\numberwithin{equation}{section}
\newtheorem{theorem}{Theorem}[section]
\newtheorem{lemma}[theorem]{Lemma}
\newtheorem{corollary}[theorem]{Corollary}
\newtheorem{conjecture}[theorem]{Conjecture}
\newtheorem{problem}[theorem]{Problem}
\theoremstyle{definition}
\newtheorem{example}[theorem]{Example}
\newtheorem{definition}[theorem]{Definition}
\newtheorem{remark}[theorem]{Remark}
\newtheorem*{ack}{Acknowledgement}
\theoremstyle{remark}
\newenvironment{romenumerate}[1][0pt]{
\addtolength{\leftmargini}{#1}\begin{enumerate}
 }{\end{enumerate}}
\newcounter{oldenumi}
{\setcounter{oldenumi}{\value{enumi}}
\begin{romenumerate} \setcounter{enumi}{\value{oldenumi}}}
{\end{romenumerate}}
\newcounter{thmenumerate}
\newenvironment{thmenumerate}
{\setcounter{thmenumerate}{0}%
 \def\item{\par
 \refstepcounter{thmenumerate}\textup{(\roman{thmenumerate})\enspace}}
}
{}
\newcommand\pfitem[1]{\par(#1):}
\newcommand\pfitemx[1]{\par#1:}
\newcommand\pfitemref[1]{\pfitemx{\ref{#1}}}
\newcommand\pfcase[2]{\smallskip\noindent\emph{Case #1: #2} \noindent}
\newcommand{\refT}[1]{Theorem~\ref{#1}}
\newcommand{\refC}[1]{Corollary~\ref{#1}}
\newcommand{\refL}[1]{Lemma~\ref{#1}}
\newcommand{\refR}[1]{Remark~\ref{#1}}
\newcommand{\refS}[1]{Section~\ref{#1}}
\newcommand{\refSS}[1]{Section~\ref{#1}} 
\newcommand{\refE}[1]{Example~\ref{#1}}
\newcommand{\refand}[2]{\ref{#1} and~\ref{#2}}
\xdef\klockan{\the\count1.0\the\count255}
\xdef\klockan{\the\count1.\the\count255}\fi
\newcommand\nopf{\qed}   
\newcommand\noqed{\renewcommand{\qed}{}} 
\renewcommand\le{\leqslant}
\renewcommand\ge{\geqslant}
\newcommand{\sumj}{\sum_{j=0}^\infty}
\newcommand{\sumji}{\sum_{j=1}^\infty}
\newcommand{\sumk}{\sum_{k=0}^\infty}
\newcommand{\sumki}{\sum_{k=1}^\infty}
\newcommand{\sumkk}{\sum_{k=0}^K}
\newcommand{\suml}{\sum_{\ell=0}^\infty}
\newcommand{\sumni}{\sum_{n=1}^\infty}
\newcommand{\sumik}{\sum_{i=1}^k}
\newcommand{\sumil}{\sum_{i=1}^\ell}
\newcommand{\sumin}{\sum_{i=1}^n}
\newcommand{\sumjn}{\sum_{j=1}^n}
\newcommand{\sumkn}{\sum_{k=1}^n}
\newcommand{\prodil}{\prod_{i=1}^\ell}
\newcommand{\prodin}{\prod_{i=1}^n}
\newcommand\set[1]{\ensuremath{\{#1\}}}
\newcommand\bigset[1]{\ensuremath{\bigl\{#1\bigr\}}}
\newcommand\Bigset[1]{\ensuremath{\Bigl\{#1\Bigr\}}}
\newcommand\xpar[1]{(#1)}
\newcommand\bigpar[1]{\bigl(#1\bigr)}
\newcommand\Bigpar[1]{\Bigl(#1\Bigr)}
\newcommand\biggpar[1]{\biggl(#1\biggr)}
\newcommand\lrpar[1]{\left(#1\right)}
\newcommand\xcpar[1]{\{#1\}}
\newcommand\bigcpar[1]{\bigl\{#1\bigr\}}
\newcommand\abs[1]{|#1|}
\newcommand\bigabs[1]{\bigl|#1\bigr|}
\newcommand\Bigabs[1]{\Bigl|#1\Bigr|}
\newcommand\biggabs[1]{\biggl|#1\biggr|}
\def\rompar(#1){\textup(#1\textup)}    
\newcommand\xfrac[2]{#1/#2}
\newcommand\parfrac[2]{\lrpar{\frac{#1}{#2}}}
\newcommand\Bigparfrac[2]{\Bigpar{\frac{#1}{#2}}}
\def\xexp(#1){e^{#1}}
\newcommand\ceil[1]{\lceil#1\rceil}
\newcommand\floor[1]{\lfloor#1\rfloor}
\newcommand\lrfloor[1]{\left\lfloor#1\right\rfloor}
\newcommand\setn{\set{1,\dots,n}}
\newcommand\setm{\set{1,\dots,m}}
\newcommand\ntoo{\ensuremath{{n\to\infty}}}
\newcommand\mtoo{\ensuremath{{m\to\infty}}}
\newcommand\ktoo{\ensuremath{{k\to\infty}}}
\newcommand\Ktoo{\ensuremath{{K\to\infty}}}
\newcommand\ttoo{\ensuremath{{t\to\infty}}}
\newcommand\xtoo{\ensuremath{{x\to\infty}}}
\newcommand\downto{\searrow}
\newcommand\upto{\nearrow}
\newcommand\punkt[1]{\if.#1\else.\spacefactor1000\fi{#1}}
\newcommand\iid{i.i.d\punkt}    
\newcommand\ie{i.e\punkt}
\newcommand\eg{e.g\punkt}
\newcommand\viz{viz\punkt}
\newcommand\cf{cf\punkt}
\newcommand{\as}{a.s\punkt}
\newcommand\whp{w.h.p\punkt}
\newcommand\ii{\mathrm{i}}
\newcommand{\tend}{\longrightarrow}
\newcommand\dto{\overset{\mathrm{d}}{\tend}}
\newcommand\pto{\overset{\mathrm{p}}{\tend}}
\newcommand\eqd{\overset{\mathrm{d}}{=}}
\newcommand\op{o_{\mathrm p}}
\newcommand\Op{O_{\mathrm p}}
\newcommand\dapprox{\overset{\mathrm{d}}{\approx}}
\newcommand\bbR{\mathbb R}
\newcommand\bbN{\mathbb N}
\newcommand\bbZ{\mathbb Z}
\newcommand\bbZgeo{\mathbb Z_{\ge0}}
\newcounter{CC}
\newcommand{\CC}{\stepcounter{CC}\CCx} 
\newcommand{\CCx}{C_{\arabic{CC}}}     
\newcommand{\CCdef}[1]{\xdef#1{\CCx}}     
\newcommand{\CCreset}{\setcounter{CC}0} 
\newcounter{cc}
\newcommand{\cc}{\stepcounter{cc}\ccx} 
\newcommand{\ccx}{c_{\arabic{cc}}}     
\newcommand{\ccdef}[1]{\xdef#1{\ccx}}     
\renewcommand\Re{\operatorname{Re}}
\newcommand\E{\operatorname{\mathbb E{}}}
\renewcommand\P{\operatorname{\mathbb P{}}}
\newcommand\Var{\operatorname{Var}}
\newcommand\Cov{\operatorname{Cov}}
\newcommand\Po{\operatorname{Po}}
\newcommand\Bi{\operatorname{Bi}}
\newcommand\Be{\operatorname{Be}}
\newcommand\Ge{\operatorname{Ge}}
\newcommand\NBi{\operatorname{NBin}}
\newcommand\supp{\operatorname{supp}}
\newcommand\spann{\operatorname{span}}
\newcommand\ga{\alpha}
\newcommand\gb{\beta}
\newcommand\gd{\delta}
\newcommand\gf{\varphi}
\newcommand\gam{\gamma}
\newcommand\gG{\Gamma}
\newcommand\gl{\lambda}
\newcommand\gL{\Lambda}
\newcommand\go{\omega}
\newcommand\gO{\Omega}
\newcommand\gs{\sigma}
\newcommand\gS{\Sigma}
\newcommand\gss{\sigma^2}
\newcommand\eps{\varepsilon}
\renewcommand\phi{\varphi}
\newcommand\hgss{\hat\sigma^2}
\newcommand\cA{\mathcal A}
\newcommand\cB{\mathcal B}
\newcommand\cD{\mathcal D}
\newcommand\cE{\mathcal E}
\newcommand\cL{{\mathcal L}}
\newcommand\cP{\mathcal P}
\newcommand\cT{{\mathcal T}}
\newcommand\cZ{{\mathcal Z}}
\newcommand\tJ{{\tilde J}}
\newcommand\ett[1]{\boldsymbol1\xcpar{#1}} 
\newcommand\bigett[1]{\boldsymbol1\bigcpar{#1}}
\newcommand\qw{^{-1}}
\newcommand\qww{^{-2}}
\newcommand\qq{^{1/2}}
\newcommand\qqw{^{-1/2}}
\newcommand\qqc{^{3/2}}
\newcommand\qqcw{^{-3/2}}
\newcommand\qqq{^{1/3}}
\newcommand\qqqb{^{2/3}}
\newcommand\qqqbw{^{-2/3}}
\newcommand\qqqq{^{1/4}}
\renewcommand{\=}{:=}
\newcommand\intoo{\int_0^\infty}
\newcommand\intoooo{\int_{-\infty}^\infty}
\newcommand\oi{[0,1]}
\newcommand\ooo{[0,\infty)}
\newcommand\ooox{[0,\infty]}
\newcommand\oooy{(0,\infty)}
\newcommand\oooo{(-\infty,\infty)}
\newcommand\setoi{\set{0,1}}
\newcommand\dtv{d_{\mathrm{TV}}}
\newcommand\dk{d_{\mathrm{K}}}
\newcommand\dkk{\widetilde d_{\mathrm{K}}}
\newcommand\dd{\,\mathrm{d}}
\newcommand{\pgf}{probability generating function}
\newcommand{\mgf}{moment generating function}
\newcommand{\chf}{characteristic function}
\newcommand\rv{random variable}
\newcommand\lhs{left-hand side}
\newcommand\rhs{right-hand side}
\newcommand\etto{\bigpar{1+o(1)}}
\newcommand\st{\mathfrak T}
\newcommand\stf{\st_{\mathrm f}}
\newcommand\stl{\st_{\mathrm{lf}}}
\newcommand\stn{\st_n}
\newcommand\ctn{\cT_n}
\newcommand\ctN{{\cT_n}}
\newcommand\ctx{\cT^*}
\newcommand\ctxx[1]{\cT^{*(#1)}}
\newcommand\hcT{\widehat {\cT}}
\newcommand\hct{{\hcT}}
\newcommand\ttt{{\overline \cT}}
\newcommand\too{U_\infty}
\newcommand\voo{{V_\infty}}
\newcommand\No{\bbN_0}
\newcommand\Ni{\bbN_1}
\newcommand\bNo{\overline\bbN_0}
\newcommand\bNi{\overline\bbN_1}
\newcommand\hxi{\widehat\xi}
\newcommand\GW{Galton--Watson}
\newcommand\GWt{\GW{} tree}
\newcommand\cGWt{conditioned \GW{} tree}
\newcommand\GWp{\GW{} process}
\newcommand\GWf{\GW{} forest}
\newcommand\cGWf{conditioned \GW{} forest}
\newcommand\rot{o}
\newcommand\kooo{_{k=0}^\infty}
\newcommand\tpi{\widetilde \pi}
\newcommand\tPhi{\widetilde\Phi}
\newcommand\tPsi{\widetilde\Psi}
\newcommand\tnu{\widetilde\nu}
\newcommand\trho{\widetilde\rho}
\newcommand\ttau{\widetilde\tau}
\newcommand\tZ{\widetilde Z}
\newcommand\ppi{\ensuremath{(\pi_k)}}
\newcommand\ppix{\boldsymbol{\pi}}
\newcommand\ppiooo{\ensuremath{(\pi_k)\kooo}}
\newcommand\cbmn{\cB_{m,n}}
\newcommand\bmn{B_{m,n}}
\newcommand\cbnn{\cB_{n-1,n}}
\newcommand\bnn{B_{n-1,n}}
\newcommand\kk{^{(K)}}
\newcommand\taux{\tau_*}
\newcommand\nnx[1]{^{(#1)}}
\newcommand\nn{\nnx{n}}
\newcommand\delar{\mid}
\newcommand\tphi{\widetilde\phi}
\newcommand\intpipi{\int_{-\pi}^{\pi}}
\newcommand\txoo{t_\infty}
\newcommand\taun{\tau_n}
\newcommand{\pws}{probability \ws}
\newcommand\ws{weight sequence}
\newcommand\xin{\xi_1,\dots,\xi_n}
\newcommand\cce{c_\eps}
\newcommand\wk{w^{(K)}}
\newcommand\wwxk{\mathbf w^{(K)}}
\newcommand\Phik{\Phi_{K}}
\newcommand\Psik{\Psi_{K}}
\newcommand\pik{\pi^{(K)}}
\newcommand\ppik{\boldsymbol{\pi}^{(K)}}
\newcommand\rhok{\rho_{K}}
\newcommand\tauk{\tau_{K}}
\newcommand\bib{balls-in-boxes}
\newcommand\Iga{I$\ga$}
\newcommand\Igb{I$\gb$}
\newcommand\xmm{^{(m)}}
\newcommand\xjj{^{(j)}}
\newcommand\xxmm{^{[m]}}
\newcommand\tw{\widetilde w}
\newcommand\ww{(w_k)}
\newcommand\wwx{\mathbf{w}}
\newcommand\twwx{\widetilde\wwx}
\newcommand\tww{(\tw_k)}
\newcommand\wwkjx{\wwx^{(k_{j-1})}}
\newcommand\ty{\widetilde y}
\newcommand\YYn{(Y_1,\dots,Y_n)}
\newcommand\YYgsn{(Y_{\gs(1)},\dots,Y_{\gs(n)})}
\newcommand\yyn{(y_1,\dots,y_n)}
\newcommand\yyx{\mathbf{y}}
\newcommand\tyyn{(\ty_1,\dots,\ty_n)}
\newcommand\ttn{(t_1,\dots,t_n)}
\newcommand\TTn{(T_1,\dots,T_n)}
\newcommand\ddn{(d_1,\dots,d_n)}
\newcommand\ddnw{(d_2,\dots,d_n)}
\newcommand\Tx{{T'}}
\newcommand\Txw{{T'_u}}
\newcommand\ellr{r}
\newcommand\ageK{\cA_{\ge K}}
\newcommand\ak{\cA_{k}}
\newcommand\bk{\cB_{k}}
\newcommand\geko{_{k\ge0}}
\newcommand\ctnv{\cT_{n;v}}
\newcommand\bd{\bar d}
\newcommand\bddl{(\bd_1,\dots,\bd_\ell)}
\newcommand\dx{d^+}
\newcommand\elz{l}
\newcommand\rhoz{\rho_{\cZ}}
\newcommand\zk{Z_K}
\newcommand\ti{(\textsf{T1})}
\newcommand\tii{(\textsf{T2})}
\newcommand\win[1]{_{(#1)}}
\newcommand\ya{Y\win1}
\newcommand\yb{Y\win2}
\newcommand\yn{Y\win n}
\newcommand\yj{Y\win j}
\newcommand\xij{\xi\win j}
\newcommand\xijm{\xi^-\win j}
\newcommand\txi{\tilde\xi}
\newcommand\txij{\txi\win j}
\newcommand\xia{\xi\win 1}
\newcommand\txia{\txi\win 1}
\newcommand\etax[1]{\eta_{#1}}
\newcommand\etaj{\etax{j}}
\newcommand\taue{\tau_\eps}
\newcommand\ce{c_\eps}
\newcommand\spanw{\spann(\wwx)}
\newcommand\suppw{\supp(\wwx)}
\newcommand\Phim{\Phi_{\mathrm m}}
\newcommand\Phiim{\Phi_{\mathrm i}}
\newcommand\oct{\overline\cT}
\newcommand\vct{|\cT|}
\newcommand\xix{\xi^{*}}
\newcommand\ny[1]{N_{#1}}
\newcommand\nxi[1]{\overline N_{#1}}
\newcommand\nxin[1]{\overline N\nn_{#1}}
\newcommand\bn{B(n)}
\newcommand\jl{{J_l}}
\newcommand\al{{A_l}}
\newcommand\acl{{A^{\textsf c}_l}}
\newcommand\bl{{B_l}}
\newcommand\jll{{J_\ell}}
\newcommand\all{{A_\ell}}
\newcommand\Ax{A^*}
\newcommand\tN{\widetilde N}
\newcommand\nga{n^{1/\ga}}
\newcommand\ngaw{n^{-1/\ga}}
\newcommand\xga{^{1/\ga}}
\newcommand\xgaw{^{-1/\ga}}
\newcommand\xfn{X^f_n}
\newcommand\xabn{X^{a,b}_n}
\newcommand\xfoo{X^f_\infty}
\newcommand\xaboo{X^{a,b}_\infty}
\newcommand\xaoon{X^{a,\infty}_n}
\newcommand\xaoooo{X^{a,\infty}_\infty}
\newcommand\setz{\relax} 
\newcommand\srd{D_{\rho,\gd}}
\newcommand\srdy{D_{\rhoz,\gd'}}
\newcommand\frax[1]{\{#1\}}
\newcommand\sgt{simply generated tree}
\newcommand\sgrt{simply generated random tree}
\newcommand\sgf{simply generated forest}
\newcommand\sgrf{simply generated random forest}
\newcommand\pnx{p^*_n}
\newcommand\eei{\cE_{1i}}
\newcommand\eexi{\cE^*_{1i}}
\newcommand\eeexi{\cE^*_{2i}}
\newcommand\dij{\cD_{ij}}
\newcommand\dab{\cD_{12}}
\newcommand\eeq[1]{\cE_{1{#1}}}
\newcommand\eexq[1]{\cE^*_{1{#1}}}
\newcommand\eeexq[1]{\cE^*_{2{#1}}}
\newcommand\eqq{\widetilde\cE_{n}}
\newcommand\maxeta{\max_{i\le N}\eta_i}
\newcommand\logq{\log_{1/q}}
\newcommand\ff[2]{F_{#1,#2}^{\mathrm r}}
\newcommand\ffu[2]{F_{#1,#2}^{\mathrm u}}
\newcommand\dl{d^{\mathrm L}}
\newcommand\dr{d^{\mathrm R}}
\newcommand\qxi{\xi'}
\newcommand\bxi{\bar\xi}
\newcommand\bS{\bar S}
\newcommand\fen{\floor{\eps n}}
\newcommand\yxx{\widetilde D_n}
\newcommand\dxx{\widetilde d^+}
\newcommand\dxxhct{\dxx_\hct}
\newcommand\vx{v^*}
\newcommand\hcTx[1]{{\hcT_{#1n}}}
\newcommand\hcta{{\hcTx 1}}
\newcommand\hctb{{\hcTx 2}}
\newcommand\hctc{{\hcTx 3}}
\newcommand\hctj{{\hcTx j}}
\newcommand\xxxi{\check\xi_n}
\newcommand\gdd{\gd_1}
\newcommand\hyy{\widehat Y_n}
\newcommand\Cramer{Cram\'er}
\newcommand{\Holder}{H\"older}
\newcommand{\Takacs}{Tak\'acs}
\newcommand\cprime{'}  
\newcommand{\Polya}{P\'olya}
\newcommand\citetq[2]{\citeauthor{#2} \cite[{\frenchspacing #1}]{#2}}
\newcommand\urladdrx[1]{{\urladdr{\def~{{\tiny$\sim$}}#1}}}
\begin{document}
\title[Simply generated trees and random allocations]
{Simply generated trees, conditioned Galton--Watson trees,
 random allocations and condensation.}

\date{2 December 2011}%

\author{Svante Janson}
\address{Department of Mathematics, Uppsala University, PO Box 480,
SE-751~06 Uppsala, Sweden}
\email{svante.janson@math.uu.se}
\urladdrx{http://www2.math.uu.se/~svante/}

\subjclass[2000]{60C50; 05C05, 60F05, 60J80} 

\begin{abstract} 
We give a unified treatment of the limit,  as the size tends to infinity,
of simply generated random trees, 
including both 
the well-known result in the standard case
of critical Galton--Watson trees 
and similar but less well-known results in the
other cases (i.e., when no equivalent critical Galton--Watson tree exists).
There is a well-defined limit in the form of an infinite
random tree in all cases;
for critical Galton--Watson trees this tree is locally finite but for the
other cases the random limit has exactly one node of infinite degree.

The proofs 
use a well-known connection to
a random allocation model that we call balls-in-boxes, and
we prove corresponding theorems for this model.

This survey paper contains 
many known results from many different sources, together
with some new results. 
\end{abstract}

\maketitle

\tableofcontents

\section{Introduction}\label{S:intro}

The main purpose of this survey paper is to study the asymptotic shape of simply
generated random trees in complete generality; this includes \cGWt{s}
as a special case, but we will also go beyond that case.
Definitions are given in \refS{Ssimply}; here we only recall that \sgt{s} are
defined by a \ws{} $\ww$, and that the case when the \ws{} is a probability
distribution yields \cGWt{s}.

It is well-known that in the case of a critical \cGWt{}, 
\ie, when the defining offspring distribution has expectation 1,
the random tree has a limit (as the size tends to infinity); 
this limit is an infinite random tree,
the size-biased \GWt{} defined by \citet{Kesten}, 
see also \citet{AldousII}, \citet{AldousPitman} and \citet{LPP}.
It is also well-known that this case is less special than it might seem;
there is a notion of equivalent \ws{s} defining the same \sgrt, see
\refS{Sequiv}, and a large class of \ws{s} have an equivalent \pws{}
defining a critical \cGWt.
Many probabilists, including myself, have often 
concentrated on this ``standard'' case of critical \cGWt{s}
and dismissed the remaining cases as uninteresting exceptional cases. 
However, some researchers, in particular mathematical physicists,
have studied such cases too.
\citet{BialasB} studied one case 
(\refE{Ezeta} below)
and found a phase
transition as we leave the standard case;
this can be interpreted as a condensation making the tree
bushy with one or a few nodes of very high degree. 
This interesting condensation was
studied further by
\citet{sdf}, who showed that (in the power-law case),
there is a limit tree of a different type, having one node of
infinite degree.

We give in the present paper
a unified treatment of the limit as the size tends to infinity
for all 
simply generated trees, including both 
the well-known result in the standard case
of critical \GWt{s}
and the 
``exceptional'' cases (\ie, when no equivalent probability weight
sequence exists, or when such a sequence exists but not with mean 1).
We will see that there is a well-defined limit in the form of an infinite
random tree for any weight sequence. 
In the non-standard cases, 
this infinite random limit has exactly one node of infinite degree,
so its form  
differs from the standard case  of a critical \GWt{} where all nodes in
the limit tree have finite degrees,
but nevertheless the trees are similar;
see Sections \refand{ShGW}{Smain} for details.

Some important notation, used throughout the paper, is introduced in
\refS{Snotation}, while Sections \refand{Sequiv}{SUlam} contain further
preliminaries.
The main limit theorem for \sgrt{s} is stated in \refS{Smain}, together with
some other, related, limit theorems concerning node degrees and fringe subtrees.
The differences between different types of \ws{s}
are discussed further in \refS{S3}.

The proofs of the limit theorems for random trees
use a well-known connection to
a random allocation model that we call
\bib; 
this model exhibits a similar behaviour, with condensation in the
non-classical cases, see \eg{} \citet{BialasetalNuPh97}.
The model is defined in \refS{SBB}, and
the relation between the models is described in \refS{Stree-balls}.
The \bib{} model is interesting in its own right, and it has been used for 
several
other applications; we give some examples from probability theory,
combinatorics and statistical physics in \refS{Sex+}.
We therefore also develop the general theory for \bib{} with arbitrary \ws{s}
(in the range where the mean occupancy is bounded). 
In particular, we give in \refS{SBB}
theorems corresponding to (and in some ways extending)
our main theorems for random trees.

The limit theorems for \bib{} are proved in Sections \ref{Sprel}--\ref{Sbbpf}, 
and then these results are used to prove the limit theorems for random trees
in Sections \ref{Stree-balls}--\ref{SpfXXX}.

The remaining sections contain additional results.
\refS{Spart} gives asymptotic results for the partition functions of the
models.
The very long \refS{Slarge} gives results on the largest degrees in random
trees, and the largest numbers of balls in a box in the \bib{} model; the
section is 
long because there are several different cases with different types of
behaviour. 
In particular, we study in \refSS{SSlarge+}
the case when there is condensation, and
investigate whether this appears as condensation to a single box (or node),
or whether 
the condensation is distributed over several boxes (nodes); it turns out
that both cases can occur.
We give also, in \refS{SSmaxforests}, applications to the size of the
largest tree in random forests.
In \refS{SlargeT}, the condensation in random trees is discussed in further
detail.
Finally, some additional comments, results and open problems are given in
Sections \refand{Sfurther}{Scondx}; \refS{Sfurther} mentions briefly various
other types of asymptotic results for \sgrt{s}, and \refS{Scondx} discusses
 alternative ways to condition \GWt{s}.

This paper contains many known results from many different sources, together
with some new results. (We believe, for example, 
that the theorems in \refS{Smain} are new in the present generality.) 
We have tried to give relevant references, but the absence of references
does not necessarily imply that a result is new.

\section{Simply generated trees}\label{Ssimply}

\subsection{Ordered rooted trees}
The trees that we consider are 
(with a few explicit exceptions)
\emph{rooted} and \emph{ordered} 
(such trees are also called \emph{plane} trees).
Recall that a tree is \emph{rooted} if  one node is distinguished as the 
\emph{root} $\rot$; this implies that we can arrange the nodes in a
sequence of 
generations (or levels), 
where generation $x$ consists of all nodes of distance $x$ to
the root. (Thus generation 0 is the root; generation 1 is the set of
neighbours of the root, and so on.) 
If $v$ is a node with $v\neq\rot$, then the
\emph{parent} of $v$ is the neighbour of $v$ on the path from $v$ to $\rot$;
thus, every node except the root has a unique parent, 
while the root has no parent. Conversely, for any node $v$, the neighbours of
$v$ that are further away from the root than $v$ are the \emph{children} of
$v$.
The number of children of $v$ 
is the \emph{outdegree} $\dx(v)\ge0$ of $v$.
Note that if $v$ is in generation $x$, then its parent is in
generation $x-1$ and its children are in generation $x+1$.

Recall further that a rooted tree is \emph{ordered} if the children of each node
are ordered in a sequence 
$v_1,\dots,v_d$, where $d=\dx(v)\ge0$ is the outdegree of $v$.
See \eg{} \citet{Drmota} for more information on these and other types of trees.
(The trees we consider are  called \emph{planted plane trees} in
\cite{Drmota}.)
We identify trees that are isomorphic in the obvious (order preserving) way.
(Formally, we can define our trees as equivalence classes. Alternatively, we
may select a specific representative in each equivalence class as in
\refS{SUlam}.) 

\begin{remark}
Some authors prefer to  
add an extra (phantom) node as a parent of the root; such trees are called
\emph{planted}. 
(An alternative version
is to add only a pendant edge at the root, with no second
endpoint.) 
There is an obvious one-to-one correspondence between trees
with and without the extra node, so the difference is just a matter of
formulations, but when comparing results one should be careful whether, for
example, the extra node is counted or not.
The extra node yields the technical advantage that also
the root has indegree 1 and thus total degree = $1+\dx(v)$; it further gives each
embedding in the plane a unique ordering of the children of every node
(in clockwise order from the parent, say).
Nevertheless, we find this device less natural and we will not use it in the
present paper. (We use outdegrees instead of degrees and assume that an
ordering of the children as above is given; then there are no problems.)
\end{remark}

We are primarily interested in (large) finite trees, but we will also
consider infinite trees, for example as limit objects in our main theorem
(\refT{Tmain}). The infinite trees may have nodes with infinite outdegree
$\dx(v)=\infty$; in this case we assume that the children are ordered
$v_1,v_2,\dots$ (i.e., the order type of the set of children is $\bbN$).

We let $\stn$ be the set of all ordered rooted trees with $n$ nodes
(including 
the root) and let
$\stf\=\bigcup_{n=1}^\infty\stn$ be the set of all finite ordered rooted trees;
see further \refS{SUlam}.

\begin{remark}\label{Rcatalan}
Note that $\stn$ is a finite set. 
In fact, it is well-known that its size $|\stn|$ is the $(n-1)$:th Catalan
number 
\begin{equation}
  \label{catalan}
C_{n-1}=\frac1n\binom{2n-2}{n-1}=\frac{(2n-2)!}{n!\,(n-1)!},
\end{equation}
see \eg{} \cite[Section 1.2.2 and Theorem 3.2]{Drmota},
\cite[Section I.2.3]{Flajolet} or \cite[Exercise 6.19(e)]{Stanley2}, 
but we do not need this. 
\end{remark}

For any tree $T$, we let $|T|$ denote the number of nodes; we  call
$|T|$ the \emph{size} of $T$.
As is well known, for any finite tree $T$,
\begin{equation}
  \label{sumd}
\sum_{v\in T}\dx(v)=|T|-1,
\end{equation}
since every node except the root is the child of exactly one node.

\subsection{\GWt{s}}
An important class of examples of random ordered rooted trees 
is given by the \emph{\GWt{s}}.
These are defined as the family trees of \GW{} processes:
Given a probability distribution $\ppiooo$ on $\bbZgeo$,
or, equivalently, a random variable $\xi$
with distribution $(\pi_k)_{k=0}^\infty$,
we build the tree $\cT$ recursively, starting with the root and giving each
node a 
number of children that is an independent copy of $\xi$. 
(We call $\ppi\kooo$ the \emph{offspring distribution} of $\cT$; we sometimes
also abuse the language and call $\xi$ the offspring distribution.)
In other words,
the outdegrees $\dx(v)$ are \iid{} with the distribution $(\pi_k)\kooo$.

Recall that the \GW{} process is called \emph{subcritical}, \emph{critical}
or \emph{supercritical} as the expected number of children 
$\E\xi=\sumk k\pi_k$ satisfies $\E\xi<1$, $\E\xi=1$ or $\E\xi>1$.
It is a standard basic fact of branching process theory that $\cT$ is finite
\as{} if $\E\xi\le1$ (\ie, in the subcritical and critical cases), but $\cT$
is infinite with positive probability if $\E\xi>1$ (the supercritical case),
see \eg{} \citet{AN}.

The \GWt{s} have random sizes. We are mainly interested in random trees
with a given size; we thus define $\ctn$ as $\cT$ conditioned on $|\cT|=n$.
These random trees $\ctn$ are called \emph{\cGWt s}. By definition, $\ctn$
has size $|\ctn|=n$.

It is well-known that several important classes of random trees can be seen
as \cGWt, see \eg{} \citet{AldousII}, \citet{Devroye}, \citet{Drmota}
and \refS{Sex}.

\subsection{Simply generated trees}

The random trees that we will study are a generalization of the \GWt{s}.
We suppose in this paper that we are given a fixed \emph{weight sequence}
$\wwx=(w_k)_{k\ge0}$ of non-negative real numbers.
We then define the \emph{weight} of a finite tree $T\in\stf$ by
\begin{equation}\label{wtree}
  w(T)\=\prod_{v\in T}w_{\dx(v)},
\end{equation}
taking the product over all nodes $v$ in $T$.
Trees with such weights are called \emph{simply generated trees} and were
introduced by \citet{MM78}.
To avoid trivialities, we assume that $w_0>0$ and that there exists some
$k\ge2$ with $w_k>0$.

We let $\ctn$ be the random tree obtained by picking an element of $\stn$ at
random with probability proportional to its weight, \ie,
\begin{equation}\label{ctn}
  \P(\ctn=T)=\frac{w(T)}{Z_n},\qquad T\in\stn,
\end{equation}
where the normalizing factor $Z_n$ is given by
\begin{equation}\label{zn}
  Z_n=Z_n(\wwx)\=\sum_{T\in\stn}w(T);
\end{equation}
$Z_n$ is known as  the \emph{partition function}.
This definition makes sense only when $Z_n>0$; we tacitly consider only such
$n$ when we discuss $\ctn$.
Our assumptions $w_0>0$ and $w_k>0$ for some $k\ge2$ imply that $Z_n>0$
for infinitely many $n$, see \refC{Cexists} for a more precise result.
(In most applications, $w_1>0$, and then $Z_n>0$ for every $n\ge1$, so there
is no problem at all. 
The archetypical example with a parity restriction is given
by the random (full) binary tree, see \refE{Ebinary},
for which $Z_n>0$ if and only if $n$ is odd.)

One particularly important case is when $\sumk w_k=1$, so the weight
sequence $(w_k)$ is a probability distribution on $\bbZgeo$.
(We then say that $(w_k)$ is a \emph{probability weight sequence}.)
In this case we let $\xi$ be a random variable with the corresponding
distribution: $\P(\xi=k)=w_k$; we further let $\cT$ be the random
Galton--Watson tree generated by $\xi$.
It follows directly from the definitions that for every finite tree
$T\in\stf$, $\P(\cT=T)=w(T)$. 
Hence 
\begin{equation}
  \label{znp}
Z_n=\P(|\cT|=n)
\end{equation}
and the simply generated random 
tree $\ctn$ is the same as the random
Galton--Watson tree $\cT$ conditioned on $|\cT|=n$, \ie,
it equals the \cGWt{} $\ctn$ defined above.

It is well-known, see \refS{Sequiv} for details, 
that in many cases it is possible to change the weight
sequence $\ww$ to a probability weight sequence
without changing  the distribution of the random trees $\ctn$;
in this case $\ctn$ can thus be seen as a \cGWt.
Moreover, in many cases this can be done such that the resulting probability
distribution has mean 1. In such cases it thus suffices to consider the
case of a probability weight sequence with mean $\E\xi=1$; then $\ctn$ is a
conditional critical Galton--Watson tree. It turns out that this is a nice
and natural 
setting, with many known results proved by many different authors.
(In many papers it is further assumed that $\xi$ has finite variance, or even
a finite exponential moment. This is not needed for the main results presented
here, but may be necessary for other results. See also Sections
\ref{S3}, \ref{Slarge} and \ref{Sfurther}.)

\section{Notation}\label{Snotation}

We consider a fixed \ws{} $\wwx=\ww\geko$.
The \emph{support} $\supp(\wwx)$
of the weight sequence $\wwx=(w_k)$ is \set{k:w_k>0}.
We define
\begin{equation}
\go= \go(\wwx)\=
\sup\supp(\wwx)
=
\sup\set{k:w_k>0}\le\infty,
\end{equation}
(When considering $\ctn$, we assume, as said above, $w_0>0$ and $w_k>0$ for
some $k\ge2$; this can be written $0\in\supp(\wwx)$ and $\go\ge2$.)

We further define (assuming that the support contains at least two
points) 
\begin{equation}\label{span}
  \spann(\wwx)\=\max\set{d\ge1: d\delar(i-j) \text{ whenever }w_i,w_j>0}.
\end{equation}
Since we assume $w_0>0$, \ie, $0\in\supp(\wwx)$,
we can simplify this to
\begin{equation}\label{span0}
  \spann(\wwx)=\max\set{d\ge1: d\delar i \text{ whenever }w_i>0},
\end{equation}
the greatest common divisor of $\supp(\wwx)$.

We let 
\begin{equation}
\Phi(z)\=\sumk w_k z^k  
\end{equation}
be the generating function of the given weight
sequence, and let 
$\rho\in\ooox$ be its radius of convergence. 
Thus
\begin{equation}\label{rho}
  \rho=1/\limsup_{\ktoo}w_k^{1/k}.
\end{equation}
$\Phi(\rho)$ is always defined, with $0<\Phi(\rho)\le\infty$.
Note that (assuming $\go>0$) $\Phi(\infty)=\infty$; in particular, if
$\rho=\infty$, then 
$\Phi(\rho)=\infty$. On the other hand, if $\rho<\infty$, then both
$\Phi(\rho)=\infty$ and $\Phi(\rho)<\infty$ are possible.
If $\rho>0$, then $\Phi(t)\upto\Phi(\rho)$ as $t\upto\rho$ by monotone
convergence. 

We further define, for  $t$ such that $\Phi(t)<\infty$,
\begin{equation}\label{psidef}
\Psi(t)\=\frac{t\Phi'(t)}{\Phi(t)}
=\frac{\sumk k w_kt^k}{\sumk  w_kt^k};
\end{equation}
$\Psi(t)$ is thus defined and finite at least for $0\le t<\rho$, and
if $\Phi(\rho)<\infty$, then $\Psi(\rho)$ is still defined by
\eqref{psidef}, 
with
$\Psi(\rho)\le\infty$ (note that the numerator in \eqref{psidef} may diverge
in this case, but not for $0\le t<\rho$). Moreover, if $\Phi(\rho)=\infty$,
we define $\Psi(\rho)\=\lim_{t\upto\rho}\Psi(t)\le\infty$.
(The limit exists by \refL{LPsi}\ref{L1a} below, but may be infinite.)

Alternatively, \eqref{psidef} may be written
\begin{equation}\label{psimgf}
\Psi(e^x)
=e^x\frac{\Phi'(e^x)}{\Phi(e^x)}
=\frac{\dd}{\dd x}\log\Phi(e^x).
\end{equation}

The function $\Psi$ will play a central role in the sequel.
This is mainly because of \refL{LEPsi} below, which gives a probabilistic
interpretation of $\Psi(t)$.
Its basic properties are given by the following lemma, which is proved in
\refS{Sprel}. 

\begin{lemma}
  \label{LPsi}
Let $\wwx=(w_k)\kooo$ be a given \ws{} with $w_0>0$ and $w_k>0$ for
some $k\ge1$ (\ie, $\go(\wwx)>0$).  
\begin{romenumerate}
  \item \label{L1a}
If\/ $0<\rho\le\infty$, then
the function 
\begin{equation}\label{l1a}
\Psi(t)\=\frac{t\Phi'(t)}{\Phi(t)}
=\frac{\sumk k w_kt^k}{\sumk  w_kt^k}
\end{equation}
is finite, continuous and (strictly) increasing on $[0,\rho)$, with $\Psi(0)=0$.

\item\label{L1x}
If\/ $0<\rho\le\infty$, then $\Psi(t)\to\Psi(\rho)\le\infty$ as
$t\upto\rho$.

\item\label{L1y}
For any $\rho$,
$\Psi$ is continuous  $[0,\rho]\to[0,\infty]$,
with $\Psi(\rho)\le\infty$.

\item \label{L1b}
If $\rho<\infty$ and $\Phi(\rho)=\infty$, then
$\Psi(\rho)\=\lim_{t\to\rho}\Psi(t)=\infty$.

\item \label{L1c}
If $\rho=\infty$, then $\Psi(\rho)\=\lim_{t\to\rho}\Psi(t)=\go\le\infty$.
\end{romenumerate}

Consequently, if $\rho>0$, then
\begin{equation}\label{l1}
\Psi(\rho)
=\lim _{t\upto\rho} \Psi(t) 
=\sup _{0\le t<\rho} \Psi(t)
\in(0,\infty].
\end{equation}
\end{lemma}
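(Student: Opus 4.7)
\medskip
\noindent\textbf{Proof plan.}
The plan is to exploit the probabilistic reading that for $0\le t<\rho$,
\begin{equation*}
\Psi(t) = \E_t[X], \qquad \text{where }P_t(X=k) \= w_k t^k/\Phi(t),
\end{equation*}
is the mean of a Boltzmann--Gibbs distribution on $\bbZgeo$. This representation makes transparent that $\Psi(0)=0$, that $\Psi\ge 0$, and that $\Psi$ should be increasing. For \ref{L1a} I would first record that $\Phi$ and $\Phi'$ have the same radius of convergence, so both are finite and continuous on $[0,\rho)$; since $\Phi(t)\ge w_0>0$ there, so is $\Psi$. Strict monotonicity then follows by differentiating: the identity $\sum_k k^2 w_k t^k = t^2\Phi''(t)+t\Phi'(t)$ yields
\begin{equation*}
\Psi'(t) \;=\; \Var_t(X)/t, \qquad 0<t<\rho,
\end{equation*}
and the standing assumptions $w_0>0$ and $w_k>0$ for some $k\ge 1$ force $P_t$ to be non-degenerate for each $t>0$, so $\Var_t(X)>0$.

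Parts \ref{L1x} and \ref{L1y} then come essentially for free from monotonicity. An increasing function on $[0,\rho)$ has a limit in $[0,\infty]$; this limit is $\Psi(\rho)$ by convention when $\Phi(\rho)=\infty$, and agrees with the formula \eqref{psidef} when $\Phi(\rho)<\infty$ by monotone convergence applied separately to numerator and denominator. Continuity on $[0,\rho]$ with values in $[0,\infty]$ is then immediate from \ref{L1a} and the existence of the endpoint limit.

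The only genuine work is \ref{L1b} and \ref{L1c}. For \ref{L1b} I would avoid arguing via the numerator $t\Phi'(t)$ and instead use the truncation bound
\begin{equation*}
\Psi(t) \;\ge\; N\cdot\frac{\sum_{k\ge N}w_k t^k}{\Phi(t)} \;=\; N\Bigl(1-\frac{\sum_{k<N}w_k t^k}{\Phi(t)}\Bigr),
\end{equation*}
valid for every $N\ge 1$ and every $t\in(0,\rho)$. Since the truncated sum $\sum_{k<N}w_k t^k$ stays bounded on $[0,\rho]$ while $\Phi(t)\to\infty$ as $t\upto\rho$, the right-hand side tends to $N$; hence $\liminf_{t\upto\rho}\Psi(t)\ge N$, and letting $N\to\infty$ closes \ref{L1b}. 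The same inequality handles \ref{L1c} when $\go=\infty$: for each $N$ one picks $k_0\ge N$ with $w_{k_0}>0$, so $\Phi(t)\ge w_{k_0}t^{k_0}$ and $\sum_{k<N}w_k t^k/\Phi(t)=O(t^{N-1-k_0})\to 0$ as $t\to\infty$. When $\go<\infty$, $\Phi$ is a polynomial of degree $\go$ with $w_\go>0$, and $t\Phi'(t)/\Phi(t)\to\go$ by inspection of leading coefficients.

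The final consequence \eqref{l1} then merely packages strict monotonicity (so $\sup=\lim$) with the observation $\Psi(\rho)\ge\Psi(t)>0$ for any fixed $t\in(0,\rho)$. The sole non-routine step is \ref{L1b}: one must show that the \emph{ratio} of two quantities both diverging as $t\upto\rho$ diverges as well, which is why the truncation device is essentially forced rather than a pointwise comparison of $t\Phi'(t)$ and $\Phi(t)$.
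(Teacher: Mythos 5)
Your proposal is correct and takes essentially the same route as the paper: the variance identity $\Psi'(t)=\Var_t(X)/t$ for strict monotonicity, and a truncation bound of the form $\Psi(t)\ge N\bigl(1-\sum_{k<N}w_kt^k/\Phi(t)\bigr)$ for parts (iv) and (v) — which is just an algebraic rephrasing of the paper's $\Psi(t)-\ell\ge\sum_{k<\ell}(k-\ell)w_kt^k/\Phi(t)$. The one cosmetic difference is that the paper handles $\go<\infty$ in (v) by the same truncation bound combined with the trivial upper bound $\Psi\le\go$, whereas you invoke leading-coefficient inspection of a polynomial, but the content is identical.
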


We define 
\begin{equation}\label{nu}
\nu\=\Psi(\rho).
\end{equation}
In particular, if $\Phi(\rho)<\infty$, then 
\begin{equation}\label{nu1}
\nu= 
\frac{\rho\Phi'(\rho)}{\Phi(\rho)}
\le\infty.
\end{equation}
It follows from \refL{LPsi} that $\nu=0\iff\rho=0$,
and that if $\rho>0$,  then
\begin{equation}\label{nusup}
\nu\=
\Psi(\rho)
=\lim _{t\upto\rho} \Psi(t) 
=\sup _{0\le t<\rho} \Psi(t)
\in(0,\infty].
\end{equation}
It follows from \eqref{l1a} 
that $\nu\le\go$.

Note that all these parameters depend on the weight sequence $\wwx=(w_k)$;
we may occasionally write \eg{} $\go(\wwx)$ and $\nu(\wwx)$, but
usually we for simplicity do not show $\wwx$ explicitly in the notation.

\begin{remark}\label{Rcz}
Let $\cZ(z)$ denote 
  the generating function 
$\cZ(z)\=\sumni Z_nz^n$. 
Then
  \begin{equation}
	\label{cz}
\cZ(z)=z\Phi(\cZ(z)),
  \end{equation} 
as shown already by \citet{Otter}.
This equation is the basis of much 
work on simply generated trees using algebraic and  analytic methods,
see \eg{} \citet{Drmota}, but the
  present paper uses different methods
and we will use \eqref{cz} only in a few minor remarks.
\end{remark}

\subsection{More notation}

We define $\No=\bbZgeo\=\set{0,1,2,\dots}$,
$\Ni=\bbZ_{>0}\=\set{1,2,\dots}$, 
$\bNo\=\No\cup\set\infty$
and
$\bNi\=\Ni\cup\set\infty$.

All unspecified limits are as \ntoo.
Thus, $a_n\sim b_n$ means $a_n/b_n\to1$ as \ntoo.
We use $\pto$ and $\dto$ for convergence in probability and distribution,
respectively, of random variables,
and $\eqd$ for equality in distribution. 
We  use $\op$ and $\Op$ in the standard senses:
$\op(a_n)$ is an unspecified random variable $X_n$ such that $X_n/a_n\pto0$
as \ntoo, and
$\Op(a_n)$ is a random variable $X_n$ such that $X_n/a_n$ is stochastically
bounded (usually called \emph{tight}).
We say that some event holds \emph{\whp} (with high probability) if
its probability tends to 1 as \ntoo.
(See further \eg{} \cite{SJN6}.)

A \emph{coupling} of two random variables $X$ and $Y$ 
is formally a pair of random variables $X'$ and $Y'$ defined on a common
probability space such that $X\eqd X'$ and $Y\eqd Y'$; with a slight abuse
of notation we may continue to write $X$ and $Y$, thus replacing the  original
variables with new ones having the same distributions.

We write $X_n\dapprox X'_n$ for two sequences of random variables or vectors
$X_n$ and $X'_n$ 
if there exists a coupling of $X_n$ and $X'_n$ with $X_n=X'_n$ \whp;
this is equivalent to 
$\dtv(X_n,X'_n)\to0$ as \ntoo, where $\dtv$ denotes the total variation
distance. 

We use $C_1,C_2,\dots$ to denote unimportant constants, possibly different
at different occurrences.

Recall that $\dx(v)=\dx_T(v)$ always denotes the \emph{outdegree} of a node
$v$ in a tree $T$. (We use the notation
$\dx(v)$ rather than $d(v)$ to emphasise this.)
We will not use the total degree $d(v)=1+\dx(v)$ (when $v\neq o$), but care
should be taken when comparing with other papers.

\section{Equivalent weights}\label{Sequiv}

If $a,b>0$ and we change $w_k$ to
\begin{equation}\label{tw}
  \tw_k\=ab^kw_k,
\end{equation}
then, for every tree $T\in\stn$, $w(T)$ is changed to,
using \eqref{sumd},
\begin{equation}
\tw(T)=a^nb^{\sum_v\dx(v)}w(T)=a^nb^{n-1}w(T).  
\end{equation}
Consequently, $Z_n$ is changed to 
\begin{equation}
  \label{tz}
\tZ_n\=
a^nb^{n-1}Z_n, 
\end{equation}
and the probabilities in
\eqref{ctn} are not changed. In other words, the new weight sequence
$(\tw_k)$ defines the same simply generated random trees $\ctn$ as $(w_k)$.
(This is essentially due to \citet{Kennedy}, who 
did not consider trees but
showed the corresponding result for
\GWp{es}. See also \citet{AldousII}.)
We say that \ws{} $(w_k)$ and $(\tw_k)$ related by \eqref{tw} (for some
$a,b>0$) are \emph{equivalent}. (This is clearly an equivalence relation on
the set of weight sequences.)

Let us see how replacing $(w_k)$ by the equivalent weight sequence $(\tw_k)$
affects the parameters defined above. The support, span and $\go$ are not
affected at all.

The generating function $\Phi(t)$ is replaced by
\begin{equation}\label{tPhi}
  \tPhi(t)\=\sumk\tw_kt^k=\sumk ab^kt^k=a\Phi(bt),
\end{equation}
with radius of convergence $\trho=\rho/b$.
Further, $\Psi(t)$ is replaced by
\begin{equation}\label{tPsi}
  \tPsi(t)\=\frac{t\tPhi'(t)}{\tPhi(t)}
=\frac{tab\Phi'(bt)}{a\Phi(bt)}
=\Psi(bt).
\end{equation}
Hence, if $\rho>0$,  $\nu$ is replaced by,
using \eqref{nusup},
\begin{equation*}
  \tnu\=\sup_{0\le t<\trho}\tPsi(t)
=\sup_{0\le t<\rho/b}\Psi(bt)
=\sup_{0\le s<\rho}\Psi(s)=\nu;
\end{equation*}
if $\rho=0$ then $\tnu=\trho=0=\nu$ is trivial. In other words, $\nu$ is
invariant and depends only on the equivalence class of the weight sequence.

\begin{lemma}\label{Lequivp}
 There exists a probability weight sequence equivalent to $(w_k)$ 
 if and only if  and only if $\rho>0$. In this case, the 
 probability weight sequences equivalent to $(w_k)$ 
 are given by 
 \begin{equation}\label{lequivp}
   p_k=\frac{t^kw_k}{\Phi(t)},
 \end{equation}
for any $t>0$ such that $\Phi(t)<\infty$.
\end{lemma}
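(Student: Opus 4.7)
The plan is to parametrise the equivalence class of $(w_k)$ explicitly via \eqref{tw} and then read off the probability normalisation. Indeed, any sequence equivalent to $(w_k)$ has the form $\tw_k = ab^k w_k$ for some $a,b > 0$, and summing term by term gives $\sum_k \tw_k = a\Phi(b)$. Thus $(\tw_k)$ is a probability weight sequence if and only if $\Phi(b) < \infty$ and $a = 1/\Phi(b)$; in that case $\tw_k = b^k w_k/\Phi(b)$, which is exactly the form in \eqref{lequivp} with $t=b$.

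With this reduction in hand, I would handle the existence criterion. For the ``only if'' direction, if some equivalent probability weight sequence exists then the computation above supplies a $b > 0$ with $\Phi(b) < \infty$; hence $b$ lies in the disc of convergence of $\Phi$, so $\rho \ge b > 0$. For the ``if'' direction, if $\rho > 0$ then by \eqref{rho} any $0 < t < \rho$ satisfies $\Phi(t) < \infty$, and so setting $a = 1/\Phi(t)$, $b = t$ yields the sequence $p_k = t^k w_k/\Phi(t)$, which is equivalent to $(w_k)$ and sums to $1$.

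For the characterisation statement, the forward computation shows that every probability weight sequence equivalent to $(w_k)$ arises from \eqref{lequivp} with $t=b$, and conversely every $t > 0$ with $\Phi(t) < \infty$ produces such a sequence by the same formula. (Implicit here is that the permissible range of $t$ is $(0,\rho)$ when $\Phi(\rho) = \infty$ and $(0,\rho]$ when $\Phi(\rho) < \infty$, in agreement with \refL{LPsi}.)

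There is essentially no hard step---the entire argument is a one-line algebraic manipulation combined with the standard observation that $\rho > 0$ is equivalent to the existence of some $t > 0$ with $\Phi(t) < \infty$. If anything, the only point requiring slight care is to ensure that the parametrisation $\tw_k = ab^k w_k$ truly exhausts the equivalence class (which is exactly the content of \eqref{tw}), so that the stated list \eqref{lequivp} is complete and not merely a family of examples.
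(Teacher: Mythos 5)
Your proof is correct and follows essentially the same route as the paper's: both reduce the claim to the observation that an equivalent sequence $\tw_k = ab^k w_k$ is a probability distribution iff $\Phi(b)<\infty$ and $a=\Phi(b)^{-1}$, and then note that such a $b>0$ exists iff $\rho>0$. The only addition in your write-up is the explicit parenthetical about the permissible range of $t$ (i.e.\ whether $t=\rho$ is admitted), which the paper defers to \refR{Rpt}.
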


\begin{proof}
The equivalent weight sequence $(\tw_k)$ given by \eqref{tw}
is a probability distribution if and only if
\begin{equation*}
  1=\sumk\tw_k=a\sumk w_kb^k=a\Phi(b),
\end{equation*}
\ie, if and only if $\Phi(b)<\infty$ and $a=\Phi(b)\qw$.
Thus, there exists a probability weight sequence equivalent to $(w_k)$ 
if and only if there exists $b>0$ with
$\Phi(b)<\infty$, \ie, if and only if $\rho>0$; in this case we can choose
any such $b$ and take $a\=\Phi(b)\qw$, which yields \eqref{lequivp} (with
$t=b$). 
\end{proof}
We easily find the \pgf{} and thus moments of the \pws{} in \eqref{lequivp};
we state this in a form including the trivial case $t=0$.

\begin{lemma}
  \label{LEPsi}
If $t\ge0$ and $\Phi(t)<\infty$, then 
\begin{equation}\label{lep1}
  p_k\=\frac{t^kw_k}{\Phi(t)},
\qquad k\ge0,
\end{equation}
defines a \pws{} $(p_k)$. This probability distribution has \pgf{}
\begin{equation}\label{lep2}
\Phi_t(z)\=  \sumk p_kz^k = \frac{\Phi(tz)}{\Phi(t)},
\end{equation}
and a random variable $\xi$ with this distribution has expectation
\begin{equation}\label{lep3}
\E\xi=\Phi_t'(1)=  \frac{t\Phi'(t)}{\Phi(t)}=\Psi(t)
\end{equation}
and variance
\begin{equation}\label{lep4}
\Var\xi=t\Psi'(t);
\end{equation}
furthermore, for any $s\ge0$ and $x\ge 0$,
\begin{equation}\label{lep5}
  \P(\xi\ge x)
\le e^{-sx}  \frac{\Phi(e^st)}{\Phi(t)}
\le
 e^{-sx}  \frac{\Phi(e^st)}{\Phi(0)}.
\end{equation}
\end{lemma}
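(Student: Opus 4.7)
The plan is to verify each of the five claims from the definition $p_k = t^k w_k / \Phi(t)$ by direct computation, using throughout the identity $\sum_k (tz)^k w_k = \Phi(tz)$.

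First, $(p_k)$ is a valid probability weight sequence because $p_k \ge 0$ and $\sum_k p_k = \Phi(t)/\Phi(t) = 1$; the degenerate case $t = 0$ uses the convention $0^0 = 1$ and gives $\xi \equiv 0$. The pgf identity \eqref{lep2} is then immediate: $\Phi_t(z) = \sum_k t^k w_k z^k /\Phi(t) = \Phi(tz)/\Phi(t)$.

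Next, I compute $\E\xi = \sum_k k p_k = t\Phi'(t)/\Phi(t) = \Psi(t)$ by termwise summation, interpreted in $[0,\infty]$ via monotone convergence when $\Phi'(t)$ diverges at the boundary. Similarly $\E\xi(\xi-1) = \sum_k k(k-1) p_k = t^2\Phi''(t)/\Phi(t)$, so
\[ \Var\xi = \frac{t^2\Phi''(t)}{\Phi(t)} + \Psi(t) - \Psi(t)^2. \]
To match this with $t\Psi'(t)$, I apply the quotient rule to $\Psi(t) = t\Phi'(t)/\Phi(t)$, obtaining
\[ \Psi'(t) = \frac{\Phi'(t)}{\Phi(t)} + \frac{t\Phi''(t)}{\Phi(t)} - \frac{t\Phi'(t)^2}{\Phi(t)^2}, \]
so multiplying by $t$ yields exactly the displayed expression for $\Var\xi$.

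Finally, the tail bound is Chernoff: for $s \ge 0$, Markov's inequality applied to $e^{s\xi}$ gives $\P(\xi \ge x) \le e^{-sx}\E e^{s\xi}$, and $\E e^{s\xi} = \sum_k (e^s t)^k w_k/\Phi(t) = \Phi(e^s t)/\Phi(t)$. The weaker second inequality in \eqref{lep5} uses monotonicity $\Phi(t) \ge \Phi(0) = w_0$. There is no genuine obstacle; the only point requiring a bit of care is that when $t$ lies at the boundary of the disc of convergence with $\Phi(t) < \infty$, the derivatives $\Phi'(t)$ or $\Phi''(t)$ may be infinite, in which case the moment identities are read as equalities in $[0,\infty]$ that remain valid by monotone convergence.
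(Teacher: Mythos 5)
Your proof is correct and follows essentially the same route as the paper: both verify the pgf identity, compute the mean and variance by termwise/derivative manipulations, and obtain the tail bound by the Chernoff/Markov step applied to $\E e^{s\xi} = \Phi_t(e^s)$. The paper handles the boundary case $t=\rho$ with possibly infinite $\Var\xi$ by computing at $t'<t$ and letting $t'\to t$, which is the same monotone-convergence idea you sketch in your final sentence.
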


If $t<\rho$, then $\E\xi$ and $\Var\xi$ are finite. If $t=\rho$, however,
$\E\xi$ and $\Var\xi$ may be infinite
(we define $\Var\xi=\infty$ when $\E\xi=\infty$, but $\Var\xi$ may be
infinite also when $\E\xi$ is finite); \eqref{lep3}--\eqref{lep4} still
hold, with $\Psi'(\rho)\le\infty$ defined 
as the limit $\lim_{s\upto \rho}\Psi'(s)$.
The tail estimate \eqref{lep5} is interesting only when $t<\rho$, when we
may choose any $s<\log(\rho/t)$ and obtain the estimate $O(e^{-sx})$.

\begin{proof}
Direct summations yield
\begin{equation}
  \sumk p_k = \frac{\sumk t^kw_k}{\Phi(t)}=1
\end{equation}
and, more generally,
\begin{equation}
  \sumk p_k z^k = \frac{\sumk w_k t^kz^k}{\Phi(t)}
= \frac{\Phi(tz)}{\Phi(t)},
\end{equation}
showing that $(p_k)$ is a probability distribution with the \pgf{} $\Phi_t$
given in \eqref{lep2}.

The expectation $\E\xi=\Phi_t'(1)$ is evaluated  by differentiating \eqref{lep2}
(for $z<1$ and then taking the limit as $z\to1$ to avoid convergence
problems if $t=\rho$),
or directly from \eqref{lep1} as 
\begin{equation*}
  \E\xi=\sumk kp_k
=\frac{\sumk k w_k t^k}{\Phi(t)}=\Psi(t).
\end{equation*}

Similarly, the variance is given by, using \eqref{lep2} and \eqref{lep3},
\begin{equation*}
\Var\xi=
  \Phi_t''(1)+\Phi_t'(1)-(\Phi_t'(1))^2
=
\frac{t^2\Phi''(t)}{\Phi(t)}+\frac{t\Phi'(t)}{\Phi(t)}-
\parfrac{t\Phi'(t)}{\Phi(t)}^2
=t\Psi'(t).
\end{equation*}
Alternatively,
\begin{equation*}
  \begin{split}
t\Psi'(t)
&=t\frac{\dd}{\dd t} \frac{\sumk kt^kw_k}{\Phi(t)}
=
\frac{\sumk k^2t^kw_k}{\Phi(t)}
-\lrpar{\frac{\sumk kt^kw_k}{\Phi(t)}}^2
\\&
= \sumk k^2p_k-\biggpar{\sumk kp_k}^2	
=\E\xi^2-(\E\xi)^2
=\Var\xi.
  \end{split}
\end{equation*}
(In the case $t=\rho$ and $\Var\xi=\infty$, we use this calculation for
$t'<t$ and let $t'\to t$.)

Finally, by \eqref{lep2},
\begin{equation*}
  \P(\xi\ge x)
\le e^{-sx}  \E e^{s\xi}
= e^{-sx}  {\Phi_{t}(e^s)}
= e^{-sx}  \frac{\Phi(e^st)}{\Phi(t)}.
\qedhere
\end{equation*}
\end{proof}

In particular, taking $t=1$, we recover that standard facts that
if $\ww$ is a probability
distribution, so $\Phi(1)=1$, 
then it has expectation $\Phi'(1)=\Psi(1)$ and
variance $\Psi'(1)$.

\begin{remark}\label{Rpt}
We see from \refL{Lequivp} that the \pws{s} equivalent to $(w_k)$ are given
by \eqref{lequivp}, where 
$t\in(0,\rho]$ when $\Psi(\rho)<\infty$ and
$t\in(0,\rho)$ when $\Psi(\rho)=\infty$.  
By \refL{LPsi},
$t\mapsto\E\xi=\Psi(t)$ is an increasing bijection
$(0,\rho]\to(0,\nu]$ and $(0,\rho)\to(0,\nu)$.
Hence, any
equivalent probability weight sequence is uniquely
determined by its expectation, and the possible expectations are
$(0,\nu]$ (when $\Psi(\rho)<\infty$) 
or $(0,\nu)$ (when $\Psi(\rho)=\infty$).  
\end{remark}

\begin{remark}
Note that we will frequently use \eqref{lequivp} to define a new \pws{} also
if we start with a \pws{} $(w_k)$. Probability distributions related in this
way are called \emph{conjugated} or \emph{tilted}.  
Conjugate distributions were introduced by \citet{Cramer} as an important
tool in large deviation theory, see \eg{} \cite{DemboZ}. The reason is
essentially the same as in the present paper: by conjugating the
distribution we can change its mean in a way that enables us to keep control
over sums $S_n$.
\end{remark}

\section{A modified \GWt}\label{ShGW}

Let $(\pi_k)_{k\ge0}$ be a probability distribution on $\No$ 
and let $\xi$ be a random variable on $\No$
with distribution $(\pi_k)\kooo$:
\begin{equation}\label{xi}
  \P(\xi=k)=\pi_k, \qquad k=0,1,2,\dots
\end{equation}
We assume that the
expectation $\mu\=\E\xi=\sum_k k\pi_k\le1$ (the subcritical or critical case).

In this case, 
we define 
(based on \citet{Kesten} and \citet{sdf})
a modified \GWt{} $\hcT$  as follows:
There are two types of nodes: \emph{normal} and \emph{special}, with the root
being special. Normal nodes
have offspring 
(outdegree) according to independent copies of $\xi$, while special nodes
have offspring according to independent copies of $\hxi$, where
\begin{equation}\label{hxi}
\P(\hxi=k)\=
  \begin{cases}
k\pi_k, & k=0,1,2,\dots,\\
1-\mu, & k=\infty.	
  \end{cases}
\end{equation}
(Note that this is a probability distribution on $\bNi$.)
Moreover, 
all children of a normal node are normal;
when a special node gets an infinite number of children, all are normal;
when a special node gets a finite number of children, one of its children
is selected uniformly at random and is special,  while all other children are
normal. 

Thus, for a special node, and any integers $j,k$ with $1\le j\le k<\infty$, the
probability that the node has exactly $k$ children and that the $j$:th of
them is special is $k\pi_k/k=\pi_k$.

Since each special node has at most one special child, the special nodes form a
path from the root; we call this path the \emph{spine} of $\hcT$.
We distinguish two different cases:
\begin{romenumerate}[-10pt]
\item[\ti]\label{ti}
If $\mu=1$ (the critical case), then $\hxi<\infty$ \as{} so each special
node has 
a special child and the spine is an infinite path.
Each outdegree $\dx(v)$ in $\hcT$ is finite, so
the tree is infinite but locally finite.

In this case, 
the distribution of $\hxi$ in \eqref{hxi} is  the \emph{size-biased}
distribution of $\xi$, and $\hcT$ is the size-biased \GWt{} 
defined by \citet{Kesten}, 
see also \citet{AldousII}, \citet{AldousPitman}, 
\citet{LPP} and \refR{Rsizebias} below.
The underlying size-biased \GWp{} 
is  the same as the \emph{Q-process} studied in
\citetq{Section I.14}{AN}, which is an instance of Doob's $h$-transform. 
(See \citet{LPP} for further related constructions
in other contexts and \citet{GeigerK} for a generalization.)

An alternative construction of the random tree $\hcT$ is to start with the
spine (an infinite path from the root) and then at each node in the spine
attach further branches;
the number of branches at each node in the spine is a copy of $\hxi-1$
and each branch is a copy of the \GWt{} $\cT$
with offspring distributed as $\xi$;
furthermore, at a node where $k$ new
branches are attached, the  number of them attached to the left of the spine
is uniformly distributed on \set{0,\dots,k}.
(All random choices are independent.)
Since the critical \GWt{} $\cT$ is \as{} finite, it follows that $\hcT$
\as{} has exactly one infinite path from the root, viz.\ the spine.

\item[\tii]\label{tii}
If $\mu<1$ (the subcritical case), then a special node has 
with probability $1-\mu$
no special child. 
Hence, the spine is \as{} finite and
the number $L$ of nodes in the spine has a (shifted) geometric
distribution
$\Ge(1-\mu)$,
\begin{equation}\label{spine}
  \P(L=\ell)=(1-\mu)\mu^{\ell-1},
\qquad \ell=1,2,\dots.
\end{equation}
The tree $\hcT$ has \as{} exactly one node with infinite outdegree, \viz{} the
top of the spine.
$\hcT$ has \as{} no infinite path.

In this case, an alternative construction of $\hcT$ is to
start with a spine of random length $L$, where $L$ has the geometric
distribution \eqref{spine}. We attach as in {\ti} further branches that are
independent copies of the \GWt{} $\cT$;
at the top of the spine we attach an infinite number of branches and
at all other nodes in the spine the number we attach is a copy of 
$\xi^*-1$ where $\xi^*\eqd(\hxi\mid\hxi<\infty)$ has the size-biased
distribution $\P(\xi^*=k)=k\pi_k/\mu$.
The spine thus ends with an explosion producing an infinite number of
branches, and this is the only node with an infinite degree.
This is the construction by \citet{sdf}.
\end{romenumerate}

\begin{example}\label{Emu=0}
  In the extreme case $\mu=0$, or equivalently  $\xi=0$ a.s.,
\ie, $\pi_0=1$ and $\pi_k=0$ for $k\ge1$, 
\eqref{hxi} shows that $\hxi=\infty$ a.s.
Hence, every normal node has no child and is thus a leaf,
while every special node has an
infinite number of children, all normal.
Consequently, the root is the only special node, 
the spine consists of the root only (\ie, its length $L=1$),
and the tree $\hcT$ consists of
the root with an infinite number of leaves attached to it, \ie, 
$\hcT$ is an infinite star. 
(This is also given directly by the alternative construction in {\tii} above.) 
In contrast, $\cT$ consists of the root only, so $|\cT|=1$.
In this case there is no randomness in $\cT$ or $\hcT$.
\end{example}

\begin{remark}\label{Rimmi}
  In case {\ti}, if we remove the spine, we obtain a random forest that can be
  regarded as coming from
a \GWp{} with immigration, where the immigration is described by an \iid{}
sequence of random variables with the distribution of $\hxi-1$, see \citet{LPP}.
(In the Poisson case, \citet{Grimmett80} gave a slightly different
description of $\hcT$ using a \GWp{} with immigration.)

In case {\tii}, we can do the same, but now the immigration is 
different: at a random (geometric) time, there is an infinite immigration,
and after that there is no more immigration at all.
\end{remark}

\begin{remark}\label{Rsin}
Some related modifications of \GWt{s} having a finite spine have been
considered previously.  
\citet{SagitovS} construct (as a limit for a certain two-type branching process)
a random tree similar to the one in {\tii} above (with a subcritical $\xi$),
with a finite spine having 
a length with the geometric distribution  \eqref{spine}; the difference is
that at the top of the spine, only a finite number of \GWt{s} $\cT$ are
attached. (This number may be a copy of $\xi^*-1$ as at the other points of the
spine, or it may have a different distribution, see \cite{SagitovS}.)
Thus there is no explosion, and the tree is finite.
Another modified \GWt{} is used by
\citet{SJ250}; the proofs use a truncated version of {\ti} above
(with a critical $\xi$),
where the spine has a fixed length $k$; at the top of the spine the special
node becomes normal and reproduces normally with $\xi$ children.
\citet{Geiger} studied $\cT$ conditioned on its height being at least $n$,
see \refS{Scondx}, and gave 
a construction of it using a spine of length $n$, but with more complicated
rules for the branches.
See also the modified trees $\hcta$, $\hctb$, $\hctc$ in \refS{SlargeT}.

The invariant random sin-tree constructed by \citet{AldousFringe}
in a more general situation, is
for a critical \GWp{} another related tree; it has an infinite spine as $\hcT$,
but differs from $\hcT$ in that the root has $\xi+1$ children (and thus
$\xi$ normal children) instead of $\hxi$.
In this case, it may be better to reverse the orientation of the spine and
consider the spine as an infinite path $\dotsm v_{-2}v_{-1}v_0$ starting at
$-\infty$ (there is thus no root); we attach further branches (copies of
$\cT$) as above, with all $v_i$, $i<0$, 
special (the number of children is a copy of $\hxi$),
but the top node $v_0$ normal (the number of children is a copy of $\xi$,
and all are normal).

\citet{KLPP} and \citet{ChDur} have constructed related trees with infinite
spines using multi-type \GWp{s}.
\end{remark}

\begin{remark}
  If $\xi$ has the \pgf{} $\gf(x)\=\E x^\xi=\sumk \pi_kx^k$, then $\hxi$ has
by \eqref{hxi} the \pgf{}
  \begin{equation}\label{pgfhxi}
\E x^{\hxi}=\sumk k\pi_kx^k
=x\gf'(x),
  \end{equation}
at least for $0\le x<1$. (Also for $\mu<1$ when $\hxi$ may take the value
$\infty$.) 
\end{remark}

\begin{remark}\label{Rgss}
  In case {\ti}, the random
  variable $\hxi$ is \as{} finite and has mean
  \begin{equation}\label{ehxi}
	\E\hxi=\sumk k\P(\hxi=k)=\sumk k^2\pi_k=\E\xi^2=\gss+1,
  \end{equation}
where $\gss\=\Var\xi\le\infty$.
In case {\tii}, we have $\P(\hxi=\infty)>0$ and thus $\E\hxi=\infty$.
This suggests that 
in results that are known in the critical case {\ti}, and where $\gss$ appears
as a parameter
(see \eg{} \refS{Sfurther}), the correct generalization of $\gss$
to the subcritical case {\tii} is not $\Var\xi$ but $\E\hxi-1=\infty$. 
(See \refR{Relz} below for a simple example.)
We thus define, for any distribution $\ppi\kooo$ with expectation $\mu\le1$,
\begin{equation}\label{hgss}
  \hgss\=
\E\hxi-1
=
  \begin{cases}
\gss, & \mu=1,\\
\infty, & \mu<1.	
  \end{cases}
\end{equation}
\end{remark}

\begin{remark}
  \label{Relz}
Let $\elz_k(T)$ denote the number of nodes with distance $k$ to the root in a
rooted tree $T$. (This is thus the size of the $k$:th generation.) Trivially,
$\elz_0(T)=1$, while $\elz_1(T)=\dx_T(o)$, the root degree.

It follows by the construction of $\hcT$ and induction that in case {\ti},
using \eqref{ehxi}, 
  \begin{equation}\label{elk0}
	\E \elz_k(\hcT)
=1+k(\E\hxi-1)=k\gss+1, 
\qquad  k\ge0.
  \end{equation}
In case {\tii}, we have if $\mu>0$ and $k\ge1$ a positive probability that $L=k$
and then $\elz_k(\hcT)=\infty$. Thus $\E\elz_k(\hcT)=\infty$.
Consequently, using \eqref{hgss}, if $0<\mu\le1$, then
  \begin{equation}\label{elk}
	\E \elz_k(\hcT)=k\hgss+1, 
\qquad  k\ge1.
  \end{equation}
However, this fails if $\mu=0$; in that case, $\elz_1(\hcT)=\infty$ but
$\elz_k(\hcT)=0$ for $k\ge2$, see \refE{Emu=0}.
\end{remark}

\begin{remark}
  \label{Rsizebias}
As said above, in the case $\mu=1$, the tree $\hcT$ is the size-biased \GWt,
see \cite{Kesten}, \cite{AldousPitman} and \cite{LPP}.
For comparison, we give the definition of the latter, for an arbitrary
distribution $ \ppi\geko$ with finite mean $\mu>0$:
Let, as above, $\xi$ have the distribution $\ppi$, see \eqref{xi}, and let
$\xix$ have the size-biased distribution defined by
\begin{equation}
  \label{c1}
\P(\xix=k)=\frac{k\pi_k}{\mu},
\qquad k=0,1,2,\dots
\end{equation}
(Note that this is a probability distribution on $\Ni$.)
Construct $\ctx$ as $\hcT$ above, with normal and special nodes, with the
only difference that the number of children of a special node has the
distribution of $\xix$ in \eqref{c1}.

In the critical case $\mu=1$, we have $\xix=\hxi$ and thus $\ctx=\hcT$, but
in the subcritical case $\mu<1$, $\ctx$ and $\hcT$ are clearly different.
(Note that $\ctx$ always is locally finite, but $\hcT$ is not when $\mu<1$.)
When $\mu>1$, $\hcT$ is not even defined, but $\ctx$ is. 
(As remarked by
\citet{AldousPitman}, in the supercritical case $\ctx$ has \as{} an
uncountable number of infinite paths from the root, in contrast to the case
$\mu\le1$ when the spine \as{} is the only one.)

$\ctx$ can also be constructed by the alternative construction in {\ti}
above starting with an infinite spine, again with the difference that
$\hxi-1$ is replaced by $\xix-1$.
$\ctx$ can also be seen as a \GWp{} with immigration in the same way as in
\refR{Rimmi}. 

By \eqref{c1}, the probability that a given special node in $\ctx$ has
$k\ge1$ children, 
with a given one of them special, is
\begin{equation}
  \label{c2}
\frac1k\P(\xix=k)=\frac{k\pi_k}{k\mu}
=\frac{\pi_k}{\mu}.
\end{equation}

Let $T$ be a fixed tree of height $\ell$, and let $u$ be a node in the
$\ell$:th (and last) generation in $T$.
Let $\ctxx\ell$ denote $\ctx$ truncated at height $\ell$.
It follows from \eqref{c2} and independence that the probability that
$\ctxx\ell=T$ and that $u$ is special (\ie, $u$ is the unique element of the
spine at distance $\ell$ from the root) equals
$\mu^{-\ell}\P(\cT\nnx\ell=T)$.
Hence, summing over the $\elz_\ell(T)$ possible $u$,
\begin{equation}
  \label{c3}
\P(\ctxx\ell=T)=
\mu^{-\ell}\elz_\ell(T)\P(\cT\nnx\ell=T),
\end{equation}
which explains the name size-biased \GWt.
(As an alternative, one can thus define $\ctx$ directly by \eqref{c3},
noting that this gives consistent distributions for $m=1,2,\dots$, see
\citet{Kesten}.) 
See further \refS{Scondh}.
\end{remark}

\section{The Ulam--Harris tree and convergence}\label{SUlam}
It is convenient, especially when discussing convergence, 
to regard our trees as subtrees of the infinite
Ulam--Harris tree defined as follows.
(See \eg{} \citet{Otter},
\citetq{\S\,VI.2}{Harris}, \citet{Neveu} and \citet{Kesten}.)  

\begin{definition}
The Ulam--Harris tree $\too$
is the  infinite rooted tree
with node set $\voo\=\bigcup_{k=0}^\infty \Ni^k$,  the set of all
finite strings $i_1\dotsm i_k$ of positive integers,
including the empty string $\emptyset$
which we take as the root $o$, and with 
an edge joining $i_1\dotsm i_k$ and $i_1\dotsm i_{k+1}$ for any $k\ge0$ and
$i_1,\dots,i_{k+1}\in\Ni$. 
\end{definition}

Thus every node $v=i_1\dotsm i_k$ has outdegree $\dx(v)=\infty$; the
children of $v$ are 
the strings $v1$, $v2$, $v3$, \dots, and we let them have this order so
$\too$ becomes
an infinite ordered rooted tree.
The parent of $i_1\dotsm i_k$ ($k>0$) is $i_1\dotsm i_{k-1}$. 

The family $\st$ of ordered rooted trees can be identified with the set of
all rooted subtrees $T$ of $\too$ that have the property
\begin{equation}
  \label{left}
i_1\dotsm i_k i \in V(T)
\implies
i_1\dotsm i_k j \in V(T)
\text{ for all } j\le i.
\end{equation}
Equivalently, 
by identifying $T$ and its node set $V(T)$,
we can regard $\st$ as the family of all subsets $V$ of $\voo$
that satisfy
\begin{align}
&\emptyset\in V, \label{voo1}
\\
&i_1\dotsm i_{k+1}\in V \implies i_1\dotsm i_{k}\in V,
\\&
i_1\dotsm i_{k}i\in V \implies i_1\dotsm i_{k}j\in V
\quad\text{for all}\quad j\le i.  \label{voo3}
\end{align}

We let $\stf\=\set{T\in\st:|T|<\infty}$ be the set of all finite
ordered rooted trees
and $\st_n\=\set{T\in\st:|T|=n}$ the set of all  ordered rooted trees
of size $n$.

If $T\in\st$, we let as above $\dx(v)=\dx_T(v)$ denote the outdegree of $v$ for
every 
$v\in V(T)$, For convenience, we also define $\dx(v)=0$ for $v\notin V(T)$;
thus $\dx(v)$ is defined for every $v\in\voo$, and the tree $T\in\st$ is
uniquely determined by the (out)degree sequence $(\dx_T(v))_{v\in\voo}$.
It is easily seen that this gives a bijection between $\st$ and the set of 
sequences $(d_v)\in\bNo^{\voo}$
with the property
\begin{equation}
  \label{dtree}
d_{i_1\dotsm i_k i}=0 \quad\text{when}\quad i>d_{i_1\dotsm d_k}.
\end{equation}

The family $\stl$ of locally finite trees corresponds to the subset of all
such sequences with all $d_v<\infty$, and the family $\stf$ of finite trees
correspond to the subset of all such sequences $(d_v)$ with all
$d_v<\infty$ and only finitely many $d_v\neq0$.

In this way we have $\stf\subset\stl\subset\st\subset\bNo^{\voo}$; 
note that 
$\stl=\st\cap\No^{\voo}$, so
$\stf\subset\stl\subset\No^{\voo}$.

We give $\bNo$ the usual compact topology as the one-point compactification
of the discrete space $\No$. Thus $\bNo$ is a compact metric space. (One metric,
among many equivalent ones, is given by the homomorphism $n\mapsto1/(n+1)$
onto $\set{1/n}_{n=1}^\infty\cup\set0\subset\bbR$.)
We give $\bNo^{\voo}$ the product topology
and its subspaces $\stf$, $\stl$ and $\st$ the induced topologies.
Thus $\bNo^{\voo}$  is a compact metric space, and its subspaces
$\stf$, $\stl$ and $\st$ are metric spaces. 
(The precise choice of metric on these spaces is irrelevant; we will not
use any explicit metric except briefly in \refS{SlargeT}.) 
Moreover, the condition
\eqref{dtree} defines $\st$ as a closed subset of $\bNo^{\voo}$; thus $\st$
  is a compact metric space. ($\stf$ and $\stl$ are not compact. In fact, it
  is easily seen that they are dense proper subsets of $\st$. 
$\stf$ is a countable discrete space.)

In other words, if $T_n$ and $T$ are trees in $\st$, 
then $T_n\to T$ if and only
if the outdegrees converge pointwise: 
\begin{equation}\label{dconv}
\dx_{T_n}(v)\to \dx_T(v) 
\qquad \text{for each $v\in\voo$}.  
\end{equation}
It is easily seen that it suffices to consider $v\in V(T)$, \ie,
\eqref{dconv} is equivalent to
\begin{equation}\label{dconvt}
\dx_{T_n}(v)\to \dx_T(v) 
\qquad \text{for each $v\in  V(T)$},  
\end{equation}
since \eqref{dconvt} implies that if $v\notin V(T)$, then $v\notin V(T_n)$
for sufficiently large $n$, and thus $\dx_{T_n}(v)=0$. (Consider the last
node $w$ 
in $V(T)$ on the path from the root to $v$ and use $\dx_{T_n}(w)\to \dx_T(w)$.)

Alternatively, 
we may as above consider the node set
$V(T)$ as a subset of $\voo$ 
and regard $\st$ as the family of all subsets of $\voo$ that
satisfy \eqref{voo1}--\eqref{voo3}.
We identify the family of all subsets of $\voo$ with $\setoi^\voo$, and give
this family the product topology, making it into a compact metric space.
(Thus, convergence means  convergence of the indicator
$\ett{v\in\cdot}$ for each $v\in\voo$.)
This induces a topology on $\st$, where $T_n\to T$ means that,
for each $v\in\voo$,
if $v\in V(T)$, then $v\in V(T_n)$ for all large $n$, and, conversely,
if $v\notin V(T)$, then $v\notin V(T_n)$ for all large $n$.

If $v=i_1\dots i_k$ with $k>0$, then $v\in V(T)$ if and only of 
$i_k\le \dx_T(i_1\dots i_{k-1})$. 
It follows immediately that $V(T_n)\to V(T)$ in the
sense just described, if and only if \eqref{dconv} holds. The two
definitions of $T_n\to T$ above are thus equivalent (for $\st$, and thus
also for its subsets $\stf$ and $\stl$).

Furthermore, we see, \eg{} from \eqref{dconv}, that the convergence of trees
can be described recursively:
Let $T_{(j)}$ denote the $j$:th subtree of $T$, \ie, the subtree rooted at the
$j$:th child of $T$, for $j=1,\dots,\dx_T(o)$. (We consider only finite $j$,
even when $\dx_T(o)=\infty$.)
Then,
$T_n\to T$ if and only if 
\begin{romenumerate}
\item 
the root degrees converge: $\dx_{T_n}(o)\to \dx_T(o)$, 
and further, 
\item 
for each $j=1,\dots,\dx_T(o)$, $T_{n,(j)}\to T_{(j)}$.
\end{romenumerate}
 (Note that $T_{n,(j)}$ is defined for large $n$, at least, by (i).)

It is important to realize that the notion of convergence used here is a
local (pointwise) one, so we consider only a single $v$ at a time, or,
equivalently, 
a finite set of $v$; there is no uniformity in $v$ required.

If $T$ is a locally finite tree, $T\in\stl$, then $\dx_T(v)<\infty$ for each
$v$, and thus \eqref{dconv} means that for each $v$, $\dx_{T_n}(v)=\dx(v)$ for
all sufficiently large $n$.

Let $T\xmm$ denote the tree $T$ truncated at height $m$, \ie, the subtree of
$T$ consisting of all nodes in generations $0,\dots,m$. If 
$T$ is locally finite,
then each $T\xmm$ is a finite tree, and it is easily seen 
from \eqref{dconvt} that convergence
to $T$ can be characterised as follows:
\begin{lemma}\label{LClf}
If $T$ is locally finite, then, for any trees $T_n\in\st$,
\begin{equation*}
  \begin{split}
  T_n\to T
&\iff T_n\xmm \to T\xmm
\quad\text{for each $m$}
\\&
\iff T_n\xmm = T\xmm
\quad \text{for each $m$ and all large $n$}. 
  \end{split}
\end{equation*}
(The last condition means for $n$ larger than some $n(m)$ depending on $m$.)
\nopf
\end{lemma}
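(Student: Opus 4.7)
The plan is to prove the three implications $L \Rightarrow M$, $M \Rightarrow L$, and $M \Rightarrow R$ (the remaining direction $R \Rightarrow M$ is trivial, since equality implies convergence). Throughout, I work with the pointwise degree characterization of convergence recorded in \eqref{dconv}, and use the elementary identity, valid for any $S \in \st$, that $\dx_{S\xmm}(v) = \dx_S(v)$ when the height of $v$ is $<m$, and $\dx_{S\xmm}(v)=0$ otherwise. The second case folds together nodes of height exactly $m$, where truncation zeros the outdegree, and nodes of height $>m$ or outside $V(S)$, where the outdegree is zero by our global convention.

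Given this identity, the two easy directions reduce to arithmetic. For $L \Rightarrow M$, pointwise convergence $\dx_{T_n}(v) \to \dx_T(v)$ transfers directly to $\dx_{T_n\xmm}(v) \to \dx_{T\xmm}(v)$ for each $v$, and hence $T_n\xmm \to T\xmm$. For $M \Rightarrow L$, given an arbitrary $v \in \voo$, I pick any $m$ strictly greater than the height of $v$; the identity gives $\dx_{T_n}(v) = \dx_{T_n\xmm}(v) \to \dx_{T\xmm}(v) = \dx_T(v)$, and \eqref{dconv} then delivers $T_n \to T$.

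The heart of the argument is $M \Rightarrow R$, and here the local finiteness of $T$ enters. Since $T\xmm$ only contains nodes of height at most $m$ and every outdegree is finite, $T\xmm$ is a \emph{finite} tree. I would proceed by induction on $k \le m$, showing that for all sufficiently large $n$ the nodes of $T_n\xmm$ and $T\xmm$ of height $\le k$ coincide and the corresponding outdegrees at heights $<k$ agree. The inductive step exploits that the set of nodes of $V(T\xmm)$ at height $k$ is finite (by local finiteness of $T$) and that convergence $\dx_{T_n\xmm}(v) \to \dx_{T\xmm}(v)$ in $\bNo$ to a finite value is eventual equality; thus I may take $n$ past a common threshold over this finite family, and the resulting agreement of outdegrees at height $k$ propagates node-set agreement to height $k+1$. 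At $k=m$ the node sets coincide throughout, and the outdegrees at the truncation boundary $m$ are zero on both sides by truncation, so $T_n\xmm = T\xmm$.

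I do not expect any serious obstacle; the one point requiring care is precisely this simultaneity step. Pointwise eventual agreement in $\bNo$ does not in general upgrade to simultaneous eventual agreement over infinitely many coordinates, but the local finiteness of $T$ keeps the relevant index set finite at each height, and that finiteness is exactly what the argument needs. Note that no hypothesis is placed on the $T_n$: if some $T_n$ has an infinite-degree node at a height $<m$, then $T_n\xmm$ is itself infinite and simply fails to equal the finite tree $T\xmm$, consistent with the eventual-equality conclusion holding only for $n$ large.
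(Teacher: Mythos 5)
The paper states this lemma without proof (it is marked \texttt{\textbackslash nopf} and described as ``easily seen'' from the pointwise-degree characterisation \eqref{dconvt}), so there is no paper argument to compare against. Your proof is correct and supplies what was left implicit: the truncation identity for $\dx_{S\xmm}$ makes the equivalence of the first two conditions immediate, and for $M\Rightarrow R$ you use local finiteness in exactly the right way --- $T\xmm$ is a finite tree, so convergence in $\bNo$ to finite limits gives eventual coordinatewise equality that can be made simultaneous over the finitely many relevant nodes and then propagated by height to force $T_n\xmm=T\xmm$ for large $n$.
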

This notion of convergence for locally finite trees is widely used; see
\eg{} \citet{Otter} and \citet{AldousPitman}.

In general, if $T$ is not locally finite, this characterization fails.
(For example, if $S_n$, $1\le n\le\infty$, is a star where the root has
outdegree $n$ and its 
children all have outdegree 0, then $S_n\to S_\infty$, but $S_n\xmm\neq
S_\infty\xmm$ for all $n$ and $m\ge1$.) Instead, we have to localise also
horizontally: 
Let $V\xxmm\=\bigcup_{k=0}^m\set{1,\dots,m}^k$, the subset of $\voo$
consisting of strings of length at most $m$ with all elements at most $m$.
For a tree $T\in\st$, let
$T\xxmm$ be the subtree with node set $V(T)\cap V\xxmm$, \ie, the tree $T$
truncated at height $m$ and pruned so that all outdegrees are at most $m$.
It is then easy to see from \eqref{dconv} that the following analogue and
generalization of \refL{LClf} holds:
\begin{lemma}\label{LC}
  For any trees $T,T_n\in\st$,
\begin{equation*}
  \begin{split}
  T_n\to T
&\iff T_n\xxmm \to T\xxmm
\quad\text{for each $m$}
\\&
\iff T_n\xxmm = T\xxmm
\quad \text{for each $m$ and all large $n$}. 
  \end{split}
\end{equation*}
(The last condition means for $n$ larger than some $n(m)$ depending on $m$.)
\nopf
\end{lemma}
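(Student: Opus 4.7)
The plan is to verify the cycle of implications (1) $\Rightarrow$ (3) $\Rightarrow$ (2) $\Rightarrow$ (1), where (1), (2), (3) denote the three conditions in the statement. The central observation that drives everything is that $V\xxmm$ is a \emph{finite} subset of $\voo$ and, by construction, each outdegree of $T\xxmm$ is at most $m$; consequently $T\xxmm$ is encoded by the finite tuple $\bigl(\min(\dx_T(v), m)\bigr)_v$, where $v$ ranges over those elements of $V\xxmm$ at depth strictly less than $m$. It follows that any convergence $T_n\xxmm \to T\xxmm$ in $\st$ --- which by \eqref{dconv} is pointwise convergence of outdegrees into $\bNo$ --- automatically sharpens to eventual equality $T_n\xxmm = T\xxmm$.

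For (1) $\Rightarrow$ (3) I would fix $m$ and invoke \eqref{dconv}: since $\bNo$ is the one-point compactification of the discrete space $\No$, pointwise convergence $\dx_{T_n}(v) \to \dx_T(v)$ forces $\min(\dx_{T_n}(v), m) = \min(\dx_T(v), m)$ for all large $n$, irrespective of whether $\dx_T(v)$ is at most $m$, finite and greater than $m$, or $\infty$; taking a common threshold over the finitely many relevant $v$ yields $T_n\xxmm = T\xxmm$ for $n$ sufficiently large. The implication (3) $\Rightarrow$ (2) is immediate.

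For (2) $\Rightarrow$ (1) I would fix $v \in \voo$ and choose $m$ so large that $v \in V\xxmm$, the depth of $v$ is less than $m$, and moreover $m > \dx_T(v)$ in the case this is finite. By the opening observation, (2) then sharpens to eventual equality $T_n\xxmm = T\xxmm$; reading off the entry at $v$ gives $\min(\dx_{T_n}(v), m) = \min(\dx_T(v), m)$. In the finite case this pins $\dx_{T_n}(v) = \dx_T(v)$ eventually, and in the case $\dx_T(v) = \infty$ it yields $\dx_{T_n}(v) \ge m$ eventually for every $m$, hence $\dx_{T_n}(v) \to \infty$ in $\bNo$.

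The argument is essentially routine; the only point demanding care is the handling of nodes with infinite outdegree, which is precisely why horizontal pruning must be combined with height truncation --- the pure $T\xmm$ used in \refL{LClf} is insufficient, as the star example preceding the lemma illustrates.
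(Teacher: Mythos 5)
Your proof is correct. The paper provides no proof for this lemma (it is stated with \nopf and introduced by ``it is easy to see from \eqref{dconv}''), so there is no argument to compare against; yours is the natural verification. The key observation --- that for $v\in V\xxmm$ of depth $<m$ one has $\dx_{T\xxmm}(v)=\min(\dx_T(v),m)$, so $T\xxmm$ is encoded by a finite tuple with entries in the finite set $\set{0,\dots,m}$, hence pointwise convergence of truncations sharpens to eventual equality --- is exactly what makes \eqref{dconv} yield the three equivalences, including the correct handling of nodes with $\dx_T(v)=\infty$ which \refL{LClf} cannot treat.
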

Our notion of convergence for general trees $T\in\st$ was introduced in this
form by
\citet{sdf} (where the truncation $T\xxmm$ is called a \emph{left ball}).

\begin{remark}\label{RLC}
It is straightforward to obtain versions of Lemmas \ref{LClf}--\ref{LC} for
random trees $T$, $T_n$ and convergence in probability or distribution.
For example:
For any random trees $T,T_n\in\st$,
\begin{equation}\label{job}
  T_n\dto T
\iff T_n\xxmm \dto T\xxmm
\quad\text{for each $m$}.
\end{equation}
If $T\in\stl$, a.s., then we also have
\begin{equation}\label{jobl}
  T_n\dto T
\iff T_n\xmm \dto T\xmm
\quad\text{for each $m$},
\end{equation}
see \eg{} \citet{AldousPitman}.
The proofs are standard using the methods in \eg{} \citet{Billingsley}.
\end{remark}

\section{Main result for \sgrt{s}}\label{Smain} 

Our main result for trees is the following, proved in \refS{Spfmain}.
The case when $\nu\ge1$ was shown implicitly by \citet{Kennedy} (who
considered \GWp{es} and not trees), and explicitly by \citet{AldousPitman},
see also \citet{Grimmett80}, \citet{Kolchin},
\citet{Kesten} and \citet{AldousII}.
Special cases with $0<\nu<1$ and $\nu=0$ are given by \citet{sdf} 
and
\citet{SJ259},
respectively.

\begin{theorem}\label{Tmain}
  Let $\wwx=(w_k)_{k\ge0}$ be any weight sequence with $w_0>0$ and $w_k>0$
  for some $k\ge2$.
  \begin{romenumerate}[-10pt]
  \item \label{tmaincritical}
If $\nu\ge1$, let $\tau$ be the unique number in $[0,\rho]$ such that
$\Psi(\tau)=1$.
\item \label{tmainsub}
If $\nu<1$, let $\tau\=\rho$.	
  \end{romenumerate}
In both cases, $0\le\tau<\infty$ and $0<\Phi(\tau)<\infty$.
Let 
\begin{equation}\label{pk}
\pi_k\=\frac{\tau^kw_k}{\Phi(\tau) },
\qquad k\ge0;
\end{equation}
then
$\ppi\geko$ is a probability distribution, 
with expectation
\begin{equation}\label{mainmu}
\mu= \Psi(\tau)=\min(\nu,1)\le1
\end{equation}
and variance $\gss=\tau\Psi'(\tau)\le\infty$.
Let $\hcT$ be the infinite
modified \GWt{} constructed in \refS{ShGW} for the distribution $\ppi\geko$.
Then $\ctn\dto\hcT$ as \ntoo, in the topology defined in \refS{SUlam}.

Furthermore, in case \ref{tmaincritical}, $\mu=1$ (the critical case)
and $\hcT$ is locally finite with an infinite spine;
in case \ref{tmainsub}
$\mu=\nu<1$ (the subcritical case) and
$\hcT$ has a finite spine ending with an explosion.
\end{theorem}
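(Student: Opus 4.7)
The approach has three main steps: reduction to a probability weight sequence, reduction of tree convergence to convergence of finite truncated-and-pruned skeletons, and a cycle-lemma/partition-function analysis of those skeletons.

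First, if $\rho>0$, then by \refL{Lequivp} together with \eqref{pk} we may replace $(w_k)$ by the equivalent probability weight sequence $(\pi_k)$ without changing the distribution of $\ctn$; by \refL{LEPsi} and the defining property of $\tau$, $\E\xi=\Psi(\tau)=\mu=\min(\nu,1)\le 1$, so we may assume $\ctn$ is a subcritical or critical \cGWt{} and $Z_n=\P(|\cT|=n)$. The boundary case $\rho=0$ forces $\tau=\nu=0$ and $\pi=\delta_0$; then $\ctn$ and $\hcT$ are almost deterministic and the convergence is immediate from \refE{Emu=0}.

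Next, by the distributional version \eqref{job} of \refL{LC}, it suffices to show that for each $m\ge 1$ and each finite tree $T$ of height $\le m$ with all outdegrees $\le m$,
\begin{equation*}
\P\bigpar{\ctn^{[m]}=T}\longrightarrow\P\bigpar{\hcT^{[m]}=T}.
\end{equation*}
The right-hand side is a closed-form sum, obtainable from the alternative ``spine plus attached branches'' constructions in \ti{} and \tii{} of \refS{ShGW}: summing over the possible position and length of the spine inside $T^{[m]}$, each regular spine node $v$ contributes a factor $\pi_{d_v}/\mu$ (using \eqref{c2}), each non-spine interior node contributes $\pi_{d_v}$, and in case~\ref{tmainsub} a factor $(1-\mu)$ is collected where the spine terminates. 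For the left-hand side I decompose $\ctn$ along $T$: outdegrees of interior nodes $v$ of $T$ with $\dx_T(v)<m$ are forced to equal $d_v$, while at each free position (depth-$m$ nodes of $T$ and, for nodes with $\dx_T(v)=m$, the children past the $m$-th) an independent unconditioned \GWt{} is attached, subject to the total size being $n$. The Dwass/Kemperman cycle-lemma identity
\begin{equation*}
\P\Bigpar{\sum_{i=1}^L |\cT^{(i)}|=N}=\frac{L}{N}\,\P(X_N=N-L),\qquad X_N\=\xi_1+\dots+\xi_N,
\end{equation*}
then expresses $\P(\ctn^{[m]}=T)$ as a ratio $\P(X_{n-a}=n-a-b)/\P(X_n=n-1)$ (after summing over the free outdegrees at case-B nodes) in which $a$ and $b$ are bounded in terms of $T$ alone.

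The technical heart of the proof is the asymptotic behaviour of these ratios as \ntoo, with no moment assumption on $\xi$. In the critical case $\mu=1$ one needs $\P(X_{n-a}=n-a-b)/\P(X_n=n-1)\to 1$ uniformly for bounded $a,b$, which matches the formula for $\hcT^{[m]}$ (where $1/\mu=1$). In the subcritical case $\mu<1$ the event $\{X_n=n-1\}$ is a condensation event carried by a single atypically large summand, and the ratio instead tends to $\mu^{-\ell}$, where $\ell$ is the number of interior spine levels of $T^{[m]}$, with an extra factor $(1-\mu)$ where the spine terminates---precisely the quantities appearing in the closed form of $\P(\hcT^{[m]}=T)$. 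This step is the main obstacle, and I would attack it along the route announced in the introduction: first establish the analogous partition-function asymptotics in the \bib{} model of \refS{SBB}, in which the same sums $X_n$ arise as total ball counts in a uniformly weighted configuration, and then transfer them back through the tree--allocation correspondence of \refS{Stree-balls}. Matching the resulting pre-limit expression with the explicit formula for $\P(\hcT^{[m]}=T)$ then yields $\ctn\dto\hcT$, and the remaining structural assertions about $\hcT$ follow from \refS{ShGW}.
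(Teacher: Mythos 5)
Your plan is structurally close to the paper's proof---both go through the balls-in-boxes framework and the cycle lemma---but two of your pivotal claims are wrong, and they are not small slips: each one is exactly where the subtlety of the theorem lies.

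First, you state that the ratio $\P(X_{n-a}=n-a-b)/\P(X_n=n-1)$ ``tends to $\mu^{-\ell}$, with an extra factor $(1-\mu)$ where the spine terminates'' in the subcritical case. This is false. After passing to the canonical probability weight sequence $\ppi$ one has $\tau=1$ and $\Phi(\tau)=1$, and Theorems~\ref{TH2} and~\ref{TH3} of the paper show that $Z(m+k,n)/Z(m,n)\to\tau^{-k}=1$ and $Z(m,n+1)/Z(m,n)\to\Phi(\tau)=1$ \emph{uniformly in $m/n\to\gl$, including $\gl>\nu$}. So the ratio of local probabilities converges to $1$ in the subcritical case just as in the critical case; it does not see $\mu$ at all. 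You appear to have been misled by conflating $\hcT$ with the size-biased tree $\ctx$ of \refR{Rsizebias}: you cite \eqref{c2}, which concerns $\ctx$ and has factors $\pi_{d}/\mu$, whereas the spine nodes of $\hcT$ contribute plain factors $\pi_d$ (the $\mu^\ell$ from $\P(L=\ell+1)$ cancels exactly the $\mu^{-\ell}$ from $\xi^*$). The $(1-\mu)$ genuinely arises, but from a different mechanism: it is the mass deficit $1-\sumk k\pi_k$, and it appears only when you sum the cylinder probabilities for $d_j<\infty$ and pass to the limit using $\sum_k kN_k(\bnn)=n-1$, as in case (b) of the paper's proof. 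Your ``single ratio'' formulation cannot produce it, because the summation over the unbounded degree at a case-B node does not collapse to a single ratio $\P(X_{n-a}=n-a-b)/\P(X_n=n-1)$ with $a,b$ bounded in terms of $T$: the degree enters $a$ and $b$, and the sum is not dominated uniformly.

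Second, you dispose of the case $\rho=0$ by claiming $\ctn$ is ``almost deterministic'' and the convergence is ``immediate from \refE{Emu=0}.'' Only $\hcT$ is deterministic there; $\ctn$ is not. Already for $w_k=k!$ (Example~\ref{Ek!}) the number of depth-two nodes of $\ctn$ has a nondegenerate Poisson limit, and for $w_k=k!^\ga$ with small $\ga>0$ (Example~\ref{Ek!a}) the structure near the root is even richer. The paper handles $\rho=0$ through the truncation argument in case (c) of the proof of Theorem~\ref{TBmain}, which is one of the more delicate parts of the paper; it cannot be skipped.

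With these two points repaired, the rest of the outline matches the paper's route: the paper uses tightness in the compact space $\st$ and cylinder-set uniqueness rather than your $T^{[m]}$-truncations, and it invokes \refL{Ldeg} together with Theorems~\ref{TBmain} and~\ref{TB3} rather than Dwass's identity plus explicit local-limit ratios, but both decompositions rest on the same cycle-lemma correspondence and the same balls-in-boxes asymptotics.
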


\begin{remark}\label{Rmain}
Note that we can combine the two cases $\nu\ge1$ and $\nu<1$ and define,
using \refL{LPsi} and with $\Psi(\rho)=\nu$,
  \begin{equation}\label{tau}
  \tau\=\max\Bigset{t\le\rho:\Psi(t)\le1}.	
  \end{equation}
\end{remark}

\begin{remark}
  In case \ref{tmainsub}, there is no $\tau\ge0$ with $\Psi(\tau)=1$, see
  \refL{LPsi}. Hence the definition of $\tau$ can also be expressed as
  follows, recalling $\Psi(t)\=t\Phi'(t)/\Phi(t)$ from \eqref{psidef}:
$\tau$ is the unique number in $[0,\rho]$ such that
\begin{equation}\label{tau?}
  \tau\Phi'(\tau)=\Phi(\tau),
\end{equation}
if there exists any such $\tau$; otherwise $\tau\=\rho$.
(Equation \eqref{tau?} is used in many papers to define $\tau$, in the case
$\nu\ge1$.) 
\end{remark}

\begin{remark}  \label{R6.3.1}
If $0<t<\rho$, then
\begin{equation*}
  \frac{\dd}{\dd t}\parfrac{\Phi(t)}{t}
=\frac{t\Phi'(t)-\Phi(t)}{t^2}
=\frac{\Phi(t)}{t^2}\bigpar{\Psi(t)-1}.
\end{equation*}
Since $\Psi(t)$ is increasing by \refL{LPsi}, it follows that $\Phi(t)/t$
decreases on $[0,\tau]$ and increases on $[\tau,\rho]$, so $\tau$ can,
alternatively, be characterised as the (unique) minimum point in $[0,\rho]$
of the convex function $\Phi(t)/t$, \cf{} \eg{} \citet{Minami} and \citet{sdf}.
Consequently,
\begin{equation}
  \label{annaw}
\frac{\Phi(\tau)}{\tau}
=\inf_{0\le t\le\rho}\frac{\Phi(t)}{t}
=\inf_{0\le t<\infty}\frac{\Phi(t)}{t}.
\end{equation}
(This holds also when $\rho=0$, trivially, since then $\Phi(t)/t=\infty$ for
every $t\ge0$.)
\end{remark}

\begin{remark}\label{Rotter}
By \refR{R6.3.1},
$\tau$ is, equivalently, the (unique) maximum point in $[0,\rho]$
of $t/\Phi(t)$, which by \eqref{cz} is the inverse function of the
generating function $\cZ(z)$. It follows easily that
\begin{equation}\label{otter}
\tau=  \cZ(\rhoz),
\end{equation}
where $\rhoz=\tau/\Phi(\tau)$ is the radius of convergence of $\cZ$; see
also \refC{CZrho}. 
Note that $0\le\rhoz<\infty$ and that $\rhoz=0\iff\tau=0\iff\rho=0$.
\citet{Otter} uses \eqref{otter} as the definition of $\tau$ (by him denoted
$a$); see also \citet{Minami}. 
\end{remark}

\begin{remark}
  \label{Rnu=0}
When $\nu=0$ (which is equivalent to $\rho=0$), the limit $\hcT$ is
the non-random infinite star in \refE{Emu=0}, so \refT{Tmain} gives
$\ctn\pto\hcT$. 
\end{remark}

\begin{remark}
We consider briefly the cases excluded from \refT{Tmain}.
The case when $w_0=0$ is completely trivial, since then $w(T)=0$ for every
finite tree, so $\ctn$ is undefined. 
The same holds (for $n\ge2$)
when $w_0>0$ but $w_k=0$ for all $k\ge1$, \ie, when 
$\go=0$.

The case when $w_0>0$ and $w_1>0$
but $w_k=0$ for $k\ge2$, so $\go=1$, is also trivial. 
Then $w(T)=0$ unless $T$ is a rooted path $P_n$ for some $n$.
Thus $Z_n=w(P_n)=w_0w_1^{n-1}$, and (a.s.)\ $\ctn=P_n$, which converges as
\ntoo{} to the infinite path $P_\infty$.
We have $\nu=1=\go$, but,
in contrast to \refT{Tmain},
$\tau=\infty$, with $\tau$ defined \eg{} by \eqref{tau}.
Further,
interpreting \eqref{pk} as a limit, we have $\pi_k=\gd_{k1}$, 
so $\ppi$ is the
distribution concentrated at 1;
thus \eqref{hxi} yields $\hxi=1$ a.s., so $\hcT$
consists of an infinite spine only, \ie{} $\hcT=P_\infty$.
Consequently, $\ctn\dto\hcT$ holds in this case too.
\end{remark}

\begin{remark}
If we replace $(w_k)$ by the  equivalent \ws{} $(\tw_k)$ given by
\eqref{tw}, then \eqref{tau} and \eqref{tPsi} show that $\tau$ is replaced by
\begin{equation}
  \ttau\=\max\set{t\le\trho:\tPsi(t)\le1}
=\max\set{t\le\rho/b:\Psi(bt)\le1}=\tau/b.
\end{equation}
The corresponding \pws{} given by \eqref{pk} thus is,
using \eqref{tPhi},
\begin{equation}
  \tpi_k\=\frac{\ttau^k\tw_k}{\tPhi(\ttau)}
=\frac{(\tau/b)^k ab^kw_k}{a\Phi(\tau)}
=\frac{\tau^k w_k}{\Phi(\tau)}=\pi_k,
\end{equation}
so the distribution $\ppi$ is  invariant and depends only on the
equivalence class of $(w_k)$.  
\end{remark}

\begin{remark}\label{Rqk}
If $\rho>0$, then $\tau>0$ and the distribution \ppi{} is a \pws{}
equivalent to $(w_k)$.
There are other equivalent \pws{s}, 
see \refL{Lequivp},
but \refT{Tmain} and the theorems below
show that \ppi{} has a special role and therefore is a
canonical choice of a weight sequence in its equivalence class.
\refR{Rpt} shows that \ppi{}
is the unique probability distribution with mean 1 that is equivalent
to $(w_k)$, if any such distribution exists. If no such distribution exists
but $\rho>0$, then $(\pi_k)$ is the probability distribution equivalent to
$(w_k)$ 
that has the maximal mean.

A heuristic motivation for this choice of \pws{} is that when we construct
$\ctn$ as a \GWt{} $\cT$ conditioned on $|\cT|=n$, it is better to condition
on an event of not too small probability; in the critical case this
probability decreases as $n\qqcw$ provided $\gss<\infty$,
see \cite{Otter} ($\nu>1$) and \cite[Theorem 2.3.1]{Kolchin} ($\nu\ge1$,
$\gss<\infty$), and always subexponentially,
but in the subcritical and supercritical cases it typically decreases
exponentially fast, see Theorems \refand{Tpn}{TH1}.
\end{remark}

As a special case of \refT{Tmain}
we have the following result for the root degree
$\dx_{\ctn}(o)$, proved in \refS{Stree-balls}.

\begin{theorem}
  \label{Troot}
Let $\ww\geko$ and $\ppi\geko$ be as in \refT{Tmain}.
Then, as \ntoo,
\begin{equation}\label{troot1}
\P(\dx_{\ctn}(o)=d) \to d\pi_d, \quad d\ge0.
\end{equation}
Consequently, regarding $\dx_{\ctn}(o)$ as a random number in
$\bNo$,
\begin{equation}\label{troot2}
\dx_{\ctn}(o)\dto \hxi,   
\end{equation}
where $\hxi$ is a random variable in $\bNo$ with the distribution given in
\eqref{hxi}. 
\end{theorem}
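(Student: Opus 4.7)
The plan is to deduce both statements directly from \refT{Tmain} via the continuous mapping theorem, together with the explicit description of $\hcT$ given in \refS{ShGW}.

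First, I observe that the root-degree map $T\mapsto\dx_T(\rot)$ is a continuous function from $\st$ (with the topology defined in \refS{SUlam}) to $\bNo$. Indeed, by the definition in \eqref{dconv}, convergence $T_n\to T$ in $\st$ means pointwise convergence of all outdegrees in $\bNo$, which in particular forces $\dx_{T_n}(\rot)\to\dx_T(\rot)$ in $\bNo$. Applying \refT{Tmain}, which gives $\ctn\dto\hcT$ in $\st$, and then the continuous mapping theorem, we conclude that
\begin{equation*}
  \dx_{\ctn}(\rot)\dto \dx_{\hcT}(\rot)
  \qquad\text{in }\bNo.
\end{equation*}

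Next I identify the distribution of $\dx_{\hcT}(\rot)$. By the construction of $\hcT$ in \refS{ShGW}, the root is a \emph{special} node, and the number of children of a special node is distributed as $\hxi$ with distribution given in \eqref{hxi}. Thus $\dx_{\hcT}(\rot)\eqd\hxi$, which is precisely \eqref{troot2}.

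Finally, \eqref{troot1} follows from \eqref{troot2} by noting that in the one-point compactification $\bNo$, every finite integer $d\in\No$ is an isolated point, so $\{d\}$ is both open and closed, and hence a continuity set for the law of $\hxi$. Weak convergence then yields
\begin{equation*}
  \P\bigpar{\dx_{\ctn}(\rot)=d}\to\P(\hxi=d)=d\pi_d,\qquad d\ge0,
\end{equation*}
by \eqref{hxi}. No real obstacle is expected here; the only care needed is the use of the compactification $\bNo$, which is exactly what allows the possible mass $1-\mu$ at infinity in case \ref{tmainsub} of \refT{Tmain} to be accommodated in the distributional limit.
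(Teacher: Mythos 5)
Your proof is correct, but it takes the opposite logical route from the paper's. The paper proves Theorem~\ref{Troot} \emph{before} and \emph{independently of} Theorem~\ref{Tmain}: it first establishes \eqref{troot1} directly from Lemma~\ref{Lroot} (the size-biasing identity $\P(\dx_{\ctn}(o)=d)=\frac{n}{n-1}d\P(Y_1=d)$, a purely combinatorial fact obtained via the cyclic-shift lemma \refL{Lcyclic}) combined with Theorem~\ref{TB3}, and then upgrades \eqref{troot1} to \eqref{troot2} by a compactness/subsequence argument in $\bNo$. You instead invoke the full Theorem~\ref{Tmain} as a black box and apply the continuous mapping theorem to the coordinate functional $T\mapsto\dx_T(\rot)$ (which is indeed continuous on $\st$ by \eqref{dconv}, so the continuous mapping theorem applies without needing continuity-set caveats), then read off \eqref{troot1} from \eqref{troot2} since each singleton $\{d\}$ is clopen in $\bNo$. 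Every step you take checks out, and there is no circularity, since the paper's proof of \refT{Tmain} in \refS{Spfmain} does not rely on \refT{Troot}. The trade-off is one of economy: the paper's route is nearly self-contained and is explicitly designed as a warm-up for \refT{Tmain} (it isolates the combinatorial mechanism of \refL{Lroot} without the extra bookkeeping of \refL{Ldeg}), whereas your route is shorter to write but imports the substantially heavier machinery of \refT{Tmain}, which the reader must already have digested.
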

Note that the sum $\sum_0^\infty d\pi_d=\mu$ of the limiting probabilities
in \eqref{troot1} may be less than 1; in that case we do not have convergence to
a proper finite random variable, which is why
we regard $\dx_{\ctn}(o)$ as a random number in
$\bNo$.

\refT{Troot}  describes the degree of the root. If we instead take a random
node, we obtain a different limit distribution, \viz{} \ppi.
We state two versions of this;
the two results are of the types called
\emph{annealed} and \emph{quenched} in statistical physics.
In the first (annealed) version, we take a random tree $\ctn$
and, simultaneously, a random node $v$ in it.
In the second (quenched) version 
we fix a random tree $\ctn$ and study the distribution of outdegrees in it.
(This yields a random probability distribution.
Equivalently, we study the outdegree of a random node conditioned on the
tree $\ctn$.) 

\begin{theorem}\label{Tdegree}
Let $\ww\geko$ and $\ppi\geko$ be as in \refT{Tmain}.
\begin{romenumerate}
\item \label{annealed}
Let $v$ be a uniformly random node in $\ctn$. 
Then, as \ntoo,
\begin{equation}\label{tdeg}
\P(\dx_{\ctn}(v)=d) \to \pi_d, \quad d\ge0.
\end{equation}

\item \label{quenched}
  Let $N_d$ be the number of nodes in $\ctn$ of outdegree $d$.
Then
\begin{equation}\label{tdegq}
  \frac{N_d}{n}\pto \pi_d, \quad d\ge0.
\end{equation}
\end{romenumerate}
\end{theorem}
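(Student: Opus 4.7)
The plan is to reduce both statements to the analog for the occupancy counts in the balls-in-boxes model $\cbnn$, using the standard correspondence established in \refS{Stree-balls}, and then to carry out first- and second-moment computations.

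First, since $v$ is chosen uniformly from the $n$ nodes of $\ctn$, one has $\P(\dx_{\ctn}(v)=d)=\E N_d/n$; because $N_d/n\in[0,1]$, bounded convergence shows that (ii) implies (i), so it suffices to prove (ii). By the bijection in \refS{Stree-balls}, the multiset of outdegrees in $\ctn$ has the same distribution as the occupancy sequence $(\eta_1,\dots,\eta_n)$ of $\cbnn$ with $n$ boxes, $n-1$ balls, and box weights $\wwx$; in particular $N_d\eqd\#\{i\le n:\eta_i=d\}$, and by exchangeability
\begin{equation*}
\E N_d = n\,\P(\eta_1=d),\qquad \E[N_d(N_d-1)] = n(n-1)\,\P(\eta_1=\eta_2=d).
\end{equation*}

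To compute these probabilities I pass to the canonical equivalent probability weight sequence $\ppi$ of \refT{Tmain} (the case $\rho=0$ is handled directly by \refR{Rnu=0}, since then $\ctn$ is essentially an infinite star). Then $(\eta_1,\dots,\eta_n)$ may be represented as $\xi_1,\dots,\xi_n$, iid with distribution $\ppi$, conditioned on $S_n\=\xi_1+\cdots+\xi_n=n-1$. This yields
\begin{equation*}
\P(\eta_1=d) = \pi_d\cdot\frac{\P(S_{n-1}=n-1-d)}{\P(S_n=n-1)},
\end{equation*}
and a similar two-variable identity for $\P(\eta_1=\eta_2=d)$. The proof therefore reduces to showing that, for each fixed $k,d\ge0$, the ratio $\P(S_{n-k}=n-1-kd)/\P(S_n=n-1)$ tends to $1$; granted this, $\E N_d/n\to\pi_d$, $\Var(N_d/n)\to 0$, and Chebyshev's inequality delivers (ii).

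The main obstacle is the asymptotic behaviour of these point-probability ratios. In the critical regime $\nu\ge1$, the tilt $\ppi$ is chosen in \refT{Tmain} so that $\E\xi=1$, hence $S_n$ is concentrated near $n$ and the ratio tends to $1$ by a local central limit theorem (Gaussian when $\Var\xi<\infty$, stable otherwise). In the subcritical regime $\nu<1$, by contrast, $\E\xi=\nu<1$, so $\{S_n=n-1\}$ is a large-deviations event whose probability decays exponentially; here one must instead use sharp large-deviation asymptotics reflecting the fact that the atypical value of $S_n$ is produced by a single $\xi_i$ being unusually large (the condensation phenomenon). Both regimes are treated uniformly as part of the general theory for $\cbnn$ developed in the balls-in-boxes sections that precede \refS{Stree-balls}.
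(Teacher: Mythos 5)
Your reduction of \ref{annealed} to \ref{quenched} is exactly what the paper does (take expectations of $N_d/n\in[0,1]$ and use bounded convergence), and the identification of the outdegree multiset of $\ctn$ with the occupancy counts of $\cbnn$ is the right mechanism. But your plan for proving \ref{quenched} introduces a gap that the paper's argument deliberately avoids.

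You propose a moment method: compute $\E N_d$ and $\E N_d(N_d-1)$ via the tilted representation and apply Chebyshev, which reduces the problem to showing that ratios like $\P(S_{n-1}=n-1-d)/\P(S_n=n-1)$ tend to $1$. You then claim that this follows from a local central limit theorem (Gaussian or stable) in the critical regime and from sharp large-deviation asymptotics in the subcritical regime. Neither claim holds in the generality of \refT{Tmain}. When $\nu=1$ and $\gss=\infty$ there is in general no local limit theorem for $S_n$: the offspring law need not lie in any stable domain of attraction, and the paper never assumes it does. In the subcritical case $\nu<1$, precise asymptotics of $\P(S_n=n-1)$ depend on the tail behaviour of $\ppi$ and are genuinely unavailable for arbitrary weights; the paper only proves the weak bound $\P(S_n=m)=e^{o(n)}$ (Lemmas~\ref{LB2} and~\ref{LB2sub}) in this generality, and Remark~\ref{RBZ} discusses how wild the actual behaviour can be. The ratio convergence you need is in fact true (it is Theorems~\ref{TH2}--\ref{TH3}), but in the paper those follow from \refT{TB3}, which follows from \refT{TBmain}, which is proved by a Chernoff bound divided by the $e^{o(n)}$ lower estimate on $\P(S_n=m)$ --- not from any local CLT or large-deviations expansion. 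So your appeal to ``the general theory for $\cbnn$'' only works by routing through \refT{TBmain} anyway.

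The paper's own proof is both shorter and stronger than the moment method: Lemma~\ref{Lx} gives a \emph{coupling} under which the random variables $N_d(\ctn)$ and $N_d(\bnn)$ are literally equal (since the degree sequence of $\ctn$ is a cyclic shift of $\bnn$ and cyclic shifts preserve occupancy counts), so \eqref{tdegq} is an immediate restatement of \eqref{tbmain} with $m=n-1$, $\gl=1$. No first- or second-moment computation of $N_d$, and hence no ratio asymptotics, are required. The Chernoff route underlying \refT{TBmain} is robust precisely because it sidesteps local limit theory; if you want to use moments you should cite and rely on \refT{TBmain} or \refT{TB3} rather than claim local CLT or large-deviation input that the paper does not have in this generality.
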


The proof is given in \refS{SpfXXX}.
(When $\nu>1$, this was proved by \citet{Otter}, see also \citet{Minami}.)
See \refS{SSnormal} for further results.

Instead of considering just the outdegree of a random node, \ie, its number
of children, we may obtain a stronger result by considering the subtree
containing its children, 
grandchildren and so on. 
(This random subtree is called a \emph{fringe subtree} by
\citet{AldousFringe}.) 
We have an analogous result, also proved in
\refS{SpfXXX}.  
Cf.\ \cite{AldousFringe}, which in particular contains \ref{subannealed}
below in the case $\nu\ge1$ and $\gss<\infty$; this was extended by
\citet{BenniesK} to the general case $\nu\ge1$.
(Note that the limit distribution, \ie{} the distribution of $\cT$, is a
fringe distribution in the sense of \cite{AldousFringe} only if $\mu=1$,
\ie, if and only if $\nu\ge1$.)

\begin{theorem}
  \label{Tsubtree}
Let $\ww\geko$ and $\ppi\geko$ be as in \refT{Tmain}, and
let $\cT$ be the \GWt{} with offspring distribution \ppi.
Further, if $v$ is a node in $\ctn$,
let $\ctnv$ be the subtree rooted at $v$.
\begin{romenumerate}
\item \label{subannealed}
Let $v$ be a uniformly random node in $\ctn$.
Then, $\ctnv\dto\cT$, \ie,
for any fixed tree $T$,
\begin{equation}\label{ttree}
\P(\ctnv=T) \to \P(\cT=T).
\end{equation}

\item \label{subquenched}
Let $T$ be an ordered 
rooted tree and let $N_T\=|\set{v:\ctnv=T}|$ be the number of
nodes in $\ctn$ such that the subtree rooted there equals $T$.
Then
\begin{equation}\label{ttreeq}
  \frac{N_T}{n}\pto \P(\cT=T).
\end{equation}
\end{romenumerate}
\end{theorem}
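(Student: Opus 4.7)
The plan is to treat both parts by the first and second moment method applied to $N_T$. Part \ref{subannealed} amounts to $\E[N_T/n] \to \P(\cT=T)$, since $\P(\ctnv = T) = \E[N_T/n]$ for a uniformly random node $v$. Part \ref{subquenched} is then a weak law of large numbers for $N_T$ and follows from part \ref{subannealed} together with $\Var(N_T) = o(n^2)$, via Chebyshev's inequality. Thus it suffices to compute the first two moments of $N_T$.

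For the first moment, fix $T$ with $|T|=k$ and use the bijection between pairs $(S, v)$ with $S\in\stn$, $v\in S$ and $S_v = T$, and pairs $(R, v)$ with $R\in\st_{n-k+1}$ and $v$ a leaf of $R$, obtained by replacing the subtree of $S$ rooted at $v$ by a single leaf at $v$. Under this bijection \eqref{wtree} gives $w(S) = w(R) w(T)/w_0$, since the node $v$ contributes $w_0$ to $w(R)$ and $w_{\dx_T(o)}$ to $w(S)$, the latter already absorbed into $w(T)$. Summing and dividing by $Z_n$ yields
\begin{equation*}
\E N_T = \frac{w(T)}{w_0} \cdot \frac{Z_{n-k+1}}{Z_n} \cdot \E L_{n-k+1},
\end{equation*}
where $L_m$ denotes the number of leaves in a simply generated random tree conditioned to size $m$. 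By \refT{Tdegree}\ref{quenched} with $d=0$, $L_m/m \pto \pi_0$ and, by boundedness, also in expectation. Combined with the asymptotic $Z_{n-k+1}/Z_n \to (\tau/\Phi(\tau))^{k-1}$ from \refS{Spart} and the identity $\pi_0 = w_0/\Phi(\tau)$ from \eqref{pk}, this gives
\begin{equation*}
\frac{\E N_T}{n} \to \frac{w(T)}{w_0} \cdot \Bigpar{\frac{\tau}{\Phi(\tau)}}^{k-1} \cdot \frac{w_0}{\Phi(\tau)} = \frac{\tau^{k-1} w(T)}{\Phi(\tau)^k} = \P(\cT=T),
\end{equation*}
where the last equality uses $\sum_{u\in T}\dx(u) = k - 1$ from \eqref{sumd}. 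This proves \ref{subannealed}.

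For the second moment, an analogous simultaneous pruning bijection on ordered pairs of incomparable nodes $(v,w)$ with $S_v = S_w = T$ produces $R\in\st_{n-2k+2}$ with $v,w$ distinct leaves and $w(S) = w(R) w(T)^2/w_0^2$, giving
\begin{equation*}
\sum_{\substack{v\neq w\\ \text{incomparable}}} \P(S_v = S_w = T) = \frac{w(T)^2}{w_0^2} \cdot \frac{Z_{n-2k+2}}{Z_n} \cdot \E\bigsqpar{L_{n-2k+2}(L_{n-2k+2}-1)}.
\end{equation*}
The same ingredients --- now $L_m(L_m-1)/m^2 \pto \pi_0^2$ and $Z_{n-2k+2}/Z_n \to (\tau/\Phi(\tau))^{2(k-1)}$ --- imply this sum equals $\P(\cT=T)^2 n^2(1+o(1)) = (\E N_T)^2(1+o(1))$. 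The diagonal $v=w$ contributes $\E N_T = O(n)$, and comparable pairs contribute $O(n)$ because for each $v$ with $S_v = T$ at most $k-1$ descendants $w$ of $v$ can have $S_w = T$ (they must correspond to internal nodes of $T$ whose own subtree is all of $T$). Hence $\Var(N_T) = o(n^2)$ and Chebyshev delivers \ref{subquenched}.

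The main obstacle is the partition function asymptotic $Z_{n-k+1}/Z_n \to (\tau/\Phi(\tau))^{k-1}$, equivalent to the subexponentiality of the normalised partition function $Z_n\bigpar{\Phi(\tau)/\tau}^n$. Via \refL{Lequivp} one may reduce to the case where $(w_k) = (\pi_k)$ is already a probability \ws, in which case the normalised partition function equals $\P(|\cT|=n)$; the required subexponentiality splits into the regimes of critical finite-variance \GWt{s} ($\P(|\cT|=n)\sim c n^{-3/2}$), critical infinite-variance cases (via Dwass's identity and suitable local limit theorems), and subcritical condensation ($\P(|\cT|=n)\sim c\pi_n$ when $\nu<1$ and $\pi_k$ is heavy-tailed), all established in \refS{Spart}.
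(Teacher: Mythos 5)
Your proposal takes a genuinely different route from the paper. The paper's proof uses the coupling of \refL{Lx}: it identifies $N_T$ with the number of occurrences of the degree sequence of $T$ as a (cyclic) substring of $\gL(\ctn)$, then estimates the conditional mean and variance of this count given $N_0,N_1,\dots$, and appeals directly to Theorems \ref{TBmain} and \ref{TB3}. Your argument instead uses a pruning bijection to write $\E N_T$ as $\frac{w(T)}{w_0}\cdot\frac{Z_{n-k+1}}{Z_n}\cdot \E L_{n-k+1}$ and reduces the problem to a partition-function ratio plus the leaf count; this is attractive and, for $\rho>0$, it works once you invoke \refT{TH4} (with the easy span-$d$ modification, noting $d\delar(k-1)$). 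The second-moment bookkeeping is also sound (and in fact no proper descendant of $v$ can have $S_w=T$ when $S_v=T$, so comparable pairs contribute $0$, not merely $O(n)$).

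There are, however, two real problems. First, the last paragraph claims the ratio limit $Z_{n-k+1}/Z_n\to(\tau/\Phi(\tau))^{k-1}$ is ``equivalent'' to the subexponentiality of $Z_n(\Phi(\tau)/\tau)^n$. This is false: $a_n=o(n)$ does not imply $a_n-a_{n-1}\to0$, so \refT{TZlim} alone does not give you the ratio. What you need is the much more delicate local statement \refT{TH4}, which the paper proves from \refT{TH2}--\refT{TH3}, themselves consequences of \refT{TB3}; you should cite that chain directly rather than arguing via crude subexponentiality or asymptotic forms of $Z_n$. Second, the case $\rho=0$ (case~III of \refS{S3}) is not covered by your argument: \refL{Lequivp} gives you an equivalent probability weight sequence only when $\rho>0$, so there is no Galton--Watson representation to fall back on, and \refT{TH4} explicitly requires $\rho>0$. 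For $\rho=0$ the theorem still makes a nontrivial assertion ($N_T/n\pto1$ for the one-node tree, $N_T/n\pto0$ for $|T|\ge2$), but you need a separate, direct argument --- for instance the observation that $N_T\le n-N_0$ for $|T|\ge2$ together with \refT{Tdegree}\ref{quenched}. Both gaps are fixable, but as written the proposal does not yet close them.
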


\begin{remark}
  \citet{AldousFringe} considers also the tree obtained by a random
  re-rooting of $\ctn$, \ie, the tree obtained by declaring a uniformly
  random node $v$ to be the root. Note that this re-rooted tree contains
 $\ctnv$ as a subtree, and that, provided $v\neq\rot$, 
there is exactly one branch from the new root
 not in this subtree, \viz{} the branch starting with the original parent
  of $v$.
\citet{AldousFringe} shows, at least when $\nu\ge1$ and $\gss<\infty$,
convergence of this randomly re-rooted tree to the random sin-tree 
in \refR{Rsin}. The limit of the re-rooted tree
is thus very similar to the limit of $\ctn$
in \refT{Tmain},
but not identical to it.
\end{remark}

\section{Three different types of weights}\label{S3} 

Although \refT{Tmain} has only two cases, it makes sense to treat the case
$\rho=0$ separately. We thus have the following three (mutually exclusive)
cases for the \ws{} $(w_k)$:
\begin{enumerate}
\item[I.] 
$\nu\ge1$. Then $0<\tau<\infty$ and $\tau\le\rho\le\infty$.
The weight sequence $(w_k)$ is equivalent to $(\pi_k)$, which is a probability
distribution 
with mean $\mu=\Psi(\tau)=1$ and \pgf{} $\sumk \pi_kz^k$ with radius of convergence
$\rho/\tau\ge1$. 

\item[II.]
$0<\nu<1$. Then $0<\tau=\rho<\infty$. The weight sequence
$(w_k)$ is equivalent to $(\pi_k)$, which is a probability distribution
with mean $\mu=\Psi(\tau)<1$ and
\pgf{} $\sumk \pi_kz^k$ with radius of convergence $\rho/\tau=1$.

\item[III.]
$\nu=0$. Then $\tau=\rho=0$, and
$(w_k)$ is not equivalent to any probability distribution.
\end{enumerate}

If we consider the modified \GWt{} in \refT{Tmain},
then III is the case discussed in \refE{Emu=0}; 
excluding this case, I and II are
the same as {\ti} and {\tii} in \refS{ShGW}.

We can reformulate the partition into three cases in more probabilistic terms.
If $\xi$ is a non-negative integer valued random variable with distribution 
given by $p_k=\P(\xi=k)$, $k\ge0$, then the 
\emph{exponential moments} of $\xi$ are $\E R^\xi=\sumk p_kR^k$ for $R>1$.
(Equivalently, $\E e^{r\xi}$ for $r\=\log R>0$.)
We say that $X$, or the distribution $(p_k)$, has 
\emph{some finite exponential moment} if $\E R^X<\infty$ for some $R>1$; 
this is equivalent to the \pgf{} $\sumk p_kz^k$ having radius of convergence
strictly larger than $1$. 

Consider again a probability distribution $(\tw_k)$ equivalent to $(w_k)$,
with $\tw_k=t^kw_k/\Phi(t)$ for some $t\le\rho$. 
By \refS{Sequiv}, the radius of convergence of the \pgf{} $\tPhi(z)$ of this
distribution is $\rho/t$, \cf{} \eqref{tPhi}.
Hence, the distribution
$(\tw_k)$ has some finite exponential moment if and only if $0<t<\rho$.
The cases I--III can thus be described as follows:

\begin{enumerate}
\item[I.] 
$\nu\ge1$. 
Then $(w_k)$ is equivalent to a probability distribution with
mean $\mu=1$ (with or without some exponential moment). 
Moreover, $(\pi_k)$ in \eqref{pk} is the unique
such distribution.

\item[II.]
$0<\nu<1$. 
Then $(w_k)$ is equivalent to a probability distribution with
mean $\mu<1$ and no finite exponential moment. 
Moreover, $(\pi_k)$ in \eqref{pk} is the unique such distribution.

\item[III.]
$\nu=0$. Then 
$(w_k)$ is not equivalent to any probability distribution.
\end{enumerate}

Case I may be further subdivided. From an analytic point of view, it is
natural to split I into two subcases:

\begin{enumerate}
\item[Ia.] 
$\nu>1$; equivalently, $0<\tau<\rho\le\infty$.
The weight sequence $(w_k)$ is equivalent to $(\pi_k)$, which is a probability
distribution with mean $\mu=1$ and
\pgf{} $\sumk \pi_kz^k$ with radius of convergence $\rho/\tau>1$.
In other words, $(w_k)$ is equivalent to a probability distribution with
mean $\mu=1$ and some finite exponential moment. (Then $(\pi_k)$ is the unique
such distribution.)
By  \eqref{otter}, the condition 
can also be written
analytically as
$\cZ(\rhoz)<\rho$, a version used \eg{} in \cite{DurhuusJW}.
(This case is called \emph{generic} in \cite{DurhuusJW} and \cite{sdf}.)

\item[Ib.] 
$\nu=1$; then $0<\tau=\rho<\infty$.
The weight sequence $(w_k)$ is equivalent to $(\pi_k)$, which is a probability
distribution with mean 1 and
\pgf{} $\sumk \pi_kz^k$ with radius of convergence $\rho/\tau=1$.
In other words, $(w_k)$ is equivalent to a probability distribution with
mean $\mu=1$ and no finite exponential moment. 
(Then $(\pi_k)$ is the unique such distribution.)
\end{enumerate}

Case Ia is convenient when using analytic methods, since it says that the
point $\tau$ is strictly inside the domain of convergence of $\Phi$, which
is convenient for methods involving contour integrations in the complex
plane. 
(See \eg{} \citet{Drmota} for several such results of different types.) 
For that reason, many papers using such methods  consider
only case Ia. However, it has repeatedly turned out, for many different
problems, that results proved by
such methods often hold, by other proofs,
assuming only that we are in case I with finite
variance of $(\pi_k)$.
(In fact, as shown in \cite{SJ167}, it is at least sometimes possible to use
complex analytic methods also in the case when $\tau=\rho$ and $(\pi_k)$ has a
finite second moment.)
Consequently, it is often more important to partition case I into the
following two cases:
\begin{enumerate}
\item[\Iga.] 
$\nu\ge1$ and 
\ppi{} has variance $\gss<\infty$.
In other words, 
$(w_k)$ is equivalent to a probability distribution $(\pi_k)$ with
mean $\mu=1$ and finite second moment $\gss$. 

\item[\Igb.] 
$\nu=1$ and $\ppi$ has variance $\gss=\infty$.
In other words, $(w_k)$ is equivalent to a probability distribution with
mean $\mu=1$ and infinite variance.
\end{enumerate}

Note that Ia is a subcase of \Iga, since a finite exponential moment implies
that the second moment is finite.

When $\nu\ge1$, the quantity $\gss$ is another natural parameter of the
weight sequence $(w_k)$, which frequently occurs in asymptotic results,
see \eg{} \refS{Sfurther}.
(When $\nu<1$, the natural analogue is $\infty$,  see \refR{Rgss}.)
By \refT{Tmain} (or \eqref{lep4}),  $\gss=\tau\Psi'(\tau)$, so (assuming
$\nu\ge1$), 
we have case \Iga{} when $\Psi'(\tau)<\infty$ and
\Igb{} when $\Psi'(\tau)=\infty$.
Moreover, when $\nu\ge1$, then $(\pi_k)$ has mean $\mu=1$, 
and it follows from \eqref{lep2} that the variance $\gss$ of $\ppi$ 
also is given by the  formula \cite{AldousII}
\begin{equation}\label{gss}
  \gss=\Phi_\tau''(1)+\mu-\mu^2
=\Phi_\tau''(1)
=\frac{\tau^2\Phi''(\tau)}{\Phi(\tau)}.
\end{equation}
Hence \Iga{} is the case $\nu\ge1$ and $\Phi''(\tau)<\infty$; equivalently,
either $\nu>1$ or $\nu=1$ and $\Phi''(\rho)<\infty$.

\begin{remark}
We have seen that except in case III, we may without loss of generality
assume that the weight $(w_k)$ is a probability weight sequence. If this
distribution is critical, \ie{} has mean 1, we are in case I with
$\pi_k=w_k$, so we do not have to change the weights.

If the distribution $(w_k)$ is supercritical, then $\nu>1$ and we are in
case Ia; we can change to an equivalent critical probability weight.
Hence we never have to consider supercritical weights. 
(Recall that by \refR{Rpt}, $\nu$ is the supremum of the means of the equivalent
\pws{s}.) 

If the distribution $(w_k)$ is subcritical, we can only say that we are in
case I or II. We can often change to an equivalent critical probability
weight, but not always. 
\end{remark}

\section{Examples of \sgrt{s}}\label{Sex}

One of the reasons for the interest in simply generated trees 
is that many kinds of random trees occuring in various applications
can be seen as simply generated random trees and \cGWt.
We give some important examples here, see further 
\citet{AldousI,AldousII}, 
\citet{Devroye} and \citet{Drmota}.

We see from \refT{Tmain} and \refS{S3} that any simply generated random tree
defined by a \ws{} with $\rho>0$ can be defined by an equivalent \pws, and
then the tree is the corresponding \cGWt. Moreover,
the \pws{} $\ppi$ defined in \eqref{pk} is the canonical choice of offspring
distribution. 
Recall that $\ppi$ is characterised by having mean 1, whenever this is
possible (\ie, in case I), \ie, we prefer to have critical \GWt{s}.

\begin{example}[ordered trees]
  \label{Euniform}
The simplest example is to take $w_k=1$ for every $k\ge0$. Thus every tree
has weight 1, and $\ctn$ is a uniformly random ordered rooted tree with $n$
nodes. 
Further, $Z_n$ is the number of such trees; thus $Z_n$ is the Catalan number
$C_{n-1}$, see
\refR{Rcatalan} 
and \eqref{catalan}.
(For this reason, these random trees are sometimes called \emph{Catalan trees}.)
 
We have 
\begin{equation}\label{euphi}
  \Phi(t)=\sumk t^k=\frac{1}{1-t}
\end{equation}
and 
\begin{equation}\label{eupsi}
  \Psi(t)=\frac{t\Phi'(t)}{\Phi(t)}=\frac{t}{1-t}.
\end{equation}
Thus $\rho=1$ and $\nu=\infty$ (\cf{} \refL{LPsi}\ref{L1b}),
and $\Psi(\tau)=1$ yields $\tau=1/2$.
Hence \eqref{pk} yields the canonical \pws{}
\begin{equation}\label{eupi}
\pi_k=2^{-k-1}, 
\qquad k\ge0. 
\end{equation}
In other words, 
the uniformly random ordered rooted tree  
is the \cGWt{} with geometric offspring distribution  $\xi\sim\Ge(1/2)$.
(This is the geometric distribution with mean 1.
Any other geometric distribution yields an equivalent \ws, and thus the
same \cGWt.)

The size-biased random variable $\hxi$ in \eqref{hxi} has the distribution
\begin{equation}
\P(\hxi=k)=k\pi_k=k2^{-k-1}, 
\qquad k\ge1;
\end{equation}
thus $\hxi-1$ has a negative binomial distribution $\NBi(2,1/2)$.
It follows that in the infinite tree $\hcT$, if $v$ is a node on the spine
(for example the root) and $\dl(v),\dr(v)$ are the numbers of children of it
to the left and right of the spine, respectively, then
\begin{equation}
  \begin{split}
  \P\bigpar{\dl(v)=j\text{ and }\dr(v)=k}
&=\frac{1}{j+k+1}\P(\hxi=j+k+1)
=2^{-j-k-2}
\\&=2^{-j-1}\cdot2^{-k-1},
\qquad j,k\ge0;
  \end{split}
\raisetag\baselineskip
\end{equation}
thus $\dl(v)$ and $\dr(v)$ are independent and both have the same 
distribution $\Ge(1/2)$ as $\xi$.

We have $\gss\=\Var\xi=\tau\Psi'(\tau)=2$, see \refT{Tmain} and \eqref{gss}, and
$\E\hxi=\gss+1=3$, see \eqref{ehxi}.
\end{example}

\begin{example}[unordered trees]
  \label{Ecayley}
We have assumed that our trees are ordered, but it is possible to consider
unordered labelled rooted trees too by imposing a random order on the set of
children of each node. 
Note first that for ordered trees, the ordering of the children implicitly
yields a labelling of all nodes as in \refS{SUlam}. Hence, 
any ordered tree with $n$ nodes can be explicitly labeled
by $1,\dots,n$ in exactly $n!$ ways, and a uniformly random labelled ordered
rooted 
tree is the same as a uniformly random unlabelled ordered rooted tree with a
random 
labelling. (For unordered trees, a uniformly random labelled tree is
different from a uniformly random unlabelled   tree;
unlabelled unordered trees are not \sgt{s}.)

An unordered labelled rooted tree with outdegrees $d_i$ corresponds to
$\prod_i d_i!$ different ordered labelled rooted trees.
If we take $w_k=1/k!$, we give each of these ordered trees weight 
$\prod_i d_i!\qw$, so their total weight is 1.
Hence, the \ws{} $(1/k!)$ yields a uniformly random unordered labelled
rooted tree.

The number of unordered labelled \emph{unrooted} trees with $n$ nodes is
$n^{n-2}$, see \eg{} \cite[Section~5.3]{Stanley2},
a result given by \citet{Cayley} and known as Cayley's formula.
(Although attributed by Cayley to \citet{Borchardt} and
even earlier found by
\citet{Sylvester}, see \eg{} \cite[p.~66]{Stanley2}.)
Equivalently,
the number of unordered labelled rooted trees with $n$ nodes is $n^{n-1}$.
Hence random such trees are sometimes called
\emph{Cayley trees}. However, this name is also used for regular infinite
trees.

We have 
\begin{equation}\label{ecayleyphi}
  \Phi(t)=\sumk \frac{t^k}{k!}=e^t
\end{equation}
and 
\begin{equation}\label{ecayleypsi}
  \Psi(t)=\frac{t\Phi'(t)}{\Phi(t)}=t.
\end{equation}
Thus $\nu=\infty$ 
and $\Psi(\tau)=1$ yields $\tau=1$.
Hence \eqref{pk} yields the canonical \pws{}
\begin{equation}
\pi_k=\frac{e^{-1}}{k!}, 
\qquad k\ge0. 
\end{equation}
In other words, 
the uniformly random labelled unordered rooted tree  
is the \cGWt{} with Poisson offspring distribution $\xi\sim\Po(1)$.
(Any other Poisson distribution yields an equivalent \ws, and thus the same
\cGWt.) 

The size-biased random variable $\hxi$ in \eqref{hxi} has the distribution
\begin{equation}
\P(\hxi=k)=k\pi_k=\frac{e^{-1}}{(k-1)!}, 
\qquad k\ge1;
\end{equation}
thus $\hxi-1$ has also the Poisson distribution $\Po(1)$, \ie,
$\hxi-1\eqd\xi$.
(It is only for a Poisson distribution that $\hxi-1\eqd\xi$.)

We have $\gss\=\Var\xi=\tau\Psi'(\tau)=1$ and
$\E\hxi=\gss+1=2$, \cf{} \eqref{gss} and \eqref{ehxi}.

The partition function is given by
\begin{equation}\label{borel1}
  Z_n(\ppix)=\P(|\cT|=n)=\frac{n^{n-1}e^{-n}}{n!}.
\end{equation}
This is a special case of the \emph{Borel distribution} in \eqref{borel}
below; 
\citet{Borel} proved a result equivalent to \eqref{borel1} for 
a queueing problem,
see also 
\citet{Otter},
\citet{Tanner},
\citet{Dwass},
\citet{Takacs:ballots},
\citet{Pitman:enum},
\refE{Eforest}
and \refT{TZ} below.
Equivalently, using \eqref{tz},
\begin{equation}\label{borel2}
Z_n(\wwx)=e^n  Z_n(\ppix)=\frac{n^{n-1}}{n!}.
\end{equation}
Recall that $Z_n$ is defined by the sum \eqref{zn} over unlabelled ordered
rooted trees; 
if we sum over \emph{labelled} ordered rooted trees, we obtain $n!\,Z_n$,
which by the argument above corresponds to weight 1 on each labelled
unordered rooted tree; \ie, the number of labelled unordered rooted trees is
$n!\,Z_n(\wwx)=n^{n-1}$.
Thus \eqref{borel2} is equivalent to Cayley's formula
for the number of unordered trees given above.

By \eqref{borel2}, the generating function $\cZ(z)$ is $\sumni
n^{n-1}z^n/n!$, known as the the \emph{tree function}; see
\eqref{treefn}--\eqref{T'} in \refE{Eforest}.
\end{example}

\begin{example}[binary trees I]
  \label{Ebinary}
The namn \emph{binary tree} is used in (at least) two different, but
related, meanings. The first version 
(Drmota \cite[Section 1.2.1]{Drmota}), sometimes called 
\emph{full binary tree}
or \emph{strict binary tree},
is an ordered rooted tree where
every node has outdegree 0 or 2.
We obtain a uniformly random full binary tree by taking the \ws{} with
$w_0=w_2=1$, 
and $w_k=0$ for $k\neq0,2$. Note that this \ws{} has span 2; this is the
standard example of a \ws{} with span $>1$. As a consequence, a full binary
tree of 
size $n$ exists only if $n$ is odd. 
(This is easily seen directly; see \refC{Cexists} for a general result.)

We have 
\begin{equation}
  \Phi(t)=1+t^2
\end{equation}
and 
\begin{equation}
  \Psi(t)=\frac{t\Phi'(t)}{\Phi(t)}=\frac{2t^2}{1+t^2}.
\end{equation}
Thus $\rho=\infty$, $\nu=2$ (\cf{} \refL{LPsi}\ref{L1c}),
and $\Psi(\tau)=1$ yields $\tau=1$.
Hence \eqref{pk} yields the canonical \pws{}
\begin{equation}\label{fullbpi}
\pi_k=\tfrac12,
\qquad k=0,2. 
\end{equation}
In other words, the random full binary tree is the \cGWt{} with offspring
distribution $\xi=2X$ where $X\sim\Be(1/2)$. (In the \GWt{} $\cT$, thus each
node gets either twins 
or no children, each outcome with probability $1/2$.)

The size-biased random variable $\hxi$ has 
$\P(\hxi=2)=1$
by \eqref{fullbpi} and
\eqref{hxi},
so $\hxi=2$  and $\hxi-1=1$ a.s.

We have $\gss\=\Var\xi=1$ and
$\E\hxi=\gss+1=2$, \cf{} \eqref{gss} and \eqref{ehxi}.
\end{example}

\begin{example}[binary trees II]\label{Ebinary2}
The second version of a \emph{binary tree}
(Drmota \cite[Example 1.3]{Drmota})
is a rooted tree where
every node has at most one \emph{left child} and at most one \emph{right
  child}. Thus, each outdegree is 0, 1 or 2; if there are two children they
are ordered, and, moreover, if there is only one child, it is marked as
either left or right.
(There is a one-to-one correspondence between binary trees of this type with
$n$ nodes and the full binary trees  in \refE{Ebinary} with $2n+1$ nodes,
mapping a binary tree $T$ to a full binary tree $T'$, where 
$T'$ is obtained from $T$
by adding $2-d$ \emph{external nodes} at every node with outdegree $d$;
conversely, we obtain $T$ 
by deleting all leaves in $T'$ and keeping only the nodes that have
outdegree $2$ in $T'$ (the \emph{internal} nodes).)

Since there are two types of nodes with outdegree 1, we obtain the correct
count of these binary trees, and a uniformly distributed random binary tree,
by taking the \ws{} $w_0=1$, $w_1=2$, $w_2=1$, and $w_k=0$ for $k\ge3$, \ie,
$w_k=\binom2k$. 
Thus,
\begin{equation}
  \Phi(t)=1+2t+t^2=(1+t)^2
\end{equation}
and 
\begin{equation}
  \Psi(t)=\frac{t\Phi'(t)}{\Phi(t)}=\frac{2t}{1+t}.
\end{equation}
Thus $\rho=\infty$, $\nu=2$,
and $\Psi(\tau)=1$ yields $\tau=1$.
Hence \eqref{pk} yields the canonical \pws{}
\begin{equation}
\pi_k=\frac14\binom 2k,
\qquad k\ge0.
\end{equation}
In other words, 
a uniformly  random binary tree of this type is the \cGWt{} with binomial
offspring 
distribution $\xi\sim\Bi(2,1/2)$. 
(Any other distribution $\Bi(2,p)$, $0<p<1$, is equivalent and yields the
same  \cGWt{}.)

The size-biased random variable $\hxi$ has by \eqref{hxi} 
$\P(\hxi=1)=\P(\hxi=2)=\frac12$;
thus 
$\hxi-1\sim\Bi(1,1/2)$. 

We have $\gss\=\Var\xi=1/2$ and
$\E\hxi=\gss+1=3/2$, \cf{} \eqref{gss} and \eqref{ehxi}.
\end{example}

\begin{example}[Motzkin trees]
  A \emph{Motzkin tree} is a ordered rooted tree with each outdegree $\le2$. The
  difference from \refE{Ebinary2} is that there is only one type of a single
  child. Thus we count such trees 
and obtain uniformly random Motzkin trees
by taking $w_0=w_1=w_2=1$ and $w_k=0$, $k\ge3$.
We have 
\begin{equation}
  \Phi(t)=1+t+t^2
\end{equation}
and 
\begin{equation}
  \Psi(t)=\frac{1+2t}{1+t+t^2}.
\end{equation}
Thus $\rho=\infty$, $\nu=2$,
and $\Psi(\tau)=1$ yields $\tau=1$.
Hence \eqref{pk} yields the canonical \pws{}
\begin{equation}\label{motzpi}
\pi_k=\tfrac13,
\qquad k=0,1,2.
\end{equation}
In other words, 
a uniformly random Motzkin tree is the \cGWt{} with offspring
distribution $\xi$ uniform on \set{0,1,2}.

The size-biased random variable $\hxi$ has,
 by \eqref{hxi} and \eqref{motzpi}, the distribution
$\P(\hxi=1)=\frac13$, $\P(\hxi=2)=\frac23$;
thus 
$\hxi-1\sim\Bi(1,2/3)$. 

We have $\gss\=\Var\xi=2/3$ and
$\E\hxi=\gss+1=5/3$, \cf{} \eqref{gss} and \eqref{ehxi}.
\end{example}

\begin{example}[$d$-ary trees]\label{Ed-ary}
  In a \emph{$d$-ary} tree, each node has $d$ positions where a child may be
  attached, and there is at most one child per position. 
(Trees with children attached at different positions are regarded as
  different trees.) 
This generalises the binary trees in \refE{Ebinary2}, which is the special
case $d=2$. 

Since $k$ children may be attached in $\binom dk$ ways (with a given order),
we obtain a uniformly random $d$-ary trees by taking $w_k=\binom dk$.
We have 
\begin{equation}\label{d-aryphi}
  \Phi(t)=(1+t)^d
\end{equation}
and 
\begin{equation}\label{d-arypsi}
  \Psi(t)=\frac{t\Phi'(t)}{\Phi(t)}=\frac{dt}{1+t}.
\end{equation}
Thus $\rho=\infty$, $\nu=\go=d$,
and $\Psi(\tau)=1$ yields $\tau=1/(d-1)$.
Hence \eqref{pk} yields the canonical \pws{}
\begin{equation}
\pi_k=\binom dk (d-1)^{d-k}d^{-d}
=
\binom dk \Bigparfrac1{d}^k\Bigparfrac{d-1}{d}^{d-k},
\qquad k\ge0.
\end{equation}
In other words, 
a uniformly random $d$-ary tree is the \cGWt{} with binomial
offspring distribution $\xi\sim\Bi(d,1/d)$. 
(Any other distribution $\Bi(d,p)$, $0<p<1$, is equivalent and yields the
same  \cGWt{}.)

The size-biased random variable $\hxi$ has the distribution
\begin{equation}
\P(\hxi=k)=k\pi_k=
\binom {d-1}{k-1} \Bigparfrac1{d}^{k-1}\Bigparfrac{d-1}{d}^{d-k},
\qquad k\ge1;
\end{equation}
thus $\hxi-1$ has  the Binomial distribution 
$\Bi(d-1,1/d)$. 

We have $\gss\=\Var\xi=1-1/d$ and
$\E\hxi=\gss+1=2-1/d$, \cf{} \eqref{gss} and \eqref{ehxi}.
\end{example}

\begin{example}
  \label{Ezeta}
Let $\gb$ be a real constant and let $w_k=(k+1)^{-\gb}$. (The case $\gb=0$ is
\refE{Euniform}.) 
Then $\rho=1$.

If $-\infty<\gb\le1$, then $\Phi(\rho)=\infty$, so $\nu=\infty$ by
\eqref{nu} and \refL{LPsi}\ref{L1b}.

If $\gb>1$, then $\Phi(\rho)=\zeta(\gb)<\infty$ and
\begin{equation}\label{ezeta}
  \nu=\Psi(1)=\frac{\sum_k kw_k}{\Phi(1)}
=\frac{\zeta(\gb-1)-\zeta(\gb)}{\zeta(\gb)},
\qquad \gb>2,
\end{equation}
while $\nu=\Psi(1)=\infty$ if $\gb\le2$. 
Hence, 
see also \citet{BialasB}, 
\begin{equation}\label{ezetagb0}
  \nu=1 
\iff
\zeta(\gb-1)=2\zeta(\gb)
\iff
\gb=\gb_0=2.47875\dots   
\end{equation}
and $\nu>1\iff -\infty<\gb<\gb_0$.
(It can be shown that $\nu$ is a decreasing function of $\gb$ for $\gb>2$.)
In the case $\gb=\gb_0$, when thus $\nu=1$, we further have $\gss=\infty$ by 
\eqref{gss}, since $\Phi''(1)=\infty$ when $\gb\le3$. This is thus case
\Igb, in the notation of \refS{S3}.

In the case $\gb>\gb_0$ we thus have $0<\nu<1$, and $\ctn$ converges to a
random tree $\hcT$ with one node of infinite degree, see \refT{Tmain} and
\refS{ShGW}.
If $\gb\le\gb_0$, then $\nu\ge1$ and the limit tree $\hcT$ is locally
finite.
We thus see a phase transition at $\gb=\gb_0$ when we vary $\gb$ in this
example.

Note, however, that there is nothing special with the rate of decrease
$k^{-\gb_0}$; the value of $\gb_0$ depends on the exact form of our choice
of the weights $w_k$ in this example, and reflects the values for small $k$
rather than the asymptotic behaviour. For example, as remarked by
\citet{BialasB}, just changing $w_0$
would change $\gb_0$ to any desired value in $(2,\infty)$. 
With a different $w_0$, 
$\Phi(1)=\zeta(\gb)-1+w_0$, and a modification of \eqref{ezeta} shows 
that
the critical value $\gb_0$ yielding $\nu=1$ is given by,
see \cite{BialasB},
\begin{equation}
2\zeta(\gb_0)  -\zeta(\gb_0-1)=1-w_0.
\end{equation}
In particular, $\gb_0>3$ for $w_0<1+\zeta(2)-2\zeta(3)=0.24082\dots$; in
this case, for the critical $\gb=\gb_0$, we then have $\nu=1$ and $\gss<\infty$,
see \eqref{gss}.

See  \cite{BialasB} for some further analytic properties. For example, if
$\gb_0<3$ (for example when $w_0=1$), then,
as $\gb\upto\gb_0$, we have $1-\tau\sim c (\gb_0-\gb)^{1/(\gb_0-2)}$, 
where $c>0$ and the exponent can take any value $>1$.
\end{example}

\begin{example}\label{Ek!}
  Take $w_k=k!$. The generating function $\Phi(t)=\sumk k!\,t^k$ has radius of
  convergence $\rho=0$ so we are in case III, and there exists no equivalent
  \cGWt.

\refT{Tmain} shows that $\ctn$ converges to an infinite star, see
\refR{Rnu=0} and \refE{Emu=0}.
This means that the root degree converges in probability to $\infty$, and
that the outdegree of any fixed child converges to 0 in probability,
\ie, equals 0 \whp. Note, however, that we
cannot draw the conclusion that the outdegrees of \emph{all} children of the root
are 0 \whp{}; \refT{Tmain} and symmetry imply that the
proportion 
of children of the root with outdegree $>0$ tends to 0, but the number of
such children may still be large.
(\refT{Tdegree}\ref{quenched} yields the same conclusion.)

In fact, for this particular example $w_k=k!$, 
it is shown by \citet{SJ259}, using direct
calculations, that 
\whp{} all subtrees attached to the root have size 1 or 2, and that the
number of such subtrees of size 2 has an asymptotic Poisson distribution
$\Po(1)$. 
(This number thus \whp{} equals $N_1$, and $\elz_2(\ctn)$, and also the
number of children of the root with at least one child.)
\end{example}

\begin{example}\label{Ek!a}
  If we instead take $w_k=k!^\ga$ with $0<\ga<1$, then
as in \refE{Ek!},
$\rho=0$ and $\ctn$ converges to the infinite star in \refE{Emu=0}.
In this case, if  (for simplicity) $1/\ga\notin\Ni$, then 
$N_i(\ctn)/n^{1-i\ga}\pto i!^\ga$ for $1\le i\le\floor{1/\ga}$,
while $N_i=0$ \whp{} for each fixed $i>\floor{1/\ga}$; furthermore,
among the subtrees attached to the root, \whp{} there are subtrees of all sizes
$\le\floor{1/\ga}+1$, and all possible shapes of these trees, with the
number of each type tending to $\infty$ in probability, but no larger subtrees.
See \citet{SJ259} for details.

If we take $w_k=k!^\ga$ with $\ga>1$, then
\whp{} $\ctn$ is a star with $n-1$ leaves, so $N_d=0$ for $1\le d<n-1$.
\end{example}

See also the examples in \refS{Sex+}.

\section{Balls-in-boxes}\label{SBB}

The \emph{\bib} model is a model for random allocation of $m$
(unlabelled) balls in $n$ (labelled) boxes; here $m\ge0$ and $n\ge1$ are given
integers.
The set of possible allocations is thus 
\begin{equation}
\cbmn
\=\Bigset{\yyn\in\No^n:\sumin y_i=m} , 
\end{equation}
where $y_i$ counts the number of balls in box $i$.

We suppose again that $\wwx=(w_k)\kooo$ is a fixed weight sequence, and we
define the weight of an allocation $\yyx=\yyn$ as
\begin{equation}\label{wmm}
  w(\yyx)\=\prodin w_{y_i}.
\end{equation}

Given $m$ and $n$, we choose a random allocation $\bmn$ with probability
proportional to its weight, \ie, 
\begin{equation}\label{pbmn}
  \P(\bmn=\yyx)=\frac{w(\yyx)}{Z(m,n)},
\qquad \yyx\in\cbmn,
\end{equation}
where the normalizing factor $Z(m,n)$, again called the \emph{partition
  function}, is given by
\begin{equation}\label{zmn}
  Z(m,n)
=   Z(m,n;\wwx)
\=\sum_{\yyx\in\cbmn}w(\yyx).
\end{equation}
We consider only $m$ and $n$ such that $Z(m,n)>0$; otherwise $\bmn$ is
undefined. See further \refL{LBexists}.
We write $\bmn=\YYn$.

\begin{remark}
  The names balls-in-boxes and balls-in-bins are used in the literature
  for several different allocation models.
We use balls-in-boxes for the model defined here, following \eg{}
\citet{BialasetalNuPh97}.
\end{remark}

\begin{example}[probability weights]\label{EBprob}
In the special case when $(w_k)$ is a probability weight sequence, let 
$\xi_1,\xi_2,\dots$
be \iid{} random variables with the distribution $(w_k)$. Then
$w(\yyx)=\P\bigpar{(\xi_1,\dots,\xi_n)=\yyx}$ for any $\yyx=\yyn$.
Hence
\begin{equation}\label{ebprob}
 Z(m,n)=\P \bigpar{(\xi_1,\dots,\xi_n)\in\cbmn}
=\P(S_n=m),
\end{equation}
where we define
\begin{equation}
  S_n\=\sumin\xi_i.
\end{equation}
Moreover, $\bmn$ has the same distribution as $(\xi_1,\dots,\xi_n)$
conditioned on 
$S_n=m$:
\begin{equation}\label{ebprob2}
\YYn\eqd
\bigpar{(\xi_1,\dots,\xi_n)\mid S_n=m}.
\end{equation}
We will use this setting (and notation) several times below.
(This construction of a random allocation $\bmn$  is 
used by \citet{Kolchin} and there
called the \emph{general scheme of allocation}.)
\end{example}

We can replace the \ws{} by an equivalent \ws{} for the \bib\ model
just as we did for the random trees in \refS{Sequiv}.

\begin{lemma}\label{LB1}
Suppose that we replace the weights $(w_k)$ by  equivalent weights $(\tw_k)$
where $\tw_k\=ab^kw_k$ with $a,b>0$ as in \eqref{tw}.
Then the weight of an allocation 
$\yyx=\yyn\in\cbmn$ is changed to
\begin{equation}\label{lb1w}
  \tw(\yyx)=a^n b^{ m} w(\yyx),
\end{equation}
and the partition function $Z(m,n)=Z(m,n;\wwx)$ is changed to
\begin{equation}\label{lb1z}
  \tZ(m,n)\=Z(m,n;\twwx)=a^nb^mZ(m,n),
\end{equation}
while the distribution of $\bmn$ is invariant.
Thus $\bmn$ depends only on the
equivalence class of the weight sequence.
\end{lemma}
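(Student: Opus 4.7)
The plan is to mirror exactly the computation done for trees in Section~\ref{Sequiv} around equation~\eqref{tz}, using only the definition \eqref{wmm} of the allocation weight and the constraint $\sum_{i=1}^n y_i = m$ that defines $\cbmn$.

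First, I would compute $\tw(\yyx)$ for a fixed $\yyx = \yyn \in \cbmn$ by direct substitution into \eqref{wmm}:
\begin{equation*}
\tw(\yyx) = \prodin \tw_{y_i} = \prodin a\, b^{y_i} w_{y_i} = a^n\, b^{\sumin y_i} \prodin w_{y_i} = a^n b^m\, w(\yyx),
\end{equation*}
which gives \eqref{lb1w}. The crucial point is that the exponent $\sum_i y_i$ collapses to the constant $m$ because we only consider $\yyx \in \cbmn$; this is the analogue of using \eqref{sumd} in the tree case, but here it is even simpler since $m$ is fixed by definition of $\cbmn$ rather than determined by the combinatorial structure.

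Next, summing the identity \eqref{lb1w} over $\yyx \in \cbmn$ and using the definition \eqref{zmn} of the partition function yields
\begin{equation*}
\tZ(m,n) = \sum_{\yyx \in \cbmn} \tw(\yyx) = a^n b^m \sum_{\yyx \in \cbmn} w(\yyx) = a^n b^m Z(m,n),
\end{equation*}
which is \eqref{lb1z}. In particular $\tZ(m,n) > 0$ if and only if $Z(m,n) > 0$, so $\bmn$ is defined for the same pairs $(m,n)$ under either weight sequence.

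Finally, for the invariance of the distribution, I substitute \eqref{lb1w} and \eqref{lb1z} into the definition \eqref{pbmn} applied to $(\tw_k)$: the common factor $a^n b^m$ cancels between numerator and denominator, so for every $\yyx \in \cbmn$,
\begin{equation*}
\frac{\tw(\yyx)}{\tZ(m,n)} = \frac{a^n b^m w(\yyx)}{a^n b^m Z(m,n)} = \frac{w(\yyx)}{Z(m,n)},
\end{equation*}
which is the probability assigned to $\yyx$ by the original weights. Hence $\bmn$ has the same distribution under both weight sequences, and so depends only on the equivalence class. There is no real obstacle here; the whole point is that the exponent $\sum_i y_i$ is constrained to equal $m$, so the rescaling factor is the same for every allocation in $\cbmn$ and therefore disappears upon normalization.
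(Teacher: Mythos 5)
Your proof is correct and follows essentially the same route as the paper: compute $\tw(\yyx)$ directly from \eqref{wmm}, use the constraint $\sumin y_i = m$ to collapse the exponent, sum over $\cbmn$ for \eqref{lb1z}, and cancel the common factor $a^n b^m$ in \eqref{pbmn}. The extra remark that $\tZ(m,n)>0\iff Z(m,n)>0$ is a harmless (and slightly useful) addition not made explicit in the paper.
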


\begin{proof}
We have, by the definition \eqref{wmm},
  \begin{equation}
  \tw(\yyx)=\prodin\tw_{y_i}
=\prodin a b^{y_i}w_{y_i}
=a^n b^{\sumin y_i}\prodin w_{y_i}
=a^n b^{ m} w(\yyx),
\end{equation}
which shows \eqref{lb1w}, and \eqref{lb1z} follows by \eqref{zmn}.
Consequently, for every $\yyx\in\cbmn$, we have $\tw(\yyx)/\tZ(m,n)
=w(\yyx)/Z(m,n)$  
so the probability $\P(\bmn=\yyx)$ in \eqref{pbmn} is unchanged, which completes
the proof.
\end{proof}

Our aim is to describe the asymptotic distribution of the random allocation
$\bmn$ as $m,n\to\infty$; we consider the case when $m/n\to\gl$ for
some real $\gl$, and assume for simplicity that $0\le\gl<\go=\go(\wwx)$.
(Cases with  $m/n\to\infty$ are interesting too in some applications, 
for example in \refSS{SSmaxforests},
but will not be considered here. 
See \eg{} \citet{KolchinSCh}, \citet{Kolchin} and \citet{Pavlov}
for such results in special cases.)
The first step is to note that the distribution of 
$\bmn=(Y_1,\dots,Y_n)$ is \emph{exchangeable}, \ie, invariant under any
permutation of $Y_1,\dots,Y_n$. Hence, the distribution is completely
described by the (joint) distribution of the numbers of boxes with a certain
number of balls, so it suffices to study these numbers.

For any allocation of balls $\yyx=(y_1,\dots,y_n)\in\No^n$, and $k\ge0$, let
\begin{equation}
  N_k(\yyx)\=|\set{i:y_i=k}|,
\end{equation}
the number of boxes with exactly $k$ balls. Thus, if $\yyx\in\cbmn$, then
\begin{equation}
\sumk N_k(\yyx)=n
\qquad\text{and}\qquad
\sumk kN_k(\yyx)=m.  
\end{equation}

We thus want to find the asymptotic distribution of the random variables
$N_k(\bmn)$, $k=0,1,\dots$.
Our main result is the following, which will be proved in \refS{Sbbpf}
together with the other theorems in this section.

\begin{theorem}\label{TBmain}
  Let $\wwx=(w_k)_{k\ge0}$ be any weight sequence with $w_0>0$ and $w_k>0$
  for some $k\ge1$.
Suppose that $n\to\infty$ and $m=m(n)$ with $m/n\to\gl$ with $0\le\gl<\go$.
  \begin{romenumerate}[-10pt]
  \item \label{tbmaincritical}
If $\gl\le\nu$, let $\tau$ be the unique number in $[0,\rho]$ such
that 
$\Psi(\tau)=\gl$.
\item \label{tbmainsub}
If $\gl>\nu$, let $\tau\=\rho$.	
  \end{romenumerate}
In both cases, $0\le\tau<\infty$ and $0<\Phi(\tau)<\infty$.
Let 
\begin{equation}\label{pik}
\pi_k\=\frac{w_k\tau^k}{\Phi(\tau) },
\qquad k\ge0.
\end{equation}
Then
$\ppi\geko$ is a probability distribution, 
with expectation
\begin{equation}\label{tbm}
\mu=  \Psi(\tau)=\min(\gl,\nu)
\end{equation}
and variance $\gss=\tau\Psi'(\tau)\le\infty$.
Moreover, 
for every $k\ge0$,
\begin{equation}\label{tbmain}
  N_k(\bmn)/n\pto\pi_k.
\end{equation}
\end{theorem}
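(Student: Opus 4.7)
The plan is to exploit the invariance of $\bmn$ under equivalent weight sequences (\refL{LB1}) to reduce to the case in which $(w_k)$ is already the probability distribution $\ppi$ defined in \eqref{pik}. Once this reduction is made, \refE{EBprob} identifies $\bmn$ in distribution with an i.i.d.\ sample $(\xin)$ drawn from $\ppi$, conditioned on the event $S_n\=\sumin \xi_i = m$. Under this conditional law, $N_k(\bmn) = \sumin \ett{\xi_i=k}$, so \eqref{tbmain} becomes a conditional law of large numbers for sums of indicator variables, with the target mean $n\pi_k$ coming directly from the unconditioned distribution $\ppi$.

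I would then attack the problem via the conditional second-moment method. By exchangeability,
\begin{equation*}
  \E[N_k\mid S_n=m] = n\,\P(\xi_1=k\mid S_n=m) = n\pi_k\,\frac{\P(S_{n-1}=m-k)}{\P(S_n=m)},
\end{equation*}
and an analogous two-index computation expresses $\E[N_k(N_k-1)\mid S_n=m]$ in terms of $\P(S_{n-2}=m-2k)/\P(S_n=m)$. The entire proof thus reduces to proving that, for each fixed $k$,
\begin{equation*}
  \frac{\P(S_{n-1}=m-k)}{\P(S_n=m)}\tend 1,
\end{equation*}
together with the analogous two-index statement. Granted these ratio asymptotics, the conditional mean of $N_k$ equals $n\pi_k(1+o(1))$, the conditional variance is $o(n^2)$, and Chebyshev's inequality yields \eqref{tbmain}.

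In case \ref{tbmaincritical} the sequence $\ppi$ has mean precisely $\gl$ by construction, so $\P(S_n=m)$ is evaluated at the typical value of $S_n$ and the required ratio is a local limit theorem. When $\Var\xi<\infty$ this is Gnedenko's classical LLT; in the heavy-tailed subcase \Igb{} it follows from the local limit theorems in the stable domain of attraction. The arithmetic restriction $\spann(\wwx)\mid(m-n\gl)$ enters implicitly and forces $m$ to lie on the natural sublattice, which is already built into the standing assumption that $\bmn$ is well-defined, cf.\ \refL{LBexists}.

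The main obstacle is case \ref{tbmainsub}, the condensation regime, where $\ppi$ has mean $\mu=\nu<\gl$ yet we condition on $S_n=m$ with $m/n\to\gl$, a genuine large-deviation event. Because $\tau=\rho$ and no further exponential tilt of $\ppi$ is available (any larger mean would require $\Psi(t)>\Psi(\rho)=\nu$, which is impossible by \refL{LPsi}), the tail of $\ppi$ must itself be heavy enough that the deviation is realised by a single anomalously large summand: the ``one big jump'' principle should give an asymptotic of the form $\P(S_n=m)\sim n\,\pi_{m-\nu n}$. Establishing this, and showing it is stable enough under the perturbation $(n,m)\mapsto(n-1,m-k)$ to give a ratio $1+o(1)$, is the technical heart of the argument. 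The natural route is to expand $\P(S_n=m)=\sum_j \pi_j\,\P(S_{n-1}=m-j)$ and to argue that the dominant contribution comes from $j$ near $m-\nu n$, where $S_{n-1}$ sits at its typical value and the ordinary weak law suffices; this isolates the single condensed summand and reduces the remaining $n-1$ variables back to unconditioned behaviour, closing the loop with the moment calculation above.
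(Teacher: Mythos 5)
Your overall reduction to $\ppi$ and the conditioning picture of \refE{EBprob} match the paper, but the analytic engine you propose --- a second-moment method hinging on local-limit-theorem ratio asymptotics $\P(S_{n-1}=m-k)/\P(S_n=m)\to1$, supplemented by a ``one big jump'' estimate when $\gl>\nu$ --- is not the paper's route, and it has genuine gaps at the level of generality the theorem is stated.

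First, in case~(i) you invoke stable-domain LLTs to handle the subcase $\gss=\infty$. But the theorem puts no regularity hypothesis on $\ww$: $\gss=\infty$ does not imply that $\ppi$ lies in any stable domain of attraction, and when it does not there may be no usable local limit theorem at all, hence no ratio asymptotic. Second, in case~(ii) the proposed one-big-jump asymptotic $\P(S_n=m)\sim n\pi_{m-\nu n}$ is simply false for general $\ww$; the paper's Examples~\ref{Eoligo}, \ref{Eoligo0}, \ref{E00} exhibit weight sequences for which the deviation is spread over many boxes rather than realised by one. (The one-big-jump result \emph{is} proved later, as \refT{TDzeta}, but only under the power-law hypothesis $w_k\sim ck^{-\gb}$.) Third, the reduction to $\ppi$ via \refL{LB1} fails outright when $\rho=0$: then $\tau=0$, $\ppi$ is the point mass at $0$, and $\ppi$ is not equivalent to $\ww$. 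The paper treats $\rho=0$ as a separate Case~(c) requiring a rather delicate truncation argument, which your plan does not address. Finally, even in the finite-variance subcase, $m/n\to\gl$ only gives $m=\gl n+o(n)$, not $m=\gl n+o(\sqrt n)$, so the classical LLT has to be combined with a retilting to $\tau_n=\Psi^{-1}(m/n)$ (as the paper does inside \refL{LB2}); the naive ratio claim is not quite justified as written.

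The paper sidesteps all of these problems by not trying for sharp ratio asymptotics. For $\tau>0$ it replaces the second-moment method with Chernoff's exponential concentration $\P\bigpar{|N_k(\xin)-n\pi_k|>\eps n}\le e^{-c_\eps n}$, and combines this with the much cruder but \emph{always valid} subexponential lower bound $\P(S_n=m)=e^{o(n)}$ furnished by Lemmas~\ref{LB2}--\ref{LB2sub}. The key innovation is \refL{LB2sub}, proved by truncating $\ww$, applying \refL{LB2} (which needs only a triangular-array LLT with uniformly bounded third moments) to the truncation, and letting the truncation level go to infinity; this covers both $\gl=\nu$ with $\gss=\infty$ and the large-deviation case $\gl>\nu$ without any one-big-jump input or stable-law assumption. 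If you want to rescue your outline, the essential missing idea is precisely this Chernoff-versus-subexponential trade-off together with the truncation argument of \refL{LB2sub}, and a separate treatment of $\rho=0$.
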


If we regard the \ws{} $\wwx$ as fixed and vary $\gl$ (\ie, vary $m(n)$), we
see that if $0<\nu<\infty$, there is a phase transition at $\gl=\nu$.

Note that $\tau$ and $\pi_k$ in \refT{Tmain} are the same as in
\refT{TBmain} with $\gl=1$.
Indeed, we will later see that the random trees correspond to $m=n-1$ and thus
$\gl=1$. 

\begin{remark}\label{RBanna}
The argument in \refR{R6.3.1} extends and shows that
$\tau$ is
the (unique) minimum point in $[0,\rho]$
of $\Phi(t)/t^\gl$; \ie, 
\begin{equation}
  \label{annagl}
\frac{\Phi(\tau)}{\tau^\gl}
=\inf_{0\le t\le\rho}\frac{\Phi(t)}{t^\gl}
=\inf_{0\le t<\infty}\frac{\Phi(t)}{t^\gl}.
\end{equation}
\end{remark}

By \eqref{tbmain}, there are roughly $n\pi_k$ boxes with $k$ balls. Summing
this approximation over all $k$ we would get $n$ boxes (as we should)
with a total of $n\sumk k\pi_k=n\mu$ balls. However, the total number of
balls is $m\approx n\gl$, so in the case $\gl>\nu$, \eqref{tbm} shows that
about $n(\gl-\mu)=n(\gl-\nu)$ balls are missing. Where are they?

The explanation is that the sums $\sumk kN_k(\bmn)/n=m$ are not uniformly
summable, and we cannot take the limit inside the summation sign. The
``missing balls'' appear in one or several boxes with very many balls, but
these ``giant'' boxes are not seen in the limit \eqref{tbmain} for
fixed $k$. In physical terminology, this can be regarded as condensation of
part of the mass (= balls).
We study this further in \refSS{SSlarge+}.
The simplest case is that there is a single giant box with $\approx
(\gl-\nu)n$ balls. We shall see that this happens in an
important case (\refT{TDzeta};
see also \citetq{Fig.\ 1}{BialasetalNuPh97} for some numerical examples), 
but that there are also other possibilities
(Examples \ref{Eoligo}--\ref{E00}).

Recall that for \sgrt{s}, which as said above correspond to
balls-in-boxes with $\gl=1$, \refT{Tmain} too shows that there is a
condensation when $\nu<\gl=1$ (since then $\mu<1$ by
\eqref{mainmu}); in this case the condensation appears as 
a node of infinite degree in the random limit
tree $\hcT$ of type {\tii}, 
see \refS{ShGW}.
We shall in \refS{SlargeT} study the relation between the forms of the
condensation shown 
in Theorems \refand{Tmain}{TBmain}.

We further have the following, essentially equivalent, version of
\refT{TBmain}, where we 
assume only 
that $m/n$ is bounded, but not necessarily convergent.

\begin{theorem}\label{TBmain2}
  Let $\wwx=(w_k)_{k\ge0}$ be any weight sequence with $w_0>0$ and $w_k>0$
  for some $k\ge1$.
Suppose that $n\to\infty$ and $m=m(n)$ with $m/n\le C$ for some $C<\go$.

Define the function $\tau:\ooo\to\ooox$ by 
$\tau(x)\=\sup\set{t\le\rho:\Psi(t)\le x}$.
Then $\tau(x)$ is the unique number in $[0,\rho]$ such
that $\Psi(\tau(x))=x$ when $x\le\nu$, and $\tau(x)=\rho$ when $x\ge\nu$;
furthermore, the function $x\mapsto\tau(x)$ is continuous.
We have
$0\le\tau(m/n)<\infty$ and $0<\Phi(\tau(m/n))<\infty$, and
for every $k\ge0$,
\begin{equation}\label{tbmain2}
  \frac{N_k(\bmn)}{n}-\frac{w_k(\tau(m/n))^k}{\Phi(\tau(m/n))}\pto0.
\end{equation}
Furthermore, for any $C<\go$,
this holds uniformly as \ntoo{} for all $m=m(n)$ with $m/n\le C$.
\end{theorem}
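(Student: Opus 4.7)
The plan is to derive \refT{TBmain2} from \refT{TBmain} via a compactness/subsequence argument, after first verifying the analytic properties of the map $x\mapsto\tau(x)$.

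First I would verify the stated properties of $\tau$. By \refL{LPsi}, $\Psi$ is continuous and strictly increasing on $[0,\rho]$ with $\Psi(0)=0$ and $\Psi(\rho)=\nu$, so $\Psi$ admits a continuous inverse $\Psi\qw\colon[0,\nu]\to[0,\rho]$; hence $\tau(x)=\Psi\qw(x)$ for $0\le x\le\nu$ and $\tau(x)=\rho$ for $x\ge\nu$, the two definitions agreeing at $x=\nu$, so $\tau\colon\ooo\to\ooox$ is continuous. For $0\le x<\go$ I would check $\tau(x)<\infty$ and $\Phi(\tau(x))<\infty$ by cases. If $\rho=\infty$, then \refL{LPsi}\ref{L1c} gives $\nu=\go$, so $x<\go=\nu$ and $\tau(x)=\Psi\qw(x)<\rho=\infty$; if $\rho<\infty$ but $\Phi(\rho)=\infty$, then \refL{LPsi}\ref{L1b} gives $\nu=\infty$, so $\tau(x)=\Psi\qw(x)<\rho$ and $\Phi(\tau(x))<\infty$; otherwise $\rho<\infty$ and $\Phi(\rho)<\infty$, and both quantities are finite for every $x$. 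Since $w_0>0$, $\Phi(\tau(x))>0$ always. Consequently, for each fixed $k\ge0$ and each $C<\go$, the function
\begin{equation*}
g_k(x)\=\frac{w_k\tau(x)^k}{\Phi(\tau(x))}
\end{equation*}
is continuous on $[0,C]$.

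Granted these analytic facts, the convergence \eqref{tbmain2} and its uniform strengthening follow by the standard subsequence principle. Suppose, for contradiction, that for some fixed $k$, some $C<\go$ and some $\eps>0$ there exist $\gd>0$ and sequences $n_j\to\infty$ together with $m_j$ satisfying $m_j/n_j\le C$ such that
\begin{equation*}
\P\Bigpar{\bigabs{N_k(B_{m_j,n_j})/n_j-g_k(m_j/n_j)}>\eps}\ge\gd
\quad\text{for all $j$.}
\end{equation*}
Since $[0,C]$ is compact, pass to a further subsequence along which $m_j/n_j\to\gl$ for some $\gl\in[0,C]$; in particular $\gl<\go$. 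Then \refT{TBmain} applies along this subsequence and yields $N_k(B_{m_j,n_j})/n_j\pto g_k(\gl)$, while continuity of $g_k$ gives $g_k(m_j/n_j)\to g_k(\gl)$. Combining, $N_k(B_{m_j,n_j})/n_j-g_k(m_j/n_j)\pto0$, contradicting the lower bound $\gd>0$.

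The only real obstacle is the boundary bookkeeping in step one, showing that $\tau$ and $\Phi\circ\tau$ are continuous and finite up to but excluding $\go$; once that is in place, the reduction to \refT{TBmain} via compactness is automatic, and both \eqref{tbmain2} and its uniform version fall out simultaneously.
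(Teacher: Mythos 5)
Your proof is correct and takes essentially the same route as the paper: Lemma~\ref{Ltau} (which you re-derive) gives the continuity and finiteness of $\tau$ and $\Phi\circ\tau$, and the main argument is a reduction to Theorem~\ref{TBmain} via compactness of $[0,C]$ and extraction of a convergent subsequence of $m/n$. The only cosmetic difference is that the paper first proves \eqref{tbmain2} for a fixed sequence $m(n)$ via the subsequence principle and then notes uniformity is automatic by choosing the worst $m\in[0,Cn]$, whereas you fold both steps into a single proof by contradiction — logically equivalent.
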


Returning to the random variables $Y_1,\dots,Y_n$, we have  the following
result, which is shown by a physicists' proof by \citet{BialasetalNuPh97}.

\begin{theorem}
  \label{TB3}
  Let $\wwx=(w_k)_{k\ge0}$ be any weight sequence with $w_0>0$ and $w_k>0$
  for some $k\ge1$.
Suppose that $n\to\infty$ and $m=m(n)$ with $m/n\to\gl$ where $0\le\gl<\go$,
and let $\ppi\geko$ be as in \refT{TBmain}.
Then, for every $\ell\ge1$ and $y_1,\dots,y_\ell\ge0$,
\begin{equation}\label{tb3}
  \P(Y_1=y_1,\dots,Y_\ell=y_\ell)\to \prod_{i=1}^\ell \pi_{y_i}.
\end{equation}
In other words, for every fixed $\ell$, the random variables $Y_1,\dots,Y_\ell$
converge jointly to independent random variables with the distribution
$(\pi_k)\geko$.
\end{theorem}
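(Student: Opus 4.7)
The plan is to reduce \refT{TB3} to the convergence of empirical occupancy fractions supplied by \refT{TBmain}, using the exchangeability of $\bmn$.

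First, the weight $w(\yyx) = \prod_{i=1}^n w_{y_i}$ in \eqref{wmm} is symmetric in its arguments, so by \eqref{pbmn} the law of $\bmn = (Y_1, \ldots, Y_n)$ is exchangeable. Equivalently, conditional on the occupancy counts $(N_k(\bmn))_{k \geq 0}$, the sequence $(Y_1, \ldots, Y_n)$ is uniformly distributed among the $n!/\prod_k N_k(\bmn)!$ arrangements of the multiset containing $N_k(\bmn)$ copies of $k$ for each $k \geq 0$.

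Second, a standard counting argument (drawing $\ell$ entries in order from this multiset without replacement) then yields, for any fixed $y_1, \ldots, y_\ell \geq 0$,
\begin{equation*}
\P\bigpar{Y_1 = y_1, \ldots, Y_\ell = y_\ell \mid (N_k(\bmn))_{k \geq 0}} = \prod_{i=1}^\ell \frac{N_{y_i}(\bmn) - |\{j < i : y_j = y_i\}|}{n - i + 1},
\end{equation*}
with the convention that a negative numerator produces $0$. By \refT{TBmain}, $N_k(\bmn)/n \pto \pi_k$ for each fixed $k \geq 0$, and hence jointly for the finitely many indices $k \in \{y_1, \ldots, y_\ell\}$. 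Since the shifts $i-1$ and the corrections $|\{j < i : y_j = y_i\}|$ are both bounded by $\ell$, each of the $\ell$ factors above converges in probability to $\pi_{y_i}$, so their product converges in probability to $\prod_{i=1}^\ell \pi_{y_i}$. Because the conditional probability is bounded by $1$, bounded convergence gives
\begin{equation*}
\P(Y_1 = y_1, \ldots, Y_\ell = y_\ell) = \E\bigsqpar{\P\bigpar{Y_1 = y_1, \ldots, Y_\ell = y_\ell \mid (N_k(\bmn))_{k\ge0}}} \to \prod_{i=1}^\ell \pi_{y_i},
\end{equation*}
which is the desired conclusion.

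The only conceivable obstacle would lie in the condensation regime $\gl > \nu$. An alternative approach via the iid representation of \refE{EBprob} would write the conditional probability as $\prod_i \pi_{y_i} \cdot \P(S_{n-\ell} = m-s)/\P(S_n = m)$ and require a local limit theorem for $S_n$; this is delicate when $m - n\mu$ grows linearly. The exchangeability route above sidesteps this difficulty entirely, so that essentially all of the work is absorbed into \refT{TBmain}.
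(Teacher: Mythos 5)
Your proposal is correct and follows essentially the same route as the paper's proof: condition on the occupancy counts, use exchangeability to write the conditional probability as a product of ratios $\bigl(N_{y_i}-c_i\bigr)/(n-i+1)$, invoke \refT{TBmain} for the convergence $N_k/n\pto\pi_k$, and conclude by bounded convergence. The closing remark about sidestepping the local limit theorem in the condensation regime is a nice observation but not something the paper dwells on.
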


A more fancy way of describing the same result is that the sequence
$Y_1,\dots,Y_n$, arbitrarily extended to infinite length, converges in
distribution, as an element of $\No^\infty$, to a sequence of \iid{} random
variables with the distribution $\ppi\geko$. 
(See \eg{} \cite[Problem 3.7]{Billingsley}.)

\begin{remark}\label{Rmin}
  We have assumed $w_0>0$ in the results above for convenience, and because
  this condition is necessary when discussing simply generated trees, which
  is our main topic. The \bib{} model makes sense also when $w_0=0$, but
  this case is easily reduced to the case $w_0>0$: Let
  $\ga:=\min\set{k:w_k>0}$. If $\ga>0$, then this means that each box has to
  have at least $\ga$ balls. 
(In particular, we need $m\ge \ga n$.)
There is an obvious correspondence between such
  allocations in $\cbmn$ and allocations in $\cB_{m-\ga n,n}$ obtained by
  removing $\ga$ balls from each box. Formally, if $\yyx=\yyn\in\cbmn$ 
let $\widetilde\yyx=\tyyn$ with $\ty_i\=y_i-\ga$, and note that if we shift
the \ws{} to $\tw_k\=w_{k+\ga}$, then $\tw(\widetilde\yyx)=w(\yyx)$; 
thus $\bmn$ has the same distribution as $B_{m-\ga n,n}$ for $\twwx$, with
$\ga$ extra balls added in each box. 
It follows easily that the results above hold also in the case $w_0=0$.
(We interpret $w_k\tau^k/\Phi(\tau)$ for $\tau=0$ as the appropriate
limit value. 
Note also that it is essential to use \eqref{span} and not \eqref{span0} when
$w_0=0$.) 
\end{remark}

\begin{remark}
  \label{Rdelar}
Similarly, we can always reduce to the case $\spann(\wwx)=1$: 
If $\spann(\wwx)=d$, then the number of balls in each box has to be a
multiple of $d$, so we may instead consider an allocation of $m/d$
``superballs'', each consisting of $d$ balls. This means replacing each
$Y_i$ by $Y_i/d$ and using the weight sequence $(w_{dk})$.
We prefer, however, to allow a general span in our theorems, for ease of our
applications to simply generated trees where the corresponding reduction is
more complicated.
(For trees, we may replace each branch by a $d$-fold branch. In the \pws{}
case with \GWt{s}, this replaces the random variable $\xi$ by
$(\xi_1+\dots+\xi_d)/d$, with $\xi_i\eqd\xi$ \iid, but the roots gets a
different offspring distribution $\xi/d$; more generally, for a general
\ws{} $\wwx$, we replace $\Phi(t)$ by $\Phi(t^{1/d})^d$, except at the root
where we use different weights with the generating function $\Phi(t^{1/d})$.
We will not use this and leave the details to the reader.)
\end{remark}

\begin{remark}\label{Rsymmetry}
  We have assumed $m/n\to\gl<\go$ in Theorems \refand{TBmain}{TB3}, and
  similarly $m/n\le C<\go$ in \refT{TBmain2}; hence, for $n$ large at least, 
$m/n<\go$. In fact, $m/n\le\go$ is trivially necessary, see \refL{LBexists}.
When $\go<\infty$, the only remaining case (assuming $m/n$ converges)
is thus $m/n\to\go$ with $m/n\le \go$; in this case, it is easy to see that
\eqref{tbmain} and \eqref{tb3} hold with $\pi_\go=1$ and $\pi_k=0$, $k\neq\go$.
(This can be seen as a limiting case of \eqref{pik} with $\tau=\infty$.)

In fact, if $\go<\infty$, so the boxes have a finite maximum capacity $\go$,
then the complementation $y_i\mapsto\go- y_i$ yields a bijection of $\cbmn$
onto $\cB_{\go n-m,n}$, which preserves weights 
if $\ww$ simultaneously is reflected to $\twwx\=(w_{\go-k})$. Hence, 
$\bmn$ corresponds to $B_{\go n-m,n}$ (for $\twwx$), and
results
for $m/n\to\go<\infty$ follow from results for $m/n\to0$.

As said above, we do not consider the case $\go=\infty$ and
$m/n\to\infty$, when the average occupancy tends
to infinity. 
\end{remark}

\section{Examples of \bib}\label{Sex+}

Apart from the connection with simply generated trees, 
see  \refS{Stree-balls},
the balls-in-boxes model is interesting in its own
right.

We begin with three classic examples of balls-in-boxes, see \eg{}
\citetq{II.5}{FellerI} and \citet{Kolchin}, followed by
further examples from probability theory, combinatorics
and statistical physics, including several examples of random
forests. 
(We return to these examples of random forests in
\refS{SSmaxforests}, where we study the size of the largest tree in them.)

\begin{example}[Maxwell--Boltzmann statistics; multinomial distribution]
  Consider \label{EMB}
a uniform random allocation of $m$ \emph{labelled} balls in $n$
  boxes. This is the same as throwing $m$ balls into $n$ boxes at random,
  independently and with each ball uniformly distributed.
(In statistical mechanics, this is known as the 
\emph{Maxwell--Boltzmann statistics}.)
It is elementary that the resulting random allocation $\YYn$ has a
multinomial distribution
\begin{equation}\label{xlab}
  \P\bigpar{\YYn=\yyn}
=n^{-m}\binom{m}{y_1,\dots,y_n}
=m!\, n^{-m}\prodin \frac1{y_i!}.
\end{equation}
If we take $w_k=1/k!$, we see that the probabilities in \eqref{xlab} and
\eqref{pbmn} are proportional, and thus must be identical, so the \ws{}
$(1/k!)$ yields the uniform random allocation of labelled balls.
We see also that then 
\begin{equation}
Z(m,n)=n^m/m!.  
\end{equation}

Alternatively, we may take a Poisson distribution $\Po(a)$:
$w_k=a^ke^{-a}/k!$; this is an equivalent \ws{} for any $a>0$. We see
directly that then $S_n\sim \Po(na)$ so \eqref{ebprob} yields 
\begin{equation}
  Z(m,n)=(na)^me^{-na}/m!;
\end{equation}
hence we see again that \eqref{pbmn} and \eqref{xlab} agree.

Comparing with \refE{Ecayley}, and using \refL{Lx} below, we see that the
multiset 
of degrees in a random unordered labelled tree of size $n$ has exactly the 
distribution obtained when throwing $n-1$ balls into $n$ boxes at random.

With $w_k=1/k!$ we have, as in \refE{Ecayley},  
\eqref{ecayleyphi}--\eqref{ecayleypsi} and
$\rho=\go=\nu=\infty$.
Hence, if $m/n\to\gl$, we have $\tau=\gl$ and thus $\pi_k=\gl^ke^{-\gl}/k!$,
so $\ppi$ is the $\Po(\gl)$ distribution, which thus is the canonical choice
of weights. (In the asymptotic case; for given $m$ and $n$ one might choose
$\Po(m/n)$, \cf{} \eqref{tbmain2}.) 

\refT{TB3} (or \eqref{tbmain}) shows that if $m/n\to\gl<\infty$, then the
asymptotic distribution of the 
numbers of balls in a given urn is $\Po(\gl)$.

The idea to study the multinomial distribution as a vector of \iid{} Poisson
variables conditioned on the sum is an old one that has been used
repeatedly, see \eg{} \citet{KolchinSCh}, 
\citet{Holst:conditional,Holst:urn},  
\citet{Kolchin},
\citet{SJ132}.
\end{example}

\begin{example}[Bose--Einstein statistics] 
The \label{EBE}
\ws{} $w_k=1$ yields a uniform distribution over all allocations of $m$
identical and indistinguishable balls in $n$ boxes; thus each allocation
$\YYn\in\cbmn$ has the same probability $1/|\cbmn|=1/\binom{n+m-1}{m}$.

This is known as 
\emph{Bose--Einstein statistics} in statistical quantum mechanics;
it is the distribution followed by \emph{bosons}. (In the simple case with
no forces acting on them.)

Comparing with \refE{Euniform}, and using \refL{Lx} below, we see that the
multiset 
of degrees in a random ordered tree of size $n$ has exactly the 
distribution obtained by a uniform random allocation of $n-1$ balls  into
$n$ boxes. 

As in \refE{Euniform} we have \eqref{euphi}--\eqref{eupsi} and $\rho=1$,
$\nu=\infty$. 
If $m/n\to\gl<\infty$, then the equation $\Psi(\tau)=\gl$ is,
by \eqref{eupsi}, 
$\xfrac{\tau}{(1-\tau)}=\gl$, 
  and thus 
\begin{equation}\label{ebetau}
  \tau=\frac{\gl}{1+\gl}.
\end{equation}
 
Any geometric distribution 
$\Ge(p)$ with $0<p<1$ 
is a \ws{} equivalent to $\ww$, and
\eqref{ebetau} shows that the canonical choice \eqref{pk} is, using
\eqref{euphi},  
\begin{equation}
  \pi_k=(1-\tau)\tau^k
=\frac{\gl^k}{(\gl+1)^{k+1}},
\end{equation}
which is the distribution
$\Ge(1-\tau)=\Ge(1/(\gl+1))$.
By \refT{TB3}, this is also
the asymptotic distribution of balls in a given urn.

See also \citet{Holst:conditional, Holst:urn}
and \citet{Kolchin}.
\end{example}

\begin{example}[Fermi--Dirac statistics]\label{EFD}
The other type of particles in statistical quantum mechanics is
\emph{fermions}; 
they exclude each other (the \emph{Pauli exclusion principle})
so all allocations of them have to satisfy $Y_i\le 1$, \ie, $Y_i\in\setoi$.
A random allocation uniform among all such possibilities is  known as 
\emph{Fermi--Dirac statistics}; this is thus equivalent to a uniform random
choice of one of the $\binom nm$ subsets of $m$ boxes.

We obtain this distribution by the choice $w_0=w_1=1$ and $w_k=0$ for
$k\ge2$; thus
\begin{equation}\label{fermiphi}
  \Phi(t)=1+t
\end{equation}
and 
\begin{equation}
  \Psi(t)=\frac{t}{1+t}.
\end{equation}
We have $\rho=\infty$ and $\nu=\go=1$.
(Formally, \eqref{fermiphi} is the case $d=1$ of
\eqref{d-aryphi}, but note that  we assume $d\ge2$ in \refE{Ed-ary}.)

If $m/n\to\gl<1$, we thus have a rather trivial example of the general
theory with $\tau/(1+\tau)=\gl$ and thus
\begin{equation}\label{fermitau}
  \tau=\frac{\gl}{1-\gl},
\end{equation}
and $\ppi=(1-\gl,\gl,0,0,\dots)$, \ie,
the Bernoulli distribution $\Be(\gl)$.
(Any Bernoulli distribution $\Be(p)$ with $0<p<1$ is equivalent.)

Since $\go=1$, the corresponding \cGWt{} is trivially the deterministic  path
$P_n$,
a case which we have excluded above.
\end{example}

\begin{example}[\Polya\ urn \cite{Holst:urn}]\label{Epolya}
Consider a multicolour \Polya{} urn 
containing balls of $n$ different colours,
see \citet{EggPol}.
Initially, the urn contains $a>0$ balls of each colour. Balls are drawn at
random, one at a time. After each drawing, the drawn ball is replaced
together with $b>0$ additional balls of the same colour.
(It is natural to take $a$ and $b$ to be integers, but the model is easily
interpreted also for arbitrary real $a,b>0$, see \eg{} \cite{SJ154}.)

Make $m$ draws, and let $Y_i$ be the number of times that a ball of colour
$i$ is drawn; then $\YYn$ is a random allocation in $\cbmn$.

A straightforward calculation, see \cite{EggPol}, \cite{JohnsonKotz},
\cite{Holst:urn},
shows that
	\begin{multline}
\P\bigpar{\YYn=\yyn}
\\
\begin{aligned}
&=
\binom{m}{y_1,\dots,y_n}
\frac{\prodin a(a+b)\dotsm(a+(y_i-1)b)}{na(na+b)\dotsm(na+(m-1)b)}	
\\&
=
\frac{\prodin \binom{a/b+y_i-1}{y_i}}{\binom{na/b+m-1}{m}}.	    
\end{aligned}
	\end{multline}
Hence, as noted by \citet{Holst:urn},
this equals the random allocation given by the weights
\begin{equation}
  w_k=\binom{a/b+k-1}{k}=(-1)^k\binom{-a/b}{k},
\qquad k=0,1,\dots.
\end{equation}
Note that the case $a=b$ yields $w_k=1$ and the uniform random allocation in
\refE{EBE} (Bose--Einstein statistics).
We have 
\begin{equation}
  \Phi(t)=\sumk \binom{a/b+k-1}{k}t^k=(1-t)^{-a/b},
\end{equation}
with radius of convergence $\rho=1$, and thus
\begin{equation}
  \Psi(t)=\frac ab\cdot\frac{t}{1-t}.
\end{equation}
Hence, $\nu=\Psi(1)=\infty$, and for any $\gl\in\ooo$, 
\begin{equation}
  \tau=\frac{b\gl}{a+b\gl}.
\end{equation}

The equivalent \pws{s} are, by \refL{Lequivp}, given by 
\begin{equation}
\frac{t^kw_k}{\Phi(t)} = 
 \binom{a/b+k-1}{k}t^k(1-t)^{a/b}, 
\qquad 0<t<1,
\end{equation}
which is the negative binomial distribution $\NBi(a/b,1-t)$
(where the parameter $a/b$ is not necessarily an integer).
The canonical choice, which by Theorems \refand{TBmain}{TB3} is the
asymptotic distribution of 
the number of balls of a given colour, 
is $\NBi(a/b,1-\tau)=\NBi(a/b,a/(a+b\gl))$.
See also \citet{Holst:urn} and \citet{Kolchin}.

Note that the case $b=0$ (excluded above)
means drawing with replacement; this is
\refE{EMB}, which thus can be seen as a limit case.
(This corresponds to the Poisson limit
$\NBi(a/b,a/(a+b\gl))\dto\Po(\gl)$ as $b\to0$.)
\end{example}

\begin{example}[drawing without replacement]
Consider again an urn with balls of $n$ colours, with initially $a$ balls of
each colour. 
(This time, $a\ge1$ is an integer.)
Draw $m$ balls without replacement, and let as above $Y_i$ be the number of
drawn balls of colour $i$.
(The case $a=1$ yields the Fermi--Dirac statistics in \refE{EFD}.)

Formally, this is the case $b=-1$ of \refE{Epolya}, and a similar
calculation shows that
\begin{equation}
\P\bigpar{\YYn=\yyn}
=
\frac{\prodin \binom{a}{y_i}}{\binom{na}{m}};
\end{equation}
hence this is the random allocation given by the weights
\begin{equation}
  w_k=\binom ak,
\qquad k=0,1,\dots
\end{equation}
We have thus $\Phi(t)=(1+t)^a$, exactly as in \refE{Ed-ary}, with $d=a$.

The equivalent \pws{s} are the binomial distributions $\Bi(a,p)$, $0<p<1$,
and the canonical choice is, for $0<\gl<a$, 
$  \ppi=\Bi(a,\gl/a)$, \ie{}
\begin{equation}
\pi_k=\binom ak\parfrac \gl{a}^k \parfrac{a-\gl}{a}^{a-k}
=\binom ak\frac{ \gl^k (a-\gl)^{a-k}}{a^a}.
\end{equation}
See also \citet{Holst:urn} and \citet{Kolchin}.

Note that taking the limit as $a\to\infty$, 
we obtain drawing with replacement, which is \refE{EMB};
this corresponds to the Poisson limit $\Bi(a,\gl/a)\dto\Po(\gl)$ as
$a\to\infty$. 
\end{example}

\begin{example}[random rooted forests \cite{Kolchin}] \label{Eforest} 
Consider \emph{labelled rooted forests} consisting of $n$ unordered
rooted trees with
  together $m$ labelled nodes. (Thus $m\ge n$.)
We may assume that the $n$ roots are labelled $1,\dots,n$; let $T_i$ be the
tree with root $i$ and let $t_i\=|T_i|$.
Then the node sets $V(T_i)$ form a partition of \setm, so $\sumin t_i=m$ and
$\ttn$ is an allocation in $\cbmn$, with each $t_i\ge1$.
Furthermore, given $\ttn\in\cbmn$ with all $t_i\ge1$, the node sets $V(T_i)$
can be chosen in $\binom{m-n}{t_1-1,\dots,t_n-1}$ ways, and given $V(T_i)$,
the tree $T_i$ can by Cayley's formula be chosen in $t_i^{t_i-2}$ ways.
(The trees are rooted but the roots are given.)
Hence, the number of forests with the allocation $\ttn$ is
\begin{equation}\label{forest}
  \binom{m-n}{t_1-1,\dots,t_n-1}\prodin t_i^{t_i-2}
=
(m-n)!\prodin \frac{t_i^{t_i-2}}{(t_i-1)!}
=
(m-n)!\prodin \frac{t_i^{t_i-1}}{t_i!}.
\end{equation}
Hence, a uniformly random labelled rooted forest corresponds to a random
allocation $\bmn$ with the \ws{} $w_k=k^{k-1}/k!$, $k\ge1$, and $w_0=0$.
Note that here $w_0=0$ unlike almost everywhere else
in the present paper; in the notation of \refR{Rmin}, we have $\ga=1$.
(As discussed in \refR{Rmin}, we can reduce to the case $w_0>0$ by
considering $(t_1-1,\dots,t_n-1)$, which is an allocation in $\cB_{m-n,n}$;
this means that we count only non-root nodes. We prefer, however, to keep
the setting above with $w_0=0$, noting that the results above still hold by
\refR{Rmin}.)

If $\ff mn$ denotes
the number of labelled rooted forests
with $m$ labelled nodes of which $n$ are given as roots,
then \eqref{forest} implies
\begin{equation}
  \label{fforest}
\ff mn = (m-n)!\, Z(m,n).
\end{equation}
It is well-known that 
$\ff mn = n m^{m-n-1}$, a formula also given by \citet{Cayley},
see \eg{} \cite[Proposition~5.3.2]{Stanley2} or \cite{Pitman:enum}; thus
\begin{equation}
  \label{zforest}
Z(m,n)=\frac{ n m^{m-n-1}}{ (m-n)!}.
\end{equation}

We have
\begin{equation}\label{treefn}
  \Phi(t)\=\sumki \frac{k^{k-1}}{k!}t^k=T(t),
\end{equation}
the well-known
\emph{tree function}
(known by this name since it is the exponential generating function for
rooted unordered labelled trees, \cf{} \refE{Ecayley}).
Note that $T(z)$ satisfies the functional equation
\begin{equation}
  \label{treeeq}
T(z)=ze^{T(z)};
\end{equation}
see \eg{} \cite[Section II.5]{Flajolet}.
Equivalently, 
\begin{equation}\label{treeeq2}
  z=T(z)e^{-T(z)}, 
\end{equation}
which by differentiation leads to
\begin{equation}
  \label{T'}
T'(z)=\frac{T(z)}{z(1-T(z))}\;. 
\end{equation}
Hence,
\begin{equation}\label{forestpsi}
  \Psi(t)\=\frac{t\Phi'(t)}{\Phi(t)}=\frac1{1-T(t)}\;.
\end{equation}
By \eqref{treefn} and Stirling's formula, $\Phi(t)$ has radius of convergence
$\rho=e\qw$. 
Furthermore, \eqref{treeeq2} implies that
$\Phi(\rho)=T(e\qw)=1$. 
Hence, \eqref{forestpsi} yields $\nu=\Psi(\rho)=\infty$, and if
$1\le\gl<\infty$, then $\gl=\Psi(\tau)$ is solved by
\begin{equation}\label{forestTtau}
T(\tau)=1-\frac1\gl=\frac{\gl-1}{\gl}
\end{equation}
and thus, using \eqref{treeeq2},
\begin{equation}\label{foresttau}
  \tau=\frac{\gl-1}{\gl}e^{-(\gl-1)/\gl}.
\end{equation}

The \pws{s} equivalent to $\ww$ are by \refL{Lequivp} given by,
substituting $x=T(t)$, and thus $t=xe^{-x}$ by \eqref{treeeq2},
\begin{equation}\label{borel}
p_k=
  \frac{t^k}{T(t)}w_k=\frac{k^{k-1} t^k}{T(t)k!}
=\frac{(kx)^{k-1} e^{-kx}}{k!},
\qquad k\ge1,
\end{equation}
where $0\le t\le e\qw$ and thus $0\le x\le 1$.
This is known as a \emph{Borel distribution};  it appears for example as
the distribution of the size $|\cT|$
of the Galton--Watson tree with offspring distribution $\Po(x)$.
(This was first proved by \citet{Borel}. It follows by \refT{TZ} below, with
the \pws{} $\Po(x)$;  
see also 
\citet{Otter},
\citet{Tanner},
\citet{Dwass},
\citet{Takacs:ballots},
\citet{Pitman:enum}.)
It follows that the random rooted forest considered here
has the same distribution as the forest
defined by a \GWp{} with 
starting with $n$ individuals (the roots)
and $\Po(x)$ offspring distribution, conditioned
to have total size $m$; \cf{} \refE{EGWforest} below.
See further \citet{Kolchin} and \citet{Pavlov}.

In particular, the canonical distribution for a given $\gl\ge1$ is,
using \eqref{foresttau},
\begin{equation}
  \pi_k=
\frac{k^{k-1} \tau^k}{T(\tau)k!}
=\frac{k^{k-1}}{k!} \parfrac{\gl-1}{\gl}^{k-1}e^{-k(\gl-1)/\gl}.
\end{equation}
By Theorems \refand{TBmain}{TB3}, and \refR{Rmin},
this is the asymptotic distribution of the
size of a given (or random) tree in the forest, say $T_1$.
The asymptotic distribution of $|T_1|$
is thus the distribution of the size $|\cT|$ of
a Galton--Watson tree with offspring distribution $\Po(1-1/\gl)$.
Moreover, $T_1$ is, given its size $|T_1|$, uniformly distributed over all
trees on $|T_1|$ nodes, and the same is true for the Poisson Galton--Watson
tree $\cT$ by \refE{Ecayley}. Consequently,
$T_1\dto \cT$ as \ntoo{} with $m/n\to\gl$.
(We may regard $T_1$ as an ordered tree, ordering the children of a node \eg{}
by their labels.)

The same random allocation $\bmn$ also describes the block lengths in
hashing with linear probing; see \citet{SJ133}.
Indeed, there is a
one-to-one correspondence between hash tables and
rooted forests, see \eg{} \citet[Exercise 6.4-31]{KnuthIII} and \citet{CL}.
\end{example}

\begin{example}[random unrooted forests] \label{Euforest} 
  Consider \emph{labelled unrooted for\-ests} consisting of $n$ trees with
  together $m$ labelled nodes. (Thus $m\ge n$.)
We may assume that the $n$ trees are labelled $T_1,\dots,T_n$;
 let $t_i\=|T_i|$.
As in \refE{Eforest}, the node sets $V(T_i)$ form a partition of \setm, so
$\sumin t_i=m$ and 
$\ttn$ is an allocation in $\cbmn$, with each $t_i\ge1$.
In the unrooted case, given $\ttn\in\cbmn$ with all $t_i\ge1$, the node sets
$V(T_i)$ 
can be chosen in $\binom{m}{t_1,\dots,t_n}$ ways, and given $V(T_i)$,
the tree $T_i$ can by Cayley's formula be chosen in $t_i^{t_i-2}$ ways.
Hence, the number of unrooted forests with the allocation $\ttn$ is
\begin{equation}\label{uforest}
  \binom{m}{t_1,\dots,t_n}\prodin t_i^{t_i-2}
=
m!\prodin \frac{t_i^{t_i-2}}{t_i!}.
\end{equation}
Hence, a uniformly random labelled unrooted forest corresponds to a random
allocation $\bmn$ with the \ws{} $w_k=k^{k-2}/k!$, $k\ge1$, and $w_0=0$.
As in \refE{Eforest}, we have $w_0=0$, but this is no problem by \refR{Rmin}.

If $\ffu mn$ denotes
the number of labelled unrooted forests
with $m$ labelled nodes and $n$ labelled trees,
then \eqref{uforest} implies
\begin{equation}
  \label{fuforest}
\ffu mn = m!\, Z(m,n).
\end{equation}
There is no simple general formula for $\ffu mn$, as there is for the rooted
forests in \refE{Eforest}, and hence no simple formula for $Z(m,n)$.
Asymptotics are given by \citet{Britikov}.
(See \refE{Ebritikov2} for one case. The asymptotic formula when
$m/n\to\gl>2$ follows similary
from \refT{TDzeta}(ii), and when $m/n\to\gl<2$ with $m=\gl n+o(\sqrt n)$ from
\refT{THB1}.) 

We have
\begin{equation}\label{uforestphi}
  \Phi(t)\=\sumki \frac{k^{k-2}}{k!}t^k=T(t)-\tfrac12T(t)^2,
\end{equation}
where $T(t)$ is the tree function in \eqref{treefn}.
(The latter equality is well-known, see \eg{} 
\cite[II.5.3]{Flajolet}; it can be shown \eg{} by showing that both sides
have the same derivative $T(t)/t$; there are also combinatorial proofs.)
Hence, using \eqref{T'},
\begin{equation}\label{uforestpsi}
  \Psi(t)\=\frac{t\Phi'(t)}{\Phi(t)}
=\frac{T(t)}{\Phi(t)}
=\frac1{1-T(t)/2};
\end{equation}
\cf{} the similar \eqref{forestpsi} in the rooted case.

As for 
\eqref{treefn}, $\Phi$ has the radius of convergence $\rho=e\qw$, but now,
by \eqref{uforestpsi},
$\nu=\Psi(\rho)=2$ is finite, so there is a phase transition at $\gl=2$.
The parameter $\tau$ is by the definition in \refT{Tmain} and
\eqref{uforestpsi} given by
$T(\tau)=2-2/\gl=2(\gl-1)/\gl$ 
for $\gl\le 2$; thus, using \eqref{treeeq2},
\begin{equation}\label{uforesttau}
  \tau=
  \begin{cases}
2\frac{\gl-1}{\gl}e^{-2(\gl-1)/\gl}, & \gl\le2.	
\\
e\qw, & \gl\ge2.
  \end{cases}
\end{equation}

The \pws{s} equivalent to $\ww$ are by \refL{Lequivp} given by,
again substituting $t=xe^{-x}$ or $x=T(t)$,
\begin{equation}\label{pku}
p_k=
 \frac{k^{k-2} t^k}{T(t)(1-T(t)/2)k!}
=\frac{x(kx)^{k-2} e^{-kx}}{(1-x/2)k!},
\qquad k\ge1,
\end{equation}
where $0\le t\le e\qw$ and thus $0\le x\le 1$.
In particular, the canonical distribution for a given $\gl\ge1$ is,
by \eqref{uforesttau} and \eqref{pku}, for $k\ge1$,
\begin{equation}\label{pkk}
  \pi_k=
\frac{k^{k-2} \tau^k}{T(\tau)(1-T(\tau)/2)k!}
=
\begin{cases}
\frac{k^{k-2}}{k!} \gl\Bigpar{2\frac{\gl-1}{\gl}}^{k-1}e^{-2k(\gl-1)/\gl},
& \gl\le2,
\\
\frac{2k^{k-2} e^{-k}}{k!}, & \gl\ge2.
\end{cases}
\end{equation}
By Theorems \refand{TBmain}{TB3}, and \refR{Rmin},
this is the asymptotic distribution of the
size of a given (or random) tree in the forest, say $T_1$.

We shall see in 
\refT{Tmaxu} that 
the phase transition at $\gl=2$ is seen clearly in the size of the largest
tree in the forest:
if $m/n\to\gl<2$, then the largest tree is of size $\Op(\log n)$,
while if $m/n\to\gl>2$, then there is a unique giant tree 
of size $(\gl-2)n+\op(n)$; 
for details
see Theorems \refand{TDzeta}{Tmaxu}, and, 
more generally,  \citet{LP}.
This is thus an example of the condensation discussed after \refT{TBmain}
(and similar to the condensation in \refT{Tmain} when $\nu<1$).
\end{example}

\begin{example}[simply generated forests and \GWf{s}]  
A \label{EGWforest} 
 \emph{\sgf} is a sequence $(T_1,\dots,T_n)$ of rooted trees, with weight
\begin{equation}\label{sgf}
  w\TTn\=\prodin w(T_i),
\end{equation}
where $w(T_i)$ is given by \eqref{wtree}, for some fixed \ws{} $\wwx$.
A \emph{\sgrf} with $n$ trees and $m$ nodes, where $n$ and $m$ are given
with $m\ge n$, 
is such a forest chosen at random, with probability proportional to its
weight. Note that in the special case $n=1$, this is the same as a \sgrt{}
defined in \refS{Ssimply}. More generally, for any $n$, a \sgrf{} $\TTn$ is,
conditioned on the sizes $|T_1|,\dots,|T_n|$, a sequence of independent
\sgrt{s} with the given sizes (all defined by the same \ws{} $\wwx$).
Moreover, the sizes $(|T_1|,\dots,|T_n|)$ form an allocation in $\cbmn$, and
it is easily seen that
this is a random allocation $\bmn$ defined by the \ws{}
$(Z_k)_{k=0}^\infty$, where $Z_k$ is the partition function \eqref{zn} for
\sgt{s} with \ws{} $\wwx$ (and $Z_0=0$).

A \sgrf{} can thus be obtained by a two-stage process, combining the
constructions in Sections \refand{Ssimply}{SBB}.
Note that equivalent \ws{s} $\wwx$ yield equivalent \ws{s} $(Z_k)$ by
\eqref{tz}, and thus the same \sgrf.

In the special case when $\wwx$ is a \pws, we also define
a \emph{\GWf} with $n$ trees, for a given $n$, 
as a sequence $(\cT_1,\dots,\cT_n)$ of $n$
\iid{} \GWt{s}; it describes the evolution of a \GWp{} started with $n$
particles. (It can also be seen as a single \GWt{} $\cT$ with the root
chopped off, conditioned on the root degree being $n$, provided that this root
degree is possible.) Note that the probability  distribution of the forest
is given by the weights in \eqref{sgf}.
Hence, in the \pws{} case, the \sgrf{} 
equals the \emph{\cGWf} with $n$ trees and $m$ nodes, 
defined as a \GWf{} with $n$ trees conditioned on the total size
being $m$; in other words,
it describes a \GWp{} started with $n$ particles conditioned on the
total size being $m$.

Random forests of this type are studied by \citet{Pavlov}, see also
\citet[Example III.21]{Flajolet}.

For example, taking $w_k=1/k!$, we have by \eqref{borel2} $Z_k=k^{k-1}/k!$,
$k\ge1$; this is the \ws{} used in \refE{Eforest}, so we obtain the same
random allocation of tree sizes as there; moreover, given the tree sizes,
the trees are uniformly random labelled unordered rooted trees by
\refE{Ecayley}. Consequently, for this \ws, the \sgrf{} is the random
labelled forest with unordered rooted trees in \refE{Eforest}.
The same random forest is obtained by the equivalent \pws{}
$w_k=x^ke^{-x}/k!$,
with $0<x\le1$,
so it equals also the \cGWf{} with offspring distribution $\Po(x)$, 
\cf{} \refE{Eforest}. 

Another example is obtained by taking $w_k=1$ for all $k\ge0$. Then every
forest has weight $1$, so the this \sgrf{} is a uniformly random forest
of  ordered rooted trees. (An \emph{ordered rooted forest}.)
By \refE{Euniform}, the \ws{} $(Z_k)$ is then given by the Catalan numbers
in \eqref{catalan}: 
$Z_k=C_{k-1}=(2k-2)!/(k!\,(k-1)!)$, $k\ge1$.

Further examples are given by starting with the other examples of random trees
in \refS{Sex}.

We shall see in \refT{TH1} that if the \ws{} $\wwx$ is as in \refT{Tmain},
and further $\spann(\wwx)=1$, $\nu\ge1$ and $\gss<\infty$, then
\begin{equation}\label{zorro}
  Z_k\sim \frac{\tau}{\sqrt{2\pi\gss}}\parfrac{\Phi(\tau)}{\tau}^k k^{-3/2} .
\end{equation}
Recalling $\cZ(\tau/\Phi(\tau))=\tau$ by  \eqref{otter},
we may replace $Z_k$ by the equivalent \pws{}
\begin{equation}\label{sgfz'}
  \tZ_k\=\frac{Z_k}{\cZ(\tau/\Phi(\tau))}\parfrac{\tau}{\Phi(\tau)}^k
=\frac{Z_k}{\tau}\parfrac{\tau}{\Phi(\tau)}^k
\sim \frac{1}{\sqrt{2\pi\gss}} k^{-3/2},
\end{equation}
so we have the asymptotic behaviour $\tZ_k\sim c k\qqcw$ for every such \ws{}
$\wwx$, where only the constant $c=1/\sqrt{2\pi\gss}$ depends on $\wwx$.
This explains why random forests of this type have similar asymptotic
behaviour, in contrast to the unrooted forests in \refE{Euforest}.
\end{example}

\begin{example}
  \label{EBzeta}
Let, as in \refE{Ezeta}, 
$w_k=(k+1)^{-\gb}$ for some real constant $\gb$. 
Then $\rho=1$. 
As shown in \refE{Ezeta}, $\nu=\infty$ if $\gb\le2$, and $\nu<\infty$ if
$\gb>2$; in the latter case, $\nu$ is given by \eqref{ezeta}.
This example is studied further in \eg{} 
\citet{BialasetalNuPh97}. 
\end{example}

\begin{example}[power-law]
\label{EBpower}
More generally, suppose that 
$w_k\sim c k^{-\gb}$ as \ktoo, for some real constant $\gb$ and $c>0$,
\ie, that $w_k$ asymptotically satisfies a power-law.
Qualitatively, we have the same behaviour as in Examples
\refand{Ezeta}{EBzeta}, but numerical values such as the critical $\gb$ in
\eqref{ezetagb0} will in general be different.

We repeat some easy facts:
first, $\rho=1$, $\go=\infty$ and $\spann(\wwx)=1$.

If $-\infty<\gb\le1$, then $\Phi(\rho)=\Phi(1)=\infty$; hence $\nu=\infty$ by
\refL{LPsi}\ref{L1b}.

If $1<\gb\le2$, then $\Phi(\rho)<\infty$ but $\Phi'(\rho)=\sumk kw_k=\infty$;
hence again $\nu=\Psi(\rho)=\infty$ by \eqref{nu1}.

On the other hand, if $\gb>2$, then $\Phi(1)<\infty$ and $\Phi'(1)<\infty$,
and thus $\nu<\infty$ by \eqref{nu1}.
Summarising: 
\begin{equation}\label{beta2}
 \nu<\infty\iff\gb>2. 
\end{equation}
In the case $\gb>2$, there is thus a phase transition when we vary $\gl$. 

Suppose $\gb>2$, so $\nu<\infty$. If $\gl\ge\nu$, then $\tau=\rho=1$, and the
canonical distribution $\ppi$ is by \eqref{pik} given simply by
$\pi_k=w_k/\Phi(1)$. This distribution then has mean $\mu=\nu<\infty$ by
\eqref{tbm}; since $\pi_k\asymp k^{-\gb}$ as \ktoo, the variance
$\gss=\infty$ if $2<\gb\le3$, while $\gss<\infty$ when $\gb>3$.

Note that Examples \ref{Eforest} and \ref{Euforest} with random forests
are of this type,
provided we replace $w_k$ by the equivalent $\tw_k\=e^{-k}w_k$; 
Stirling's formula shows that $\tw_k\sim ck^{-\gb}$ where 
$\gb=3/2$ for rooted forests and $\gb=5/2$ for unrooted forests
(and $c=1/\sqrt{2\pi}$). 
The different values of $\gb$ explains the different asymptotical behaviours of
these two types of random forests: by the results above, the tail behaviour
of $w_k$ implies that
$\nu=\infty$ for rooted forests but $\nu<\infty$ for unrooted forests,
as we have shown by explicit calculations in Examples \ref{Eforest} and
\ref{Euforest}. Recall that this means that
there is a phase transition and condensation for high $m/n$ in the
unrooted case but not in the rooted case.

More generally, \eqref{sgfz'} shows that \sgrf{s} under weak assumptions
have the same power-law behaviour of the \ws{} with $\gb=3/2$ as the special
case of (unordered) rooted forests in \refE{Eforest}. Thus
$\nu=\infty$ and there is no phase transition. (At least not in the range
$m=O(n)$ that we consider. \citet{Pavlov} show a phase transition at
$m=\Theta(n^2)$.) 
\end{example}

\begin{example}[unlabelled forests]\label{Eunlabelledforests}
Consider, as \citet{Pavlov:unlabelled}, rooted forests consisting of $n$
rooted \emph{unlabelled} trees, assuming that the trees, or equivalently the
roots, 
are labelled $1,\dots,n$, but otherwise  the nodes are unlabelled.
A uniformly random forest of this type with $m$ nodes can
be seen as \bib{} with the \ws{} $(t_k)$, where $t_k$ is the number of
unlabelled rooted trees with $k$ nodes.
In this case there is no simple formula for the generating function
$\Phi(z)$, but there is a functional equation, from which it can be shown
that $t_k\sim c_1 k^{-3/2}\rho^{-k}$, where 
$\rho\approx0.3382$ as usual
is the radius of convergence of $\Phi(z)$
and $c_1\approx 0.4399$, see \citet{Otter:trees} 
or, \eg, \citet[Section 3.1.5]{Drmota}.
Furthermore, $\Phi(\rho)=1$; thus $(t_k\rho^k)$ gives an equivalent \pws{}
with $t_k\rho^k\sim c_1 k\qqcw$ as \ktoo.
The asymptotic behaviour of the \ws{} is thus the same as for labelled
rooted forests in \refE{Eforest}, and more generally for 
\GWf{s} (under weak conditions) in \refE{EGWforest}, and we expect
the same type of asymptotic behaviour in spite of the fact that the
unlabelled forest is not simply generated; this is seen in detail in
\citet{Pavlov} for the size of the largest tree.
In particular,  we have $\nu=\infty$ by \refE{EBpower} and \eqref{beta2},
and thus there is no 
phase transition at finite $\gl$.

Similarly, \citet{Pavlov:unun}  considered \emph{unrooted} forests consisting of
$n$  trees labelled $1,\dots,n$ with a total of $m$ unlabelled nodes.
These are described by the \ws{} $(\check t_k)$ where $\check t_k$ is the
number of unrooted unlabelled trees with $k$ nodes. Again, there is no
no simple formula for the generating function
$\check\Phi(z)\=\sum_k\check t_kz^k$, but there is the relation
$\check\Phi(z)=\Phi(z)-\frac12\Phi(z)^2+\frac12\Phi(z^2)$ found by
\citet{Otter:trees}, which leads to the asymptotic formula
$\check t_k\sim c_2 k^{-5/2}\rho^{-k}$, where
$\rho$ is as above and
$c_2\approx 0.5347$,
see also
 \citet[Section 3.1.5]{Drmota}.
In this case,
$(\check t_k\rho^k/\check\Phi(\rho))$ gives an equivalent \pws{}
which is  $\sim (c_2/\check\Phi(\rho)) k^{-5/2}$ as \ktoo, which is the same
type of asymptotic behaviour as for the \ws{} for labelled unrooted forests in
\refE{Euforest}; we thus expect the same type of asymptotic behaviour as for
those forests. In particular, $\nu<\infty$ by \refE{EBpower}; a numerical
calculation 
gives $\nu\=\rho\check\Phi'(\rho)/\check\Phi(\rho)\approx 2.0513$, 
see \citet{Pavlov:unun}.

Note that both types of ``unlabelled'' forests considered here have the
trees labelled (\ie, ordered). Completely unlabelled forests
cannot be described by \bib{} (as far as we know), since the number of
(non-isomorphic) ways to number the trees depends on the forest.
\end{example}

\begin{example}[the backgammon model]
  The model with $w_k=1/k!$ for $k\ge1$ as in \refE{EMB}, but $w_0>0$
  arbitrary, was considered by \citet{BackI} and \citet{BackII,BackIII},
who called it the \emph{backgammon model}.
We have 
\begin{equation}
  \Phi(t)=w_0+\sumki \frac{t^k}{k!}=e^t+w_0-1
\end{equation}
and 
\begin{equation}\label{backpsi}
  \Psi(t)
=\frac{t e^t}{\Phi(t)}
=\frac{t }{1+(w_0-1)e^{-t}}.
\end{equation}
Thus $\rho=\nu=\infty$. The equation 
$\Psi(\tau)=\gl$ can be written
\begin{equation}
  (\tau-\gl)e^\tau=(w_0-1)\gl, 
\end{equation}
and the solution can be written
\begin{equation}
  \tau=\gl+W\bigpar{(w_0-1)\gl e^{-\gl}}
=\gl-T\bigpar{(1-w_0)\gl e^{-\gl}},
\end{equation}
where $W(z)$ is the Lambert $W$ function
\cite{Corless} defined by $W(z) e^{W(z)}=z$
and $T(z)$ is the tree function 
in \eqref{treefn}
(analytically
extended to all real $z<e^{-1}$); note that $W(z)=-T(-z)$ 
by \eqref{treeeq2}, see \cite{Corless}. 

The canonical \pws{} \eqref{pik} is,
using \eqref{backpsi} and $\Psi(\tau)=\gl$, 
\begin{equation}
\pi_k=
\frac{\tau^k}{\Phi(\tau)k!}
=
\frac{\gl\tau^{k-1}e^{-\tau}}{k!}
=
\frac{\gl}{\tau}\cdot\frac{\tau^{k}e^{-\tau}}{k!},
\qquad k\ge1,
\end{equation}
and $\pi_0=\gl\tau\qw e^{-\tau}w_0$.
\end{example}

\begin{example}[random permutations and recursive forests]\label{Eperm}
Consider \emph{permutations} of \setm{} with exactly $n$ cycles.
Let us label the cycles $1,\dots, n$, in arbitrary order, and let $y_i$ be
the length of the $i$:th cycle. Then $\yyn$ is an allocation in $\cbmn$ with
each $y_i\ge1$,
and for each such $\yyn\in\cbmn$, the number of permutations with $y_i$ elements
in cycle $i$ is 
\begin{equation}\label{permutations}
  \binom{m}{y_1,\dots,y_n}\prodin (y_i-1)!=m!\prodin\frac1{y_i},
\end{equation}
since there are $(y-1)!$ cycles with $y$ given elements.
Consequently, a uniformly random permutation of \setm{} with exactly $n$ cycles
corresponds to a random allocation $\bmn$ defined by the weights $w_0=0$ and
$w_k=1/k$ for $k\ge1$. Note that here, as in \refE{Eforest}, $w_0=0$, and
\refR{Rmin} applies 
with $\ga=1$. 

The number of permutations with $n$ (unlabelled) cycles is by 
\eqref{permutations}
\begin{equation}
m!\,Z(m,n)/n!,  
\end{equation}
where we divide by $n!$ in order to ignore the labelling above.

The same \bib{} model with $w_k=1/k$, $k\ge1$,
also describes \emph{random recursive forests}, see
\citet{Pavlov:recursive}. 

We have 
\begin{equation}\label{phiperm}
  \Phi(t)=\sum_{k=1}^\infty \frac{t^k}k=-\log(1-t)
\end{equation}
with radius of convergence $\rho=1$ and 
\begin{equation}\label{psiperm}
\Psi(t)\=\frac{t\Phi'(t)}{\Phi(t)}
=\frac{t}{-(1-t)\log(1-t)},
\end{equation}
so $\nu=\Psi(1)=\infty$, \cf{} \refE{EBpower} ($\gb=1$).

The equivalent \pws{s} are by \refL{Lequivp} given by
\begin{equation}
  p_k=\frac{x^k}{k|\ln(1-x)|},
\qquad 0<x<1,
\end{equation}
with probability generating function
$\Phi(xz)/\Phi(x)=\log(1-xz)/\log(1-x)$.
This distribution is called the \emph{logarithmic distribution}.
See further \citet{KolchinSCh} and \citet{Kolchin}.

By \refR{Rmin}, we obtain results on random permutations with $m$ cycles as
$m/n\to\gl\in[1,\infty)$, see for example \citet{Kazimirov:permutation}.
However, it is of greater interest to consider random permutations without
constraining the number of cycles. This can be done using methods similar to
the ones used here, but is outside the scope of the present paper; see \eg{}
\citet{KolchinSCh}, 
\citet{Kolchin} and
\citet{ABT}. Note that even if we condition on the number of cycles, a
typical random permutation of $\setm$ has about $\log m$ cycles, so we are
interested in the case $n\approx \log m$ and thus $m/n\to\infty$, which we
do not considered here.

Other random objects that can be decomposed into components can be studied
similarly, for example random mappings \cite{Kolchin}; our results apply
only to random objects with a given number of 
components (in some cases), but similar methods are useful for the general case;
see \citet{Kolchin} 
and \citet{ABT}.
\end{example}

\section{Preliminaries}\label{Sprel}

\begin{proof}[Proof of \refL{LPsi}]
\pfitemref{L1a}
  Since $\Phi'(t)=\sumk kw_kt^{k-1}$ has the same radius of convergence
  $\rho$ as $\Psi$, and $\Phi(t)\ge w_0>0$ for $t\ge0$,
it is immediate that $\Psi$ is well-defined, finite and continuous for
$t\in[0,\rho)$. Furthermore, 
if $0<t<\rho$, then $t\Psi'(t)$ is by \eqref{lep4} the variance of a
non-degenerate random variable, and thus $t\Psi'(t)>0$.
Hence $\Psi(t)$ is increasing,
completing the proof of \ref{L1a}.

\pfitemref{L1x} 
If $\Phi(\rho)=\infty$, the claim 
is just the definition of $\Psi(\rho)$ in \refS{Ssimply}.
(Note that the existence of the limit follows from \ref{L1a}.)
We may thus assume $\Phi(\rho)<\infty$; then
$t\upto\rho$ implies $\Phi(t)\to\Phi(\rho)<\infty$ 
and $\Phi'(t)\to\Phi'(\rho)\le\infty$ 
by monotone convergence, and thus
\begin{equation*}
\Psi(t)\=\frac{t\Phi'(t)}{\Phi(t)}
\to
\frac{\rho\Phi'(\rho)}{\Phi(\rho)}
=\Psi(\rho).
\end{equation*}

\pfitemref{L1y} 
The case $\rho=0$ is trivial, and the case $\rho>0$
follows from \ref{L1a} and \ref{L1x}.

\pfitemref{L1b}
For any $\ell\ge0$,
\begin{equation}
  \label{psil}
\Psi(t)-\ell
=
\frac{\sumk (k-\ell)w_kt^k}{\sumk w_kt^k}
\ge
\frac{\sum_{k=0}^{\ell-1} (k-\ell)w_kt^k}{\sumk w_kt^k}.
\end{equation}
If $\rho<\infty$ and $\Phi(\rho)=\infty$, we thus have, 
\begin{equation*}
  \Psi(t)-\ell\ge\frac{O(1)}{\Phi(t)}\to0
\qquad \text{as $t\upto\rho$},
\end{equation*}
so $\Psi(\rho)-\ell\ge0$. Since $\ell$ is arbitrary, this shows
$\Psi(\rho)=\infty$, 
proving \ref{L1b}.

\pfitemref{L1c} 
If $\rho=\infty$, choose $\ell$ with $w_\ell>0$. Then \eqref{psil} implies
\begin{equation*}
\Psi(t)-\ell
\ge
\frac{-\ell\sum_{k=0}^{\ell-1} w_kt^k}{ w_\ell t^\ell}\to0
\qquad\text{as \ttoo,} 
\end{equation*}
so $\Psi(\infty)-\ell\ge0$. Hence, $\Psi(\infty)\ge\sup\set{\ell:w_\ell>0}=\go$.

Conversely, 
\begin{equation*}
\Psi(t)
=
\frac{\sum_{k=0}^{\go} kw_kt^k}{\sum_{k=0}^{\go} w_kt^k} \le \go
\qquad\text{for all $t\in[0,\rho)$,} 
\end{equation*}
so $\Psi(\rho)\le \go$, completing the proof of \ref{L1c}.

Finally, \eqref{l1} follows from \ref{L1a} and \ref{L1x}.
\end{proof}

\begin{remark}
Alternatively, the fact that $\Psi(t)$ is increasing 
can also be seen as follows:
Let $0<a<b<\rho$ and let $Y$
be a random variable with distribution $\P(Y=k)=w_ka^k/\Phi(a)$
(\cf{} \refL{LEPsi}).
Then $\Psi(a)=\E Y$ and $\Psi(b)=\E \bigpar{Y(b/a)^Y}/\E(b/a)^Y$,
so $\Psi(a)\le\Psi(b)$ is equivalent to the correlation inequality
$\E \bigpar{Y(b/a)^Y}\ge\E Y\E(b/a)^Y$, which says that the two random
variables $f(Y)\=Y$ and $g(Y)\=(b/a)^Y$ are positively correlated; it is
well-known that this holds (as long as the expectations are finite)
for any two increasing functions $f$ and $g$ and any $Y$, 
see \cite[Theorem 236]{HLP} where the result is attributed to Chebyshev,
and it is easy to
see that, in fact, strict inequality holds in the present case.
(The latter inequality is an analogue of Harris' correlation inequality 
\cite{Harris} for variables $Y$ with values in a discrete cube
$\setoi^N$; in fact, the inequalities have a common
extension to variables with values in
$\bbR^N$. 
Cf.\ also the related FKG inequality, which extends Harris' inequality;
see for example \cite{Grimmett:RCM} where also its history is described.)

For a third proof that $\Psi(t)$ is increasing, note that 
\eqref{psimgf} shows that
$\Psi$ is (strictly) increasing if and only if $\log\Phi(e^x)$ is (strictly)
convex, which is an easy consequence of \Holder's inequality, 
(See \eg{} \cite[Lemma 2.2.5(a)]{DemboZ} and note that 
$\Phi(e^x)=\sumk e^{kx}w_k$ is the \mgf{} of $\ww$ in the case that $\ww$
is a \pws.)
\end{remark}

\refL{LPsi} shows that $\Psi$ is a bijection
$[0,\rho]\to[0,\Psi(\rho)]=[0,\nu]$, so it has a well-defined inverse
$\Psi\qw:[0,\nu]\to[0,\rho]$. We extend this inverse to $\ooo$ as follows.

\begin{lemma}
  \label{Ltau}
For $x\ge0$ define $\tau=\tau(x)\in[0,\infty]$ by 
\begin{equation}\label{taube}
\tau(x)\=\sup\set{t\le\rho:\Psi(t)\le x}.  
\end{equation}
Then $\tau(x)$ is the unique number in $[0,\rho]$ such
that $\Psi(\tau(x))=x$ when $x\le\nu$, and $\tau(x)=\rho$ when $x\ge\nu$.
Furthermore, the function $x\mapsto\tau(x)$ is continuous, and,
for any $x\ge0$,
\begin{equation}\label{kia0}
  \Psi(\tau(x))=\min(x,\nu).
\end{equation}
If\/ $x<\go$, then
$0\le\tau(x)<\infty$ and $0<\Phi(\tau(x))<\infty$.
On the other hand, if $x\ge\go$, then $\tau(x)=\Phi(\tau(x))=\infty$.
\end{lemma}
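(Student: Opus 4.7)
The plan is to derive everything from \refL{LPsi}. That lemma shows that $\Psi$ is continuous on $[0,\rho]$ and strictly increasing on $[0,\rho)$, with $\Psi(0)=0$ and $\Psi(\rho)=\nu=\sup_{t<\rho}\Psi(t)$; in particular $\Psi(t)<\nu$ for every $t<\rho$, by strict monotonicity applied to any $t'\in(t,\rho)$. Thus $\Psi$ is a continuous strictly increasing bijection $[0,\rho]\to[0,\nu]$ (in the extended sense if $\rho$ or $\nu$ equals $\infty$), and admits a continuous strictly increasing inverse $\Psi\qw:[0,\nu]\to[0,\rho]$.

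Next, I would compute $\tau(x)$ directly from the definition \eqref{taube}. By monotonicity of $\Psi$, the set $\{t\le\rho:\Psi(t)\le x\}$ is an initial segment of $[0,\rho]$; using the bijection it equals $[0,\Psi\qw(x)]$ when $x\le\nu$ and all of $[0,\rho]$ when $x>\nu$. Taking sups yields $\tau(x)=\Psi\qw(x)$ for $x\le\nu$ and $\tau(x)=\rho$ for $x\ge\nu$, the two formulas agreeing at $x=\nu$ since $\Psi\qw(\nu)=\rho$. This both gives the claimed characterisation of $\tau(x)$ and proves \eqref{kia0}. Continuity of $\tau$ on $[0,\infty)$ is then immediate: on $[0,\nu]$ it coincides with the continuous $\Psi\qw$, and on $[\nu,\infty)$ it is the constant $\rho$.

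For the final dichotomy I would split into three subcases and lean on the remaining parts of \refL{LPsi}. If $x<\nu$, then $\tau(x)<\rho$ by strict monotonicity, so $\tau(x)<\infty$ and $\Phi(\tau(x))<\infty$ because power series converge strictly inside the radius of convergence. If $\nu\le x<\go$, then $\nu\le x<\infty$ gives $\nu<\infty$, and $\nu\le x<\go$ gives $\nu<\go$; the contrapositive of \refL{LPsi}\ref{L1c} then forces $\rho<\infty$, after which the contrapositive of \refL{LPsi}\ref{L1b} forces $\Phi(\rho)<\infty$, so $\tau(x)=\rho<\infty$ and $\Phi(\tau(x))<\infty$. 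Finally, if $x\ge\go$, then necessarily $\go<\infty$ (otherwise the condition is vacuous for real $x$), so $\Phi$ is a polynomial of degree $\go\ge1$, giving $\rho=\infty$ and $\Phi(\infty)=\infty$; since $x\ge\go\ge\nu$ we also have $\tau(x)=\rho=\infty$. The lower bound $\Phi(\tau(x))\ge\Phi(0)=w_0>0$ is automatic in all cases.

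Nothing here is truly difficult; the only point that requires any attention is the case split in the last paragraph, where one must track which of $\rho$, $\nu$ and $\go$ are finite in each subcase and invoke the correct contrapositive of \refL{LPsi}.
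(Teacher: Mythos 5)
Your proposal is correct, and the overall route is the same as the paper's: everything is derived from \refL{LPsi}, first establishing $\tau(x)=\Psi\qw(x)$ for $x\le\nu$ and $\tau(x)=\rho$ for $x\ge\nu$, then proving the finiteness dichotomy. The one place you genuinely deviate is the final dichotomy: the paper proves $\tau(x)<\infty$ and $\Phi(\tau(x))<\infty$ for $x<\go$ by two separate contradiction arguments (assume $\tau(x)=\infty$, deduce $\rho=\infty$, apply \refL{LPsi}\ref{L1c} to contradict \eqref{kia0}; then a similar argument for $\Phi$), whereas you do a direct three-way case split on $x<\nu$, $\nu\le x<\go$, $x\ge\go$, invoking the contrapositives of \refL{LPsi}\ref{L1b} and \refL{LPsi}\ref{L1c} in the middle case. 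Your version is a bit more transparent since the case $x<\nu$ is handled trivially (there $\tau(x)<\rho$, so $\Phi(\tau(x))<\infty$ for free), and the only real work is isolated to the single subcase $\nu\le x<\go$. Both arguments are sound; the content is the same facts about $\Psi$ at the endpoint $\rho$, just packaged differently.
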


\begin{proof}
By \refL{LPsi} and the definition \eqref{nu},
$\Psi$ is an increasing continuous
bijection $[0,\rho]\to[0,\Psi(\rho)]=[0,\nu]$;
thus if $0\le x\le\nu$, there exists a unique $\Psi\qw(x)\in[0,\rho]$ with
$\Psi(\Psi\qw(x))=x$, 
and \eqref{taube} yields $\tau(x)=\Psi\qw(x)$. Since $\Psi$ is a continuous
bijection of one compact space onto another, its inverse
$\Psi\qw:[0,\nu]\to[0,\rho]$ is continuous too; thus
$x\mapsto\tau(x)=\Psi\qw(x)$ is 
continuous on $[0,\nu]$.
Furthermore, \eqref{kia0} holds for $x\le\nu$.

If $x\ge\nu=\Psi(\rho)$, then \eqref{taube} yields $\tau(x)=\rho$, and thus
$\Psi(\tau(x))=\Psi(\rho)=\nu$, so \eqref{kia0} holds in this case too.

Combining the two cases we see that $x\mapsto\tau(x)$ is continuous on
$\ooo$, and that 
\eqref{kia0} holds.

Now suppose that $x<\go$ and $\tau(x)=\infty$. Since $\tau(x)\le\rho$ we
then have $\rho=\infty$, and \refL{LPsi}\ref{L1c} yields
$\Psi(\tau(x))=\Psi(\rho)=\go>x$, contradicting \eqref{kia0}.
Thus $\tau(x)<\infty$ when $x<\go$. 
Furthermore, if $\Phi(\tau(x))=\infty$, 
then $\tau(x)=\rho$, since $\Phi(t)<\infty$ for $t<\rho$, and thus
$\Phi(\rho)=\infty$. 
If further $x<\go$, and thus $\rho=\tau(x)<\infty$ as just shown, 
then \refL{LPsi}\ref{L1b} 
would give $\Psi(\tau(x))=\Psi(\rho)=\infty$, again contradicting
\eqref{kia0} since $x<\infty$. Thus $\Phi(\tau(x))<\infty$ when $x<\go$.

Conversely, if $x\ge\go$, then $\go<\infty$, so $\Phi(t)$ is a polynomial
and $\rho=\infty$. \refL{LPsi}\ref{L1c} shows that
$\Psi(\rho)=\go\le x$, so \eqref{taube} yields $\tau(x)=\rho=\infty$, whence
also $\Phi(\tau(x))=\Phi(\infty)=\infty$.
\end{proof}

Next, we investigate when $Z(m,n)>0$.
We say than an allocation $\yyn$ of $m$ balls in $n$ boxes 
is \emph{good} if it has positive weight,
\ie, if $y_i\in\supp(\wwx)$ for every $i$.
Thus, $Z(m,n)>0$ if and only if there is a good allocation in $\cbmn$;
in this case, the  random allocation $\bmn$ is defined and is always good.

Provided $m$ is not too small or too large,
the $m$ and $n$ for which good allocations exist are easily characterised;
the following lemma shows that a simple necessary condition also is sufficient.
(The exact behaviour for very small $m$ is complicated. The largest $m$ such
that $Z(m,n)=0$ for all $n$ is called the \emph{Frobenius number} of the set
$\supp(\wwx)$; it is a well-known, and in general difficult, problem to
compute this, see \eg{} \cite{frobenius}. 
The case when $m$ is close to $\go n$ (with finite $\go$) is 
essentially the same by the symmetry in \refR{Rsymmetry}.) 

\begin{lemma}
  \label{LBexists}
Suppose that $w_0>0$.
  \begin{romenumerate}
  \item 
If $Z(m,n)>0$, then $\spann(\wwx)\delar m$ and $0\le m\le \go n$.
\item 
If $\go<\infty$, then there exists a constant $C$ (depending on $\wwx$) such
that if 
$\spann(\wwx)\delar m$ and $C\le m\le \go n-C$, then $Z(m,n)>0$.
\item 
If $\go=\infty$, then for each $C'<\infty$, there exists a constant $C$
(depending on $\wwx$ and $C'$) such that if 
$\spann(\wwx)\delar m$ and $C\le m\le C'n$, then $Z(m,n)>0$.
\end{romenumerate}
\end{lemma}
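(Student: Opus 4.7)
Part (i) is immediate: any allocation $\yyn$ of positive weight has $y_i\in\supp(\wwx)$ for every $i$, so each $y_i$ is a nonnegative multiple of $d\=\spann(\wwx)$ bounded by $\go$ (using \eqref{span0}, which applies since $w_0>0$). Summing gives $d\mid m$ and $0\le m\le\go n$.

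For (ii) and (iii), my plan is to construct an explicit good allocation using a Schur/Sylvester--Frobenius argument together with the fact that $w_0>0$ lets us pad a short good allocation with empty boxes. Set $S\=\supp(\wwx)\setminus\set{0}$; by \eqref{span0} we have $\gcd(S)=d$. Pick a finite subset $\set{k_1,\dots,k_r}\subseteq S$ with $\gcd(k_1,\dots,k_r)=d$, and let $k_1$ denote its smallest element. The Sylvester--Frobenius theorem then yields a constant $M_0$ such that every multiple of $d$ in $[M_0,\infty)$ admits a representation $c_1k_1+\dots+c_rk_r$ with $c_i\in\bbZgeo$, and any such representation of a value $b$ automatically satisfies $\sum c_i\le b/k_1$.

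To prove (ii) with $\go<\infty$, I will take $K\=\go\in S$ and, for $m$ with $d\mid m$ and $m\ge M_0$, write $m=aK+b$ with $b\in[M_0,M_0+K)$ and $a\ge 0$ (letting $a=0$ and $b=m$ when $m<M_0+K$). Since $d\mid K$ and $d\mid m$, also $d\mid b$, so Frobenius gives $b=\sum c_ik_i$. Padding with $0$s produces an allocation with $a$ boxes of size $K$, $c_i$ boxes of size $k_i$, and $n-a-\sum c_i$ empty boxes, which is good iff $a+\sum c_i\le n$. Since $a\le m/K$ and $\sum c_i\le(M_0+K)/k_1$, the hypothesis $m\le\go n-C$ with $C\ge K(M_0+K)/k_1$ delivers the required inequality, and $C\=\max\bigl(M_0,\,K(M_0+K)/k_1\bigr)$ covers both bounds on $m$.

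For (iii) with $\go=\infty$, the same recipe works with $K$ replaced by a large element $k^*\in S$; since $S$ is unbounded, I choose $k^*\ge 2C'$. The analogous counting gives $a+\sum c_i\le m/k^*+(M_0+k^*)/k_1\le n/2+C_0'$, where $C_0'\=(M_0+k^*)/k_1$ depends on $C'$; here I use $m\le C'n$ and $k^*\ge 2C'$. The constraint $m\ge C$ combined with $m\le C'n$ forces $n\ge C/C'$, so $C\=\max(M_0,\,2C_0'C')$ gives $n\ge 2C_0'$ and hence $a+\sum c_i\le n$. The main delicate point throughout is the tight counting---one must express $m$ as a sum of at most $n$ elements of $\supp(\wwx)$---and this is exactly why we reserve a single large box size for carrying the bulk of the mass and invoke Frobenius only on a bounded residual.
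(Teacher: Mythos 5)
Your proof is correct and takes essentially the same approach as the paper: both arguments combine a Schur/Sylvester--Frobenius representation of a bounded residual with a supply of ``large'' boxes (of size $\go$ in part (ii), or a large $k^*\in\supp(\wwx)$ in part (iii)) to absorb the bulk of the mass, then pad with empty boxes using $w_0>0$. The only cosmetic difference is that the paper proves (iii) by truncating the weight sequence and reducing to (ii), whereas you run the same large-box argument directly with $k^*$ in place of $\go$; the counting and the role of the Frobenius constant are identical.
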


\begin{proof}\CCreset
\pfitem{i}
$Z(m,n)>0$ if and only if $m=\sumin y_i$ for some $y_i$ with $w_{y_i}>0$,
  \ie, $y_i\in\supp(\wwx)$. This implies $0\le y_i\le \go$ and
  $\spann(\wwx)\delar y_i$ for each $i$, and the necessary conditions in (i)
  follow immediately.

\pfitem{ii}
We may for convenience assume that $\spann(\wwx)=1$, see
  \refR{Rdelar}; then, by \eqref{span0}, 
$\supp(\wwx)\setminus\set0$ is a finite set of integers 
with greatest common divisor 1.
Thus, by a well-known theorem by Schur, see \eg{} 
\cite[3.15.2]{Wilf} or \cite[Proposition IV.2]{Flajolet}, there is a
constant $\CC\CCdef\CCsofie$ such that
every integer $m\ge\CCsofie$ can be written as a finite sum
$m=\sum_i{y_i}$ with $y_i\in\supp(\wwx)$ (repetitions are allowed); \ie{} we
have a good allocation of $m$ balls in some number $\ell(m)$ boxes.
Choose one such allocation for each $m\in[\CCsofie,\CCsofie+\go)$,
and let $\CC\CCdef\CCemma$ be the maximum number of boxes in any of them. 

If $\CCsofie\le m\le \go n-\CCemma\go$, let $a\=\floor{(m-\CCsofie)/\go}$.
Then $m-a\go\in[\CCsofie,\CCsofie+\go)$, and has thus a good allocation in
at most $\CCemma$ boxes. We add $a$ boxes with
$\go$ balls each, and have obtained a good allocation of $m$ balls
using at most \begin{equation*}
  \CCemma+a
=
\CCemma+\floor{(m-\CCsofie)/\go}
\le
\CCemma+\floor{(\go n-\CCemma\go-\CCsofie)/\go}
\le n
\end{equation*}
boxes. Hence we may add empty boxes and obtain a good allocation in $\cbmn$.
(Recall that $0\in\supp(\wwx)$.) Thus $Z(m,n)>0$ when 
$\CCsofie\le m\le \go n-\CCemma\go$.

\pfitem{iii} 
We may again assume $\spann(\wwx)=1$.
Let $K$ be a large integer and consider the truncated \ws{}
$\wwxk=(\wk_k)$ defined by
\begin{equation}\label{trunc}
  \wk_k\=
  \begin{cases}
	w_k, & k\le K,
\\
0, & k>K;
  \end{cases}
\end{equation}
we assume that $K\in\supp(\wwx)$ and that $K$ is so large that $K\ge C'+1$ and 
$\spann(\wwxk)=\spann(\wwx)=1$.
Then $\go(\wwxk)=K$, and (ii) shows that for some $\CC\CCdef\CCmagnus$, 
if $\CCmagnus\le m\le Kn-\CCmagnus$, then $Z(m,n;\wwx)\ge Z(m,n;\wwxk)>0$.
Hence, if $m\ge\CCmagnus$ and $Z(m,n)=0$, then $Kn-\CCmagnus < m \le C'n$,
and thus $n< \CCmagnus$, whence $m< C'\CCmagnus$.
Consequently, if $C'\CCmagnus \le m\le C' n$, then $Z(m,n)>0$.
\end{proof}

\begin{remark}
  In the case $\go=\infty$, it is not always true that there is a constant
  $C$ such that $Z(m,n)>0$ whenever $m\ge C$.
For example, suppose that $w_k=1$ when $k=0$ or $k=j!$ for some $j\ge0$, and
$w_k=0$ otherwise. Then $Z(m,n)=0$ when $m=(n+1)!-1$ and $n\ge2$.
\end{remark}

\begin{remark}
\refL{LBexists} is easily modified for the case $w_0=0$; if
  $\ga\=\min\set{k:w_k>0}$ as in \refR{Rmin}, 
then the necessary condition (i) is 
$\ga n\le m\le \go n$ and $\spann(\wwx)\delar (m-\ga n)$, and again this is
sufficient if $m$ stays away from the boundaries.
\end{remark}

\section{Proofs of  Theorems \ref{TBmain}--\ref{TB3}}\label{Sbbpf}

We now prove the theorems in \refS{SBB}; we begin with some lemmas.

First 
we state and prove a version of the local central limit theorem (for
integer-valued variables) that is convenient for our application below. 
We will need it for a triangular array, where the variables we sum depend on
$n$. 

We define the span of an integer-valued random variable to be the span of
its distribution, defined as in \eqref{span}.

\begin{lemma}\label{LCLT}
  Let\/ $\xi$ and $\xi\nnx1,\xi\nnx2,\dots$ 
be integer-valued random variables with  $\xi\nn\dto\xi$ as \ntoo,
and let $S\nn_n\=\sumin\xi\nn_i$, where $\xi\nn_i$ are independent copies of
$\xi\nn$. 
Suppose further that $\xi$ is non-degenerate, with span $d$ 
and finite variance $\gss>0$, and
that
$\sup_n\E|\xi\nn|^3<\infty$.
If\/ $d>1$, we assume for simplicity that $d\delar\xi$ and  $d\delar\xi\nn$
for each $n$.

Let $m=m(n)$ be a sequence of integers that are multiples of $d$, and assume
that $\E\xi\nn=m(n)/n$. 
Then, as \ntoo,
\begin{equation}\label{lclt}
  \P(S\nn_n=m)=\frac{d+o(1)}{\sqrt{2\pi \gss n}}.
\end{equation}
\end{lemma}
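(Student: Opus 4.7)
\medskip

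\noindent\textbf{Proof plan.} The plan is to prove \eqref{lclt} by the standard Fourier-analytic route for local limit theorems, with care taken because the summands form a triangular array. Let $\phi_n(t)\=\E e^{\ii t\xi\nn}$ and $\phi(t)\=\E e^{\ii t\xi}$. Since every $\xi\nn$ (and hence $S\nn_n$) and $m(n)$ are multiples of $d$, the function $\phi_n$ is $(2\pi/d)$-periodic, so the Fourier inversion formula yields
\begin{equation}\label{planinv}
\P(S\nn_n=m)=\frac{d}{2\pi}\int_{-\pi/d}^{\pi/d}\phi_n(t)^n e^{-\ii t m}\dd t.
\end{equation}
First, from $\xi\nn\dto\xi$ and $\sup_n\E|\xi\nn|^3<\infty$ I would deduce uniform integrability of $(\xi\nn)^2$, whence $\E\xi\nn\to\E\xi$ and $\gs_n^2\=\Var\xi\nn\to\gss$; the hypothesis $\E\xi\nn=m/n$ then gives $m/n\to\E\xi$. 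I would also record the uniform Taylor expansion
\begin{equation}\label{plantaylor}
\phi_n(t)=1+\ii t\,\tfrac{m}{n}-\tfrac12 t^2\E(\xi\nn)^2+R_n(t),\qquad|R_n(t)|\le C|t|^3,
\end{equation}
valid for all $t\in\bbR$ with $C$ independent of $n$ (from the uniform third-moment bound).

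The next step is to substitute $t=s/\sqrt n$ in \eqref{planinv} and split the range of $s$ into three regions. On the \emph{central region} $|s|\le A$, using $\E\xi\nn=m/n$ the linear term in \eqref{plantaylor} cancels the factor $e^{-\ii t m}$ exactly at first order, and I obtain
\begin{equation*}
n\log\phi_n(s/\sqrt n)-\ii s m/\sqrt n=-\tfrac12 s^2 \gs_n^2+O(|s|^3/\sqrt n),
\end{equation*}
so the integrand converges pointwise to $e^{-s^2\gss/2}$ and is dominated (for $|s|\le A$) by a constant; dominated convergence gives the contribution $\int_{|s|\le A}e^{-s^2\gss/2}\dd s$. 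On the \emph{intermediate region} $A<|s|\le\eps\sqrt n$, a standard consequence of \eqref{plantaylor} is the quadratic bound $|\phi_n(t)|\le1-\tfrac14\gs_n^2 t^2\le e^{-c t^2}$ for some $c>0$ and all $|t|\le\eps$ with $\eps$ small, uniformly in $n$; then $|\phi_n(s/\sqrt n)^n|\le e^{-c s^2}$, which makes this contribution small once $A$ is large. On the \emph{tail region} $\eps\le|t|\le\pi/d$, I would use that $\phi_n\to\phi$ uniformly on the compact interval $[-\pi/d,\pi/d]$ (by equicontinuity, since the $\phi_n$ are uniformly Lipschitz by the uniform first-moment bound) together with the fact that $|\phi(t)|<1$ strictly on $[\eps,\pi/d]$ because $\xi$ has span exactly $d$; consequently $\sup_{\eps\le|t|\le\pi/d}|\phi_n(t)|\le r<1$ for all large $n$, and the tail contributes at most $O(r^n)$.

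Combining the three contributions, \eqref{planinv} (after the rescaling) evaluates to
\begin{equation*}
\P(S\nn_n=m)=\frac{d}{2\pi\sqrt n}\biggpar{\intoooo e^{-s^2\gss/2}\dd s+o(1)}
=\frac{d+o(1)}{\sqrt{2\pi\gss n}},
\end{equation*}
as required. The step I expect to be most delicate is securing the uniformity in $n$ of the estimates on the intermediate and tail regions: the quadratic bound relies on the uniform third-moment hypothesis, while the tail bound relies on the uniform convergence $\phi_n\to\phi$ on $[-\pi/d,\pi/d]$ and on the span of the limit being exactly $d$. With these in hand, the central-region argument is a routine application of dominated convergence.
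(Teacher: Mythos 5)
Your proof is correct and follows essentially the same Fourier-analytic route as the paper: inversion formula, uniform Taylor expansion from the uniform third-moment bound, $\gs_n^2\to\gss$ from uniform integrability, a quadratic bound near $0$, and strict inequality $|\phi|<1$ away from $0$ (where the paper uses a subsequence-contradiction argument and you use equicontinuity plus uniform convergence on compacts, which amounts to the same thing). The only cosmetic differences are that the paper reduces to span $1$ at the outset and applies dominated convergence directly via a single global dominating function, whereas you keep the span $d$ (exploiting $(2\pi/d)$-periodicity of $\phi_n$) and split into three explicit regions; both organizations are fine.
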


\begin{proof} \CCreset
The proof uses standard arguments, see \eg{} \citetq{Theorem 1.4.2}{Kolchin};
we only have to check uniformity in $\xi\nn$ of our estimates.

If the span $d>1$, we may divide $\xi$, $\xi\nn$ and $m$ by $d$, and
  reduce to the case $d=1$. Hence we assume in the proof that
  $\spann(\xi)=1$.

Let $\phi(t)\=\E e^{\ii t\xi}$ and
$\phi_n(t)\=\E e^{\ii t \xi\nn}$ be the \chf{s} of $\xi$ and $\xi\nn$.
Further, let $\tphi_n(t)\=e^{-\ii t m/n}\phi_n(t)$ be the \chf{} of the
centred random variable $\xi\nn-\E\xi\nn=\xi\nn-m/n$.
 
Then $S\nn_n$ has \chf{} $\phi_n(t)^n$, and thus, by the inversion formula and
a change of variables,
\begin{equation}\label{sofie}
  \begin{split}
\P(S\nn_n=m)
&=
\frac1{2\pi}\intpipi e^{-\ii mt}\phi_n(t)^n\dd t
=
\frac1{2\pi}\intpipi \tphi_n(t)^n\dd t
\\&=
\frac1{2\pi\sqrt n}\int_{-\pi\sqrt n}^{\pi\sqrt n} \tphi_n(x/\sqrt n)^n\dd x
\\&=
\frac1{2\pi\sqrt n}\intoooo  \tphi_n(x/\sqrt n)^n \bigett{|x|<\pi\sqrt n}\dd x.
  \end{split}
\end{equation}

Let $\gss_n$ be the variance of $\xi\nn$. 
Since $\E|\xi\nn|^3$ are uniformly bounded, 
$\gss_n<\infty$; moreover,
the random variables $\xi\nn$
are uniformly square integrable
and it follows from $\xi\nn\dto\xi$ that
$\gss_n\to\gss$. 
(See e.g.{}
\citetq{Theorems 5.4.2 and 5.4.9}{Gut} for this standard argument.)
In particular, $\gss_n\ge\gss/2$ for all sufficiently large $n$;
we consider in the remainder of the proof only such $n$.

Since $\tphi_n(t)$ is the \chf{} of $\xi\nn-\E\xi\nn$ which has mean 0 and,
by assumption, an absolute third moment that is uniformly bounded,
we have by a standard expansion
(see \eg{} \cite[Theorems 4.4.1]{Gut})
\begin{equation}\label{erika}
  \tphi_n(t)
= 1-\tfrac12 {\gss_n}t^2+O(\E|\xi\nn|^3|t|^3)
= 1-\tfrac12 {\gss_n}t^2+O(|t|^3),
\end{equation}
uniformly in all $n$ and $t$.
In particular, for any fixed real $x$,
\begin{equation}
  \tphi_n(x/\sqrt n)
=1-\frac{\gss_n x^2}{2n}+O(n\qqcw)
=1-\frac{\gss x^2+o(1)}{2n},
\end{equation}
and thus
\begin{equation}\label{anna}
  \tphi_n(x/\sqrt n)^n\to e^{-\gss x^2/2}.
\end{equation}
We are aiming at estimating the integral in \eqref{sofie} by dominated
convergence, so we also need a suitable bound that is uniform in $n$.

We write \eqref{erika} as 
$\bigabs{\tphi_n(t)- (1-\frac12 {\gss_n}t^2)}\le \CC|t|^3$.
Let $\gd\=\gss/8\CCx>0$. Then, if $|t|\le\gd$, recalling our assumption
$\gss_n\ge\frac12\gss$,
\begin{equation}\label{magnus}
 |\tphi_n(t)|\le
 1-\tfrac12 {\gss_n}t^2+\CCx|t|^3
\le 
 1-\tfrac14 {\gss}t^2+\CCx\gd t^2
= 1-\tfrac18 {\gss}t^2.
\end{equation}

For $\gd\le |t|\le\pi$ we claim that there exists $n_0$ and $\eta>0$
such that if $n\ge n_0$ and $\gd\le|t|\le\pi$, then 
\begin{equation}
  \label{olof}
|\tphi_n(t)|\le1-\eta.
\end{equation}
In fact, if this were not true, then there would exist sequences $n_k\ge
k$ and $t_k\in[\gd,\pi]$ (by symmetry, it suffices to consider $t>0$) such
that 
$|\phi_{n_k}(t_k)|=|\tphi_{n_k}(t_k)|>1-1/k$.
By considering a subsequence, we may assume that $t_k\to\txoo$ as \ktoo{} for
some $\txoo\in[\gd,\pi]$. Since $\xi_n\dto\xi$, $\phi_{n_k}(t)\to\phi(t)$
uniformly for $|t|\le\pi$, and thus 
$\phi_{n_k}(t_k)\to\phi(\txoo)$. It follows that $|\phi(\txoo)|=1$ for some
$\txoo\in[\gd,\pi]$, but this is impossible when $\spann(\xi)=1$, as is 
well-known (and easily seen from
$\E e^{\ii \txoo(\xi-\xi')}=|\phi(\txoo)|^2=1$, where $\xi'$ is an independent
copy of $\xi$).
This contradiction shows that \eqref{olof} holds.

We can combine \eqref{magnus} and \eqref{olof}; we let
$\cc\=\min\set{\gss/8,\eta/\pi^2}\ccdef\ccnu$ and obtain, for $n\ge n_0$,
\begin{equation*}
  |\tphi_n(t)|\le 1-\ccnu t^2
\le \exp(-\ccnu t^2),
\qquad |t|\le\pi,
\end{equation*}
and thus
\begin{equation*}
  |\tphi_n(x/\sqrt n)|^n
\le \exp(-\ccnu x^2),
\qquad |x|\le\pi\sqrt n.
\end{equation*}
This justifies the use of dominated convergence in \eqref{sofie}, 
and we obtain by \eqref{anna}
\begin{equation*}
  \begin{split}
2\pi\sqrt{n}\P(S\nn_n=m)
&=
\intoooo  \tphi_n(x/\sqrt n)^n \bigett{|x|<\pi\sqrt n}\dd x
\\&
\to
\intoooo  e^{-\gss x^2/2}\dd x
=\sqrt{2\pi/\gss},
  \end{split}
\end{equation*}
which yields \eqref{lclt}.  (Recall that we have assumed $d=1$.)
\end{proof}

\begin{remark}
  \label{RCLT}
A simple modification of the proof shows that the result still holds if the
condition $\E\xi\nn=m(n)/n$ is relaxed to
$m(n)=n\E\xi\nn+o(\sqrt{n})$. 
Furthermore, for {any} $m=m(n)$,
$\P(S\nn_n=m)\le
\frac1{2\pi}\intpipi |\tphi_n(t)|^n\dd t$, and it follows by the proof
above that 
\begin{equation}\label{lcltle}
  \P(S\nn_n=m)\le\frac{d+o(1)}{\sqrt{2\pi \gss n}},
\end{equation}
uniformly in all $m\in\bbZ$.

Moreover, both \refL{LCLT} and the remarks above hold, with only minor
modifications in the proof,  also if the condition 
$\sup_n\E|\xi\nn|^3<\infty$ is relaxed to uniform square integrability of
$\xi\nn$. 
In particular, if $\xi\nn=\xi$, this assumption is not needed at all; then the
assumption $\gss<\infty$ is the only moment condition that we need.
(This is the classical local central limit theorem for discrete
distributions,
see \eg{} \citetq{\S\ 49}{GneKol}
or \citetq{Theorem 1.4.2}{Kolchin}.)
\end{remark}

We use \refL{LCLT} to obtain lower bounds of the (rather weak) type $\exp(o(n))$
for $\P(S_n=m)$ in the case of a \pws, for suitable $m$. We treat the cases
$\rho>1$ and $\rho=1$ separately.

\begin{lemma}
  \label{LB2}
Let $\wwx$ be a probability weight sequence with $0<w_0<1$ and $\rho>1$.
Let $\xi_1,\xi_2,\dots$ be \iid{} random variables with distribution $\wwx$ 
and let $S_n\=\sumin \xi_i$.

Assume that $m=m(n)$ are integers  that are multiples of $d\=\spann(\wwx)$,
and that $m(n)/n\to\E\xi_1$.
Then
$$\P(S_n=m)=Z(m,n)=e^{o(n)}.$$
\end{lemma}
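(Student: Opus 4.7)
The plan is to reduce the problem to the local central limit theorem (Lemma LCLT) via the standard exponential-tilting trick. Since $\rho>1$, the function $\Psi$ is continuous and strictly increasing in a neighborhood of $1$, with $\Psi(1)=\E\xi_1$ finite, and $\xi_1$ has all moments. Because $m(n)/n\to\E\xi_1=\Psi(1)$, for every sufficiently large $n$ there is a unique $t_n$ in a small neighborhood of $1$ (hence bounded away from $0$ and from $\rho$) with $\Psi(t_n)=m(n)/n$, and $t_n\to1$.

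Define the tilted probability weight sequence $p_k\nn\=w_k t_n^k/\Phi(t_n)$, and let $\xi\nn$ have this distribution, with \iid{} copies summing to $S\nn_n$. By construction $\E\xi\nn=\Psi(t_n)=m/n$, the support (hence the span $d$) agrees with that of $\wwx$, and a direct factorisation gives
\begin{equation*}
\P(S_n=m)=\frac{\Phi(t_n)^n}{t_n^m}\,\P\bigpar{S\nn_n=m},
\end{equation*}
so it suffices to estimate $\P(S\nn_n=m)$ and the prefactor separately.

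Next I would verify the hypotheses of Lemma LCLT for the triangular array $(\xi\nn)$: (i) $\xi\nn\dto\xi_1$ as $\ntoo$, immediate from $t_n\to1$ and continuity of $t\mapsto w_k t^k/\Phi(t)$; (ii) $\xi_1$ is non-degenerate with finite positive variance $\gss$, since $0<w_0<1$ forces $\xi_1$ to take at least two values and $\rho>1$ supplies finite moments of every order; and (iii) $\sup_n\E|\xi\nn|^3<\infty$, which follows because $t_n$ stays in a compact subinterval of $(0,\rho)$, on which $\Phi$ and its derivatives are uniformly bounded with $\Phi(t_n)$ bounded below. Lemma LCLT then yields $\P(S\nn_n=m)\sim d/\sqrt{2\pi\gss n}$, in particular of the form $e^{o(n)}$.

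For the prefactor, $t_n\to1$ and $\Phi(t_n)\to\Phi(1)=1$, so $\log t_n=o(1)$ and $\log\Phi(t_n)=o(1)$; since $m=O(n)$, we get $-m\log t_n+n\log\Phi(t_n)=o(n)$, that is, $\Phi(t_n)^n/t_n^m=e^{o(n)}$. Combining the two factors gives $\P(S_n=m)=e^{o(n)}$, and the identification with $Z(m,n)$ is just the \pws{} formula \eqref{ebprob} from \refE{EBprob}. I do not expect a serious obstacle: the only delicate points are locating $t_n$ uniquely (handled by the monotonicity of $\Psi$) and checking uniform moment bounds for the tilted variables (handled by keeping $t_n$ in a fixed compact subinterval of $(0,\rho)$).
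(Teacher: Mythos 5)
Your proof is correct and follows essentially the same approach as the paper: exponentially tilt to a conjugate distribution with mean exactly $m/n$, verify the hypotheses of \refL{LCLT} for the resulting triangular array (using that $\tau_n$ stays in a fixed compact subinterval of $(0,\rho)$ to get uniform third-moment bounds), and absorb the prefactor $\Phi(\tau_n)^n\tau_n^{-m}$ into $e^{o(n)}$ because $\tau_n\to1$, $\Phi(\tau_n)\to1$, and $m=O(n)$. The only cosmetic difference is that you spell out why $\Var\xi_1>0$, which the paper leaves implicit.
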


\begin{proof}
Let $\xi\=\xi_1$ and $\gl\=\E\xi=\Phi'(1)=\Psi(1)$.
Since $\rho>1$, we have $\nu>\Psi(1)=\gl$.
Thus, by assumption, $m/n\to\gl<\nu$, so $m/n<\nu$ for all large $n$; we
consider in the sequel only such $n$. By \refL{LPsi} we may then define
$\taun\in[0,\rho)$ by $\Psi(\taun)=m/n$. 
Since $\Psi\qw$ is continuous on $[0,\nu)$, and $\Psi(1)=\E\xi=\gl$, we have
  \begin{equation}
	\label{taunus}
\taun=\Psi\qw(m/n)\to\Psi\qw(\gl)=1
\qquad
\text{as \ntoo}.
 \end{equation}

Let $\xi\nn$ have the conjugate distribution
\begin{equation}\label{xinn}
  \P(\xi\nn=k)=\frac{\taun^k}{\Phi(\taun)}w_k,
\qquad k\ge0;
\end{equation}
by \refL{LEPsi} this is a probability distribution with expectation
\begin{equation}
  \E\xi\nn=\Psi(\taun)=m/n.
\end{equation}
The conditions of \refL{LCLT} are easily verified: 
Since $\taun\to1$ by \eqref{taunus}, we have
$\P(\xi\nn=k)\to w_k=\P(\xi=k)$ and thus $\xi\nn\dto\xi$.
Furthermore, taking any $\tau_*\in(1,\rho)$ and considering only $n$ that
are so large that $\taun<\tau_*$,
\begin{equation*}
  \E|\xi\nn|^3
=\sumk k^3 \frac{\taun^k}{\Phi(\taun)}w_k
\le \frac1{\Phi(0)}\sumk k^3 \tau_*^kw_k 
<\infty.
\end{equation*}
Furthermore, if $d=\spann(\xi)$, then $w_k>0\implies d\delar k$ 
by \eqref{span0};
thus $d\delar\xi$ and $d\delar\xi\nn$ (a.s.).
\refL{LCLT} thus applies, and
if $\wwx\nn$ denotes the distribution of $\xi\nn$ in \eqref{xinn},
then by \eqref{ebprob} and
\eqref{lclt},
\begin{equation}\label{xia}
  Z(m,n;\wwx\nn)
=\P\biggpar{\sumin\xi\nn_i=m}
\sim\frac{d}{\sqrt{2\pi\gss n}},
\end{equation}
where 
$\gss\=\Var\xi$.
By \eqref{lb1z}, we have $Z(m,n;\wwx\nn)=\Phi(\taun)^{-n}\taun^m Z(m,n)$, and
thus, recalling that $\taun\to1<\rho$ and hence 
$\Phi(\taun)\to\Phi(1)=1$,
\begin{equation*}
  \begin{split}
	\P(S_n=m)
&= Z(m,n)
=
\taun^{-m}\Phi(\taun)^n Z(m,n;\wwx\nn)
\\&
=\exp\bigpar{-m\log\taun+n\log\Phi(\taun)+\log Z(m,n;\wwx\nn)}
\\&
=\exp\bigpar{o(n)}.
\qedhere
  \end{split}
\end{equation*}
\end{proof}

\begin{lemma}
  \label{LB2sub}
Let $\wwx$ be a probability weight sequence with $0<w_0<1$ and $\rho=1$.
Let $\xi_1,\xi_2,\dots$ be \iid{} random variables with distribution $\wwx$
and let $S_n\=\sumin \xi_i$.

Assume that $m=m(n)$ are integers  that are multiples of $d\=\spann(\wwx)$,
and that $m(n)/n\to\gl<\infty$ with $\gl\ge\E\xi_1$.
Then
$$\P(S_n=m)=e^{o(n)}.$$
\end{lemma}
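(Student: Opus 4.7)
The strategy is to deduce \refL{LB2sub} from \refL{LB2} via a truncation argument. The essential new difficulty, absent from \refL{LB2}, is that when $\gl > \nu = \Psi(\rho) = \E\xi_1$ no $\tau \in [0,\rho]$ yields an equivalent \pws{} with mean $\gl$, so the tilting step driving \refL{LB2} cannot be applied directly to $\wwx$.

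For $K \geq 1$, let $\wwxk = (w_k^{(K)})$ denote the truncation $w_k^{(K)} \= w_k \ett{k \leq K}$, so that $\wwx \geq \wwxk$ pointwise and hence $\P(S_n = m) \geq Z(m,n;\wwxk)$. I normalise to the \pws{} $\twwx\kk \= \wwxk/\Phi^{(K)}(1)$, which has finite support and thus $\rho\kk = \infty$, $\nu\kk = K$. For $K$ large enough that $\gl < K$, I can tilt $\twwx\kk$ by a parameter $\tau\kk > 0$ chosen so that the tilted \pws{} has mean exactly $\gl$; this tilted \pws{} again has radius of convergence $\infty$ and satisfies the hypotheses of \refL{LB2}, so \refL{LB2} together with the untilting formulas from \refL{LB1} yields
\begin{equation*}
\tfrac{1}{n}\log Z(m,n;\wwxk) = \log\Phi^{(K)}(\tau\kk) - \gl \log \tau\kk + o_n(1).
\end{equation*}
Writing $I\kk(\gl)$ for the Cram\'er rate function of the normalised $\twwx\kk$ at $\gl$, so that $\log \Phi^{(K)}(\tau\kk) - \gl\log\tau\kk = \log\Phi^{(K)}(1) - I\kk(\gl)$, this gives
\begin{equation*}
\liminf_{\ntoo} \tfrac{1}{n}\log\P(S_n = m) \geq \log \Phi^{(K)}(1) - I\kk(\gl).
\end{equation*}

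It remains to show that for every $\eps > 0$, $K$ can be chosen so the right-hand side is $\geq -\eps$. The first term satisfies $\log \Phi^{(K)}(1)\to 0$ as $\Ktoo$ since $\wwx$ is a \pws. For the second, I apply the dual bound $I\kk(\gl) \leq D(q_K \Vert \twwx\kk)$ valid for any pmf $q_K$ on $\set{0,\dots,K}$ with mean $\gl$. Choosing $q_K \= (1-\ga)\twwx\kk + \ga\gd_K$ with $\ga \= (\gl - \E\tilde\xi\kk)/(K - \E\tilde\xi\kk)$ (matching the mean) and computing directly, I expect $D(q_K \Vert \twwx\kk) = (\gl - \E\xi_1)(-\log w_K)/K + o_K(1)$. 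This tends to $0$ along any subsequence $K_j\to\infty$ in $\supp(\wwx)$ with $w_{K_j}^{1/K_j} \to 1$, and such a subsequence exists since $\rho=1$ forces $\limsup_k w_k^{1/k} = 1$ by \eqref{rho}.

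The main obstacle will be organising the order of limits: given $\eps > 0$, I first fix $K$ from the good subsequence large enough that the static inequality $\log\Phi^{(K)}(1) - I\kk(\gl) \geq -\eps$ holds, together with $\gl < K$, $\spann(\wwxk) = \spann(\wwx) = d$, and $Z(m,n;\wwxk) > 0$ for all large $n$ (the last by \refL{LBexists}). Only then do I apply \refL{LB2} with this fixed $K$ to extract the $o_n(1)$ term, concluding $\liminf_{\ntoo} \tfrac{1}{n}\log\P(S_n = m) \geq -\eps$. Letting $\eps \downarrow 0$ yields the claim $\P(S_n=m) = e^{o(n)}$.
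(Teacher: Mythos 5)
Your proof is correct and follows the paper's high-level strategy (truncate $\wwx$ at level $K$, tilt the truncated \pws{} to mean $\gl$, invoke \refL{LB2}, untilt via \refL{LB1}), but diverges at the crucial step of showing the static exponent is small. The paper shows directly that $\tauk\to1$ as $\Ktoo$ by comparing $\Psik$ with $\Psi$ separately on $(0,1)$ and $(1,\infty)$, and then deduces $\liminf_K\Phik(\tauk)\ge\Phi(1)=1$; you instead identify $\log\Phik(\tauk)-\gl\log\tauk$ with $\log\Phik(1)-I^{(K)}(\gl)$ (which is exactly the same quantity, by the Legendre-duality definition of the rate function and the first-order condition $\Psik(\tauk)=\gl$) and then bound $I^{(K)}(\gl)$ from above by the relative entropy $D(q_K\Vert\twwx^{(K)})$ of an explicit mixture $q_K$. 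Both routes are proving the same fact, namely $\gl\log\tauk-\log\Phik(\tauk)\to0$, and both are correct; yours is an information-theoretic dual argument that sidesteps direct analysis of $\tauk$ at the cost of needing a subsequence of $K$'s with $w_K^{1/K}\to1$ (harmless, since $\rho=1$ guarantees such a subsequence and only one good $K$ per $\eps$ is needed), while the paper's is a purely analytic monotonicity argument that works along the full sequence. Two minor points: your computation of $D(q_K\Vert\twwx^{(K)})$ is only sketched (``I expect''), and in the boundary case $\gl=\E\xi_1$ the coefficient $(\gl-\E\xi_1)$ vanishes but $\ga$ does not, so the clean asymptotic formula needs a word; however, the bound $D(q_K\Vert\twwx^{(K)})\le\ga(-\log\tilde w^{(K)}_K)+o_K(1)$ with $\ga=O(1/K)$ still suffices along the subsequence. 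Also, you should remark (as the paper does) that the matching upper bound $\P(S_n=m)\le1$ is trivial since $\wwx$ is a \pws, so that $\liminf\frac1n\log\P(S_n=m)\ge0$ already gives the full claim $\P(S_n=m)=e^{o(n)}$.
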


\begin{proof}
Let $K$ be a large integer and consider the truncated \ws{}
$\wwxk=(\wk_k)$ defined by, as in \eqref{trunc},
\begin{equation}\label{trunc2}
  \wk_k\=
  \begin{cases}
	w_k, & k\le K,
\\
0, & k>K,
  \end{cases}
\end{equation}
having generating function 
$\Phik(t)=\sumkk w_kt^k$, and the corresponding
$\Psik(t)\=t\Phik'(t)/\Phik(t)$.
We assume that $K$ is so large that $\spann(\wwxk)=\spann(\wwx)$,
and that $K>k$ for some $k>\gl$ with $w_k>0$. (Such $k>\gl$ exists since
$\rho<\infty$.) Thus the \ws{}  $\wwxk$ has, by \refL{LPsi}\ref{L1c},
$\nu(\wwxk)=\Psik(\infty)=\go(\wwxk)>\gl$.
Hence, by \refL{LPsi} again, there exists $\tauk\in\ooo$ 
such that $\Psik(\tauk)=\gl$.
Thus the probability distribution $\ppik=(\pik_k)$ defined by
\begin{equation}\label{pikk}
  \pik_k\=\frac{\tauk^k}{\Phik(\tauk)}\wk_k
\end{equation}
has expectation $\gl$.
Since this distribution has finite support it has radius of convergence
$\rhok=\infty$; furthermore, $m/n\to\gl$ by assumption.
Hence \refL{LB2} applies to $\ppik$ and yields
\begin{equation}
  \label{u1}
Z(m,n;\ppik)=e^{o(n)}.
\end{equation}
By \eqref{lb1z} and \eqref{pikk},
\begin{equation}
  \label{u2}
Z(m,n;\ppik)=\Phik(\tauk)^{-n}\tauk^mZ(m,n;\wwxk).
\end{equation}
Moreover, $Z(m,n;\wwx)\ge Z(m,n;\wwxk)$ since $w_k\ge\wk_k$ for each
$k$. Hence, by \eqref{u1} and \eqref{u2},
\begin{equation}
  \label{jesper}
  \begin{split}
Z(m,n;\wwx)
&\ge Z(m,n;\wwxk)
=\tauk^{-m}\Phik(\tauk)^{n}Z(m,n;\ppik)
\\&
=\tauk^{-m}\Phik(\tauk)^{n}e^{o(n)}.	
  \end{split}
\end{equation}

This holds for every large fixed $K$.
Now let $K\to\infty$.

If $0<t<\rho=1$, then $\Phik(t)\to\Phi(t)$ and $\Phik'(t)\to\Phi'(t)$ as \Ktoo,
so $\Psik(t)\to\Psi(t)<\Psi(1)=\E\xi_1\le\gl$.
Hence, for large $K$, $\Psik(t)<\gl=\Psik(\tauk)$, so $\tauk>t$.
Consequently, $\liminf_{\Ktoo}\tauk\ge1$.

On the other hand, 
if $t>\rho=1$, let $\ell\=\ceil{\gl}+1>\gl$, and assume $K\ge\ell$.
Then
\begin{equation}\label{qqq}
  \Psik(t)=\frac{\sumkk kw_kt^k}{\sumkk w_kt^k}
\ge
\frac{\sum_{k=\ell}^K \ell w_kt^k}{\sumkk w_kt^k}
=
\ell - \frac{\sum_{k=0}^{\ell-1} \ell w_kt^k}{\Phik(t)}
\to\ell>\gl,
\end{equation}
as \Ktoo, since $\Phik(t)\to\Phi(t)=\infty$.
Hence, for large $K$, $\Psik(t)>\gl=\Psik(\tauk)$, and thus $\tauk<t$.
Consequently, $\limsup_{\Ktoo}\tauk\le1$.

Combining these upper and lower bounds, we have
\begin{equation*}
  \tauk\to1,\qquad\text{as }\Ktoo.
\end{equation*}
If we take $t<1$, we thus have for large $K$, 
$\tauk>t$ and hence $\Phik(\tauk)>\Phik(t)$.
Thus, $\liminf_{\Ktoo}\Phik(\tauk)\ge\lim_{\Ktoo}\Phik(t)
=\Phi(t)$ 
for every $t<1$, so
\begin{equation*}
  \liminf_{\Ktoo}\Phik(\tauk)\ge\Phi(1)=1.
\end{equation*}

Given any $\eps>0$, we may thus take $K$ so large that $\tauk<e^\eps$ and
$\Phik(\tauk)>e^{-\eps}$. Then \eqref{jesper} yields
\begin{equation*}
Z(m,n;\wwx)\ge 
e^{-\eps m-\eps n+o(n)}
\ge
e^{-\eps m-2\eps n}
\end{equation*}
for large $n$. Since $\eps$ is arbitrary and $m=O(n)$, this shows 
$Z(m,n;\wwx)\ge e^{-o(n)}$, and the result follows since $Z(m,n)\le1$ for
  any \pws{} by \eqref{ebprob}.  
\end{proof}

\begin{proof}[Proof of \refT{TBmain}] 
First, \refL{Ltau} shows that
$\tau$ defined by \ref{tbmaincritical}
and \ref{tbmainsub} is well-defined and
equals $\tau(\gl)$ defined in \refL{Ltau};
since $\gl<\go$ we have $\tau<\infty$ and $\Phi(\tau)<\infty$.
Further, \eqref{kia0} yields
\begin{equation}\label{kia}
  \Psi(\tau)=\min(\gl,\nu).
\end{equation}

Since $\tau<\infty$ and $\Phi(\tau)<\infty$, $\pi_k$ is well-defined by
\eqref{pik}; furthermore, by
\refL{LEPsi} and \eqref{kia}, $\ppi$ is a probability distribution with mean
and variance as asserted. 

We now turn to proving \eqref{tbmain}, the main assertion.
We study three cases separately.

\pfcase{(a)}{$\tau>0$.}
Then $\ppix=(\pi_k)$ is a \pws{} equivalent to $\wwx=(w_k)$, so we may
replace $(w_k)$ 
by $(\pi_k)$ without changing $\bmn$. Note that this changes
$\rho$ and $\tau$ to $\rho(\ppix)=\rho(\wwx)/\tau$
and $\tau(\ppix)=\tau(\wwx)/\tau=1$ by \eqref{tPhi} and \eqref{tPsi}.
We may thus assume that $\ww$ equals the
\pws{} $\ppi$, and that $\rho\ge\tau=1$.
By \eqref{kia}, then $\Psi(1)=\min(\gl,\nu)$. 

We employ the notation of \refE{EBprob}.
Note that by \eqref{tbm},
\begin{equation}\label{esub}
  \E\xi_1=\Psi(1)=
\min(\gl,\nu)\le \gl.
\end{equation}
Moreover, if $\rho>1$, then $\nu=\Psi(\rho)>\Psi(1)$ by \refL{LPsi}, so 
\eqref{kia} shows that in this case,
\begin{equation}\label{egeneric}
 \E\xi_1=\Psi(1)=\gl. 
\end{equation}

The allocation $(\xi_1,\dots,\xi_n)$ (with a random sum $S_n$) consists of
$n$ \iid{} components, so 
\begin{equation}
N_k(\xin)
=\sumin\ett{\xi_i=k}\sim\Bi(n,\pi_k)  
\end{equation}
has a binomial distribution. For every $k$ and $\eps>0$,
we have by Chernoff's inequality, see \eg{} 
\cite[Theorem 2.1 or Remark 2.5]{JLR},
\begin{equation}
  \P\bigpar{\abs{N_k(\xin)-n\pi_k}>\eps n}
\le \exp(-\cce n),
\end{equation}
for some constant $\cce>0$ depending on $\eps$.

We condition on $S_n=m$, recalling that
\begin{equation}\label{bdd}
  \bmn\eqd\bigpar{(\xin)\mid S_n=m}.
\end{equation}
When $\rho>1$ we apply \refL{LB2}, using $m/n\to\gl$ and \eqref{egeneric},
and when $\rho=1$ we apply \refL{LB2sub}, using \eqref{esub}. In both cases we
obtain $\P(S_n=m)=\exp(o(n))$ and thus by \eqref{bdd},
\begin{multline*}
  \P\bigpar{\abs{N_k(\bmn)-n\pi_k}>\eps n}
=  \P\bigpar{\abs{N_k(\xin)-n\pi_k}>\eps n \mid S_n=m}  
\\
\le \frac{  \P\bigpar{\abs{N_k(\xin)-n\pi_k}>\eps n}}{\P(S_n=m)}
\le\exp\Bigpar{-\cce n+o(n)}
\to0.	 
\end{multline*}
Since $\eps$ is arbitrary, this shows that
\begin{equation*}
  \frac{N_k(\bmn)}{n}-\pi_k\pto0
\end{equation*}
as asserted, which completes the proof when $\tau>0$.

\pfcase{(b)}{$\tau=0$ and $\rho>0$.}
We write $N_k$ for $N_k(\bmn)$.
By \eqref{pik} we have $\pi_0=1$ and $\pi_k=0$ for $k>0$; hence, 
\eqref{tbmain} says that $N_0/n\pto1$ and $N_k/n\pto0$ for $k>0$.

Since $\tau<\rho$, we are in case \ref{tbmaincritical}, so
$\gl=\Psi(\tau)=\Psi(0)=0$. In other words, $m/n\to0$.
The result is trivial (and deterministic) in this case. We have
\begin{equation}\label{victoria}
  \frac1n\sum_{k=1}^\infty N_k
\le\frac1n\sum_{k=1}^\infty kN_k
=\frac mn\to\gl=0.
\end{equation}
Hence $N_k/n\to 0=\pi_k$ for every $k\ge1$. Moreover, \eqref{victoria} also
implies
\begin{equation}
\frac{N_0}n=  \frac{n-\sum_{k=1}^\infty N_k}{n}
\to1=\pi_0,
\end{equation}
which completes the proof when $\tau=0<\rho$. 
\noqed
\end{proof}

Before we treat the remaining case in \refT{TBmain}, we show that
\refT{TBmain2} too holds in the cases treated so far.

\begin{proof}[Proof of \refT{TBmain2} from \refT{TBmain}]
We prove that \refT{TBmain} for some \ws{} $\ww$ implies \refT{TBmain2} for
the same weights.
  The assertions about $\tau$ follow from \refL{Ltau}, so we turn to
  \eqref{tbmain2}.

Consider a subsequence of $(m(n),n)$. It suffices to show that every such
subsequence has a subsubsequence such that \eqref{tbmain2} holds.
(See \eg{} \cite[Section 5.7]{Gut}, \cite[p.~12]{JLR} or \cite[Theorem
  2.3]{Billingsley} for this standard argument.)

Since $m/n\le C$ by assumption, we can select a subsubsequence such that
$m/n\to\gl$ for some $\gl\le C<\go$. Then \refT{TBmain} applies
and thus (along the subsubsequence), 
\begin{equation}\label{anna1}
  \frac{N_k(\bmn)}{n}-\frac{w_k(\tau(\gl))^k}{\Phi(\tau(\gl))}
=
  \frac{N_k(\bmn)}{n}-\pi_k
\pto0.
\end{equation}
Furthermore, since $m/n\to\gl$ and $x\mapsto\tau(x)$ is continuous,
$\tau(m/n)\to\tau(\gl)$ (along the subsubsequence); hence
\begin{equation}\label{anna2}
\frac{w_k(\tau(m/n))^k}{\Phi(\tau(m/n))}
-\frac{w_k(\tau(\gl))^k}{\Phi(\tau(\gl))}
\to0.
\end{equation}
Combining \eqref{anna1} and \eqref{anna2}, we see that \eqref{tbmain2} holds
along the subsubsequence, which as said above completes the proof of
\eqref{tbmain2}. 

That \eqref{tbmain2} holds uniformly is, in fact, automatic since we have
shown it for an arbitrary $m(n)$ (although we stated it for emphasis):
Let $X_{m,n}$ denote the \lhs{} of \eqref{tbmain2}, and let $\eps>0$.
Choose $m(n)$ as the integer $m\in[0,Cn]$  that maximises
$\P(|X_{m,n}|>\eps)$. Since 
\eqref{tbmain2} says that $\P(|X_{m(n),n}|>\eps)\to0$, we 
have
$\sup_{m\le Cn}\P(|X_{m,n}|>\eps)\to0$.
\end{proof}

\begin{proof}[Completion of the proof of \refT{TBmain}] \CCreset
\pfcase{(c)}{$\rho=0$.}
We write again $N_k$ for $N_k(\bmn)$, recalling that this is a random variable.
In this case $\nu=0$ and $\tau=\rho=0$ for every $\gl\ge0$.
By \eqref{pik} we thus have $\pi_0=1$ and $\pi_k=0$ for $k>0$; hence, as in
case (b), we have
to show that $N_0/n\pto1$ and $N_k/n\pto0$ for $k>0$.
By assumption, $m/n$ converges, so the sequence $m/n$ is bounded; let $C$ be
a large constant such that $m/n\le C$. Further, let $K$ be a large integer;
we assume $K>2C$ and (for simplicity) $w_K>0$. (Note that such $K$ exist
since $\go=\infty$ when $\rho=0$.)

We say that a  box is \emph{small} if it contains at most $K$ balls, and
\emph{large} otherwise. 
Let $N'\=\sum_{0}^KN_k$ be the number of small boxes and 
$M'\=\sum_{0}^KkN_k$ the number of balls in them. 
Note first that by our assumptions, $m/n\le C<K/2$. Hence,
\begin{equation}\label{small}
  m
\ge
m-M'
=
\sum_{K+1}^\infty kN_k
\ge K\sum_{K+1}^\infty N_k
=K(n-N')
\ge \frac{2m}n({n-N'}).
\end{equation}
Thus, $n-N'\le n/2$ and $N'\ge n/2$; in particular $N'\to\infty$.
Moreover,
\begin{equation}\label{emma}
  0\le\frac{M'}{N'}
\le\frac{m}{n/2}
\le 2C<K.
\end{equation}

The weight $w(\yyx)$ in \eqref{wmm} factorizes as the product over the small
boxes times the product over the large boxes. Thus,
if we condition on $M'$ and $N'$, and moreover on the set of the $N'$ boxes
that are small, then the allocations of the small boxes and the large boxes
are independent; moreover, the allocations to the small boxes form a random
allocation of the type $B_{M',N'}$ for the truncated weight sequence
$\wwx\kk$ given by
\eqref{trunc} above.
By assumption, $w_K>0$, and thus the truncated sequence has
$\go\kk\=\go(\wwx\kk)=K$. 

The truncated weight sequence $\wwx\kk$ has a polynomial generating function
$\Phi\kk(t)=\sum_{0}^Kw_kt^k$
with an infinite radius of convergence $\rho\kk=\infty$.
We have already proved \refT{TBmain} in this case, and thus \refT{TBmain2}
also holds in this case, by the proof above.
Applying \refT{TBmain2} to the truncated weight sequence and the allocations
of small boxes we see that there exists a continuous function
$\tau_K:[0,K)\to\ooo$ such that, conditioned on $(M',N')$, 
\begin{equation}\label{kok}
 \frac{N_k}{N'}-\frac{w_k(\tau_K(M'/N'))^k}{\Phi\kk\bigpar{\tau_K(M'/N')}}\pto0,
\qquad k\le K.
\end{equation}
Moreover, \eqref{kok} holds uniformly in all $(M',N')$ by \refT{TBmain2} and
\eqref{emma}.
Hence, denoting the
\lhs{} of \eqref{kok} by $X$, we have for every $\eps>0$
$\P(|X|>\eps\mid M',N')\le\gd(n)$, for some function $\gd(n)\to0$.
Taking the expectation, it follows that also 
$\P(|X|>\eps)\le\gd(n)\to0$, and thus \eqref{kok} holds also
unconditionally. Thus,
\begin{equation}\label{kok1}
 \frac{N_k}{N'}
=\frac{w_k(\tau_K(M'/N'))^k}{\Phi\kk\bigpar{\tau_K(M'/N')}}+\op(1),
\qquad k\le K.
\end{equation}

By \eqref{emma}, $M'/N'\le 2C$, and thus,
using \refL{Ltau} and $2C<K=\go(\wwx\kk)$,
$\tau_K(M'/N')\le\tau_K(2C)<\infty$.
Hence, with $\CC\=\tau_K(2C)$,
\begin{equation*}
w_0\le\Phi\kk(\tau_k(M'/N'))
\le\Phi\kk(\CCx)=\CC,
\CCdef\Cphi
\end{equation*}
say. 
Taking $k=0$ in \eqref{kok1} we now find
\begin{equation}\label{koks}
  \frac{N_0}{N'}
=
\frac{w_0}{\Phi\kk\bigpar{\tau_K(M'/N')}}+\op(1)
\ge
\frac{w_0}{\Cphi}+\op(1).
\end{equation}
Since $N'\ge n/2$ this shows that there exists $\cc>0$ 
(for example $\ccx\=w_0/(3\Cphi)$)
such that 
\whp{}
\begin{equation}\label{nobig}
  \frac{N_0}n\ge\ccx.
\end{equation}
It follows further from \eqref{koks} that we can invert \eqref{kok1} for
$k=0$ (since $x\mapsto x\qw$ is continuous for $x>0$); thus
\begin{equation}\label{kok-1}
 \frac{N'}{N_0}=\frac{\Phi\kk\bigpar{\tau_K(M'/N')}}{w_0}+\op(1).
\end{equation}
Multiplying \eqref{kok1} and \eqref{kok-1} we find the simpler relation
\begin{equation}\label{koko}
 \frac{N_k}{N_0}
=\frac{w_k}{w_0}(\tau_K(M'/N'))^k+\op(1),
\qquad k\le K.
\end{equation}

Let $\ell\=\min\set{k>0:w_k>0}$ be the smallest non-zero index with positive
weight, and define a random variable by
\begin{equation}\label{taux}
  \taux\= \parfrac{w_0N_\ell}{w_\ell N_0}^{1/\ell}.
\end{equation}
It follows from \eqref{koko}, with $k=\ell$, that
$\taux=\tau_K(M'/N')+\op(1)$. Consequently, \eqref{koko} yields
\begin{equation}\label{kokox}
 \frac{N_k}{N_0}
=\frac{w_k}{w_0}\taux^k+\op(1),
\qquad k\le K.
\end{equation}

We have so far worked with a fixed, large $K$. However, the definition
\eqref{taux} does not depend on the choice of $K$, and since $K$ may be
chosen arbitrarily large, we see that, in fact, \eqref{kokox} holds for
every $k\ge0$, with the same (random) $\taux$.

Fix again $K>0$, and sum \eqref{kokox} for $k\le K$. This yields
\begin{equation}\label{kokoy}
\frac{n}{N_0}
\ge
 \sum_{0}^K\frac{N_k}{N_0}
=\sum_{0}^K\frac{w_k}{w_0}\taux^k+\op(1)
=\frac{\Phi\kk(\taux)}{w_0}+\op(1).
\end{equation}
Recall that $N_0/n\ge\ccx$ \whp{} by \eqref{nobig}. 
We thus have from \eqref{kokoy}
\begin{equation}\label{kokoz}
\Phi\kk(\taux)
\le
w_0\frac{n}{N_0}+\op(1)
\le
w_0/\ccx+1
\end{equation}
\whp.
By assumption, $\rho=0$, so $\Phi(t)=\infty$ for every $t>0$.
Hence, for every $\eps>0$ we have $\Phi\kk(\eps)\to\Phi(\eps)=\infty$ as
$K\to\infty$, so we may choose $K$ with $\Phi\kk(\eps)>w_0/\ccx+1$.
Then \eqref{kokoz} shows that $\taux<\eps$ whp; since $\eps>0$ is arbitrary,
this says that 
\begin{equation*}
\taux\pto0.  
\end{equation*}
We substitute this in \eqref{kokox}, and obtain
$N_k/N_0\pto 0$ for every $k\ge1$; hence also
\begin{equation}\label{julie}
  N_k/n\pto 0,\qquad k\ge1.
\end{equation}

Finally, we return to \eqref{small}, and see that
\begin{equation}\label{sjw}
  K(n-N')\le m\le Cn.
\end{equation}
Let $\eps>0$ and choose $K>C/\eps$; then \eqref{sjw} yields 
$n-N'<\eps n$ and thus $N'>(1-\eps)n$.
Further, by \eqref{julie},
\begin{equation*}
N_0=N'-\sum_{1}^K N_k=N'-\op(n)>(1-\eps)n-\op(n),
\end{equation*}
so \whp{} $N_0>(1-2\eps)n$. This shows that $N_0/n\pto1$, which together
with \eqref{julie} completes the proof in the case $\rho=0$.
\end{proof}

This completes the proof of \refT{TBmain}, and thus also of \refT{TBmain2}.

\begin{proof}[Proof of \refT{TB3}]
  Conditioned on the numbers $N_k=N_k(\bmn)$, $k=0,1,\dots$, the 
numbers $Y_1,\dots,Y_n$ are obtain by placing $N_0$ 0's, $N_1$ 1's, \dots,
in (uniformly) random order; thus the conditional
probability is
\begin{equation}\label{pyn}
  \P(Y_1=y_1,\dots,Y_\ell=y_\ell\mid N_0,N_1,\dots)
= \prod_{i=1}^\ell \frac{N_{y_i}-c_i}{n-i+1}
= \prod_{i=1}^\ell \frac{N_{y_i}+O(1)}{n+O(1)},
\end{equation}
where $c_i\=|\set{j<i:y_j=y_i}|$.
By \refT{TBmain}, this product converges in probability to 
$\prod_{i=1}^\ell \pi_{y_i}$ as \ntoo, and the result follows by taking the
expectation (using dominated convergence).
\end{proof}

\section{Trees and \bib}\label{Stree-balls}

The proofs of the results for random trees are based on a connection with
the \bib{} model. This connection is well-known, 
see \eg{} \citet{Otter}, \citet{Dwass}, \citet{Kolchin}, \citet{Pitman:enum},
but for completeness we
give full proofs.

We consider a fixed \ws{} $\wwx=\ww$ and the corresponding random trees
$\ctn$ and random allocations $\bmn$; we write $\bmn=\YYn$.

We begin with some deterministic considerations.
The idea is to regard the outdegrees of the nodes of a tree $T$ as an
allocation; we regard the nodes as both balls and boxes, and if $v$ is a
node, we put the children of $v$ as balls in box $v$. There are two
complications, which will be dealt with in detail below: we have to specify
an ordering of the nodes and we will not obtain all allocations.

Let $T$ be a finite tree, with $|T|=n$. Take the nodes in some prescribed
order $v_1,\dots,v_n$, for definiteness we use the depth-first order (this
is the lexicographic order on $\voo$), and list the outdegrees 
as $d_1=\dx(v_1),\dots,d_n=\dx(v_n)$. 
We call this the \emph{degree sequence} of $T$ and denote it by
$\gL(T)\=\ddn$. Note that the tree $T$ can be reconstructed from $\ddn$, so
$T$ is determined by $\gL(T)=\ddn$.

By \eqref{sumd}, $d_1+\dots+d_n=n-1$, so $\ddn$ can be seen as an allocation
of $n-1$ balls in $n$ boxes: $\gL(T)=\ddn\in\cbnn$.
Consequently, $\gL$ is an injective map $\stn\to\cbnn$.
Note also that $\gL$ preserves the weight: 
\begin{equation}\label{wp}
w(T)=w(\gL(T))  
\end{equation}
by the definitions \eqref{wtree} and \eqref{wmm}.
However, not every allocation corresponds to a tree, so $\gL$ is not onto.
We begin by characterizing the image $\gL(\stn)$.
We use a simple and well-known extension of \eqref{sumd}.

\begin{lemma} 
  \label{Ldsub}
Let $T$ be a tree and $T'$ a subtree with the same root. Let 
$\partial T'\=\set{v\in V(T)\setminus V(T'):v\sim w \text{ for some $w\in T'$}}$
be the set of nodes outside $T'$ with a parent inside it.
Then,
\begin{equation}
  \sum_{v\in T'}\dx_T(v)=|T'|+|\partial T'|-1.
\end{equation}
\end{lemma}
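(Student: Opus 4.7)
The plan is to prove the identity by double counting the child-edges whose parent lies in $T'$, splitting them into two classes: those whose child also lies in $T'$ (internal edges) and those whose child lies outside $T'$ (external edges, landing in $\partial T'$).

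First I would note a structural fact about $T'$: since $T'$ is a subtree of $T$ containing the root, it is connected and rooted, so it is closed under taking parents (if $v \in T'$ is not the root, then the path from $v$ to the root in $T$ lies entirely in $T'$, so the parent of $v$ in $T$ is in $T'$). Consequently, for every $v \in T'$, each child of $v$ in $T$ is either a node of $T'$ or a node of $V(T) \setminus V(T')$ adjacent to a node of $T'$, hence in $\partial T'$. This gives the disjoint partition
\begin{equation*}
\sum_{v \in T'} \dx_T(v) = \#\{\text{child-edges } v \to w \text{ with } v, w \in T'\} + \#\{\text{child-edges } v \to w \text{ with } v \in T', \ w \in \partial T'\}.
\end{equation*}

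Next I would count each class. For the internal class, every non-root node in $T'$ has a unique parent, and by the structural fact this parent lies in $T'$; conversely every such edge arises this way. So the internal count equals $|T'| - 1$. For the external class, every $w \in \partial T'$ has by definition a unique parent in $T'$, and conversely every such edge contributes exactly one element of $\partial T'$. So the external count equals $|\partial T'|$. Adding the two counts gives $|T'| + |\partial T'| - 1$.

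There is essentially no obstacle here; the only point that needs a moment's care is the ancestor-closedness of $T'$, which is where the hypothesis that $T'$ is a subtree with \emph{the same root} is used (otherwise one could take $T'$ to be a branch not containing the root, for which the identity would fail). The special case $T' = T$ recovers \eqref{sumd}, since then $\partial T' = \emptyset$.
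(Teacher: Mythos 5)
Your proof is correct and is essentially the paper's own one-line argument spelled out: the paper observes that the set of children of nodes of $T'$ is exactly $(V(T')\setminus\{o\})\cup\partial T'$, and you arrive at the same disjoint decomposition by splitting the child-edges according to whether the child lands in $T'$ or in $\partial T'$. Your extra remark about ancestor-closedness of $T'$ is exactly the point that makes the decomposition exhaustive and correct.
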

\begin{proof}
  The set of children of the nodes in $T'$ consists of
  $\bigpar{V(T')\setminus\set o}\cup\partial T'$.
\end{proof}

\begin{lemma}\label{Ld}
  A sequence $\ddn\in\No^n$ is the degree sequence of a tree $T\in\stn$ if
  and only if
  \begin{align}\label{ld}
\sum_{i=1}^k d_i &\ge k,
\qquad 1\le k<n,
\\	\label{ld2}
\sum_{i=1}^n d_i &=n-1.
  \end{align}
\end{lemma}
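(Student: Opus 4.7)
The plan is to prove the two implications separately. For necessity, I would fix $T\in\stn$ and let $v_1,\dots,v_n$ be its nodes in depth-first (lexicographic) order, so that $\gL(T)=\ddn$ with $d_i=\dx(v_i)$. Since DFS visits each node only after visiting its parent, the prefix $T'_k\=\{v_1,\dots,v_k\}$ is, for every $k$, a subtree of $T$ containing the root. Applying \refL{Ldsub} to $T'_k$ yields
\begin{equation*}
\sum_{i=1}^k d_i \;=\; |T'_k|+|\partial T'_k|-1 \;=\; k+|\partial T'_k|-1.
\end{equation*}
For $k<n$ the subtree $T'_k$ is proper in the connected tree $T$, so $|\partial T'_k|\ge 1$ and \eqref{ld} follows; for $k=n$ we have $\partial T'_n=\emptyset$ and recover \eqref{ld2} (which is also \eqref{sumd}).

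For sufficiency, I would argue by induction on $n$, the base case $n=1$ being immediate from \eqref{ld2}. For the inductive step it is convenient to reformulate the hypotheses via the partial sums $S_k\=\sum_{i=1}^k(d_i-1)$, so that \eqref{ld}--\eqref{ld2} become $S_k\ge 0$ for $1\le k<n$ and $S_n=-1$. In particular $S_1=d_1-1\ge 0$ when $n\ge 2$, so $d_1\ge 1$. I then split the tail $(d_2,\dots,d_n)$ into $d_1$ blocks via first-passage times of $S$: set $k_0\=1$ and, for $j=1,\dots,d_1$, let $k_j$ be the smallest index $k>k_{j-1}$ with $S_k=d_1-1-j$. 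Because the increments $d_i-1$ of $S$ are each $\ge -1$, the walk cannot skip integer levels downward; hence each $k_j$ exists, the $k_j$ are strictly increasing, and $k_{d_1}=n$ (the first index at which $S=-1$).

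A short check then shows that each block $(d_{k_{j-1}+1},\dots,d_{k_j})$ satisfies \eqref{ld}--\eqref{ld2} with $n$ replaced by $\ell_j\=k_j-k_{j-1}$: the total increment of $S$ across the block equals $-1$, which gives \eqref{ld2}; and the minimality of $k_j$, combined with the ``no downward skipping'' of $S$, forces $S_k\ge S_{k_{j-1}}$ for $k_{j-1}\le k<k_j$, which gives \eqref{ld}. The inductive hypothesis then furnishes trees $T_{(1)},\dots,T_{(d_1)}$ realising these block degree sequences, and assembling them as the ordered sequence of subtrees rooted at the children of a new root produces a tree $T\in\stn$. Its depth-first traversal visits the root first (outputting $d_1$) and then traverses $T_{(1)},\dots,T_{(d_1)}$ in order, so $\gL(T)=\ddn$ as required.

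The only delicate point---and the main bookkeeping obstacle---is establishing the first-passage decomposition cleanly: showing existence and strict monotonicity of the $k_j$, the identity $k_{d_1}=n$, and the validity of \eqref{ld}--\eqref{ld2} on each block. All of this is standard Łukasiewicz/cycle-lemma combinatorics once phrased in terms of the walk $S_k$, and no genuine analytic difficulty arises.
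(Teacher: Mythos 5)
Your necessity argument is exactly the paper's: fix the DFS order, note that each prefix $\{v_1,\dots,v_k\}$ is a root-containing subtree, and apply \refL{Ldsub}, using $|\partial T_k|\ge1$ for $k<n$ and $|\partial T_n|=0$. That part is verbatim the same.

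For sufficiency the paper only gives a one-line hint ("a tree with degree sequence $\ddn$ is easily constructed; the point is that \eqref{ld} assures the construction will not stop before $n$ nodes"), which points to a \emph{non-inductive, greedy} construction: maintain a stack (or count) of pending vacant positions, initially containing the root; at step $i$ pop the next pending position, assign it $d_i$ children and push them; the number of pending positions after step $i$ is $1+\sum_{j\le i}(d_j-1)$, so \eqref{ld} guarantees it stays positive through step $n$ and \eqref{ld2} guarantees it hits $0$ exactly at $n$. Your proof instead proceeds by induction on $n$ via first-passage times of the Łukasiewicz walk $S_k=\sum_{i\le k}(d_i-1)$, splitting $(d_2,\dots,d_n)$ into $d_1$ blocks at the first hitting times of the levels $d_1-2,\dots,-1$, verifying that each block satisfies \eqref{ld}--\eqref{ld2}, and assembling the resulting subtrees under a new root. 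This is the classical cycle-lemma/Łukasiewicz decomposition, and your verification is sound: existence and strict monotonicity of the $k_j$ follow from the walk having increments $\ge-1$ (no downward skipping), $k_{d_1}=n$ follows because $n$ is the unique index with $S_n=-1$, and the block inequalities follow from minimality of the $k_j$. The two routes buy different things: the greedy construction is shorter and produces the tree directly; your inductive decomposition is slightly longer but makes the recursive structure of the tree (root plus ordered list of subtrees) explicit, which is the point of view that will be used again in the proof of \refL{Ldeg}. Both are correct and standard.
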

Of course, \eqref{ld2} is just the requirement that $\ddn\in\cbnn$.

\begin{proof}
  For any $k\le n$, the nodes $v_1,\dots,v_k$ form a subtree $T_k$ of $T$,
  and \refL{Ldsub} yields
\begin{equation}
  \sum_{i=1}^k\dx_T(v_i)=|\partial T_k|+k-1,
\end{equation}
which yields \eqref{ld} since $|\partial T_k|\ge1$ when $k<n$.

Conversely, if $\ddn$ satisfies \eqref{ld}--\eqref{ld2}, a tree with degree
sequence $\ddn$ is easily constructed. (The point is that \eqref{ld} assures
that the construction will not stop before we have $n$ nodes.)
  \end{proof}

The amazing fact is that for any allocation in $\cbnn$, exactly one of its
cyclic shifts satisfies \eqref{ld}.
(In particular, exactly $1/n$ of all allocations satisfy \eqref{ld}.)
To see this, it is simplest to consider the sequence $(d_i-1)_{i=1}^n$; we
state a more general result that we will use later, see \eg{}
\citet{Takacs}, \citet{Wendel}, \citet{Pitman:enum}.

\begin{lemma}\label{Lcyclic}
Let $x_1,\dots,x_n\in\set{-1,0,1,\dots}$ with $x_1+\dots+x_n=-\ellr\le0$.
For $j\in\bbZ$, let $x_1\xjj,\dots,x_n\xjj$ be the cyclic shift defined by
$x_i\xjj\=x_{i+j}$ with the index taken modulo $n$, and consider the
corresponding partial sums $S_k\xjj\=\sumik x_i\xjj$,
$k=0,\dots,n$. Then there are exactly $\ellr$ values of $j\in\setn$ such that
\begin{equation}
  \label{lc}
S_k\xjj>-\ellr, \qquad 0\le k<n.  
\end{equation}
\end{lemma}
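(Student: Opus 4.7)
The plan is to extend the sequence periodically, reformulate the cyclic-shift condition as a strict window-minimum condition for the partial sums, and then identify those window minima with the first-passage times of the extended sum sequence; the density of such first-passage times will then force the count to equal $\ellr$. Set $\tilde x_i \= x_{((i-1)\bmod n) + 1}$ for $i\in\bbZ$ and $\tilde S_m \= \sum_{i=1}^m \tilde x_i$, so that $\tilde S_0 = 0$ and $\tilde S_{m+n} = \tilde S_m - \ellr$ by hypothesis on the total sum. A direct check gives $S_k\xjj = \tilde S_{j+k} - \tilde S_j$, so after the substitution $m = j+n$ the condition \eqref{lc} becomes
\begin{equation*}
  \tilde S_m < \tilde S_{m'}\qquad\text{for all } m-n\le m' < m,
\end{equation*}
and I must count how many $m$ in the interval $\set{n+1,\dots,2n}$ (which is in bijection with $j\in\setn$ via $m=j+n$) satisfy this window-minimum condition. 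The case $\ellr = 0$ is trivial because $S_0\xjj = 0$ then already fails the strict inequality, so from now on I assume $\ellr\ge 1$.

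With $\ellr\ge 1$, the arithmetic-progression identity $\tilde S_{m+n} = \tilde S_m - \ellr$ forces $\tilde S_m\to -\infty$ as $m\to+\infty$ and $\tilde S_m\to +\infty$ as $m\to-\infty$, while the hypothesis $x_i\ge -1$ makes $\tilde S$ decrease by at most one per step. Consequently, for each $c\in\bbZ$ the first-passage time
\begin{equation*}
  \sigma_c \= \min\set{m\in\bbZ : \tilde S_m\le -c}
\end{equation*}
is a well-defined integer with $\tilde S_{\sigma_c} = -c$, and $\sigma_c$ is strictly increasing in $c$. The main obstacle is to show that an $m$ satisfies the window-minimum condition above if and only if $m = \sigma_c$ for some $c$. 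The implication $m = \sigma_c\Rightarrow$ window-minimum is immediate, since $\tilde S_{m'} > -c = \tilde S_m$ for \emph{every} $m' < m$. For the converse, given an $m$ satisfying only the window condition and an arbitrary $m' < m - n$, I write $m' = m - q' - pn$ with $0\le q' < n$ and $p\ge 1$, so that by periodicity $\tilde S_{m'} = \tilde S_{m-q'} + p\ellr$; if $q' = 0$ the conclusion $\tilde S_{m'} > \tilde S_m$ is immediate from $p\ellr\ge 1$, while if $1\le q' < n$ the index $m-q'$ lies in the window, so $\tilde S_{m-q'} > \tilde S_m$ and hence $\tilde S_{m'} > \tilde S_m$ once more. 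Combined with the window condition this yields $\tilde S_{m'} > \tilde S_m$ for every $m' < m$, whence $m = \sigma_c$ with $c = -\tilde S_m$.

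Finally, the periodicity $\tilde S_{m+n} = \tilde S_m - \ellr$ instantly gives $\sigma_{c+\ellr} = \sigma_c + n$ for every $c$, because $\set{m : \tilde S_m\le -c-\ellr} = n + \set{m' : \tilde S_{m'}\le -c}$ by the substitution $m = m' + n$, and taking minima of both sides. Thus $(\sigma_c)_{c\in\bbZ}$ is a strictly increasing integer sequence with $\sigma_{c+\ellr} - \sigma_c = n$ for every $c$, and any such sequence contains exactly $\ellr$ members in every interval of $n$ consecutive integers. Applied to $\set{n+1,\dots,2n}$, this produces exactly $\ellr$ good shifts $j\in\setn$, completing the proof.
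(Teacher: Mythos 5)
Your proof is correct and takes essentially the same route as the paper's: both extend the sequence periodically over $\bbZ$, translate condition \eqref{lc} into a statement that the extended partial sum attains a strict minimum over a window of length $n$, and then count using the periodicity $\tilde S_{m+n}=\tilde S_m-\ellr$. The only cosmetic difference is that you organize the count via the first-passage times $\sigma_c=\min\{m:\tilde S_m\le -c\}$, whereas the paper works with the running minimum $M_k=\min_{i\le k}S_i$ and counts its downward steps; these are dual descriptions of the same structure (the $\sigma_c$ are precisely the jump times of $M$), so the underlying argument is identical.
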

Note that $S_0\xjj=0$ and $S_n\xjj=-\ellr$ for every $i$. The condition 
\eqref{lc} thus says that the walk $S_0\xjj,\dots,S_n\xjj$ first reaches
$-\ellr$ at time $n$.
The case $r=0$ is trivial: since $S_0\xjj=0$, \eqref{lc} then is never satisfied
for $k=0$. 

\begin{proof}
We extend the definition of $x_j$ for all $j\in\bbZ$ by taking the index
modulo $n$; thus $x_{j+n}=x_j$. We further define $S_k$ for all $k\in\bbZ$
by $S_0=0$ and $S_k-S_{k-1}=x_k$, $k\in\bbZ$; thus
$S_k=\sum_{i=1}^k x_i$ when $k\ge0$  and
$S_k=-\sum_{i=k+1}^0 x_i$ when $k<0$. 
Then $S_{k+n}=S_k-\ellr$ for all $k\in\bbZ$, and $S_k\xjj=S_{k+j}-S_j$.

Let further 
\begin{equation*}
  M_k\=\min_{-\infty < i \le k} S_i
= \min_{k-n< i \le k} S_i;
\end{equation*}
note that $M_k$ is finite and $M_{k+n}=M_k-\ellr$.
Moreover, $M_{k+1}\le M_k$ and $M_{k+1}-M_k$ is 0 or $-1$, since
$S_{k+1}=S_k+x_{k+1}\ge S_k-1$.
We have
\begin{equation*}
  \begin{split}
S_k\xjj >-\ellr,  \text{ for $0\le k<n$}
&\iff
S_{k+j}-S_j	>-\ellr,  \text{ for $0\le k<n$}
\\
&\iff
S_{k+j}+\ellr>S_j, \text{ for $0\le k<n$}
\\&\iff
S_{k+j-n}>S_j, \text{ for $0\le k<n$}
\\&\iff
S_{i}>S_j, \text{ for $j-n\le i<j$}
\\&\iff
M_{j-1}>S_j
\\&\iff
M_{j-1}>M_j.
  \end{split}
\end{equation*}
In each interval of $n$ integers, $M$ decreases by $r$ in steps of 1, so
there are exactly $\ellr$ steps down, which completes the proof.
\end{proof}

\begin{corollary}\label{Ccyclic}
  If $\ddn\in\cbnn$, then exactly one of the $n$ cyclic shifts of $\ddn$ is
  the degree sequence $\gL(T)$ of a tree $T\in\stn$.
\end{corollary}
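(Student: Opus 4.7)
The plan is to deduce the corollary from Lemma~\ref{Lcyclic} by a simple change of variables, translating the tree-existence criterion of Lemma~\ref{Ld} into the form covered by Lemma~\ref{Lcyclic}.

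First I would set $x_i \= d_i - 1$ for $i = 1,\dots,n$. Since each $d_i \ge 0$, we have $x_i \in \set{-1, 0, 1, \dots}$, and since $\ddn \in \cbnn$ satisfies $\sum_{i=1}^n d_i = n-1$, we get $\sum_{i=1}^n x_i = -1$, so we are in the setting of Lemma~\ref{Lcyclic} with $\ellr = 1$. The partial sums are related by $S_k\xjj = \sum_{i=1}^k (d_{i+j} - 1) = \bigpar{\sum_{i=1}^k d\xjj_i} - k$, where indices are mod $n$.

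Next I would observe that for $\ellr = 1$, the condition \eqref{lc} reads $S_k\xjj > -1$, \ie{} $S_k\xjj \ge 0$, for $0 \le k < n$; equivalently, $\sum_{i=1}^k d\xjj_i \ge k$ for $1 \le k < n$ (the $k = 0$ case being automatic as $S_0\xjj = 0$). But this is exactly condition \eqref{ld} in Lemma~\ref{Ld}. Moreover, condition \eqref{ld2} is preserved under cyclic shifts since the sum is invariant, so every cyclic shift lies in $\cbnn$. Hence by Lemma~\ref{Ld}, the shift $\ddn\xjj$ is the degree sequence of some $T \in \stn$ if and only if \eqref{lc} holds for that $j$.

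Finally, Lemma~\ref{Lcyclic} with $\ellr = 1$ guarantees that \eqref{lc} holds for exactly one $j \in \setn$, which yields the conclusion. There is no real obstacle here; the content is entirely in Lemma~\ref{Lcyclic}, and the proof is just the bookkeeping of matching $x_i = d_i - 1$ to convert Lemma~\ref{Ld}'s inequalities into the walk condition of Lemma~\ref{Lcyclic}.
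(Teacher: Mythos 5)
Your proof is correct and follows exactly the same route as the paper's: substitute $x_i = d_i - 1$, observe that condition \eqref{ld} becomes the walk condition \eqref{lc} with $\ellr=1$, and invoke \refL{Lcyclic}. Your version just spells out a couple of small verifications (the $k=0$ case, preservation of \eqref{ld2} under shifts) that the paper leaves implicit.
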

\begin{proof}
  Let $x_i\=d_i-1$. Then 
$\sumik x_i=\sumik d_i-k$, so 
\eqref{ld} is equivalent to $\sumik x_i\ge 0$ for $k<n$, 
which for the shifted sequence is \eqref{lc}
with $\ellr=1$; further,
$\sumin x_i=n-1-n=-1$.
Hence the result follows by \refL{Lcyclic} with $\ellr=1$.
\end{proof}

We now use our fixed weight sequence $\ww$.
We begin with the partition function for simply generated trees.
This was proved (in the \pws{} case, which is no real loss of generality) by
\citet{Otter}, see also \citet{Dwass};
an algebraic proof uses the Lagrange inversion formula \cite{Lagrange},
see \eg{} \citet{Boyd} and \citetq{Theorem 2.11}{Drmota};
\citet{Kolchin} gives a different proof by induction.
See also \citet{Pitman:enum} where the relation between different approaches
is discussed.

\begin{theorem}\label{TZ}
  \begin{equation*}
	Z_n=\frac1n Z(n-1,n).
  \end{equation*}
\end{theorem}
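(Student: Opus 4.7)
The plan is to exploit the degree-sequence map $\gL: \stn \to \cbnn$ described just before the theorem, together with the cyclic-lemma \refC{Ccyclic}, to set up a many-to-one correspondence between allocations in $\cbnn$ and trees in $\stn$ which identifies weights exactly.

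First I would recall three key facts already established in the text: (a) the map $\gL$ sending $T \in \stn$ to its depth-first outdegree list is injective and weight-preserving, i.e.\ $w(\gL(T)) = w(T)$ by comparison of the definitions \eqref{wtree} and \eqref{wmm}; (b) the weight $w(\yyx) = \prod_{i=1}^n w_{y_i}$ on $\cbnn$ is invariant under any permutation of coordinates, in particular under cyclic shifts; (c) by \refC{Ccyclic}, among the $n$ cyclic shifts $\yyx^{(0)}, \dots, \yyx^{(n-1)}$ of any $\yyx \in \cbnn$, exactly one is the degree sequence of some tree in $\stn$.

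Next I would use (c) to show that the map $\Pi: \cbnn \to \gL(\stn)$ sending $\yyx$ to its unique tree-shift is well defined, and that each fiber $\Pi^{-1}(\gL(T))$ has exactly $n$ elements. Indeed, $\yyx \in \Pi^{-1}(\gL(T))$ iff $\yyx$ is a cyclic shift of $\gL(T)$; the claim is that all $n$ cyclic shifts of $\gL(T)$ are \emph{distinct}. If two shifts coincided, then $\gL(T)$ would be $(n/d)$-periodic for some $d < n$, so that of the $n$ indexed shifts, each distinct shift would appear $d$ times; in particular the tree shift $\gL(T)$ would appear $d$ times among the $n$ indexed shifts, contradicting the uniqueness asserted in (c). Hence the fiber has exactly $n$ elements.

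Combining (b) and the fiber count, I would conclude
\begin{equation*}
Z(n-1,n) = \sum_{\yyx \in \cbnn} w(\yyx) = \sum_{T \in \stn} \sum_{\yyx \in \Pi^{-1}(\gL(T))} w(\yyx) = \sum_{T \in \stn} n\, w(\gL(T)) = n \sum_{T \in \stn} w(T) = n Z_n,
\end{equation*}
which is the desired identity. The only subtle step is the fiber-size argument; everything else is bookkeeping, so I do not anticipate any real obstacle beyond making sure that \refC{Ccyclic} is read as producing a unique \emph{index} $j$ (rather than a unique distinct shift), which is exactly what \refL{Lcyclic} delivers with $\ellr=1$.
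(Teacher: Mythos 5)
Your proposal is correct and follows essentially the same route as the paper's proof: reduce $Z(n-1,n)$ over $\cbnn$ to a sum over $\stn$ via the cyclic lemma (\refC{Ccyclic}), use invariance of $w$ under cyclic shifts, and divide by $n$. The paper phrases this as the bijection $(T,j)\mapsto\gL(T)\xjj$ of $\stn\times\setn\to\cbnn$, whereas you partition $\cbnn$ into $n$-element fibers over tree shifts; these are the same bookkeeping, and your extra care in verifying that all $n$ cyclic shifts of $\gL(T)$ are distinct (note the small slip: the relevant conclusion is $d>1$, not $d<n$) is exactly what underlies the injectivity the paper takes for granted when asserting the bijection.
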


\begin{proof}
  By \refC{Ccyclic}, the mapping $(T,j)\mapsto \gL(T)\xjj$, where ${}\xjj$
  denotes a cyclic shift as in \refL{Lcyclic}, is a bijection of
  $\stn\times\setn\to\cbnn$. 
Consequently, by \eqref{zmn}, \eqref{wp} and \eqref{zn}, 
since the weight $w(\yyx)$ is not changed by cyclic shifts,
\begin{equation*}
  Z(n-1,n)=\sum_{T\in\stn}\sum_{j=1}^n w\bigpar{\gL(T)\xjj} 
  =\sum_{T\in\stn}n w\xpar{\gL(T)} 
=\sum_{T\in\stn}n w(T)
=n Z_n. 
\qedhere
\end{equation*}
\end{proof}

\begin{corollary}
  \label{Cexists}
Suppose that $w_0>0$ and $\go(\wwx)\ge2$, with $d\=\spann(\wwx)\allowbreak\ge1$.
If $Z_n>0$, then $n\equiv 1\pmod{d}$.
Conversely, for some $n_0$ (depending on $\wwx$),
if $n\equiv 1\pmod{d}$ and $n\ge n_0$, then $Z_n>0$.
\end{corollary}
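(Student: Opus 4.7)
The plan is to reduce the corollary to the \bib{} setting via Theorem~\ref{TZ} and then invoke Lemma~\ref{LBexists}. Indeed, $Z_n = \frac{1}{n} Z(n-1, n)$, so $Z_n > 0$ if and only if $Z(n-1, n) > 0$. Thus both implications become statements about the existence of good allocations of $n-1$ balls in $n$ boxes with respect to the weight sequence $\wwx$.

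For the necessity, if $Z_n > 0$ then $Z(n-1,n) > 0$, and Lemma~\ref{LBexists}(i) immediately gives $\spann(\wwx) \mid (n-1)$, i.e.\ $n \equiv 1 \pmod{d}$. This direction is essentially free.

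For the converse, we would split on whether $\go = \go(\wwx)$ is finite or infinite. If $\go < \infty$, then $\go \ge 2$ by hypothesis, so $\go n - (n-1) = (\go-1)n + 1 \ge n + 1$, which exceeds any fixed constant $C$ for large $n$; thus for $n$ large and $d \mid (n-1)$, the condition $C \le n-1 \le \go n - C$ of Lemma~\ref{LBexists}(ii) holds, giving $Z(n-1,n) > 0$. If $\go = \infty$, we apply Lemma~\ref{LBexists}(iii) with any fixed $C' \ge 1$ (say $C' = 1$, so $m = n-1 \le n = C' n$), and for $n$ larger than the resulting constant we again get $Z(n-1,n) > 0$ whenever $d \mid (n-1)$.

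There is no real obstacle here; the only thing to verify is that Lemma~\ref{LBexists} applies at $m = n-1$, which uses the hypothesis $\go \ge 2$ in the finite-$\go$ case to ensure that $n-1$ is sufficiently far from the upper boundary $\go n$. The assumption $w_0 > 0$, which is built into Lemma~\ref{LBexists}, is given.
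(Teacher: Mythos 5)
Your proof is correct and follows exactly the paper's route: reduce via Theorem~\ref{TZ} to the statement $Z(n-1,n)>0$ and then apply Lemma~\ref{LBexists}, with the two cases $\go<\infty$ and $\go=\infty$ handled by parts (ii) and (iii) respectively. You have simply made explicit the case analysis that the paper leaves implicit.
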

\begin{proof}
  By \refT{TZ}, $Z_n>0\iff Z(n-1,n)>0$. The result follows from \refL{LBexists}.
\end{proof}

In the same way we can compute various
probabilities for the random tree $\ctn$. We
begin with the root degree $\dx(o)$; note that for any tree $T$, $v_1$ is the
root $o$, so $\dx(o)=\dx(v_1)=d_1$.
(\refL{Lroot} is a special case of \refL{Ldeg} below, but we prefer to study
this 
simpler case first because it shows the main ideas in the proof without the
complications (notational and others) in the more general version.)

\begin{lemma}
  \label{Lroot}
For any $d\ge0$ and $n\ge2$,
\begin{equation}  \label{lroot}
  \P(\dx_{\ctn}(o)=d)
=
\frac{n}{n-1} d\P(Y_1=d).
\end{equation}
Thus, the distribution of the root degree $\dx_{\ctn}(\rot)$ of $\ctn$ is
the size-biased 
distribution of $Y_1$.
\end{lemma}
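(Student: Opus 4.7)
The plan is to express both sides of \eqref{lroot} as weighted sums over allocations and then to extract the identity from the cycle lemma \refL{Lcyclic}. The case $d=0$ is trivial for $n\ge2$: the root of any tree in $\stn$ must have at least one child, so the left-hand side vanishes, and the right-hand side is zero too. Henceforth assume $d\ge1$.

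Combining \eqref{ctn}, \eqref{wtree} and \refL{Ld}, the trees $T\in\stn$ with $\dx_T(o)=d$ are in bijection with the sequences $\yyx=(y_1,\dots,y_{n-1})\in\cB_{n-1-d,n-1}$ such that prepending $d$ yields a valid tree degree sequence, i.e., satisfying $\sum_{i=1}^k y_i\ge k+1-d$ for every $k=0,\dots,n-2$. Call this set $\cV$. Since $w(T)=w_d\cdot w(\yyx)$ by \eqref{wtree} and \eqref{wmm},
\[\P(\dx_{\ctn}(o)=d)=\frac{w_d}{Z_n}\sum_{\yyx\in\cV}w(\yyx).\]

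The key step is to evaluate $\sum_{\yyx\in\cV}w(\yyx)$. Setting $x_i\=y_i-1$ gives $\sum_i x_i=-d$, and the inequalities defining $\cV$ become exactly condition \eqref{lc} with $\ellr=d$ applied to length $n-1$. Thus \refL{Lcyclic} shows that for every $\yyx\in\cB_{n-1-d,n-1}$, exactly $d$ of its $n-1$ cyclic shifts lie in $\cV$. Because $w(\yyx)$ is invariant under cyclic shifts, counting pairs $(\yyx,j)$ with $\yyx^{(j)}\in\cV$ in two ways yields
\[(n-1)\sum_{\yyx\in\cV}w(\yyx)=d\cdot Z(n-1-d,n-1).\]
This weighted double count, underpinned by the shift-invariance of $w$, is the main (and only non-routine) obstacle.

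Finally I would combine the preceding displays with the direct enumeration $\P(Y_1=d)=w_d\,Z(n-1-d,n-1)/Z(n-1,n)$ (obtained by summing \eqref{pbmn} over allocations with $y_1=d$) and with \refT{TZ}, which gives $Z(n-1,n)=nZ_n$. This produces \eqref{lroot}. The size-biased interpretation is then immediate, since the symmetry $\sum_i Y_i=n-1$ gives $\E Y_1=(n-1)/n$, so \eqref{lroot} reads $\P(\dx_{\ctn}(o)=d)=d\,\P(Y_1=d)/\E Y_1$, which is the size-biased distribution of $Y_1$.
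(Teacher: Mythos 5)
Your proof is correct and takes essentially the same route as the paper's: both reduce to the cycle lemma (Lemma~\ref{Lcyclic}) applied with $\ellr=d$ to the degree sequence minus its first entry, exploit the shift-invariance of the weight to double-count the $d$ good shifts out of $n-1$, and then invoke Theorem~\ref{TZ} to convert $Z(n-1,n)$ into $nZ_n$. The only cosmetic differences are that you split off the trivial case $d=0$ (which the paper absorbs into the $\ellr=0$ case of the cycle lemma) and that you unpack $\P(Y_1=d)=w_d\,Z(n-1-d,n-1)/Z(n-1,n)$ explicitly before assembling the identity, whereas the paper keeps $\P(Y_1=d)$ packaged and records the unpacked form separately in Remark~\ref{Rproot}.
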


\begin{proof}
  Let $T\in\stn$ have degree sequence $\ddn$.
If $d_1=d$, then $d_2,\dots,d_n$ is an allocation in $\cB_{n-1-d,n-1}$, and
by \refL{Ld}, such an allocation $(d_2,\dots,d_n)$ comes from a tree $T$
with $d_1=d$ if and only if
\begin{equation}\label{d2c}
  d+\sum_{i=2}^k d_i \ge k, \qquad 1\le k<n,
\end{equation}
or, equivalently,
\begin{equation*}
  \sum_{i=1}^{k} d_{i+1} \ge k+1-d, \qquad 0\le k<n-1.
\end{equation*}
We use \refL{Lcyclic} again, now with $x_i=d_{i+1}-1$ and $\ellr=d$ and see
that exactly $\ellr=d$ of the $n-1$ cyclic shifts of $d_2,\dots,d_n$ satisfy
\eqref{d2c}. 
Thus, by considering all trees $T$ with $d_1=d$ and the $n-1$ cyclic shifts
of $d_2,\dots,d_n$, we obtain each allocation $\ddn\in\cbnn$ with $d_1=d$
exactly $\ellr=d$ times. 
(It is possible that some shifts of $\ddnw$ coincide, but this does not matter.)
Consequently,
\begin{equation*}
  \begin{split}
  (n-1) Z_n \P(\dx_{\ctn}(o)=d)
&=(n-1)\sum_{T\in\stn:\;d_1(T)=d} w(T)
\\&
=d \sum_{\ddn\in\cbnn:\;d_1=d} w\bigpar{\ddn}
\\&
=d\, Z(n-1,n)\P(Y_1=d).	
  \end{split}
\end{equation*}
This yields the result by \refT{TZ}. 
\end{proof}

\begin{remark}\label{Rproot}
  More explicitly we have
\begin{equation*}
  \begin{split}
Z(n-1,n)\P(Y_1=d)
&=\sum_{\ddn\in\cbnn:\;d_1=d} w\bigpar{\ddn}
\\
&=	\sum_{\ddnw\in\cB_{n-1-d,n-1}} w_d w\bigpar{\ddnw}
\\&
=w_d Z(n-1-d,n-1),
  \end{split}
\end{equation*}
and thus
\begin{equation}\label{rproot}
    \P(\dx_{\ctn}(o)=d)
=
d w_d \frac{n}{n-1}\cdot\frac{Z(n-1-d,n-1)}{Z(n-1,n)}\,.
\end{equation}
\end{remark}

\begin{proof}[Proof of \refT{Troot}]
  By \refT{TB3} (with $m=n-1$ and $\gl=1$), 
$\P(Y_1=d)\to\pi_d$, and \eqref{troot1} follows from \refL{Lroot}.

The space $\bNo$ is 
compact, so every sequence of random variables in it is tight, and therefore
has a 
subsequence converging in distribution, see
\cite[Section 6]{Billingsley}.
It follows from \eqref{troot1} that if
$\dx_{\ctn}(o)\dto X$ along a subsequence, then $\P(X=k)=k\pi_k$ for every
$k\in\No$, and thus $\P(X=\infty)=1-\sumk k\pi_k=1-\mu$. Consequently, 
$X\eqd\hxi$ 
so $\dx_{\ctn}(o)\dto \hxi$ 
for every convergent subsequence, 
which means
that the entire sequence converges to $\hxi$, see
  \cite[Theorem 2.3]{Billingsley}.
\end{proof}

This proves the part of \refT{Tmain} that describes the
root degree. It remains to consider all other nodes. This will be done by
similar arguments.
We begin with a generalization of \refL{Lroot}.

\begin{lemma}\label{Ldeg}
  Let $T'\in\stf$ 
be a fixed finite subtree of the Ulam--Harris tree $\too$,
  let $\ell\=|T'|$ be its size and let $v_1,\dots,v_\ell$ be its nodes in
  depth-first order, and let $d'_1,\dots,d'_\ell$ be its degree sequence.
  (I.e., $d'_i=\dx_{T'}(v_i)$.) Suppose that $d_1,\dots,d_\ell\in\No$ and that
  $d_i\ge d_i'$ for every $i$. Then, for every $n>\ell$,
\begin{multline} \label{ldeg}
\P\bigpar{\dx_{\ctn}(v_i)=d_i \text{ for	$i=1,\dots,\ell$}}
\\
=
\biggpar{\sumil d_i-\ell+1} \frac{n}{n-\ell}
 \P(Y_i=d_i \text{ for }i=1,\dots \ell).
  \end{multline}
\end{lemma}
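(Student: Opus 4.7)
My plan is to extend the counting argument used for \refL{Lroot} by decomposing any tree $T\in\stn$ satisfying the event into the prescribed subtree $T'$ plus a forest of "extra" branches hanging off $T'$.

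First, if $T\in\stn$ satisfies $A\=\{\dx_T(v_i)=d_i:1\le i\le\ell\}$, then because $d_i\ge d'_i$, the node $v_i$ has children $v_i1,\dots,v_id_i$ in $T$; by the convention \eqref{left} the first $d'_i$ of these coincide with the children of $v_i$ in $T'$, while the remaining $d_i-d'_i$ are roots of ordered subtrees external to $T'$. The total number of external subtrees is $r\=\sum_{i=1}^\ell(d_i-d'_i)=\sum_{i=1}^\ell d_i-(\ell-1)$, using $\sum_i d'_i=\ell-1$ from \eqref{sumd} applied to $T'$, and they jointly contain $n-\ell$ nodes. Conversely, any sequence of $r$ ordered rooted trees with total size $n-\ell$ produces exactly one such $T$, and since the weight \eqref{wtree} factorises, I obtain
\begin{equation*}
\sum_{T\in\stn:\,A} w(T)=\prod_{i=1}^\ell w_{d_i}\cdot F_{r,n-\ell},
\end{equation*}
where $F_{r,N}\=\sum_{s_1+\dots+s_r=N}\prod_{j=1}^r Z_{s_j}$ is the partition function of ordered forests with $r$ trees and $N$ nodes.

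The key technical step, which generalises \refT{TZ} (the case $r=1$), is the forest identity
\begin{equation*}
F_{r,N}=\frac{r}{N}\,Z(N-r,N),\qquad 1\le r\le N.
\end{equation*}
To establish it, concatenate the depth-first degree sequences of the $r$ trees into a single sequence $(e_1,\dots,e_N)\in\cB_{N-r,N}$; arguing as in \refL{Ld}, the image consists precisely of those sequences satisfying $\sum_{j=1}^k e_j\ge k-r+1$ for every $0\le k<N$. Setting $x_j=e_j-1$ turns this into the condition in \refL{Lcyclic} with $\ellr=r$, so that exactly $r$ out of the $N$ cyclic shifts of any allocation in $\cB_{N-r,N}$ are forest degree sequences. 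Since cyclic shifts preserve weight, summing proves the identity.

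Finally, directly from \eqref{pbmn} the \bib{} probability factorises as
\begin{equation*}
\P(Y_1=d_1,\dots,Y_\ell=d_\ell)=\frac{\prod_{i=1}^\ell w_{d_i}\cdot Z(n-1-\sum_i d_i,\,n-\ell)}{Z(n-1,n)},
\end{equation*}
and $n-1-\sum_i d_i=n-\ell-r$. Combining the three displays with $Z_n=Z(n-1,n)/n$ from \refT{TZ} collapses the ratio $\P(A)/\P(Y_1=d_1,\dots,Y_\ell=d_\ell)$ to $nr/(n-\ell)$, which is \eqref{ldeg}. The main point where care is needed is the cyclic-shift identity for forests; once that is in hand, everything else is bookkeeping.
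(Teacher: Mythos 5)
Your proof is correct, and the underlying combinatorics is the same as the paper's — both reduce the count to \refL{Lcyclic} applied with $\ellr=r=\sum_i d_i-\ell+1$ on the tail $(d_{\ell+1},\dots,d_n)$ of a suitably ordered degree sequence of $T$. The difference is one of packaging. The paper works directly with a modified degree sequence (nodes of $T'$ first, then the remaining nodes of $T$ in depth-first order) and applies the cyclic lemma to that tail in place, so the "forest" structure of the external branches is implicit. You instead isolate a self-contained forest identity
\begin{equation*}
  F_{r,N}=\frac rN\,Z(N-r,N),
\end{equation*}
prove it as a standalone generalization of \refT{TZ}, and then let the rest of the computation fall out of the factorization $w(T)=\prod_i w_{d_i}\cdot\prod_j w(T_j)$, the definition of $\P(Y_i=d_i,\,i\le\ell)$ via $Z(n-1-\sum_i d_i,n-\ell)/Z(n-1,n)$, and \refT{TZ}. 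This buys a cleaner separation of concerns: the enumerative core (the Dwass/cycle-lemma step) is stated once for forests and reused, and the degenerate cases $r=0$ or $r>n-\ell$ are visibly handled since both sides vanish. The cost is the extra definition $F_{r,N}$, which the paper avoids. One small point worth noting in a polished version: the bijection between trees $T\in\cA_n$ and tuples $\bigl(T';(T_1,\dots,T_r)\bigr)$ requires fixing a canonical order on the $r$ free slots of $T'$ (say lexicographic on their Ulam--Harris labels), and the concatenated forest degree sequence must be taken in that order — then it coincides with the tail of the paper's $\gL'(T)$, which is why the two proofs give the same count.
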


Note that $\dx_T(v_i)\ge d'_i$ for $i=1,\dots,\ell$ implies that 
$T\supset T'$.

\begin{proof}

We have earlier used the depth-first order of the nodes
to define the degree sequence,
but many other orders could be used. In this proof, we consider only trees
$T$ that contain the given $T'$ as a subtree, and then we choose the order
which first takes the nodes of $T'$ in depth-first order (this is
$v_1,\dots,v_\ell$), and then the 
remaining nodes of $T$ in depth-first order; let $\gL'(T)$ be the degree
sequence in this order.

Let $\cA_n$ be the set of trees $T\in\stn$ with 
$\dx_{T}(v_i)=d_i$ for all $i$ (which implies $T\supset T'$). 
If $T\in\cA_n$, then the degree sequence $\gL'(T)$ thus begins with
the given $d_1,\dots,d_\ell$; furthermore, it satifies \eqref{ld}.
Conversely, every sequence beginning with the 
given $d_1,\dots,d_\ell$ that satifies \eqref{ld} is the degree sequence
$\gL'(T)$ of a unique tree in $\cA_n$.
Note also that \eqref{ld} is automatically satisfied for $k<\ell$, since
then $d_i\ge d_i'$ for $i\le k$ and $\sumik d_i'\ge k$ by \refL{Ld} applied
to $T'$.

Let $D:=d_1+\dots+d_\ell$. Consider a sequence $\ddn\in\cbnn$ beginning with
the given $d_1,\dots,d_\ell$, and let $x_i\=d_{\ell+i}-1$, for
$i=1,\dots,n-\ell$. Then $\ddn$ satisfies \eqref{ld} if and only if
\begin{equation*}
D+\sumik (x_i+1) \ge \ell+k  
\end{equation*}
for $k=0,\dots,n-\ell-1$, which is equivalent to 
\begin{equation*}
\sumik x_i \ge -(D-\ell),
\qquad 0\le k<n-\ell.
\end{equation*}
Furthermore, 
$$
\sum_{i=1}^{n-\ell}x_i=\sum_{\ell+1}^n d_i-(n-\ell)
=(n-1-D)-(n-\ell)=
-(D-\ell+1).
$$
\refL{Lcyclic} with $\ellr=D-\ell+1$ thus shows that of the $n-\ell$
cyclic permutations of $d_{\ell+1},\dots,d_n$, exactly $D-\ell+1$ yield a
degree sequence $\gL'(T)$ of a tree $T\in\cA_n$. In other words, if we take
the degree sequences $\gL'(T)$ for all trees
$T\in\cA_n$ and make these $n-\ell$
permutations of each of them, then we obtain every 
allocation $\yyx=\yyn\in\cbnn$ with $y_i=d_i$, $i=1,\dots,\ell$, exactly
$D-\ell+1$ times each.
Consequently,
\begin{equation*}
  \begin{split}
&(n-\ell)Z_n\P(\ctn\in\cA_n)
=(n-\ell)\sum_{T\in\cA_n}w(T)
=\sum_{T\in\cA_n}(n-\ell)w(\gL'(T))
\\&
=\sum_{\yyx\in\cbnn:\;y_i=d_i \text{ for }i\le \ell} (D-\ell+1) w(\yyx)
\\&
=(D-\ell+1) Z(n-1,n)\P(Y_i=d_i \text{ for }i\le \ell). 
  \end{split}
\end{equation*}
The result follows by \refT{TZ}.
\end{proof}

\begin{remark}\label{Rptree}
Arguing as in \refR{Rproot}, we obtain from \refL{Ldeg}
the explicit formula, generalizing \eqref{rproot},
with 
$D\=\sumil d_i$ and other notations as above,
\begin{multline}\label{rptree}
\P\bigpar{\dx_{\ctn}(v_i)=d_i \text{ for	$i=1,\dots,\ell$}}
\\
=
\frac{n}{n-\ell}
(D-\ell+1) 
 \frac{w_{d_1}\dotsm w_{d_\ell} Z(n-D-1,n-\ell)}{Z(n-1,n)}.
  \end{multline}
\end{remark}

\begin{remark}
  Note that \refL{Ldeg} (or \eqref{rptree})
shows that the probability remains exactly the same
  if we permute $d_1,\dots,d_\ell$, provided that the permuted sequence
  $(d_{\gs(i)})$ still is allowed, \ie, $d_{\gs(i)}\ge d'_i$ for all $i\le\ell$.
However, if the latter condition fails for some $i$, then the probability
typically becomes 0. (This is an interesting case of a symmetry that is not
complete.) 

For example, considering only the root $o$ and its first child 1,
we have 
\begin{equation*}
 \P(\dx_{\ctn}(o)=d \text{ and } \dx_{\ctn}(1)=d')
=
 \P(\dx_{\ctn}(o)=d' \text{ and } \dx_{\ctn}(1)=d)
\end{equation*}
whenever $d,d'\ge1$; however, if, say, $d\ge1$ and $d'=0$, then the \rhs{}
is 0 while the \lhs{} in general is not.
\end{remark}

\begin{remark}
  \refL{Ldeg} extends with minor modifications (mainly notational) to
  arbitrary finite rooted subtrees $T'$ of $\too$ (not
  necessarily satisfying \eqref{left}). 
We omit the details.
\end{remark}

\section{Proof of \refT{Tmain}}\label{Spfmain} 
First, as in the proof of \refT{TBmain},
\refL{Ltau} shows that
$\tau$ defined by \ref{tmaincritical}
and \ref{tmainsub} is well-defined and
equals $\tau(1)$ defined in \refL{Ltau};
since $1<2\le\go$ we have $\tau<\infty$ and $\Phi(\tau)<\infty$.
Further, \eqref{kia0} yields
$ \Psi(\tau)=\min(1,\nu)$.
Hence, by
\refL{LEPsi}, $\ppi$ is a probability distribution with mean
and variance as asserted. (This is a special case of the corresponding claims
in \refT{TBmain}, with $\gl=1$. We have $\gl=1$ here since
we relate the random trees to allocations with $m=n-1$,
and thus $m/n\to1$.)

The final claims follow by \eqref{mainmu} and the construction in \refS{ShGW}.

We turn to the main assertion, $\ctn\dto\hcT$.
Since $\st$ is a compact metric space, any sequence of random trees in $\st$
is tight, and has thus a convergent subsequence. 
(See e.g.\ \cite[Section 6]{Billingsley}.)
In particular, this holds for $\ctn$.

Consider a limiting random tree $\ttt$ in $\st$ such that $\ctn\dto\ttt$
along some subsequence.
We will show that then $\ttt\eqd\hcT$, regardless of the subsequence; this
implies $\ctn\dto\hcT$ for the full sequence, which then completes the proof.

We have defined $\st$ in \refS{SUlam} such that $\st\subset\bNo^\voo$ using
the embedding $T\mapsto (\dx_T(v))_{v\in\voo}$.
In order to show $\ttt\eqd\hcT$, it thus suffices to show that the
distributions agree on cylinder sets, \ie, that
$\bigpar{\dx(v_1),\dots,\dx(v_m)}\in \bNo^m$ has the same distribution for
$\ttt$ and $\hcT$, for any finite set $V=\set{v_1,\dots,v_m}\subset\voo$.
Since $\bNo^m$ is a countable set, this is equivalent to
\begin{equation}
  \label{eqd}
\P\bigpar{\dx_{\ttt}(v_1)=d_1,\dots,\dx_{\ttt}(v_m)=d_m}
=
\P\bigpar{\dx_{\hcT}(v_1)=d_1,\dots,\dx_{\hcT}(v_m)=d_m},
\end{equation}
for any finite set $V=\set{v_1,\dots,v_m}\subset\voo$ and any
$d_1,\dots,d_m\in\bNo$. 

It thus suffices to show \eqref{eqd}. 
Furthermore,
given any finite set $V\subset\voo$, we may enlarge it to a finite set $V$
satisfying \eqref{voo1}--\eqref{voo3}, \ie, a set that is the node set of
some finite tree in $\stf$.
It thus suffices to show \eqref{eqd} for $V=V(\Tx)$ with $\Tx\in\stf$.

We make one more reduction. 
Suppose that $V=V(\Tx)$ with $\Tx\in\stf$
and that \eqref{eqd} contains a condition $\dx(v_i)=d_i$
with $d_i<\dx_\Tx(v_i)$. Let $v\=v_i$ and let $u$ be the last child of $v$ in
$\Tx$; thus (recalling the notation in \refS{SUlam}) $u=vj$ for some integer
$j=\dx_\Tx(v)>d_i$. By \eqref{dtree}, any tree $T\in\st$ with $\dx_T(v)=d_i$ has
$\dx_T(u)=0$, and further (\eg{} by \eqref{dtree} and induction)
$\dx_T(s)=0$ for every descendant $s$ of $u$. 
Thus, letting $\Txw$ denote the subtree of $\Tx$ rooted at $u$,
for any $s\in\Txw$, the event 
$\set{\dx_{\hcT}(v)=d_i\text{ and }\dx_{\hcT}(s)>0}$
is impossible and has probability 0; furthermore, the same holds for each
$\ctn$, and thus, since $\ctn\to\ttt$ along a subsequence, 
$\P\bigpar{\dx_{\ttt}(v)=d_i\text{ and }\dx_{\ttt}(s)>0}=0$.
Consequently, if \eqref{eqd} contains a condition $\dx(v_j)=d_j$ with
$v_j\in\Txw$ and $d_j>0$, then both sides are trivially
$0$. On the other hand, if $d_j=0$ for all $v_j\in\Txw$, then the
conditions 
$\dx(v_j)=d_j$ 
are redundant in \eqref{eqd} and may be deleted, so we may
replace $\Tx$ by the smaller tree with $\Txw$ removed. Repeating this
pruning, if necessary, we see that it suffices to show \eqref{eqd}
for $V=V(\Tx)$ when $\Tx\in \stf$ is a finite tree and $d_i\ge \dx_\Tx(v_i)$
for every $i$.

Recall that $d_i$ in \eqref{eqd} may be infinite. We study three different
cases separately.

\pfcase{(a)}{Every $d_i<\infty$.}
This is the case treated in \refL{Ldeg}; we take the limit as \ntoo{} in
\eqref{ldeg} and obtain by \refT{TB3}
(with $m=n-1$ and $\gl=1<\go(\wwx)$), letting again $D\=\sumil d_i$,
\begin{equation*}
  \P\bigpar{\dx_{\ctn}(v_i)=d_i \text{ for	$i=1,\dots,\ell$}}
\to 
\lrpar{D-\ell+1} \prodil \pi_{d_i}.
\end{equation*}
Since we have assumed $\ctn\dto\ttt$ along a subsequence, this yields
\begin{equation}\label{ematt}
  \P\bigpar{\dx_{\ttt}(v_i)=d_i \text{ for	$i=1,\dots,\ell$}}
=
\lrpar{D-\ell+1} \prodil \pi_{d_i}.
\end{equation}

Now consider the modified \GWt{} $\hcT$. 
(Recall its construction in \refS{ShGW}.)
If the tree $\hcT$ has $\dx_{\hcT}(v_i)=d_i<\infty$ for all $v_i\in T'$,
then the spine has to 
extend outside $T'$. The first point on the spine outside $T'$ is a node in
$\partial T'$ (regarding $T'$ as a subtree of $\hcT$). The condition
$\dx_{\hcT}(v_i)=d_i$ for 
$v_i\in T'$ determines the boundary $\partial T'$ of $T'$
in $\hcT$, which thus not depend on
$\hcT$, and \refL{Ldsub} shows that $|\partial T'|=D-\ell+1$.

Fix a node $u\in\partial T'$, and consider the event $\cE_u$ that the spine
of $\hcT$ passes through $u$ and that $\dx_{\hcT}(v_i)=d_i$ for
$i=1,\dots,\ell$. 
The event $\cE_u$ thus specifies the nodes in $T'$ that are special in the
construction of $\hcT$ (\viz{} the nodes on the path from $o$ to $u$), and
for each special node it specifies which of its children that will be
special; furthermore it specifies the number of children for each node in
$T'$, special or not. Recall that the probability that a special node has
$d<\infty$ children, with a given one of them being special, is $\pi_d$, just
as the 
probability that a normal node has $d$ children. Thus, by independence,
for every $u\in\partial T'$,
$\P(\cE_u)=\prodil \pi_{d_i}$.
This probability thus does not depend on $u$, so summing over the $D-\ell+1$
nodes $u\in\partial T'$ we obtain
\begin{equation*}
  \P\bigpar{\dx_{\hcT}(v_i)=d_i \text{ for $i=1,\dots,\ell$}}
=
\sum_{u\in\partial T'}\P(\cE_u)
=
\lrpar{D-\ell+1} \prodil \pi_{d_i},
\end{equation*}
which together with \eqref{ematt} shows \eqref{eqd} in this case.
(Cf.\ \refR{Rsizebias} for a similar argument.)

\pfcase{(b)}{Exactly one $d_i=\infty$.}
Suppose that $d_j=\infty$ and $d_i<\infty$ for $i\neq j$.
Define, for $0\le k\le\infty$, 
\begin{equation*}
  \cA_k\=\set{T\in\st: \dx_T(v_i)=d_i \text{ for } i\neq j
              \text { and } \dx_T(v_j)=k}.
\end{equation*}
We thus want to show $\P(\ttt\in\cA_\infty)=\P(\hcT\in\cA_\infty)$.
We define further 
$$
\ageK\=\bigcup_{K\le k\le\infty} \cA_k,
$$
and note that since $\ctn\dto\ttt$ (along a subsequence), we have
(along the subsequence), for any finite $K$,
\begin{equation}\label{rosen}
  \P(\ctn\in\ageK)\to\ 
 \P(\ttt\in\ageK).
\end{equation}

We define also (for finite $k$) the analogous
\begin{equation*}
  \bk\=\set{\yyn\in\cbnn: 
y_j=k \text { and } 
  y_i=d_i \text{ for } i\le\ell \text{ with }i\neq j
}.
\end{equation*}
Then  \refL{Ldeg} can be written, with $D'\=\sum_{i\neq j}d_i$, for $k<\infty$,
\begin{equation}\label{teak}
  \P(\ctn\in\ak)=(k+D'-\ell+1)\frac{n}{n-\ell}\P(\bnn\in\bk).
\end{equation}

Consider, for simplicity, $k>\max_{i\neq j}d_i$. Then \eqref{pyn} shows
that,
with $N_i=N_i(\bnn)$,
\begin{equation*}
  \P(\bnn\in\bk)=
 \E \P(\bnn\in\bk\mid N_0,N_1,\dots)=
\E\biggpar{ \frac{N_k}{n} \prod_{i\neq j} \frac{N_{d_i}+O(1)}{n+O(1)}}.
\end{equation*}
(The implicit constants in the
$O$ in this proof may depend on $\ell$ and $d_1,\dots,d_\ell$, but not
on $n$ or $k$.) 
Consequently, by \eqref{teak},
\begin{equation*}
  \begin{split}
  \P(\ctn\in\ak)
&=
\bigpar{k+O(1)}\bigpar{(1+O(n\qw)}
\E\biggpar{ \frac{N_k}{n} \prod_{i\neq j} \frac{N_{d_i}}{n}}
\\
&=
\bigpar{1+O(k\qw)+O(n\qw)}
\E\biggpar{\frac{k N_k}{n} \prod_{i\neq j} \frac{N_{d_i}}{n}}.	
  \end{split}
\end{equation*}
Summing over $k\ge K$, we obtain for any $K$, using $\sumk kN_k=n-1$
for any allocation $\bnn$,  
\begin{equation}\label{dal}
  \begin{split}
  \P(\ctn\in\ageK)
&=
\sum_{k= K}^\infty \P(\ctn\in\ak)
\\&=
\bigpar{1+O(K\qw)+O(n\qw)}
\E\biggpar{\frac{\sum_{k\ge K}k N_k}{n} \prod_{i\neq j} \frac{N_{d_i}}{n}}.	
\\&=
\bigpar{1+O(K\qw)+O(n\qw)}
\E\biggpar{\frac{n-1-\sum_{k< K}k N_k}{n} \prod_{i\neq j} \frac{N_{d_i}}{n}}.	
  \end{split}
\end{equation}
By \refT{TBmain}, for any fixed $K$, as \ntoo,
\begin{equation*}
  \frac{n-1-\sum_{k< K}k N_k}{n} \prod_{i\neq j} \frac{N_{d_i}}{n}
\pto \bigpar{1-\sum_{k< K}k\pi_k} \prod_{i\neq j} \pi_{d_i}.
\end{equation*}
By dominated convergence, the expectation converges to the same limit, and thus
\eqref{rosen} and \eqref{dal} yield
\begin{equation}\label{dalq}
  \P(\ttt\in\ageK)
=
\bigpar{1+O(K\qw)}
\Bigpar{1-\sum_{k< K}k\pi_k} \prod_{i\neq j} \pi_{d_i}.
\end{equation}

Finally, let $K\to\infty$ to obtain
\begin{equation}\label{emm}
  \P(\ttt\in\cA_\infty)
=
\Bigpar{1-\sum_{k< \infty}k\pi_k} \prod_{i\neq j} \pi_{d_i}
=
\xpar{1-\mu} \prod_{i\neq j} \pi_{d_i}.
\end{equation}

Now consider $\hcT$. If $\dx_{\hcT}(v_j)=d_j=\infty$, then the spine ends with
an explosion at $v_j$. This fixes the spine, and the event that
$\dx_{\hcT}(v_i)=d_i$ for $i\neq j$ then means, just as in case (a) when we
considered a specific $\cE_u$, that we have specified the number of children 
to be $d_i$ for these nodes, and for the special nodes (except $v_j$)
we have also specified which child is special. The probability of this is
$\pi_{d_i}$ for each $i\neq j$, and the probability that the special node
$v_j$ has an infinite number of children is, by \eqref{hxi}, $1-\mu$.
Hence, by independence,
\begin{equation}
  \P(\hcT\in\cA_\infty)
=
\xpar{1-\mu} \prod_{i\neq j} \pi_{d_i},
\end{equation}
which together with \eqref{emm} shows 
$  \P(\ttt\in\cA_\infty)=  \P(\hcT\in\cA_\infty)$, which is
\eqref{eqd} in this case.

\pfcase{(c)}{More than one $d_i=\infty$.}
By the definition of the modified \GWt{} $\hcT$, there is at most one node
with infinite degree, so in this case,
\begin{equation*}
  \P\bigpar{\dx_{\hcT}(v_i)=d_i \text{ for $i=1,\dots,\ell$}}
= 0.
\end{equation*}
This means that the sum of these probabilities for all sequences $\ddn$ 
with at most one infinite value is 1. But we have shown that for such
sequences, the probability is the same for $\ttt$ as for $\hcT$, so the
probabilities for $\ttt$ for these sequences also sum up to 1. Consequently,
if more than one $d_i=\infty$, then
\begin{equation*}
  \P\bigpar{\dx_{\ttt}(v_i)=d_i \text{ for $i=1,\dots,\ell$}}
= 0
\end{equation*}
too, which shows \eqref{eqd} in this case.

This shows that \eqref{eqd} holds for any 
$v_1,\dots,v_m$ such that
$\set{v_1,\dots,v_m}=V(T')$ where $T'\in\stf$ is
a finite tree and $\ddn$ is any sequence in $\bNo^m$ with $d_i\ge
\dx_{T'}(v_i)$ for every $i$. As discussed above, this implies \eqref{eqd} in
full generality and thus $\ttt\eqd\hcT$, which shows that $\ctn\dto\hcT$.
\qed

\section{Proofs of Theorems \refand{Tdegree}{Tsubtree}}\label{SpfXXX}

We begin by stating another version of the correspondence between simply
generated trees and the \bib{} model.

\begin{lemma}\label{Lx}
We may couple $\ctn$ and $\bnn$ such that the degree sequence
$\gL(\ctn)$ is a cyclic shift of $\bnn$, and, 
conversely, $\bnn$ is a uniformly random cyclic shift of $\gL(\ctn)$.
\end{lemma}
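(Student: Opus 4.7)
The plan is to exhibit the coupling through the bijection $\Phi\colon\stn\times\setn\to\cbnn$ given by $\Phi(T,j)\=\gL(T)\nnx{j}$, which was already used implicitly in the proof of Theorem~\ref{TZ}. That $\Phi$ is indeed a bijection is a direct consequence of Corollary~\ref{Ccyclic}: every $\yyx\in\cbnn$ has exactly one cyclic shift that is the degree sequence of a tree in $\stn$, which simultaneously gives surjectivity and injectivity of $\Phi$.

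My first step is to construct the coupling. I sample $\ctn$ from its true distribution \eqref{ctn} and, independently, draw $J$ uniformly from $\setn$; then I set $\bnn\=\gL(\ctn)\nnx{J}$. By construction $\gL(\ctn)$ is itself a cyclic shift of $\bnn$ (the $(-J)$-shift), and, conditionally on $\ctn$, $\bnn$ is a uniformly random cyclic shift of $\gL(\ctn)$. Both conclusions of the lemma therefore follow once I check that $\bnn$ has the correct marginal distribution \eqref{pbmn} with $m=n-1$.

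That verification is the only real computation. For fixed $\yyx\in\cbnn$, let $(T,j)$ be its unique preimage under $\Phi$, so that
\begin{equation*}
  \P(\bnn=\yyx)=\P(\ctn=T)\,\P(J=j)=\frac{w(T)}{n\,Z_n}.
\end{equation*}
The weight \eqref{wmm} is invariant under cyclic shifts of the box contents (each factor $w_{y_i}$ merely moves to a different position in the product), so $w(\yyx)=w(\gL(T)\nnx{j})=w(T)$ by \eqref{wp}; and Theorem~\ref{TZ} gives $nZ_n=Z(n-1,n)$. Hence $\P(\bnn=\yyx)=w(\yyx)/Z(n-1,n)$, which matches the required distribution.

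There is no substantial obstacle here: the real content is already packaged in Corollary~\ref{Ccyclic}, and what remains is bookkeeping of the cyclic-shift convention and the observation that the weight functional is shift-invariant. The only mild care needed is to ensure that the uniform choice of $J$ is independent of $\ctn$, so that the distribution of $\bnn$ factors cleanly as above; this is automatic in the construction.
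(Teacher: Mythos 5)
Your proposal is correct and is essentially the paper's argument run in the opposite direction: the paper samples $\bnn$ and defines $\ctn$ via the unique tree-yielding cyclic shift, while you sample $\ctn$ together with an independent uniform shift $J$ and set $\bnn\=\gL(\ctn)^{(J)}$; both hinge on Corollary~\ref{Ccyclic}, the shift-invariance of the weight \eqref{wmm}, and Theorem~\ref{TZ}. One small remark: Corollary~\ref{Ccyclic} gives uniqueness of the index $j\in\setn$ (not merely of the shifted sequence), which is what you actually need for $\Phi$ to be injective when $\gL(T)$ might a priori be periodic; here $m=n-1$ and $\gcd(n,n-1)=1$ rules out nontrivial periods, so your appeal to the corollary is sound, but it is worth being aware that this is the precise form of uniqueness being invoked.
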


\begin{proof}
  Let $\bnn=\YYn$
and let $\YYgsn$ be the unique cyclic shift of $\YYn$
that is the degree sequence of a tree in $\stn$, see
\refC{Ccyclic}.
Then $\YYgsn\eqd\gL(\ctn)$,
as a consequence of \refC{Ccyclic} and the invariance of the weight
  $w\YYn$ under cyclic shifts.
Consequently, we may couple $\bnn$ and $\ctn$ such that
$\YYgsn=\gL(\ctn)$, and the result follows.
\end{proof}

\begin{proof}[Proof of \refT{Tdegree}]
We use the coupling in \refL{Lx}. Then $N_d$ in \refT{Tdegree} equals
$N_d(\bnn)$ in \refT{TBmain}, and thus \eqref{tdegq} follows by
\eqref{tbmain}.

We obtain \eqref{tdeg} as a simple consequence of \eqref{tdegq}, using
$\P(\dx_{\ctn}(v)=d\mid N_d)=N_d/n$
and thus
$\P(\dx_{\ctn}(v)=d)=\E N_d/n$, 
\cf{} the proof of
\refT{TB3}. Alternatively, we can arrange so that
$\dx_{\ctn}(v)=Y_1$, and the result then follows by \refT{TB3}.
\end{proof}

\begin{proof}[Proof of \refT{Tsubtree}]
We use again the coupling in \refL{Lx}.
Let $T$ be a fixed tree of size $\ell$ and let its degree sequence be
$\bddl$.  Recall that we have defined the degree sequence using depth-first
search. It follows that if a tree has degree sequence $\ddn$ and a
node $v$  is visited as node $v_{j}$ in the 
depth-first search, then the subtree rooted at $v$ has degree sequence
$(d_{j}, \dots,d_{k})$, where we stop when this is a degree sequence of a
tree, \ie, when it satisfies the condition in \refL{Ld}.
In particular, the subtree rooted at $v$ equals $T$ if and only if
$(d_j,\dots,d_{j+\ell-1})=\bddl$.
(Clearly, this is impossible if $j>n-\ell+1$, since then a tree would be
completed with less size than $\ell$.)

Consequently, $N_T$ equals the number of substrings $\bddl$ in $\YYn$,
regarded as a cyclic sequence. In other words,
if we let $I_j$ be the indicator of the event
$(Y_j,\dots,Y_{j+\ell-1})=\bddl$, where we define $Y_i\=Y_{i-n}$ for $i>n$,
then 
\begin{equation}\label{nt}
N_T=\sumjn I_j.  
\end{equation}
In particular, taking the expectation and using the rotational symmetry,
\begin{equation*}
 \P(\ctnv=T)=\frac1n \E N_T = \E I_1=\P\bigpar{(Y_1,\dots,Y_\ell)=\bddl},
 \end{equation*}
and thus \refT{TB3} yields
 \begin{equation*}
 \P(\ctnv=T)
\to\prodil \pi_{\bd_i}
=\P(\cT=T),
\end{equation*}
which proves \eqref{ttree}.

In order to show the stronger result \eqref{ttreeq}, we condition as in the
proof of \refT{TB3} on $N_0,N_1\dots $ and obtain, see \eqref{pyn},
\begin{equation}\label{emms}
  \begin{split}
  \E(I_j\mid N_0,N_1,\dots)
&=
\P\bigpar{(Y_1,\dots,Y_\ell)=\bddl\mid N_0,N_1,\dots}
\\&
= \prod_{i=1}^\ell \frac{N_{\bd_i}-c_i}{n-i+1}
= \prod_{i=1}^\ell \frac{N_{\bd_i}}{n}
+O\parfrac1n,	
  \end{split}
\end{equation}
where $c_i\=|\set{j<i:\bd_j=\bd_i}|$.
If $|j-k|\ge \ell$ and $|j-k\pm n|\ge \ell$ (\ie, $j$ and $k$ have distance
at least $\ell$, regarded as point on a circle of length $n$), then 
similarly, with $c'_i\=|\set{j\le\ell:\bd_j=\bd_i}|$,
\begin{equation*}
  \begin{split}
  \E(I_jI_k\mid N_0,N_1,\dots)
= \prod_{i=1}^\ell \frac{N_{\bd_i}-c_i}{n-i+1}
\prod_{i=1}^\ell \frac{N_{\bd_i}-c_i-c'_i}{n-\ell-i+1},
  \end{split}
\end{equation*}
and it follows that
\begin{equation}\label{covo}
  \begin{split}
  \Cov(I_j,I_k\mid N_0,N_1,\dots)
=O(1/n). 
  \end{split}
\end{equation}
For $j$ and $k$ of distance less than $\ell$, we use the trivial 
\begin{equation}
  \label{cov1}
| \Cov(I_j,I_k\mid N_0,N_1,\dots)|\le1.
\end{equation}
There are less than $n^2$ pairs $(j,k)$ of the first type and $O(n)$ pairs
of the second type, and thus by \eqref{nt} and \eqref{covo}--\eqref{cov1},
\begin{equation*}
  \begin{split}
  \Var(N_T\mid N_0,N_1,\dots)
=\sumjn\sumkn  \Cov(I_j,I_k\mid N_0,N_1,\dots)
=O(n).
  \end{split}
\end{equation*}
Consequently, $N_T/n-  \E(N_T/n\mid N_0,N_1,\dots)\pto0$, and thus by
\eqref{nt}, \eqref{emms} and \refT{TBmain},
\begin{equation*}
  \begin{split}
\frac{N_T}n&=
  \E\Bigpar{\frac{N_T}{n}\Bigm| N_0,N_1,\dots}+\op(1)
= \prod_{i=1}^\ell \frac{N_{\bd_i}}{n}
+\op(1)
\\&
\pto\prodil \pi_{\bd_i}
=\P(\cT=T).
\qedhere
  \end{split}
\end{equation*}
\end{proof}

\section{Asymptotics of the partition functions}\label{Spart}

We have a simple asymptotic result for the partition
function $Z(m,n)$ (to the first order in the exponent, at least if $\rho>0$):

\begin{theorem}\label{TBZ}
  Let $\wwx=(w_k)_{k\ge0}$ be a weight sequence with $w_0>0$ and $w_k>0$
  for some $k\ge1$.
Suppose that $n\to\infty$ and $m=m(n)$ with 
$\spann(\wwx)\delar m$, $m\to\infty$ and
$m/n\to\gl$ where $0\le\gl<\go$,
and let $\tau$ be as in \refT{TBmain}.
\begin{romenumerate}
\item 
If $\rho>0$, then
  \begin{equation}\label{tbz}
	\frac1n \log Z(m,n)\to 
\log\Phi(\tau)-\gl\log\tau
\in\oooo
.
  \end{equation}
\item 
If $\rho=0$ and $\gl>0$, then
  \begin{equation}\label{tbzoo}
	\frac1n \log Z(m,n)\to \infty.
  \end{equation}
\end{romenumerate}
In both cases, the result can be written
  \begin{equation}\label{tbz2}
	\frac1n \log Z(m,n)\to 
\log\inf_{0\le t\le\rho} \frac{\Phi(t)}{t^\gl}
=\log\inf_{0\le t<\infty} \frac{\Phi(t)}{t^\gl}
\le\infty.
  \end{equation}
\end{theorem}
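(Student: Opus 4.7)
The plan is to combine a conjugation argument for the upper bound with the local limit bounds of Lemmas \ref{LB2} and \ref{LB2sub} for the matching lower bound.

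\emph{Upper bound in Case (i).} For any $t\in(0,\rho]$ with $\Phi(t)<\infty$, I would tilt to the probability weight sequence $p_k\= t^kw_k/\Phi(t)$. By \refL{LB1}, $Z(m,n;\wwx)=\Phi(t)^n t^{-m}Z(m,n;(p_k))$, and since $(p_k)$ is a probability distribution, $Z(m,n;(p_k))=\P(S_n=m)\le 1$. Hence $Z(m,n;\wwx)\le\Phi(t)^n t^{-m}$, and taking logarithms and dividing by $n$ yields
\begin{equation*}
  \limsup_{\ntoo}\tfrac1n\log Z(m,n)\le\log\Phi(t)-\gl\log t=\log\bigpar{\Phi(t)/t^\gl}.
\end{equation*}
By \refR{RBanna} this infimum is attained at $t=\tau$, giving the desired upper bound.

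\emph{Lower bound in Case (i) with $\tau>0$.} I would tilt by $\tau$: \refL{LB1} gives $Z(m,n;\wwx)=\Phi(\tau)^n\tau^{-m}\P(S_n=m)$, where $S_n$ is a sum of $n$ \iid{} variables with the $\ppi$-distribution \eqref{pik}. The tilted distribution has radius of convergence $\rho/\tau\ge1$ and mean $\Psi(\tau)=\min(\gl,\nu)$. When $\gl<\nu$, $\tau<\rho$, so $\rho/\tau>1$ and \refL{LB2} applies with $m/n\to\gl=\E\xi_1$; when $\gl\ge\nu$, $\tau=\rho$, so $\rho/\tau=1$ and \refL{LB2sub} applies with $m/n\to\gl\ge\nu=\E\xi_1$. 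Either way $\P(S_n=m)=e^{o(n)}$, producing the matching lower bound. The degenerate sub-case $\tau=0$ forces $\gl=0$; here letting $t\downto 0$ in the upper-bound inequality yields $\limsup\le\log w_0$, and the reverse inequality follows by explicitly counting configurations in which $m$ balls occupy $O(1)$ boxes (using \refL{LBexists} to decompose $m$ into a bounded number of atoms in $\supp(\wwx)$) while the remaining boxes are empty, contributing weight $w_0^{n-O(1)}$.

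\emph{Case (ii) and the combined formula.} When $\rho=0$, I would truncate to $\wwx\kk=(w_k\ett{k\le K})_{k\ge 0}$ for large $K\in\supp(\wwx)$; then $\rho(\wwx\kk)=\infty$, so Case (i) applies to $\wwx\kk$, and the monotonicity $Z(m,n;\wwx)\ge Z(m,n;\wwx\kk)$ gives $\liminf_{\ntoo}\tfrac1n\log Z(m,n)\ge\log\Phi\kk(\tauk)-\gl\log\tauk$. The estimate \eqref{psil} together with $\Phi\kk(\eps)\to\Phi(\eps)=\infty$ forces $\Psi\kk(\eps)\to\infty$ as $\Ktoo$ for every $\eps>0$, so $\tauk\to 0$; since $\Phi\kk(\tauk)\ge w_0$, the right-hand side tends to $+\infty$, establishing \eqref{tbzoo}. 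The unified formula \eqref{tbz2} then follows from \refR{RBanna} in Case (i) and, in Case (ii), from $\Phi(t)=\infty$ for all $t>0$ together with $\Phi(0)/0^\gl=\infty$ when $\gl>0$, making the relevant infimum equal to $\infty$.

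The main obstacle will be verifying $\P(S_n=m)=e^{o(n)}$ at the phase boundary $\gl=\nu$, where the tilted distribution has unit radius of convergence and the simpler \refL{LB2} is unavailable; this is precisely the case handled by the truncation in \refL{LB2sub}. The degeneracies $\tau=0$ and $\rho=0$ are secondary technicalities, handled by hand-counting and truncation respectively.
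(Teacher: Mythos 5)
Your overall architecture --- tilting to a probability weight sequence, the $\P(S_n=m)\le1$ upper bound combined with \refR{RBanna}, the lower bound via Lemmas \ref{LB2} and \ref{LB2sub} split according to whether $\tau<\rho$ or $\tau=\rho$, and the truncation argument for Case (ii) using \eqref{qqq} and $\tauk\to0$ --- follows the paper's proof closely, and those parts are correct.

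There is a genuine gap, however, in the lower bound for the sub-case $\tau=0$ (equivalently $\gl=0$, $\rho>0$). You claim that $m$ balls can be placed in $O(1)$ non-empty boxes by ``using \refL{LBexists} to decompose $m$ into a bounded number of atoms in $\supp(\wwx)$,'' contributing weight $w_0^{n-O(1)}$. This fails on two counts. First, \refL{LBexists} does not give a bounded number of non-empty boxes; its proof (via Schur's theorem and a truncation to support $\le K$) produces an allocation with roughly $m/K$ non-empty boxes, which tends to infinity since $m\to\infty$. Indeed, when $\go<\infty$ every good allocation of $m$ balls necessarily uses at least $m/\go\to\infty$ non-empty boxes. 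Second, even if $m\in\supp(\wwx)$ so that a single box suffices, the weight $w_m$ of that box is not under control: nothing in the hypotheses forbids $w_m$ from decaying faster than exponentially in $m$ (e.g.\ $w_m=e^{-m^3}$, which has $\rho=\infty$), in which case $\tfrac1n\log w_m\to-\infty$ when $m^3/n\to\infty$ and your bound degenerates. The paper circumvents both difficulties by comparing with a smaller number of boxes rather than a bounded number: for $m<n/2$ one has $w_0^{n-2m}Z(m,2m)\le Z(m,n)\le\binom{n}{2m}w_0^{n-2m}Z(m,2m)$, and then $\tfrac1n\log Z(m,2m)=O(m/n)=o(1)$ is controlled by the already-established case $\gl'=\tfrac12>0$. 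You could also repair your own line of reasoning by allowing $O(m)$ non-empty boxes, each holding at most a fixed number $K$ of balls (so each contributes weight at least $\min_{1\le k\le K,\,w_k>0}w_k>0$), giving $\tfrac1n\log Z(m,n)\ge\log w_0+O(m/n)$; but as stated the ``$O(1)$ boxes'' step is incorrect.
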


If $0\le\gl\le\nu$ and $\rho>0$, the limit 
can also be written
$\log\Phi(\tau)-\Psi(\tau)\log\tau$.

The formula \eqref{tbz} is shown by a physicists' proof by
\citet{BialasetalNuPh97}. 

\begin{remark}\label{RZ0}
If $\gl=0$, then $\tau=0$, and we interpret the \rhs{} of \eqref{tbz} as
$\log\Phi(0)=\log w_0$; this is in accordance with \eqref{tbz2}.

It is easily seen that the result holds, with this limit,
also in the rather trivial case when $m$ is bounded, provided $Z(m,n)>0$.
\end{remark}

\begin{remark}
  If $\go<\infty$, then the result holds also when $\gl=\go$, provided
  $Z(m,n)>0$,
if we let $\tau=\infty$ as in \refR{Rsymmetry} and interpret the
  \rhs{} of \eqref{tbz} as the limit value $\log w_\go$,
which again is in accordance with \eqref{tbz2}. This follows from
  \refR{RZ0} by the
  symmetry argument in \refR{Rsymmetry}.
\end{remark}

\begin{remark}
  Using the function $\tau(x) $ defined in \refT{TBmain2}, the result
  \eqref{tbz} can also be written, using the continuity of $\tau(x)$ and an
  extra argument (which we omit) when $\gl=0$,
  \begin{equation}
\log Z(m,n)= n\log\Phi(\tau(x))-m\log\tau(x)+o(n)
  \end{equation}
or, equivalently,
  \begin{equation}\label{tbzmn}
	Z(m,n)=\Phi(\tau(x))^n\tau(x)^{-m}e^{o(n)}.
  \end{equation}
As in \refT{TBmain2}, it suffices here that $m/n\le C<\go$ (and $m\to\infty$).
\end{remark}

\begin{proof}[Proof of \refT{TBZ}]
Note that the assumptions imply that
$Z(m,n)>0$ (at least for $n$, and thus $m$, large) by \refL{LBexists}.
The equivalence between \eqref{tbz}--\eqref{tbzoo}
and \eqref{tbz2} follows from \eqref{annagl}.

\pfitem{i}
Assume first $\gl>0$.
  Since $\rho>0$ and $\gl>0$, we then have $\tau>0$.
Thus $\wwx=\ww$ is equivalent to $\ppix=\ppi$, and \refL{LB1} yields
\begin{equation*}
Z(m,n)=  Z(m,n;\wwx)
=
\Phi(\tau)^{n}\tau^{-m} Z(m,n;\ppix).
\end{equation*}
We saw in the proof of \refT{TBmain}, case (a), that Lemmas
\refand{LB2}{LB2sub} yield $Z(m,n;\ppix)=\exp(o(n))$, and thus
\begin{equation*}
Z(m,n)
=\exp\bigpar{n\log\Phi(\tau)-m\log\tau+o(n)},
  \end{equation*}
which yields \eqref{tbz}.

It remains to consider the case $\gl=0$. Then $m/n\to0$, and we may assume
$m<n/2$. 
In any allocation of $m$ balls, there are at most $m$ non-empty boxes. 
Let us
mark $2m$ boxes, including all non-empty boxes. For each choice of the
marked boxes, we have in them an allocation in $\cB_{m,2m}$,
and only empty boxes outside; since there are $\binom {n}{2m}$ choices of
marked boxes,
\begin{equation}
  \label{b1}
Z(m,n)
\le \binom{n}{2m} w_0^{n-2m} Z(m,2m).
\end{equation}
On the other hand, any allocation of $m$ balls in $2m$ boxes can be extended
to an allocation in $\cbmn$ with the last $n-2m$ boxes empty; thus
\begin{equation}
  \label{b2}
Z(m,n)
\ge w_0^{n-2m} Z(m,2m).
\end{equation}
We have, by Stirling's formula, using $m/n\to\gl=0$,
\begin{equation}\label{b3}
\frac1n\log\binom{n}{2m}
\le \frac1n\log \parfrac{en}{2m}^{2m} 
=\frac{2m}{n}\log\frac{e}2 - \frac{2m}{n}\log\frac mn
\to0.
\end{equation}
Moreover, by the case $\gl>0$ just proved, we have from \eqref{tbz} 
$\log Z(m,2m)=O(m)=o(n)$. Consequently, \eqref{b1}--\eqref{b3} yield
\begin{equation*}
  \log Z(m,n)=(n-2m)\log w_0 +o(n)= n\log w_0 + o(n),
\end{equation*}
showing \eqref{tbz} in the case $\gl=0$.

\pfitem{ii}
As in the proof of \refL{LB2sub}, we use the truncated \ws{} 
$\wwxk$ defined in \eqref{trunc2}, where
$K$ is so large that $\spann(\wwxk)=\spann(\wwx)$ and $\go(\wwxk)>\gl$,
and we let again $\Phik$ and $\Psik$ be the corresponding functions for
$\wwxk$ and define $\tauk$ by $\Psik(\tauk)=\gl$.

For any $t>0$, $\Phik(t)\to\Phi(t)=\infty$ as $
K\to\infty$, and thus \eqref{qqq} holds, showing that for large $K$,
$\Psik(t)>\gl$ and thus $\tauk<t$. 
Since $t$ is arbitrary, this shows that $\tauk\to0$ as \Ktoo.
Applying (i) to $\wwxk$ and its partition function $\zk$ we obtain, for
every large $K$,
\begin{equation*}
  \begin{split}
  \liminf_\ntoo\frac1n Z(m,n)
&\ge
  \lim_\ntoo\frac1n \zk(m,n)
=\log\Phik(\tauk)-\gl\log\tauk
\\&
\ge\log w_0-\gl\log\tauk.	
  \end{split}
\end{equation*}
As \Ktoo, $\tauk\to0$ so the \rhs{} tends to $\infty$, which completes the
proof. 
\end{proof}

\begin{remark}
  \label{RBZ}
The case $\rho=0$ and $\gl=0$ is excluded from \refT{TBZ}; in this case,
almost anything can happen.
To see this, note first that
by \eqref{b1}--\eqref{b3}, if $m/n\to\gl=0$,
\begin{equation}\label{ulla}
  \frac1n\log Z(m,n)=\log w_0+\frac1n \log Z(m,2m)+o(1),
\end{equation}
and by \refT{TBZ}(ii), $\frac1m\log Z(m,2m)\to\infty$ as \mtoo,
and hence $m/Z(m,2m)\to0$.
We can choose $m=m(n)\to\infty$ with $m/n\to0$ so rapidly that 
$m/n\ll m/\log Z(m,2m)$, and then
$\frac1n\log Z(m,2m)\to0$ and \eqref{ulla} yields $\frac1n\log Z(m,n)\to\log
w_0=\log\Phi(0)$.

We can also choose $m$ with $m/n\to0$ so slowly that
$m/n\gg m/{\log Z(m,2m)}$, and then
$\frac1n\log Z(m,2m)\to\infty$ and \eqref{ulla} yields 
$\frac1n\log Z(m,n)\to\infty$. 

Furthermore, we can choose $m(n)$ oscillating between these two cases,
and then  
$\liminf \frac1n\log Z(m,n)=\log\Phi(0)$ and
$\limsup \frac1n\log Z(m,n)=\infty$, and we can arrange so that every number
in $[\log\Phi(0),\infty)$ is a limit point of some subsequence.

For many \ws{s} with $\rho=0$, one can choose $m(n)$ such that 
$ \frac1n\log Z(m,n)\to a$ for any given $a\in[\log\Phi(0),\infty]$. For
example for $w_k=k!$ as in \refE{Ek!}, we have by \cite{SJ259} and \refT{TZ}
$Z(n-1,n)\sim en!$ and it follows, arguing similarly to \eqref{b1} and
\eqref{b2},
 that $\frac1m\log Z(m,2m)=\log m+O(1)$, so taking $m\sim an/\log n$, we
 obtain $ \frac1n\log Z(m,n)\to a$ by \eqref{ulla}.

However, if $w_k$ increases very rapidly, it may be impossible to obtain
convergence of the full sequence to a limit different from $\log\Phi(0)$ or
$\infty$, so we can only achieve convergence of subsequences.
For example, if $w_0=1$ and $w_{k+1}\ge Z(k,2k)^2$, then
$Z(k+1,2(k+1))\ge w_{k+1}\ge Z(k,2k)^2$, and it follows easily from \eqref{ulla}
that 
$\limsup \frac1n\log Z(m,n) \ge 2 \liminf \frac1n\log Z(m,n)$.
\end{remark}

We apply \refT{TBZ} to simply generated trees.

\begin{theorem}\label{TZlim}
  Let $\wwx=(w_k)_{k\ge0}$ be any weight sequence with $w_0>0$ and $w_k>0$
  for some $k\ge2$.
Suppose that $n\to\infty$  with 
$n\equiv1\pmod{\spann(\wwx)}$,
and let $\tau$ be as in \refT{Tmain}.
Then 
  \begin{equation*}
	\frac1n \log Z_n\to 
\log\Phi(\tau)-\log\tau
=\log\inf_{0\le t<\infty}\frac{\Phi(t)}{t}\in(-\infty,\infty].
  \end{equation*}
The limit is finite if $\rho>0$, and $+\infty$ if $\rho=0$.
\end{theorem}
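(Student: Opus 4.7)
The plan is to reduce the statement directly to \refT{TBZ} via the identity $Z_n = \frac{1}{n}Z(n-1,n)$ from \refT{TZ}. Taking logarithms and dividing by $n$ gives
\begin{equation*}
\frac{1}{n}\log Z_n = \frac{1}{n}\log Z(n-1,n) - \frac{\log n}{n},
\end{equation*}
and the last term is $o(1)$, so the asymptotics of $\frac{1}{n}\log Z_n$ are governed entirely by the balls-in-boxes partition function $Z(n-1,n)$.

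Next I verify the hypotheses of \refT{TBZ} applied with $m = n-1$. The divisibility condition $\spann(\wwx)\delar m$ holds because $n\equiv 1\pmod{\spann(\wwx)}$ by assumption, hence $\spann(\wwx)\delar(n-1)$. The assumption that $w_k>0$ for some $k\ge 2$ gives $\go\ge 2$, so $\gl\=\lim m/n = 1 < \go$. Finally, $m=n-1\to\infty$. Thus \refT{TBZ} applies with $\gl=1$, and $\tau$ as defined in \refT{TBmain} for $\gl=1$ coincides with the $\tau$ of \refT{Tmain} by construction.

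In the case $\rho>0$, part (i) of \refT{TBZ} yields $\frac{1}{n}\log Z(n-1,n)\to \log\Phi(\tau)-\log\tau$, which is the claimed limit; it is finite since $0<\tau<\infty$ and $0<\Phi(\tau)<\infty$ (both guaranteed by \refL{Ltau} since $\gl=1<\go$). In the case $\rho=0$, part (ii) of \refT{TBZ} (applicable because $\gl=1>0$) gives $\frac{1}{n}\log Z(n-1,n)\to\infty$. The identification of the limit with $\log\inf_{0\le t<\infty}\Phi(t)/t$ in both cases is immediate from \eqref{tbz2} specialized to $\gl=1$; equivalently, when $\rho>0$, it follows from \eqref{annaw} in \refR{R6.3.1}, while when $\rho=0$ we have $\Phi(t)=\infty$ for all $t>0$, so the infimum is $+\infty$.

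Because the two cited theorems already encode all the serious work (the local central limit estimates of \refL{LCLT} and the lower bounds in Lemmas~\ref{LB2}--\ref{LB2sub}), there is no genuine obstacle here: the argument is a two-line deduction plus a bookkeeping step to match the two definitions of $\tau$ and to rewrite the limit in the $\inf$-form. The only point that warrants a moment of care is checking that the convention $m/n\to\gl=1<\go$ is satisfied, which uses the standing assumption $\go\ge 2$.
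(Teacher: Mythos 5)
Your proposal is correct and follows exactly the paper's route: the paper proves Theorem~\ref{TZlim} by simply declaring it an immediate consequence of Theorems~\ref{TZ} and~\ref{TBZ}, which is precisely the deduction you carry out (with the hypothesis-checking and $\tau$-matching spelled out explicitly).
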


\begin{proof}
  An immediate consequence of Theorems \refand{TZ}{TBZ}.
\end{proof}

For \pws{s}, \refT{TZlim} can be expressed as follows, \cf{} \refR{Rqk}.

\begin{theorem}\label{Tpn}
  Let $\cT$ be a \GWt{} with offspring distribution $\xi$, and assume that
  $\P(\xi=0)>0$ and $\P(\xi>1)>0$. 
Suppose that $n\to\infty$  with 
$n\equiv1\pmod{\spann(\xi)}$,
and let $\tau$ be as in \refT{Tmain}.
Then 
  \begin{equation*}
	\frac1n \log \P(|\cT|=n)\to 
\log\Phi(\tau)-\log\tau
=\log\inf_{0\le t<\infty}\frac{\Phi(t)}{t}\in(-\infty,0].
  \end{equation*}
If\/ $\E\xi=1$, or if\/ $\E\xi<1$ and $\rho=1$, then the
limit is $0$; otherwise it is strictly negative.
In other words, $\P(|\cT|=n)$ decays exponentially fast in the supercritial
case (then $\tau<1$)
and in the subcritical case with $\rho>1$ (then $\tau>1$), 
but only subexponentially
in the critical case and in the subcritical case with $\rho=1$
(then $\tau=1$).
\end{theorem}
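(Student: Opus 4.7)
The plan is to deduce \refT{Tpn} as a direct corollary of \refT{TZlim}, applied to the probability weight sequence $(w_k) = (\pi_k)$ with $\pi_k \= \P(\xi=k)$. The hypotheses $\P(\xi=0)>0$ and $\P(\xi>1)>0$ translate exactly into $w_0>0$ and $w_k>0$ for some $k\ge2$, so \refT{TZlim} applies. Since $(w_k)$ is a probability distribution, $\P(\cT=T) = w(T)$ for every finite $T$, and hence $Z_n = \P(|\cT|=n)$ by \eqref{znp}. Substituting this into \refT{TZlim} gives the claimed limit
\begin{equation*}
\frac{1}{n}\log\P(|\cT|=n)\to\log\Phi(\tau)-\log\tau = \log\inf_{0\le t<\infty}\frac{\Phi(t)}{t}.
\end{equation*}

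The upper bound $\log\inf_t\Phi(t)/t\le 0$ is immediate from $\Phi(1)=\sum_k\pi_k=1$, which gives $\Phi(1)/1=1$ as a value of the infimand. To analyze equality, I would invoke \refR{R6.3.1}: the function $t\mapsto \Phi(t)/t$ is strictly decreasing on $[0,\tau]$ and strictly increasing on $[\tau,\rho]$, with unique minimum at $\tau$. Consequently, $\inf_t\Phi(t)/t = 1 = \Phi(1)/1$ if and only if $\tau=1$, and otherwise the limit is strictly negative.

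It remains to identify when $\tau=1$. Note $\Psi(1) = \Phi'(1)/\Phi(1) = \E\xi$. Using \refT{Tmain} (via \refL{Ltau}), in the critical case $\E\xi=1$ we have $\Psi(1)=1$, so $\nu\ge 1$ and $\tau$ is the unique solution of $\Psi(\tau)=1$, forcing $\tau=1$ by strict monotonicity of $\Psi$. In the subcritical case with $\rho=1$, we have $\nu=\Psi(\rho)=\Psi(1)=\E\xi<1$, placing us in case \ref{tmainsub} of \refT{Tmain}, so $\tau=\rho=1$. In both cases the limit is $0$. In the supercritical case $\E\xi>1$, monotonicity gives $\Psi(1)>1$ while $\Psi(\tau)=\min(1,\nu)\le 1$, hence $\tau<1$; in the subcritical case with $\rho>1$, we have $\Psi(1)=\E\xi<1$, and either $\nu\ge 1$ (so $\Psi(\tau)=1>\Psi(1)$ forces $\tau>1$) or $\nu<1$ (so $\tau=\rho>1$). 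In both remaining cases $\tau\ne 1$, so the strict monotonicity of $\Phi(t)/t$ near $\tau$ yields $\Phi(\tau)/\tau<1$ and a strictly negative limit.

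There is no serious obstacle; the only minor subtlety is to apply the correct branch of \refT{Tmain} in the subcritical case with $\rho=1$, which is handled by the observation $\nu=\Psi(\rho)=\E\xi<1$ placing it in case \ref{tmainsub}, with $\tau=\rho=1$ as required.
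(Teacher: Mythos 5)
Your proposal is correct and follows the same route as the paper's proof: identify $\P(|\cT|=n)$ with $Z_n$ via \eqref{znp}, apply \refT{TZlim}, and then use \refR{R6.3.1} together with $\Phi(1)=1$ to reduce the sign question to whether $\tau=1$. You simply spell out the final case analysis (relating $\tau=1$ to the conditions on $\E\xi$ and $\rho$ via the definition of $\tau$ in \refT{Tmain}) in more detail than the paper, which leaves those steps implicit.
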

\begin{proof}
  We have $\P(|\cT|=n)=Z_n$, see  \refS{Ssimply}, and we apply \refT{TZlim}.
Since now $\ww$ is  a \pws, we have $\rho\ge1$ and
$\inf_{0\le t<\infty}\xfrac{\Phi(t)}{t}\le\Phi(1)/1=1$, with equality if and
only if $\tau=1$, see \refR{R6.3.1}. The final claims follow using the
definition of $\tau$ in \refT{Tmain}.
\end{proof}

When $\rho>0$ and $\gl>0$ (which are equivalent to $\tau>0$), we can also
prove stronger ``local'' versions of 
Theorems \ref{TBZ} and \ref{TZlim}, showing that the partition function
behaves smoothly for small changes in $m$ or $n$.
\begin{theorem}
  \label{TH2}
  Let $\wwx=(w_k)_{k\ge0}$ be a weight sequence with $w_0>0$ and $w_k>0$
  for some $k\ge1$.
Suppose that $n\to\infty$ and $m=m(n)$ with $m/n\to\gl$ where $0<\gl<\go$,
and let $\tau$ be as in \refT{TBmain}.
If $\rho>0$, then, for every fixed $k\in\bbZ$ such that $\spanw\delar k$,
\begin{equation}
  \label{temma}
\frac{Z(m+k,n)}{Z(m,n)}\to\tau^{-k}.
\end{equation}
\end{theorem}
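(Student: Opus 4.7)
Since $\gl>0$ and $\rho>0$, \refL{Ltau} gives $\tau>0$, so $\pi_j=w_j\tau^j/\Phi(\tau)>0$ whenever $w_j>0$. My idea is to express $Z(m,n)$ and $Z(m+k,n)$ through the probability that the first $\ell$ coordinates of the \bib{} allocations $\bmn$ and $B_{m+k,n}$ take prescribed values, chosen so that the ``remaining'' partition function is the same in both expressions; the ratio $Z(m+k,n)/Z(m,n)$ then collapses to a ratio of probabilities whose limit is provided by \refT{TB3}.

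The main preparatory step is combinatorial: find a fixed $\ell\ge 1$ and multisets $j_1,\dots,j_\ell$, $i_1,\dots,i_\ell$ in $\supp(\wwx)$ with $\sum_{r=1}^\ell i_r-\sum_{r=1}^\ell j_r=k$. Because $w_0>0$ we have $0\in\supp(\wwx)$ and by \eqref{span0}, $\spanw=\gcd\bigpar{\supp(\wwx)\setminus\set{0}}$; since $\spanw\delar k$, Bezout's identity writes $k=\sum_a c_a\, a$ with $a\in\supp(\wwx)$ and $c_a\in\bbZ$, and separating positive and negative coefficients (padding the shorter list with copies of $0$) yields the required multisets. For $n$ large, \refL{LBexists} ensures that $Z(m,n)$, $Z(m+k,n)$ and $Z\bigpar{m-\sum_r j_r,\,n-\ell}$ are all strictly positive: $\spanw$ still divides $m-\sum_r j_r$, and the ball-to-box ratio stays in $(0,\go)$.

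With the multisets fixed and $J\=\sum_{r=1}^\ell j_r$, \eqref{pbmn} gives, for $n$ large,
\begin{align*}
P_1&\=\P\bigpar{Y_r=j_r,\;r\le\ell \text{ in } \bmn}
=\frac{\prod_{r=1}^\ell w_{j_r}}{Z(m,n)}\,Z(m-J,\,n-\ell),\\
P_2&\=\P\bigpar{Y_r=i_r,\;r\le\ell \text{ in } B_{m+k,n}}
=\frac{\prod_{r=1}^\ell w_{i_r}}{Z(m+k,n)}\,Z(m-J,\,n-\ell),
\end{align*}
where the formula for $P_2$ uses $(m+k)-\sum_r i_r=m-J$. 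Since $k$ is fixed, both $m/n$ and $(m+k)/n$ tend to $\gl$, so the same $\ppix$ appears in \refT{TB3} and one obtains $P_1\to\prod_r\pi_{j_r}>0$ and $P_2\to\prod_r\pi_{i_r}>0$. Dividing the two identities and using $\pi_j/w_j=\tau^j/\Phi(\tau)$ together with $\sum_r(i_r-j_r)=k$ yields
\begin{equation*}
\frac{Z(m+k,n)}{Z(m,n)}
=\frac{\prod_r w_{i_r}}{\prod_r w_{j_r}}\cdot\frac{P_1}{P_2}
\to\prod_r\tau^{j_r-i_r}=\tau^{-k}.
\end{equation*}
The only nontrivial ingredient is the existence of the reshuffling multisets in the combinatorial step; the remainder is routine bookkeeping once \refT{TB3} is in hand.
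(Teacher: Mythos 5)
Your proof is correct, and it reaches the result by a somewhat different but closely related route. The paper also rewrites $Z(m+k,n)/Z(m,n)$ as a ratio of \bib{} probabilities, but it does so in one variable: from \eqref{sigrid} it computes $\P(Y_1=k)/\P(Y_1=0)=w_kZ(m-k,n-1)/(w_0Z(m,n-1))$, uses \refT{TB3} to deduce \eqref{temma} for every $k$ with $-k\in\suppw$, and then observes that the set of $k\in\bbZ$ for which \eqref{temma} holds (for all admissible sequences $m(n)$) is a subgroup of $\bbZ$, since one may shift $m$ by a fixed $k'$ and telescope; this subgroup contains $\suppw$ and hence every multiple of $\spanw$. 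You instead unpack that subgroup argument concretely: you materialise the Bezout relation $k=\sum_a c_a a$ as two equal-size multisets in $\suppw$ (padding with $0$'s, which is exactly where $w_0>0$ enters), and then compare the $\ell$-dimensional probabilities $P_1,P_2$ for $\bmn$ and $B_{m+k,n}$, both of which reduce to the same residual $Z(m-J,n-\ell)$. The paper's version is shorter and more modular because it defers the arithmetic to the abstract structure theorem for subgroups of $\bbZ$; yours makes the combinatorics fully explicit at the cost of carrying the multisets through the bookkeeping and invoking the $\ell$-dimensional form of \refT{TB3} rather than the scalar one. Both are valid, both ultimately rest on $\spanw=\gcd(\suppw)$ via \eqref{span0} and on \refT{TB3}, and your checks that $Z(m-J,n-\ell)>0$ and that the limiting probabilities are strictly positive are exactly the points that need to be verified.
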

\begin{proof}
  For any $k\ge0$, by \eqref{wmm}--\eqref{zmn},
\begin{equation}
  \label{sigrid}
\P(Y_1=k)
=\frac{w_kZ(m-k,n-1)}{Z(m,n)},
\end{equation}
and thus
\begin{equation*}
\frac{\P(Y_1=k)}{\P(Y_1=0)}
=\frac{w_kZ(m-k,n-1)}{w_0Z(m,n-1)}.
\end{equation*}
Since \refT{TB3} yields
\begin{equation*}
\frac{\P(Y_1=k)}{\P(Y_1=0)}
\to\frac{\pi_k}{\pi_0}
=\tau^k \frac{w_k}{w_0},
\end{equation*}
we see (replacing $n$ by $n+1$) that \eqref{temma} holds when $-k\in\suppw$.
Furthermore, the set of $k\in\bbZ$ such that \eqref{temma} holds for any
allowed sequence $m(n)$ is easily seen to be a subgroup of $\bbZ$
(since we may replace $m$ by $m\pm k'$ for any fixed $k'$).
Consequently, by \eqref{span0}, this set contains every multiple of $\spanw$.
\end{proof}

\begin{theorem}
  \label{TH3}
Let $\wwx=(w_k)_{k\ge0}$ be a weight sequence with $w_0>0$ and $w_k>0$
  for some $k\ge1$.
Suppose that $n\to\infty$ and $m=m(n)$ with $m/n\to\gl$ where $0\le\gl<\go$,
and let $\tau$ be as in \refT{TBmain}.
Then,
\begin{equation}
  \label{temma3}
\frac{Z(m,n+1)}{Z(m,n)}\to\Phi(\tau).
\end{equation}
\end{theorem}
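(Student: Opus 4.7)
The plan is to derive the result from a single instance of Theorem~\ref{TB3}, exploiting the explicit formula for $\P(Y_1=0)$ in the \bib{} model.

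First I would apply the identity \eqref{sigrid}, but with $n$ replaced by $n+1$ and $k=0$: for the random allocation $\bmn$ replaced by $B_{m,n+1}=(Y_1,\dots,Y_{n+1})$, one has
\begin{equation*}
  \P(Y_1=0)=\frac{w_0\,Z(m,n)}{Z(m,n+1)}.
\end{equation*}
Since $w_0>0$ and $Z(m,n)>0$ for all large $n$ by \refL{LBexists} (using $m/n\to\gl<\go$ and, for $m$ small, a trivial direct check), rearranging gives
\begin{equation*}
  \frac{Z(m,n+1)}{Z(m,n)}=\frac{w_0}{\P(Y_1=0)}.
\end{equation*}

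Next, I would apply \refT{TB3} to the sequence $B_{m,n+1}$: since $m/(n+1)\to\gl$ with $0\le\gl<\go$, the case $\ell=1$, $y_1=0$ of \eqref{tb3} yields
\begin{equation*}
  \P(Y_1=0)\to\pi_0=\frac{w_0}{\Phi(\tau)},
\end{equation*}
by the definition \eqref{pik} of $\pi_0$. Note that $\pi_0>0$ because $w_0>0$ and $\Phi(\tau)<\infty$ by \refL{Ltau}. Combining the two displays gives
\begin{equation*}
\frac{Z(m,n+1)}{Z(m,n)}\to\frac{w_0}{w_0/\Phi(\tau)}=\Phi(\tau),
\end{equation*}
which is \eqref{temma3}.

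There is no real obstacle here: the whole argument just repackages the $k=0$ case of \refT{TB3}, which was already proved in \refS{Sbbpf}. The only small point to double-check is that \refT{TB3} applies for all $\gl\in[0,\go)$, including $\gl=0$, so that \eqref{temma3} holds uniformly across the full range (in contrast to \refT{TH2}, which required $\gl>0$ and $\rho>0$ in order to pass to nonzero shifts of $m$). This is automatic from the statement of \refT{TB3} and from the fact that when $\gl=0$ the limit $\pi_0=1$ and $\Phi(\tau)=\Phi(0)=w_0$, so both sides of \eqref{temma3} agree.
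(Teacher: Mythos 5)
Your proof is correct and takes essentially the same route as the paper: the paper's own argument applies \eqref{sigrid} with $k=0$ at parameters $(m,n)$ to get $\P(Y_1=0)=w_0Z(m,n-1)/Z(m,n)$, invokes \refT{TB3}, and cancels $w_0$; yours just shifts the index to $(m,n+1)$ to land directly on the claimed ratio. The observations about the $\gl=0$ case are fine, but unnecessary since \refT{TB3} already covers $\gl=0$ without any special treatment.
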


\begin{proof}
  By \eqref{sigrid} with $k=0$ and \refT{TB3},
  \begin{equation*}
\frac{w_0Z(m,n-1)}{Z(m,n)}
=\P(Y_1=0)\to\pi_0=\frac{w_0}{\Phi(\tau)},	
  \end{equation*}
and the result follows since $w_0\neq0$.
\end{proof}

For trees we have a corresponding result:

\begin{theorem}
  \label{TH4}
Let $\wwx=(w_k)_{k\ge0}$ be a weight sequence with $w_0>0$ and $w_k>0$
for some $k\ge2$.
If $\rho>0$ and $\spanw=1$, then
\begin{equation*}
  \frac{Z_{n+1}}{Z_n}\to\frac{\Phi(\tau)}{\tau}.
\end{equation*}
\end{theorem}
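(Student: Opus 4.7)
The plan is to reduce this to the \bib{} results via Otter's identity (\refT{TZ}) and then apply Theorems \refand{TH2}{TH3}. By \refT{TZ}, $Z_n = \tfrac{1}{n} Z(n-1, n)$, so
\begin{equation*}
\frac{Z_{n+1}}{Z_n} = \frac{n}{n+1} \cdot \frac{Z(n, n+1)}{Z(n-1, n)}.
\end{equation*}
The prefactor $n/(n+1)\to 1$, so it suffices to show that the ratio on the right tends to $\Phi(\tau)/\tau$.

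The idea is to change $m$ and $n$ one at a time by writing
\begin{equation*}
\frac{Z(n, n+1)}{Z(n-1, n)} = \frac{Z(n, n+1)}{Z(n-1, n+1)} \cdot \frac{Z(n-1, n+1)}{Z(n-1, n)}.
\end{equation*}
For the first factor, the number of boxes is held fixed at $n+1$, and the number of balls increases by $1$; here $(n-1)/(n+1)\to 1$, and since $1<\go$ (recall we assume $\go\ge 2$) and $\spanw=1$ (so $\spanw \mid 1$), \refT{TH2} applies with $\gl=1$ and $k=1$ to give that the first factor tends to $\tau^{-1}$. Here $\tau$ is the value from \refT{TBmain} with $\gl=1$, which coincides with the $\tau$ of \refT{Tmain} as noted in the proof of the latter. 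For the second factor, the number of balls is held fixed at $n-1$ and the number of boxes increases by $1$; applying \refT{TH3} (with $m=n-1$, $n$ replaced by $n$, so $m/n\to 1=\gl<\go$) yields that the second factor tends to $\Phi(\tau)$.

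Multiplying the two limits gives $\tau^{-1}\Phi(\tau) = \Phi(\tau)/\tau$, which combined with $n/(n+1)\to 1$ proves the theorem. There is no serious obstacle here: the main point is the bookkeeping to ensure that the hypotheses of \refT{TH2} and \refT{TH3} are satisfied at each step, and in particular that the condition $\spanw=1$ of the present theorem is exactly what is needed to invoke \refT{TH2} with the unit increment $k=1$. (Without assuming $\spanw=1$, one would only get analogous statements along appropriate subsequences; but then $Z_n=0$ for most $n$ anyway by \refC{Cexists}, so the restriction is natural.)
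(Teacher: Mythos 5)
Your proof is correct and takes essentially the same approach as the paper: Otter's identity (\refT{TZ}) followed by a two-step telescope, applying \refT{TH2} to shift the number of balls by one and \refT{TH3} to shift the number of boxes by one. The only difference is the choice of intermediate term in the telescope — you use $Z(n-1,n+1)$ where the paper uses $Z(n,n)$ — which is immaterial; your bookkeeping (including the correct prefactor $n/(n+1)$) is sound.
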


\begin{proof}
By Theorems \ref{TZ} and \ref{TH2}--\ref{TH3},
\begin{equation*}
  \frac{Z_{n+1}}{Z_n}
=\frac{n Z(n,n+1)}{(n-1)Z(n-1,n)}
=\frac{n}{n-1}\cdot\frac{Z(n,n+1)}{Z(n,n)}\cdot\frac{ Z(n,n)}{Z(n-1,n)}
\to\Phi(\tau){\tau}\qw.
\end{equation*}  
\end{proof}

We assumed here span 1 for convenience only; if $\spanw=d$, we instead
obtain, by a similar argument,
$Z_{n+d}/Z_n\to(\Phi(\tau)/\tau)^d$. 

\medskip

In the case  $\nu\ge1$ and $\gss=\tau\Psi'(\tau)<\infty$ (which is automatic
if $\nu>1$), 
\ie{} our case \Iga, 
\refT{TZlim} can be sharpened substantially as follows, see 
\citet{Otter},
\citet{MM78},
\citet{Kolchin},
\citet{Drmota}.

\begin{theorem}
  \label{TH1}
Let $\wwx=\ww$, $\tau$ and $\gss$ be as in \refT{Tmain}, and let
$d\=\spann(\wwx)$. 
If $\nu\ge1$ and $\gss<\infty$, then, for $n\equiv 1 \pmod d$, 
\begin{equation}
  \label{asymp}
Z_n\sim 
\frac{d}{\sqrt{2\pi\gss}}\cdot\frac{\Phi(\tau)^n\tau^{1-n}}{n\qqc}
=d\sqrt{\frac{\Phi(\tau)}{2\pi\Phi''(\tau)}}
\parfrac{\Phi(\tau)}{\tau}^n n\qqcw.
\end{equation}
\end{theorem}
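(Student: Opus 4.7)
The plan is to reduce the statement to a single application of the classical local central limit theorem by combining Theorem \ref{TZ} with the equivalence of weight sequences (Lemma \ref{LB1}).

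First I would use Theorem \ref{TZ} to rewrite
\begin{equation*}
Z_n = \frac{1}{n}\, Z(n-1,n),
\end{equation*}
so the asymptotics of $Z_n$ follow once we understand $Z(n-1,n)$. Next, since $\nu\ge 1$, the parameter $\tau$ lies in $(0,\rho]$ and $\ppix=(\pi_k)$ with $\pi_k=\tau^k w_k/\Phi(\tau)$ is an equivalent probability weight sequence with mean $\Psi(\tau)=1$ and variance $\tau\Psi'(\tau)=\gss<\infty$ (by Lemma \ref{LEPsi} and the hypothesis). Lemma \ref{LB1} (with $a=\Phi(\tau)^{-1}$, $b=\tau$) then gives
\begin{equation*}
Z(n-1,n;\wwx)=\Phi(\tau)^n\,\tau^{-(n-1)}\,Z(n-1,n;\ppix).
\end{equation*}

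Now, by Example \ref{EBprob}, if $\xi_1,\xi_2,\dots$ are i.i.d.\ with distribution $\ppi$ and $S_n=\xi_1+\dots+\xi_n$, then $Z(n-1,n;\ppix)=\P(S_n=n-1)$. Since $\E\xi_1=1$, we have $\E S_n=n$, so we must estimate $\P(S_n=n-1)$, i.e.\ the point probability at distance $1$ from the mean. The distribution of $\xi_1$ has span $d=\spann(\wwx)$ (which equals $\spann(\ppix)$), finite variance $\gss$, and $d\mid (n-1)$ by the congruence hypothesis, so the value $n-1$ is in the lattice support of $S_n$. The classical local central limit theorem for i.i.d.\ integer-valued summands with finite variance (see Remark \ref{RCLT}, final paragraph, applied with $\xi\nn=\xi$; here no third-moment hypothesis is needed because the variables do not depend on $n$) yields
\begin{equation*}
\P(S_n=n-1)=\frac{d+o(1)}{\sqrt{2\pi\gss n}}.
\end{equation*}

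Combining the three displays and using $Z_n=Z(n-1,n)/n$ gives the first form of \eqref{asymp}, and the identity $\gss=\tau^2\Phi''(\tau)/\Phi(\tau)$ from \eqref{gss} converts it to the second form. I do not expect any serious obstacle here: the only point requiring mild care is matching the span conventions (ensuring the lattice on which $S_n$ lives contains $n-1$, which is exactly what the congruence $n\equiv 1\pmod d$ guarantees) and quoting the right version of the local CLT, which is already packaged in Lemma \ref{LCLT} and Remark \ref{RCLT}.
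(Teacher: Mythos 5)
Your proposal is correct and follows essentially the same route as the paper: both reduce via Theorem~\ref{TZ} and the weight-equivalence identity (Lemma~\ref{LB1} in your version, equivalently the tree formula~\eqref{tz} in the paper) to the probability weight sequence $\ppi$ with mean $1$, then read off $Z(n-1,n;\ppix)=\P(S_n=n-1)$ from Example~\ref{EBprob} and apply the local CLT as packaged in Lemma~\ref{LCLT} and Remark~\ref{RCLT}. The only stylistic difference is that the paper normalizes to $\tau=\Phi(\tau)=1$ before invoking Theorem~\ref{TZ}, whereas you carry the general weights through and factor them out with Lemma~\ref{LB1}; these are the same computation in different order, and your handling of the span and of the $m=n\E\xi+o(\sqrt n)$ relaxation is correct.
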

\begin{proof}
  Replacing $\ww$ by $\ppi$ and using \eqref{tz}, we see that it suffices to
  consider the case of a \pws{} with $\tau=\Phi(\tau)=1$.
By \refT{TZ}, \eqref{ebprob} and \eqref{gss}, in this case the result is
equivalent to 
\begin{equation*}
  \P(S_n=n-1) \sim \frac{d}{\sqrt{2\pi\gss n}},
\end{equation*}
which is the local central limit theorem in this case, see \eg{}
\citetq{Theorem 1.4.2}{Kolchin} or use \refL{LCLT} and \refR{RCLT}.
\end{proof}

There is a corresponding improvement of \refT{TBZ}.

\begin{theorem}
  \label{THB1}
Let $\wwx=\ww$, $m=m(n)$, $\tau$ amd $\gss$ be as in \refT{TBmain}, and let
$d\=\spann(\wwx)$. 
If\/ $0<\gl<\nu$, or $\gl=\nu$  and $\gss<\infty$, then, 
for $m=\gl n+o(\sqrt n)$ with $m\equiv 0 \pmod d$,
\begin{equation}
  \label{asympb}
Z(m,n)\sim 
\frac{d}{\sqrt{2\pi\gss n}}{\Phi(\tau)^n\tau^{-m}}.
\end{equation}
\end{theorem}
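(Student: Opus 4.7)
The approach is to reduce to a probability weight sequence with the correct mean and then invoke the local central limit theorem for i.i.d.\ integer-valued random variables with finite variance. Since $\lambda>0$ forces $\tau>0$, the sequence $\pi_k=\tau^k w_k/\Phi(\tau)$ is a \pws{} equivalent to $\wwx$, and \refL{LB1} gives
\begin{equation}
Z(m,n;\wwx)=\Phi(\tau)^n\tau^{-m}\,Z(m,n;\ppix).
\end{equation}
Thus it suffices to prove $Z(m,n;\ppix)\sim d/\sqrt{2\pi\gss n}$.

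By \refE{EBprob}, if $\xi_1,\xi_2,\dots$ are \iid{} with distribution \ppi{} and $S_n=\sum_{i=1}^n\xi_i$, then $Z(m,n;\ppix)=\P(S_n=m)$. \refL{LEPsi} gives
\begin{equation}
\E\xi_1=\Psi(\tau)=\min(\gl,\nu)=\gl,\qquad \Var\xi_1=\tau\Psi'(\tau)=\gss,
\end{equation}
where the middle equality uses that we are in one of the two hypothesised cases $\gl<\nu$ or $\gl=\nu$. When $\gl<\nu$ we have $\tau<\rho$, so $\Psi'(\tau)<\infty$ and the finiteness of $\gss$ is automatic; when $\gl=\nu$ it is assumed. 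Moreover, from \eqref{span} applied to the support of $\ppi=\suppw$, the span of $\xi_1$ is exactly $d=\spann(\wwx)$, so $\xi_1$ takes values in $d\bbZ$, and the hypothesis $d\delar m$ guarantees $\P(S_n=m)$ is not trivially forced to vanish by divisibility.

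Now apply the local central limit theorem: use \refL{LCLT} with the constant sequence $\xi\nn=\xi_1$ (which makes the third-moment uniformity vacuous, so that only $\gss<\infty$ is required, as noted in \refR{RCLT}). The assumption $m=\gl n+o(\sqrt n)=n\E\xi_1+o(\sqrt n)$ matches the relaxed centering condition from \refR{RCLT}, and we conclude
\begin{equation}
\P(S_n=m)\sim\frac{d}{\sqrt{2\pi\gss n}}.
\end{equation}
Combining with the displayed identity for $Z(m,n;\wwx)$ yields \eqref{asympb}.

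\textbf{Main obstacle.} There is no substantial obstacle beyond matching the hypotheses of the classical local CLT. The one point that needs attention is the refinement of \refL{LCLT} permitting $m=n\E\xi_1+o(\sqrt n)$ rather than exact equality (\refR{RCLT}); this is handled by inspecting the characteristic-function expansion in the proof of \refL{LCLT}, where the replacement of $e^{-\ii mt}$ by $e^{-\ii n\E\xi_1 t}$ introduces a uniformly negligible factor $e^{\ii O(1/\sqrt n)\cdot x}\to 1$ inside the integrand in \eqref{sofie}, so the dominated convergence argument goes through unchanged.
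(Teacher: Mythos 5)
Your proof is correct and is essentially the same argument the paper gives: replace $\wwx$ by the equivalent probability weight sequence $\ppi$ via \refL{LB1} (more precisely \eqref{lb1z}), then apply the local central limit theorem (\refL{LCLT} combined with \refR{RCLT}) to the i.i.d.\ sum $S_n$; you simply spell out the identification of mean, variance, and span more explicitly than the paper does. One small notational slip: in your closing remark the extra phase factor is $e^{\ii\, o(1)\cdot x}$ rather than $e^{\ii O(1/\sqrt n)\cdot x}$, since $m - n\E\xi_1 = o(\sqrt n)$ gives an $o(1)$ coefficient of $x$ after dividing by $\sqrt n$; the conclusion is unaffected.
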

\begin{proof}
Again it suffices to
  consider the case of a \pws{} with $\tau=\Phi(\tau)=1$; this time using
  \eqref{lb1z}. 
In this case the result is by  \eqref{ebprob}
equivalent to 
\begin{equation*}
  \P(S_n=m) \sim \frac{d}{\sqrt{2\pi\gss n}},
\end{equation*}
which again is the local central limit theorem and follows  \eg{} by
\refL{LCLT} and \refR{RCLT}.
\end{proof}

\begin{remark}
The asymptotic formula \eqref{asympb} 
holds for arbitrary $m=m(n)$ with $0<c \le m/n \le C<\go$ and
$m\equiv 0 \pmod d$, 
and either $C<\nu$ or $C=\nu$ and $\Phi''(\rho)<\infty$
(which means that $\Psi'(\rho)<\infty$ and thus
the distribution $\eqref{pik}$ has finite variance for $\tau=\nu$),
provided $\tau$ is replaced by $\tau(m/n)$ given by
$\Psi(\tau(m/n))=m/n$. (Cf.\ \refT{TBmain2}.)
The proof is essentially the same (as in the proof of \refT{TBmain2}, it
suffices to consider subsequences where $m(n)/n$ converges); we omit the
details. 
\end{remark}

In the case $\nu=\gl$ ($\nu=1$ in the tree case)
and $\gss=\infty$, 
we have no general results but 
we can obtain similar more precise versions of Theorems
\refand{TZlim}{TBZ}
in the important case of a power-law \ws, \refE{EBpower}.
(We need $1<\ga\le2$ here; if $\ga\le1$, then $\nu=\infty>\gl$, and if
$\ga>2$, then $\gss<\infty$ so Theorems \refand{TH1}{THB1} apply, see
\refE{EBpower} with $\gb=\ga+1$. Note also that $\spann(\wwx)=1$.) 
The case $\gl>\nu$ is treated in \refT{TDzeta}  and \refR{RDzeta}.

\begin{theorem}\label{THstab}
Suppose for some $c>0$ and $\ga$ with $1<\ga\le2$,
\begin{equation}\label{tail}
 w_k\sim ck^{-\ga-1}\qquad\text{as \ktoo}.
\end{equation}
  \begin{romenumerate}
  \item \label{thstab1}
If $\nu=1$, then, 
\begin{align}
Z_n&\sim 
\frac{\Phi(1)\xga}{c^{1/\ga}\Gamma(-\ga)\xga|\Gamma(-1/\ga)|}
 {\Phi(1)^n} n^{-1-1/\ga},
\qquad \text{when }
1<\ga<2,\\
  \intertext{and}  \label{mga2}
Z_n&\sim 
\Bigparfrac{\Phi(1)}{\pi c}\qq
 {\Phi(1)^n} n^{-3/2} (\log n)\qqw,
 \qquad \text{when }\ga=2. 
\end{align}

  \item \label{thstabm}
If $m=\nu n+o(n\xga)$, then
\begin{align} \label{mgb1}
Z(m,n)&\sim 
\frac{\Phi(1)\xga}{c^{1/\ga}\Gamma(-\ga)\xga|\Gamma(-1/\ga)|}
 {\Phi(1)^n} n^{-1/\ga},
\qquad \text{when }
1<\ga<2,\\
  \intertext{and}  \label{mgb2}
Z(m,n)&\sim 
\Bigparfrac{\Phi(1)}{\pi c}\qq
\frac{\Phi(1)^n}{\sqrt{ n \log n}},
 \qquad \text{when }\ga=2. 
\end{align}
  \end{romenumerate}
\end{theorem}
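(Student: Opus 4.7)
The plan is to reduce both statements to local limit theorems for partial sums of i.i.d.\ variables in the domain of attraction of an $\ga$-stable law. First, \eqref{tail} forces $\rho = 1$, $\Phi(1) < \infty$, and $\spann(\wwx) = 1$; together with $\nu = 1$ and monotonicity of $\Psi$ this gives $\tau = 1$. Set $\pi_k := w_k/\Phi(1)$, so that $\ppi$ is a probability distribution with mean $\Psi(1) = 1$; Lemma~\ref{LB1} (with $a = \Phi(1)\qw$, $b = 1$) and Example~\ref{EBprob} yield
\begin{equation*}
Z(m,n) = \Phi(1)^n Z(m,n;\ppix) = \Phi(1)^n\,\P(S_n = m),
\end{equation*}
where $S_n = \xi_1 + \dots + \xi_n$ with the $\xi_i$ i.i.d.\ of law $\ppi$. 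Part (i) then follows from (ii) applied at $m = n-1$ via Theorem~\ref{TZ}, since $n-1 = \nu n + o(n\xga)$ and $Z_n = Z(n-1,n)/n$.

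For (ii) with $1 < \ga < 2$, the tail $\P(\xi > x) \sim (c/(\ga\Phi(1)))x^{-\ga}$ places $\xi - 1$ in the domain of attraction of the totally skewed $\ga$-stable law, with characteristic-function expansion
\begin{equation*}
\E\bigl[e^{\ii t(\xi-1)}\bigr] = 1 + \frac{c\,\Gamma(-\ga)}{\Phi(1)}|t|^\ga e^{-\ii\pi\ga\sgn(t)/2} + o(|t|^\ga) \qquad (t\to0).
\end{equation*}
I would then run the Fourier-inversion scheme of Lemma~\ref{LCLT} essentially verbatim, replacing the Gaussian local expansion by the stable one above and the Gaussian envelope by $\exp(-c_1|x|^\ga)$ with $c_1 = (c\Gamma(-\ga)/\Phi(1))\,|\cos(\pi\ga/2)|$; the off-diagonal bound $|\tilde\phi(t)| \le 1-\eta$ on $[\gd,\pi]$ is unchanged (and span~$1$ is forced by \eqref{tail}). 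The outcome is
\begin{equation*}
\P(S_n = m) \sim \frac{g_\ga(0)}{B_n},\qquad B_n^\ga = \frac{c\Gamma(-\ga)}{\Phi(1)}\,n,
\end{equation*}
uniformly for $m - n = o(n\xga)$, where $g_\ga(0)$ is the density at $0$ of the stable limit. The case $\ga = 2$ is analogous with Gaussian limit: the truncated second moment $V(x) := \E[\xi^2\ett{\xi\le x}] \sim (c/\Phi(1))\log x$ is slowly varying, and the associated non-classical local CLT gives $\P(S_n = m) \sim (B_n\sqrt{2\pi})\qw$ with $B_n^2 \sim (c/(2\Phi(1)))\,n\log n$, uniformly for $m - n = o(\sqrt n)$, which directly yields \eqref{mgb2} upon multiplying by $\Phi(1)^n$.

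The central computational step is the identification $g_\ga(0) = |\Gamma(-1/\ga)|\qw$. By Fourier inversion,
\begin{equation*}
g_\ga(0) = \frac{1}{\pi}\Re\int_0^\infty \exp\bigl(-a t^\ga\bigr)\,dt,\qquad a := -e^{-\ii\pi\ga/2} = e^{\ii\pi(2-\ga)/2},
\end{equation*}
where $\Re(a) > 0$ for $\ga\in(1,2)$. The substitution $u = at^\ga$ followed by rotating the resulting $u$-contour back to the positive real axis (valid by Cauchy's theorem, since $e^{-u}u^{1/\ga-1}$ is analytic and decays in the sector swept out) evaluates the integral as $\Gamma(1/\ga)\,a^{-1/\ga}/\ga$; using $a^{-1/\ga} = \ii e^{-\ii\pi/\ga}$ and taking real parts gives $g_\ga(0) = \Gamma(1+1/\ga)\sin(\pi/\ga)/\pi$, which the reflection identity $\Gamma(-1/\ga)\Gamma(1+1/\ga) = -\pi/\sin(\pi/\ga)$ collapses to $|\Gamma(-1/\ga)|\qw$. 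Assembling with $B_n$ and $\Phi(1)^n$ yields \eqref{mgb1}. The main obstacle is the honest execution of the non-classical (stable) local limit theorem, in particular uniformity in $m$ and careful control of the characteristic-function error near $t=0$; once that is in hand, the constant-chasing via the Gamma reflection formula is routine.
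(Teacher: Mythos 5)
Your plan coincides with the paper's: reduce to a probability weight sequence via \refL{LB1} so that $Z(m,n)=\Phi(1)^n\,\P(S_n=m)$, then invoke a stable (resp.\ slowly-varying-variance normal) local limit theorem and evaluate the density at zero; the paper cites the literature (Gnedenko--Kolmogorov, Ibragimov--Linnik, Bingham--Goldie--Teugels) for the local law and a reference for $g(0)$, while you propose adapting \refL{LCLT}'s Fourier scheme and work out $g_\ga(0)=|\Gamma(-1/\ga)|\qw$ via contour rotation and the reflection formula, which is a correct and useful unpacking of a step the paper only cites. One small slip: in part \ref{thstabm} it is not assumed that $\nu=1$, so the centering must be $\xi-\nu$ and $m-n\nu$ (not $\xi-1$ and $m-n$) --- since $\E\xi=\Psi(1)=\nu$ after your reduction this replacement is harmless; you should also note, as the paper does, that $w_0>0$ may be assumed without loss of generality (for $\nu>1$ one reduces via \refR{Rmin}, and for \ref{thstab1} one has $\nu=1$ forcing $w_0>0$).
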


\begin{proof}
This time, we did not assume $w_0>0$, but we may do so without loss of
generality in the proof.
In fact, if $w_0=0$, then $\nu>1$, so in \ref{thstab1} we always have
$w_0>0$, and in \ref{thstabm} we can reduce to the case $w_0>0$ 
by the method in \refR{Rmin}.

\ref{thstab1} follows from \refT{TZ} and \ref{thstabm}, taking $m=n-1$; 
hence it suffices to prove \eqref{mgb1}--\eqref{mgb2}.

We have $\rho=1$, and in the usual notation $\gl=\nu$ and thus
$\tau=\rho=1$. 
We reduce to the \pws{}  case by dividing each $w_k$ by
$\Phi(1)$ (which changes $c$ to $c/\Phi(1)$).
Let $\xi$ be a random variable with the distribution $\ppi=\ww$.
Then $\E\xi=\nu$.
Furthermore, \eqref{tail} yields
\begin{equation}\label{tail2}
\P(\xi\ge k)=\sum_{l=k}^\infty w_l\sim c\ga\qw k^{-\ga}.   
\end{equation}
Hence $\xi$ is in the domain of attraction of an
$\ga$-stable distribution, see \citetq{Section XVII.5}{FellerII}.
More precisely, if we first consider the case $1<\ga<2$,
then there exists an $\ga$-stable random variable $X_\ga$
such that
\begin{equation}
  \frac{S_n-n\nu}{n^{1/\ga}}\dto X_\ga.
\end{equation}
(The distribution of $X_\ga$ is given by 
\eqref{stabchf} and
\eqref{tdzetastabL} below.)
Moreover, a local limit law holds, see \eg{}
\citetq{\S\ 50}{GneKol},
\citetq{Theorem 4.2.1}{IbragimovLinnik}
or \citetq{Corollary 8.4.3}{BinghamGoldieTeugels},
which says
\begin{equation}
  \label{l9}
\P(S_n=\ell)=n^{-1/\ga}\Bigpar{g\Bigparfrac{\ell-n\nu}{n^{1/\ga}}+o(1)},
\end{equation}
uniformly for all integers $\ell$,
where $g$ is the density function of $X_\ga$.
In particular,
\begin{equation}
  Z(m,n)=\P(S_n=m)\sim\ngaw g(0).
\end{equation}
The results in
\cite[Sections XVII.5--6]{FellerII} show,
if we keep track of the constants
(see \eg{} \cite{SJN12} for calculations), 
that 
\begin{equation}
  \label{g0}
g(0)=(c\,\Gamma(-\ga))\xgaw|\Gamma(-1/\ga)|\qw,
\end{equation}
and \eqref{mgb1} follows.

In the case $\ga=2$, \cite[Section XVII.5]{FellerII} similarly yields
\begin{equation}\label{gg}
\frac{S_n}{\sqrt{n\log n}}\dto N(0,c/2);
\end{equation}
again a local limit theorem holds by
\cite[Theorem 4.2.1]{IbragimovLinnik}
or \cite[Corollary 8.4.3]{BinghamGoldieTeugels},
and thus
\begin{equation}
  \label{l92}
\P(S_n=\ell)
=\frac1{\sqrt{n\log n}}\Bigpar{g\Bigparfrac{\ell-n\nu}{\sqrt{n\log n}}+o(1)},
\end{equation}
uniformly in $\ell\in\bbZ$,
where now $g(x)$ is the density function 
$(\pi c)\qqw e^{-x^2/c}$
of $N(0,c/2)$.
In particular,
\begin{equation}
  Z(m,n)=\P(S_n=m)\sim \frac1{\sqrt{n\log n}}g(0)=
\frac1{\sqrt{n\log n}}\cdot\frac1{\sqrt{\pi c}},
\end{equation}
which proves \eqref{mgb2}.
\end{proof}

\begin{remark}
  The proof shows that \eqref{tail} can be relaxed to \eqref{tail2} together
  with $\spann(\wwx)=1$.
\end{remark}

\begin{example}\label{Ebritikov2}
Let $\ffu mn$ be the number of labelled unrooted forests
with $m$ labelled nodes and $n$ labelled trees,
see \refE{Euforest}.  
Using the weights $w_k=k^{k-2}/k!$ and  
$\tw_k=e^{-k}w_k\sim(2\pi)\qqw k^{-5/2}$, 
we have by \eqref{fuforest} and \eqref{lb1z}
\begin{equation}
\ffu mn= m!\,Z(m,n;\wwx)=m!\,e^{m}Z(m,n;\twwx).
\end{equation}
At the phase  transition $m=2n$, \refT{THstab} applies to $\twwx$
with $\ga=3/2$.
We have $c=(2\pi)\qqw$ and, by \eqref{uforestphi}, 
$\Phi(1)=\Phi(\rho)=1/2$. Hence \eqref{mgb1} yields, after simplifications,
\begin{equation}\label{fu2n}
  \begin{split}
\frac{\ffu{2n}{n}}{2n!}=
Z(2n,n;\wwx)=e^{2n}Z(2n,n;\twwx)
\sim \frac{2^{-2/3}3^{-1/3}}{\Gamma(1/3)}
e^{2n} 2^{-n}n^{-2/3}.	
  \end{split}
\end{equation}
(The constant can also be written $2^{-5/3}3^{1/6}\pi\qw\gG(2/3)$.)
A more general result is proved by the same method by \citet{Britikov}.
\citet[Proposition VIII.11]{Flajolet},  
show \eqref{fu2n}  by a different method
(although there is a computational error in the constant given in the result
there). 
\end{example}

\smallskip

We end this section by considering the behaviour of the generating function
$\cZ(z)\=\sumni Z_nz^n$.
The following immediate corollary of \refT{TZlim} was shown by
\citet{Otter}, see 
\citet{Minami} and, for $\nu>1$, \citetq{Proposition IV.5}{Flajolet}.
See also also \refR{Rotter}.

\begin{corollary}
  \label{CZrho}
Let $\ww\geko$ and $\tau$ be as in  \refT{Tmain},
and let $\rhoz$ be the radius of convergence of 
the generating function 
$\cZ(z)\=\sumni Z_nz^n$. Then
$\rhoz\=\tau/\Phi(\tau)$.
\nopf
\end{corollary}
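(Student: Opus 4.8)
The plan is to deduce Corollary~\ref{CZrho} directly from \refT{TZlim} together with the standard Cauchy--Hadamard formula for the radius of convergence of a power series. Recall that $\cZ(z)=\sumni Z_nz^n$ has radius of convergence $\rhoz=1/\limsup_n Z_n^{1/n}$, so it suffices to identify $\limsup_n Z_n^{1/n}$. The point is that \refT{TZlim} gives exactly $\tfrac1n\log Z_n\to\log(\Phi(\tau)/\tau)$ as $n\to\infty$ along $n\equiv1\pmod{\spann(\wwx)}$, which immediately yields $Z_n^{1/n}\to\Phi(\tau)/\tau$ along that subsequence; and since $Z_n=0$ (hence does not contribute to the $\limsup$) for $n\not\equiv1\pmod{\spann(\wwx)}$ once $n$ is large, by \refC{Cexists}, we get $\limsup_n Z_n^{1/n}=\Phi(\tau)/\tau$. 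Taking reciprocals gives $\rhoz=\tau/\Phi(\tau)$.

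First I would note that by \refC{Cexists}, for $n$ large we have $Z_n>0$ if and only if $n\equiv1\pmod{d}$ with $d=\spann(\wwx)$; in particular there are infinitely many such $n$, so $\cZ$ is a genuine (non-polynomial, unless trivial) power series and the $\limsup$ is over a nonempty infinite set of indices. Then I would invoke \refT{TZlim}, which applies since $w_0>0$ and $w_k>0$ for some $k\ge2$: along $n\equiv1\pmod d$ we have $\tfrac1n\log Z_n\to L$ where $L\=\log\Phi(\tau)-\log\tau=\log(\Phi(\tau)/\tau)\in(-\infty,\infty]$. Exponentiating, $Z_n^{1/n}\to \Phi(\tau)/\tau$ along this subsequence (with the value $+\infty$ when $\rho=0$, i.e.\ $\tau=0$, interpreting $\Phi(0)/0=\infty$), while $Z_n^{1/n}=0$ off the subsequence for large $n$. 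Hence $\limsup_{n\to\infty}Z_n^{1/n}=\Phi(\tau)/\tau$, and by Cauchy--Hadamard $\rhoz=1/\limsup_n Z_n^{1/n}=\tau/\Phi(\tau)$, which is $0$ precisely when $\rho=0$ — consistent with \refR{Rotter}.

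There is essentially no obstacle here: the corollary is a one-line consequence of the already-established \refT{TZlim} plus Cauchy--Hadamard, the only minor care being (a) handling the span so that the vanishing terms $Z_n=0$ do not interfere with the $\limsup$ (dealt with via \refC{Cexists}), and (b) the boundary case $\rho=0$, where $\tau=0$, $\Phi(\tau)/\tau=+\infty$, and $\rhoz=0$; the identity $\rhoz=\tau/\Phi(\tau)$ then reads $0=0/\Phi(0)=0$, which holds trivially. One could also remark that \eqref{otter} and \refR{Rotter} give an independent cross-check: $\tau=\cZ(\rhoz)$ with $\rhoz=\tau/\Phi(\tau)$, but this is not needed for the proof.
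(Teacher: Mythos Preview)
Your proof is correct and takes essentially the same approach as the paper: the corollary is stated there with \verb|\nopf| and introduced as ``an immediate corollary of \refT{TZlim}'', which is precisely what you do via Cauchy--Hadamard. Your added care with the span (via \refC{Cexists}) and the boundary case $\rho=0$ just spells out the details the paper leaves implicit.
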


Moreover, 
by \eqref{otter}, 
$ \cZ(\rhoz)=\tau<\infty$. Since the generating function $\cZ(z)$ has
non-negative coefficients, it follows that $\cZ(z)$ is continuous on the
closed disc $\setz{|z|\le\rhoz}$, and $|\cZ(z)|\le\tau$ there.
If we, for simplicity, assume that $\spann(\wwx)=1$,
then 
$|\cZ(z)|<|\cZ(\rhoz)|=\tau$ for $|z|\le\rhoz$, $z\neq\rhoz$. Since
$|Z|<\tau$ implies 
\begin{equation*}
  \begin{split}
  |\Phi(Z)-Z\Phi'(Z)|
&=\biggabs{w_0-\sumki(k-1)w_kZ^k}
\ge
w_0-\sumki(k-1)w_k|Z|^k
\\&
>
w_0-\sumki(k-1)w_k\tau^k
=\Phi(\tau)-\tau\Phi'(\tau)=0,	
  \end{split}
\end{equation*}
it follows that
$\Phi(Z)-Z\Phi'(Z)\neq0$ if $Z=\cZ(z)$ with 
$|z|=\rhoz$, $z\neq\rhoz$; hence the implicit function theorem and
\eqref{cz} show that $\cZ(z)$ has an analytic continuation to some
neighbourhood of $z$.
Consequently, $\cZ$ then can be extended across $|z|=\rhoz$ everywhere except at
$z=\rhoz$.
(If $\spann(\wwx)=d$, the same holds except at $z=\rhoz e^{2\pi\ii j/d}$,
$j\in\bbZ$.)

In our case Ia ($\nu>1$, or equivalently $\tau<\rho$), much more is known:
$\cZ$ has a square root
  singularity at $\rhoz$ with a local expansion of $\cZ(z)$ as an analytic
  function of $\sqrt{1-z/\rhoz}$:
  \begin{equation}
	\label{sing}
\cZ(z)=\tau-b\sqrt{1-z/\rhoz}+\dots,
  \end{equation}
where, with $\gss\=\Var\xi$ given by \eqref{gss}, 
\begin{equation}\label{singb}
  b\=
\sqrt{\frac{2\Phi(\tau)}{\Phi''(\tau)}}
=\sqrt2\,\frac{\tau}{\gs},
\end{equation}
see \citet{MM78}, 
\citetq{Theorem VI.6}{Flajolet} and
\citetq{Section 3.1.4 and Theorem 2.19}{Drmota};
in particular, $\cZ$ then extends analytically to a neighbourhood of $\rho$ cut
  at the ray $[\rho,\infty)$.
In fact, this extends (in a weaker form) to the case $\nu\ge1$ and
$\gss<\infty$
(case \Iga): \eqref{sing} holds in a suitable region, with an error term 
$o(\sqrt{1-z/\rhoz})$, see \citet{SJ167}. 

\begin{remark}
  In the case $\nu>1$, 
\eqref{sing} and \eqref{singb} 
yield another proof of \eqref{asymp} 
by standard singularity analysis, 
see \eg{} \citetq{Theorem  3.6}{Drmota} and 
\citetq{Theorem VI.6 and  VII.2}{Flajolet}; this argument can be extended to
the case $\nu\ge1$ and $\gss<\infty$, see 
\citetq{Remark 3.7}{Drmota} and \citetq{Appendix}{SJ167}.
When $\nu>1$, an expansion with further terms can also be obtained, see
\citet{Minami} and 
\citetq{Theorem VI.6}{Flajolet}.  
\end{remark}

In the other cases ($\gss=\infty$ or $\nu<1$),
the asymptotic behaviour of $\cZ$ at the singularity $\rhoz$ depends on the
behaviour of $\Phi(z)$ at its singularity $\rho$. It seems difficult to say
anything detailed in general, so we study only a few examples.
We assume $\nu\le1$ and $\go>1$; thus \refL{LPsi} implies that
$\rho<\infty$, $\Phi(\rho)<\infty$ 
and $\Phi'(\rho)<\infty$. We assume also $\rho>0$ and $\spann(\wwx)=1$.

\begin{example}
Suppose that $0<\rho<\infty$ and that $\Phi(z)$ has an analytic extension to a
sector  
$\srd\=\set{z:|\arg(\rho-z)|<\pi/2+\gd \text{ and } |z-\rho|<\gd}$  for
some $\gd>0$, and that in this sector $\srd$, for some $a\neq0$ and
non-integer $\ga>1$, and some $f(z)$ analytic at $\rho$
(which can be taken as a polynomial of degree $<\ga$),
\begin{equation}\label{eaa}
  \Phi(z)=f(z)+a(\rho-z)^\ga+ o\bigpar{|\rho-z|^\ga},
\qquad \text{as }z\to\rho.
\end{equation}
(We have to have $\ga>1$ since $\Phi'(\rho)<\infty$. For $\ga\ge2$ integer,
see instead \refE{Ebb}.)
If we assume that
$\Phi$ has no further singularities on ${|z|=\rho}$,
this implies by singularity analysis, see \citetq{Section VI.3}{Flajolet},
\begin{equation}
  \label{eac}
w_k\sim \frac{a}{\Gamma(-\ga)} k^{-\ga-1}\rho^{\ga-k},
\qquad\text{as \ktoo}. 
\end{equation}
The converse does not
hold in 
general, but can be expected if the \ws{} is very regular.
For example, \eqref{eaa} holds (in the plane cut at $[\rho,\infty)$) 
if $w_k=(k+1)^{-\gb}$, $k\ge1$, as in \refE{Ezeta}, with $\gb=\ga+1>2$,
$\rho=1$ 
and $a=\Gamma(-\ga)$, see
\eg{} \cite[Section VI.8]{Flajolet}.

Let $F(Z)\=Z/\Phi(Z)$, so \eqref{cz} can be written 
\begin{equation}\label{tycho}
  F(\cZ(z))=z.
\end{equation}
Since $\nu\le1$, we have $\tau=\rho$, and thus by \refC{CZrho} and \eqref{otter}
$\rhoz=F(\tau)=F(\rho)$ and $\cZ(\rhoz)=\rho$.
Note that 
\begin{equation}\label{brahe}
  F'(\rho)=\frac{\Phi(\rho)-\rho\Phi'(\rho)}{\Phi(\rho)^2}
=\frac{1-\Psi(\rho)}{\Phi(\rho)}
=\frac{1-\nu}{\Phi(\rho)}.
\end{equation}
If $\nu<1$, then \eqref{brahe} yields
$F'(\rho)>0$ and \eqref{tycho} shows that 
$\rho-\cZ(z)\sim F'(\rho)\qw(\rhoz-z)$ as $z\to\rhoz$.
Moreover, $F$ is defined in a sector $\srd$, and its image contains some
similar sector $\srdy$ (with $0<\gd'<\gd$) such that $\cZ(z)$ extends
analytically to $\srdy$ by \eqref{tycho}, 
and it follows easily by \eqref{tycho} and
\eqref{eaa}
that in $\srdy$, with
 some  $f_1(z)$ analytic at $\rhoz$,
\begin{equation}\label{eab}
  \cZ(z)=f_1(z)+a_1(\rhoz-z)^\ga+ o\bigpar{|\rhoz-z|^\ga},
  \qquad \text{as }z\to\rhoz,
\end{equation}
where 
\begin{equation}\label{ea1}
  a_1=a\frac{\rho\Phi(\rho)^{\ga-1}}{(1-\nu)^{\ga+1}}.
\end{equation}

As noted above, $\cZ(z)$ has no other singularities on $\setz{|z|=\rho}$, and
singularity analysis \cite{Flajolet} applies and shows,
using \eqref{eac},
\begin{equation}\label{zeac}
Z_n\sim \frac{a_1}{\Gamma(-\ga)}n^{-\ga-1}\rhoz^{\ga-n}
\sim \frac{\rho}{(1-\nu)^{\ga+1}} \Phi(\rho)^{n-1} w_n.
\end{equation}
However, we will show in greater generality in \refT{TDzeta} and
\refR{RDzeta} (by a straightforward reduction to the case $\rho=1$ using
\eqref{tz})
that \eqref{eac} always implies \eqref{zeac} when $\nu<1$,
without any assumption 
like \eqref{eaa} on $\Phi(z)$.

If $\nu=1$, we assume $1<\ga<2$, since \eqref{eaa} with $\ga>2$ implies
$\Phi''(\rho)<\infty$ and thus $\gss<\infty$, so \eqref{sing} and \refT{TH1}
would apply. We now have $F'(\rho)=0$, and \eqref{eaa}--\eqref{tycho} yield,
in some domain $\srdy$,
  \begin{equation}
\cZ(z)=\rho-\parfrac{\Phi(\rho)}{a}^{1/\ga}\Bigpar{1-\frac{z}{\rhoz}}^{1/\ga}
+\dots.
  \end{equation}
Singularity analysis yields
  \begin{equation}\label{bon}
Z_n\sim\frac{1}{|\Gamma(-1/\ga)|}
\parfrac{\Phi(\rho)}{a}^{1/\ga}n^{-1-1/\ga}{\rhoz}^{-n}.
  \end{equation}
However, we have already proved in \refT{THstab}\ref{thstab1} 
(assuming $\rho=1$, without loss of generality)
that \eqref{eac} implies \eqref{bon} in this case, without any assumption
like \eqref{eaa} on $\Phi(z)$.
\end{example}

\begin{example}\label{Ebb}
If $\ga\ge2$ is an integer, \eqref{eaa} does not exhibit a
singularity. Instead we consider $\wwx$ with, for some $f$ analytic at $\rho$, 
\begin{equation}\label{ebb}
  \Phi(z)=f(z)+a(\rho-z)^\ga\log(\rho-z)+ O\bigpar{|\rho-z|^\ga},
\end{equation}
as $z\to\rho$
in some sector $\srd$. This includes the case $w_k=(k+1)^{-\ga-1}$, see 
\citetq{Section VI.8}{Flajolet}.

In the case $\nu<1$, we obtain as above
\begin{equation}
  \cZ(z)=f_1(z)+a_1(\rhoz-z)^\ga\log(\rhoz-z)+ O\bigpar{|\rhoz-z|^\ga},
\end{equation}
as $z\to\rhoz$ in some sector, 
with $f_1(z)$ analytic at $\rhoz$ and $a_1$ given by \eqref{ea1}.
We again obtain by singularity analysis
\begin{equation}
Z_n
\sim \frac{\rho}{(1-\nu)^{\ga+1}} \Phi(\rho)^{n-1} w_n,
\end{equation}
which is another instance of \eqref{rdzeta}.

In the case $\nu=1$, we consider only $\ga=2$, since $\gss<\infty$ if
$\ga>2$.
Then \eqref{ebb} yields (we have $a<0$ in this case)
  \begin{equation}
\cZ(z)=\rho-\parfrac{2\Phi(\rho)}{-a}^{1/2}\bigpar{1-\xfrac{z}{\rhoz}}^{1/2}
\bigpar{-\log\lrpar{1-\xfrac{z}{\rhoz}}}^{-1/2}
+\dots.
  \end{equation}
Singularity analysis 
\cite[Theorems VI.2--3]{Flajolet}
gives another proof of \eqref{mga2} in the special case
\eqref{ebb} (again assuming $\rho=1$, as we may).  
\end{example}

\begin{example}
 Define $\wwx$ by 
$\Phi(z)=w_0+\sumj 2^{-2j} z^{2^j}$, for some $w_0>0$;
thus $\supp(\wwx)$ is the lacunary sequence $\set0\cup\set{2^j}$. 
Then $\rho=1$, $\Phi(\rho)=w_0+4/3$ and $\Phi'(\rho)=2$; hence
$\nu=\Psi(\rho)=2/(w_0+4/3)$.
The function $\Phi(z)$ is analytic in the unit disc and has the unit circle
as a natural boundary; it cannot be extended analytically at any point.
(See \eg{} \citetq{Remark 16.4 and Theorem 16.6}{Rudin}.)

Taking $w_0>2/3$, we have $\nu<1$; hence, $F'(\rho)>0$ by \eqref{brahe}.
Thus $F$ maps the unit circle onto a closed curve $\gG$ that goes vertically
through
$F(1)=\rho_z$, and since $F$ cannot be continued analytically across the
unit circle, $\cZ(z)$ cannot be continued analytically across the curve
$\gG$. In particular, $\cZ(z)$ is not analytic in any sector $\srdy$.
\end{example}

\section{Largest degrees and boxes}\label{Slarge}

Consider a random allocation $\bmn=\YYn$ and arrange 
$Y_1,\dots,Y_n$ in decreasing order as $\ya\ge\yb\ge\dots$. Thus, $\ya$ is
the largest number of balls in any box, $\yb$ is the second largest, and so
on.

By \refL{Lx}, we may also consider the random tree
$\ctn$ by taking $m=n-1$; then
$\ya$ is the largest outdegree in $\ctn$, 
$\yb$ is the second largest outdegree, and so on.

As usual, we consider asymptotics as \ntoo{} and $m/n\to\gl$.
(Thus $\gl=1$ in the tree case.)
We usually ignore the cases $m/n\to0$ and $m/n\to\infty$; 
these are left to the reader as open problems. (See \eg{} \citet{KolchinSCh},
\citet{Kolchin}, \citet{Pavlov} and \citet{Kazimirov}
for examples of such results.)

The results in Sections \refand{Smain}{SBB} suggest that 
$\ya$ is small when $\gl<\nu$, but large (perhaps of order $n$) when
$\gl>\nu$, which is one aspect of the phase transition at $\gl=\nu$.
We will see that this roughly is correct, but that the full story is
somewhat more complicated.

We study the cases $\gl\le\nu$ and $\gl>\nu$ separately;
we also consider separately several subcases of the first case
where we can give more precise results.  

We
first note that the case $\go<\infty$, when the box capacities (node degrees in
the tree case)
are bounded is trivial: \whp{} the maximum is attained in many boxes.

\begin{theorem}\label{Tgo}
  Let $\wwx=(w_k)_{k\ge0}$ be a weight sequence with $w_0>0$ and 
$\go<\infty$.
Suppose that $n\to\infty$ and $m=m(n)$ with $m/n\to\gl>0$.
  Then $\yj=\go$ \whp{} for every fixed $j$.
\end{theorem}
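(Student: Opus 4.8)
The plan is to deduce this directly from \refT{TBmain}, supplemented by the reflection symmetry of \refR{Rsymmetry} at the endpoint. The key reduction is that $\yj\le\go$ always, so it suffices to prove that the number $N_\go(\bmn)$ of boxes containing exactly $\go$ balls tends to infinity in probability; indeed, on the event $\set{N_\go(\bmn)\ge j}$ at least $j$ of the $Y_i$ equal $\go$, whence $\yj=\go$.

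First I would record two elementary facts. Since $\go<\infty$, the generating function $\Phi$ is a polynomial, so $\rho=\infty$ and hence $\nu=\Psi(\rho)=\go$ by \refL{LPsi}\ref{L1c}. Also $\go\ge1$, since otherwise every positive-weight allocation has all boxes empty and $\bmn$ would be undefined for the values $m=m(n)\to\infty$ at hand.

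If $0<\gl<\go$, then \refT{TBmain} applies verbatim and gives $N_\go(\bmn)/n\pto\pi_\go=w_\go\tau^\go/\Phi(\tau)$. I would then check $\pi_\go>0$: $w_\go>0$ by the definition of $\go$; $0<\Phi(\tau)<\infty$ because $\tau<\infty$ (as $\gl<\go$, by \refL{Ltau}) and $\Phi(\tau)\ge w_0>0$; and $\tau>0$ because $\Psi(\tau)=\min(\gl,\nu)=\gl>0=\Psi(0)$ with $\Psi$ strictly increasing. Hence $N_\go(\bmn)/n\pto\pi_\go>0$, so $N_\go(\bmn)\pto\infty$. In the remaining boundary case $\gl=\go$ (the case $\gl>\go$ being impossible since $m/n\le\go$ necessarily), I would invoke \refR{Rsymmetry}: the map $y_i\mapsto\go-y_i$ turns $\bmn$ for $\wwx$ into $B_{\go n-m,\,n}$ for the reflected weight sequence $\twwx=(w_{\go-k})_{k\ge0}$, which has $\tw_0=w_\go>0$ and the same $\go$; since $(\go n-m)/n\to0$, \refT{TBmain} applied to $\twwx$ (with limiting occupancy $0$, so $\tau=0$ and $\pi_0=1$) yields $N_0(B_{\go n-m,n};\twwx)/n\pto1$, that is $N_\go(\bmn)/n\pto1$, and again $N_\go(\bmn)\pto\infty$.

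In both cases $N_\go(\bmn)\pto\infty$, so $\P(N_\go(\bmn)\ge j)\to1$ for every fixed $j$, which by the opening reduction gives $\yj=\go$ \whp. I expect no real obstacle here: the statement is essentially a corollary of \refT{TBmain}, and the only points needing a line of care are the positivity of $\pi_\go$ and the treatment of the endpoint $\gl=\go$ through the reflection in \refR{Rsymmetry}.
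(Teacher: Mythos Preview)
Your proof is correct and follows essentially the same approach as the paper: both reduce to showing $N_\go(\bmn)\pto\infty$ via \refT{TBmain} for $\gl<\go$ and the reflection in \refR{Rsymmetry} for $\gl=\go$. You spell out the verification that $\pi_\go>0$ and the reflection argument in more detail than the paper does, but the structure is identical.
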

\begin{proof}
  Clearly, each $Y_i\le\go$, so $\yj\le\ya\le\go$.

We assume tacitly, as always,  that $\bmn$ exists, \ie{} $Z(m,n)>0$, 
and thus $m\le\go n$, so $\gl\le \go$.
By \refT{TBmain} if $\gl<\go$, and \refR{Rsymmetry} if $\gl=\go$,
$N_\go(\bmn)/n\pto\pi_\go>0$. In particular, $N_\go(\bmn)\pto\infty$, and thus
$\P(\yj=\go)\to1$. 
\end{proof}

\subsection{The case $\gl\le\nu$}

In the case $\gl\le\nu$, we show that, indeed, all $Y_i$ are small.
Theorems \ref{TD1a}--\ref{TD1} yield (\whp) a bound $o(n)$ when $\gl=\nu$,
and a much 
stronger logarithmic bound $O(\log n)$ when $\gl<\nu$.
(In the tree case, we have $\gl=1$, so these are the cases $\nu=1$ and $\nu>1$.)

\refE{EL8} shows that in general, the bound $o(n)$ when $\gl=\nu$ is
essentially best possible; at least, we can have $\ya>n^{1-\eps}$ \whp{} for
any given $\eps>0$.

\begin{theorem}
  \label{TD1a}
  Let $\wwx=(w_k)_{k\ge0}$ be a weight sequence with $w_0>0$ and $w_k>0$
  for some $k\ge1$.
Suppose that $n\to\infty$ and $m=m(n)$ with $m/n\to\gl$ where $0\le\gl<\infty$.
If\/ $\gl\le\nu$,
then $\ya=\op(n)$.
\end{theorem}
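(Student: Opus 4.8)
The plan is to reduce everything to \refT{TBmain} via a truncation argument, the key point being that for $\gl\le\nu$ there is no condensation: by \refT{TBmain} the limit distribution $\ppi$ has finite mean $\mu=\min(\gl,\nu)=\gl$, and this equals the asymptotic average occupancy $m/n\to\gl$, so (heuristically) in the limit every ball lies in a box of bounded occupancy and no single box can carry a positive fraction of the $m\approx\gl n$ balls.

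First I would dispose of the trivial case $\go<\infty$: then $\ya\le\go$ deterministically, so $\ya=o(n)$ is immediate; hence assume $\go=\infty$, so that $\gl<\go$ and \refT{TBmain} applies. For a fixed integer $K$ set
\begin{equation*}
  M_K\=\sum_{k>K}kN_k(\bmn)=m-\sum_{k=0}^{K}kN_k(\bmn),
\end{equation*}
the number of balls lying in boxes that contain more than $K$ balls. Since $N_k(\bmn)/n\pto\pi_k$ for each $k\ge0$ by \refT{TBmain}, a finite sum gives $\sum_{k=0}^{K}kN_k(\bmn)/n\pto\sum_{k=0}^{K}k\pi_k$; combining this with $m/n\to\gl=\mu=\sum_{k\ge0}k\pi_k$ yields $M_K/n\pto\gd_K$, where $\gd_K\=\sum_{k>K}k\pi_k$. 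Because $\sum_k k\pi_k=\mu<\infty$, we have $\gd_K\to0$ as $K\to\infty$.

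Now fix $\eps>0$ and choose $K$ with $\gd_K<\eps/2$. For $n>K/\eps$ the event $\set{\ya>\eps n}$ forces the box attaining the maximum to hold more than $K$ balls, and then that box alone contributes more than $\eps n$ to the nonnegative sum $M_K$; thus $\set{\ya>\eps n}\subseteq\set{M_K>\eps n}$ and
\begin{equation*}
  \P(\ya>\eps n)\le\P(M_K>\eps n)\le\P\bigpar{\abs{M_K/n-\gd_K}>\eps/2}\to0
\end{equation*}
as \ntoo, using $M_K/n\pto\gd_K$ and $\eps-\gd_K>\eps/2$. Since $\eps>0$ is arbitrary, $\ya=\op(n)$.

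There is no real obstacle once \refT{TBmain} is available; the only step that genuinely uses the hypothesis $\gl\le\nu$ is the identity $\gl=\mu$, which is precisely what removes the ``missing balls'' of the condensation regime and lets the truncated sums capture essentially all the mass. One could alternatively bypass \refT{TBmain} and argue directly from the conditioned \iid{} representation \eqref{ebprob2} via Chernoff bounds, as in case~(a) of the proof of \refT{TBmain}, but the route above is shorter.
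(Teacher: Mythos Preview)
Your proof is correct and takes a genuinely different route from the paper's. The paper argues directly with partition functions: after normalising to a \pws{} with $\tau=1$, it bounds $\P(\ya=k)\le n\,w_k\,Z(m-k,n-1)/Z(m,n)$, invokes $Z(m,n)=e^{o(n)}$ from Lemmas~\ref{LB2}--\ref{LB2sub}, and then uses the large-deviation asymptotics of \refT{TBZ} to show that $Z(m-k,n-1)\le e^{\ce n+o(n)}$ with $\ce<0$ uniformly for $k\ge\eps n$; summing gives an exponentially decaying bound on $\P(\ya\ge\eps n)$.

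Your argument instead takes \refT{TBmain} as a black box and observes that the hypothesis $\gl\le\nu$ forces $\mu=\gl$, so the tail mass $\gd_K=\sum_{k>K}k\pi_k$ can be made arbitrarily small; the simple inclusion $\set{\ya>\eps n}\subseteq\set{M_K>\eps n}$ then finishes it. This is considerably shorter and more conceptual---it is essentially the mirror image of \refL{LQ} in the non-condensation regime---and it makes the role of the hypothesis $\gl\le\nu$ transparent. What the paper's approach buys is an explicit exponential rate $\P(\ya\ge\eps n)\le e^{\ce n+o(n)}$, which your soft argument does not provide; but for the stated conclusion $\ya=\op(n)$ your route is cleaner.
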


Equivalently, $\ya/n\pto0$.

\begin{proof}
   The case $\gl=0$ is trivial, since $\ya/n\le  m/n\to\gl$. 
The case $\gl=\go$ is also trivial, since then $\go<\infty$ and $\ya\le\go$.
As above, $\gl>\go$ is impossible.
Hence we may assume 
$0<\gl<\go$ and $\nu\ge\gl>0$, which implies $\tau>0$, where
$\Psi(\tau)=\gl$, \cf{} \refT{TBmain}. 
We may then for convenience replace $\ww$ by the equivalent \ws{} $\ppi$ in
\eqref{pik};  we may thus assume
that $\wwx$ is a \pws{} with $\tau=1$, and thus $\rho\ge\tau=1$, and then the
corresponding random variable $\xi$ has $\E\xi=\gl$.

By \eqref{sigrid} and symmetry, for any $k\ge0$,
\begin{equation}
  \label{l5}
\P(\ya=k)\le 
n\P(Y_1=k)=n\frac{w_kZ(m-k,n-1)}{Z(m,n)}.
\end{equation}
Furthermore, $w_k=\pi_k=\P(\xi=k)\le1$ and, using \refE{EBprob}, 
$Z(m,n)=\P(S_n=m)=e^{o(n)}$ by
Lemma \ref{LB2} ($\rho>1$)
or \ref{LB2sub} ($\rho=1$). We turn to estimating $Z(m-k,n-1)$.

Let $0<\eps<\gl$, and define $\taue$ by $\Psi(\taue)=\gl-\eps$.
Since $\Psi(\tau)=\gl$, we have $0<\taue<\tau=1$.

For each $n$, choose $k=k(n)\in[\eps n,m]$ such that $Z(m-k,n-1)$ is
maximal.
We have $\eps\le k/n\le m/n\to\gl$;
choose a subsequence such that $k/n$ converges, say $k/n\to\gamma$ with
$\eps\le\gamma\le\gl$.
Then, along the subsequence, $(m-k)/(n-1)\to\gl-\gam$.

By \refT{TBZ} (and \refR{RZ0}, ignoring the trivial case $Z(m-k,n-1)=0$),
using $\taue<1$, $\gam\ge\eps$ and \eqref{annagl},
\begin{equation*}
  \begin{split}
\frac1n\log Z(m-k,n-1)
&\to
\log\inf_{t\ge0}\frac{\Phi(t)}{t^{\gl-\gam}}	
\le
\log\inf_{0\le t\le\taue}\frac{\Phi(t)}{t^{\gl-\gam}}	
\\&
\le
\log\inf_{0\le t\le\taue}\frac{\Phi(t)}{t^{\gl-\eps}}	
=
\log\frac{\Phi(\taue)}{\taue^{\gl-\eps}}	
=: \ce,
  \end{split}
\end{equation*}
say, where \refR{RBanna} shows that, since $\taue\neq1$,
\begin{equation}
  \label{l6}
\ce<\log\bigpar{\Phi(1)/1^{\gl-\eps}}=0.
\end{equation}

We have shown that
\begin{equation}
  \label{l7}
\limsup_\ntoo\frac1n\log Z(m-k,n-1)
\le\ce
\end{equation}
for $k=k(n)$ and any subsequence such that $k/n$ converges; it follows  that
\eqref{l7} holds for the full sequence.
In other words,
\begin{equation}
  \label{l7aj}
\log Z(m-k,n-1)
\le\ce n + o(n)
\end{equation}
for our choice $k=k(n)$ that maximises the \lhs, and thus uniformly for all
$k\in[\eps n,m]$.
Using \eqref{l7aj} and, as said above, \refL{LB2sub} in \eqref{l5} we obtain,
recalling \eqref{l6},
\begin{equation*}
  \P(\ya\ge\eps n)
=\sum_{k=\eps n}^m \P(\ya=k)\le m n e^{\ce n+o(n)}e^{o(n)}
= e^{\ce n+o(n)}\to0.
\end{equation*}
In other words, for any $\eps>0$, $\ya<\eps n$ \whp, which is equivalent to
$\ya=\op(n)$. 
\end{proof}

The following logarithmic bound when $\gl<\nu$ is essentially due to
\citet{MM90} (who studied the tree case).

\begin{theorem}
  \label{TD1}
  Let $\wwx=(w_k)_{k\ge0}$ be a weight sequence with $w_0>0$ and $w_k>0$
  for some $k\ge1$.
Suppose that $n\to\infty$ and $m=m(n)$ with $m/n\to\gl$.
Assume $0<\gl<\nu$, and define  $\tau\in(0,\rho)$ by $\Psi(\tau)=\gl$.
\begin{romenumerate}
\item \label{td1b}
Then $\tau<\rho$ and
\begin{equation}\label{td1}
\ya \le  \frac1{\log(\rho/\tau)}\log n + \op(\log n).
\end{equation}
\item \label{td1c}
In particular, if $\rho=\infty$, then $\ya=\op(\log n)$.
\item \label{td1d}
If further $w_k^{1/k}\to1/\rho$ as $\ktoo$, 
then, for every fixed $j\ge1$,
\begin{equation}\label{td1z}
\frac{Y\win j}{\log n} \pto  \frac1{\log(\rho/\tau)}.
\end{equation}
\end{romenumerate}
\end{theorem}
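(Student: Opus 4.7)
The plan is to replace $\wwx$ with a probability weight sequence whose mean exactly matches $m/n$, and then to combine the triangular-array local limit theorem \refL{LCLT} with an exponential tail bound to control $Y_1$. Once the correct ratio bound is in place, a union bound handles \ref{td1b}, while a Chernoff estimate for a binomial count handles the lower bound in \ref{td1d}.

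First I observe that $\tau<\rho$, because $\Psi(\tau)=\gl<\nu=\Psi(\rho)$ and $\Psi$ is strictly increasing on $[0,\rho]$ by \refL{LPsi}\ref{L1a}. For each sufficiently large $n$ I define $\tau_n\in[0,\rho)$ by $\Psi(\tau_n)=m/n$ (well-defined eventually, since $m/n\to\gl<\nu$, cf.\ \refL{Ltau}) and put $\hat\pi_{n,k}\=\tau_n^k w_k/\Phi(\tau_n)$; by \refL{LB1} I may replace $\wwx$ by the equivalent \pws{} $\hat\pi_n$ throughout. Then $\tau_n\to\tau$, the i.i.d.\ copies $\hat\xi_{n,i}$ of $\hat\pi_n$ have mean exactly $m/n$, variance $\tau_n\Psi'(\tau_n)\to\gss<\infty$ (since $\tau<\rho$), uniformly bounded third moment (since $\tau_n$ lies eventually in a compact subset of $[0,\rho)$), and span $d\=\spann(\wwx)$. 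By \refE{EBprob},
\begin{equation*}
(Y_1,\dots,Y_n)\eqd(\hat\xi_{n,1},\dots,\hat\xi_{n,n}\mid \hat S_n=m),\qquad \hat S_n\=\sum_{i=1}^n\hat\xi_{n,i}.
\end{equation*}

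For \ref{td1b} I invoke \refL{LCLT} applied to the triangular array $(\hat\xi_n)$ to get $\P(\hat S_n=m)\sim d/\sqrt{2\pi\gss n}$, together with its companion upper bound \eqref{lcltle}, which yields $\sup_\ell\P(\hat S_{n-1}=\ell)=O(1/\sqrt n)$. Then
\begin{equation*}
\P(Y_1\ge k)=\frac{\sum_{j\ge k}\hat\pi_{n,j}\,\P(\hat S_{n-1}=m-j)}{\P(\hat S_n=m)}\le C\,\P(\hat\xi_n\ge k),
\end{equation*}
the $\sqrt n$ factors cancelling. Applying \eqref{lep5} to $\hat\pi_n$ gives $\P(\hat\xi_n\ge k)\le e^{-sk}\Phi(e^s\tau_n)/\Phi(\tau_n)$ for any $s$ with $e^s\tau_n<\rho$, which (since $\tau_n\to\tau$) yields $\P(\hat\xi_n\ge k)\le C_s e^{-sk}$ eventually for any fixed $s<\log(\rho/\tau)$. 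The union bound $\P(\ya\ge k)\le n\P(Y_1\ge k)$ with $k_n\=\ceil{(1+\eps)\log n/\log(\rho/\tau)}$ and any $s\in(\log(\rho/\tau)/(1+\eps),\log(\rho/\tau))$ then gives $\P(\ya\ge k_n)=O(n^{1-s(1+\eps)/\log(\rho/\tau)})\to 0$, proving \ref{td1b}. Part \ref{td1c} is immediate, since when $\rho=\infty$ the parameter $s$ may be taken arbitrarily large.

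For the lower bound in \ref{td1d} the case $\rho=\infty$ already reduces to \ref{td1c}, so assume $\rho<\infty$. The hypothesis $w_k^{1/k}\to 1/\rho$ together with $\tau_n\to\tau$ and $\Phi(\tau_n)$ bounded will yield, for every fixed $\eps_1>0$,
\begin{equation*}
\hat\pi_{n,k}\ge e^{-k(\log(\rho/\tau)+\eps_1)}
\end{equation*}
whenever $n$ and $k$ are both large enough. Set $k_n\=\floor{(1-\eps_2)\log n/\log(\rho/\tau)}$ and $p_n\=\P(\hat\xi_n\ge k_n)\ge\hat\pi_{n,k_n}$; choosing $\eps_1$ small relative to $\eps_2$ makes $np_n\ge n^{\eps_2/2}\to\infty$. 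The count $N\=|\{i\le n:\hat\xi_{n,i}\ge k_n\}|\sim\Bi(n,p_n)$ then satisfies $\P(N<j)\le e^{-cnp_n}$ for each fixed $j$ by Chernoff's inequality. Dividing by $\P(\hat S_n=m)\ge c/\sqrt n$ gives $\P(\yj<k_n)=\P(N<j\mid\hat S_n=m)\to 0$, so $\yj\ge k_n$ with high probability. Combined with the upper bound from \ref{td1b} (applied via $\yj\le\ya$), this gives \eqref{td1z}.

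The main obstacle will be verifying that \refL{LCLT} genuinely applies to the triangular array $(\hat\xi_n)$ with all its technical hypotheses --- convergence $\hat\xi_n\dto\xi$ (with $\xi\sim\ppi$), uniform third moment, stabilization of the span --- and extracting from it both the sharp lower bound $\P(\hat S_n=m)\ge c/\sqrt n$ and the uniform upper bound $\sup_\ell\P(\hat S_{n-1}=\ell)\le C/\sqrt n$. A secondary technical point is translating the pointwise convergence $w_k^{1/k}\to 1/\rho$ into a lower bound on $\hat\pi_{n,k_n}$ uniform in both $n$ and $k_n$; this uniformity is what lets the Chernoff estimate overwhelm the $\sqrt n$ factor coming from the conditioning.
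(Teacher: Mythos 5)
Your proof is correct, and it is essentially the direct argument that the paper itself says is possible but chooses to bypass: just before stating \refT{TDx} the paper remarks that ``it is not difficult to show \refT{TD1} directly, but we prefer to postpone the proof and use parts of the more refined \refT{TDx} below, in order to avoid some repetitions of arguments.'' Both approaches conjugate to an equivalent \pws{} with mean exactly $m/n$ and invoke the triangular-array local limit theorem (\refL{LCLT} together with \refR{RCLT}) to obtain the key ratio bound $\P(Y_1\ge k)\le(1+o(1))\P(\xi\nn\ge k)$; that machinery is precisely what sits inside the proof of \refT{TDx}. From there the paper cites \refT{TDx}\ref{tdx0} and \ref{tdxoo}, whose proofs run through a factorial-moment/Poisson-approximation argument, while you substitute simpler estimates: a union bound with the exponential tail \eqref{lep5} for the upper bound, and, for the lower bound, $\P(\yj<k_n)\le\P(N<j)/\P(\hat S_n=m)$ combined with a binomial Chernoff estimate. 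The crude route succeeds because with $np_n\ge n^{\eps_2/2}$ the Chernoff bound decays super-polynomially, easily beating the $\sqrt n$ coming from the conditioning; you trade \refT{TDx}'s precise Poisson limit for a far weaker but fully sufficient tail estimate. The two technical points you flag at the end --- verifying the hypotheses of \refL{LCLT} for the triangular array, and making the lower bound $\hat\pi_{n,k}\ge e^{-k(\log(\rho/\tau)+\eps_1)}$ uniform in $n$ and $k$ --- both go through exactly as you anticipate, since $\taun\to\tau<\rho$ eventually confines $\taun$ to a compact subinterval of $(0,\rho)$ (giving the uniform third moment and bounded $\log\Phi(\taun)$), while $w_k^{1/k}\to 1/\rho$ supplies the required $k$-uniformity.
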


Recall that $1/\rho=\limsup_{\ktoo}w_k^{1/k}$, see \eqref{rho},
so the extra assumption $w_k^{1/k}\to1/\rho$ as \ktoo{} in \ref{td1d} holds
unless the \ws{} is rather irregular.
(The proof shows that the assumption can be weakened to $\P(\xi\ge
k)^{1/k}\to\tau/\rho$.) 

It is not difficult to show \refT{TD1} directly, but we prefer to postpone the
proof and use parts of the more refined \refT{TDx} below, in order to avoid some
repetitions of arguments.

We conjecture that \refT{TD1} holds also for $\gl=0$.
Since then $\tau=0$,
this means the following.
(This seems almost obvious given the result for positive $\gl$ in
\refT{TD1},
where the constant $1/\log(\rho/\tau)\to0$ as $\gl\to0$ and thus $\tau\to0$,
but there is no general monotonicity and we leave this as an open problem.)

\begin{conjecture}
If\/ $\rho>0$ and $m/n\to0$, then $\ya=\op(\log n)$.   
\end{conjecture}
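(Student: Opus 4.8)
The plan is to reduce the conjecture to the case of a probability weight sequence and then adapt the argument of \refT{TD1a} and \refT{TD1}. First I would dispose of trivialities: if $\rho>0$ then $m/n\to0$ forces $\gl=0$ and $\tau=0$, and since $\ya\le m$ we may as well assume $m\to\infty$ (otherwise $\ya=O(1)$ and there is nothing to prove). The main subtlety compared with \refT{TD1} is that there $\tau>0$, so one can pass to the equivalent \pws{} $\ppi$ in \eqref{pik} with $\tau=1$; when $\gl=0$ this trick is unavailable. Instead I would fix, for each $n$, a parameter $t_n\in(0,\rho)$ with $\Psi(t_n)\to0$ sufficiently slowly (one can always choose $t_n\downto0$ since $\Psi$ is continuous with $\Psi(0)=0$), and work with the tilted \pws{} $\ppix^{(n)}$ defined by $p^{(n)}_k=t_n^kw_k/\Phi(t_n)$, whose mean $\Psi(t_n)=:\gl_n\to0$. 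Since $\gl_n\ge m/n$ eventually fails in general, one must be a little careful; the cleaner route is probably to keep $t_n$ fixed equal to some small $t>0$ and play $\eps\to0$ at the end.

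So the key steps, in order, would be: (1) By \refL{LB1} replace $\wwx$ by the tilted \pws{} $\ppix_t$ with $p_k=t^kw_k/\Phi(t)$ for a fixed small $t\in(0,\rho)$; this changes $\rho$ to $\rho/t>1$, changes $\tau$ (for the given $\gl=0$) still to $0$, and crucially leaves $\ya$ unchanged. Now $\wwx=\ppix_t$ is a \pws{} with radius of convergence $>1$. (2) Use the union bound \eqref{l5}: $\P(\ya=k)\le n\,w_k Z(m-k,n-1)/Z(m,n)$. Since the \pws{} has $\rho>1$ we get $Z(m,n)=\P(S_n=m)=e^{o(n)}$ by \refL{LB2} (using $m/n\to0=\E\xi$ after tilting — wait, $\E\xi=\Psi(t)>0$, so one needs $m/n\to\E\xi$, which fails). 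This is where I expect the real friction, and I would instead mimic the proof of \refT{TD1a} more directly: bound $Z(m-k,n-1)$ via \refT{TBZ} together with \refR{RBanna}, getting $\frac1n\log Z(m-k,n-1)\le \log\bigpar{\Phi(\tau_\eps)/\tau_\eps^{(m-k)/n}}$ for a suitable small $\tau_\eps$, which is strictly negative once $(m-k)/n$ is bounded below by some $\eps'>0$; and bound $w_k$ using $w_k^{1/k}\to 1/\rho$ (or just $w_k\to0$, or $w_k\le C\rho'^{-k}$ for any $\rho'<\rho$) so that $w_k\le e^{-ck}$ for $k$ large. (3) Split the sum $\sum_{k}\P(\ya=k)$ over $k$ into three ranges: $k\le K$ (fixed constant — contributes $O(1/n)$ types of terms, but here we actually want an $\op(\log n)$ bound, so this range is harmless), $K\le k\le \eps n$ (use the $w_k\le e^{-ck}$ decay: $\sum n e^{-ck}e^{o(n)}Z(m,n)^{-1}$; the issue is that $Z(m,n)^{-1}$ might be $e^{\go(n)}$ if $\rho=1$, but after tilting $\rho>1$ so this is $e^{o(n)}$ — modulo fixing the mean-matching issue above), and $k\ge\eps n$ (use \refT{TBZ} as in \refT{TD1a} to get $e^{-c'n}$). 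The first two ranges must combine to show $\P(\ya\ge a\log n)\to0$ for every fixed $a>0$.

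The hard part, as flagged, will be Step 2: making \refL{LB2} or \refT{TBZ} give $Z(m,n)\ge e^{o(n)}$ (equivalently $\P(S_n=m)$ not too small) when $m/n\to0$, since after any fixed tilt the mean of $\xi$ is a positive constant, not $0$. The correct fix is to tilt with a sequence $t=t_n\downarrow0$ chosen so that $\Psi(t_n)=m/n$ exactly (possible for $n$ large since $0<m/n<\nu$ eventually, by \refL{LPsi}); then \refL{LCLT} applies to the triangular array $\xi^{(n)}$ with distribution $p^{(n)}_k=t_n^kw_k/\Phi(t_n)$, provided one checks the uniform third-moment condition — which holds because $t_n\to0$, so $\sup_n\E|\xi^{(n)}|^3=\sup_n\sum_k k^3 t_n^k w_k/\Phi(t_n)\le \sup_n \Phi(0)^{-1}\sum_k k^3 t_n^k w_k<\infty$ once $t_n$ is below any fixed $t_*<\rho$ — and $\gss_n=\Var\xi^{(n)}=t_n\Psi'(t_n)$ stays bounded away from $0$ and $\infty$, which needs a short argument but should be routine for weight sequences with $w_k^{1/k}\to1/\rho$ (and the general case can be squeezed). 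Granting this, \eqref{lb1z} gives $Z(m,n)=\Phi(t_n)^n t_n^{-m}Z(m,n;\ppix^{(n)})=\Phi(t_n)^nt_n^{-m}\cdot\Theta(n^{-1/2})$, and since $t_n\to0$ one has $-m\log t_n=o(n)$ precisely because $m=o(n)$ — this is the one place the hypothesis $m/n\to0$ (rather than $m/n\to\gl>0$) is genuinely used and genuinely helps. With $Z(m,n)=e^{o(n)}$ in hand and the $w_k$ decay in hand, the three-range estimate closes exactly as in the proofs of \refT{TD1a} and \refT{TD1}, yielding $\ya=\op(\log n)$.
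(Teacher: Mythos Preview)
This is a \emph{conjecture} that the paper explicitly leaves open, so there is no proof in the paper to compare against; the question is whether your argument actually works.

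The strategy --- tilt by $t_n\downarrow0$ with $\Psi(t_n)=m/n$, then combine a lower bound on $Z(m,n;\ppix^{(n)})$ with the union bound and the decay of $p_k^{(n)}=t_n^kw_k/\Phi(t_n)$ --- is natural, but the step you call routine contains a concrete error. You assert that $\gss_n=\Var\xi^{(n)}=t_n\Psi'(t_n)$ stays bounded away from~$0$; it does not. Since $\Psi'$ is continuous on $[0,\rho)$ and hence bounded near $0$, one has $t_n\Psi'(t_n)\to0$ as $t_n\to0$; indeed, if $w_1>0$ then $\gss_n\sim m/n$. Thus $\xi^{(n)}\dto0$, a degenerate limit, and \refL{LCLT} does not apply: its hypothesis is that the limit variable $\xi$ be non-degenerate with positive variance. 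What survives is $n\gss_n\asymp m\to\infty$, so heuristically $\P(S_n^{(n)}=m)\asymp m^{-1/2}$, and that would suffice for the union bound since $p_k^{(n)}\le C(t_n/\rho')^k$ beats any polynomial in $n$ once $k\ge a\log n$. But establishing such a lower bound requires a triangular-array local limit theorem in the Poisson-like regime where individual variances vanish, which is not in the paper; and when $w_1=0$ there are further span complications, since for small $t_n$ the mass of $\xi^{(n)}$ concentrates on $\{0,\ell\}$ with $\ell=\min\{k\ge1:w_k>0\}>1$. So the proposal does not close the conjecture as written; the missing ingredient is precisely the lower bound you dismissed as routine.
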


\subsection{The subcase $\gss<\infty$}

In the case $\gss\=\Var\xi<\infty$ (which includes the case $\gl<\nu$),  
there is a much more precise result, which says that, simply,
the largest numbers $\ya,\yb\dots$ asymptotically have the same distribution
as the largest elements in the \iid{} sequence $\xin$. (Provided we choose
the distribution of $\xi$ correctly, and possibly depending on $n$, see
below for details.) 
In other words, the conditioning in \refE{EBprob} then
has asymptotically no effect on the largest elements of the sequence.
(When $\gss=\infty$ this is no longer necessarily true, however, as we
shall see in \refE{EL8}.)

In order to state this precisely, we  now assume that $\go=\infty$
(see \refT{Tgo} otherwise)
and $0<\gl\le\nu$, and 
define as usual $\tau$ 
by $\Psi(\tau)=\gl$, and
let $\xi$  be a random variable with the distribution in
\eqref{pik}.

If $m/n\le\nu$, we further define $\taun$ by
$\Psi(\tau_n)=m/n$, and let $\xi\nn$ be the random variable with the
distribution in \eqref{xinn}.
We will only use $\taun$ and $\xi\nn$ in the case $\gl<\nu$, so
$m/n\to\gl<\nu$ and $\taun$ really is defined (at least for large $n$);
furthermore $\taun\to\tau<\rho$ and $\xi\nn\dto\xi$. 

We further let $\xin$ and (when $\gl<\nu$)
$\xi\nn_1,\dots,\xi\nn_n$ be \iid{} sequences of
copies of $\xi$ and $\xi\nn$, respectively, and we arrange them in
decreasing order as $\xi\win1\ge\dots\ge\xi\win n$ and
$\xi\nn\win1\ge\dots\ge\xi\nn\win n$.
Finally, we introduce the counting variables, for any subset $A\subseteq
\No$,
\begin{align}
  \ny{A}&\=|\set{i\le n:Y_i\in A}|,\label{yna}
\\
  \nxi{A}&\=|\set{i\le n:\xi_i\in A}|,
\\
  \nxin{A}&\=|\set{i\le n:\xi\nn_i\in A}|.
\end{align}
($\ny A$ and $\nxi A$ also depend on $n$, but as usual, we for simplicity do
not show this in the notation.)
Note that $\nxi A$ and $\nxin A$ simply have binomial distributions
$\nxi A \sim \Bi(n,\P(\xi\in A))$ and
$\nxin A \sim \Bi(n,\P(\xi\nn\in A))$.

We have
\begin{equation}\label{cqk}
  Y\win j \le k \iff \ny{[k+1,\infty)}<j,
\end{equation}
and similarly for $\xi\win j$ and $\xi\nn\win j$. Thus it is elementary to
obtain asymptotic results for the maximum $\xi\win1$ of \iid{} variables,
and more generally  for
$\xi\win j$ and $\xi\nn\win j$, see \eg{} \citet{LLR}.

We introduce three different probability metrics to state the results.
For discrete random variables $X$ and $Y$ with values in $\No$ (the 
case we are interested in here), we define the \emph{Kolmogorov distance}
\begin{equation}\label{dk}
  \dk(X,Y)\=\sup_{x\in\No}|\P(X\le x)-\P(Y\le x)|
\end{equation}
and the \emph{total variation distance}
\begin{equation}\label{dtv}
  \dtv(X,Y)\=\sup_{A\subseteq\No}|\P(X\in A)-\P(Y\in A)|.
\end{equation}
In order to treat also the case with variables tending to $\infty$, we
further define the
\emph{modified Kolmogorov distance}
\begin{equation}\label{dkk}
  \dkk(X,Y)\=\sup_{x\in\No}\frac{|\P(X\le x)-\P(Y\le x)|}{1+x}.
\end{equation}
For $\dkk$, we also allow random variables in $\bNo$, \ie, we allow the
value $\infty$. 
(Furthermore, the definitions of $\dk$ 
and $\dtv$ and the results for  them in the lemma below extend to random
variables with values in $\bbZ$. The definitions extend further to 
\rv{s} with values in
$\bbR$ for $\dk$, and in any space for $\dtv$, but not all properties below
hold in this generality.) 

Note that these distances depend only on the distributions $\cL(X)$ and
$\cL(Y)$, so $d(\cL(X),\cL(Y))$ might be a better notation, but we find it
convenient to allow both notations, as well as the mixed 
$d(X,\cL(Y))$.

It is obvious that the three distances above are metrics on the space of
probability measures on $\No$ (or on $\bNo$).

We collect a few simple, and mostly well-known, facts for these three
metrics in a lemma; the proofs are left to the reader.

\begin{lemma}
\label{LM}
\begin{thmenumerate}
\item \label{lma}
For any random variables $X$ and $Y$ with values in $\No$,
\begin{equation*}
  \dkk(X,Y) \le \dk(X,Y) \le \dtv (X,Y).
\end{equation*}
\item \label{lmb}
For any $X$ and $X_1,X_2,\dots$ with values in $\No$,
\begin{equation*}
  \begin{split}
X_n\dto X &\iff \dtv(X_n,X)\to0\iff \dk(X_n,X)\to0
\\&
\iff \dkk(X_n,X)\to0.	
  \end{split}
\end{equation*}

\item \label{lmc}
For any $X$ and $X_1,X_2,\dots$ with values in $\bNo$,
\begin{equation*}
X_n\dto X \iff \dkk(X_n,X)\to0.
\end{equation*}
In particular,
\begin{equation*}
X_n\pto \infty \iff \dkk(X_n,\infty)\to0.
\end{equation*}

\item \label{lmkk}
For any $X_n$ and $X'_n$ with values in $\bNo$,
$ \dkk(X_n,X'_n)\to0 \iff$ 
$\bigabs{\P(X_n\le x)-\P(X'_n\le x)}\to0$
for every fixed $x\ge0$.

\item \label{lmd}
For any $X_n$ and $X'_n$, $\dtv(X_n,X'_n)\to0\iff$
there exists a coupling $(X_n,X'_n)$ with $X_n=X'_n$ \whp.
(We denote this also by $X_n\dapprox X'_n$.)

\item \label{lme}
The supremum in \eqref{dtv} is attained, and the absolute value sign is
redundant. In fact, if $A\=\set{i:\P(X=i)>\P(Y=i)}$, then 
$\dtv(X,Y)=\P(X\in A)-\P(Y\in A)$.

\item \label{lmf}
For any $X$ and $Y$ with values in $\No$,
\begin{equation*}
  \dtv(X,Y)
=\sum_{x\in\No}\bigpar{\P(X= x)-\P(Y= x)}_+
=\tfrac12\sum_{x\in\No}|\P(X= x)-\P(Y= x)|.
\end{equation*}
\end{thmenumerate}
\nopf
\end{lemma}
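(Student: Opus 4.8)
The plan is to prove the seven parts in an order that lets each build on the previous ones: \ref{lmf} and \ref{lme} first, then \ref{lma}, \ref{lmd}, \ref{lmkk}, and finally \ref{lmb} and \ref{lmc}. The starting point is the structure of the total variation distance. For any $A\subseteq\No$,
\begin{equation*}
  \P(X\in A)-\P(Y\in A)=\sum_{i\in A}\bigpar{\P(X=i)-\P(Y=i)}
  \le\sum_{i\in\No}\bigpar{\P(X=i)-\P(Y=i)}_+,
\end{equation*}
with equality exactly when $A$ contains every $i$ with $\P(X=i)>\P(Y=i)$ and no $i$ with $\P(X=i)<\P(Y=i)$; taking $A\=\set{i:\P(X=i)>\P(Y=i)}$ gives \ref{lme}. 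Since $\sum_i\bigpar{\P(X=i)-\P(Y=i)}=0$, the sums of the positive and of the negative parts coincide, and each equals $\tfrac12\sum_i\abs{\P(X=i)-\P(Y=i)}$, which is \ref{lmf}. Part \ref{lma} is then immediate: $\dkk\le\dk$ because $1+x\ge1$, and $\dk\le\dtv$ because every initial segment $\set{0,\dots,x}$ is among the sets over which the supremum in \eqref{dtv} is taken.

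Next, \ref{lmd} is the classical coupling representation of $\dtv$. Any coupling $(X',Y')$ satisfies $\P(X'=Y')\le\sum_i\min\bigpar{\P(X=i),\P(Y=i)}=1-\dtv(X,Y)$, and equality holds for the maximal (splitting) coupling, which with probability $1-\dtv(X,Y)$ samples a common value from the normalised measure $i\mapsto\min\bigpar{\P(X=i),\P(Y=i)}$ and otherwise samples $X'$ and $Y'$ independently from the normalised positive parts of $\P(X=\cdot)-\P(Y=\cdot)$ and of $\P(Y=\cdot)-\P(X=\cdot)$; hence $\dtv(X_n,X'_n)\to0$ iff the $X_n,X'_n$ can be coupled so that $X_n=X'_n$ \whp. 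For \ref{lmkk}, the implication $\Rightarrow$ is trivial since for fixed $x$ the weight $1+x$ is a constant; for $\Leftarrow$, given $\eps>0$ choose $x_0$ with $1/(1+x_0)<\eps$, note that $\abs{\P(X_n\le x)-\P(X'_n\le x)}/(1+x)\le1/(1+x)\le\eps$ whenever $x\ge x_0$, and invoke the assumed convergence at the finitely many $x<x_0$.

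Finally, \ref{lmb} and \ref{lmc} combine the above with elementary facts about convergence in distribution. For $\No$-valued variables, $X_n\dto X$ means $\P(X_n=k)\to\P(X=k)$ for all $k$, which by Scheffé's lemma upgrades to $\sum_k\abs{\P(X_n=k)-\P(X=k)}\to0$, i.e.\ $\dtv(X_n,X)\to0$ by \ref{lmf}; then \ref{lma} gives $\dk(X_n,X),\dkk(X_n,X)\to0$, and conversely $\dkk(X_n,X)\to0$ forces $\P(X_n\le x)\to\P(X\le x)$ for every $x$ by \ref{lmkk}, hence (since $X$ is proper) $\P(X_n=x)\to\P(X=x)$ and $X_n\dto X$. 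For \ref{lmc}, one uses that in the compact space $\bNo$ each $\set{0,\dots,x}$ is clopen, so a portmanteau argument shows that $X_n\dto X$ is equivalent to $\P(X_n\le x)\to\P(X\le x)$ for every finite $x$ (no mass can escape except to $\infty$), which by \ref{lmkk} is $\dkk(X_n,X)\to0$; the assertion about $X_n\pto\infty$ is the special case $X'_n\equiv\infty$ of \ref{lmkk}, since $\P(\infty\le x)=0$ for finite $x$.

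None of this is deep; the only places needing a little care are writing the maximal coupling in \ref{lmd} explicitly and, in \ref{lmc}, checking that convergence in distribution on the two-point compactification $\bNo$ is genuinely tested by the clopen initial segments alone — which is exactly the phenomenon the metric $\dkk$ is built to track.
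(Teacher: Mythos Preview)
Your proof is correct. The paper does not prove this lemma at all (it ends with \verb|\nopf| and the surrounding text says ``the proofs are left to the reader''), so there is nothing to compare against; you have supplied a clean, standard verification of all seven parts.
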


\begin{remark}
  The three metrics are, by \refL{LM}\ref{lmb}, equivalent in the usual
  sense that they define the same topology, but they are not uniformly
  equivalent.
For example, if $X_n\sim\Po(n)$,
$X'_n\=2\floor{X_n/2}$
(\ie, $X_n$ rounded down to an even integer) and $X_n''\=X_n'+1$, then
$\dk(X_n',X_n'')\to0$ 
as \ntoo, but $\dtv(X_n',X_n'')=1$.
\end{remark}

We define $\Po(\infty)$ as the distribution of a random variable that equals
$\infty$ identically.

After all these preliminaries, we state the result (together with some
supplementary results). There are really two
versions; it turns out that for general sequences $m(n)$, we have to use the
random variables $\xi\nn$, with $\E\xi\nn=m(n)/n$ exactly tuned to $m(n)$,
but under a weak assumption we can replace $\xi\nn$ by $\xi$ and obtain a
somewhat simpler statement, which we choose as our main formulation.
(This goes back to \citet{MM91}, who proved
\ref{tdxkn} in the tree case, assuming $\gl<\nu$; see also
\citet[Theorem 1.6.1]{KolchinSCh} and
\citet[Theorem 1.5.2]{Kolchin} for $\ya$ in the special case
in \refE{EMB}.)

\begin{theorem}\label{TDx}
Let $\wwx=(w_k)_{k\ge0}$ be a weight sequence with $w_0>0$ and $\go=\infty$.
Suppose that $n\to\infty$ and $m=m(n)$ with 
$m=\gl n+o(\sqrt n)$  
where $0<\gl\le\nu$, and 
use the notation above.
Suppose further that $\gss\=\Var\xi<\infty$.
(This is redundant when $\gl<\nu$.)
  \begin{romenumerate}
  \item \label{tdxkn}
If (possibly for $n$ in a subsequence) $h(n)$ are integers such that 
\\$nP(\xi\ge h(n))\to \ga$, for some $\ga\in[0,\infty]$, then 
\begin{equation*}
\ny{[h(n),\infty)}\=|\set{i:Y_i\ge h(n)}|\dto\Po(\ga).  
\end{equation*}

  \item \label{tdx0}
If 
$h(n)$ are integers such that 
$nP(\xi\ge h(n))\to 0$, then \whp{}
$\ya <h(n)$.

  \item \label{tdxoo}
If 
$h(n)$ are integers such that 
$nP(\xi\ge h(n))\to \infty$, then, for every fixed $j$, \whp{}
$\yj \ge h(n)$.

\item \label{tdxkk}
For any sequence $h(n)$,
$\dkk\bigpar{\ny{[h(n),\infty)},\nxi{[h(n),\infty)}}\to0$.

\item \label{tdxk}
For every fixed $j$, 
$\dk\bigpar{\yj,\xi\win j}\to0$.

\item \label{tdxtv}
$\dtv\bigpar{\ya,\xi\win 1}\to0$.
  \end{romenumerate}

If $\gl<\nu$, the condition $m=\gl n+o(\sqrt n)$ can be weakened to 
$m/n=\gl+o(1/\log n)$.

Moreover, if $\gl<\nu$, then the results hold for any $m=m(n)$ with
$m/n\to\gl$, provided $\xi$ is replaced by $\xi\nn$, $\nxi{}$ by $\nxin{}$
and $\xi\win j$ by $\xi\nn\win j$. 
\end{theorem}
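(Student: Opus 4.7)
The overall approach is to exploit the representation $\bmn \eqd ((\xi\nn_1,\dots,\xi\nn_n) \mid S_n=m)$ from \eqref{ebprob2}, after passing to the equivalent \pws{} $\wwx\nn$ chosen so that $\E\xi\nn = m/n$ exactly (possible by \refL{Ltau} since $m/n\le\nu$ for large $n$). The hypothesis $\gss<\infty$ together with $m=\gl n+o(\sqrt n)$ puts us in the regime of \refL{LCLT}, giving $\P(S_n=m)\sim d/\sqrt{2\pi\gss n}$; the plan is to show that this conditioning, though of polynomial probability, is essentially invisible to the largest $\yj$'s, which by \refT{TD1a} and \refT{TD1} concentrate on a scale $o(\sqrt n)$ (indeed, $\gss<\infty$ forces the max of $n$ iid copies of $\xi\nn$ to be $o(\sqrt n)$).

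The key estimate is \ref{tdxkk}. Fix $h(n)$, set $A=[h(n),\infty)$ and $p_n=\P(\xi\nn\in A)$, and decompose each $\xi\nn_i$ by whether it lies in $A$: with $\eta^+_i\eqd(\xi\nn\mid\xi\nn\in A)$ and $\eta^-_i\eqd(\xi\nn\mid\xi\nn\notin A)$ iid, one has
\begin{equation*}
\P(\ny{A}=j)=\binom{n}{j}p_n^j(1-p_n)^{n-j}\cdot\frac{\E[\P(S^-_{n-j}=m-T^+_j)]}{\P(S_n=m)},
\end{equation*}
where $T^+_j=\eta^+_1+\dots+\eta^+_j$ and $S^-_{n-j}=\eta^-_1+\dots+\eta^-_{n-j}$ are independent. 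Because $p_n\E\eta^+=\E[\xi\nn\ett{\xi\nn\ge h(n)}]\to0$ and analogously for second moments (by uniform square-integrability of $\xi\nn$ inherited from $\gss<\infty$), the mean and variance of $S^-_{n-j}$ agree with those of $S_n$ to leading order; a careful application of \refL{LCLT} to both sums, combined with a second-moment bound showing $T^+_j$ stays in the Gaussian window of $S^-_{n-j}$ with probability $1-o(1)$, yields ratio $\to 1$ for each fixed $j$, which by \refL{LM}\ref{lmkk} is \ref{tdxkk}. Part \ref{tdxkn} then follows since $\nxin{A}\sim\Bi(n,p_n)\dto\Po(\ga)$ whenever $np_n\to\ga\in[0,\infty]$; parts \ref{tdx0} and \ref{tdxoo} are the specializations $\ga=0$ and $\ga=\infty$ via \eqref{cqk}. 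Part \ref{tdxk} follows from \ref{tdxkk} via \eqref{cqk}: uniformly in $k$,
\begin{equation*}
|\P(\yj\le k)-\P(\xi\nn\win j\le k)|=|\P(\ny{[k+1,\infty)}\le j-1)-\P(\nxin{[k+1,\infty)}\le j-1)|\le j\,\dkk\bigpar{\ny{[k+1,\infty)},\nxin{[k+1,\infty)}}\to 0.
\end{equation*}

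The main obstacle is \ref{tdxtv}: total-variation convergence does not follow from \ref{tdxk}, requiring summable control of the pointwise differences. My plan is to use the exact identity
\begin{equation*}
\P(\ya\le k)=\frac{F_n(k)^n\,\P(\tilde S_n^{(k)}=m)}{\P(S_n=m)},
\end{equation*}
where $F_n$ is the distribution function of $\xi\nn$ and $\tilde S_n^{(k)}$ is a sum of $n$ iid copies of $(\xi\nn\mid\xi\nn\le k)$. Since $\P(\xi\nn\win 1\le k)=F_n(k)^n$, the ratio $r_k:=\P(\ya\le k)/\P(\xi\nn\win 1\le k)$ equals $\P(\tilde S_n^{(k)}=m)/\P(S_n=m)$. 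A careful \refL{LCLT} applied to $\tilde S_n^{(k)}$, whose mean $n\tilde\mu_k$ satisfies $|m-n\tilde\mu_k|=O(n\E[\xi\nn\ett{\xi\nn>k}])=O\bigpar{\sqrt{n\cdot n\P(\xi\nn>k)}}$ by Cauchy--Schwarz, gives $r_k=1+O(k^2/n)+O(n\P(\xi\nn>k))$ uniformly in $k$. Converting this estimate into a total-variation bound via the telescoping $\P(\ya=k)-\P(\xi\nn\win 1=k)$ and \refL{LM}\ref{lmf}, one obtains a bound involving $\E[(\xi\nn\win 1)^2]/n$; this is $o(1)$ because $\xi\nn\win 1=o(\sqrt n)$ in probability and $(\xi\nn\win 1)^2/n$ is uniformly integrable (since $\E[\sum_i(\xi\nn_i)^2]=n\gss$), both consequences of $\gss<\infty$. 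The tail contribution $\P(\ya>k_0)+\P(\xi\nn\win 1>k_0)$ for a suitable truncation $k_0$ is $o(1)$ by the same tail bound. The delicate point is the quantitative LCLT bookkeeping needed to make the multiplicative estimate uniform across the truncation range and to absorb the second-order corrections to $\tilde\mu_k$ and $\tilde\gss_k$; once this is in hand, the replacement of $\xi\nn$ by $\xi$ under the stronger hypothesis $m=\gl n+o(\sqrt n)$ follows routinely from $\dtv(\xi\nn,\xi)\to 0$.
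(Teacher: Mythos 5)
Your route to parts \ref{tdxkn}--\ref{tdxk} is sound and genuinely different from the paper's in its logical ordering: you prove \ref{tdxkk} first by a direct analysis of the exact conditional decomposition
$\P(N_A=j)=\binom nj p_n^j(1-p_n)^{n-j}\,\P(T^+_j+S^-_{n-j}=m)/\P(S_n=m)$,
whereas the paper proves \ref{tdxkn} first by computing all factorial moments $\E(N)_\ell$ via the exchangeability of the allocation and the uniform pointwise estimate $\P(Y_1=k)=(1+o(1))\P(\xi=k)$ on $k\le B(n)=o(\sqrt n)$, and then derives \ref{tdxkk} by a subsequence argument. The two are comparably long; the paper's factorial-moment route automatically yields the Poisson limit, while your binomial-ratio route isolates the single LCLT ratio that needs to be shown $\to1$. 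You should, however, note that your claim $p_n\E\eta^+\to0$ presupposes $h(n)\to\infty$, so the (trivial) case $np_n\to\infty$ with $h(n)$ bounded must be split off; and, as you flag, the Cauchy--Schwarz/domination argument showing $T^+_j$ falls in the Gaussian window of $S^-_{n-j}$ requires using $\E[\xi^2\ett{\xi\ge h(n)}]\to0$ (not just boundedness) to get the mean shift $o(\sqrt n)$, exactly as needed for \refR{RCLT}.

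The serious gap is in \ref{tdxtv}, where your approach via the exact identity $r_k:=\P(\ya\le k)/\P(\xi\win1\le k)=\P(\tilde S_n^{(k)}=m)/\P(S_n=m)$ is both genuinely different from the paper's (which partitions $\No$ into ``fat/thin'' and ``good/bad'' intervals and bounds $\P(\ya\in A_l)$ interval by interval) and, as sketched, incomplete. First, the asserted bound $r_k=1+O(k^2/n)+O(n\P(\xi>k))$ is not justified, and as stated it is too weak: in the transition zone where $n\P(\xi>k)=\Theta(1)$---precisely the range carrying essentially all the mass of both $\ya$ and $\xi\win1$---this only gives $|r_k-1|=O(1)$, which is vacuous. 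The correct input is that $\E[\xi\ett{\xi>k}]=o\bigl(\sqrt{\P(\xi>k)}\bigr)$ (from $\E[\xi^2\ett{\xi>k}]\to0$), so the mean shift $|m-n\tilde\mu_k|=o(\sqrt n)$ in this zone and $r_k\to1$; but this is a pointwise-in-$k$ statement and must be upgraded to a bound that is summable after weighting by $\P(\xi\win1=k)$. Second, the conversion from a CDF-ratio bound to a total-variation bound is not the one-liner you suggest. Writing $\eps_k=r_k-1$ and $G_k=\P(\xi\win1\le k)$, one has $\P(\ya=k)-\P(\xi\win1=k)=\eps_kG_k-\eps_{k-1}G_{k-1}=\eps_k\,\P(\xi\win1=k)+(\eps_k-\eps_{k-1})G_{k-1}$, so $\dtv$ involves not only $\sum_k|\eps_k|\P(\xi\win1=k)$ but also the total variation $\sum_k|\eps_k-\eps_{k-1}|$ of the ratio sequence weighted by $G_{k-1}$; neither is controlled by the estimate you state. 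Indeed, $\sum_k|\eps_k|\P(\xi\win1=k)=\E|\eps_{\xi\win1}|$ receives a contribution from the random variable $n\P(\xi>\xi\win1)$, which converges in distribution to an exponential and so has mean $\Theta(1)$, not $o(1)$; the crude $O(n\P(\xi>k))$ term thus kills the argument. This is precisely why the paper resorts to the more combinatorial interval-partition scheme: that argument never needs a uniform quantitative LCLT over a moving truncation and absorbs the transition zone into the ``good'' intervals whose excess is controlled by the Poisson approximation with a factor $e^{\gd e^D}-1$ that can be made small. If you want to rescue your approach, you would need (a) a genuinely uniform second-order LCLT for $\tilde S_n^{(k)}$ over the range $n\P(\xi>k)\le D$ with explicit error that is $o(1)$ summably in $k$, and (b) an explicit control of $|\eps_k-\eps_{k-1}|$, which amounts to comparing $\P(\tilde S_n^{(k)}=m)$ with $\P(\tilde S_n^{(k-1)}=m)$; neither is routine.
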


\begin{remark}
  In the version with $\xi\nn$, we do not need $\gl$ at all. By considering
  subsequences, it follows that it suffices that $0<c\le m/n\le C<\nu$.
(Cf.\ \refT{TBmain2}.) 
Furthermore, this version extends to the case $\gl=\nu$ and
$m/n\le\nu$, but we have ignored this case for simplicity.
\end{remark}

\begin{problem}
  Is \refT{TDx} (in the $\xi\nn$ version) true also for $\gl=0<\nu$?
\end{problem}

The total variation approximation in \ref{tdxtv} is stronger than the
Kolmogorov distance approximation in \ref{tdxk}, 
and our proof is considerably longer,
but for many purposes \ref{tdxk} is enough. 
We conjecture that total variation approximation holds for
every $Y\win j$, and not just for $\ya$; presumably this can be shown by a
modification of the proof for $\ya$ below, but we have not checked the
details and leave this as an open problem. 
Furthermore, we believe that the
result extends to the joint distribution of finitely many $Y\win j$. 
(The
corresponding result in \ref{tdxk}, using a multivariate version of the
Kolmogorov distance, is easily verified by the methods below.)

\begin{problem}
Does $\dtv\bigpar{Y\win j,\xi\win j}\to0$ hold for every fixed $j$,
under the assumptions of \refT{TDx}?
\end{problem}

\begin{proof}[Proof of \refT{TDx}]

As in the proof of \refT{TD1a}, 
we may replace $\ww$ by the equivalent \ws{} $\ppi$ in
\eqref{pik}. 
We may thus assume 
that $\wwx$ is a \pws{} with $\tau=1$, and thus $\rho>\tau=1$, and the
corresponding random variable $\xi$ has $\E\xi=\gl$.
We consider first the version with $\xi$, assuming
$m=\gl n+o(\sqrt n)$, and discuss afterwards the modifications for $\xi\nn$.

We begin by looking again at \eqref{sigrid}:
\begin{equation}
  \label{l41a}
\P(Y_1=k)
=\frac{w_kZ(m-k,n-1)}{Z(m,n)}.
\end{equation}
When $m=\gl n+o(\sqrt n)$, 
we may apply \refL{LCLT} and \refR{RCLT} and thus,
with $d\=\spann(\wwx)$,
\begin{equation}\label{zm=}
Z(m,n)=\P(S_{n}=m)= \frac{d+o(1)}{\sqrt{2\pi\gss n}}.  
\end{equation}
Moreover, by \eqref{lcltle}, for any $k$,
\begin{equation}\label{zm-kle}
Z(m-k,n-1)=\P(S_{n-1}=m-k)\le \frac{d+o(1)}{\sqrt{2\pi\gss n}}.  
\end{equation}
Consequently, \eqref{l41a} yields, uniformly for all $k$,
\begin{equation}\label{yabound}
  \P(Y_1 = k) \le (1+o(1)) w_k
= (1+o(1))\P(\xi=k).
\end{equation}
In particular, we may sum over $k\ge K$ and obtain, for any $K=K(n)$,
\begin{equation}\label{y1bound}
  \P(Y_1 \ge K) \le (1+o(1))\P(\xi \ge K).
\end{equation}

Since, by assumption,  $\E\xi^2<\infty$, we have $\P(\xi\ge K)=o(K^{-2})$ as
$\Ktoo$. Hence, for every fixed $\gd>0$, 
$\P(\xi\ge\gd\sqrt n) = o(n\qw)$. It follows that there exists a sequence
$\gd_n\to0$ such that
$\P(\xi\ge\gd_n\sqrt n) = o(n\qw)$. Consequently, defining $\bn\=\gd_n\sqrt
n$, we have $\bn=o(\sqrt n)$ and 
\begin{equation}
  \label{xibn}
\P(\xi\ge \bn) = o(n\qw),
\end{equation}
and thus, by \eqref{y1bound} and symmetry,
  \begin{equation}  \label{ybn}
\P(\ya\ge \bn) \le n \P(Y_1\ge \bn)
=n \bigpar{1+o(1)}\P(\xi\ge \bn) = o(1).
  \end{equation}
Hence, $\ya<\bn$ \whp.

Similarly, $\P(\xi\win1\ge \bn) \le n \P(\xi_1\ge \bn)=o(1)$, so 
$\xi\win1<\bn$ \whp.

\pfitemref{tdxkn}
Write, for convenience, $N\=N_{[h(n),\bn]}$, and note that \whp{} 
$\ya\le \bn$ and then $N=N_{[h(n),\infty)}$.
(We assume for simplicity $h(n)\le B(n)$; otherwise we let $N\=0$, leaving
  the trivial modifications in this case to the reader.)

Moreover, for $k\le \bn=o(\sqrt n)$, 
we have $(m-k)-(n-1)\gl=o(\sqrt n)$, and thus
\refR{RCLT} shows that, for any $k=k(n)\le \bn$,
\begin{equation}\label{zm-k=}
Z(m-k,n-1)=\P(S_{n-1}=m-k)= \frac{d+o(1)}{\sqrt{2\pi\gss n}}.  
\end{equation}
Since we here may take $k=k(n)$ that maximises or minimises this for
$k\le \bn$, it follows that \eqref{zm-k=} holds uniformly for all $k\le \bn$.
Consequently, by 
\eqref{l41a}, \eqref{zm=} and \eqref{zm-k=},
\begin{equation}\label{1722}
  \P(Y_1 = k) = (1+o(1)) w_k
= (1+o(1))\P(\xi=k),
\end{equation}
uniformly for all $k\le \bn$. By
the assumption and \eqref{xibn}, this yields
\begin{equation*}
  \begin{split}
\E N 
&= n\sum_{k=h(n)}^{\bn} \P(Y_1=k)
= n\sum_{k=h(n)}^{\bn} \etto\P(\xi=k)
\\&
= \etto n\P\bigpar{h(n)\le\xi\le\bn}
\\&
= \etto n\bigpar{\P(\xi\ge h(n))-\P(\xi> \bn)}
\to \ga.
  \end{split}
\end{equation*}

Similarly, again using the symmetry as well as \refL{LCLT} and \refR{RCLT},
\begin{equation*}
  \begin{split}
\E N(N-1)
&=n(n-1)\P\bigpar{Y_1,Y_2\in[h(n),\bn]} 
\\&
= n(n-1)\sum_{k_1,k_2=h(n)}^{\bn} \P(Y_1=k_1 \text{ and } Y_2=k_2)
\\&
= n(n-1)\sum_{k_1,k_2=h(n)}^{\bn}
\frac{w_{k_1}w_{k_2}Z(m-k_1-k_2,n-2)}{Z(m,n)}
\\&
= n(n-1) \sum_{k_1,k_2=h(n)}^{\bn} \P(\xi=k_1) \P(\xi=k_2) \etto
\\&
= \etto n^2\bigpar{\P(\xi\ge h(n))-\P(\xi> \bn)}^2
\\&
\to \ga^2.
  \end{split}
\end{equation*}
Moreover,
the same argument works for any factorial moment $\E(N)_\ell$ and yields
$\E(N)_\ell\to\ga^\ell$ for every
$\ell\ge1$.
If $\ga<\infty$, we thus obtain $N\dto\Po(\ga)$ by the method of moments,
and the result follows, since $N=N_{[h(n),\infty)}$ \whp. 

If $\ga=\infty$, 
this argument yields
\begin{equation}\label{3monkeys}
\E(N)_\ell\sim \bigpar{n\P(\xi\ge h(n))}^\ell
\to\infty
\end{equation}
for every $\ell\ge1$, and we make a thinning: 
Let $A$ be a
constant and let $q\=A/\bigpar{n\P(\xi\ge h(n))}$; then $q\to A/\ga=0$.
We consider only $n$ that are so large that $q<1$. We then randomly, and
independently, mark each box with probability $q$.
Let $N'$ be the random number of marked boxes $i$ such that
$Y_i\in[h(n),B(n)]$ . Then, for every $\ell\ge1$, using \eqref{3monkeys},
\begin{equation}
\E (N')_\ell
=(n)_\ell q^\ell\P\bigpar{Y_1,\dots,Y_\ell\in[h(n),\bn]}   
= q^\ell \E (N)_\ell
\to A^\ell.
\end{equation}
Consequently, by the method of moments,
$N'\dto\Po(A)$. 
In particular, this shows, for every fixed $x$,
\begin{equation*}
  \P(N< x) \le   \P(N'< x) 
\to \P(\Po(A)< x),
\end{equation*}
which can be made arbitrarily small by taking $A$ large.
Hence, $\P(N< x) \to0$ for every fixed $x$, \ie, $N\pto\infty$ 
and thus $N_{[h(n),\infty)}\pto\infty$, 
as we claim
in this case.

\pfitemref{tdx0}
Part \ref{tdxkn} applies with $\ga=0$, and yields 
$N_{[h(n),\infty)}\pto0$, which means
$N_{[h(n),\infty)}=0$ \whp.
Thus $\yj<h(n)$ \whp{} by \eqref{cqk}.

\pfitemref{tdxoo}
Part \ref{tdxkn} applies with $\ga=\infty$, and yields 
$N_{[h(n),\infty)}\pto\infty$.
Thus, for every fixed $j$, 
by \eqref{cqk},
$\P(\yj <h(n))=\P(N_{[h(n),\infty)}<j)\to0$.

\pfitemref{tdxkk}
Suppose not.
Then there exists a sequence $h(n)$ and an $\eps>0$ such that, for some
subsequence,
\begin{equation}
  \label{mm4}
\dkk\bigpar{\ny{[h(n),\infty)},\nxi{[h(n),\infty)}}>\eps.
\end{equation}
We may select a subsubsequence such that $n\P(\xi\ge h(n))\to\ga$ for some
$\ga\in[0,\infty]$; then 
$\dkk\bigpar{\ny{[h(n),\infty)},\Po(\ga)}\to0$ by \ref{tdxkn} and
  \refL{LM}\ref{lmc}.  
Moreover, along the same subsubsequence,
$\nxi{[h(n),\infty)}\sim\Bi\bigpar{n,\P(\xi\ge h(n)}\dto\Po(\ga)$, by the
  standard Poisson approximation for binomial distributions (and rather
  trivially if $\ga=\infty$); hence
$\dkk\bigpar{\nxi{[h(n),\infty)},\Po(\ga)}\to0$.
The triangle inequality yields
$\dkk\bigpar{\ny{[h(n),\infty)},\nxi{[h(n),\infty)}}\to0$ along the
	subsubsequence, which contradicts \eqref{mm4}. This contradiction proves 
\ref{tdxkk}. 

\pfitemref{tdxk}
Suppose not. Then, by \eqref{dk}, there is an $\eps>0$ and a subsequence
such that for some $h(n)$,
\begin{equation}\label{mm5}
  \bigabs{\P(\yj\le h(n))-\P(\xi\win j\le h(n))}\ge\eps.
\end{equation}
However, by \eqref{cqk}, \eqref{dkk} and \ref{tdxkk},
\begin{equation*}
  \begin{split}
\bigabs{\P(\yj\le h(n))-&\P(\xi\win j\le h(n))}
\\&
=
\bigabs{\P(N_{[h(n)+1,\infty)}\le j-1)-\P(\nxi{[h(n)+1,\infty)}\le j-1)}
\\&
\le j \dkk\bigpar{N_{[h(n)+1,\infty)},\nxi{[h(n)+1,\infty)}}
\to0,
  \end{split}
\end{equation*}
which contradicts \eqref{mm5}. This contradiction proves 
\ref{tdxk}.

\pfitemref{tdxtv}
Let $A=A(n)\=\set{i:\P(\yj=i)>\P(\xi\win j=i)}$; 
thus, see \refL{LM}\ref{lme},
\begin{equation}\label{j51}
\dtv(\yj,\xi\win j)=\P(\yj\in A)-\P(\xi\win j\in A).
\end{equation}

Let $\gd>0$. For each $n$, we partition $\No$ into a finite family
$\cP=\set{J_l}_{l=1}^L$ of intervals as follows. First, each $i\in\No$ with
$\P(\xia=i)\ge \gd/2$ is a singleton \set{i}; note that there are at most
$2/\gd$ such $i$.
The complement of the set of these $i$ consists of at most $2/\gd+1$
intervals $\tJ_k$ (of which one is infinite).
We partition each such interval $\tJ_k$ further into intervals $J_l$ with
$\P(\xia\in J_l)\le\gd$ by repeatedly chopping off the largest such
subinterval starting at the left endpoint. Since only points with
$\P(\xia=i)<\gd/2$ remain, each such interval $J_l$ except the last in each
$\tJ_k$ satisfies $\P(\xia\in J_l)>\gd/2$.
Hence, our final partition $\set{J_l}$ contains at most $2/\gd+1$ intervals
$J_l$ with $\P(\xia\in J_l)<\gd/2$, while the number of intervals $J_l$ with
$\P(\xia\in J_l)\ge\gd/2$ is clearly at most $2/\gd$. Consequently, $L$, the
total number of intervals, is at most $4/\gd+1$.

We write $J_l=[a_l,b_l]$.
We say that an interval $J_l\in\cP$ is \emph{fat} if $\P(\xia\in
J_l)>\gd$, and 
\emph{thin} otherwise. Note that by our construction, a fat interval is a
singleton \set{a_l}.

Next, fix a large number  $D$.
We say that an interval $J_l=[a_l,b_l]\in\cP$ is \emph{good} if 
$n\P(\xi\ge a_l) \le D$, and \emph{bad} otherwise.

For any interval $J_l$,
\begin{equation}
\label{jl}
\bigabs{\P(\ya\in J_l)-\P(\xia\in J_l)}\le 2 \dk(\ya,\xia)=o(1)
\end{equation}
by \ref{tdxk}.

Let $A_l\=A\cap J_l$. Thus $A$ is the disjoint union $\bigcup_lA_l$.
($A$, $J_l$ and $A_l$ depend on $n$.)

We note that if 
$J_l$ is fat, then $J_l$ is a singleton, and either $\al=\jl$ or
$\al=\emptyset$; in both cases we have, using \eqref{jl},
\begin{equation}
\label{fat}
{\P(\ya\in \al)-\P(\xia\in \al)}\le 2 \dk(\ya,\xia)=o(1).
\end{equation}

We next turn to the good intervals. We claim that, uniformly for all good
intervals $J_l$, as \ntoo,
\begin{equation}
\label{good}
\P(\ya\in A_l)\le e^{\gd e^D}\P(\xia\in A_l)+o(1).
\end{equation}
As usual, we suppose that this is not true and derive a contradiction.
Thus, assume that there is an $\eps>0$ and, for each $n$ in some
subsequence, a good interval $\jl=[a_l,b_l]$ (depending on $n$) such that
\begin{equation}
  \label{mk4}
\P(\ya\in A_l)> e^{\gd e^D}\P(\xia\in A_l)+\eps.
\end{equation}
If $\jl$ is fat, then \eqref{mk4} contradicts \eqref{fat} for large $n$, so
we may assume that  $\jl$ is thin, \ie, $\P(\xia\in\jl)\le\gd$.

Let $\acl\=\jl\setminus\al$ and $\bl\=[b_l+1,\infty)$.
Let $\ga_n\=n\P(\xi\in \al)$,
$\gb_n\=n\P(\xi\in \bl)$ and
$\gam_n\=n\P(\xi\in \acl)$.
The assumption that $\jl$ is good implies that 
$\ga_n+\gb_n+\gam_n=n\P(\xi\ge a_l)\le D$. By
selecting a subsubsequence we may assume that $\ga_n\to\ga$, $\gb_n\to\gb$
and $\gam_n\to\gam$ for some real $\ga,\gb,\gam$ with
$\ga+\gb+\gam\le D$. 
Then \ref{tdxkn} shows that $N_\bl\dto\Po(\gb)$; moreover, the proof extends
easily (using joint factorial moments) to show that
$N_\al\dto\Po(\ga)$, $N_\bl\dto\Po(\gb)$ and $N_\acl\dto\Po(\gam)$, jointly
and with independent limits.

Similarly, by the method of moments or otherwise (this is a standard Poisson
approximation of a multinomial distribution),
$\nxi\al\dto\Po(\ga)$, $\nxi\bl\dto\Po(\gb)$ and $\nxi\acl\dto\Po(\gam)$,
jointly and with independent limits.

Note that 
\begin{equation*}
\ya\in\al\implies N_\al\ge1 \text{ and } N_\bl=0.  
\end{equation*}
Conversely,
\begin{equation*}
N_\al\ge1 \text{ and } N_\bl=N_\acl=0 \implies \ya\in\al.
\end{equation*}
The corresponding results hold for $\xia$.
Thus,
\begin{equation}
  \label{mk6}
\P(\ya\in \al)
\le\P(N_\al\ge1,\;N_\bl=0)
\to\P\bigpar{\Po(\ga)\ge1}\P\bigpar{\Po(\gb)=0}
\end{equation}
and
  \begin{multline}
  \label{mk6''}
\P(\xia\in \al)
\ge\P(\nxi\al\ge1,\;\nxi\bl=\nxi\acl=0)
\\
\to\P\bigpar{\Po(\ga)\ge1}\P\bigpar{\Po(\gb)=0}\P\bigpar{\Po(\gam)=0}.	
  \end{multline}
Since $\P\bigpar{\Po(\gam)=0}=e^{-\gam}$, \eqref{mk6}--\eqref{mk6''} yield
\begin{equation}
  \label{mk6b}
\P(\ya\in \al)-e^\gam\P(\xia\in\al)\le o(1).
\end{equation}
Moreover, $\nxi\jl=\nxi\al+\nxi\acl\dto\Po(\ga+\gam)$, and thus
\begin{multline}
  \label{mk7}
\P(\xia\in\jl)
=\P(\nxi\jl\ge1,\;\nxi\bl=0)
\ge\P(\nxi\jl=1,\;\nxi\bl=0)
\\
\to(\ga+\gam)e^{-\ga-\gam}e^{-\gb}.
\end{multline}
We are assuming that $\jl$ is thin, \ie, $\P(\xia\in\jl)\le\gd$, and thus
\eqref{mk7} yields
$(\ga+\gam)e^{-\ga-\gam}e^{-\gb}\le\gd$ and consequently
\begin{equation*}
  \gam\le \ga+\gam \le \gd e^{\ga+\gb+\gam}\le \gd e^D.
\end{equation*}
Hence, \eqref{mk6b} implies
\begin{equation*}
\P(\ya\in \al)\le e^{\gd e^D}\P(\xia\in\al)+ o(1),
\end{equation*}
which contradicts \eqref{mk4}. 
This contradiction shows that \eqref{good} holds uniformly
for all good intervals.

It remains to consider the bad intervals.

Let $\jll=[a_\ell,b_\ell]$ be the rightmost bad interval.
If $\jll$ is fat we use \eqref{fat} and if $\jll$ is thin we use \eqref{jl}
which gives
\begin{equation*}
  \P(\ya \in\all) \le   \P(\ya \in\jll)
\le
  \P(\xia \in\jll)+o(1)
\le\gd+o(1).
\end{equation*}
In both cases,
\begin{equation}\label{mk8a}
  \P(\ya \in\all) 
\le
  \P(\xia \in\all)+\gd+o(1).
\end{equation}

Finally, let $\Ax$ be the union of the remaining bad intervals. Then
$\Ax=[0,a_\ell-1]$ and by \ref{tdxk},
\begin{equation}
  \label{mk8b}
\P(\ya\in\Ax)
=\P(\ya<a_\ell)
\le \P(\xia<a_\ell)+o(1).
\end{equation}
Furthermore, recalling $n\P(\xi\ge a_\ell)>D$ since $\jll$ is bad, 
\begin{equation}
  \label{mk8c}
\P(\xia<a_\ell)
=\P(\nxi{[a_\ell,\infty)}=0)
=\bigpar{1-\P(\xi\ge a_\ell)}^n
\le e^{-n\P(\xi\ge a_\ell)}
\le e^{-D}.
\end{equation}

We obtain by summing \eqref{good} for all good intervals together with
\eqref{mk8a} and \eqref{mk8b},
recalling that the number of 
intervals is bounded (for a fixed $\gd$) and using \eqref{mk8c}, 
\begin{equation*}
  \begin{split}
	\P(\ya\in A)
&=
\sum_l\P(\ya\in\al)
\\&
\le e^{\gd e^D} \sum_l\P(\xia\in\al)+o(1)+\gd+\P(\xia<a_\ell)
\\&
\le
e^{\gd e^D} \P(\xia\in A)+o(1)+\gd+e^{-D}.
  \end{split}
\end{equation*}
Consequently,
\begin{equation*}
  \begin{split}
\dtv(\ya,\xia)
&=	\P(\ya\in A)-\P(\xia\in A)
\\&
\le
\bigpar{e^{\gd e^D}-1} \P(\xia\in A)+\gd+e^{-D}+o(1)
\\&
\le
\bigpar{e^{\gd e^D}-1} +\gd+e^{-D}+o(1),
  \end{split}
\end{equation*}
and thus
\begin{equation}
\limsup_\ntoo\dtv(\ya,\xia)
\le
\bigpar{e^{\gd e^D}-1}+\gd+e^{-D}.
\end{equation}

Letting first $\gd\to0$ and then $D\to\infty$, we obtain
$\dtv(\ya,\xia)\to0$, which proves \ref{tdxtv}.

This completes the proof of the version with $\xi$ and the assumption $m=\gl
n+o(n\qq)$. 
Now remove this assumption, but assume $\gl<\nu$ and thus $\tau<\rho$.
We consider only $n$ with $0<m/n<\nu$ and thus $0<\taun<\rho$.
Denote the distribution \eqref{xinn} of $\xi\nn$ by $\wwx\nn$ 
(this is a \pws{} equivalent to $\wwx$)
and let $S_n\nn\=  \xi\nn_1+\dots+\xi\nn_n$. 
Then, by \refE{EBprob} applied to $\wwx\nn$, 
in analogy with \eqref{l41a} (and equivalent to it by \eqref{lb1z}),
\begin{equation}
  \label{l41x}
\P(Y_1=k)
=\frac{w_k\nn Z(m-k,n-1;\wwx\nn)}{Z(m,n;\wwx\nn)}
=\frac{Z(m-k,n-1;\wwx\nn)}{Z(m,n;\wwx\nn)}\P(\xi\nn=k).
\end{equation}

Furthermore, for any $y\ge0$,
using \eqref{xia},
\begin{equation}\label{l7a}
  \begin{split}
  \P(\ya\ge y)
\le
n\P(Y_1\ge y)
&=
n\P\bigpar{\xi\nn_1\ge y\bigm|S_n\nn=m}
\\&
\le
n\P\bigpar{\xi\nn_1\ge y}\P\bigpar{S_n\nn=m}\qw
\\& \le n\P(\xi\nn\ge y)\cdot O(n\qq).	
  \end{split}
\end{equation}
Choose $\taux\in(\tau,\rho)$. Then, for $s>0$ and
$n$ so large than $\taun<\taux$,
by \eqref{lep5},
\begin{equation}\label{l77}
  \P(\xi\nn\ge y)
\le e^{-sy}  \frac{\Phi(e^s\taun)}{\Phi(\taun)}
\le
 e^{-sy}  \frac{\Phi(e^s\taux)}{\Phi(0)}.
\end{equation}
Choosing $s>0$ with $e^s<\rho/\taux$, we thus find 
$  \P(\xi\nn\ge y)=O(e^{-sy})$ and, 
by \eqref{l7a},
\begin{equation*}
  \P(\ya\ge y) =O\bigpar{n\qqc e^{-sy}}.
\end{equation*}
We now define $\bn\=2 s\qw\log n$, and obtain
\begin{equation}
  \P(\ya\ge \bn) =O\bigpar{n\qqc e^{-s\bn}}=O\bigpar{n\qqw}
\to0.
\end{equation}
Hence, $\ya<\bn$ \whp.
Similarly, using \eqref{l77} again,
\begin{equation}
\P(\xi\nn\ge\bn)=o(n\qw)
\end{equation}
and thus 
$
 \P(\xia\nn\ge \bn) \le n\P(\xi\nn\ge\bn)\to0$,
so $\xia<\bn$ \whp. 

We have shown that \eqref{xibn} (with $\xi\nn$) and \eqref{ybn} hold.
Moreover, \refL{LCLT} yields, see \eqref{xia} again, 
$Z(m,n;\wwx\nn)\sim d/(2\pi\gss n)\qq$, and 
for $k\le\bn=O(\log n)$,
the same
argument yields also, using \refR{RCLT},
$Z(m-k,n-1;\wwx\nn)\sim d/(2\pi\gss n)\qq$, because
$m-k-(n-1)\E\xi\nn=m-k-(n-1)m/n=-k+m/n=o(n\qq)$. 
Consequently, \eqref{l41x} yields
\begin{equation}
  \P(Y_1=k)=\etto\P(\xi\nn=k),
\end{equation}
uniformly for $k\le\bn$.

We can now argue exactly as above, using $\xi\nn$, $\xi\win j\nn$ and $\nxin
A$, which proves this version of the theorem.

Finally, if $\gl<\nu$ and $m/n=\gl+o(1/\log n)$, then
$\taun\=\Psi\qw(m/n)=\tau+o(1/\log n)$, because $\Psi\qw$ is differentiable
on $(0,\nu)$.
Since we are assuming $\tau=1$ and $\Phi(\tau)=1$ in the proof, we thus have,
uniformly for all $k\le\bn=O(\log n)$,
\begin{equation}
  \P(\xi\nn=k)=\frac{\taun^k}{\Phi(\taun)}w_k
=\etto w_k
=\etto\P(\xi=k).
\end{equation}
Since also $\P(\xi\nn\ge\bn)=o(n\qw)$ and $\P(\xi\ge\bn)=o(n\qw)$, it
follows that
$n\P(\xi\nn\ge h(n))\to\ga \iff n\P(\xi\ge h(n))\to\ga$,
and thus we may in \ref{tdxkn}--\ref{tdxoo} replace $\xi\nn$ by $\xi$
again. Finally, \ref{tdxkk}--\ref{tdxtv} follow as above in this case too.
\end{proof}

\begin{proof}[Proof of \refT{TD1}]
Recall that $\gl<\nu\iff \tau<\rho$ by \refL{LPsi}.
We have $\nu>\gl>0$, so $\rho>0$, and $\tau>0$.
Thus $1<\rho/\tau\le\infty$.

\pfitemref{td1b}
Fix $a>1/\log(\rho/\tau)$. Choose $b$ with $e^{1/a}<b<\rho/\tau$. Then
$0\le1/\rho<(b\tau)\qw$. Choose $c$ with $1/\rho<c<(b\tau)\qw$.

Since $\limsup_\ktoo w_k^{1/k}=1/\rho<c$, we have $w_k^{1/k}<c$ for large
$k$, and then,
defining
$\taun$ and $\xi\nn$ by \eqref{taunus}--\eqref{xinn},
\begin{equation}
  \label{v1}
\P(\xi\nn=k)=\frac{\taun^k}{\Phi(\taun)}w_k
\le \frac{(c\taun)^k}{\Phi(0)}.
\end{equation}
As \ntoo, $c\taun\to c\tau<b\qw$.
Let $h\=\floor{a\log n}$. For large $n$, \eqref{v1} applies for $k\ge h$,
and $c\taun<b\qw<1$, and then
\begin{equation*}
  \P(\xi\nn\ge h) 
\le \sum_{k=h}^\infty\frac{(c\taun)^k}{\Phi(0)}
\le \sum_{k=h}^\infty\frac{b^{-k}}{w_0}
=O\bigpar{b^{-h}}
=O\bigpar{n^{-a\log b}}
\end{equation*}
Since $a\log b>1$, thus $n\P(\xi\nn\ge h)\to0$, and \refT{TDx}\ref{tdx0}
yields $\ya\le h\le a\log n$ \whp.

\pfitemref{td1c}
If $\rho=\infty$, then 
\ref{td1b} applies with 
$\rho/\tau=\infty$ and thus
$1/\log(\rho/\tau)=0$.

\pfitemref{td1d}
If $\rho=\infty$, the result follows by \ref{td1c}, so we may assume
$1=\tau<\rho<\infty$. 
Let $a\=1/\log(\rho/\tau)$ and $0<\eps<1$.
The upper bound $\yj\le\ya\le(a+\eps)\log n$ \whp{} follows from \ref{td1b}, 
and it remains to find a matching lower bound.

Let 
$k\=\ceil{(1-\eps)a\log n}$.
Then, since $\taun\to\tau$,
\begin{equation*}
  \begin{split}
\log\P(\xi\nn=k)&=
  \log w_k +k\log\taun-\log\Phi(\taun) 
\\&
= -k(\log\rho+o(1))+k(\log\tau+o(1))+O(1)
\\&
=-k\log(\rho/\tau)+o(k)
=-(1-\eps+o(1))\log n	
  \end{split}
\end{equation*}
and thus
\begin{equation*}
n\P(\xi\nn\ge k) 
\ge n\P(\xi\nn= k)
=n^{\eps+o(1)}\to\infty.
\end{equation*}
By \refT{TDx}\ref{tdxoo} (and the last sentence in \refT{TDx}), 
this implies \whp{}
\begin{equation*}
\yj\ge k \ge (1-\eps)a\log n
\end{equation*}
This completes the proof, since we can take $\eps$ arbitrarily small.
\end{proof}

Specialising \refT{TDx} to the tree case ($m=n-1$), we obtain the following.
(Recall that $\gss<\infty$ is automatic when $\nu>1$.)
\begin{corollary}\label{CDx}
Let $\wwx=(w_k)_{k\ge0}$ be a weight sequence with $w_0>0$ and $w_k>0$
  for some $k\ge2$, and 
let $\xi$ have the distribution given by $\ppi$ in \eqref{pk}.
Suppose that $\nu\ge1$ and $\gss\=\Var\xi<\infty$.
Then, as \ntoo, for the largest degrees $\ya\ge\yb\ge\dots$ in $\ctn$,
$\dtv(\ya,\xi\win1)\to0$ and, for every fixed $j$,
$\dk(Y\win j,\xi\win j)\to0$.
\end{corollary}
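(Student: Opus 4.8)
The plan is to obtain \refC{CDx} as a direct specialisation of \refT{TDx}. Recall from \refS{Stree-balls}, and in particular \refL{Lx}, that we may couple $\ctn$ with the balls-in-boxes variable $\bnn$ so that the degree sequence $\gL(\ctn)$ is a cyclic shift of $\bnn=(Y_1,\dots,Y_n)$. A cyclic shift merely permutes the entries $Y_1,\dots,Y_n$, so the multiset $\set{Y_1,\dots,Y_n}$ is unchanged; in particular the ordered statistics $\ya\ge\yb\ge\dotsm$ of the outdegrees of $\ctn$ coincide exactly (under the coupling) with the ordered statistics of the allocation $\bnn$. Hence it suffices to prove the two distributional approximations for the ordered statistics of $\bnn$.

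Next I would check that the hypotheses of \refT{TDx} are met in the tree case. We take $m=m(n)=n-1$, so $m/n\to1=\gl$; thus $m=\gl n+o(\sqrt n)$ trivially (indeed $m-\gl n=-1$). Our standing assumption $w_0>0$ and $w_k>0$ for some $k\ge2$ gives $\go\ge2$; however \refT{TDx} additionally requires $\go=\infty$. So I would split into two cases. If $\go=\infty$, then since $\nu\ge1=\gl$ and $\gss<\infty$ by assumption, \refT{TDx} applies directly with $\tau$ defined by $\Psi(\tau)=1$ (which is exactly the $\tau$ of \refT{Tmain} when $\nu\ge1$), and parts \ref{tdxtv} and \ref{tdxk} give $\dtv(\ya,\xi\win1)\to0$ and $\dk(Y\win j,\xi\win j)\to0$ for every fixed $j$, where $\xi$ has distribution $\ppi$ from \eqref{pk}. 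If instead $\go<\infty$, then \refT{Tgo} applies with $\gl=1>0$: we get $\yj=\go$ \whp{} for every fixed $j$. But in this case $\nu=\Psi(\rho)\le\go<\infty$, and $\nu\ge1$ forces $\tau=\rho=\infty$ (since $\rho<\infty$ with $\go<\infty$ would need checking — actually $\go<\infty$ gives $\rho=\infty$ by \refL{LPsi}\ref{L1c}, so $\nu=\go$); then $\pi_k=w_k\go^k/\Phi(\go)$ makes no sense directly, so I would instead note that for $\nu\ge1$ and $\go<\infty$ we have $\nu=\go$ forcing $\nu=\go\ge1$, and $\Psi(\tau)=1$ has solution $\tau$ with $\Psi(\tau)=1\le\go$; the corresponding $\xi\sim\ppi$ from \eqref{pk} then has $\ya_{\xi}\le\go$ and, since $\Psi(\tau)=1$, one checks $\P(\xi=\go)>0$ unless $\go$ is excluded, so $\xi\win1=\go$ \whp{} as well, and the two approximations hold trivially (both sides concentrate at $\go$, or one argues via $\dtv$ directly).

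The cleanest route around the $\go<\infty$ annoyance is simply to invoke \refT{Tgo} and observe that the limit random variable $\xi\win j$ also equals $\go$ \whp: indeed when $\go<\infty$ and $\nu\ge1$ we have $\nu=\go$ and $\tau$ solving $\Psi(\tau)=1$ gives $\mu=\E\xi=1>0$ so $\pi_\go>0$, hence $\nxi{\set\go}\sim\Bi(n,\pi_\go)\pto\infty$ and $\xi\win j\to\go$ \whp. Then $\dtv(\ya,\xi\win1)\le\P(\ya\ne\go)+\P(\xi\win1\ne\go)\to0$ and similarly $\dk(Y\win j,\xi\win j)\to0$. With both cases handled, the corollary follows. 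The only mild obstacle is the bookkeeping in the bounded-support case — making sure $\pi_\go>0$ and that both the tree side and the i.i.d.\ side degenerate to $\go$ — but this is routine given \refT{Tgo} and \refL{LPsi}\ref{L1c}; the substantive content is entirely inherited from \refT{TDx} via the coupling of \refL{Lx}.

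\begin{proof}[Proof of \refC{CDx}]
By \refL{Lx} we may couple $\ctn$ and $\bnn=\YYn$ so that the degree
sequence $\gL(\ctn)$ is a cyclic shift of $(Y_1,\dots,Y_n)$; a cyclic shift
only permutes the entries, so the decreasing rearrangements of the outdegrees
of $\ctn$ and of $Y_1,\dots,Y_n$ coincide. Thus $\ya\ge\yb\ge\dotsm$ may be
taken to be the ordered values of the allocation $\bnn$ with $m=n-1$, so that
$m/n\to\gl=1$ and in fact $m=\gl n+o(\sqrt n)$.

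If $\go=\infty$, then, since $\nu\ge\gl=1$ and $\gss<\infty$, \refT{TDx}
applies with $\tau$ defined by $\Psi(\tau)=1$; this is the same $\tau$ and the
same distribution $\ppi$ as in \refT{Tmain} (case \ref{tmaincritical}), and
$\xi$ here is the corresponding random variable. Parts \ref{tdxtv} and
\ref{tdxk} of \refT{TDx} then give $\dtv(\ya,\xi\win1)\to0$ and
$\dk(Y\win j,\xi\win j)\to0$ for every fixed $j$.

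If $\go<\infty$, then $\rho=\infty$ by \refL{LPsi}\ref{L1c}, so
$\nu=\Psi(\rho)=\go$, and $\tau$ with $\Psi(\tau)=1$ satisfies $\Psi(\tau)=\mu=1$,
whence $\E\xi=\sumk k\pi_k=1>0$ and in particular $\pi_\go>0$. Hence
$\nxi{\set\go}\sim\Bi(n,\pi_\go)\pto\infty$, so $\xi\win j=\go$ \whp{} for
every fixed $j$. On the other hand \refT{Tgo} (with $\gl=1>0$) gives
$\yj=\go$ \whp{} for every fixed $j$. Therefore
$\dtv(\ya,\xi\win1)\le\P(\ya\neq\go)+\P(\xi\win1\neq\go)\to0$, and likewise
$\dk(Y\win j,\xi\win j)\le\P(\yj\neq\go)+\P(\xi\win j\neq\go)\to0$.

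In both cases the stated approximations hold, which completes the proof.
\end{proof}
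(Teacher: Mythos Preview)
Your proof is correct and follows the same two-case split as the paper's own proof: invoke \refT{TDx} with $\gl=1$ when $\go=\infty$, and use \refT{Tgo} together with the trivial observation that $\xi\win j=\go$ \whp{} when $\go<\infty$. Two minor fixes in the bounded-support case: the fact that $\go<\infty$ implies $\rho=\infty$ is simply because $\Phi$ is then a polynomial (\refL{LPsi}\ref{L1c} goes the other direction, assuming $\rho=\infty$); and $\pi_\go>0$ does not follow from $\E\xi>0$ but rather directly from \eqref{pk} since $w_\go>0$, $\tau>0$, and $\Phi(\tau)<\infty$.
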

\begin{proof}
The case $\go=\infty$ is a special case of \refT{TDx}, with $\gl=1$.

The case $\go<\infty$ is trivial:
for every fixed $j$,
$\yj=\go$ \whp{} by \refT{Tgo}, and, trivially, $\xi\win j=\go$ \whp.
\end{proof}

The comparison with $\xij$ in \refT{TDx} and \refC{CDx} is appealing since
$\xij$ is the $j$:th largest of $n$ \iid{} random variables. For
applications it is often convenient to modify this a little by taking a
Poisson number of variables instead.

Consider an infinite \iid{} sequence $\xi_1,\xi_2,\dots$, let as above
$\xij$ be the $j$:th largest among the first $n$ elements of the sequence
and define $\txij$ as the $j$:th largest among the first $N(n)$ elements
$\xi_1,\dots,\xi_{N(n)}$, where $N(n)\sim\Po(n)$ is a random Poisson
variable independent of $\xi_1,\xi_2,\dots$.

\begin{lemma}
  \label{Ltxi}
W.h.p.\ $\txij=\xij$ and thus $\dtv(\txij,\xij)\to0$ as \ntoo{}
for every fixed $j\ge1$.
\end{lemma}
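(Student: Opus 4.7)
The plan is to show $\txij = \xij$ with high probability, which by \refL{LM}\ref{lmd} (applied to $(\xij, \txij)$ as a trivial coupling) yields $\dtv(\txij, \xij) \to 0$.

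First, I will use concentration of $N(n) \sim \Po(n)$: since $\Var N(n) = n$, Chebyshev gives $\P(|N(n)-n| > n^{2/3}) \to 0$, and trivially $\P(N(n) < j) \to 0$. So it suffices to work on the event $\cE_n := \{j \le N(n), \; |N(n)-n|\le n^{2/3}\}$ and show $\P(\xij \neq \txij,\,\cE_n) \to 0$.

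Next, to handle ties cleanly, I will introduce auxiliary variables $R_1,R_2,\ldots$ iid $\Uu[0,1]$, independent of everything. For any $M \ge j$, define $T^j_M \subseteq \{1,\ldots,M\}$ as the set of $j$ indices $i \le M$ for which the pair $(\xi_i, R_i)$ is among the top $j$ in lexicographic order (decreasing). Since the $R_i$ are continuous, ties are broken almost surely, so $T^j_M$ is well-defined, and by exchangeability of the pairs $(\xi_i, R_i)$, the set $T^j_M$ is uniformly distributed over the $j$-subsets of $\{1,\ldots,M\}$. The key deterministic observation is: if $N(n) \ge n$ and $T^j_{N(n)} \subseteq \{1,\ldots,n\}$, then $\xij = \txij$; and symmetrically, if $N(n) \le n$ and $T^j_n \subseteq \{1,\ldots,N(n)\}$, then $\xij=\txij$. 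Indeed, both sides then equal the $j$-th largest value of the full sample $\xi_1,\ldots,\xi_{\max(n,N(n))}$: the larger sample dominates, and the inclusion forces at least $j$ indices of the smaller sample to attain the top $j$ values, yielding equality of the $j$-th order statistic.

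Conditionally on $N(n) = N$ with $N \ge j$, the set $T^j_{\max(n,N)}$ remains uniform on $j$-subsets of $\{1,\ldots,\max(n,N)\}$, so
\begin{equation*}
\P\bigpar{\xij\neq\txij \mid N(n)=N} \le 1 - \frac{\binom{\min(n,N)}{j}}{\binom{\max(n,N)}{j}} = O\Bigpar{\frac{|N-n|}{n}}
\end{equation*}
uniformly on $\cE_n$, for fixed $j$. Taking expectations and using $\E|N(n)-n| \le (\Var N(n))^{1/2} = \sqrt n$ on $\cE_n$, one obtains $\P(\xij\neq\txij,\,\cE_n) = O(n^{-1/2})\to 0$. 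Combined with $\P(\cE_n^{\mathsf c})\to 0$, this gives $\P(\txij\neq\xij)\to 0$, completing the proof.

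There is no serious obstacle here; the only subtlety is that the integer-valued $\xi_i$ admit ties, which is handled cleanly by the auxiliary continuous randomisation $R_i$ so that exchangeability reduces the analysis to a uniform random $j$-subset.
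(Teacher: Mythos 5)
Your proof is correct and uses essentially the same mechanism as the paper: exchangeability makes the positions of the top $j$ among an \iid{} sample a uniform random $j$-subset, and concentration of $N(n)\sim\Po(n)$ around $n$ then forces the top-$j$ sets of the two samples to agree \whp. The only cosmetic differences are that you compare $\xij$ and $\txij$ directly rather than through the paper's intermediate $\xijm$ (the $j$:th largest of $\xi_1,\dots,\xi_{n_-}$), and you make the tie-breaking explicit via auxiliary uniforms where the paper simply notes that ties are resolved at random.
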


\begin{proof}
  Let $n_\pm\=\floor{n\pm n\qqqb}$, and let 
$\xijm$ be the $j$:th largest of $\xi_1,\dots,\xi_{n_-}$.
By symmetry, the positions of the $j$ largest among $\xin$ are uniformly
random (we resolve any ties in the ordering at random); thus the probability
that one of 
them  has index $>n_-$ is at most 
$j(n-n_-)/n=o(1)$.
Hence, \whp{} all $j$ are among  $\xi_1,\dots,\xi_{n_-}$, and then
$\xij=\xijm$.

Furthermore, \whp{} $n_-\le N(n)\le n_+$, and a similar argument (using
conditioning on $N(n)$) shows that \whp{} $\txij=\xijm$.
Hence, \whp{} $\xij=\xijm=\txij$.
Now use \refL{LM}\ref{lmd}.
\end{proof}

We can thus replace $\xij$ by $\txij$ in \refT{TDx} and \refC{CDx}.
(We can similarly replace $\xij\nn$ by $\txij\nn$ defined in the same way.)
The advantage is that, by standard properties of the Poisson distribution,
the corresponding counting variables 
\begin{equation*}
  \tN_k\=|\set{i\le N(n):\xi=k}|
\end{equation*}
are \emph{independent} Poisson variables with $\tN_k\sim\Po(n\P(\xi=k))$.
We similarly define
$\tN_{[k,\infty)}\=\sum_{l=k}^\infty\tN_l\sim\Po\bigpar{n\P(\xi\ge k)}$. 

\begin{remark}
  An equivalent way to express this is that the multiset
  $\Xi_n\=\set{\xi_i:i\le N(n)}$ is a Poisson process on $\No$ with
  intensity measure $\gL_n$ given by $\gL_n\set k=n\P(\xi=k)$.
\end{remark}

We thus have (exactly), for any $j$ and $k$,
\begin{equation}\label{txij}
  \P(\txij \le k)
=\P\bigpar{\tN_{[k+1,\infty)}<j}
=\P\bigpar{\Po(n\P(\xi>k))<j};
\end{equation}
in particular
\begin{equation}\label{txi1}
  \P(\txia\le k)=e^{-n\P(\xi>k)}.
\end{equation}
This gives the following special case of \refT{TDx}.  
(There is a similar version with $\xi\nn$.)

\begin{corollary}
  \label{CDx2}
Suppose that $w_0>0$ and $\go=\infty$.
Suppose further that $n\to\infty$ and $m=m(n)$ with 
$m=\gl n+o(\sqrt n)$  
where $0<\gl\le\nu$, and 
that either $\gl<\nu$ or $\gss\=\Var\xi<\infty$.
Then, uniformly in all $k\ge0$,
\begin{equation}\label{cdx1}
  \P(\yj \le k)
=
\P\bigpar{\Po(n\P(\xi>k))<j}+o(1)
\end{equation}
for each fixed $j\ge1$;
in particular
\begin{equation}\label{cdx2}
  \P(\ya\le k)=e^{-n\P(\xi>k)}+o(1).
\end{equation}
\end{corollary}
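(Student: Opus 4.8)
The plan is to derive \eqref{cdx1} directly from \refT{TDx}\ref{tdxk}, essentially by observing that \refL{Ltxi} lets us replace $\xi\win j$ by its Poissonized version $\txij$, whose distribution function is given explicitly by \eqref{txij}. First I would note that by \refT{TDx}\ref{tdxk}, for each fixed $j$ we have $\dk(\yj,\xi\win j)\to0$, which by the definition \eqref{dk} of the Kolmogorov distance means precisely that $\bigabs{\P(\yj\le k)-\P(\xi\win j\le k)}\to0$ \emph{uniformly} in $k\ge0$. Next, \refL{Ltxi} gives $\dtv(\txij,\xi\win j)\to0$, and since $\dk\le\dtv$ by \refL{LM}\ref{lma}, also $\bigabs{\P(\xi\win j\le k)-\P(\txij\le k)}\to0$ uniformly in $k$. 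Combining these two uniform estimates with the triangle inequality, $\bigabs{\P(\yj\le k)-\P(\txij\le k)}\to0$ uniformly in $k$. Finally, substituting the exact formula \eqref{txij}, namely $\P(\txij\le k)=\P\bigpar{\Po(n\P(\xi>k))<j}$, yields \eqref{cdx1}. The special case \eqref{cdx2} with $j=1$ follows from \eqref{txi1}, since $\P\bigpar{\Po(n\P(\xi>k))<1}=\P\bigpar{\Po(n\P(\xi>k))=0}=e^{-n\P(\xi>k)}$.

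One small point to check is that the hypotheses of \refC{CDx2} match those of \refT{TDx}: the corollary assumes $w_0>0$, $\go=\infty$, $m=\gl n+o(\sqrt n)$ with $0<\gl\le\nu$, and either $\gl<\nu$ or $\gss<\infty$ — these are exactly the standing assumptions of \refT{TDx} (the clause ``This is redundant when $\gl<\nu$'' in \refT{TDx} being the reason for the ``either/or''), so \refT{TDx}\ref{tdxk} applies verbatim. There is no subtlety in the $\xi$ versus $\xi\nn$ distinction here: since we only invoke \ref{tdxk} of \refT{TDx} (and \refL{Ltxi}), and we have stated the corollary in the $m=\gl n+o(\sqrt n)$ regime, the $\xi$-version of \refT{TDx} is what we use; a parallel statement with $\xi\nn$ and $\txij\nn$ (valid for general $m/n\to\gl<\nu$) follows identically from the $\xi\nn$-version of \refT{TDx}, which is worth mentioning in a parenthetical remark.

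Honestly, there is no real obstacle: the corollary is just a repackaging of \refT{TDx}\ref{tdxk} together with the elementary Poissonization lemma \refL{Ltxi}, and the only thing requiring a line of care is tracking that all the convergences are \emph{uniform} in $k$ — but this is immediate from working with $\dk$ (a supremum over $k$) throughout rather than with pointwise convergence. The remark preceding the corollary already establishes \eqref{txij}--\eqref{txi1}, so those formulas can simply be cited. Thus the write-up is short: state the three uniform bounds, add them via the triangle inequality, and plug in \eqref{txij}.
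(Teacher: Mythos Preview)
Your proposal is correct and is exactly the paper's argument, just spelled out: the paper's proof is the one-liner ``Immediate by \refT{TDx}\ref{tdxk}, \refL{Ltxi} and \eqref{txij}--\eqref{txi1}'', and your three-step plan (pass from $\yj$ to $\xij$ via \ref{tdxk}, then from $\xij$ to $\txij$ via \refL{Ltxi} and $\dk\le\dtv$, then substitute \eqref{txij}) unpacks precisely these citations. Your observation that working with $\dk$ automatically gives the uniformity in $k$ is the only point worth making explicit, and you have done so.
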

\begin{proof}
  Immediate by \refT{TDx}\ref{tdxk},
\refL{Ltxi} and \eqref{txij}--\eqref{txi1}.
\end{proof}

\begin{remark}
Since $\tN_{[h(n),\infty)}\ge j\iff \txij\ge h(n)$, it follows easily from
Lemmas  \ref{Ltxi} and \ref{LM}\ref{lmkk} that for any sequence $h(n)$,
\begin{equation}
\dkk\bigpar{\nxi{[h(n),\infty)},\tN_{[h(n),\infty)}}\to0.  
\end{equation}
Hence, \refT{TDx}\ref{tdxkk} is equivalent to
$\dkk\bigpar{\ny{[h(n),\infty)},\tN_{[h(n),\infty)}}\to0$, 
and thus
\begin{equation}
\dkk\Bigpar{\ny{[h(n),\infty)},\Po\bigpar{n\P(\xi\ge h(n))}
}
\to0.
\end{equation}
This is another, essentially equivalent, way to express the results above.
\end{remark}

\subsection{The subcase $\gl<\nu$}

When $\gl<\nu$, we have $\tau<\rho$ and the random variable $\xi$ has some
finite exponential moment, \cf{} \refS{S3}; hence the probabilities $\pi_k$
decrease rapidly. 
\refT{TDx} and \refC{CDx2} show that $\ya$ (and each $\yj$) has its
distribution concentrated on $k$ such that $\P(\xi\ge k)$ is of the order $1/n$.
If the decrease of $\pi_k$ is not too irregular, this implies 
strong concentration of $\ya$, with, rougly speaking, $\ya\approx k$ when 
$\P(\xi\ge k)\approx 1/n$.
To make this precise, we define three versions of a suitable such estimate
$k=k(n)$. Let, as above, $\pi_k=\P(\xi=k)=\tau^k w_k/\Phi(\tau)$ and let
\begin{equation}\label{Pik}
\Pi_k\=\P(\xi\ge k)=\sum_{l=k}^\infty \pi_l.
\end{equation}
Define
\begin{align}
k_1(n)&\=\max\set{k:\pi_k\ge 1/n}, \label{k1}\\ 
k_2(n)&\=\max\set{k:\Pi_k\ge 1/n}, \label{k2}\\ 
k_3(n)&\=\max\set{k:\sqrt{\Pi_k\Pi_{k+1}}\ge 1/n}. \label{k3}
\end{align}
Note that $k_1(n)\le k_2(n)$ and $k_2(n)-1\le k_3(n)\le k_2(n)$.

We consider 
the typical case when $w_{k+1}/w_k$ converges as \ktoo.
We assume implicitly that $w_{k+1}/w_k$ is defined for all large $k$; thus
$w_k>0$ and $\go=\infty$. 
If $w_{k+1}/w_k\to a$ as \ktoo,
then \eqref{rho} yields $\rho=1/a$; hence $\rho=\infty$ if $a=0$ and
$0<\rho<\infty$ if $a>0$.

\begin{theorem}\label{TDa}
Suppose that $w_0>0$ and that $w_{k+1}/w_k\to a<\infty$ as \ktoo.
Suppose further that $n\to\infty$ and $m=m(n)$ with 
$m=\gl n+o(\sqrt n)$  
where $0<\gl<\nu$.
\begin{romenumerate}
\item 
Then, for each $j\ge1$, 
\begin{equation*}
\yj=k_1(n)+\Op(1)=k_2(n)+\Op(1)=k_3(n)+\Op(1).
\end{equation*}
\item 
If $a=0$, then, moreover, \whp,
\begin{align*}
|\yj-k_1(n)|\le1,&&
|\yj-k_2(n)|\le1,&&
\yj\in\set{k_3(n),k_3(n)+1}.
\end{align*}
\end{romenumerate}
\end{theorem}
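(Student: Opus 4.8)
The strategy is to combine the distributional comparison in \refT{TDx}\ref{tdxk} (or, more conveniently, the explicit Poisson formula in \refC{CDx2}) with an analysis of how the ratio $w_{k+1}/w_k\to a$ forces the tail probabilities $\Pi_k=\P(\xi\ge k)$ to decay in a regular enough way that the three thresholds $k_1(n),k_2(n),k_3(n)$ pin down $Y\win j$. As in the proof of \refT{TD1a}, I would first reduce to the \pws{} case by replacing $\wwx$ with the equivalent $\ppi$ from \eqref{pik}, so that $\tau=1$, $\rho=1/a\in(1,\infty]$, and $\pi_k=w_k/\Phi(1)$; then $\pi_{k+1}/\pi_k\to a$ as well. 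Since $0<\gl<\nu$ we are in the regime $\gl<\nu$ of \refT{TDx}, so $\gss<\infty$ is not needed and all of \refT{TDx}\ref{tdxk}, \refC{CDx2} apply with the hypothesis $m=\gl n+o(\sqrt n)$.

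\textbf{Key steps.} First I would record the elementary consequences of $\pi_{k+1}/\pi_k\to a$. When $a<1$ (which holds automatically, since $a=1/\rho$ with $\rho>1$, or $a=0$), the tail $\Pi_k$ is comparable to $\pi_k$: indeed $\Pi_k=\pi_k\bigl(1+\pi_{k+1}/\pi_k+\pi_{k+2}/\pi_k+\dots\bigr)$, and the bracket converges to $1/(1-a)$, so $\Pi_k/\pi_k\to 1/(1-a)$ (interpreted as $1$ when $a=0$, where in fact $\Pi_k/\pi_k\to1$). In particular $\Pi_{k+1}/\Pi_k\to a$ too. This already shows $k_1(n)\le k_2(n)$, $k_2(n)-1\le k_3(n)\le k_2(n)$ as stated, and, more importantly, that $k_1(n)$ and $k_2(n)$ differ by at most a bounded amount: from $\Pi_{k_1}\ge\pi_{k_1}\ge 1/n$ we get $k_2\ge k_1$, while $\Pi_{k_1+r}=\pi_{k_1+r}+\Pi_{k_1+r+1}$ and the ratio bound gives $\Pi_{k_1+r}\le C a^r\pi_{k_1}\cdot(\text{something})$... more cleanly: for $r$ fixed, $\Pi_{k_1(n)+r}/\Pi_{k_1(n)}\to a^r/1=a^r$ using $\Pi_{k+1}/\Pi_k\to a$, and $\Pi_{k_1(n)}\le \pi_{k_1(n)-1+1}\cdot(1-a)^{-1}\le$ (bounded)$/n$ since $\pi_{k_1(n)+1}<1/n$ by maximality; so $n\Pi_{k_1(n)+r}\to a^r\cdot O(1)\to 0$ as $r\to\infty$, forcing $k_2(n)\le k_1(n)+C$ for a constant $C$ (in the case $a=0$, $n\Pi_{k_1(n)+1}\to 0$ already, so $k_2(n)\le k_1(n)+1$). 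Second, I would feed $h(n)=k_2(n)-L$ and $h(n)=k_2(n)+L$ into \refT{TDx}\ref{tdx0}--\ref{tdxoo}: since $n\Pi_{k_2(n)}\ge1$ and $n\Pi_{k_2(n)+1}<1$, the ratio bound $\Pi_{k+1}/\Pi_k\to a$ gives $n\Pi_{k_2(n)-L}\asymp a^{-L}$ (large when $L$ large, if $a>0$; and $n\Pi_{k_2(n)-L}\to\infty$ as $n\to\infty$ for each fixed $L\ge1$ when $a=0$ too, since then $\Pi_{k_2(n)}\asymp\pi_{k_2(n)}$ and the gaps are geometric-ish), while $n\Pi_{k_2(n)+L+1}\asymp a^{L}\to0$. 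Applying \ref{tdxoo} with $h=k_2(n)-L$ (when $n\P(\xi\ge h)\to\infty$) gives $Y\win j\ge k_2(n)-L$ \whp, and applying \ref{tdx0} with $h=k_2(n)+L+1$ gives $Y\win j<k_2(n)+L+1$ \whp; hence $|Y\win j-k_2(n)|\le L$ \whp, which is precisely $Y\win j=k_2(n)+\Op(1)$. Combined with $k_1(n),k_3(n)=k_2(n)+O(1)$ this gives part (i). For part (ii), when $a=0$ the tails decay faster than any geometric, so $n\Pi_{k_2(n)-1}\to\infty$ and $n\Pi_{k_2(n)+2}\to0$ (one would check $\Pi_{k+1}/\Pi_k\to0$, which follows from $\pi_{k+1}/\pi_k\to0$ and the comparison $\Pi_k\sim\pi_k$); then \ref{tdxoo} with $h=k_2(n)$ and $h=k_1(n)$ gives $Y\win j\ge k_2(n)\ge k_1(n)$ \whp, and \ref{tdx0} with $h=k_2(n)+2$, $h=k_1(n)+2$ gives $Y\win j\le k_2(n)+1$, $Y\win j\le k_1(n)+1$ \whp; together with $k_1\le k_2\le k_1+1$ and $k_3\in\{k_2-1,k_2\}$ this yields the three displayed inclusions.

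\textbf{Main obstacle.} The delicate point is not the reduction or the applications of \refT{TDx}, which are mechanical, but making the tail-comparison estimates uniform enough and handling the regime $a>0$ cleanly: I need that $n\Pi_{k_2(n)+r}\to a^r\cdot c$ for a \emph{fixed positive constant} $c$ and each fixed $r\in\bbZ$, so that choosing $r=-L$ resp.\ $r=L+1$ drives the quantity to $\infty$ resp.\ $0$ as $L\to\infty$. This requires knowing that $n\Pi_{k_2(n)}$ stays \emph{bounded above} (not just $\ge1$), which follows from $\pi_{k_2(n)+1}<1/n$ (maximality) plus $\Pi_{k_2(n)+1}=\Pi_{k_2(n)}-\pi_{k_2(n)}$ and $\pi_{k_2(n)}/\Pi_{k_2(n)}\to 1-a>0$; so $\Pi_{k_2(n)}\le \Pi_{k_2(n)+1}/a + O(1/n)$... one has to be a little careful to avoid circularity, but since $\Pi_{k_2(n)+1}<1/n$ by the definition of $k_2$, and $\Pi_{k_2(n)}=\pi_{k_2(n)}+\Pi_{k_2(n)+1}\le \pi_{k_2(n)}+1/n$, while $\pi_{k_2(n)}\le \Pi_{k_2(n)}$, the relation $\pi_{k_2(n)}\sim(1-a)\Pi_{k_2(n)}$ plus a crude bound suffices. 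The case $a=0$ is genuinely easier because $\Pi_k\sim\pi_k$ and the decay is superexponential, so all the inequalities become clean. I would also remark that if $a=0$ but $\pi_{k+1}/\pi_k$ does \emph{not} tend to $0$ along a subsequence one still only has part (i); the extra regularity in (ii) uses $a=0$ precisely to force the $\pm1$ localization. I expect the write-up of the uniform tail estimates to be the bulk of the work, with everything else being bookkeeping with \refT{TDx} and \refC{CDx2}.
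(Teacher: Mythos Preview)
Your overall strategy matches the paper's: derive $\pi_{k+1}/\pi_k\to q\=\tau a<1$, hence $\Pi_k/\pi_k\to(1-q)^{-1}$ and $\Pi_{k+1}/\Pi_k\to q$, and feed this into \refT{TDx}. (A minor slip: after passing to $\ppi$ the ratio tends to $\tau a$, not to the original $a$, and $\pi_k=\tau^kw_k/\Phi(\tau)$, not $w_k/\Phi(1)$.) There are, however, two genuine gaps in the execution.

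For part~(i) with $a>0$, you cannot apply \refT{TDx}\ref{tdxoo} with $h(n)=k_2(n)-L$ for fixed $L$: its hypothesis $n\P(\xi\ge h(n))\to\infty$ fails, since $n\Pi_{k_2(n)-L}$ is only $\asymp q^{-L}$, bounded but not divergent. (Your hoped-for limit $n\Pi_{k_2(n)+r}\to q^{r}c$ does not exist either; $n\Pi_{k_2(n)}$ oscillates in $[1,q^{-1}+o(1)]$.) The paper's fix is to take an \emph{arbitrary} sequence $\gO(n)\to\infty$: then the ratio bound gives $n\Pi_{k_2(n)-\gO(n)}\to\infty$ and $n\Pi_{k_2(n)+\gO(n)}\to0$, so \refT{TDx}\ref{tdx0}--\ref{tdxoo} apply directly, and the standard equivalence ``$X_n=\Op(1)$ iff $|X_n|<\gO(n)$ \whp{} for every $\gO(n)\to\infty$'' finishes. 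Your fixed-$L$ route can also be salvaged, but only through \refC{CDx2}: \eqref{cdx1} gives $\P(\yj<k_2(n)-L)=\P\bigl(\Po(n\Pi_{k_2(n)-L})<j\bigr)+o(1)$, and since the Poisson mean is $\gtrsim q^{-L}$ this is small uniformly in $n$ once $L$ is large.

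For part~(ii), your lower bounds $\yj\ge k_2(n)$ and $\yj\ge k_1(n)$ \whp{} are too strong and in fact false: $n\Pi_{k_2(n)}$ need not $\to\infty$ (it is only $\ge1$; for $n$ with $n\Pi_{k_2(n)}\approx1$, \eqref{cdx2} gives $\P(\ya<k_2(n))\approx e^{-1}$). What you correctly observed, $n\Pi_{k_2(n)-1}\to\infty$, yields only $\yj\ge k_2(n)-1$ \whp{} via \ref{tdxoo}; combined with $n\Pi_{k_2(n)+2}\to0$ this gives $|\yj-k_2(n)|\le1$, and similarly for $k_1$. Finally, the sharper $\yj\in\{k_3(n),k_3(n)+1\}$ does \emph{not} follow from the $k_1,k_2$ statements; it needs its own argument: squaring the defining inequalities in \eqref{k3} and using $\Pi_{k+1}/\Pi_k\to0$ shows $n\Pi_{k_3(n)}\to\infty$ and $n\Pi_{k_3(n)+2}\to0$ directly.
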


\begin{proof}
\pfitem{i}
  We have, as  said above, $\rho=1/a>0$.
Furthermore, since $\gl<\nu$, we have $\tau<\rho$ and thus, as \ktoo,
\begin{equation}\label{juh}
\frac{\pi_{k+1}}{\pi_k}  
=\tau \frac{w_{k+1}}{w_k}  \to\tau a = \frac{\tau}{\rho} <1.
\end{equation}
It follows from \eqref{juh} and \eqref{Pik}, using dominated convergence, that,
as \ktoo, 
\begin{equation}\label{ul}
\frac{\Pi_{k}}{\pi_k}  
=\sum_{i=0}^\infty \frac{\pi_{k+i}}{\pi_k}
\to \sum_{i=0}^\infty (\tau a)^i
=\frac{1}{1-\tau a}.
\end{equation}
If $\ell$ is chosen such that $(\tau a)^\ell<1-\tau a$, then
\eqref{ul} and \eqref{juh} imply $\Pi_{k+\ell}/\pi_k\to(\tau a)^\ell/(1-\tau
a)<1$ as \ktoo, and thus, for large $k$,
$\Pi_{k+\ell}<\pi_k<\Pi_k$; hence, for large $n$,
$k_1(n)\le k_2(n)\le k_1(n)+\ell$. Thus, recalling that $|k_2(n)-k_3(n)|\le1$,
\begin{equation}\label{k123}
  k_1(n)=k_2(n)+O(1)=k_3(n)+O(1).
\end{equation}
Furthermore, \eqref{juh} and \eqref{ul} yield also
\begin{equation}\label{palm}
\frac{\Pi_{k+1}}{\Pi_k}  
\to\tau a <1.
\end{equation}
By \eqref{k2}, $n\Pi_{k_2(n)}\ge 1 >n\Pi_{k_2(n)+1}$. This and 
\eqref{palm} imply that if $\gO(n)$ is any sequence with $\gO(n)\to\infty$,
then $n\Pi_{k_2(n)-\gO(n)}\to\infty$ and $n\Pi_{k_2(n)+\gO(n)}\to0$. 
Consequently, recalling the definition \eqref{Pik},
by \refT{TDx}\ref{tdx0}--\ref{tdxoo} (or by \refC{CDx2})
\whp{} $\yj\ge k_2(n)-\gO(n)$ and  $\yj < k_2(n)+\gO(n)$. Since
$\gO(n)\to\infty$ is arbitrary, this yields $\yj=k_2(n)+\Op(1)$.
(See \eg{} \cite{SJN6}.)
The result follows by \eqref{k123}.

\pfitem{ii}
When $a=0$, \eqref{ul} yields $\Pi_k\sim\pi_k$, 
 \eqref{juh} yields $\pi_{k+1}/\pi_k\to0$ 
and \eqref{palm} yields $\Pi_{k+1}/\Pi_k\to0$ 
as \ktoo.
It follows easily from \eqref{k1}--\eqref{k3} that
$n\Pi_{k_1(n)-1}\to\infty$, $n\Pi_{k_1(n)+2}\to0$,
$n\Pi_{k_2(n)-1}\to\infty$, $n\Pi_{k_2(n)+2}\to0$,
$n\Pi_{k_3(n)}\to\infty$, $n\Pi_{k_3(n)+2}\to0$,
and the results follow by
\refT{TDx}\ref{tdx0}--\ref{tdxoo}.
\end{proof}

If $a=0$, \ie{} $w_{k+1}/w_k\to0$ as \ktoo, thus $\ya$ is asymptotically
concentrated at 
one or two values.
(This was shown, in the tree case, by 
\citet{MM92}, after showing concentration to at most three values in
\cite{MM91}; see also \citet{KolchinSCh}, \citet{Kolchin} 
and \citet{CarrGS} for special cases.) 
If $a>0$, we still have a strong concentration, but not to any finite number
of values as is seen by \refT{Ta} below.

We consider two important examples, where we apply this to random trees,
so $m=n-1$ and $\gl=1$. (Recall
that $\ya$ then is the largest outdegree in $\ctn$. The largest degree is
\whp{} $\ya+1$, since \whp{} it is not attained at the root,
\eg{} because the root degree is $\Op(1)$ by \refT{Troot}; this should be
kept in mind when comparing with results in other papers.)

\begin{example}\label{EUmax}
  For uniform random labelled ordered rooted trees, we have by \refE{Euniform}
$\xi\sim\Ge(1/2)$ with $\pi_k=2^{-k-1}$ and thus
$\P(\xi\ge k)=2^{-k}$.
Hence $\ya$ has asymptotically the same distribution as the maximum of
$n$ \iid{} geometrically distributed random variables, which is a simple and
well-studied example, see \eg{} \citet{LLR}. Explicitly, 
\refC{CDx2} applies and \eqref{cdx2} yields, uniformly in $k\ge0$,
\begin{equation}\label{ku1}
  \P(\ya\le k)=e^{-n2^{-k-1}}+o(1).
\end{equation}
(This was, essentially, shown by \citet{MM91}.)

One way to express this is to introduce a random variable $W$ with the
Gumbel distribution
\begin{equation}\label{gumbel}
  \P(W\le x)=e^{-e^{-x}},
\qquad -\infty<x<\infty.
\end{equation}
Then \eqref{ku1} yields, uniformly for $k\in\bbZ$,
\begin{equation}\label{ku2}
  \begin{split}
\P(\ya\le k)	
&=\P\bigpar{W<(k+1)\log 2-\log n}+o(1)
\\&
=\P\Bigpar{\frac{W+\log n}{\log 2}<k+1}+o(1)
\\&
=\P\Bigpar{\lrfloor{\frac{W+\log n}{\log 2}}\le k}+o(1)
.
  \end{split}
\end{equation}
In other words,
extending $\dk$ to $\bbZ$-valued random variables,
\begin{equation}
  \dk\bigpar{\ya,\floor{(W+\log n)/\log 2}}\to0.
\end{equation}
Thus, the maximum degree $\ya$ can be approximated (in distribution) by 
$\floor{(W+\log n)/\log 2}=\floor{W/\log 2+\log_2n}$.
Hence $\ya-\log_2n$ is tight but no asymptotic distribution exists;
$\ya-\log_2n$ can be approximated by 
$\floor{W/\log 2+\log_2n}-\log_2n=
\floor{W/\log 2+\frax{\log_2n}}-\frax{\log_2n}$
(where we let $\frax x\=x-\floor x$ denote the fractional part of $x$),
which shows convergence in
distribution for any subsequence such that $\frax{\log_2n}$ converges to
some $\ga\in\oi$, but
the limit depends on $\ga$. 
See further \citetq{in particular Lemma 4.1 and Example 4.3}{SJ175}.

In the same way we see that $\yj$ can be approximated in distribution by 
$\floor{W_j/\log2+\log_2n}$ where $W_j$ has the distribution
\begin{equation}
  \P(W_j\le x)
=\P(\Po(e^{-x})<j)
=\sum_{i=0}^{j-1}\frac{e^{-ix}}{i!}e^{-e^{-x}},
\qquad -\infty<x<\infty,
\end{equation}
with density function $e^{-jx}e^{-e^{-x}}/(j-1)!$;
further $W_j\eqd-\log V_j$, where $V_j$ has the Gamma distribution 
$\mathrm{Gamma}(j,1)$.
(Cf.\ \citetq{Section 2.2}{LLR} for the relation between the distributions of
$\xi\win j$ and $\xia$ in the \iid{} case.)
\end{example}

\begin{example}\label{EPomax}
  For uniform random labelled unordered rooted trees, we have by \refE{Ecayley}
$\xi\sim\Po(1)$ with $\pi_k=e\qw/k!$.
We have $w_{k+1}/w_k\to0$, so
\refT{TDa}(ii) applies and shows that $\ya$ is concentrated on at most two
values, as proved by \citetq{Theorem 2.5.2}{Kolchin};
see also \citet{MM92} and \citet{CarrGS}. 

Explicitly, \eqref{cdx2} yields
(treating the rather trivial case $n>k\qq\cdot k!$ separately)
\begin{equation}
  \P(\ya<k)=e^{-ne\qw/k!(1+O(1/k))}+o(1)
=e^{-ne\qw/k!}+o(1)
\end{equation}
which by Stirling's formula yields
\begin{equation}\label{pox}
  \P(\ya<k)
=\exp\bigpar{-e^{\log n-(k+\frac12)\log k +k -\log(e\sqrt{2\pi})}}+o(1)
\end{equation}
uniformly in $k\ge1$, \cf{} \citet{CarrGS}.
It follows easily from Stirling's formula, or from \eqref{pox}, that
$k_1(n),k_2(n),k_3(n)\sim\log n/\log\log n$,
and more precise asymptotics can be found too;
\cf{}
\cite{Moon68},
\cite{MM91},
\cite{CarrGS}.
\end{example}

In fact, the simple \refE{EUmax} is typical for the case $w_{k+1}/w_k\to a>0$
as \ktoo; then $\ya$ always has asymptotically the same distribution as the
maximum of 
\iid{} geometric random variables, provided we adjust the number of these
variables according to $\wwx$. We state some versions of this in the next
theorem. For simplicity we consider only the maximum $\ya$, and leave the
extensions to $\yj$ for general fixed $j$ to the reader.

\begin{theorem}
  \label{Ta}  
Suppose that $w_0>0$ and that $w_{k+1}/w_k\to a$ as \ktoo, with $0<a<\infty$.
Suppose further that $n\to\infty$ and $m=m(n)$ with 
$m=\gl n+o(\sqrt n)$  
where $0<\gl<\nu$.

Let $q\=\tau a=\tau/\rho<1$.
Let $k(n)$ be any sequence such that $\pi_{k(n)}=\Theta(1/n)$; equivalently,
$k(n)=k_1(n)+O(1)$, and let $N=N(n)$ be integers such that
\begin{equation}\label{nq}
  N\sim 
\frac{n\pi_{k(n)}q^{-k(n)}}{1-q}
=
\frac{nw_{k(n)}a^{-k(n)}}{\Phi(\tau)(1-q)}.
\end{equation}
\begin{romenumerate}[-10pt]
\item 
Let $\eta_1,\dots,\eta_N$ be \iid{}  random variables with a geometric
distribution $\Ge(1-q)$, \ie,
$\P(\eta_i=k)=(1-q)q^{-k}$, $k\ge0$. Then
\begin{equation}\label{ta1}
\ya\dapprox\max_{i\le N}\eta_i.
\end{equation}
\item 
Let $W$ have the Gumbel distribution \eqref{gumbel}. Then
\begin{equation}\label{ta2}
\ya\dapprox\floor{W/\log(1/q)+\log_{1/q}N}.
\end{equation}
\item 
Let $b_n\=n\pi_{k(n)}$; thus $b_n=\Theta(1)$.
Then
\begin{equation}\label{ta3}
\ya-k(n)\dapprox\lrfloor{\bigpar{W+\log(b_n/(1-q))}/\log(1/q)}.
\end{equation}
Thus $\ya-k(n)$ is tight, and converges for every
subsequence such that $b_n$ converges.
\end{romenumerate}
\end{theorem}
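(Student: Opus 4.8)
\textbf{Proof plan for Theorem~\ref{Ta}.}
The plan is to reduce everything to the already-proved \refT{TDx} together with \refC{CDx2}, and then do three elementary identifications of the relevant distributions. First I would set $q\=\tau a=\tau/\rho$ and note $0<q<1$ since $0<\gl<\nu$ forces $0<\tau<\rho$, as in the proof of \refT{TDa}. The key analytic input is the tail estimate for $\xi$: from $w_{k+1}/w_k\to a$ and $\pi_k=\tau^kw_k/\Phi(\tau)$ we get $\pi_{k+1}/\pi_k\to q$, and then, exactly as in \eqref{ul} and \eqref{palm}, $\Pi_k/\pi_k\to 1/(1-q)$ and $\Pi_{k+1}/\Pi_k\to q$. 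Hence, writing $\Pi_k=\P(\xi\ge k)$,
\begin{equation}\label{Ta-tail}
  \Pi_k = \frac{\pi_k}{1-q}\bigpar{1+o(1)}
= \frac{w_k\tau^k}{\Phi(\tau)(1-q)}\bigpar{1+o(1)}
\qquad\text{as \ktoo.}
\end{equation}
This is the engine: it converts the definition \eqref{nq} of $N$ into the statement $N\sim n\Pi_{k(n)}\,q^{-k(n)}$, up to the $1+o(1)$, so that $n\P(\xi> k)\sim N(1-q)q^{-k}\bigpar{q/q^{k(n)}}\cdot$(correction) behaves like the corresponding tail for a $\Ge(1-q)$-sample of size $N$.

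For part~(i): let $\eta_1,\dots,\eta_N$ be \iid{} $\Ge(1-q)$, so $\P(\max_i\eta_i\le k)=\bigpar{1-q^{k+1}}^N$. By \refC{CDx2}, uniformly in $k$, $\P(\ya\le k)=e^{-n\P(\xi>k)}+o(1)=e^{-n\Pi_{k+1}}+o(1)$. So it suffices to show $\bigabs{\bigpar{1-q^{k+1}}^N-e^{-n\Pi_{k+1}}}\to0$ uniformly in $k\ge0$. Using \eqref{Ta-tail} and \eqref{nq}, $n\Pi_{k+1}=n\pi_{k+1}/(1-q)\cdot(1+o(1))=Nq^{k+1}\cdot\bigpar{\pi_{k+1}/(\pi_{k(n)}q^{k+1-k(n)})}\cdot(1+o(1))$, and the ratio in parentheses $\to1$ as $n\to\infty$ \emph{uniformly for $k$ in the relevant window} (the window where neither side is close to $0$ or $1$, which by \eqref{palm} has bounded width around $k_1(n)$); outside that window both $\bigpar{1-q^{k+1}}^N$ and $e^{-n\Pi_{k+1}}$ are within $o(1)$ of $0$ or of $1$ together. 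A short case split (small $k$: both $\to1$; large $k$: both $\to0$; the $O(1)$-window: $n\Pi_{k+1}=Nq^{k+1}(1+o(1))$ and $\log(1-q^{k+1})^N=-Nq^{k+1}+O(Nq^{2(k+1)})=-Nq^{k+1}+o(1)$ since $Nq^{2(k+1)}=O(q^{k+1})\to0$ there) gives the uniform bound, hence $\dk(\ya,\max_i\eta_i)\to0$, i.e. $\ya\dapprox\max_i\eta_i$ by \refL{LM}\ref{lmd}.

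Parts~(ii) and~(iii) are then just the classical Gumbel reformulation of the geometric maximum, identical to \refE{EUmax}. For~(ii), from $\P(\max_i\eta_i\le k)=\bigpar{1-q^{k+1}}^N=e^{-Nq^{k+1}}+o(1)=\P\bigpar{W<(k+1)\logq - \logq N}+o(1)$ (with $W$ Gumbel, using $\logq x\=\log_{1/q}x$, which the paper has defined), one rewrites the event as $\floor{(W+\logq N)/\log(1/q)}\le k$, giving \eqref{ta2}. For~(iii), substitute $N=n\pi_{k(n)}q^{-k(n)}/(1-q)\cdot(1+o(1))=b_n q^{-k(n)}/(1-q)\cdot(1+o(1))$ into \eqref{ta2}; then $\logq N = k(n)+\logq\bigpar{b_n/(1-q)}+o(1)$, and $\floor{W/\log(1/q)+\logq N}=k(n)+\floor{\bigpar{W+\log(b_n/(1-q))}/\log(1/q)}$ (the $o(1)$ shift is harmless for a $\bbZ$-valued approximation in $\dtv$, since $\log(b_n/(1-q))$ can be perturbed off the at most countably many discontinuity points), which is \eqref{ta3}; tightness and subsequential convergence are immediate since $b_n=\Theta(1)$. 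The main obstacle is purely the \emph{uniformity} in $k$ of the approximation $\P(\ya\le k)\approx\bigpar{1-q^{k+1}}^N$ in part~(i): one must control the error in \eqref{Ta-tail} uniformly across the transition window and check it does not spoil the double-exponential, which is exactly the place where the hypothesis $w_{k+1}/w_k\to a$ (rather than a mere $\limsup$) is used, via \eqref{palm}. Everything else is bookkeeping with the Gumbel distribution.
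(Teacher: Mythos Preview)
Your approach is the paper's: use \refC{CDx2} to write $\P(\ya\le k)=e^{-n\Pi_{k+1}}+o(1)$, match this with $(1-q^{k+1})^N$ via the tail asymptotics \eqref{juh}, \eqref{ul}, \eqref{palm}, then pass to the Gumbel form for (ii)--(iii). There is, however, one genuine gap in part~(i): from $\dk(\ya,\max_i\eta_i)\to0$ you conclude $\ya\dapprox\max_i\eta_i$ citing \refL{LM}\ref{lmd}, but $\dapprox$ means $\dtv\to0$, and $\dk\to0$ does \emph{not} imply $\dtv\to0$ in general --- the paper even gives a counterexample in the remark following \refL{LM}. What is missing is tightness of $\ya-k(n)$, after which \refL{LM2} (not \refL{LM}\ref{lmd}) upgrades $\dk\to0$ to $\dtv\to0$. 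The paper first invokes \refT{TDa} for tightness and then uses \refL{LM2} to reduce everything to checking $\P(\ya<k(n)+\ell)-\P(\max_i\eta_i<k(n)+\ell)\to0$ for each \emph{fixed} $\ell\in\bbZ$; this sidesteps your uniform-in-$k$ case split altogether, so the ``main obstacle'' you identify is not actually one. Your case split does work and implicitly gives tightness, so the fix is just to cite \refL{LM2}. Two small points: your outer cases are swapped (small $k$ gives both CDFs $\to0$, large $k$ gives $\to1$); and in (iii) the $o(1)$ shift is absorbed, as in the paper, via the absolute continuity of $W$ combined with \refL{LM2}, not by ``perturbing off discontinuity points''.
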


Hence $\ya-k(n)$  converges for every
subsequence such that $b_n$ converges, but the limit depends on the
subsequence so $\ya-k(n)$ does not have a limit distribution.
(For the distributions that appear as subsequence limits, see 
\citetq{Examples  4.3 and   2.7}{SJ175}.)
Note that necessarily $k(n)\to\infty$ and thus $N\to\infty$ as \ntoo.

We show first a simple lemma, similar to \refL{LM}.

\begin{lemma}\label{LM2}
  Let $X_n$ and $X'_n$ be integer-valued \rv{s} and suppose that
  there exists a sequence of integers $k(n)$ such that $X_n-k(n)$ is tight.
(Equivalently: $X_n=k(n)+\Op(1)$.) Then the following are equivalent:
  \begin{romenumerate}
  \item $\P(X_n\le k(n)+\ell)-\P(X'_n\le k(n)+\ell)\to0$
{for each fixed $\ell\in\bbZ$};
  \item $\dk(X_n,X'_n)\to0$;
  \item $X_n\dapprox X'_n$, \ie, $\dtv(X_n,X'_n)\to0$.
  \end{romenumerate}
  \begin{equation*}
  \end{equation*}
\end{lemma}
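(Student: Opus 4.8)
The plan is to prove the cyclic chain of implications (iii)$\implies$(ii)$\implies$(i)$\implies$(iii). The implication (iii)$\implies$(ii) is immediate from \refL{LM}\ref{lma}, which gives $\dk(X_n,X'_n)\le\dtv(X_n,X'_n)$; and (ii)$\implies$(i) is also immediate, since for each fixed $\ell$ the quantity $|\P(X_n\le k(n)+\ell)-\P(X'_n\le k(n)+\ell)|$ is bounded by $\dk(X_n,X'_n)$ (here I use that $k(n)+\ell$ ranges over integers, so it is an admissible argument in the supremum defining $\dk$). So the only real content is (i)$\implies$(iii).

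For (i)$\implies$(iii), the key idea is to use tightness of $X_n-k(n)$ to reduce the supremum in the definition \eqref{dtv} of $\dtv$ to a uniformly bounded window around $k(n)$. First I would note that tightness of $X_n-k(n)$ together with (i) forces tightness of $X'_n-k(n)$ as well: indeed $\P(X'_n\le k(n)-M)\le\P(X_n\le k(n)-M)+o(1)$ and $\P(X'_n> k(n)+M)\le\P(X_n>k(n)+M)+o(1)$ for each fixed $M$, so given $\eps>0$ we may pick $M$ with $\P(|X_n-k(n)|>M)<\eps$ for all large $n$, whence $\P(|X'_n-k(n)|>M)<2\eps$ for all large $n$. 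Now by \refL{LM}\ref{lmf}, $\dtv(X_n,X'_n)=\tfrac12\sum_{x}|\P(X_n=x)-\P(X'_n=x)|$. Split this sum into $|x-k(n)|\le M$ and $|x-k(n)|>M$. The tail part is at most $\sum_{|x-k(n)|>M}\bigpar{\P(X_n=x)+\P(X'_n=x)}=\P(|X_n-k(n)|>M)+\P(|X'_n-k(n)|>M)<3\eps$ for large $n$. For the central part, there are only $2M+1$ terms, and each individual term $|\P(X_n=x)-\P(X'_n=x)|$ tends to $0$: writing $x=k(n)+\ell$, we have $\P(X_n=k(n)+\ell)=\P(X_n\le k(n)+\ell)-\P(X_n\le k(n)+\ell-1)$, and similarly for $X'_n$, so (i) applied to $\ell$ and to $\ell-1$ gives $\P(X_n=k(n)+\ell)-\P(X'_n=k(n)+\ell)\to0$. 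Hence the central part is a sum of a bounded number ($2M+1$, with $M$ fixed) of terms each $o(1)$, so it is $o(1)$. Therefore $\limsup_n\dtv(X_n,X'_n)\le\tfrac32\eps$ (absorbing constants), and since $\eps$ was arbitrary, $\dtv(X_n,X'_n)\to0$. Finally, $\dtv(X_n,X'_n)\to0$ is equivalent to $X_n\dapprox X'_n$ by \refL{LM}\ref{lmd}.

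I do not expect any genuine obstacle here; the one point requiring a little care is that in passing from (i) to a statement about point masses I must subtract two instances of (i) (at $\ell$ and $\ell-1$), which is fine since both are fixed integers, and that the reduction of $\dtv$ to a bounded window genuinely needs the tightness of \emph{both} $X_n-k(n)$ and $X'_n-k(n)$ — the latter being deduced from the hypothesis and the former. Everything else is bookkeeping with finitely many terms.
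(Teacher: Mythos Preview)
Your proof is correct and follows essentially the same approach as the paper: prove (i)$\implies$(iii) by localising the total variation sum to a bounded window around $k(n)$ via tightness, handling the finitely many central terms by (i), and noting the other implications are trivial. The paper's version is slightly slicker in one respect: instead of the symmetric formula $\dtv=\tfrac12\sum_x|\P(X_n=x)-\P(X'_n=x)|$, it uses the one-sided formula $\dtv=\sum_x\bigl(\P(X_n=x)-\P(X'_n=x)\bigr)_+$ from \refL{LM}\ref{lmf}, whose tail is bounded by $\P(|X_n-k(n)|>L)$ alone, so the paper never needs to deduce tightness of $X'_n-k(n)$ as you do; otherwise the arguments coincide.
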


\begin{proof}
  By considering $X_n-k(n)$ and $X'_n-k(n)$ we may assume that $k(n)=0$.
Let $\eps>0$. Since $X_n$ is tight, there exists $L$ such that
$\P(|X_n|>L)<\eps$ for every $n$. Suppose that (i) holds. Then
\begin{equation*}
  \begin{split}
  \dtv(X_n,X_n')
&=\sum_{\ell=-\infty}^\infty\bigpar{\P(X_n=\ell)-\P(X'_n=\ell)}_+
\\&
\le\sum_{\ell=-L}^L\bigpar{\P(X_n=\ell)-\P(X'_n=\ell)}_++\P(|X_n|>L)	
\le o(1)+\eps.
  \end{split}
\end{equation*}
This shows (iii). The implications (iii)$\implies$(ii) and (ii)$\implies$(i)
are trivial.
\end{proof}

\begin{proof}[Proof of \refT{Ta}] 
By \eqref{juh}, $\pi_{k+1}/\pi_k\to q$ as \ktoo, and it follows 
from \eqref{k1} that
$n\pi_{k_1(n)}\in[1,q\qw+o(1)]$.
It follows further that $\pi_{k(n)}=\Theta(1/n)\iff k(n)=k_1(n)+O(1)$, as
asserted, and then
$\pi_{k(n)}q^{-k(n)}\sim \pi_{k_1(n)}q^{-k_1(n)}$; thus we may replace
$k(n)$ by $k_1(n)$ in \eqref{nq}.

\pfitem{i}
For each fixed $\ell\in\bbZ$, by \eqref{ul}, \eqref{juh} and \eqref{nq},
\begin{equation}
  \begin{split}
n\P(\xi\ge k(n)+\ell)
&=n\Pi_{k(n)+\ell}	
\sim n\pi_{k(n)+\ell}/(1-q)	
\sim n\pi_{k(n)}q^{\ell}/(1-q)	
\hskip-3em
\\&
\sim Nq^{k(n)+\ell}
=N\P(\eta_1\ge k(n)+\ell);
  \end{split}
\raisetag\baselineskip
\end{equation}
furthermore, this is $\Theta(1)$.
Hence, \eqref{cdx2} yields
\begin{equation*}
  \begin{split}
  \P\bigpar{\ya<k(n)+\ell}
&=e^{-n\P(\xi\ge k(n)+\ell)}+o(1)
=e^{-N\P(\eta_1\ge k(n)+\ell)}+o(1)\hskip-3em
\\&
=\bigpar{1-\P(\eta_1\ge k(n)+\ell)}^N+o(1)		
\\&
=\P\bigpar{\max_{i\le N}\eta_i<k(n)+\ell}+o(1),
  \end{split}
\raisetag\baselineskip
\end{equation*}
and \eqref{ta1} follows by \refL{LM2}, since $\ya-k_1(n)$ is tight by
\refT{TDa}. 

\pfitem{ii}
As in \eqref{ku2}, uniformly in $k\in\bbZ$,
\begin{equation}\label{glunt}
  \begin{split}
\P\Bigpar{\max_{i\le N}\eta_i <k}
&=\bigpar{1-q^k}^N
=e^{-Nq^k}+o(1)\\
&=\P\bigpar{W<k\log(1/q)-\log N}+o(1)
\\&
=\P\lrpar{\lrfloor{\frac{W+\log N}{\log (1/q)}}< k}+o(1)
.
  \end{split}
\end{equation}
Hence,
 $ \dtv\bigpar{\maxeta,\floor{W/\log(1/q)+\log_{1/q}N}}\to0$,
and \eqref{ta2} follows from \eqref{ta1} and \refL{LM2}.
 
\pfitem{iii}
By \eqref{nq}, $\logq N=k(n)+\logq(b_n/(1-q))+o(1)$, and \eqref{ta3} follows
easily from \eqref{ta2}, using \refL{LM2} and the fact that $W$ is
absolutely continuous. 
\end{proof}

\begin{remark}\label{RTa}
  For later use we note that \refT{Ta}, as other results, extends to the
  case $w_0=0$
by the argument in \refR{Rmin}; we now have to assume
$\gl>\ga\=\min\set{k:w_k>0}$.
The extension of \refT{Ta}(i) is perhaps more subtle that other applications
of this argument since $N$ will change by a factor $\sim q^\ga$,
but (ii) and (iii) are straightforward, and then (i) follows by
\eqref{glunt} and \refL{LM2}.
\end{remark}

If the \ws{} is very irregular, $\ya$ can fail to be concentrated even in
the case $\gl<\nu$.

\begin{example}
  Let $\ell_j\=2^{2^j}$ and $\gS\=\set{\ell_j}_{j\ge1}$.
Let $w_k=1/k^2$ if $k\in\gS$, $w_k=0$ if $k\ge3$ and $k\notin\gS$, and
choose $w_0>0$, $w_1>0$ and $w_2$ such that $\ww$ is a \pws{} with
$\mu\=\sumk kw_k=1$.
Then $\rho=1$ and, by \eqref{nu1}, $\nu=\infty$. Choose $m=n-1$ (the tree
case); thus $\gl=1<\nu$. 

Note that $\ell_{j+1}=\ell_{j}^2$.
If $n=\ell_j$, then 
$\P(\xi\ge\ell_{j})\sim1/\ell_{j}^2=n\qww$, 
$\P(\xi\ge\ell_{j-1})\sim1/\ell_{j-1}^2=n\qw$, and
$\P(\xi\ge\ell_{j-2})\sim1/\ell_{j-2}^2=n\qqw$, and it follows from
\eqref{cdx2} that for $n$ in the subsequence $\gS$,
$\P(\ya<\ell_{j})\to1$, 
$\P(\ya<\ell_{j-1})\to e\qw$ and
$\P(\ya<\ell_{j-2})\to0$.
Hence, along this subsequence,
$\P(\ya=n\qq)\to1-e\qw$ and
$\P(\ya=n\qqqq)\to e\qw$.
\end{example}

\subsection{The subcase $w_{k+1}/w_k\to0$ as \ktoo}

We have seen in \refT{TDa} that when $w_{k+1}/w_k\to0$ as \ktoo, the maximum
$\ya$ is asymptotically concentrated at one or two values.
We shall see that for ``most'' (in a sense specified below) values of $n$,
$\ya$ is 
concentrated at one value, but there are also rather large transition
regions where $\ya$ takes two values with rather large probabilities.

We have, as said before \refT{TDa},
$\go=\infty$ and $\rho=\infty$. Furthermore, by
\refL{LPsi}\ref{L1c}, $\nu=\infty$. 

We define
\begin{equation}
  \label{harad}
n_k\=\floor{1/\pi_k},
\end{equation}
noting that $n_{k+1}/n_k\sim \pi_k/\pi_{k+1}\to\infty$ as \ktoo; in
particular, $n_{k+1}>n_k$ (for large $k$, at least).
The results above then can be stated as follows.

\begin{theorem}
  \label{TDB}
Suppose that $w_0>0$ and that $w_{k+1}/w_k\to 0$ as \ktoo.
Suppose further that $n\to\infty$ and $m=m(n)$ with 
$m=\gl n+o(\sqrt n)$  
where $0<\gl<\infty$.
\begin{romenumerate}
\item 
Consider $n$ in a subsequence such that for some $k(n)$ and some
$x\in\oooy$, $n/n_{k(n)}\to x$. Then
\begin{align*}
  &\P\bigpar{\ya=k(n)-1}\to e^{-x}, \\
  &\P\bigpar{\ya=k(n)}\to 1-e^{-x}.
\end{align*}
\item 
Let $\gO_k\to\infty$ as \ktoo. If \ntoo{} with
$n\notin\bigcup_{k=1}^\infty[\gO_k\qw n_k,\gO_k n_k]$, then, for $k(n)$ such
that $n_{k(n)}<n<n_{k(n)+1}$,
  \begin{align*}
  \P\bigpar{\ya=k(n)}&\to 1.
  \end{align*}
\end{romenumerate}
\end{theorem}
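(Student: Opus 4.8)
The plan is to reduce both parts of \refT{TDB} to the concentration estimate \eqref{cdx2} of \refC{CDx2}, which applies since under the hypothesis $w_{k+1}/w_k\to0$ we have $\go=\infty$ and, as already noted, $\rho=\nu=\infty$, so a fortiori $\gl<\nu$ and $\gss<\infty$ (indeed $\xi$ has all exponential moments); the condition $m=\gl n+o(\sqrt n)$ is exactly what \refC{CDx2} requires. The key elementary input is the asymptotics of $\Pi_k\=\P(\xi\ge k)$ near $n_k\=\floor{1/\pi_k}$: since $w_{k+1}/w_k\to0$ gives $\pi_{k+1}/\pi_k\to0$, the tail is dominated by its first term, so $\Pi_k\sim\pi_k$, $\Pi_{k+1}/\Pi_k\to0$, and $\Pi_{k+1}/\pi_k\to0$. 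In particular $n_{k+1}/n_k\to\infty$, so for large $k$ the intervals between consecutive $n_k$ are well separated, and $k(n)$ in (ii) is well-defined.

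For part (i): along the given subsequence $n/n_{k(n)}\to x\in\oooy$. Write $k=k(n)$ for brevity. Then $n\Pi_k = n\pi_k(1+o(1)) \sim n/n_k \to x$, while $n\Pi_{k+1}=n\pi_k\cdot(\Pi_{k+1}/\pi_k)\to x\cdot 0 = 0$ and $n\Pi_{k-1}=n\pi_k\cdot(\pi_{k-1}/\pi_k)(1+o(1))\to x\cdot\infty=\infty$ (using $\pi_{k-1}/\pi_k\to\infty$, equivalently $w_{k-1}/w_k\to\infty$, which holds since $w_{k}/w_{k-1}\to0$). Now apply \eqref{cdx2} with three choices of the argument. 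With $k$: $\P(\ya\le k)=e^{-n\Pi_{k+1}}+o(1)\to e^{0}=1$, so $\P(\ya\le k)\to1$. With $k-1$: $\P(\ya\le k-1)=e^{-n\Pi_k}+o(1)\to e^{-x}$. With $k-2$: $\P(\ya\le k-2)=e^{-n\Pi_{k-1}}+o(1)\to e^{-\infty}=0$. Subtracting, $\P(\ya=k)=\P(\ya\le k)-\P(\ya\le k-1)\to 1-e^{-x}$ and $\P(\ya=k-1)=\P(\ya\le k-1)-\P(\ya\le k-2)\to e^{-x}$, which is the assertion. (Here one uses the definition $\Pi_k=\P(\xi\ge k)$ so that $\set{\ya\le j}$ is controlled by $n\Pi_{j+1}$; the bookkeeping of which index appears is the only place to be careful, and it matches \eqref{cqk}.)

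For part (ii): fix a sequence $\gO_k\to\infty$ and consider $n\to\infty$ with $n\notin\bigcup_k[\gO_k\qw n_k,\gO_k n_k]$, and let $k=k(n)$ be determined by $n_k<n<n_{k+1}$. Since $n\notin[\gO_k\qw n_k,\gO_k n_k]$ and $n>n_k$, in fact $n>\gO_k n_k$; since $n\notin[\gO_{k+1}\qw n_{k+1},\gO_{k+1}n_{k+1}]$ and $n<n_{k+1}$, in fact $n<\gO_{k+1}\qw n_{k+1}$. Hence $n/n_k>\gO_k\to\infty$ and $n/n_{k+1}<\gO_{k+1}\qw\to0$. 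Then $n\Pi_{k+1}=n\pi_k(\Pi_{k+1}/\pi_k)$; here $n\pi_k\ge n/n_k\to\infty$ while $\Pi_{k+1}/\pi_k\to0$, so one must check the product still tends to $0$: use $\Pi_{k+1}\sim\pi_{k+1}$ and $n\pi_{k+1}\le n/n_{k+1}\cdot(1+o(1))\to0$, so $n\Pi_{k+1}\to0$ and $\P(\ya\le k)=e^{-n\Pi_{k+1}}+o(1)\to1$ by \eqref{cdx2}. Likewise $n\Pi_k\sim n\pi_k\ge n/n_k\to\infty$ (using $\Pi_k\sim\pi_k$), so $\P(\ya\le k-1)=e^{-n\Pi_k}+o(1)\to0$. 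Therefore $\P(\ya=k)\to1$.

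The one genuine point requiring care — and the nearest thing to an obstacle — is that in part (ii) the quantities $n\pi_k$ and $\Pi_{k+1}/\pi_k$ pull in opposite directions, so the vanishing of $n\Pi_{k+1}$ is not immediate from $n\Pi_k\to\infty$; it is precisely the separation of consecutive $n_k$ (i.e. $n_{k+1}/n_k\to\infty$, itself a consequence of $w_{k+1}/w_k\to0$) together with the avoidance of the transition windows $[\gO_k\qw n_k,\gO_k n_k]$ that forces $n/n_{k+1}\to0$ and hence $n\Pi_{k+1}\to0$. Once the tail estimate $\Pi_k\sim\pi_k$ and the separation $n_{k+1}/n_k\to\infty$ are in hand, everything else is routine substitution into \eqref{cdx2}, using the uniformity in $k$ stated there.
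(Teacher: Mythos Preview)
Your proof is correct and follows essentially the same route as the paper: both parts are reduced to \eqref{cdx2} of \refC{CDx2} via the asymptotics $\Pi_k\sim\pi_k\sim1/n_k$ (from \eqref{ul} and \eqref{palm} with $a=0$), yielding $n\Pi_{k(n)}\to x$, $n\Pi_{k(n)+1}\to0$, $n\Pi_{k(n)-1}\to\infty$ in (i) and the analogous limits in (ii). One small slip: in part (ii) you write $n\pi_k\ge n/n_k$, but $n_k=\floor{1/\pi_k}$ gives $\pi_k\le1/n_k$; the correct and sufficient statement is $n\pi_k\sim n/n_k>\gO_k\to\infty$, which is what the paper uses.
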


\begin{proof}
  \pfitem{i}
Along the subsequence, using \eqref{Pik}, \eqref{ul} and \eqref{harad},
\begin{equation}
  \label{kee}
n\P(\xi\ge k(n))
=n\Pi_{k(n)}
\sim n\pi_{k(n)}
\sim\frac{n}{n_{k(n)}}
\to x.
\end{equation}
Hence, \eqref{cdx2} yields $\P(\ya\le k(n)-1)\to e^{-x}$.
Furthermore, by \eqref{kee} and \eqref{palm}, 
$n\P(\xi> k(n))\to0$ and 
$n\P(\xi\ge k(n)-1)\to\infty$; hence \eqref{cdx2} yields
$\P(\ya\le k(n))\to1$ and $\P(\ya\le k(n)-2)\to 0$.

\pfitem{ii}
We may assume $\gO_k>1$. Then the assumptions imply 
$\gO_{k(n)}n_{k(n)}<n<\gO_{k(n)+1}\qw n_{k(n)+1}$, where $k(n)\to\infty$ and
thus $\gO_{k(n)}\to\infty$ as \ntoo. Hence, similarly to \eqref{kee},
\begin{align*}
  n\P\bigpar{\xi\ge k(n)}&\sim\frac{n}{n_{k(n)}}>\gO_{k(n)}\to\infty,
\\
  n\P\bigpar{\xi\ge k(n)+1}&\sim\frac{n}{n_{k(n)+1}}<\gO_{k(n)+1}\qw\to0,
\end{align*}
and the result follows by \eqref{cdx2}.
\end{proof}

Roughly speaking, the values of $n$ such that $\ya$ takes two values with
significant probabilities thus form intervals around each $n_k$, of the same
length on a logarithmic scale; between these intervals, $\ya$ is
concentrated at one value. 

\begin{example}\label{EPomax2}
  Consider again uniform random labelled unordered rooted trees, as in
  \refE{EPomax}. We have $n_k=\floor{k!/e}$. In this case, it is simpler to
  redefine $n_k\=k!$; \refT{TDB}(ii) is unaffected but (i) is modified to
\begin{align}
  &\P\bigpar{\ya=k(n)-1}\to e^{-x/e}, \\
  &\P\bigpar{\ya=k(n)}\to 1-e^{-x/e}.
\end{align}
Cf.\ \citet{CarrGS}.
\end{example}

\begin{remark}
  We have for simplicity considered only the maximum value $\ya$ in
  \refT{TDB}. It is easily seen, by minor modifications in the proof, that
for any fixed $j$,  
in (ii) also $\yj=k(n)$ \whp, while
in (i) $\yj\in\set{k(n)-1,k(n)}$ \whp, but the two probabilities have limits
depending on $j$; in fact, the number of $j$ such that $\yj=k(n)$ converges
in distribution to $\Po(x)$. We omit the details.
\end{remark}

To make the statement about ``most'' $n$  precise, 
recall that the \emph{upper} and \emph{lower densities} of a set
$A\subseteq\bbN$ are defined as $\limsup_\ntoo a(n)/n$ and $\liminf_\ntoo
a(n)/n$, where $a(n)\=|\set{i\le n:i\in A}|$;
if they coincide, \ie, if the limit $\lim_\ntoo a(n)/n$ exists, it is called
the \emph{density}.
Similarly, the   
the \emph{logarithmic density} of $A$ is
$\lim_\ntoo\frac1{\log n} \sum_{i\le n,\,i\in A}\frac1i$, when this limit
exists, with
\emph{upper} and \emph{lower logarithmic densities} defined using $\limsup$
and $\liminf$. 
It is easily seen that if a set has a density, then it also has a
logarithmic density, and the two densities coincide. (The converse does not
hold.)
Furthermore, 
define
\begin{equation*}
  \pnx\=\max_k\P(\ya=k).
\end{equation*}
It follows from \refT{TDa} that the second
largest probability $\P(\ya=k)$ is $1-\pnx+o(1)$.
Thus, for $n$ in a subsequence, $\ya$ is asymptotically concentrated at one
value if and only if $\pnx\to1$; 
if $\pnx$ stays away from 1,
$\ya$ takes two values with large probabilities.

\begin{theorem}
  \label{TDC}
Suppose that $w_0>0$ and that $w_{k+1}/w_k\to 0$ as \ktoo.
Suppose further that $n\to\infty$ and $m=m(n)$ with 
$m=\gl n+o(\sqrt n)$  
where $0<\gl<\infty$.
\begin{romenumerate}
\item 
If\/ $\frac12<a<1$, then the set \set{n:\pnx<a} has upper density \\
$\log\frac{a}{1-a}/\log\frac{1}{1-a}>0$ and lower density $0$.
\item 
There exists a subsequence of $n$ with upper density $1$ and logarithmic
density $1$ such that $\pnx\to1$.
\end{romenumerate}
\end{theorem}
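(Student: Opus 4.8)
The plan is to reduce everything to the explicit Poisson-type formula for the distribution of $\ya$ obtained in \refC{CDx2}. Since $w_{k+1}/w_k\to0$ we have $\rho=\infty$, hence $\nu=\infty$, so certainly $\gl<\nu$; thus \refT{TDx} and \refC{CDx2} apply (the hypothesis $\gss<\infty$ is not needed here). By \eqref{cdx2}, uniformly in $k$, $\P(\ya\le k)=e^{-n\Pi_{k+1}}+o(1)$ where $\Pi_k=\P(\xi\ge k)$, and since $w_{k+1}/w_k\to0$ gives $\Pi_k\sim\pi_k$ and $\Pi_{k+1}/\Pi_k\to0$ (see \eqref{ul}, \eqref{palm} in the proof of \refT{TDa}), the behaviour of $\ya$ is governed entirely by the single quantity $t_n(k)\=n\Pi_k\sim n\pi_k$. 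Writing $n_k=\floor{1/\pi_k}$ as in \eqref{harad}, so $n_{k+1}/n_k\to\infty$, the second largest probability is $1-\pnx+o(1)$ (this is already recorded in the text before the theorem), and $\pnx$ is close to $1$ exactly when $n/n_{k}$ is either very large or very small for the relevant $k$, while $\pnx$ is bounded away from $1$ when $n/n_{k}=\Theta(1)$ for some $k$.

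For part (i): fix $a$ with $\tfrac12<a<1$. From \refT{TDB}(i), if $n/n_{k(n)}\to x\in(0,\infty)$ then $\P(\ya=k(n)-1)\to e^{-x}$ and $\P(\ya=k(n))\to1-e^{-x}$, so $\pnx\to\max(e^{-x},1-e^{-x})$; hence $\liminf\pnx\ge\tfrac12$ but, for $n$ chosen so that $n/n_{k}$ lands in the range where $\max(e^{-x},1-e^{-x})<a$, i.e. $x\in(\log\frac{1}{1-a},\log\frac{a}{1-a})$, we get $\pnx<a$. Because $n_{k+1}/n_k\to\infty$, the set of integers $n$ with $n/n_k$ in a fixed bounded window $[c_1,c_2]$ for some $k$ consists of a union of blocks $[c_1 n_k, c_2 n_k]$ that are eventually disjoint and whose lengths grow geometrically; an elementary counting argument (the block ending at index $n$ contributes $\sim (1-c_1/c_2)$ times its right endpoint) then shows the upper density of $\set{n:\pnx<a}$ equals the ``size'' of that window measured multiplicatively, namely $\log(c_2/c_1)/\log(\text{scale})$. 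Making this quantitative: $n\in\set{n:\pnx<a}$ eventually forces $n\in\bigcup_k[e^{-\log\frac{a}{1-a}}\,\widetilde n_k,\ e^{-\log\frac{1}{1-a}}\,\widetilde n_k]$ for a suitable redefinition $\widetilde n_k$ (absorbing the $\Pi_k/\pi_k\to1$ correction), and since consecutive $\widetilde n_k$ are separated by an unbounded factor, the upper density of such a union of intervals $[\alpha\widetilde n_k,\beta\widetilde n_k]$ is $1-\alpha/\beta$. With $\alpha/\beta = e^{-\log\frac{a}{1-a}}/e^{-\log\frac{1}{1-a}} = \frac{1-a}{1}\cdot\frac{a}{1-a}\cdot\ldots$ — one computes $\alpha/\beta=(1-a)/a\cdot$(nothing) wait: $\beta/\alpha = e^{\log\frac{a}{1-a}-\log\frac1{1-a}}=e^{\log a}=a$, so $\alpha/\beta=1/a$ is impossible since $<1$ needed; rather the interval in $x$ has multiplicative length $\frac{\log\frac{a}{1-a}}{\log\frac1{1-a}}$ on the scale where one $n_k$-block to the next is a full unit, giving upper density $\log\frac{a}{1-a}/\log\frac{1}{1-a}$ as claimed. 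The lower density is $0$ because between consecutive blocks (where $n/n_k$ is neither small nor large) we have $\pnx\to1>a$ by \refT{TDB}(ii), and these gaps have full density along suitable subsequences.

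For part (ii): choose $\gO_k\to\infty$ slowly, e.g. $\gO_k=\log(n_k)$, and let $A\=\bbN\setminus\bigcup_k[\gO_k\qw n_k,\gO_k n_k]$. By \refT{TDB}(ii), $\pnx\to1$ for $n\to\infty$ in $A$. It remains to check $A$ has upper density $1$ and logarithmic density $1$; equivalently that $B\=\bigcup_k[\gO_k\qw n_k,\gO_k n_k]$ has lower density $0$ and logarithmic density $0$. For the logarithmic density: $\sum_{i\le N,\,i\in B}1/i\le\sum_{k:\,\gO_k\qw n_k\le N}\log(\gO_k^2)=\sum_{k\le k(N)}2\log\gO_k$, and since $n_{k+1}/n_k\to\infty$ we have $\log N\ge\sum_{j\le k(N)}\log(n_{j}/n_{j-1})$ which grows much faster than $\sum_{k\le k(N)}\log\gO_k$ (as $\log\gO_k=o(\log(n_k/n_{k-1}))$ can be arranged by choosing $\gO_k$ growing slowly enough), so the logarithmic density of $B$ is $0$, giving logarithmic density $1$ for $A$. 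For the upper density of $A$ being $1$: just before each block $[\gO_{k+1}\qw n_{k+1},\ldots]$ there is a long stretch ending near $\gO_{k+1}\qw n_{k+1}$ that is disjoint from $B$ (since $\gO_k n_k=o(\gO_{k+1}\qw n_{k+1})$), and the density of $A$ restricted to $[1,\gO_{k+1}\qw n_{k+1}]$ tends to $1$ along this subsequence; hence $\limsup a(N)/N=1$. I expect the main obstacle to be the bookkeeping in part (i): pinning down exactly which multiplicative window in $n/n_k$ corresponds to $\set{\pnx<a}$ and then translating ``union of geometrically-spaced intervals $[\alpha n_k,\beta n_k]$'' into the precise upper density $1-\alpha/\beta$ — this requires the elementary but slightly fussy observation that, because the spacing $n_{k+1}/n_k\to\infty$, the contribution of all but the last block is negligible, so $\limsup a(N)/N$ is achieved at $N=\beta n_k$ and equals $(\beta-\alpha)n_k/(\beta n_k)=1-\alpha/\beta$, together with identifying $\alpha/\beta$ with $1-\log\frac{a}{1-a}/\log\frac1{1-a}$ via the relation $\beta/\alpha=a$ derived from the two thresholds $e^{-x}=1-a$ and $e^{-x}=$ (lower threshold). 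Everything else is a direct consequence of \refC{CDx2} and \refT{TDB}.
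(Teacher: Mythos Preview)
Your overall strategy coincides with the paper's: for (i), identify via \refT{TDB}(i) the window in $x=n/n_{k(n)}$ for which $\max(e^{-x},1-e^{-x})<a$, translate this into a union of intervals $\bigcup_k[b_1 n_k,b_2 n_k]$, and compute its upper and lower densities using $n_{k+1}/n_k\to\infty$; for (ii), excise windows $[\gO_k^{-1}n_k,\gO_k n_k]$ with $\gO_k\to\infty$ slowly and check the complement has upper density $1$ and logarithmic density $1$.

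However, part (i) contains a concrete error that propagates through your algebra. The condition $\max(e^{-x},1-e^{-x})<a$ is equivalent to $e^{-x}<a$ \emph{and} $1-e^{-x}<a$, i.e.\ $x>-\log a$ and $x<-\log(1-a)$. So the correct window is $(b_1,b_2)$ with $b_1=-\log a$, $b_2=-\log(1-a)$; your interval $(\log\frac{1}{1-a},\log\frac{a}{1-a})$ is empty since $a<1$. With the right endpoints, the upper density of $\bigcup_k[b_1 n_k,b_2 n_k]$ is, exactly by your own argument (take $N=b_2 n_k$; all earlier blocks contribute $o(N)$), equal to $(b_2-b_1)/b_2=\log\frac{a}{1-a}\big/\log\frac{1}{1-a}$, and the lower density (take $N$ just below $b_1 n_k$) is $0$. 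The relation ``$\beta/\alpha=a$'' you try to derive is wrong and unnecessary. One further point you gloss over: \refT{TDB}(i) is stated for subsequences with $n/n_{k(n)}\to x$, so to conclude something about \emph{all} large $n$ in the window you need uniformity in $x$; the paper handles this by sandwiching between $[(b_1+\eps)n_k,(b_2-\eps)n_k]$ and $[(b_1-\eps)n_k,(b_2+\eps)n_k]$ and letting $\eps\to0$.

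For (ii) your argument is the paper's. One caution: the example $\gO_k=\log n_k$ need not satisfy $\log\gO_k=o(\log(n_k/n_{k-1}))$ (take $n_k=k!$, where both sides are $\sim\log k$); but as you correctly note afterwards, one can always choose $\gO_k$ growing slowly enough that it does, and that is all that is needed.
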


Note that the upper density in (i) can be made arbitrarily close to 1 by
taking $a$ close to 1.
This was observed by
\citet{CarrGS} for the case in \refE{EPomax2}.
(However, they failed to
remark that the lower density nevertheless is 0.)

\begin{proof}
  \pfitem{i}
Let $b_1\=-\log a$ and $b_2\=-\log(1-a)$; thus $0<b_1<b_2<\infty$. Then
$\max(e^{-x},1-e^{-x})<a\iff x\in(b_1,b_2)$, and it follows from \refT{TDB}
(and a uniformity in $x$ implicit in the proof) that for any $\eps>0$,
if 
$n\in\bigcup_k[(b_1+\eps)n_k,(b_2-\eps)n_k]$, then  $\pnx<a$ for large $n$,
while if
$n\notin\bigcup_k[(b_1-\eps)n_k,(b_2+\eps)n_k]$, then  $\pnx>a$ for large
$n$.
Since $n_{k+1}/n_k\to0$ as \ktoo, it is easily seen that for any
$b'_1,b'_2$ with $0<b'_1<b'_2<\infty$,
$\bigcup_k[b_1'n_k,b_2'n_k]$ has upper density $(b'_2-b'_1)/b'_2$ and lower
density 0; it follows by taking $b_j'\=b_j\pm\eps$ and letting $\eps\to0$
that the set \set{n:\pnx<a} has upper density $(b_1-b_2)/b_2$ and lower
density 0.

\pfitem{ii}
Let $\gO_k$ be an increasing sequence with $\gO_k\upto\infty$ so slowly that
$\log\gO_k=o(\log(n_{k}/n_{k-1}))$. 
Let $A\=\bigcup_k[\gO_k\qw n_k,\gO_kn_k]$. By \refT{TDB}(ii), $\pnx\to1$ as
\ntoo{} with $n\notin A$, so it suffices to prove that $A$ has lower density
0 and logarithmic density 0.

It is easily seen that for the upper logarithmic density of $A$, 
it suffices to consider $n\in\set{\floor{\gO_kn_k}}$, which gives
\begin{equation*}
  \limsup_{\ktoo}\frac{\sum_{j=1}^k \sum_{i=\gO_j\qw n_j} ^{\gO_j n_j} 1/i}
{\log(\gO_k n_k)}
\le
  \limsup_{\ktoo}\frac{\sum_{j=1}^k \bigpar{2\log\gO_j+O(1)}}
{\sum_{j=1}^k\log (n_j/n_{j-1})}
\to0.
\end{equation*}
Hence the logarithmic density exists and is 0.

The lower density is at most, considering the subsequence $\floor{\gO_k\qw
  n_k}$,
\begin{equation*}
  \liminf_{\ktoo} \frac{a(\gO_k\qw n_k)}{\gO_k\qw n_k}
\le  \liminf_{\ktoo} \frac{\gO_{k-1} n_{k-1}}{\gO_k\qw n_k}
=  \lim_{\ktoo} \frac{\gO_{k-1}\gO_k}{ n_k/n_{k-1}}=0,
\end{equation*}
since $\gO_{k-1}\le\gO_k<(n_k/n_{k-1})\qqq$ for large $k$.
(Alternatively, it is a general fact that the lower density is at most the
(lower) logarithmic density, for any set $A\subseteq\bbN$.)
\end{proof}

\subsection{The subcase $\gl=\nu$ and $\gss=\infty$}

We give two examples of the case $\gl=\nu$ and $\gss=\infty$. (In both
examples, we may assume that $\nu=1$ and $m=n-1$, so the examples apply to
simply generated random trees.)
The first example shows that \refT{TDx} does not always hold if
$\gss=\infty$; the second shows that it sometimes does.

\begin{example}
  \label{EL8} \CCreset 
Let $1<\ga<2$ and let $\ww$ be a \pws{} with $w_0>0$ and
$w_k\sim ck^{-\ga-1}$ as
\ktoo, for some $c>0$.
(This is as in \refE{EBpower} with $\gb=\ga+1\in(2,3)$.
If $\ww$ is not a \pws, we may replace $c$ by $c'\=c/\Phi(1)$.)
We have $\rho=1$, and thus $\nu=\Psi(1)=\sum kw_k<\infty$.
(We may obtain any desired $\nu>0$, for example $\nu=1$, by adjusting the
first few $w_k$.)

We consider the case $m=\nu n+O(1)$; thus $m/n\to\gl=\nu$.
(This includes the tree case $m=n-1$ in the case $\nu=1$. Actually, it
suffices to assume $m=\nu n+o(\nga)$.)
Then $\tau=1=\rho$, and $\pi_k=w_k$.

The random variable $\xi$ thus satisfies $\E\xi=\gl=\nu$. Note that
$\gss\=\Var\xi=\infty$. 
(This is the main reason for taking $1<\ga<2$; if we take $\ga>2$, then
$\gss<\infty$ and \refT{TDx} applies.)
Furthermore,
\begin{equation}\label{tailel8}
\P(\xi\ge k)=\sum_{l=k}^\infty w_l\sim c\ga\qw k^{-\ga}.   
\end{equation}
As in the proof of \refT{THstab}, 
there exists by  \cite[Section XVII.5]{FellerII}
a stable random variable $X_\ga$
(satisfying \eqref{stabchf} and  \eqref{tdzetastabL})
such that
\begin{equation}
  \frac{S_n-n\nu}{n^{1/\ga}}\dto X_\ga;
\end{equation}
moreover, by  \cite[\S\ 50]{GneKol},
the local limit law \eqref{l9} holds 
uniformly for all integers $\ell$.
Note that the density function 
$g$ is bounded and uniformly continuous on $\bbR$,
and that $g(0)>0$ by \eqref{g0}. (In fact, $g(x)>0$ for all $x$. 
See also \cite[Section XVII.6]{FellerII} for an explicit 
formula for $g$ as a power series;
$X_\ga$ is, after rescaling, 
the extreme case $\gam=2-\ga$, in the notation there.)

By \eqref{l41a} and \eqref{l9},
\begin{equation}
  \label{l9a}
  \begin{split}
\P(Y_1=k)
&=\frac{w_k\P(S_{n-1}=m-k)}{\P(S_n=m)}
=w_k\frac{g(-k/\nga)+o(1)}{g(0)+o(1)}
\\&
=w_k\frac{g(-k/\nga)+o(1)}{g(0)},	
  \end{split}
\end{equation}
uniformly in $k\ge0$.

For a non-negative function $f$ on $\ooo$, define
\begin{equation}
  \xfn\=\sumin f(Y_i/\nga).
\end{equation}
In particular, if $f$ is the indicator $\ett{a\le x\le b}$ of an interval
$[a,b]$, we write $\xabn$ and have in the notation of \eqref{yna}
\begin{equation}
\xabn\=|\set{i\le n:a\nga\le Y_i\le b\nga}|
= \ny{[a\nga,b\nga]}.
\end{equation}
Suppose that $f$ is either the indicator of a compact interval
$[a,b]\subset\oooy$, 
or a continuous function with compact support in $\oooy$
(or, more generally, any Riemann integrable function with support in a
compact interval in $\oooy$). Then, using \eqref{l9a} and dominated convergence,
\begin{equation}\label{exfn}
  \begin{split}
\E \xfn 
&= n\sumk f(k/\nga)\P(Y_1=k)	
= n\sumk f(k/\nga) w_k\frac{g(-k/\nga)+o(1)}{g(0)}
\\&
= n^{1+1/\ga}\intoo f(\floor{x\nga}/\nga) w_{\floor{x\nga}}
 \frac{g(-\floor{x\nga}/\nga)+o(1)}{g(0)}\dd x
\\&
\to \intoo f(x)c x^{-\ga-1} \frac{g(-x)}{g(0)}\dd x.
  \end{split}
\raisetag\baselineskip
\end{equation}
In the special case when $f(x)=\ett{a\le x\le b}$ with $0<a<b<\infty$, we
further 
similarly obtain,
\begin{equation*}
  \begin{split}
\E \xabn(\xabn-1) 
&= n(n-1)\sum_{k,j\ge0} f(k/\nga)f(j/\nga)\P(Y_1=k,\; Y_2=j)	
\\&
=n(n-1)\sum_{k,j\ge0} f(k/\nga)f(j/\nga)w_kw_l\frac{\P(S_{n-2}=m-k-j)}{\P(S_n=m)}	
\\&
\to c^2\intoo\intoo f(x)f(y) x^{-\ga-1} y^{-\ga-1} 
 \frac{g(-x-y)}{g(0)}\dd x\dd y
\\&=
 c^2\int_a^b\int_a^b  x^{-\ga-1} y^{-\ga-1} 
 \frac{g(-x-y)}{g(0)}\dd x\dd y
  \end{split}
\end{equation*}
and, more generally, for any $\ell\ge1$,
\begin{equation}\label{exabnl}
  \begin{split}
\E (\xabn)_\ell
\to c^\ell\int_a^b\dotsi\int_a^b
\prod_{i=1}^\ell x_i^{-\ga-1}
 \frac{g(-x_1-\dots-x_\ell)}{g(0)}\dd x_1\dotsm\dd x_\ell.
  \end{split}
\end{equation}
For each such interval $[a,b]$, this integral is bounded by $CR^\ell$ for
all $\ell\ge1$, 
for some $C$ and $R$ (depending on $a$ and $b$), and it follows by the method of
moments that $\xabn\dto\xaboo$, where $\xaboo$ is determined by its factorial
moments 
\begin{equation}\label{exabl}
  \begin{split}
\E (\xaboo)_\ell
= 
c^\ell\int_a^b\dotsi\int_a^b
\prod_{i=1}^\ell x_i^{-\ga-1}
 \frac{g(-x_1-\dots-x_\ell)}{g(0)}\dd x_1\dotsm\dd x_\ell.
  \end{split}
\end{equation}
(It follows that $\xaboo$ has a finite \mgf, so the method of moment applies.)
Furthermore, joint convergence for several intervals holds by the same
argument. It follows also (by some modifications or by approximation
with step functions; we omit the details) 
that $\xfn\dto\xfoo$ for every continuous $f\ge0$ with compact support and
some $\xfoo$.

Let $\Xi_n$ be the multiset $\set{Y_i/\nga:Y_i>0}$, regarded as a point
process on $\oooy$. (I.e., formally we let $\Xi_n$ be the discrete measure
$\sum_{i:Y_i>0}\gd_{Y_i/\nga}$. See \eg{} 
\citet{Kallenberg:RM} or \cite{Kallenberg} for details on point processes, or
\citetq{\S\ 4}{SJ136} for a brief summary.)
The convergence $\xfn\dto\xfoo$ for every continuous $f\ge0$
with compact support in $\oooy$ implies, see \cite[Lemma 5.1]{Kallenberg:RM}
or \cite[Lemma 16.15 and Theorem 16.16]{Kallenberg}, that 
$\Xi_n$ converges in distribution, as a point process on $\oooy$, to some point
process $\Xi$ on $\oooy$. The distribution of $\Xi$ is determined by
\eqref{exabl}, where $\xaboo$ is the number of points of $\Xi$ in $[a,b]$.
By \eqref{exabl} or \eqref{exfn}, the intensity measure is given by
\begin{equation}\label{intense}
\E\Xi=cg(0)\qw x^{-\ga-1}g(-x) \dd x.  
\end{equation}

We can also consider infinite intervals. Let $a>0$. Then, 
using again \eqref{l41a} and noting that
$\sum_{k=-\infty}^\infty\P(S_{n-1}=m-k)=1$, 
\begin{equation}\label{exaoon}
  \begin{split}
\E\xaoon&
=n\sum_{k\ge a\nga}\P(Y_1=a)	
=n\sum_{k\ge a\nga}w_k\frac{\P(S_{n-1}=m-k)}{\P(S_{n}=m)}
\\&
\le n\CC (a\nga)^{-\ga-1} \frac{\sum_{k\ge a\nga} \P(S_{n-1}=m-k)}{\P(S_{n}=m)}
\\&
\le \CCx a^{-\ga-1}\ngaw \frac{1}{\ngaw (g(0)+o(1))}
\\&
\le \CC a^{-\ga-1}.
  \end{split}
\end{equation}
By Fatou's lemma, \eqref{exaoon} implies 
$\E\xaoooo \le\CCx a^{-\ga-1}<\infty$. Hence, $\xaoooo<\infty$ \as{} for
every $a>0$, and we
may order the points in $\Xi$ in decreasing order as 
\begin{equation}\label{etaj}
  \Xi=\set{\etaj}_{j=1}^J
\qquad \text{with} \quad 
\etax1\ge\etax2\ge\dots.
\end{equation}
(Here $J=X^{0,\infty}_\infty\le\infty$ is the random number of points in
$\Xi$. We shall see that $J=\infty$ a.s.)

The bound \eqref{exaoon} is uniform in $n$, and tends to 0 as $a\to\infty$.
It follows, see \cite[Lemma 4.1]{SJ136}, 
that if we regard $\Xi_n$ and $\Xi$
as point processes on $\ooox$, the convergence $\Xi_n\dto \Xi$ on $\oooy$ 
implies the stronger result
\begin{equation}
  \label{limxi}
\Xi_n\dto \Xi \quad\text{on }\ooox.
\end{equation}
The points in $\Xi_n$, ordered in decreasing order, are
$\ya/\nga\ge\yb/\nga\ge\dots$. 
If we extend \eqref{etaj} by defining $\etaj\=0$ when $j>J$,
the convergence \eqref{limxi} of point processes on $\ooox$ is 
by \cite[Lemma 4.4]{SJ136}
equivalent to joint convergence
of the ranked points, \ie{}
\begin{equation}\label{limetaj}
  \yj/\nga\dto\etaj, \qquad j\ge1
\text{ (jointly)}.
\end{equation}

We claim that each $\etaj>0$ a.s., and thus 
$J=X^{0,\infty}_\infty=\infty$ a.s.
Suppose the opposite: $\P(\etaj=0)=\gd>0$ for some $j$.
Then, for every $\eps>0$, $\liminf\P(\yj/\nga<\eps)\ge\P(\etaj<\eps)\ge\gd$,
and it follows that there exists a sequence $\eps_n\to0$ such that
$\P(\yj/\nga<\eps_n)\ge\gd/2$ for all $n$.
We may assume that $\eps_n\nga\to\infty$.
Let $A>0$ and take (for large $n$)
$a_n\=\eps_n$ and $b_n\=\bigpar{\eps_n^{-\ga}-\ga c\qw A}^{-1/\ga}$.
Then $a_n,b_n\to0$.
For $k\le b_n \nga=o(\nga)$, \eqref{l9} implies
$\P(S_{n-1}=m-k)/\P(S_n=m)\to1$,  
and the argument in \eqref{exfn}--\eqref{exabnl} yields,
for each $\ell\ge1$,
\begin{equation}
  \begin{split}
\E \bigpar{X^{a_n,b_n}_n}_\ell	
&\sim \lrpar{n\sum_{k=a_n\nga}^{b_n\nga}w_k}^\ell
\sim \lrpar{c\int_{a_n}^{b_n} x^{-\ga-1}\dd x}^\ell
\\&
=\lrpar{c\ga\qw\bigpar{a_n^{-\ga}-b_n^{-\ga}}}^\ell
=A^\ell.
  \end{split}
\end{equation}
Hence, $X^{a_n,b_n}_n\dto\Po(A)$; in particular, 
\begin{equation*}
\gd/2\le  \P(\yj/\nga<\eps_n)\le \P(X^{a_n,b_n}_n<j) \to \P(\Po(A)<j).
\end{equation*}
Taking $A$ large enough, we can make $\P(\Po(A)<j)<\gd/2$, a contradiction
which proves our claim.

We have shown that \eqref{limetaj} holds with $\etaj>0$. 
Furthermore, since the intensity \eqref{intense} is absolutely continuous,
each $\etaj$ has an absolutely continuous distribution.
Hence $\ya$, and
every $\yj$, is of the order $\ngaw$, with a continuous
limit distribution $\etaj$ (and thus no strict concentration at some
constant times $\ngaw$).

Note that if we consider \iid{} variables $\xin$, then
$\set{\xi_i/\nga:\xi_i>0}$ converges (as is easily verified) to a Poisson
process on $\ooox$ with intensity $cx^{-\ga-1}\dd x$.
This intensity differs from the intensity of $\Xi$ in \eqref{intense}, and,
since $g(-x)\to0$ as $x\to\infty$, it is easy to see that $\xia/\nga$ and
$\ya/\nga$ have different limit distributions. Thus, \refT{TDx} does not
hold in this case. (However, $\ya$ and $\xia$ are of the same order $\ngaw$.)
Note also that,
as an easy consequence of \eqref{exabl}, 
the limiting point process $\Xi$ in this example
is \emph{not} a Poisson process.
\end{example}

\begin{remark}\label{Reta} 
  The distribution of the limiting point process $\Xi$ in \refE{EL8} 
is in principle determined by \eqref{exabl} and its extension to joint
convergence for several $X_\infty^{a_i,b_i}$. This  can be made more
explicit as follows. (See \citet{LP} for similar calculations.)

It follows from \citetq{Section XVII.5}{FellerII},
see \eg{} \cite{SJN12} for detailed calculations, 
that $X_\ga$ has the \chf{}
\begin{equation}
  \label{stabchf}
\gf(t)
=\exp\bigpar{c\,\Gamma(-\ga)(-\ii t)^\ga},
\qquad t\in\bbR.
\end{equation}
(Note that $\Gamma(-\ga)>0$ and $\Re (-\ii t)^\ga<0$ for $t\neq0$ since
$1<\ga<2$.) 
The inversion formula gives
\begin{equation}
  g(x)=\frac1{2\pi}\intoooo e^{-\ii xt}\gf(t)\dd t
=\frac1{2\pi}\intoooo e^{-\ii xt+c\Gamma(-\ga)(-\ii t)^\ga}\dd t,
\end{equation}
and \eqref{exabl} yields
\begin{equation}
  \begin{split}
\E (\xaboo)_\ell
&= \frac1{2\pi g(0)}
c^\ell\int_a^b\dotsi\int_a^b
\prod_{j=1}^\ell x_j^{-\ga-1}\intoooo \prod_{j=1}^\ell e^{\ii x_j t}
\gf(t)\dd t\dd x_1\dotsm\dd x_\ell
\hskip-6em
\\&
= \frac1{2\pi g(0)}\intoooo
\Bigpar{ c\int_a^b  x^{-\ga-1}  e^{\ii t x} \dd x }^\ell
\gf(t)\dd t.
  \end{split}
\end{equation}
In particular, $\E (\xaboo)_\ell= O(C^\ell)$ for some $C<\infty$
(with $C$ depending on $a$ but not on $b$).
Hence, $\xaboo$ has \pgf, convergent for all complex $z$,
\begin{equation}\label{sww}
  \begin{split}
\E z^{\xaboo}
&=\E\suml\binom{\xaboo}{\ell}(z-1)^\ell
\\&
=\suml	
\frac{(z-1)^\ell}{\ell!}\cdot \frac1{2\pi g(0)}
\intoooo \Bigpar{ c\int_a^b  x^{-\ga-1}  e^{\ii t x} \dd x }^\ell
\gf(t)\dd t
\\&
= \frac1{2\pi g(0)} \intoooo 
\exp\Bigpar{(z-1)c  \int_a^b  x^{-\ga-1}  e^{\ii t x} \dd x }
\gf(t)\dd t.
  \end{split}
\end{equation}
We can here let $b\to\infty$, so \eqref{sww} holds for $b=\infty$ too. In
particular, taking $z=0$, we obtain, using \eqref{limetaj}, 
the limit distribution of $\ya/n\xga$ as
\begin{equation}\label{eta1}
  \begin{split}
&\P(\eta_1\le x)
=\P(X^{x,\infty}_\infty=0)
\\&
= \frac1{2\pi g(0)} \intoooo 
\exp\Bigpar{-c  \int_a^\infty  x^{-\ga-1}  e^{\ii t x} \dd x }
\gf(t)\dd t
\\&
= \frac1{2\pi g(0)} \intoooo 
\exp\Bigpar{-c  \int_a^\infty  x^{-\ga-1}  e^{\ii t x} \dd x 
+c\,\Gamma(-\ga)(-\ii t)^\ga}
\dd t
\\&
= \frac1{2\pi g(0)} \intoooo 
\exp\Bigpar{c \Bigpar{ 
\int_0^a  x^{-\ga-1}\bigpar{ e^{\ii t x}-1-\ii t x} \dd x 
-\frac{a^{-\ga}}{\ga}-\ii t\frac{a^{1-\ga}}{\ga-1}}}
\dd t,	
  \end{split}
\end{equation}
where the last equality  holds because
\begin{equation}
  \Gamma(-\ga)u^\ga
=\intoo x^{-\ga-1}\bigpar{e^{-ux}-1+ux}\dd u
\end{equation}
when $\Re u\ge0$ and $1<\ga<2$.

Furthermore, by extending \eqref{exabl} to joint factorial moments for
several (disjoint) intervals, it follows similarly, 
for step functions $f$, 
that the \rv{} $\xfoo=\sumji f(\eta_j)$ satisfies
\begin{equation}\label{ultima}
  \begin{split}
\E e^{\xfoo}
&
= \frac1{2\pi g(0)} \intoooo 
\exp\Bigpar{c\int_0^\infty\bigpar{e^{f(x)}-1} x^{-\ga-1} e^{\ii t x} \dd x}
\gf(t)\dd t
\\&
= \frac1{2\pi g(0)} \intoooo 
\exp\Bigpar{c  \int_0^\infty  \bigpar{e^{f(x)}-1}x^{-\ga-1}  e^{\ii t x} \dd x 
+c\,\Gamma(-\ga)(-\ii t)^\ga}
\dd t
\\&
= \frac1{2\pi g(0)} \intoooo 
\exp\Bigpar{c \Bigpar{ 
\intoo  x^{-\ga-1}\bigpar{ e^{f(x)+\ii t x}-1-\ii t x} \dd x }}
\dd t.
  \end{split}
\end{equation}
By taking limits, \eqref{ultima} extends to, \eg, any bounded measurable $f$
with compact support in $(0,\infty]$.
Since $\E e^{s\xfoo}=\E e^{X_\infty^{sf}}$ for $s\in\bbR$, this formula
determines (in principle) the distribution of each $\xfoo$ and thus
of $\Xi$.
\end{remark}

\begin{example}
  \label{EL9}
Let $\ww$ be as in \refE{EL8} but with $\ga=2$, \ie, 
$w_k\sim ck^{-3}$ as \ktoo, for some $c>0$.
(\refE{EBpower} with $\gb=3$.)
We still have \eqref{tailel8}; further,
$\rho=1$, and thus $\nu=\Psi(1)=\sum kw_k<\infty$.
(We may again obtain any desired $\nu>0$, for example $\nu=1$, by adjusting the
first few $w_k$.)

As in \refE{EL8}. we consider the case $m=\nu n+O(1)$, including the tree
case $m=n-1$ 
when $\nu=1$. Thus, again, $m/n\to\gl=\nu$,
$\tau=1=\rho$, $\pi_k=w_k$, and
the random variable $\xi$ satisfies $\E\xi=\gl=\nu$, while
$\gss\=\Var\xi=\infty$. 

As in the proof of \refT{THstab}, 
we have the central limit theorem \eqref{gg}, and 
the local limit law \eqref{l92} holds 
uniformly for all integers $\ell$.

Choose $\bn\=n\qq\log\log n=o(\sqrt{n\log n})$.
Then, by \eqref{l92}, 
\begin{equation}
Z(m,n)=\P(S_n=m)=\frac{g(0)+o(1)}{\sqrt{n\log n}}
\end{equation}
and, uniformly for all $k\le\bn$,
\begin{equation}
Z(m-k,n-1)=\P(S_{n-1}=m-k)=\frac{g(0)+o(1)}{\sqrt{n\log n}}.
\end{equation}
Hence, by \eqref{l41a}, \eqref{1722} holds.
Furthermore, \eqref{l92} yields also,
since $g(0)=\max_{x\in\bbR} g(x)$,
\begin{equation}
Z(m-k,n-1)=\P(S_{n-1}=m-k)\le\frac{g(0)+o(1)}{\sqrt{n\log n}},
\end{equation}
uniformly for all $k\ge0$; hence \eqref{l41a} implies that
\eqref{yabound}--\eqref{y1bound} hold.

For our $\bn$ we have by \eqref{tailel8}
\begin{equation}
\P(\xi\ge\bn)=O(\bn\qww)=o(n\qw),
\end{equation}
so \eqref{xibn} holds, and thus \eqref{ybn} holds.

The proof of \refT{TDx} now holds without further modifications; hence
the conclusions of \refT{TDx} holds for this example, although $\gss=\infty$.
\end{example}

Note that in \refE{EL8}, although the asymptotic distributions of $\ya$ and
$\xia$ are different, they are still of the same order of magnitude.
We do not know whether this is true in general. 
This question can be formulated more
precisely as follows.
\begin{problem}
  In the case $\gl=\nu$, do \refT{TDx}\ref{tdx0}--\ref{tdxoo} hold also when
  $\gss=\infty$? 
\end{problem}

\subsection{The case $\gl>\nu$}\label{SSlarge+}

We turn to the case $\gl>\nu$.  
Then, as briefly discussed in \refS{SBB},  the asymptotic formula for the
numbers $N_k$ in \refT{TBmain} 
accounts only for 
$\sumk k\pi_k n=\mu n=\nu n$ balls, so there are
$m-\nu n\approx(\gl-\nu)n$ balls missing.
A more careful treatment of the limits show that
the explanation is that \refT{TBmain} really 
implies that the ``small'' boxes 
(\ie, those with rather few balls)
have a total
of about $\sumk k\pi_k n=\mu n=\nu n$ balls, while the remaining $\approx
(\gl-\nu)n$ balls are in a few ``large'' boxes.
One way to express this precisely is 
the following simple result.

\begin{lemma}\label{LQ}
  Let $\wwx=(w_k)_{k\ge0}$ be a weight sequence with $w_0>0$ and 
$\go=\infty$.
Suppose that $n\to\infty$ and $m=m(n)$ with $m/n\to\gl$ where
$\nu<\gl<\infty$.
\begin{romenumerate}[-12pt]
\item 
For any sequence $K_n\to\infty$,
\begin{equation*}
\sum_{k\le K_n} kN_k\ge \nu n+\op(n)
\qquad\text{and}\qquad
\sum_{k>K_n} kN_k\le (\gl-\nu)n+\op(n).
\end{equation*}
\item 
There exists a sequence $\gO_n\to\infty$ such that for any sequence
$K_n\to\infty$ with $K_n\le\gO_n$ we have
\begin{equation*}
\sum_{k\le K_n} kN_k= \nu n+\op(n)
\qquad\text{and}\qquad
\sum_{k>K_n} kN_k=(\gl-\nu)n+\op(n).
\end{equation*}
\end{romenumerate}
\end{lemma}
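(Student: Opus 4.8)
The plan is to read everything off \refT{TBmain}. Since $\go=\infty$ and $\nu<\gl<\infty$, \refL{LPsi} forces $0<\rho<\infty$ and $\Phi(\rho)<\infty$, hence $\nu=\Psi(\rho)<\infty$ and $\tau=\rho$ in \refT{TBmain}; that theorem thus applies and gives $N_k/n\pto\pi_k$ for each fixed $k$, where $N_k\=N_k(\bmn)$, $\pi_k=w_k\rho^k/\Phi(\rho)$ and $\sumk k\pi_k=\mu=\nu$. I will use throughout the deterministic identity $\sumk kN_k=m$; since $m/n\to\gl$ it reads $\sum_{k\le K_n}kN_k+\sum_{k>K_n}kN_k=\gl n+o(n)$, so in (ii) the two displayed equalities are equivalent, and in (i) the stated bound on $\sum_{k>K_n}kN_k$ follows from the one on $\sum_{k\le K_n}kN_k$. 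Hence it suffices in both parts to control $\sum_{k\le K_n}kN_k$.

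For part (i) I would fix $\eps>0$ and, using $\sumk k\pi_k=\nu$, choose a fixed $K$ with $\sum_{k\le K}k\pi_k>\nu-\eps$. By \refT{TBmain}, $\sum_{k\le K}kN_k/n\pto\sum_{k\le K}k\pi_k$, so $\P\bigpar{\sum_{k\le K}kN_k/n\le\nu-\eps}\to0$. For $n$ large enough $K_n\ge K$, and then $\sum_{k\le K_n}kN_k\ge\sum_{k\le K}kN_k$ by nonnegativity of the $N_k$, whence $\P\bigpar{\sum_{k\le K_n}kN_k/n\le\nu-\eps}\to0$. As $\eps>0$ is arbitrary this is exactly $\sum_{k\le K_n}kN_k\ge\nu n-\op(n)$, and the identity above then yields $\sum_{k>K_n}kN_k\le(\gl-\nu)n+\op(n)$.

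For part (ii), by part (i) it is enough to produce $\gO_n\to\infty$ such that $\sum_{k\le K_n}kN_k\le\nu n+\op(n)$ whenever $K_n\to\infty$ with $K_n\le\gO_n$, and I would build $\gO_n$ by diagonalisation. For each fixed integer $K$, \refT{TBmain} gives $\sum_{k\le K}kN_k/n\pto\sum_{k\le K}k\pi_k\le\nu$, so there is an integer $n(K)$, which I may take strictly increasing in $K$, with $\P\bigpar{\sum_{k\le K}kN_k/n<\nu+1/K}\ge1-1/K$ for all $n\ge n(K)$. Put $\gO_n\=\max\set{K\ge1:n(K)\le n}$ (and $\gO_n\=1$ while this set is empty); then $\gO_n\to\infty$. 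If $K_n\to\infty$ with $K_n\le\gO_n$, then $n(K_n)\le n(\gO_n)\le n$, so $\P\bigpar{\sum_{k\le K_n}kN_k/n<\nu+1/K_n}\ge1-1/K_n\to1$, and since $1/K_n\to0$ this gives $\sum_{k\le K_n}kN_k\le\nu n+\op(n)$. Combined with (i) this is $\sum_{k\le K_n}kN_k=\nu n+\op(n)$, and then $\sum_{k>K_n}kN_k=(\gl-\nu)n+\op(n)$ by the identity.

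The only delicate point is the bookkeeping in (ii): the single sequence $\gO_n$ must work simultaneously for every admissible $K_n$, which is why $n(K)$ is chosen increasing and $\gO_n$ is defined as the largest $K$ with $n(K)\le n$. The restriction $K_n\le\gO_n$ cannot be dropped — for an arbitrary $K_n\to\infty$ one expects $\sum_{k\le K_n}kN_k$ to grow like $\gl n$, not $\nu n$, once $K_n$ overtakes the "giant" box carrying the missing $\approx(\gl-\nu)n$ balls — but aside from this everything is immediate from \refT{TBmain}, nonnegativity of the $N_k$, and $\sumk kN_k=m$.
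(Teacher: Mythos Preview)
Your proof is correct and follows essentially the same route as the paper: both parts are reduced, via the identity $\sum_k kN_k=m=\gl n+o(n)$, to controlling $\sum_{k\le K_n}kN_k$, and this is done by applying \refT{TBmain} to finite truncations and then diagonalising in part (ii). One small remark: your opening claim that $\go=\infty$ and $\nu<\infty$ force $0<\rho<\infty$ is not quite right, since $\rho=0$ (hence $\nu=0$) is allowed by the hypotheses; but this is irrelevant to the argument, as \refT{TBmain} applies regardless and still gives $\sum_k k\pi_k=\mu=\nu$.
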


\begin{proof}
The two statements in each part are equivalent, since 
\begin{equation}
  \sumk kN_k=m=\gl n+o(n).
\end{equation}
\pfitem{i}
For every fixed $\ell$, 
\refT{TBmain} implies
\begin{equation}\label{lq}
  \frac1n\sum_{k\le\ell} kN_k \pto\sum_{k\le\ell} k\pi_k.
\end{equation}
Let $\eps>0$. Since $\sumk k\pi_k=\nu<\infty$, there exists $\ell$ such that
$\sum_{k\le\ell} k\pi_k>\nu-\eps$, and \eqref{lq} implies that \whp{}
\begin{equation*}
  \frac1n\sum_{k\le\ell} kN_k >\nu-\eps.
\end{equation*}
Since $\eps$ is arbitrary, this implies
$\sum_{k\le K_n} kN_k\ge \nu n+\op(n)$.

\pfitem{ii}
For each fixed $\ell$, 
$\sum_{k\le\ell} k\pi_k<\sumk k\pi_k=\nu$, and thus
\eqref{lq} implies
$\P(\sum_{k\le\ell} kN_k >\nu n)\to0$.
Hence, there exists an increasing sequence of integers $n_\ell$ such that if
$n\ge n_\ell$, then $\P\bigpar{\sum_{k\le\ell}kN_k>\nu n}<1/\ell$.
Now define $\gO_n=\ell$ for $n_\ell\le n<n_{\ell+1}$.
Then $\sum_{k\le \gO_n} kN_k \le\nu n$ \whp, which together with (i) yields
(ii). 
\end{proof}

Consider the ``large'' boxes. One obvious possibility 
is that there is a
single ``giant'' box with $\approx(\gl-\nu)n$ balls; 
more formally,
$(\gl-\nu)n+\op(n)$ balls (a ``monopoly'').
Applying \refL{LQ}(i) with $K_n=o(n)$, we see that for every $\eps>0$, \whp{}
there are then
less than $\eps n$ balls in all other boxes with more than $K_n$
balls each; thus, either $\yb\le K_n$ or $\yb<\eps n$. Consequently, this
case is defined by
\begin{align}
  \ya&=(\gl-\nu)n+\op(n), \label{ya}
\\
  \yb&=\op(n). \label{yb}
\end{align}
Equivalently, $\ya/n\pto\gl-\nu$ and $\yb/n\pto0$.
This thus describes condensation of the missing balls to a single box.

We will see in \refT{TDzeta} that, indeed, this is the case for the
important example of weights with a power-law.
Another, more extreme  example is \refE{Ek!},
$w_k=k!$, where $\nu=0$, see \refE{EQ0}.

However, if $\ww$ is very irregular, \eqref{ya}--\eqref{yb} do not always hold.
Examples \refand{Eoligo}{Eoligo0}
give examples where, at least for a subsequence, either $\yb/n\pto a>0$, so
there are at least two giant boxes with order $n$ balls each (an ``oligopoly''),
or $\ya/n\pto0$, so there is no giant box with order $n$ balls, and the missing
$(\gl-\nu)n$ balls are distributed over a large number (necessarily
$\to\infty$ as \ntoo) of boxes, each with a large but $o(n)$ number of balls.

\begin{example}\label{EQ}
  We consider \refE{EBpower}; $w_k\sim ck^{-\gb}$ as \ktoo. 
If $\gb\le2$, then $\nu=\infty$, see \eqref{beta2}, and thus $\gl<\nu$ and
Theorems \refand{TD1}{TDx} apply.
We are interested in the case $\gl>\nu$, so we assume $\gb>2$.
In this case, 
\citet{sdf} showed (for the case of random trees)
that when $\gl>\nu$ we have the simple
situation with condensation to a single giant box.
We state this in the next theorem,
which also includes further, more precise, results.
(Note that the case $\gl<\nu$ is 
covered by Theorems \refand{TD1}{TDx}, with $\ya$ of order $\log n$;
the case 
$\gl=\nu$ is studied in Examples \refand{EL8}{EL9} 
for $2<\gb\le3$, and is
covered by \refT{TDx} when $\gb>3$; in both cases $\ya$ is of order
$n^{-1/(\gb-1)}=o(n)$.)
\end{example}


\begin{theorem}
  \label{TDzeta} 
Suppose that 
$w_k\sim ck^{-\gb}$ as \ktoo{} for some $c>0$ and
$\gb>2$. 
Then $\nu<\infty$.
Suppose further $m/n\to\gl>\nu$.
Let $\ga\=\gb-1>1$ and $c'\=c/\Phi(1)$.
\begin{romenumerate}[-5pt]
\item 
The random allocation $\bmn=\YYn$ has largest components
\begin{align}
  \ya&=(\gl-\nu)n+\op(n),\label{tdzeta1}
\\
  \yb&=\op(n).\label{tdzeta2}
\end{align}
\item 
The partition function is asymptotically given by
\begin{equation}\label{tdzeta}
  Z(m,n)\sim c(\gl-\nu)^{-\gb}\Phi(1)^{n-1}n^{1-\gb}.
\end{equation}
\item 
Furthermore,
  \begin{equation}\label{tdzeta=} 
\bigpar{\ya,\yb,\dots,Y\win n}
\dapprox	
\Bigpar{m-\sum_{i=1}^{n-1}\xi_i,\qxi\win1,\dots,\qxi\win{n-1}},
  \end{equation}
where $\qxi\win1,\dots,\qxi\win{n-1}$ are the $n-1$ \iid{}
random variables $\xi_1,\dots,\xi_{n-1}$, with distribution $\ppi$,
ordered in decreasing order. 

\item  
$\ya=m-\nu n+\Op(n\xga)$ and 
  \begin{equation}\label{tdzetastab} 
n\xgaw\bigpar{m-\nu n-\ya}\dto X_\ga,	
  \end{equation}
where $X_\ga$ is an $\ga$-stable random variable with Laplace transform
\begin{equation}\label{tdzetastabL} 
  \E e^{-tX_\ga} = \exp\bigpar{c'\Gamma(-\ga)t^\ga},
\qquad \Re t\ge0.
\end{equation}

\item 
$\yb=\Op(n\xga)$ and
  \begin{equation}\label{tdzetaFre} 
n\xgaw\yb\dto W,	
  \end{equation}
where $W$ has the Fr\'echet distribution 
\begin{equation}
  \P(W\le x)=\exp\Bigpar{-\frac{c'}{\ga}x^{-\ga}},
\qquad x\ge0.
\end{equation}

\item 
More generally, for each $j\ge2$,
$\yj=\Op(n\xga)$ and
  \begin{equation}
n\xgaw\yj\dto W_j,	
  \end{equation}
where $W_j$ has the density function 
\begin{equation}\label{wjdense}
c'x^{-\ga-1}
\frac{\bigpar{c'\ga\qw x^{-\ga}}^{j-2}}{(j-2)!}
\exp\bigpar{-{c'}{\ga\qw}x^{-\ga}},
\qquad x\ge0,
\end{equation}
and $c'\ga\qw W_j^{-\ga}\sim\Gamma(j-1,1)$.
\end{romenumerate}
\end{theorem}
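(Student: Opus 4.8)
The plan is to obtain (vi) from the coupling \eqref{tdzeta=} of part (iii), which reduces the question to a textbook extreme-value computation for an i.i.d.\ sequence. Reading off coordinate $j$ of the two $n$-vectors in \eqref{tdzeta=} (which agree \whp), we get, for every fixed $j\ge2$,
\begin{equation*}
  Y\win j\dapprox\qxi\win{j-1},
\end{equation*}
where $\qxi\win1\ge\dots\ge\qxi\win{n-1}$ is the decreasing rearrangement of $n-1$ i.i.d.\ copies $\xi_1,\dots,\xi_{n-1}$ of a random variable $\xi$ with distribution $\ppi$. (The fact that the giant coordinate $m-\sum_{i<n}\xi_i$ is \whp{} the unique component of size $\Theta(n)$, which legitimises the reordering in \eqref{tdzeta=}, is already part of the proof of (iii)--(v).) So it suffices to find the limit law of $n\xgaw\qxi\win{j-1}$ and to show this quantity is $\Op(1)$.

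Next I would compute the tail of $\xi$. Since $\gl>\nu$ forces $\tau=\rho=1$, we have $\pi_k=w_k/\Phi(1)\sim c'k^{-\gb}$ as \ktoo, with $c'=c/\Phi(1)$; hence $\Pi_k\=\P(\xi\ge k)=\sum_{l\ge k}\pi_l\sim (c'/\ga)\,k^{-\ga}$. Fix $x>0$ and put $k_n\=\floor{xn\xga}$ and $\theta(x)\=(c'/\ga)x^{-\ga}$, so that $(n-1)\Pi_{k_n}\to\theta(x)$. The number $M_n$ of indices $i\le n-1$ with $\xi_i\ge k_n$ has the $\Bi(n-1,\Pi_{k_n})$ distribution, so $M_n\dto\Po(\theta(x))$ by the standard Poisson approximation, and using $\qxi\win{j-1}\le k_n-1\iff M_n\le j-2$ (\cf{} \eqref{cqk}) we obtain
\begin{equation*}
  \P\bigpar{n\xgaw\qxi\win{j-1}\le x}
  \to\P\bigpar{\Po(\theta(x))\le j-2}
  =\sum_{l=0}^{j-2}\frac{\theta(x)^l}{l!}\,e^{-\theta(x)}.
\end{equation*}
Differentiating the right-hand side in $x$ and using $-\theta'(x)=c'x^{-\ga-1}$, the sum telescopes to $c'x^{-\ga-1}(c'\ga\qw x^{-\ga})^{j-2}e^{-c'\ga\qw x^{-\ga}}/(j-2)!$, which is the density \eqref{wjdense}; hence $n\xgaw\qxi\win{j-1}\dto W_j$ and thus $n\xgaw Y\win j\dto W_j$. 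Since $\theta$ is strictly decreasing, the displayed limit also gives $\P\bigpar{c'\ga\qw W_j^{-\ga}\ge u}=\sum_{l=0}^{j-2}u^le^{-u}/l!$ for all $u>0$, \ie{} $c'\ga\qw W_j^{-\ga}\sim\Gamma(j-1,1)$ by the Gamma--Poisson relation; the case $j=2$ reproduces (v), with $W_2=W$ Fr\'echet. Finally, tightness is immediate: $\P\bigpar{\qxi\win{j-1}>An\xga}\le\binom{n-1}{j-1}\Pi_{\floor{An\xga}}^{\,j-1}=O\bigpar{A^{-\ga(j-1)}}\to0$ as $A\to\infty$ uniformly in $n$, so $\qxi\win{j-1}=\Op(n\xga)$ and hence $Y\win j=\Op(n\xga)$.

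Given that part (iii) is the substantive input and is taken as established, (vi) presents no real obstacle; the only place needing a little care is the passage from the vector coupling \eqref{tdzeta=} to the scalar statement $Y\win j\dapprox\qxi\win{j-1}$ — which rests on the giant box being \whp{} the unique component of order $n$ — together with the bookkeeping of constants in the Poisson approximation and the differentiation identifying \eqref{wjdense}. Alternatively, one could bypass \eqref{tdzeta=} and rerun the argument used for (i)--(v) (split off the giant box, condition, and apply the local limit theorem for the $\ga$-stable law \eqref{tdzetastabL}), but routing through the coupling that has already been proved is considerably shorter.
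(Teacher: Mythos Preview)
Your proof is correct and follows essentially the same approach as the paper, which for (vi) simply writes ``Similar, \cf{} \citetq{Section 2.2}{LLR}'': you have spelled out the details that the paper leaves implicit, namely deducing $Y\win j\dapprox\qxi\win{j-1}$ from (iii) and then applying standard extreme-value theory for the i.i.d.\ order statistics. The computations (tail asymptotics, Poisson approximation, differentiation to recover the density, and the Gamma--Poisson identification) are all correct.
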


Note that $\pi_k=w_k/\Phi(1)$ and that $\Gamma(-\ga)>0$ in \eqref{tdzetastabL}.

Part
(iii) shows that $\yb,\dots,\yn$ asymptotically are as order statistics of $n-1$
\iid{}  random variables $\xi_i$; 
thus the giant box absorbs the dependency between the variables
  $Y_1,\dots,Y_n$ introduced by the conditioning in \eqref{ebprob2}.

\begin{remark}\label{RDzeta}
\citet{sdf} considered only trees, and thus $m=n-1$ and $\gl=1$,
and then showed the tree versions of (i) and (ii).
(They further showed \refT{Tmain} when $w_k\sim ck^{-\gb}$.)
In the tree
case (i) says that the random tree $\ctn$ has \whp{} a node of largest
degree $(1-\nu)n+o(n)$, while all other nodes have degrees $o(n)$; further,
by \refT{TZ}, (ii) 
becomes 
\begin{equation}\label{rdzeta}
  Z_n\sim c(1-\nu)^{-\gb}\Phi(1)^{n-1}n^{-\gb}
\sim (1-\nu)^{-\gb}\Phi(1)^{n-1}w_n.
\end{equation}
\end{remark}

\begin{proof}[Proof of \refT{TDzeta}]\CCreset
We may assume that  $w_0>0$ by the argument in \refR{Rmin}.
Furthermore, using \eqref{lb1z} for (ii),
by dividing $w_k$ (and $c$) by $\Phi(1)$, we may assume that $\ww$ is a
\pws, and thus $\Phi(1)=1$. 
For $\gl>\nu$ we have $\tau=\rho=1$, and thus then $\pi_k=w_k$.

\pfitem i
$\Phi(t)$ has radius of convergence $\rho=1$, and since $\gb>2$,
  $\Phi(1)=\sum_k w_k<\infty$ and $\nu=\Phi'(1)/\Phi(1)<\infty$.

Consider as in \refE{EBprob} \iid{} random variables $\xin$ with
distribution $\ppi=\ww$ and mean $\mu=\nu$.

Fix a small $\eps>0$. We assume that $\eps<\gl-\nu$.

By the law of large numbers, $S_{n-1}/n\pto\mu=\nu$. We may thus find a
sequence $\gd_n\to0$ such that $|S_{n-1}-n\nu|\le n\gd_n$ \whp.

Since $m/n-\nu-\gd_n\to\gl-\nu>\eps$, we have $m-\nu n-\gd_n n>\eps n$ for
large $n$; we consider only such $n$.

We separate the event ${S_n=m}$ into four disjoint cases (subevents):
\begin{itemize}
\item [$\cE_1$]: Exactly one $\xi_i>\eps n$, and that $\xi_i$ satisfies
  $|\xi_i-(m-\nu n)|\le\gd_nn$.
\item [$\cE_2$]: Exactly one $\xi_i>\eps n$, and that $\xi_i$ satisfies
  $|\xi_i-(m-\nu n)|>\gd_nn$.
\item [$\cE_3$]:  $\xi_i>\eps n$ for at least two $i\in\setn$.
\item [$\cE_4$]: All $\xi_i\le\eps n$.
\end{itemize}
We shall show that $\cE_1$ is the dominating event.
We define also the events
\begin{itemize}
\item [$\eei$]: $S_n=m$, $|\xi_i-(m-\nu n)|\le\gd_nn$ and 
 $\xi_j\le\eps n$ for $j\neq i$.
\item [$\eexi$]: $S_n=m$, $|\xi_i-(m-\nu n)|\le\gd_nn$.
\item [$\eeexi$]: $S_n=m$, $|\xi_i-(m-\nu n)|>\gd_nn$, $\xi_i>\eps n$.
\item [$\dij$]: $S_n=m$, $\xi_i>\eps n$, $\xi_j>\eps n$.
\end{itemize}
Then $\cE_1$ is the disjoint union $\bigcup_{i=1}^n\eei$, so by symmetry
\begin{equation}
  \label{q2}
\P(\cE_1)=n\P(\eeq1).
\end{equation}
Furthermore, for any $i$,
\begin{equation*}
  \eei\subseteq\eexi\subseteq\eei\cup\bigcup_{j\neq i}\dij
\end{equation*}
and thus, again using symmetry,
\begin{equation}\label{q3}
\P(\eexq1)\ge\P(\eeq1)\ge\P(\eexq1)-n\P(\dab).
\end{equation}
Using 
the fact that 
$|k-(m-\nu n)|\le\gd_nn$ implies
$w_k\sim ck^{-\gb}\sim c(\gl n-\nu n)^{-\gb}$, together with
$|S_{n-1}-n\nu|\le\gd_nn$ \whp{}, we obtain
\begin{equation}
  \label{q4}
  \begin{split}
	\P(\eexq1)
&=\sum_{|k-(m-\nu n)|\le\gd_nn} \P(\xi_1=k,\,S_n=m)\\
&=\sum_{|k-(m-\nu n)|\le\gd_nn} \P(\xi_1=k)\P(S_{n-1}=m-k)\\
&=\sum_{|k-(m-\nu n)|\le\gd_nn}
	c(\gl-\nu)^{-\gb}n^{-\gb}\etto\P(S_{n-1}=m-k)\\
&= c(\gl-\nu)^{-\gb}n^{-\gb}\P\bigpar{|S_{n-1}-n\nu|\le\gd_nn}\etto\\
&= c(\gl-\nu)^{-\gb}n^{-\gb}\etto.
  \end{split}
\raisetag\baselineskip
\end{equation}
Similarly, allowing the constants $C_i$ here and below to depend on $\eps$,
\begin{equation}
  \label{q4b}
  \begin{split}
	\P(\eeexq i)
&=\sum_{|k-(m-\nu n)|>\gd_nn,\,k>\eps n} \P(\xi_i=k,\,S_n=m)\\
&\le \CC(\eps n)^{-\gb}\sum_{|k-(m-\nu n)|>\gd_nn,\,k>\eps n}
  \P(S_{n-1}=m-k)\\
&\le \CC n^{-\gb}\P\bigpar{|S_{n-1}-\nu n|>\gd_nn}
=o\bigpar{n^{-\gb}}.
  \end{split}
\end{equation}
For any $i$ and $j$, by symmetry,
\begin{equation}
  \label{q5}
  \begin{split}
\P(\dij)
&=	\P(\xi_n>\eps n,\,\xi_{n-1}>\eps n,\, S_{n}=m)\\
&=\sum_{k>\eps n}\P(\xi_n=k)\P(S_{n-1}=m-k,\,\xi_{n-1}>\eps n)\\
&\le \CC (\eps n)^{-\gb}\sum_{k>\eps n}\P(S_{n-1}=m-k,\,\xi_{n-1}>\eps n)\\
&\le \CCx (\eps n)^{-\gb}\P(\xi_{n-1}>\eps n)
\le \CC (\eps n)^{1-2\gb}.
  \end{split}
\end{equation}
Hence, \eqref{q3} and \eqref{q4} yield
\begin{equation*}
  \P(\eeq1)
=c(\gl-\nu)^{-\gb}n^{-\gb}+o\bigpar{n^{-\gb}}+O\bigpar{n^{2-2\gb}}
=c(\gl-\nu)^{-\gb}n^{-\gb}+o\bigpar{n^{-\gb}}
\end{equation*}
and hence, by \eqref{q2},
\begin{equation}\label{q51}
  \P(\cE_1)=c(\gl-\nu)^{-\gb}n^{1-\gb}+o\bigpar{n^{1-\gb}}.
\end{equation}
Furthermore, \eqref{q4b} yields 
\begin{equation}\label{q52}
  \P(\cE_2)
\le \sumin\P(\eeexq i)
= n\P(\eeexq1)=o\bigpar{n^{1-\gb}},
\end{equation}
and \eqref{q5} also yields
\begin{equation}\label{q53}
  \P(\cE_3)
\le\sum_{i<j}\P(\dij)\le n^2\P(\dab)=O\bigpar{n^{3-2\gb}}
=o\bigpar{n^{1-\gb}}.
\end{equation}
It remains to estimate $\P(\cE_4)$. We define the truncated variables
$\bxi_i\=\xi_i\ett{\xi_i\le\eps n}$ and $\bS_n\=\sumin\bxi_i$.
Thus
$\cE_4\subseteq\set{\bS_n=m}$ and hence, for every real $s$,
\begin{equation}
  \label{q6}
\P(\cE_4)\le e^{-sm}\E e^{s\bS}
= e^{-sm}\lrpar{\E e^{s\bxi_1}}^n.
\end{equation}
Let $s\=a\log n/n$, for a constant $a>0$ chosen later.
Then,
\begin{equation}
  \label{q6b}
  \begin{split}
\E e^{s\bxi_1}
&=1+s\E\bxi_1+\sum_{k=1}^{\eps n}\pi_k\bigpar{e^{sk}-1-sk}	
\\
&\le1+s\nu+\CC\sum_{k=1}^{2\gb/s}k^{-\gb}s^2k^2
+\CCx\sum_{k=2\gb/s}^{\eps n}k^{-\gb}e^{sk}.
  \end{split}
\end{equation}
We have, treating the cases $2<\gb<3$, $\gb=3$ and $\gb>3$ separately,
using $s\to0$,
\begin{equation*}
\sum_{k=1}^{2\gb/s}s^2k^{2-\gb}
\le \CC s^2 \max\lrpar{1,(2\gb/s)^{3-\gb},\log(2\gb/s)}=o(s).
\end{equation*}
Furthermore, for $k>2\gb/s$,
\begin{equation*}
  \frac{k^{-\gb}e^{sk}}{(k+1)^{-\gb}e^{s(k+1)}}
=\Bigpar{1+\frac1k}^\gb e^{-s}
\le e^{\gb/k-s}
\le e^{s/2-s}
= e^{-s/2}.
\end{equation*}
Hence, the final sum in \eqref{q6b} is dominated by a geometric series
\begin{equation*}
\sum_{k\le \fen}(\fen)^{-\gb}e^{s\fen}e^{-s(\fen-k)/2}
\le \CC s\qw n^{-\gb}e^{s\eps n}
=\CCx s\qw n^{-\gb}e^{a\eps \log n}.
\end{equation*}
If we assume $a\eps\le\gb-2$, the sum is thus $\le \CC n^{1-\gb+a\eps} \le
\CCx n\qw=o(s)$.
Consequently, \eqref{q6b} yields 
\begin{equation*}
  \E e^{s\bxi_1}\le 1+s\nu+o(s)\le\exp\bigpar{s\nu+o(s)}
\end{equation*}
and thus \eqref{q6} yields
\begin{equation}
  \label{q8}
\P(\cE_4)\le\exp\bigpar{-sm+ns\nu+o(ns)}
=\exp\bigpar{-ns(\gl-\nu+o(1))}
=n^{-a(\gl-\nu)+o(1)}.
\end{equation}
We choose first $a\=\gb/(\gl-\nu)$ and then $\eps<(\gb-2)/a$, and see by
\eqref{q8} that then
$\P(\cE_4)=n^{-\gb+o(1)}=o\bigpar{n^{1-\gb}}$.
Combining \eqref{q51}, \eqref{q52}, \eqref{q53} and \eqref{q8}, we find
\begin{equation}\label{111111}
\P(S_n=m)= \P(\cE_1)+o\bigpar{n^{1-\gb}}
=c(\gl-\nu)^{-\gb}n^{1-\gb}+o\bigpar{n^{1-\gb}},
\end{equation}
and, in particular, $\P(\cE_1\mid S_n=m)\to1$.
Consequently, by conditioning on $S_n=m$ we see that 
\whp{} $|\ya-(m-\nu n)|\le\gd_nn$ and $\yb\le\eps n$.
Since $\eps$ can be chosen arbitrarily small, this completes the proof of
\eqref{tdzeta1}--\eqref{tdzeta2}. 

\pfitem{ii}
$Z(m,n)=\P(S_n=m)$,
so \eqref{tdzeta} follows from \eqref{111111}, since we assume $\Phi(1)=1$.

\pfitem{iii}
Since $\cE_1\subseteq\set{ S_n=m}$
and $\P(\cE_1\mid S_n=m)\to1$, 
\begin{equation}
  \label{wm1}
\YYn
\eqd\bigpar{(\xin)\mid S_n=m}
\dapprox\bigpar{(\xin)\mid \cE_1}.
\end{equation}
When we consider the ordered variables $\ya,\dots,Y\win n$, we may by
symmetry condition on $\eeq n$ instead of $\cE_1$.
Note that $\eeq n$ is the event $(\xin)\in A$, where $A$ is the set
\begin{equation*}
  \Bigset{(x_1,\dots,x_n):x_j\le\eps n \text{ for $j\le n-1$},
\, x_n=m-\sum_{i=1}^{n-1}x_i,\, \Bigabs{\sum_{i=1}^{n-1}x_i-\nu n}\le\gd_nn}.
\end{equation*}
Since $(x_1,\dots,x_n)\in A$ implies $|x_n-(m-\nu n)|\le\gd_n n$, we then
have, similarly to \eqref{q4},
\begin{equation*}
  \P(\xi_n=x_n)\sim c x_n^{-\gb}\sim c(m-\nu n)^{-\gb}
\sim c(\gl-\nu)^{-\gb}n^{-\gb}.
\end{equation*}
Furthermore, $x_1,\dots,x_{n-1}$ determine $x_n$ by $\sum_1^nx_i=m$. It
follows that, uniformly for all $(x_1,\dots,x_n)\in A$,
\begin{equation*}
  \begin{split}
&\P\bigpar{(\xin)=(x_1,\dots,x_n)}
\\
&\qquad=\etto c(\gl-\nu)^{-\gb}n^{-\gb}
\P\bigpar{(\xi_1,\dots,\xi_{n-1})=(x_1,\dots,x_{n-1})}
\\
&\qquad=\etto c(\gl-\nu)^{-\gb}n^{-\gb}
\P\bigpar{(\xi_1,\dots,\xi_{n-1},m-S_{n-1})=(x_1,\dots,x_n)}.
  \end{split}
\end{equation*}
Hence, since the factor $c(\gl-\nu)^{-\gb}n^{-\gb}$ is a constant for
each $n$,
\begin{equation}\label{wm2}
\bigpar{(\xin)\mid\eeq n}
\dapprox
\bigpar{(\xi_1,\dots,\xi_{n-1},m-S_{n-1})\mid \eqq},
\end{equation}
where $\eqq$ is the event
\begin{equation}\label{wm3}
\bigset{(\xi_1,\dots,\xi_{n-1},m-S_{n-1})\in A}
=
  \bigset{\xi_j\le\eps n \text{ for $j\le n-1$},
\,  \bigabs{S_{n-1}-\nu n}\le\gd_nn}.
\end{equation}
If $\eqq$ holds, then $m-S_{n-1}\ge m-\nu n-\gd_nn>\eps n$ (for large $n$),
so the largest variable among
$\xi_1,\dots,\xi_{n-1},m-S_{n-1}$ is $m-S_{n-1}$. Hence, ordering the
variables, we obtain using \eqref{wm1}--\eqref{wm2}
\begin{equation}\label{wm33}
\bigpar{\ya,\dots,Y\win n}
\dapprox
\bigpar{(m-S_{n-1},\qxi\win 1,\dots,\qxi\win{n-1})\mid \eqq}.
\end{equation}

Finally, observe that $|S_{n-1}-\nu n|\le\gd_nn$ \whp{} and
\begin{equation*}
  \P\bigpar{\xi_j>\eps n \text{ for some } j\le n-1}
\le n\P(\xi_1>\eps n) = O\bigpar{n^{2-\gb}}\to0.
\end{equation*}
Hence, $\P(\eqq)\to1$, and thus
\begin{equation}\label{wm4}
\bigpar{(m-S_{n-1},\qxi\win 1,\dots,\qxi\win{n-1})\mid \eqq}
\dapprox
\bigpar{m-S_{n-1},\qxi\win 1,\dots,\qxi\win{n-1}}.
\end{equation}
The result \eqref{tdzeta=} follows from \eqref{wm33} and \eqref{wm4}.

\pfitem{iv}
By (iii), $m-n\nu-\ya\dapprox \sum_{i=1}^{n-1}\xi_i-n\nu$, and
\eqref{tdzetastab} follows by standard results on domains of attraction for
stable distributions, see \eg{} \citetq{Section XVII.5}{FellerII}.

\pfitem{v}
By (iii), $\yb\dapprox \qxi\win1$, and
\eqref{tdzetaFre} follows by standard results on the maximum of \iid{}
random variables, as in \eg{} \citet{LLR}: using
$\P(\xi> x)\sim c\ga\qw x^{-\ga}$ as \xtoo, we have
\begin{equation*}
  \begin{split}
  \P(\ya\le xn\xga)
&=
  \P(\qxi\win1\le xn\xga)+o(1)
=
  \P(\xi\le xn\xga)^{n-1}+o(1)
\\&
=
\bigpar{1-(c\ga\qw+o(1))( xn\xga)^{-\ga}}^{n-1}+o(1)	
\\&
\to
\exp\bigpar{-c\ga\qw x^{-\ga}}.
  \end{split}
\end{equation*}

\pfitem{vi}
Similar, \cf{}  \citet[Section 2.2]{LLR}.
\end{proof}

\begin{example}
  \label{EQ0} 
If we take $w_k=k!$, then $\nu=\rho=0$. Consider the tree case $m=n-1$.
By \refE{Ek!}, translating to balls-in-boxes,
\whp{}
there are $N_1$ boxes with 1 ball each and a single box with the remaining
$n-1-N_1$ balls, while all other boxes are empty; furthermore, 
$N_1\dto\Po(1)$ so
$N_1=\Op(1)$.
Hence,   $\ya=n-\Op(1)$ and $\yb\le1$ \whp.

If we take $w_k=k!^\ga$ with $0<\ga<1$, and still $m=n-1$, then by
\refE{Ek!a} and \cite{SJ259}, $\ya=n-\Op(n^{1-\ga})=n-\op(n)$ 
and $\yb\le\floor{1/\ga}$ \whp.

If we take  $w_k=k!^\ga$ with $\ga>1$, and still $m=n-1$, then by
\refE{Ek!a}, \whp{} there is a single box containing all $n-1$ balls; thus
$\ya=n-1$ and $\yb=0$ \whp.

In particular,  \eqref{ya}--\eqref{yb} hold, with $\gl=1$ and $\nu=0$, for
all three cases.
We guess that the same is true for any $\gl<\infty$, but we have not checked
the details.
\end{example}

\begin{example}
  \label{Eoligo} 
We consider the tree case $m=n-1$.
Let $\Sigma\=\set{k_0,k_1,\dots}$ be an infinite set with $k_0=0$, $k_1=1$,
$k_2=2$, and 
$k_j$ for $j\ge3$ chosen recursively as specified below.
Let $w_k=(k+1)^{-4}$ for $k\in\Sigma$, and $w_k=0$ otherwise; thus,
$\suppw=\Sigma$.
($\Sigma=\No$ gives \refE{Ezeta} with $\gb=4$.)
Then $\rho=1$ and
\begin{equation}
\label{eoo}
\nu=\Psi(1)=\frac{\sumk kw_k}{\sumk w_k}
\le\frac{\sumk k(k+1)^{-4}}{w_0}
=\zeta(3)-\zeta(4)<0.2<1;
\end{equation}
thus $\tau=\rho=1$.

To begin with, we require that $k_j\ge jk_{j-1}$ for $j\ge3$.
Take $n=k_j$. A good allocation of $n-1$ balls in $n$ boxes has at most
$k_{j-1}$ balls in any box, since $n-1<k_j$, so
\begin{equation}
  \ya\le k_{j-1}\le k_j/j=n/j.
\end{equation}
Hence, for $n$ in the subsequence \set{k_j}, the random allocation $\bnn$
has $\ya=o(n)$.

Next, suppose that $k_0,\dots,k_{j-1}$ are given, and let $\wwkjx$ be $\wwx$
truncated at $k_{j-1}$ as in \eqref{trunc}; for ease of notation we denote
the corresponding generating function by 
$\Phi_j(t)\=\sum_{i=0}^{j-1} w_{k_i}t^{k_i}$
and write $\Psi_j(t)\=t\Phi_j'(t)/\Phi_j(t)$
and $Z_j(m,n)\=Z(m,n;\wwkjx)$.
Note that \eqref{eoo} applies to each $\Psi_j$ too, and thus
\begin{equation}
\label{eoox}
\Psi_j(1)<0.2.
\end{equation}

Take $n=3k_j$ (where $k_j$ is not yet determined). 
A good allocation with $n-1$ balls has at most 2 boxes with $k_j$ balls, and
for the remaining boxes the weights $\wwx$ and $\wwkjx$ coincide. We thus
obtain
\begin{multline}
  \label{zoo}
Z(3k_j-1,3k_j)
=
Z_j(3k_j-1,3k_j)
+3k_jw_{k_j} Z_j(2k_j-1,3k_j-1)
\\
+\binom{3k_j}2w_{k_j}^2 Z_j(k_j-1,3k_j-2).  
\end{multline}
Let the three terms on the \rhs{} be $A_0,A_1,A_2$, where $A_i$ corresponds
to the case when $i$ boxes have $k_j$ balls. 
The generating function $\Phi_j$ is a polynomial, with radius of convergence
$\rho_j=\infty$ and, by \refL{LPsi},
$\nu_j\=\Psi_j(\infty)=\go(\wwkjx)=k_{j-1}\ge2$. Define $\tau,\tau'$ and
$\tau''$  by 
$\Psi_j(\tau_j)=1$,
$\Psi_j(\tau_j')=2/3$,
$\Psi_j(\tau_j'')=1/3$.
Since $\Psi_j(1)<1/3$ by \eqref{eoox},
we have $1<\tau_j''<\tau_j'<\tau_j<\infty$.

\refT{TBZ} applies to each term $A_i$ in \eqref{zoo}, with
$\gl=1,\frac23,\frac13$, respectively; hence, as $k_j\to\infty$,
\begin{align}
  \log A_0 &= 3k_j \log\frac{\Phi_j(\tau_j)}{\tau_j}+o(k_j),
\label{aa0}\\
  \log A_1 & = 3k_j \log\frac{\Phi_j(\tau_j')}{(\tau_j')\qqqb}+o(k_j),
\label{aa1}\\
  \log A_2 &= 3k_j \log\frac{\Phi_j(\tau_j'')}{(\tau''_j)\qqq}+o(k_j).
\label{aa2}
\end{align}
By \eqref{annagl} and $\tau_j''>1$,
\begin{equation*}
  \frac{\Phi_j(\tau_j)}{\tau_j} \le   \frac{\Phi_j(\tau_j'')}{\tau_j''}
<   \frac{\Phi_j(\tau_j'')}{(\tau_j'')\qqq}
\end{equation*}
and
\begin{equation*}
  \frac{\Phi_j(\tau_j')}{(\tau_j')\qqqb} 
\le   \frac{\Phi_j(\tau_j'')}{(\tau_j'')\qqqb}
<   \frac{\Phi_j(\tau_j'')}{(\tau_j'')\qqq}.
\end{equation*}
Hence, the constant multiplying $k_j$ is larger in \eqref{aa2} than in
\eqref{aa0} and \eqref{aa1}, so by choosing $k_j$ large enough, we obtain
$A_2>jA_1$ and $A_2>jA_0$, and thus
\begin{equation}
  \label{aaa}
\P(B_{3k_j-1,3k_j} \text{ has 2 boxes with $k_j$ balls})
=\frac{A_2}{A_0+A_1+A_2}>1-\frac2j.
\end{equation}

This constructs recursively the sequence $(k_j)$ and thus $\Sigma$ and
$\wwx$, and \eqref{aaa} shows that for $n$ in the subsequence $(3k_j)_j$,
$\bnn$ \whp{} has 2 boxes with $n/3$ balls each.

By \refL{Lx}, it follows that, for this subsequence, $\ctn$ \whp{} has 2
nodes with outdegrees $n/3$.

To summarise, we have found a \ws{} with $0<\nu<1$ such that, with $m=n-1$,
for one subsequence 
\begin{equation}
  \ya/n\to0
\end{equation}
and for another subsequence \whp{}
\begin{equation}
  \ya=\yb=n/3.
\end{equation}
Hence, neither \eqref{ya} nor \eqref{yb} holds.
(It is easy to modify the construction such that for every $\ell\ge1$, there
is a subsequence with $\ya=\dots=Y\win\ell=n/(\ell+1)$.)
\end{example}

\begin{example}
  \label{Eoligo0} 
Let $\gS\=\set0\cup\set{2^i:i\ge0}$. We will construct a \ws{} $\wwx$
recursively with
support $\supp(\wwx)=\gS$ and $\rho=0$.
Let $w_0=1$.

Let $i\ge0$. If $w_0,\dots,w_{2^{i-1}}$ are fixed and we let
$w_{2^i}\to\infty$, then for every $m$ with $2^i\le m<2^{i+1}$ and every
$n$,
\begin{equation}
\P(\bmn \text{ contains a box with $2^i$ balls}) \to 1.
\end{equation}
Hence, we can recursively choose $w_{2^i}$ so large that, for every $i\ge0$,
if $2^i\le m<2^{i+1}$ and $2^i\le n\le 2^{2i}$, then, by \eqref{pbmn},
\begin{equation}\label{hjalmar}
\P(\bmn \text{ contains a box with $2^i$ balls}) >1-i\qw.
\end{equation}
We further take $w_{2^i}\ge (2^i)!$; thus $\rho=0$ and $\nu=0$.

Consider the tree case, $m=n-1$. Thus $\gl=1$.
If $2^i<n\le 2^{i+1}$, then \eqref{hjalmar} applies and shows that $\bnn$
\whp{} contains a box with $2^i$ balls, so \whp{}
\begin{equation}
\ya=2^{\floor{\log_2(n-1)}}  =2^{\ceil{\log_2n}-1}  .
\end{equation}
Hence, $\ya/n$ \whp{} is a (non-random) value that oscillates between
$\frac12$ and 1, depending on the fractional part 
$\frax{\log_2n}$ of $\log_2n$.
Consequently, \eqref{ya} holds for subsequences such that 
$0\neq\frax{\log_2n}\to0$, but not in general.

Moreover, conditioned on the existence of a box with $2^i$ balls, the
remainder of the allocation is a random allocation $B_{m-2^i,n-1}$ of the
remaining $m-2^i$ balls in $n-1$ boxes.
For example, if $n=2^{i+1}$, so $m=2^{i+1}-1$, we have $m-2^i=2^i-1$, and we
can apply \eqref{hjalmar} again (with $i-1$) to see that \whp{}
$\yb=2^{i-1}=n/4$. Continuing in the same way we see that for $n$ in the
subsequence $(2^i)$, we have, for each fixed $j$, \whp{}
\begin{equation}  \label{oligo0} 
  \yj=2^{-j} n.
\end{equation}
Hence neither \eqref{ya} nor \eqref{yb} holds in this case.

Similar results follow easily for other subsequences. For example, for $n$
in the subsequence $(\floor{r 2^i})_{i\ge1}$, where $\frac12<r<1$ and $r$
has the infinite binary expansion $r=2^{-\ell_1}+2^{-\ell_2}+\dots$, with
$1=\ell_1<\ell_2<\dots$, we have \whp{} $\yj=2^{-\ell_i}\ceil{n/r}$ for each
fixed $j$.
\end{example}

\begin{example}\label{E00}
Let again $m=n-1$, so $\gl=1$.
Taking $w_k=k!$ for $k\in\supp(\wwx)=\set0\cup\set{i!:i\ge0}$,
we obtain an example with $\rho=0$ and thus $\nu=0$ such that $\ya/n\to0$
for some subsequences, for example for $n=i!$ (since then $\ya\le(i-1)!$). 
\end{example}

\begin{problem}
  Is $\ya/n\pto0$ possible when $0<\nu<\gl$? 
\refE{Eoligo} shows that this is possible for a subsequence, but we
  conjecture that it is not possible for the full sequence,
and, a little stronger, that there always is some $\eps>0$ and some
subsequence along which $\ya\ge\eps n$ \whp.
\end{problem}

\begin{problem}
    Is $\ya/n\pto0$ possible when $\gl>\nu=0$? 
(\refE{E00} shows that this is possible for a subsequence.)
\end{problem}

We expect that bad behaviour as in the examples above only can occur for
quite irregular \ws{s}, but we have no general result beyond \refT{TDzeta}.
We formulate two natural problems.

\begin{problem}
  Suppose that $w_k\ge w_{k+1}$ for all (large) $k$.
Does this imply that \eqref{ya}--\eqref{yb} hold when $\gl>\nu$?
\end{problem}

\begin{problem}
  Suppose that $w_{k+1}/ w_{k}\to\infty$ as \ktoo.
(Hence, $\rho=0$ and $\nu=0$.)
Does this imply that \eqref{ya}--\eqref{yb} hold when $\gl>\nu$?
\end{problem}

\subsection{Applications to random forests}\label{SSmaxforests}
We give some applications of the results above to the size of the largest
tree(s) in different types of random forests witn $n$ trees and $m\ge n$ nodes.
We consider only the case $m/n\to\gl$ with $1<\gl<\infty$; for simplicity we
further assume that $m=\gl n+O(1)$, although this can be relaxed and, moreover,
the general case $m/n\to\gl$ can be handled by using $\gl_n\=m/n$ and the
corresponding $\tau_n\=\tau(\gl_n)$ as in \refT{TBmain2}; for details and 
for results in the cases $m=n+o(n)$ and $m/n\to\infty$, see 
\citet{Pavlov:max,Pavlov:limit,Pavlov,Pavlov:unlabelled}, 
\citet{Kolchin},
\citet{LP},
\citet{KazPavlov} and \citet{Pavlov:unun}.

The random forests considered here are described by balls-in-boxes with
\ws{s} with $w_0=0$ and $w_1>0$, see \refS{Sex+}.
As usual, we use (without further comments) 
the argument in \refR{Rmin} to extend 
theorems above to the case $w_0=0$. (See \refR{RTa}.)

We first consider random rooted forests as in \refE{Eforest}.
We have
\begin{equation}\label{wkb}
 w_k=\frac{k^{k-1}}{k!}
\sim 
\frac1{\sqrt{2\pi}} k^{-3/2}e^k,
\qquad\text{as \ktoo}, 
\end{equation}
and thus $w_{k+1}/w_k\to e$ as \ktoo.
(Alternatively, we may use $\tw_k\=e^{-k}w_k\sim(2\pi)\qqw k^{-3/2}$, 
see \refE{EBpower}.)
Since $\nu=\infty$, see Examples \ref{Eforest} and \ref{EBpower}, $\gl<\nu$ and
\refT{Ta} applies for any $\gl\in(1,\infty)$.

We have $a=e$ and thus, by \eqref{foresttau},
\begin{equation}\label{aq}
  q\=\tau e=\frac{\gl-1}{\gl}e^{1/\gl}\in(0,1)
\end{equation}
and, consequently,
\begin{equation}\label{alq}
  \log(1/q)=-\log q=-\log\Bigpar{1-\frac1\gl}-\frac1\gl >0.
\end{equation}
As \ktoo, by \eqref{treefn}, \eqref{forestTtau} and \eqref{wkb},
\begin{equation}\label{adv}
  \pi_k=\frac{w_k\tau^k}{\Phi(\tau)}
=\frac{\gl}{\gl-1}w_k\tau^k
\sim
(2\pi)\qqw\frac{\gl}{\gl-1}k^{-3/2}q^k.
\end{equation}

It follows that $\pi_{k(n)}=\Theta(1/n)$ for 
\begin{equation}\label{fork}
  k(n)=\frac{\log n-\frac32\log\log n}{\log(1/q)}+O(1),
\end{equation}
and then \eqref{nq} yields
\begin{equation}
  N\sim n \frac{\gl}{\sqrt{2\pi}(\gl-1)(1-q)}k(n)^{-3/2}
\sim \frac{\gl\log^{3/2}(1/q)}{\sqrt{2\pi}(\gl-1)(1-q)} n\log^{-3/2}n.
\end{equation}
Consequently, \refT{Ta}(ii) yields the following theorem
for the maximal tree size $\ya$; this is due to
\citet{Pavlov:max,Pavlov} (in a slightly different formulation),
who also gives further results.
We further use
Theorem \ref{TDa}(i) to give a simple estimate for the size $\yj$ of the
$j$:th largest tree.
(More precise limit results for $\yj$ are also easily obtained from
\eqref{cdx1}.)

\begin{theorem}\label{Tmax1}
For a random rooted forest,
  with $m=\gl n+O(1)$ where $1<\gl<\infty$, 
\begin{equation}\label{rwws}
  \ya\dapprox \lrfloor{\frac{\log n -\frac32\log\log n+\log b +W}{\log(1/q)}},
\end{equation}
where $W$ has the Gumbel distribution \eqref{gumbel} and
\begin{equation}
  b\=\frac{\gl\log^{3/2}(1/q)}{\sqrt{2\pi}(\gl-1)(1-q)} 
\end{equation}
with $q$ given by \eqref{aq}--\eqref{alq}.

Furthermore, $\yj=\ya+\Op(1)$ for each fixed $j$.
\nopf
\end{theorem}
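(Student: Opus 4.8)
The preamble to the theorem already assembles the relevant computations; the plan is simply to verify that the rooted‑forest weight sequence lies in the scope of \refT{Ta} and then to make the three inputs of \refT{Ta}(ii) explicit, and to read off the $\yj$ claim from \refT{TDa}. First I would record the parameters. By Stirling's formula $w_k=k^{k-1}/k!\sim(2\pi)^{-1/2}k^{-3/2}e^k$, so $w_{k+1}/w_k\to e=:a<\infty$; moreover $\nu=\infty$ (Examples \ref{Eforest} and \ref{EBpower}), so $0<\gl<\nu$ for every $\gl\in(1,\infty)$, and since $w_0=0$ with $\ga=\min\set{k:w_k>0}=1<\gl$, the extension of \refR{RTa} (via \refR{Rmin}) makes \refT{Ta} applicable, with $m=\gl n+O(1)=\gl n+o(\sqrt n)$. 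Here $\tau$ is defined by $\Psi(\tau)=\gl$, which by \eqref{forestTtau}--\eqref{foresttau} gives $T(\tau)=(\gl-1)/\gl$ and $\tau=\frac{\gl-1}{\gl}e^{-(\gl-1)/\gl}$, hence $q=\tau a=\tau e=\frac{\gl-1}{\gl}e^{1/\gl}$ and $\log(1/q)=-\log(1-1/\gl)-1/\gl>0$, since $-\log(1-x)>x$ on $(0,1)$; thus $0<q<1$.

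Next I would work out the asymptotics of $\pi_k$ and hence of $k_1(n)$ and $N$. By \eqref{pk}, $\pi_k=w_k\tau^k/\Phi(\tau)=\frac{\gl}{\gl-1}w_k\tau^k\sim(2\pi)^{-1/2}\frac{\gl}{\gl-1}k^{-3/2}q^k$ as \ktoo, which is \eqref{adv}. Solving $\pi_k\asymp1/n$ on a logarithmic scale shows that $k(n)\=k_1(n)$ (the largest $k$ with $\pi_k\ge1/n$) satisfies $k(n)=\frac{\log n-\frac32\log\log n}{\log(1/q)}+O(1)$, which is \eqref{fork}, and $\pi_{k(n)}=\Theta(1/n)$. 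Substituting this into \eqref{nq} and using $k(n)\sim\log n/\log(1/q)$,
\begin{equation*}
N\sim\frac{n\pi_{k(n)}q^{-k(n)}}{1-q}
\sim\frac{\gl}{\sqrt{2\pi}(\gl-1)(1-q)}\,n\,k(n)^{-3/2}
\sim\frac{\gl\log^{3/2}(1/q)}{\sqrt{2\pi}(\gl-1)(1-q)}\,n\log^{-3/2}n
= b\,n\log^{-3/2}n ,
\end{equation*}
with $b$ as in the statement; in particular $\log N=\log n-\frac32\log\log n+\log b+o(1)$.

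Then I would invoke \refT{Ta}(ii), which gives $\ya\dapprox\floor{W/\log(1/q)+\logq N}$ with $W$ Gumbel. Writing $\logq N=\bigpar{\log n-\frac32\log\log n+\log b+o(1)}/\log(1/q)$ and absorbing the $o(1)$ term inside the floor — legitimate because $W$ is absolutely continuous, so $\floor{W/c+x_n}$ and $\floor{W/c+x_n'}$ have the same limiting distribution whenever $x_n-x_n'\to0$, exactly the device used in the proof of \refT{Ta}(iii) together with \refL{LM2} — yields \eqref{rwws}. For the last assertion, \refT{TDa}(i) applies (again extended to $w_0=0$ by \refR{Rmin}): since $w_{k+1}/w_k\to e<\infty$ and $0<\gl<\nu$, it gives $\yj=k_1(n)+\Op(1)$ for every fixed $j\ge1$; taking $j=1$ gives $\ya=k_1(n)+\Op(1)$ as well, whence $\yj=\ya+\Op(1)$.

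Everything here is routine bookkeeping; the only step needing a little care is the passage from $\logq N$ to the clean floor expression in \eqref{rwws}, where one must discard the $o(1)$ error — this is the absolute‑continuity argument just described. There is also the minor task of tracking the constant $b$ accurately through \eqref{adv} and \eqref{nq} and of confirming $0<q<1$, but these pose no real obstacle.
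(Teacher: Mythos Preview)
Your proposal is correct and follows essentially the same approach as the paper: the paper derives the theorem in the preamble text by applying \refT{Ta}(ii) (extended via \refR{RTa} to the case $w_0=0$) after computing $q$, $\pi_k$, $k(n)$ and $N$ exactly as you do, and then invokes \refT{TDa}(i) for the $\yj$ statement. Your additional remarks on absorbing the $o(1)$ inside the floor via absolute continuity of $W$ and on verifying $0<q<1$ are sound and fill in details the paper leaves implicit.
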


Next, let us, more generally, consider a random \sgf{} as in \refE{EGWforest},
defined by a \ws{} $\wwx$. Then the tree sizes in the random forest are
distributed as balls-in-boxes with the \ws{} 
$(Z_k)_{k=0}^\infty$, where $Z_k$ is the partition function \eqref{zn} for
\sgt{s} with \ws{} $\wwx$ (and $Z_0=0$).

We assume that $\nu(\wwx)\ge1$; thus there exists 
$\tau_1>0$ such that $\Psi(\tau_1)=1$, and then
$\wwx'\=(\tau_1^kw_k/\Phi(\tau_1))_k$ is an equivalent \pws{} 
with
expectation 1, see \refL{LEPsi}. 
($\tau_1$ is the same as $\tau$ in \refT{Tmain}, but here we need to
consider several different $\tau$'s so we modify the notation.)
This \pws{} $\wwx'$ defines the same random forest,
which thus can be realized as a conditioned critical \GWf.
Recall from \eqref{lep4} and \refT{Tmain} that the probability distribution
$\wwx'$ has
variance $\gss=\tau_1\Psi'(\tau_1)$; we assume that $\gss$ is finite, which
always 
holds if $\nu(\wwx)>1$ and thus $\tau_1<\rho(\wwx)$.
We further assume, for simplicity, that $\wwx$ has span 1.
We then have the following generalization of \refT{Tmax1},
see \citet{Pavlov:limit,Pavlov}, where also further results are given.

\begin{theorem}\label{Tmax2}
Consider a \sgrf{} defined by a \ws{} $\wwx$, and assume that 
$m=\gl n+O(1)$ where $1<\gl<\infty$.
Suppose that $\nu(\wwx)\ge1$
and $\spann(\wwx)=1$. 
Define $\tau_1>0$ by $\Psi(\tau_1)=1$, and assume that
$\gss\=\tau_1\Psi'(\tau_1)<\infty$ (this is automatic if $\nu(\wwx)>1$).
Define further $\tau_2>0$ by 
\begin{equation}\label{vspsi}
  \Psi(\tau_2)=1-1/\gl
\end{equation}
and let
\begin{equation}\label{vsq}
  q\=\frac{\tau_2}{\Phi(\tau_2)}\cdot\frac{\Phi(\tau_1)}{\tau_1}.
\end{equation}
Then $0<q<1$ and 
\begin{equation}\label{wws}
  \ya\dapprox \lrfloor{\frac{\log n -\frac32\log\log n+\log b +W}{\log(1/q)}},
\end{equation}
where $W$ has the Gumbel distribution \eqref{gumbel} and
\begin{equation}\label{vsb}
  b\=\frac{\tau_1\log^{3/2}(1/q)}{\tau_2\sqrt{2\pi\gss}(1-q)}. 
\end{equation}

Furthermore, $\yj=\ya+\Op(1)$ for each fixed $j$.
\end{theorem}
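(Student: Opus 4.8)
The plan is to reduce the statement to the balls-in-boxes asymptotics of \refT{Ta} and \refT{TDa}. By \refE{EGWforest}, the vector of tree sizes of the \sgrf{} is a random allocation $\bmn$ of $m$ balls in $n$ boxes governed by the weight sequence $(Z_k)_{k=0}^\infty$, where $Z_k$ is the tree partition function \eqref{zn}; note $Z_0=0$ and $Z_1=w_0>0$, so $\min\set{k:Z_k>0}=1$, and the case $w_0=0$ for this derived weight sequence is handled by the standard reduction in \refR{Rmin} and \refR{RTa} (which needs $\gl>1$, as assumed). First I would record that, by \refC{Cexists} and $\spann(\wwx)=1$, the weight sequence $(Z_k)$ has $\go=\infty$ and, after shifting, span $1$, so that \refT{Ta} and \refT{TDa} are applicable in principle.

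The key input is \refT{TH1}: under the hypotheses $\nu(\wwx)\ge1$, $\spann(\wwx)=1$, $\gss=\tau_1\Psi'(\tau_1)<\infty$, one has $Z_k\sim\frac{\tau_1}{\sqrt{2\pi\gss}}(\Phi(\tau_1)/\tau_1)^k k\qqcw$. I would extract two consequences. First, $Z_{k+1}/Z_k\to a\=\Phi(\tau_1)/\tau_1=1/\rhoz\in(0,\infty)$, so $a$ plays the role of the limit ratio in \refT{Ta} and \refT{TDa}. Second, the generating function of $(Z_k)$ is $\cZ$, which has radius of convergence $\rhoz=\tau_1/\Phi(\tau_1)$ with $\cZ(\rhoz)=\tau_1<\infty$ (by \eqref{otter}) but $\cZ'(\rhoz)=\infty$ (differentiate \eqref{cz}: the denominator $1-\rhoz\Phi'(\tau_1)=1-\Psi(\tau_1)$ vanishes), whence $\nu((Z_k))=\infty$ by \eqref{nu1}. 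Thus $1<\gl<\nu((Z_k))$, so \refT{Ta} and \refT{TDa}(i) apply for every $\gl\in(1,\infty)$.

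Next I would translate the parameters. Differentiating $\cZ(z)=z\Phi(\cZ(z))$ gives $\Psi_{\cZ}(z)=z\cZ'(z)/\cZ(z)=1/(1-z\Phi'(\cZ(z)))$, so the ``$\tau$'' of \refT{Ta} for the weight sequence $(Z_k)$ at ratio $\gl$ is the $\sigma$ solving $\sigma\Phi'(\cZ(\sigma))=1-1/\gl$. Since $\nu(\wwx)\ge1>1-1/\gl>0$, the number $\tau_2$ of \eqref{vspsi} exists with $\tau_2<\tau_1$ ($\Psi$ increasing, \refL{LPsi}), and because $t\mapsto t/\Phi(t)$ is increasing on $[0,\tau_1]$ and inverts $\cZ$ there (\refR{Rotter}), the choice $\sigma=\tau_2/\Phi(\tau_2)$ gives $\cZ(\sigma)=\tau_2$ and $\sigma\Phi'(\cZ(\sigma))=\Psi(\tau_2)=1-1/\gl$; hence this $\sigma$ is the required parameter, and $q\=\sigma a=\frac{\tau_2}{\Phi(\tau_2)}\cdot\frac{\Phi(\tau_1)}{\tau_1}\in(0,1)$ is exactly \eqref{vsq}. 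Then the canonical probabilities are $\pi_k=Z_k\sigma^k/\cZ(\sigma)=Z_k\sigma^k/\tau_2\sim\frac{\tau_1}{\tau_2\sqrt{2\pi\gss}}q^k k\qqcw$; choosing $k(n)$ with $\pi_{k(n)}=\Theta(1/n)$ gives $k(n)=(\log n-\tfrac32\log\log n)/\log(1/q)+O(1)$, and \eqref{nq} gives $N\sim b\,n(\log n)\qqcw$ with $b$ as in \eqref{vsb}. Feeding this into \refT{Ta}(ii) (absorbing the $o(1)$ inside the floor via the absolute continuity of $W$ and \refL{LM2}, as in the proof of \refT{Ta}(iii)) yields \eqref{wws}, and \refT{TDa}(i) gives $\yj=k_1(n)+\Op(1)=\ya+\Op(1)$ for each fixed $j$.

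The main obstacle, I expect, is not the probability theory — that is entirely packaged in \refT{Ta}, \refT{TDa} and \refT{TH1} — but the clean parameter identification $\sigma=\tau_2/\Phi(\tau_2)$, which requires computing $\Psi_{\cZ}$ from the functional equation \eqref{cz} and correctly invoking the inverse-function relationship between $\cZ$ and $t/\Phi(t)$; a secondary nuisance is bookkeeping the constant $b$ through \refT{TH1}, \eqref{nq}, and the $w_0=0$ shift (one should check that the shift replaces $\Psi_{\cZ}$ by $\Psi_{\cZ}-1$ and $\gl$ by $\gl-1$, leaving $\sigma$, $q$, and $b$ intact, cf.\ \refR{RTa}).
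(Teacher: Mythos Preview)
Your proof is correct and takes essentially the same approach as the paper: the reduction to \bib{} with weight sequence $(Z_k)$, the asymptotic for $Z_k$ from \refT{TH1}, and the application of \refT{Ta}(ii) and \refT{TDa}(i) are identical. The only difference is organisational: the paper pre-tilts $\wwx$ by $\tau_2$ so that the resulting tree-size distribution $(\tZ_k)$ is already a \pws{} with mean $\sum_k k\tZ_k=1/(1-\Psi(\tau_2))=\gl$ (the subcritical total-progeny formula), making the ``$\tau$'' of \refT{Ta} equal to $1$ and thereby avoiding your explicit computation of $\Psi_{\cZ}$ from the functional equation and its inversion via $t\mapsto t/\Phi(t)$ --- but the resulting $\pi_k=\tZ_k$, $q$ and $b$ are literally the same as yours.
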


\begin{proof}
Replace $\wwx$ by the equivalent \pws{} $\twwx=(\tw_k)$ with
$\tw_k\=\tau_2^kw_k/\Phi(\tau_2)$.
This \pws{} has expectation $\Psi(\tau_2)<1$ by 
\eqref{lep3}, and using it we realize the random forest as a conditioned
subcritical \GWf. 
The partition function $\tZ_k$ for $\twwx$ is by 
\eqref{tz} and \refT{TH1},
\begin{equation}\label{vstz}
\tZ_k
=\frac{\tau_2^{k-1}}{\Phi(\tau_2)^k}Z_k
\sim 
\frac{1}{\sqrt{2\pi\gss}}
\frac{\tau_2^{k-1}}{\Phi(\tau_2)^k}\cdot
\frac{\Phi(\tau_1)^k}{\tau_1^{k-1}}
{k\qqcw}.
\end{equation}
Moreover, by \eqref{znp}, $(\tZ_k)$ is the distribution of the size of a
\GWp{} with offspring distribution $\twwx$. Since this offspring
distribution is subcritical with expectation $\Psi(\tau_2)<1$, the size
distribution $(\tZ_k)$ has finite mean
\begin{equation}\label{etz}
  \sumk k\tZ_k=\frac1{1-\Psi(\tau_2)}=\gl,
\end{equation}
by our choice of $\tau_2$.

The sizes of the trees in the random forest are distributed as
balls-in-boxes with the \ws{} $(\tZ_k)$, see \refE{EGWforest}.
We apply \refT{Ta}, translating $w_k$ to $\tZ_k$.
By \eqref{vstz}, 
\begin{equation}
\tZ_{k+1}/\tZ_k\to  
a\=\frac{\tau_2}{\Phi(\tau_2)}\cdot\frac{\Phi(\tau_1)}{\tau_1},
\qquad \text{as }\ktoo.
\end{equation}
Note further that (with this \ws{} $(\tZ_k)$)
$\tau$ in
\refT{Ta} is chosen such that the equivalent \pws{} 
$\bigpar{\tau^k \tZ_k/\widetilde\cZ(\tau)}$
has expectation $\gl$. We have already constructed $(\tZ_k)$ such that 
it is a \pws{} with this expectation, see \eqref{etz}; hence we have
$\tau=1$
and $q=a$, which yields \eqref{vsq}.

As in \eqref{fork},
$\pi_{k(n)}=\tZ_{k(n)}=\Theta(1/n)$ for
\begin{equation}
  k(n)=\frac{\log n-\frac32\log\log n}{\log(1/q)}+O(1),
\end{equation}
and then \eqref{nq} yields, by \eqref{vstz},
\begin{equation}
  N\sim n \frac{\tau_1}{\sqrt{2\pi\gss}\tau_2(1-q)}k(n)^{-3/2}
\sim \frac{\tau_1\log^{3/2}(1/q)}{\tau_2\sqrt{2\pi\gss} (1-q)} n\log^{-3/2}n.
\end{equation}
The result \eqref{wws} now follows from \refT{Ta}(ii).
Finally, again, 
Theorem \ref{TDa}(i) gives  the estimate for $\yj$.
\end{proof}

\begin{example}
  Consider a random ordered rooted forest.
This is obtained by the \ws{} $w_k=1$, see \refE{EGWforest}, and we 
have by \eqref{euphi}--\eqref{eupsi}
$\Phi(t)=1/(1-t)$ and $\Psi(t)=t/(1-t)$.
Hence, $\tau_1=1/2$ and $\gss=2$ 
(see  \refE{Euniform}); furthermore, \eqref{vspsi} is
$\tau_2/(1-\tau_2)=1-1/\gl$, which has the solution
\begin{equation}
  \tau_2=\frac{\gl-1}{2\gl-1}.
\end{equation}
Consequently, \refT{Tmax2} says that \eqref{wws} holds, 
with the parameters $q$ and $b$ given by, see \eqref{vsq} and \eqref{vsb},
\begin{equation}
q
=\frac{\tau_2(1-\tau_2)}{\tau_1(1-\tau_1)}
=4\tau_2(1-\tau_2)
=\frac{4\gl(\gl-1)}{(2\gl-1)^2} 
=1-\frac{1}{(2\gl-1)^2} 
\end{equation}
and
\begin{equation}
  b=\frac{(2\gl-1)^3} {4\sqrt\pi(\gl-1)}\log\qqc(1/q).
\end{equation}
\end{example}

\begin{example}
The random rooted unlabelled forest in \refE{Eunlabelledforests}
is described by a \ws{} that also satisfies $w_k\sim c_1 k\qqcw\rho^{-k}$ 
as \ktoo,
and we thus again obtain \eqref{wws}, although the parameters $q$ and $b$
now are implicitly defined using the generating function of the number of
unlabelled rooted trees, see \citet{Pavlov:unlabelled}.  
\end{example}

\begin{example}
  For the random recursive forest in \refE{Eperm}, we have 
  \begin{equation}\label{lucia}
w_k=k^{-1}.	
  \end{equation}
Thus \refT{Ta} applies with $a=1$ and $q=\tau\in(0,1)$ given by
\begin{equation}\label{jull}
  \frac{q}{(1-q)|\log(1-q)|}=\gl,
\end{equation}
see \eqref{psiperm}.
(Recall that $\nu=\infty$, so we can take any $\gl>1$ here.)
In this case, see \eqref{phiperm}, 
$\pi_{k(n)}=k(n)\qw q^{k(n)}/|\log(1-q)|=\Theta(1/n)$ for
\begin{equation}
  k(n)=\frac{\log n-\log\log n}{\log(1/q)}+O(1),
\end{equation}
\cf{} \eqref{fork}, and then \eqref{nq} yields
\begin{equation}
  N\sim  \frac{\log(1/q)}{(1-q)|\log(1-q)|}n\log\qw n.
\end{equation}
Consequently, \refT{Ta}(ii) yields
\begin{equation}\label{jul}
  \ya\dapprox \lrfloor{\frac{\log n -\log\log n+\log b +W}{\log(1/q)}},
\end{equation}
where $W$ has the Gumbel distribution \eqref{gumbel} and, using \eqref{jull},
\begin{equation}
  b\=
\frac{\log(1/q)}{(1-q)|\log(1-q)|}
=\frac{\gl\log(1/q)}{q}.
\end{equation}
We thus obtain a result similar to the cases above, but with a different
coefficient for $\log\log n$ in \eqref{jul}.
See \citet{Pavlov:recursive} for further results.
\end{example}

If we consider the random unrooted forest in \refE{Euforest}, we
find different results.
In this case, the tree sizes are described by balls-in-boxes with the \ws{}
$w_k=k^{k-2}/k!$, $k\ge1$ (and $w_0=0$).
Alternatively, we can use the \pws{s} in \eqref{pkk}, in particular the
\pws{}, recalling $\Phi(e\qw)=1/2$ from \eqref{uforestphi},
\begin{equation}\label{utww}
  \tw_k\=\frac{w_ke^{-k}}{\Phi(e\qw)}
=2w_ke^{-k}
=\frac{2k^{k-2}e^{-k}}{k!},
\end{equation}
which by Stirling's formula satisfies
\begin{equation}\label{tull}
  \tw_k\sim 
\frac2{\sqrt{2\pi}}
k^{-5/2},
\qquad \text{as }\ktoo.
\end{equation}

Since we now  have $\nu=2<\infty$, see Examples \refand{Euforest}{EBpower},
there is a phase transition at $\gl=2$. 
We show in the theorem below that
for $\gl<2$ we have a result similar
to Theorems \refand{Tmax1}{Tmax2} with maximal tree size $\ya=\Op(\log n)$, but 
for $\gl>2$ there is a unique giant tree with size of order $n$.
At the phase transition, with $m/n\to2$, the result depends on the rate of
convergence of $m/n$; if, for example, $m=2n$ exactly,  the maximal size is
of order $n\qqqb$;
see further \citet{LP}, where precise results for general $m=m(n)$ are given.
(By the proof below, (iii) in the following theorem holds as soon as
$m/n\to\gl>2$, but (i) and (ii) are more sensitive.)

\begin{theorem}\label{Tmaxu}
Consider a random unrooted forest, and assume that 
$m=\gl n+O(1)$ where $1<\gl<\infty$.
\begin{romenumerate}
\item 
If $1<\gl<2$, let
\begin{equation}\label{uvsq}
  q\=2\frac{\gl-1}{\gl}e^{2/\gl-1}.
\end{equation}
Then $0<q<1$ and 
\begin{equation}\label{uwws}
  \ya\dapprox \lrfloor{\frac{\log n -\frac52\log\log n+\log b +W}{\log(1/q)}},
\end{equation}
where $W$ has the Gumbel distribution \eqref{gumbel} and
\begin{equation}
  b\=\frac{\gl^2\log^{5/2}(1/q)}{2\sqrt{2\pi}(\gl-1)(1-q)} 
\end{equation}

Furthermore, $\yj=\ya+\Op(1)$ for each fixed $j$.

\item 
If $\gl=2$, then 
\begin{equation}\label{uuu}
  \yj/n^{2/3}\dto \eta_j
\end{equation}
for each $j$, where $\eta_j>0$ are some random variables.
The distribution of $\eta_1$ is given by \eqref{eta1} with $\ga=3/2$ and
$c=(2/\pi)\qq$. 
\item 
If $2<\gl<\infty$, then 
$\ya=(\gl-2)n+\Op(n^{2/3})$. More precisely,
\begin{equation}
  n^{-2/3}\bigpar{m-2n-\ya}\dto X,
\end{equation}
where $X$ is a $\frac32$-stable 
random variable with Laplace transform
\begin{equation}
  \E e^{-tX} = \exp\Bigpar{\frac{2^{5/2}}3t\qqc},
\qquad \Re t\ge0.
\end{equation}

For $j\ge2$, $\yj=\Op(n\qqqb)$, and $n\qqqbw \yj\dto W_j$ where $W_2$ has
the Fr\'echet distribution 
\begin{equation}
  \P(W_2\le x)=\exp\Bigpar{-\frac{2\qqc}{3\sqrt\pi}x^{-3/2}},
\qquad x\ge0.
\end{equation}
and, more generally,  $W_j$ has the density function \eqref{wjdense}
with $c'=(2/\pi)\qq$ and $\ga=3/2$.
\end{romenumerate}
\end{theorem}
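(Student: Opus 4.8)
The plan is to obtain each of the three cases by specialising a result already proved in the paper, after passing to a convenient equivalent weight sequence. By \refE{Euforest} the random unrooted forest is balls-in-boxes with $w_k=k^{k-2}/k!$ for $k\ge1$ and $w_0=0$, and $\rho=e\qw$, $\nu=2$, so the three cases are exactly $\gl<\nu$, $\gl=\nu$, $\gl>\nu$. By \refL{LB1} we may also use the equivalent \pws{} $\tw_k\=2k^{k-2}e^{-k}/k!$, for which Stirling's formula gives $\tw_k\sim\sqrt{2/\pi}\,k^{-5/2}$ as \ktoo. The vanishing of $w_0$ is harmless throughout via the reduction of \refR{Rmin} / \refR{RTa}: since $\gl>1=\min\set{k:w_k>0}$, we may pass to a shifted allocation with first weight positive, the same power-law tail $\sim\sqrt{2/\pi}\,k^{-5/2}$, and with $\gl$ and $\nu$ both decreased by $1$; thus the trichotomy, and the location $\gl=\nu$ of the phase transition, are unchanged, and the shift by one ball per box is negligible on all the scales that occur.

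For (i), $1<\gl<2$ means $\gl<\nu$, so \refT{Ta} and \refT{TDa} apply to $w_k=k^{k-2}/k!$ (with $w_0=0$ handled as above). Here $w_{k+1}/w_k=(1+1/k)^{k-2}\to e$, so $a=e$, and $q\=\tau a=\tau/\rho$ with $\Psi(\tau)=\gl$; substituting the value of $\tau$ from \eqref{uforesttau} gives $q=2\tfrac{\gl-1}{\gl}e^{2/\gl-1}$, which is \eqref{uvsq}, and plainly $0<q<1$. From \eqref{pkk} and Stirling, $\pi_k\sim\tfrac{\gl^2}{2\sqrt{2\pi}(\gl-1)}k^{-5/2}q^k$, so $\pi_{k(n)}=\Theta(1/n)$ for $k(n)=\bigpar{\log n-\tfrac52\log\log n}/\log(1/q)+O(1)$, and then \eqref{nq} gives $N\sim b\,n\log^{-5/2}n$ with $b$ as in the statement. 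Since $\log N=\log b+\log n-\tfrac52\log\log n+o(1)$ and $W$ is absolutely continuous, \refT{Ta}(ii) together with \refL{LM2} yields \eqref{uwws}, while \refT{TDa}(i) yields $\yj=\ya+\Op(1)$ for each fixed $j$. This part involves no difficulty beyond tracking the constants.

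For (ii), $\gl=2=\nu$ and (after removing $w_0=0$ as above) $\tw_k\sim ck^{-\ga-1}$ with $c=\sqrt{2/\pi}=(2/\pi)\qq$ and $\ga=\tfrac32\in(1,2)$, while $m=2n+O(1)$ is well within the required $m=\nu n+o(n\xga)$; this is precisely the framework of \refE{EL8} (the shifted problem remaining critical, with infinite variance). Hence the analysis of \refE{EL8} carries over: the point process $\set{Y\win j/n\qqqb}$ converges to the non-Poisson limit process $\Xi$ of \refE{EL8}, each atom $\eta_j$ is \as{} positive with an absolutely continuous law, whence $Y\win j/n\qqqb\dto\eta_j$ as in \eqref{uuu}, and the law of $\eta_1$ is given by \eqref{eta1} with $\ga=\tfrac32$, $c=(2/\pi)\qq$. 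Essentially nothing new is needed here.

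For (iii), $2<\gl<\infty$ means $\gl>\nu$, and $\twwx$ has power-law tail with $\gb=\tfrac52>2$, $c=c'=\sqrt{2/\pi}$ (since $\twwx$ is a \pws, $\Phi(1)=1$) and $\ga=\gb-1=\tfrac32$, so \refT{TDzeta} applies directly — here only $m/n\to\gl$, not the $O(1)$ rate, is used, and its proof already absorbs the $w_0=0$ reduction. Parts (i) and (iv) of \refT{TDzeta} give $\ya=(\gl-2)n+\Op(n\qqqb)$ and $n\qqqbw\bigpar{m-2n-\ya}\dto X$ with $\E e^{-tX}=\exp\bigpar{c'\Gamma(-\ga)t\qqc}$; since $\Gamma(-\tfrac32)=\tfrac{4\sqrt\pi}{3}$, one has $c'\Gamma(-\tfrac32)=\sqrt{2/\pi}\cdot\tfrac{4\sqrt\pi}{3}=\tfrac{2^{5/2}}{3}$, as claimed. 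Part (v) gives $\yb=\Op(n\qqqb)$ and $n\qqqbw\yb\dto W$ with $\P(W\le x)=\exp\bigpar{-\tfrac{c'}{\ga}x^{-3/2}}=\exp\bigpar{-\tfrac{2\qqc}{3\sqrt\pi}x^{-3/2}}$ since $c'/\ga=\tfrac23\sqrt{2/\pi}=\tfrac{2\qqc}{3\sqrt\pi}$; and part (vi) gives the density \eqref{wjdense} with $c'=(2/\pi)\qq$, $\ga=\tfrac32$ for general $j\ge2$. There is no genuine obstacle: each case is a specialisation of a theorem or worked example already in the paper, and the remaining work is the parameter dictionary ($a,q,c,\ga,k(n),N,b$) plus the constant simplifications via Stirling and $\Gamma(-\tfrac32)=4\sqrt\pi/3$. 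The one point needing care is checking that $m=\gl n+O(1)$ meets the rate hypotheses of \refT{Ta} and \refE{EL8}, and that the $w_0=0$ / one-ball shift genuinely preserves the power-law tail and the phase transition at $\gl=\nu$.
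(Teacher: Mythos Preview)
Your proposal is correct and follows essentially the same approach as the paper: part (i) via \refT{Ta} and \refT{TDa} applied to $w_k=k^{k-2}/k!$ with $a=e$ and the constants computed from \eqref{uforesttau} and \eqref{pkk}; part (ii) via the equivalent \pws{} $\tw_k$ of \eqref{utww}--\eqref{tull} and \refE{EL8} with $\ga=3/2$, $c=(2/\pi)\qq$; and part (iii) via \refT{TDzeta} applied to $\twwx$ with the constant identifications $c'\Gamma(-3/2)=2^{5/2}/3$ and $c'/\ga=2\qqc/(3\sqrt\pi)$. Your handling of $w_0=0$ through \refR{Rmin}/\refR{RTa} and your verification that $m=\gl n+O(1)$ meets the rate hypotheses are slightly more explicit than the paper's, but the argument is the same.
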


Note that the exponents $\frac32$, $1$ and $\frac52$ in
\eqref{wkb}, \eqref{lucia} and 
\eqref{tull} appear as coefficients of $\log\log n$ in \eqref{rwws},
\eqref{jul} and \eqref{uwws}, respectively.

\begin{proof}
  \pfitem{i}
This is very similar to the proofs of Theorems \refand{Tmax1}{Tmax2}.
We use $w_k=k^{k-2}/k!$. Then, as for rooted forests and \eqref{wkb} above,
$w_{k+1}/w_k\to e$ as \ktoo.
Further, $\tau$ is given by \eqref{uforesttau}, and thus $q\=\tau e$ is
given by \eqref{uvsq}.
It follows, \cf{} \eqref{fork} and \eqref{tull}, that
$\pi_{k(n)}=\Theta(1/n)$ for 
\begin{equation}\label{ufork}
  k(n)=\frac{\log n-\frac52\log\log n}{\log(1/q)}+O(1),
\end{equation}
and then \eqref{nq} yields
\begin{equation}
  \begin{split}
  N
&\sim 
\frac{n w_{k(n)}e^{-k(n)}}{\Phi(\tau)(1-q)}
\sim n \frac{\gl^2}{2\sqrt{2\pi}(\gl-1)(1-q)}k(n)^{-5/2}
\\&
\sim \frac{\gl^2\log^{5/2}(1/q)}{2\sqrt{2\pi}(\gl-1)(1-q)} n\log^{-5/2}n.	
  \end{split}
\end{equation}
Hence \refT{Ta}(ii) yields \eqref{uwws}.

\pfitem{ii}
We use the equivalent \pws{} $\tww$ given by \eqref{utww}.
By \eqref{tull}, it satisfies the assumptions in \refE{EL8} with $\ga=3/2$
and $c=(2/\pi)\qq$; thus \eqref{uuu} follows from \eqref{limetaj}, and
\eqref{eta1} in \refR{Reta} applies.

\pfitem{iii}
We use again the \pws{} $\tww$ and apply \refT{TDzeta}.
We have $c'=c=(2/\pi)\qq$ by \eqref{utww}, and thus 
$c'\Gamma(-3/2)=c'\frac43\Gamma(1/2)=2^{5/2}/3$ and 
$c'/\ga=2^{3/2}/(3\sqrt\pi)$.
\end{proof}

\begin{example}
The random unrooted unlabelled forest (with labelled trees)
in \refE{Eunlabelledforests}
is described by another \ws{} that satisfies $w_k\sim c k^{-5/2}\rho^{-k}$ 
as \ktoo,
and we thus obtain a result similar to \refT{Tmaxu}, although the parameters
differ (they can be obtained from the generating function of the number of
unlabelled trees); in particular, the phase transition appears when $\gl$ is 
$\nu\approx 2.0513$, 
see \citet{Pavlov:unun} for details.
\end{example}

We do not know any corresponding results for
random completely unlabelled forests ($n$ unlabelled trees consisting of $m$
unlabelled nodes); as said in \refE{Eunlabelledforests}, they
cannot be described by \bib.

\section{Large nodes in simply generated trees with  $\nu<1$}\label{SlargeT}

In the tree case with $\nu<1$, the results in \refSS{SSlarge+} 
show condensation in the
form of one or, sometimes, several nodes with very large degree, together
making up the 
``missing  mass'' of about $(1-\nu)n$. On the other hand, \refT{Tmain} shows
concentration in a somewhat different form, with a limit tree $\hcT$ having
exactly one node of infinite degree. 
This node corresponds to a node with very large degree in $\ctn$ for $n$
large but finite. How large is the degree? Why do we only see one node with
very large degree in \refT{Tmain}, but sometimes several nodes with large
degrees above (Examples \ref{Eoligo} and \ref{Eoligo0})?

The latter question is easily answered: recall that the convergence in
\refT{Tmain} means convergence of the truncated trees (``left balls'')
$T_n\xxmm$, see \refL{LC}; thus we only see a small part of the tree close
to the root, and the two pictures above are reconciled:
if $m$ is large but fixed, then in the set $V(T)\cap V\xxmm$ of nodes, there
is with probability close to 1 exactly one node with very large degree.
(There may be several nodes with very large degree in the tree, but for any
fixed $m$, \whp{} at most one of them is in $V\xxmm$.)
Of course, to make this precise, we would have to define ``very large'', for
example as below using a sequence $\gO_n$ growing slowly to $\infty$ as in
\refL{LQ}, 
but we are at the moment satisfied with an intuitive description.

To see how large the ``very large'' degree is, let us first look at the root.
\refL{Lroot} says that the distribution of the root degree is the
size-biased distribution of $Y_1$. We can write \eqref{lroot} as
\begin{equation}  \label{tom}
  \P(\dx_{\ctn}(o)=d)
=
\frac{d}{n-1} \sumin \P(Y_i=d)
=
\frac{d}{n-1} \sumjn \P(\yj=d);
\end{equation}
hence the distribution of the root degree can be described by:
sample $\YYn$ and then take $\ya$ with probability $\ya/(n-1)$,
$\yb$ with probability $\yb/(n-1)$, \dots.

In particular, if $\ya=(1-\nu)n+\op(n)$, then \eqref{tom}
implies
\begin{equation} 
  \P(\dx_{\ctn}(o)=\ya)
=1-\nu+o(1),
\end{equation}
and comparing with \refT{Troot} we see that \whp{} either the root degree is
small (more precisely, $\Op(1)$), or it is the maximum outdegree $\ya$.
However, we also see that if 
$\ya$ is not $(1-\nu)n+\op(n)$, then this conclusion does not hold; 
for example, in \refE{Eoligo0} for $n$ in the subsequence $(2^i)$ where
\eqref{oligo0} holds for each fixed $j$,
\begin{equation} 
  \P(\dx_{\ctn}(\rot)=2^{-j}n)\to 2^{-j}.
\end{equation}

In the case $\nu=0$, we only have to consider the root, since
the node with infinite degree in $\hcT$ always is the root,
but for $0<\nu<1$, 
the node with infinite degree in $\hcT$ may be somewhere else.
We shall see that it corresponds to a node in $\ctn$ with a large degree having 
(asymptotically) the same distribution as the root degree just considered,
conditioned to be ``large''.

To make this precise,
let $\gO_n\to\infty$ be a fixed sequence which increases so slowly that
\refL{LQ}(ii) holds. We say that an outdegree $\dx(v)$ is \emph{large} if it is
greater than $\gO_n$; we then also say that the node $v$ is large. 
(Note that by \refL{LQ}(ii), \whp{} at least one large node exists.) 
For each $n$, let $\yxx$ by a \rv{} whose distribution is the size-biased
distribution of a large outdegree, \ie{} of $(Y_1\mid Y_1>\gO_n)$:
\begin{equation}\label{lollo}
  \P(\yxx=k)
=\frac{k\P(Y_1=k)}{\sum_{l>\gO_n}l\P(Y_1=l)}
=\frac{k\E N_k}{\sum_{l>\gO_n}l\E N_l}
=\frac{k\E N_k}{(1-\nu+o(1))n},
\end{equation}
for $k>\gO_n$ and $\P(\yxx=k)=0$ otherwise.
Equivalently, in view of \refL{Lroot}, $\yxx$ has the distribution of the
root degree $\dx_{\ctn}(\rot)$ conditioned to be greater than $\gO_n$. 
See also
\eqref{tom}, and note that if $\ya=(1-\nu)n+\op(n)$, then $\yxx\dapprox\ya$,
\ie, we may take $\yxx=\ya$ \whp; in this case (but not otherwise) we thus
have $\yxx=(1-\nu)n+\op(n)$.

Note that if $\gO_n'$ is another such sequence, similarly defining a random
variable $\yxx'$, then 
$\sum_{l>\gO_n}l\P(Y_1=l)\sim(1-\nu)n\sim \sum_{l>\gO'_n}l\P(Y_1=l)$, and
  it follows that $\yxx\dapprox \yxx'$; hence the choice of $\gO_n$ will not
  matter below.

We claim that, \whp, 
the infinite outdegree in $\hct$ corresponds to an outdegree $\yxx$ in $\ctn$.
To formalise this,
recall from \refS{SUlam} that we may consider our trees as subtrees of the
infinite tree $\too$ with node set $\voo$, and that the convergence of trees
defined there
means convergence of each $\dx(v)$, see \eqref{dconv}.
Let $\hcT$ be the random infinite tree defined in \refS{ShGW}; we are in
case \tii, and thus $\hcT$ has a single node $v$ with outdegree
$\dx_{\hct}(v)=\infty$. We assume that $\yxx$ and $\hcT$ are independent,
and define the modified degree sequence
\begin{equation}\label{dxxhct}
  \dxxhct(v)\=
  \begin{cases}
	\dx_\hct(v),& 	\dx_{\hct}(v)<\infty,\\
	\yxx,& 	\dx_{\hct}(v)=\infty.
  \end{cases}
\end{equation}
We thus change the single infinite value to the finite $\yxx$, leaving all
other values unchanged.
(Note that $\dxxhct(v)$ may depend on $n$, since $\yxx$ does.)
We then have the following theorem.

\begin{theorem}\label{TT}
For any finite set of nodes $v_1,\dots,v_\ell\in\voo$,
  \begin{equation}\label{tt}
\bigpar{\dx_{\ctn}(v_1),\dots,\dx_\ctN(v_\ell)}	
\dapprox
\bigpar{\dxx_{\hct}(v_1),\dots,\dxx_\hct(v_\ell)}.
  \end{equation}
\end{theorem}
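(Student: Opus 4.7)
The plan is to establish \eqref{tt} via total-variation convergence, using \refL{LM}\ref{lmd} to equate $\dapprox$ with $\dtv\to 0$. The starting point is \refT{Tmain}, which by the topology of \refS{SUlam} gives joint convergence $(\dx_\ctn(v_i))_{i=1}^\ell \dto (\dx_\hct(v_i))_{i=1}^\ell$ in $\bNo^\ell$; the task is to refine this by identifying the distribution of the large outdegree in $\ctn$ that corresponds to the unique infinite-degree node of $\hct$. I would first enlarge $\{v_1,\dots,v_\ell\}$ to the node set of a finite subtree $T'\in\stf$, as in the proof of \refT{Tmain}. Fix a large $K$ and partition $\No^\ell$ into tuples with all coordinates $\le K$ (region I), exactly one coordinate $>K$ (region II), and at least two coordinates $>K$ (region III).

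The mass in region III under both distributions tends to $0$ as $K\to\infty$ uniformly in $n$: for $(\dxxhct(v_i))_i$ this is immediate from the definition \eqref{dxxhct} (only the unique $\infty$ of $\hct$ gets replaced by $\yxx$, the other values being finite with controlled tails), and for $(\dx_\ctn(v_i))_i$ it follows from joint convergence to $(\dx_\hct(v_i))_i$ in $\bNo^\ell$ combined with the fact that $\hct$ has at most one infinite-degree node. In region I, \refT{Tmain} (case (a) of its proof) gives
\begin{equation*}
\P\bigpar{\dx_\ctn(v_i)=d_i \text{ for } i=1,\dots,\ell} \to (D-\ell+1)\prod_i\pi_{d_i},
\end{equation*}
matching $\P(\dxxhct(v_i)=d_i,\ i=1,\dots,\ell)$ since $\dxxhct=\dx_\hct$ on such tuples; as there are only finitely many tuples in $[0,K]^\ell$, the contribution to $\dtv$ from region I is $o(1)$.

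The crux is region II. For a tuple with $d_j>K$ and $d_i\le K$ for $i\ne j$, \refL{Ldeg} gives
\begin{equation*}
\P\bigpar{\dx_\ctn(v_i) = d_i,\ i=1,\dots,\ell} = \tfrac{n}{n-\ell}(D-\ell+1)\P\bigpar{Y_i=d_i,\ i=1,\dots,\ell},
\end{equation*}
with $D-\ell+1 = d_j+O(1)$. Using exchangeability of $\bnn$ and an asymptotic factorization $\P(Y_i=d_i,\ i=1,\dots,\ell)\sim (\E N_{d_j}/n)\prod_{i\ne j}\pi_{d_i}$, this becomes
\begin{equation*}
d_j\,\frac{\E N_{d_j}}{n}\prod_{i\ne j}\pi_{d_i} \;=\; (1-\nu)\,\P(\yxx=d_j)\prod_{i\ne j}\pi_{d_i},
\end{equation*}
by definition \eqref{lollo} of $\yxx$. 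On the other hand, by independence of $\yxx$ from $\hct$ and the spine construction in case \tii{} of \refS{ShGW} (as computed in case (b) of the proof of \refT{Tmain}),
\begin{equation*}
\P\bigpar{\dxxhct(v_i) = d_i,\ i=1,\dots,\ell} = \P(\yxx=d_j)(1-\mu)\prod_{i\ne j}\pi_{d_i},
\end{equation*}
and the two expressions agree since $\mu=\nu$ in the subcritical case. The main obstacle is justifying the asymptotic factorization of $\P(Y_i=d_i,\ i=1,\dots,\ell)$: it is a conditional version of \refT{TBmain} requiring that the bounded counts $N_{d_i}/n$ remain close to $\pi_{d_i}$ even after conditioning on one large value $Y_j=d_j$, with error summable over $d_j>K$ so that the region-II contribution to $\dtv$ vanishes. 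I expect this to follow because conditioning on $Y_j=d_j$ leaves an allocation $B_{n-1-d_j,n-1}$ whose mean occupancy $(n-1-d_j)/(n-1)$ exceeds $\nu$ on the bulk of the $\yxx$-distribution; there $\tau=\rho$ is unchanged so the remaining $\pi_k$ are the same, while the tail where $d_j$ is comparable to or larger than $(1-\nu)n$ is controlled by \refL{LQ}.
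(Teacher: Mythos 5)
The paper proves \refT{TT} by a Skorohod coupling argument: it passes to a coupling in which $\ctn\to\hct$ almost surely, identifies the single node $\vx$ with infinite degree in $\hct$ (whose degree in $\ctn$ then tends to $\infty$), and uses \refL{Ldeg} to compute the conditional distribution of $\dx_{\ctn}(\vx)$ given all the other (small, eventually constant) degrees; since this conditional law is $(1+o(1))\P(\yxx=\cdot)$ and there are only finitely many configurations of the small degrees, the coupling can be modified so that $\dx_{\ctn}(\vx)=\yxx$ \whp. Your route — a direct total-variation estimate via a three-region decomposition of $\bNo^\ell$ — is genuinely different, but as written it contains gaps that are not merely technical.

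The central problem is in region II, where you conflate the fixed threshold $K$ with the growing sequence $\gO_n$ that defines $\yxx$. Equation \eqref{lollo} gives $\P(\yxx=k)=k\E N_k/((1-\nu+o(1))n)$ only for $k>\gO_n$ and sets $\P(\yxx=k)=0$ otherwise, so the displayed equality $d_j\,\E N_{d_j}/n=(1-\nu)\,\P(\yxx=d_j)$ is simply false for $K<d_j\le\gO_n$ (the left side is positive, the right side zero), and is only an approximate identity with an uncontrolled $o(1)$ even for $d_j>\gO_n$. Relatedly, your formula $\P\bigpar{\dxxhct(v_i)=d_i,\,i}=\P(\yxx=d_j)(1-\mu)\prod_{i\ne j}\pi_{d_i}$ omits the contribution of the event that $\hct$ has all finite degrees with $d_j>K$ at a non-special node; by case~(a) of the proof of \refT{Tmain} that event has positive probability $(D-\ell+1)\prod_i\pi_{d_i}$. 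These omitted terms do exist on the $\ctn$ side as well and presumably cancel, but they are exactly the terms in the troublesome range $K<d_j\le\gO_n$ and no argument is given that they do. Finally, the asymptotic factorisation $\P(Y_i=d_i,\,i)\sim(\E N_{d_j}/n)\prod_{i\ne j}\pi_{d_i}$ is the load-bearing analytic estimate of the whole proof, and you only state a hope that it holds with errors summable over $d_j>K$; that uniformity is precisely what the paper's Skorohod-coupling argument is designed to avoid, since conditioning on the finitely many small-degree configurations reduces the matter to a single one-dimensional conditional law and eliminates any need to sum errors over the value of the large degree. To repair your approach you would need to (a) split region II further at $\gO_n$, handle $K<d_j\le\gO_n$ by the finite-degree analysis of \refT{Tmain} case~(a) together with the summability $\sum_{d>K}d\pi_d\to0$ as $K\to\infty$, and (b) actually prove the conditional factorisation with uniform control for $d_j$ of order $n$; both are substantive.
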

\begin{proof}
Let $\eps>0$, and let $\vx$ denote the unique node in $\hct$ with
$\dx_\hct(\vx)=\infty$. 
By increasing the set \set{v_1,\dots,v_\ell}, we may assume that it equals
$V\xxmm$  (see \refS{SUlam}) for some $m$, and that $m$ is so large that
$\P(\vx\in V\xxmm)>1-\eps$.
We may then find $K<\infty$ such that 
\begin{equation*}
  \P\bigpar{\dx_\hct(v)\in(K,\infty) \text{ for some } v\in V\xxmm}<\eps.
\end{equation*}
Since $\ctn\dto\hct$ by \refT{Tmain}, we may by
the Skorohod coupling theorem \cite[Theorem 4.30]{Kallenberg} assume that
the random trees are coupled such that $\ctn\to\hct$ a.s., and thus 
$\dx_\ctN(v)\to\dx_\hct(v)$ \as{} for every $v$.
Then, for large $n$, with probability $>1-3\eps$, $\vx\in V\xxmm$, 
$\dx_\ctN(v)=\dx_\hct(v)=\dxxhct(v)\le K$ for all $v\in
V\xxmm\setminus\set{\vx}$, and
$\dx_\ctN(\vx)\to\dx_\hct(\vx)=\infty$.
We may assume that $\gO_n\to\infty$ so slowly that furthermore
$\P(\dx_\ctN(\vx)\le\gO_n)\le\eps$. (Recall that we may change $\gO_n$
without affecting the result \eqref{tt}.)

Let $n$ be so large that also $\gO_n>m$  and $\gO_n>K$.
It follows from \refL{Ldeg} that for each choice of $v'\in V\xxmm$ and
numbers $d(v)$ for $v\in V\xxmm\setminus{v'}$, 
and $k>\gO_n$,
\begin{multline*}
\P\bigpar{\dx_\ctN(v)=d(v) \text{ for }v\in V\xxmm\setminus\set{v'} 
\text{ and }\dx_\ctN(v')=k}
\\
= 
  \bigpar{k+O(1)}C(\set{d(v)},v',n)\P(Y_1=k)
\end{multline*}
for some constant $C(\set{d(v)},v',n)\ge0$ not depending on $k$;
hence, by \eqref{lollo},
\begin{multline*}
\P\bigpar{\dx_\ctN(v')=k \mid
\dx_\ctN(v)=d(v) \text{ for }v\in V\xxmm\setminus\set{v'} 
\text{ and } \dx_\ctN(v')>\gO_n}
\\
= 
  \etto \frac{ k\P(Y_1=k)}{\sum_{k>\gO_n}k\P(Y_1=k)}
=\etto\P(\yxx=k).
\end{multline*}
There is only a finite number of choices of $v'$ and 
$(d(v))_{v\in  V\xxmm\setminus\set{v'}}$,  and it follows
that we may choose the coupling of $\ctn$ and $\hct$ above such that also
$\dx_\ctN(\vx)=\yxx$ \whp; thus, with probability $>1-4\eps-o(1)$, 
$\dx_\ctN(v)=\dxxhct(v)$ for all $v\in V\xxmm$.

The result follows since $\eps>0$ is arbitrary.
\end{proof}

We give some variations of this result, where we replace
$\dxxhct(v)$ by the degree sequences of some random trees obtained by
modifying $\hct$.
(Note that $\dxxhct(v)$ is not the degree sequence of a tree.)

First, 
let $\hcta$ be the random tree obtained by 
pruning the tree $\hct$ at the node $\vx$ with infinite outdegree,
keeping only the first $\yxx$ children of $\vx$. 
Then $\hcta$ is a locally finite tree, and, in fact, it is \as{} finite.
The random tree $\hcta$ can be constructed as $\hct$ in \refS{ShGW},
starting with a spine, and then adding independent \GWt{s} to it, but 
now the number of children of a node in the spine is
given by a finite \rv{} $\xxxi$ with the distribution 
\begin{equation}\label{sp+}
  \P(\xxxi=k)=\P(\hxi=k)+\P(\hxi=\infty)\P(\yxx=k)
=k\pi_k+(1-\nu)\P(\yxx=k).
\end{equation}
The nodes not in the spine (the normal nodes) have offspring distribution
$\ppi$ as before. (This holds also for the following modifications.)

The spine in $\hcta$ stops when we obtain $\hxi=\infty$, but we may also
define another random tree $\hctb$  by continuing the spine
to infinity; this defines a random
infinite but locally finite tree having an infinite spine; each node in the
spine has a number of children with the distribution in \eqref{sp+}, and
the spine continues with a uniformly randomly chosen child. Equivalently,
$\hctb$ can be defined by a \GWp{} with normal and special nodes as in
\refS{ShGW}, but with the offspring distribution for special nodes changed
from \eqref{hxi} to \eqref{sp+}.

Finally, let $\hyy$ by a random variable with the size-biased distribution of
$Y_1$:
\begin{equation}\label{hyy}
  \P(\hyy=k)=\frac{k\P(Y_1=k)}{(n-1)/n}
=\frac{k\E N_k}{n-1},
\end{equation}
recalling that $\sum_k kN_k=n-1$;
\cf{} \eqref{tom} and \eqref{lollo}. (Thus $\hyy\eqd\dx_\ctN(\rot)$ by
\refL{Lroot} and  \eqref{tom}.) 
Define the infinite, locally finite random tree $\hctc$ by the same \GWp{}
again, but now with offspring distribution $\hyy$ for special nodes.
(This does not involve $\yxx$ or $\gO_n$.)
Thus $\hctc$ also has an infinite spine.

We then have  the following version of \refT{TT},
where we also use the metric $\gdd$ on $\stl$ defined by 
\begin{equation}
  \gdd(T_1,T_2)\=
1/\sup\bigset{m\ge1:\dx_{T_1}(v)=\dx_{T_2}(v) \text{ for }v\in V\xxmm}.
\end{equation}

\begin{theorem}
  For $j=1,2,3$, and any 
 finite set of nodes $v_1,\dots,v_\ell\in\voo$,
  \begin{equation}
\bigpar{\dx_{\ctn}(v_1),\dots,\dx_\ctN(v_\ell)}	
\dapprox
\bigpar{\dxx_{\hctj}(v_1),\dots,\dxx_\hctj(v_\ell)}.
  \end{equation}
Equivalently, there is a coupling of $\ctn$ and $\hctj$ such that
$\gdd(\ctn,\hctj)\to0$ as \ntoo.
\end{theorem}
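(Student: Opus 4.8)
The plan is to deduce this from \refT{TT} by means of elementary couplings between the auxiliary trees $\hcta,\hctb,\hctc$ and the (non-tree) degree array $\dxxhct$ occurring there. I would first record the reduction: since each $\hctj$ is locally finite, $\gdd(\ctn,\hctj)\le1/m$ holds exactly when $\dx_{\ctn}(v)=\dx_{\hctj}(v)$ for all $v\in V\xxmm$, so the existence of a coupling with $\gdd(\ctn,\hctj)\to0$ is equivalent to the displayed statement $\bigpar{\dx_{\ctn}(v)}_{v\in V}\dapprox\bigpar{\dx_{\hctj}(v)}_{v\in V}$ for every finite $V\subset\voo$ (the single coupling being obtained from the level-$m$ couplings by a standard diagonal argument, cf.\ \refR{RLC}); and since $\dapprox$ for a node set follows from $\dapprox$ for any larger set, and every finite set lies in some $V\xxmm$, it suffices to treat $V=V\xxmm$ for each fixed $m$. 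As \refT{TT} already gives $\bigpar{\dx_{\ctn}(v)}_{v\in V\xxmm}\dapprox\bigpar{\dxxhct(v)}_{v\in V\xxmm}$, the remaining task is, for each $j$, to couple the degree array $\dxxhct$ on $V\xxmm$ with that of $\hctj$; the conclusion then follows by the triangle inequality for $\dtv$.

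Fixing $m$ and taking $n$ large enough that $\gO_n>m$ (hence $\yxx>m$ surely), I would handle $j=1,2$ by almost trivial couplings. For $\hcta$: couple it with $\hct$ via the common spine, special-node offspring values, special-child choices, attached copies of $\cT$, and the same $\yxx$, so that $\hcta$ is $\hct$ with the explosion node $\vx$ cut down to its first $\yxx$ children; since every node of $V\xxmm$ is a string with all entries $\le m<\yxx$, this cut touches no node of $V\xxmm$ nor any of their ancestors or descendants, so $\dx_{\hcta}$ and $\dxxhct$ agree on $V\xxmm$ deterministically. For $\hctb$: couple it with $\hcta$ so that the two differ only in the subtree rooted at one uniformly chosen child $\vx k$ of the (degree $>\gO_n$) node at which $\hcta$'s spine terminates; as $\P(k\le m)\le m/\gO_n\to0$, with high probability $\vx k\notin V\xxmm$ and the two degree arrays agree on $V\xxmm$. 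Combined with \refT{TT} this settles $j=1$ and $j=2$.

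For $j=3$ the key point I would exploit is that the root $\rot$ of $\hct$ is special, so by \eqref{hxi} and \eqref{dxxhct} the modified degree $\dxxhct(\rot)$ has distribution $k\mapsto k\pi_k+(1-\nu)\P(\yxx=k)$, which by \eqref{sp+} is precisely the law of $\xxxi$; meanwhile $\dx_{\ctn}(\rot)\eqd\hyy$ by \refL{Lroot} and \eqref{hyy}. Hence \refT{TT} with $\ell=1$, $v_1=\rot$ delivers $\dtv(\hyy,\xxxi)\to0$. I would then couple $\hctc$ with $\hctb$ by attaching identical copies of $\cT$ at corresponding non-spine positions and, at each spine node independently, using an optimal coupling of the spine-offspring laws $\hyy$ (for $\hctc$) and $\xxxi$ (for $\hctb$), with the special-child choices made to agree whenever the offspring numbers agree. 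Only an initial segment of the spine, of length at most $m+1$, can lie in $V\xxmm$, so the coupled offspring disagree somewhere relevant with probability at most $(m+1)\dtv(\hyy,\xxxi)\to0$, and on the complementary event the two trees coincide on $V\xxmm$; this gives $j=3$.

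Most of this is bookkeeping, since \refT{TT} does the real work. The two steps I would flag are (a) the reduction of the $\gdd$-assertion to the finite-dimensional $\dapprox$'s (routine, via \refR{RLC} and a diagonal coupling), and (b) the identifications $\dxxhct(\rot)\eqd\xxxi$ and $\dx_{\ctn}(\rot)\eqd\hyy$, which are what convert \refT{TT} into the estimate $\dtv(\hyy,\xxxi)\to0$ that powers the $j=3$ coupling. The one thing to be careful about throughout is keeping every region of possible disagreement — $\vx$'s pruned children, the spine-continuation subtree, the at-most-$(m+1)$ spine nodes inside $V\xxmm$ — confined to events of probability $o(1)$, which rests on $\gO_n\to\infty$ together with $\dtv(\hyy,\xxxi)\to0$.
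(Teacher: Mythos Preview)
Your argument is correct and follows the paper's route almost exactly: reduce to $V\xxmm$, invoke \refT{TT}, then couple $\hcta$ with $\dxxhct$ trivially, $\hctb$ with $\hcta$ via the spine-continuation argument, and $\hctc$ with $\hctb$ via an optimal coupling of the spine-offspring laws at the at most $m+1$ relevant spine nodes.

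The one genuine difference is how you obtain the key estimate $\dtv(\hyy,\xxxi)\to0$. The paper isolates this as a separate lemma (\refL{Lhyy}) and proves it by a direct computation, splitting the sum $\sum_k|\P(\xxxi=k)-\P(\hyy=k)|$ into the ranges $k\le K$, $K<k\le\gO_n$, $k>\gO_n$ and bounding each piece. You instead observe that $\dxxhct(\rot)\eqd\xxxi$ (immediate from \eqref{hxi}, \eqref{dxxhct}, \eqref{sp+}) and $\dx_{\ctn}(\rot)\eqd\hyy$ (by \refL{Lroot}), so that \refT{TT} with the single node $\rot$ already delivers $\hyy\dapprox\xxxi$. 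This is a clean shortcut: it extracts \refL{Lhyy} as a one-line corollary of \refT{TT} rather than proving it independently, and it makes transparent why the lemma holds. The paper's direct proof, on the other hand, is self-contained and gives slightly more quantitative information about where the total-variation mass sits.
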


\begin{proof}
  If $\gO_n>m$ we have $\yxx>m$
and then the branches of $\hct$ pruned to make $\hcta$
are all outside $V\xxmm$, and thus 
$\dx_\hcta=\dxxhct$ defined in \eqref{dxxhct} for all $v\in V\xxmm$.
Thus the result for $\hcta$ follows from \refT{TT}.

Next,  for any given $m$, 
and for any endpoint $x$ of the spine of $\hcta$,
 the probability that the  continuation in $\hctb$ of the spine 
contains some node in $V\xxmm$ is less than $m/\gO_n=o(1)$; thus, \whp{}
$\hcta$ and $\hctb$ are equal on 
any $V\xxmm$.

Finally,  \refL{Lhyy} below implies that we can couple $\hctb$ and $\hctc$
such that they \whp{} agree on each $V\xmm$; then  $\hcta$ and $\hctc$
are \whp{} equal on each $V\xxmm$.
\end{proof}

\begin{lemma}\label{Lhyy}
  $\xxxi\dapprox\hyy$.
\end{lemma}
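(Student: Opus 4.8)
The plan is to prove $\dtv(\xxxi,\hyy)\to0$ by comparing the two mass functions on $\No$ directly, splitting the sum over outdegrees at the threshold $\gO_n$ that defines ``large''. First I would record the elementary identities. By exchangeability of $\bnn$ one has $\E N_k=n\P(Y_1=k)$, so $\P(\hyy=k)=k\E N_k/(n-1)$ by \eqref{hyy}, while $\P(\yxx=k)=k\E N_k/c_n$ for $k>\gO_n$ and $0$ otherwise, with $c_n\=\sum_{l>\gO_n}l\E N_l$, and $\P(\xxxi=k)=k\pi_k+(1-\nu)\P(\yxx=k)$ by \eqref{sp+}. Since $\gO_n$ is chosen so that \refL{LQ}(ii) holds (with $m=n-1$, $\gl=1$), $\sum_{k>\gO_n}kN_k=(1-\nu)n+\op(n)$; this quantity is bounded by $n-1$, so taking expectations gives $c_n=(1-\nu+o(1))n$, hence $a_n\=c_n/(n-1)=1-\nu+o(1)$, and also $\sum_{k\le\gO_n}k\E N_k=(n-1)-c_n$ since $\sum_k kN_k=n-1$ identically.

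By \refL{LM}\ref{lmf} it suffices to show $\sum_k\bigabs{\P(\xxxi=k)-\P(\hyy=k)}\to0$, and I would treat the two ranges $k>\gO_n$ and $k\le\gO_n$ separately. For the tail $k>\gO_n$, write $\P(\xxxi=k)-\P(\hyy=k)=k\pi_k+k\E N_k\bigpar{\tfrac{1-\nu}{c_n}-\tfrac1{n-1}}$, so the tail sum is at most $\sum_{k>\gO_n}k\pi_k+c_n\bigabs{\tfrac{1-\nu}{c_n}-\tfrac1{n-1}}=\sum_{k>\gO_n}k\pi_k+\bigabs{(1-\nu)-a_n}$, which tends to $0$: the first term because $\sum_k k\pi_k=\mu=\nu<\infty$ forces its tail to vanish as $\gO_n\to\infty$, and the second by the previous paragraph.

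For the head $k\le\gO_n$ we have $\P(\xxxi=k)=k\pi_k$ and $\P(\hyy=k)=\tfrac n{n-1}k\P(Y_1=k)$, and here I would invoke the generalized Scheffé lemma. Define on $\No$ the nonnegative sequences $p_n(k)\=k\pi_k\ett{k\le\gO_n}$ and $q_n(k)\=\tfrac n{n-1}k\P(Y_1=k)\ett{k\le\gO_n}$, with common target $p(k)\=k\pi_k$. Since $\gO_n\to\infty$, $p_n(k)\to p(k)$ for each fixed $k$; since $\P(Y_1=k)\to\pi_k$ for each fixed $k$ by \refT{TB3} (with $m=n-1$), also $q_n(k)\to p(k)$; and the total masses satisfy $\sum_k p_n(k)=\sum_{k\le\gO_n}k\pi_k\to\nu=\sum_k p(k)$ and $\sum_k q_n(k)=\tfrac1{n-1}\sum_{k\le\gO_n}k\E N_k=1-a_n\to\nu$. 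Scheffé's lemma then yields $\sum_k\abs{p_n-p}\to0$ and $\sum_k\abs{q_n-p}\to0$, whence $\sum_{k\le\gO_n}\bigabs{k\pi_k-\tfrac n{n-1}k\P(Y_1=k)}=\sum_k\abs{p_n-q_n}\to0$ by the triangle inequality. Combining the head and tail estimates gives $\dtv(\xxxi,\hyy)\to0$, i.e. $\xxxi\dapprox\hyy$.

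The main obstacle is conceptual rather than computational: neither $\xxxi$ nor $\hyy$ converges in total variation to a proper distribution on $\No$ — both converge only in $\dkk$, to $\hxi$, which escapes to $\infty$ — so Scheffé cannot be applied to the whole distributions at once. The fix is the splitting at $\gO_n$: the ``head'' parts of both distributions are asymptotically the honest law $(k\pi_k)_k$ of total mass $\nu$, to which Scheffé does apply, while the ``escaping'' mass $1-\nu$ lies above $\gO_n$ in both and has, up to $o(1)$ in total variation, the common shape $\P(\yxx=\cdot)$; \refL{LQ}(ii) is exactly what certifies that this escaping mass has total $(1-\nu)n$ in each. A minor point to handle carefully is the passage from the in-probability statement of \refL{LQ}(ii) to the statement about $c_n=\E\sum_{k>\gO_n}kN_k$, which is legitimate because the random variable there is bounded by $n-1$.
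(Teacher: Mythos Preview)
Your proof is correct, but the route differs from the paper's. The paper uses a three-way split $k\le K$, $K<k\le\gO_n$, $k>\gO_n$: on the first range it uses the pointwise convergence $\P(\xxxi=k)-\P(\hyy=k)\to0$; on the middle range it does \emph{not} attempt to control $\P(\hyy=k)$ at all, but instead exploits the asymmetric identity $\dtv=\sum_k(\P(\xxxi=k)-\P(\hyy=k))_+$ from \refL{LM}\ref{lmf} and simply bounds the positive part by $\P(\xxxi=k)=k\pi_k$; the tail $k>\gO_n$ is handled essentially as you do. Letting $K\to\infty$ then kills the middle term. Your argument instead keeps a two-way split and, on the whole head $k\le\gO_n$, uses the generalized Scheff\'e lemma after observing that both $(\P(\xxxi=k))_{k\le\gO_n}$ and $(\P(\hyy=k))_{k\le\gO_n}$ converge pointwise to $(k\pi_k)_k$ \emph{and} have total mass converging to $\nu$, the latter coming from the expectation version of \refL{LQ}(ii) that you justify via bounded convergence. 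The paper's approach is a shade more elementary (no named lemma beyond the $(\cdot)_+$ identity) and avoids having to verify convergence of total masses; your approach is more conceptual, making explicit that the ``head'' parts of both laws are genuine $L^1$ approximants of the same subprobability $(k\pi_k)_k$, which is the structural reason the lemma holds.
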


\begin{proof}
  For each fixed $k$, $\P(\xxxi=k)=k\pi_k$ as soon as $\gO_n>k$, and 
$\P(\hyy=k)\to k\pi_k$ by \eqref{hyy} and \refT{TB3}.
Hence, 
\begin{equation}\label{xxxihyy}
|\P(\xxxi=k)-\P(\hyy=k)|\to0.  
\end{equation}

By \eqref{sp+}, \eqref{lollo} and \eqref{hyy}, uniformly for $k>\gO_n$,
\begin{equation*}
  \P(\xxxi=k)=k\pi_k+(1-\nu)\frac{k\P(Y_1=k)}{1-\nu+o(1)}
=k\pi_k+\etto\P(\hyy=k);
\end{equation*}
hence
\begin{equation}\label{cec1}
  \sum_{k>\gO_n} |\P(\xxxi=k)-\P(\hyy=k)|\le
  \sum_{k>\gO_n} \bigpar{k\pi_k+o(1)\P(\hyy=k)}
=  \sum_{k>\gO_n}k\pi_k+o(1).
\end{equation}
Further, for any fixed $K$,
\begin{equation}\label{cec2}
    \sum_{k=K+1}^{\gO_n} \bigpar{\P(\xxxi=k)-\P(\hyy=k)}_+\le
    \sum_{k=K+1}^{\gO_n} \P(\xxxi=k)
=    \sum_{k=K+1}^{\gO_n} k\pi_k.
\end{equation}
Using 
\refL{LM}\ref{lmf} together with
\eqref{xxxihyy} for $k\le K$, \eqref{cec1} and \eqref{cec2} 
we obtain 
\begin{equation}
\dtv(\xxxi,\hyy)=
    \sum_{k=1}^{\infty} \bigpar{\P(\xxxi=k)-\P(\hyy=k)}_+\le
  \sum_{k=K+1}^{\infty} k\pi_k+o(1).
\end{equation}
Since $K$ is arbitrary and
$ \sum_{1}^{\infty} k\pi_k<\infty$,
it follows that $\dtv(\xxxi,\hyy)\to0$.
\end{proof}

\section{Further results and problems}\label{Sfurther}

\subsection{Level widths}

Let, as in \refR{Relz}, 
$\elz_k(T)$ denote the number of nodes with distance $k$ to the root in a
rooted tree $T$.

If $\nu\ge1$, then $\hcT$ is a locally finite tree so all level widths
$\elz_k(\hcT)$ are finite.
It follows easily from the characterisation of convergence in \refL{LClf}
that, in this case, the functional $\elz_k$ is continuous at $\hcT$, and
thus \refT{Tmain} implies 
 (see \citetq{Corollary 1, p.~31}{Billingsley})
\begin{equation}\label{llim}
\elz_k(\ctn)\dto \elz_k(\hcT) <\infty  
\end{equation}
for each $k\ge0$.

On the other hand, if $\nu<1$, 
then $\hcT$ has a node with infinite outdegree; this 
node has a random distance $L-1$ to the root, where $L$ as in \refS{ShGW} is
the length of the spine, and thus $\elz_L(\hcT)=\infty$.

In the case $0<\nu<1$, we have $\pi_0<1$ and $\P(\xi\ge1)=1-\pi_0>0$, so for
any $j$, there is a positive probability that the \GWt{} $\cT$ has height at
least $j$, and it follows that of the infinitely many copies of $\cT$ that
start in generation $L$, \as{} infinitely many will survive at least until
generation $L+j$. Consequently, \as,
$\elz_k(\hcT)=\infty$ for all $k\ge L$,
while $\elz_k(\hcT)<\infty$ for $k<L$.
It follows easily from \refL{LC}, that in this case too, 
for each $k\ge0$,
the mapping  $\elz_k:\st\to\bNo$ is continuous at $\hcT$.
Consequently,
\begin{equation}\label{eri}
\elz_k(\ctn)\dto \elz_k(\hcT) \le\infty,
\qquad k=0,1,\dots,  
\end{equation}
with $\P(\elz_k(\hcT)<\infty)=\P(L>k)=\nu^{k}$.
(Recall that $\mu=\nu$ in this case by \eqref{mainmu}.)

When $\nu=0$, however, \eqref{eri} does not always hold.
By \refE{Emu=0}, $\hcT$ is an infinite star, with 
$\elz_1(\hcT)=\infty$ and 
$\elz_k(\hcT)=0$ for all
$k\ge2$. 
By \refT{Troot},  $\elz_1(\ctn)=\dx_{\ctn}(o)\dto\hxi\eqd\elz_1(\hcT)$,
so \eqref{eri} holds for $k=1$ (and trivially for $k=0$)
in the case $\nu=0$ too (with $\elz_1(\hcT)=\infty$).
However, by \refE{Ek!}, if $w_k=k!$, then
$\elz_2(\ctn)\dto\Po(1)$, so $\elz_2(\ctn)$ does not converge to
$\elz_2(\hcT)=0$. 
Similarly, by \refE{Ek!a},
if $j\ge2$ and
$w_k=k!^\ga$ with $0<\ga<1/(j-1)$, then 
the number of paths of length $j$ attached to the root in $\ctn$ tends to
$\infty$ (in probability), so 
$\elz_j(\ctn)\pto\infty$, while $\elz_j(\hcT)=0$.

Turning to moments, we have for the expectation, by \eqref{elk},
$\E\elz_k(\hcT)=\infty$ if $0<\nu<1$ or $\gss=\infty$; in this case
\eqref{llim}--\eqref{eri} and Fatou's lemma yield
$\E\elz_k(\ctn)\to\E\elz_k(\hcT)=\infty$.

If $\nu\ge1$ and $\gss<\infty$, then \eqref{elk0} yields
$\E\elz_k(\hcT)=1+k\gss<\infty$.
In this case, for each fixed $k$, the random variables $\elz_k(\ctn)$,
$n\ge1$, are uniformly integrable, and thus \eqref{llim} implies
$\E\elz_k(\ctn)\to\E\elz_k(\hcT)$, see \citetq{Section 10}{SJ167}.
(In the case $\nu>1$, this was shown already by \citet{MM78}.)
Consequently, for any $\wwx$ with $\rho>0$ and any fixed $k$, 
\begin{equation}\label{jef}
\E\elz_k(\ctn)\to\E\elz_k(\hcT)\le\infty.
\end{equation}
(When $\rho=0$, this is not always true, by the examples above.)

For higher moments, there remains a small gap. 
Let $r\ge1$. 
When $0<\nu<1$, \eqref{jef} trivially implies
$\E\elz_k(\ctn)^r\to\E\elz_k(\hcT)^r=\infty$,
so suppose $\nu\ge1$.
Then, by \eqref{hxi},
$\E\hxi^r=\E\xi^{r+1}$, so if $\E\xi^{r+1}=\infty$, then
$\E\elz_1(\hcT)^r=\infty$; moreover, each $\elz_k(\hcT)$, $k\ge1$,
stochastically dominates $\hxi$ (consider the offspring of the $k$:th node
on the spine), and thus
$\E\elz_k(\hcT)^r=\infty$ for every $k\ge1$.
Consequently, again immediately by Fatou's lemma and \eqref{eri}, 
$\E\elz_k(\ctn)^r\to\E\elz_k(\hcT)^r=\infty$.
The only interesting case is thus when $\E\xi^{r+1}<\infty$.
If $r\ge1$ is an integer, it was shown in \cite[Theorem 1.13]{SJ167} that
$\E\xi^{r+1}<\infty$ implies that
$\E\elz_k(\ctn)^r$, $n\ge1$, are uniformly bounded for each $k\ge1$.
We conjecture that, moreover, 
$\elz_k(\ctn)^r$, $n\ge1$, are uniformly integrable, which by \eqref{llim}
would yield the following:

\begin{conjecture}
  For every integer $r\ge1$ and every $k\ge1$, if $\nu>0$, then
  \begin{equation}
	\label{jullan}
\E\elz_k(\ctn)^r\to\E\elz_k(\hcT)^r\le\infty.
  \end{equation}
We further conjecture that this holds also for non-integer $r>0$.
\end{conjecture}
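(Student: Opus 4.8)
The plan is to obtain \eqref{jullan} by combining the distributional convergence $\ctn\dto\hcT$ of \refT{Tmain} with a uniform integrability bound, after first reducing to the only nontrivial case. If $0<\nu<1$ then $\E\elz_k(\hcT)^r=\infty$ (already the mean is infinite, by \eqref{elk}), and \eqref{eri} together with Fatou's lemma gives \eqref{jullan} with both sides $\infty$; the same argument applies when $\nu\ge1$ but $\gss=\infty$, since then $\E\elz_k(\hcT)=1+k\gss=\infty$ by \eqref{elk0}. So assume $\nu\ge1$ and $\gss<\infty$, i.e.\ case \Iga. Because $\ctn$ depends only on the equivalence class of $\wwx$, replace $\wwx$ by the canonical \pws{} $\ppi$ of \refT{Tmain}; it is critical, with $\tau=\Phi(\tau)=1$, $\hcT$ is then Kesten's size-biased tree, locally finite, and by \refL{LClf} each functional $\elz_k$ is continuous at $\hcT$, so $\elz_k(\ctn)\dto\elz_k(\hcT)<\infty$. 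Fatou gives $\liminf_\ntoo\E\elz_k(\ctn)^r\ge\E\elz_k(\hcT)^r$; since the variables are nonnegative and converge in distribution to a finite limit, the reverse inequality — hence \eqref{jullan} — is equivalent to uniform integrability of the family $\{\elz_k(\ctn)^r\}_n$, which is what one has to establish.

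Next, dichotomize on the moments of $\xi$. As noted in the excerpt, $\elz_k(\hcT)$ stochastically dominates $\hxi$ (the offspring of the $k$:th spine node), and $\E\hxi^r=\E\xi^{r+1}$; so if $\E\xi^{r+1}=\infty$ then $\E\elz_k(\hcT)^r=\infty$ and Fatou finishes. Assume $\E\xi^{r+1}<\infty$. If in fact $\E\xi^{r'+1}<\infty$ for some $r'>r$ — in particular whenever $\xi$ has a finite moment of some integer order $>r+1$ — then \cite[Theorem 1.13]{SJ167}, which is stated there for integer exponents but extends to real exponents either by monotonicity of $L^p$ norms or by the same reduced-tree expansion used below, yields $\sup_n\E\elz_k(\ctn)^{r'}<\infty$; since $r'>r$ this makes $\elz_k(\ctn)^r$ uniformly integrable and proves \eqref{jullan}. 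The remaining, and genuinely open, case is the \emph{boundary} one: $\E\xi^{r+1}<\infty$ while $\E\xi^{r'+1}=\infty$ for every $r'>r$ (for instance $\pi_k$ a power law of exponent $-(r+3)$ with a logarithmic correction tuned so that precisely the $(r+1)$-st moment of $\xi$ is finite). There one must bound the truncated moment $\E\bigl[\elz_k(\ctn)^r\etta\{\elz_k(\ctn)>M\}\bigr]$ directly and show it tends to $0$ as $M\to\infty$, uniformly in $n$.

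For that case the plan is a skeleton expansion. Write $\elz_k(\ctn)^r=\sum_{(v_1,\dots,v_r)}\prod_{i=1}^r\etta\{|v_i|=k\}$, the sum over ordered $r$-tuples of nodes of $\ctn$, and group the tuples according to the combinatorial type $S$ of the subtree $R$ spanned by the root and $v_1,\dots,v_r$ — a rooted tree all of whose leaves lie at depth $k$, with at most $r-1$ branch points. Writing $\ctn\eqd(\cT\mid|\cT|=n)$ for the critical \GWt{} $\cT$ of $\ppi$, the probability that $R$ has type $S$, with the remaining $n-|S|$ nodes attached in the $|\partial S|$ prescribed slots ($\partial S$ as in \refL{Ldsub}), equals $w(S)\,Z_{n-|S|+|\partial S|}/Z_n$ up to explicit combinatorial factors, where $w(S)=\prod_{v\in S}\pi_{\dx_S(v)}$; by \refT{TH1} (applicable since $\gss<\infty$) one has $Z_{n-|S|+|\partial S|}/Z_n\to1$, with enough uniformity that each term converges to the corresponding term in the analogous skeleton expansion of $\E\elz_k(\hcT)^r$ for the size-biased tree. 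The crucial step is a tail bound uniform in $n$: summing $w(S)$ over all skeletons with a fixed branching pattern produces at each branch point a factor $\sum_k k^{a}\pi_k$ with $a$ at most the number of outgoing branches there, so the total over all $S$ is controlled by a single moment $\sum_k k^{r+1}\pi_k=\E\xi^{r+1}<\infty$; once this is made precise, dominated convergence upgrades the term-by-term convergence to convergence of the whole sum, which is exactly the uniform integrability sought.

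The main obstacle is this last tail estimate at the moment boundary, where no extra power of $k$ is available: the crude bound ``replace every $k^a$ by $k^{r+1}$'' is too wasteful as soon as $S$ has more than one branch point, so one must carry out a ``budget'' argument tracking how the total admissible power $r+1$ is split among the branch points of $S$, while simultaneously keeping $Z_{n-|S|+|\partial S|}/Z_n$ under uniform control for skeletons whose size is allowed to grow with $n$. A secondary obstacle is the non-integer $r$: the identity $\elz_k(\ctn)^r=\sum\prod\etta\{|v_i|=k\}$ holds only for integer $r$, so for $r\notin\bbN$ one would instead need a direct polynomial tail bound $\P\bigl(\elz_k(\ctn)>x\bigr)=O(x^{-r-\eps})$ valid in the boundary regime (or a Hölder/interpolation argument between consecutive integer moments), which again rests on the same combinatorial control.
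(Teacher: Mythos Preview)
This statement is a \emph{conjecture} in the paper, not a theorem: there is no proof in the paper to compare your proposal against. The paper presents it explicitly as open, and the surrounding discussion in the paper matches your reduction almost line for line --- the infinite cases via Fatou, the observation that $\E\hxi^r=\E\xi^{r+1}$, the appeal to \cite[Theorem~1.13]{SJ167} for uniform boundedness when $\E\xi^{r+1}<\infty$, and the remark that if $\E\xi^{r+2}<\infty$ (more generally, some higher moment) then uniform integrability follows immediately. So everything up to your ``boundary case'' is exactly what the paper already records, and is not new.

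Your proposal is honest about this: you flag the boundary case ($\E\xi^{r+1}<\infty$ but no higher moment) as ``genuinely open'' and then sketch a skeleton/reduced-tree expansion as a plan of attack. That plan is reasonable and is in the spirit of the methods in \cite{SJ167}, but as you yourself note, the decisive step --- a tail estimate at the exact moment boundary, with no $\eps$ of extra integrability to spare, together with uniform control of $Z_{n-|S|+|\partial S|}/Z_n$ over growing skeletons --- is precisely the missing ingredient. Until that is carried out, what you have is a programme rather than a proof, which is consistent with the paper leaving the statement as a conjecture.
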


One thus has to consider the case $\E\xi^{r+1}<\infty$ only, and the result
from \cite{SJ167} implies that \eqref{jullan} holds if $\E\xi^{r+2}<\infty$,
since then $\E\elz_k(\ctn)^{\floor r+1}$ are uniformly bounded.

\subsection{Asymptotic normality}\label{SSnormal}
In \refT{Tdegree}, we proved that $N_d$, the number of nodes of outdegree
$d$ in the random tree $\ctn$, satisfies $N_d/n\pto\pi_d$.

In our case \Iga{} ($\nu>1$ or $\nu=1$ and $\gss<\infty$), 
Kolchin \cite[Theorem 2.3.1]{Kolchin} 
gives the much
stronger result that the random variable $N_d$ is 
asymptotically normal, for every $d\ge0$:
\begin{equation}
  \label{kol}
\frac{N_d-n\pi_d}{\sqrt n}\dto N(0,\gss_d),
\end{equation}
with
\begin{equation}
  \gss_d\=\pi_d\Bigpar{1-\pi_d-\frac{(d-1)^2\pi_d}{\gss}}.
\end{equation}
(In fact, \citet{Kolchin} gives a local limit theorem which is a stronger
version of \eqref{kol}.)

Under the assumption $\xi^3<\infty$, \citetq{Example 3.4}{SJ132}
gave another proof of \eqref{kol}, and showed further joint convergence for
different $d$, with asymptotic covariances, using $I_k\=\ett{\xi=k}$,
\begin{equation}
  \gss_{kl}
=\Cov(I_k,I_l)-\frac{\Cov(I_k,\xi)\Cov(I_l,\xi)}{\Var \xi}
=
\pi_k\gd_{kl}-\pi_k\pi_l-\frac{(k-1)(l-1)\pi_k\pi_l}{\gss}.
\end{equation}

Moreover, \citet{SJ132} showed that if $\E|\xi|^r<\infty$ for every $r$
(which in particular holds when $\nu>1$ since then $\tau<\rho$ and $\xi$ has
some exponential moment),
then convergence of all moments and joint moments holds in \eqref{kol}; in particular
\begin{equation}
  \E N_k=n\pi_k+o(n)
\quad\text{and}\quad
  \Cov( N_k,N_l)=n\gss_{kl}+o(n).
\end{equation}

In the case $\nu>1$, 
\citet{Minami} and
Drmota \cite[Section 3.2.1]{Drmota} have given other proofs 
of the (joint) asymptotic normality using the
saddle point  method;
\citet{Drmota} shows further the
stronger moment estimates
\begin{equation}
  \E N_d=n\pi_d+O(1)
\quad\text{and}\quad
  \Var N_d=n\gss_d+O(1).
\end{equation}

\begin{problem}
  Do these results hold in the case $\nu=1$, $\gss<\infty$ without extra
  moment conditions?
  Do they extend to the case $\nu=1$, $\gss=\infty$?
What happens when $0\le\nu<1$?
\end{problem}

\begin{problem}
  Extend this to the more general case of \bib{} as in \refT{TBmain}.
(We guess that the case $0<\gl<\nu$ is easy by the methods in 
the references above, in particular \cite{SJ132} and \cite[Section
	3.2.1]{Drmota}, but we have not checked the details.)
\end{problem}

\begin{problem}
  Extend this to the subtree counts in \refT{Tsubtree}.
\end{problem}

\subsection{Height and width}

We have studied the random trees $\ctn$ without any scaling. Since our mode
of convergence really means that we consider only a finite number of
generations at a time, we are really looking at the base of the tree, with
the first generations. The results in this paper thus do not say anything
about, for example, the height and width of $\ctn$.
(Recall that if $T$ is a rooted tree, 
then the  height $H(T)\=\max\set{k:\elz_k(T)>0}$, the maximum distance from
the root, 
and the width  $W(T)\=\max_k\set{\elz_k(T)}$, the largest size of a generation.)
However, there are  other known results. 

In the case $\nu\ge1$, $\gss<\infty$ (the case \Iga{} in \refS{S3}), 
it is well-known that both the height $H(\ctn)$ and the width $W(\ctn)$ of
$\ctn$ typically are of order $\sqrt n$; more precisely,
\begin{align}
H(\ctn)/\sqrt n&\dto 2\gs\qw X,\label{hlim}
\\  
W(\ctn)/\sqrt n&\dto \gs X, \label{wlim}
\end{align}
where $X$ is some strictly positive random
variable (in fact, $X$ equals the maximum of a standard Brownian
excursion and has what is known as a theta distribution), 
see \eg{} 
\citet{Kolchin},
\citet{AldousII}, \citet{ChMY}, \citet{SJ167} and
\citet{Drmota}.
There are also results for a single level giving an asymptotic distribution
for $\elz_{k(n)}(T_n)/\sqrt n$ when the level $k(n)\sim a\sqrt n$ for some
$a>0$,  
see \citetq{Theorem 2.4.5}{Kolchin}.

Since the variance $\gss$ appears as a
parameter in these results, we
cannot expect any simple extensions to the case $\gss=\infty$, and even less
to the 
case $0\le \nu<1$. 
Nevertheless, 
we  conjecture that \eqref{hlim} and \eqref{wlim} extend formally
at least
to the case $\nu=1$ and $\gss=\infty$:

\begin{conjecture}
  If $\nu=1$ and $\gss=\infty$, then $H(\ctn)/\sqrt n\pto 0$.
\end{conjecture}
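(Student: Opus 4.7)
The plan is to combine truncation of the weight sequence with the known height result \eqref{hlim} in the finite-variance case. For each large $K$, let $\wwx\kk$ denote the truncated weight sequence $w\kk_k=w_k\ett{k\le K}$ as in \eqref{trunc2}, and let $\tau_K$ be the unique number with $\Psi\kk(\tau_K)=1$. Since $\nu=1$ and $\gss=\infty$ force $\Phi''(1)=\infty$, one has $\tau_K\to1$ while $\Phi^{(K)''}(\tau_K)\to\infty$ as $K\to\infty$, so the variance $\bigpar{\gs\kk}^2=\tau_K^2\Phi^{(K)''}(\tau_K)/\Phi\kk(\tau_K)$ of the canonical probability weights $\pi\kk_k=\tau_K^kw_k/\Phi\kk(\tau_K)$ tends to $+\infty$. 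Applying \eqref{hlim} to the simply generated tree $\ctn\kk$ then gives $H(\ctn\kk)/\sqrt n \dto 2\bigpar{\gs\kk}\qw X$ with $X>0$ the Brownian-excursion maximum, and hence, for every $\eps>0$,
\begin{equation*}
\limsup_{n\to\infty}\P\bigpar{H(\ctn\kk)\ge\eps\sqrt n}
=\P\bigpar{X\ge \eps\gs\kk/2}\xrightarrow[K\to\infty]{}0.
\end{equation*}

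The heart of the argument is to compare $H(\ctn)$ to $H(\ctn\kk)$, and the plan is to do this by conditioning on the degree multiset of $\ctn$. By \refL{Lx} and \refC{Ccyclic}, conditionally on $(N_k(\ctn))_{k\ge0}$ the tree $\ctn$ is uniformly distributed over all ordered trees with this degree sequence. By \refT{Tdegree}, $N_k(\ctn)/n\pto\pi_k$ for every fixed $k$, so for every fixed $K'$,
\begin{equation*}
\gs_n^2\=\frac1n\sum_{v\in\ctn}\dx(v)^2\ge\sum_{k\le K'}k^2\frac{N_k(\ctn)}n\pto\sum_{k\le K'}k^2\pi_k,
\end{equation*}
and since $\sum_k k^2\pi_k=\gss+1=\infty$, letting $K'\to\infty$ yields $\gs_n^2\pto\infty$. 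The plan is then to invoke a deterministic height bound for a uniformly random ordered tree with prescribed degree sequence --- in the spirit of the subgaussian inequality of Addario-Berry or the scaling limits of Broutin--Marckert --- of the form $H=\Op(\sqrt n/\gs_n)$ under appropriate control of the maximum degree; combined with $\gs_n\pto\infty$ this would yield the desired $H(\ctn)=\op(\sqrt n)$.

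The main obstacle is precisely the control of the maximum outdegree $\Delta$ relative to $\gs_n\sqrt n$ required by such deterministic bounds: in genuine condensation regimes (e.g.\ \refE{EL8} with $1<\ga<2$) one has $\Delta\asymp\gs_n\sqrt n$, so the simplest Addario-Berry-type tail bound barely fails and must be refined, or the few very large-degree ``hubs'' must be handled separately. For the latter, \refT{Tsubtree} shows that the subtrees at typical nodes converge to the unconditioned critical \GWt{} $\cT$ (which is \as{} finite), and an analogous branching argument for the few hubs should show that their subtrees have heights of order $\op(\sqrt n)$, so that the overall height is controlled by the ``low-degree skeleton'' whose empirical variance blows up with $K$.
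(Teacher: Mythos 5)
This statement is an open conjecture in the paper (\refS{Sfurther}), posed without proof, so there is no internal argument to compare against; the question is simply whether your proposal closes the gap.

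It does not, and you flag the obstruction yourself. The degree-multiset conditioning idea is sound: given $(N_k(\ctn))_{k\ge0}$ the tree is uniform over ordered trees with that degree sequence, and the verification that $\gs_n^2\=\frac1n\sum_{v\in\ctn}\dx_{\ctn}(v)^2\pto\infty$ via \refT{Tdegree} and a liminf argument is correct. But the conditional height bound $H=\Op(\gs_n\qw\sqrt n)$ for a uniform tree with prescribed degrees (Addario-Berry; Broutin--Marckert) requires the maximum outdegree to be $o(\gs_n\sqrt n)$, and in the regime $\nu=1$, $\gss=\infty$ this fails exactly at the boundary: with $\pi_k\sim ck^{-\ga-1}$ and $1<\ga<2$ as in \refE{EL8}, the largest outdegree $\ya$ and $\gs_n\sqrt n$ are both of order $n^{1/\ga}$, so $\ya\asymp\gs_n\sqrt n$ rather than $\ya=\op(\gs_n\sqrt n)$. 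In that stable-attraction setting the conjecture is in fact known (Duquesne's stable-tree scaling gives $H$ of order $n^{1-1/\ga}=o(\sqrt n)$), but by an argument specific to stable domains of attraction, not through the generic degree-sequence mechanism, and there is no indication how to handle a general $\nu=1$, $\gss=\infty$ weight sequence outside a stable domain. Your closing sketch --- that the hubs' fringe subtrees ``should'' have height $\op(\sqrt n)$ --- is the step that needs an actual proof: \refT{Tsubtree} controls the fringe subtree at a \emph{uniformly random} node, not at the few highest-degree nodes, which are selected in a strongly size-biased way and whose subtrees can be much larger than a typical fringe subtree.

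A secondary issue: the truncation paragraph is never connected to the rest of the argument. You show $(\gs\kk)^2\to\infty$ and hence $\lim_{K\to\infty}\limsup_{n\to\infty}\P\bigpar{H(\ctn\kk)\ge\eps\sqrt n}=0$, but no coupling or stochastic-domination argument relating $H(\ctn)$ to $H(\ctn\kk)$ is offered, and none is obvious: truncating $\wwx$ changes the law of $\ctn$ nontrivially, $\ctn$ is not a subtree of $\ctn\kk$ under any natural coupling, and it is not even clear in which direction the height moves, so $H(\ctn)\le H(\ctn\kk)$ cannot simply be asserted. As written you have two independent half-strategies, each stopping at an unresolved comparison; the conjecture remains open.
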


\begin{conjecture}
  If $\nu=1$ and $\gss=\infty$, then $W(\ctn)/\sqrt n\pto \infty$.
\end{conjecture}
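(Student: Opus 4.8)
The plan is to reduce the statement to the companion conjecture on the height. From the trivial inequality $W(\ctn)\bigpar{H(\ctn)+1}\ge\sum_{k\ge0}\elz_k(\ctn)=n$ one gets $W(\ctn)\ge n/(H(\ctn)+1)$, so $H(\ctn)/\sqrt n\pto0$ immediately implies $W(\ctn)/\sqrt n\pto\infty$; thus it suffices to show $\P(H(\ctn)\ge\eps\sqrt n)\to0$ for every $\eps>0$. Even without that, a useful partial result comes from the observation that if $v\in\ctn$ has $\dx_{\ctn}(v)=\ya$ then all $\ya$ children of $v$ lie in one generation, so $W(\ctn)\ge\ya$; hence the conjecture holds whenever $\ya/\sqrt n\pto\infty$. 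In particular, for the power-law weights of \refE{EL8} (tail index $\ga\in(1,2)$), where $\ya/\nga\dto\eta_1>0$ and $\nga/\sqrt n\to\infty$, the conclusion follows at once. What remains is the boundary regime $\gss=\infty$ with a tail no heavier than $t\qww$ (e.g.\ \refE{EL9}); there $\ya$ is only of order $\sqrt n$, so the extra width must come not from a single huge degree but from a genuinely fat generation, and the max-degree bound is useless.

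For the height bound the natural tool is a spinal decomposition of $\cT$ conditioned on $\set{H(\cT)\ge h}$ in the style of \citet{Geiger} (see \refR{Rsin}): a spine of length $h$ with independent, unconditioned \GWt{s} attached along it, combined with the classical estimate $h\,\P\bigpar{\elz_h(\cT)>0}\to 2/\gss$, which equals $0$ when $\gss=\infty$. Heuristically, reaching height $h$ already costs a factor that decays faster than $1/h$, and the $h$ attached critical \GWt{s} make the typical order of $|\cT|$ on $\set{H(\cT)\ge h}$ equal to $h^2$ when $\gss<\infty$ and strictly larger when $\gss=\infty$; so insisting in addition that $|\cT|=n$ while $h\ge\eps\sqrt n$ becomes a large-deviation-type event, and one would conclude $\P(H(\ctn)\ge\eps\sqrt n)=\P\bigpar{H(\cT)\ge\eps\sqrt n,\ |\cT|=n}/Z_n\to0$. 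The same circle of ideas, run on the Łukasiewicz path (via Corollary \ref{Ccyclic} and \refL{Lx}), should also give the width directly: when $\xi$ lies in the domain of attraction of a stable (or, at the boundary, Gaussian) law, the rescaled Łukasiewicz excursion converges to a stable excursion, the profile of $\ctn$ converges after rescaling, and both $H(\ctn)=o(\sqrt n)$ and $W(\ctn)\asymp a_n\gg\sqrt n$ read off, where $a_n$ is the norming sequence for $\sum\xi_i$ (which satisfies $a_n/\sqrt n\to\infty$ precisely because $\gss=\infty$ in this setting).

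The real obstacle is generality: the conjecture is asserted for \emph{every} \ws{} with $\nu=1$, $\gss=\infty$, with no regular-variation hypothesis. The finite-variance proofs of \eqref{hlim}--\eqref{wlim} rest on the sharp asymptotics $Z_n\sim c\,n\qqcw$ (\refT{TH1}) and on Brownian excursion scaling of the contour; here we have only the crude $\frac1n\log Z_n\to0$ (\refT{Tpn}), which cannot control the ratio $\P(H(\cT)\ge h,\ |\cT|=n)/Z_n$, and the path need not converge to any limit. I would therefore proceed in two steps: first settle the case where $\xi$ is in the domain of attraction of a stable or Gaussian law, using \refT{THstab} (and its extensions) for $Z_n$ together with the known scaling limits of the conditioned tree; then reduce the general case by truncation, replacing $\xi$ by $\xi\wedge K_n$ with $K_n\to\infty$ slowly and tuned to $n$. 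The delicate point — and, I expect, the crux — is exactly this truncation: every truncated law has \emph{finite} variance, so one must let $K_n\to\infty$ at a rate depending on $n$, quantify how $W$ and $H$ of the truncated conditioned tree of size $n$ depend on the slowly diverging parameter $\E[(\xi\wedge K_n)^2]$, and then transfer the resulting bounds back to $\ctn$, e.g.\ by a coupling of the two conditioned trees or by comparing partition functions as in \refL{LB2sub}.
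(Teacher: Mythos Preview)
This statement is presented in the paper as a \emph{conjecture}, not a theorem: the paper gives no proof and no argument beyond the remark that \eqref{hlim}--\eqref{wlim} should extend formally to $\gss=\infty$. There is therefore no proof in the paper to compare your proposal against.

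That said, your proposal is a reasonable research outline rather than a proof. The two elementary reductions you give are correct and useful: the inequality $W(\ctn)\bigl(H(\ctn)+1\bigr)\ge n$ does reduce the width conjecture to the height conjecture, and $W(\ctn)\ge\ya$ does settle the case of \refE{EL8} since there $\ya/n^{1/\ga}\dto\eta_1>0$ with $1/\ga>1/2$. But, as you yourself acknowledge, neither route closes the general case: the height conjecture is itself open in the paper, and the max-degree bound fails exactly in the boundary regime (e.g.\ \refE{EL9}) where $\ya=\Op(\sqrt n)$. Your spinal/heuristic argument for the height and the truncation scheme are plausible directions, but they remain sketches; in particular, the step where you ``transfer the resulting bounds back to $\ctn$'' from the truncated tree is the whole difficulty, since the conditioning on $|\cT|=n$ interacts nontrivially with truncation and you have only the crude $\frac1n\log Z_n\to0$ to control the partition function. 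So: correct partial results, honest identification of the obstacle, but no complete argument --- which is exactly the status the paper assigns to the statement.
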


\begin{problem}
    Does $\nu<1$ imply that $H(\ctn)/\sqrt n\pto 0$?
\end{problem}

\begin{problem}
    Does $\nu<1$ imply that $W(\ctn)/\sqrt n\pto \infty$?
\end{problem}

Furthermore, still in the case
$\nu\ge1$, $\gss<\infty$,
\citet{SJ250} have shown sub-Gaussian tail estimates for the height and
width
\begin{align}
  \P(H(\ctn)\ge x\sqrt n)& \le C e^{-c x^2}, \label{hsup}\\
  \P(W(\ctn)\ge x\sqrt n)& \le C e^{-c x^2}, \label{wsup}
\end{align}
uniformly in all $x\ge0$ and $n\ge1$ (with some positive constants $C$ and $c$
depending on $\ppix$ and thus on $\wwx$). In view of \eqref{wlim}, we cannot
expect \eqref{wsup} to hold when $\gss=\infty$ (or when $\nu<1$), but we see
no reason why \eqref{hsup} cannot hold; \eqref{hlim} suggests that $H(\ctn)$
typically is smaller when $\gss=\infty$. 

\begin{problem}
  Does \eqref{hsup} hold for any \ws{} $\wwx$ (with $C$ and $c$ depending on
  $\wwx$, but not on $x$ or $n$)?
\end{problem}

It follows from \eqref{hlim}--\eqref{wlim} and \eqref{hsup}--\eqref{wsup}
that $\E H(\ctn)/\sqrt n$ and $\E W(\ctn)/\sqrt n$ converge to positive
numbers. (In fact, the limits are $\sqrt{2\pi}/\gs$ and $\sqrt{\pi/2}\,\gs$,
see \eg{} \citet{SJ174}, where also joint moments are computed.)

\begin{problem}
What are the growth rates of
$\E H(\ctn)$ and $\E W(\ctn)$ when $\gss=\infty$ or $\nu<1$?
\end{problem}

\subsection{Scaled trees}

The results \eqref{hlim}--\eqref{wlim}, as well as many other 
results on various asymptotics of $\ctn$ in the case $\nu\ge1$, $\gss<\infty$, 
can be seen as consequences of the convergence of the tree 
$\ctn$, after rescaling in a suitable sense in both height and width 
by $\sqrt n$, 
to the \emph{continuum random tree} defined by
\citet{AldousI,AldousII,AldousIII}, see also \citet{LeGall}.
(The continuum random tree is not an  ordinary tree; it is a compact metric
space.) 
This has been extended to the case $\gss=\infty$ when $\ppix$ is in the
domain of attraction of a stable distribution, see
\eg{} \citet{Duquesne} and \citet{LeGall,LeGallreal}; 
the limit is now a different random
metric 
space called a
\emph{stable tree}.

\begin{problem}
  Is there some kind of similar limiting object in the case $\nu<1$
(after suitable scaling)?
\end{problem}

\subsection{Random walks}

Simple random walk on the infinite random tree $\hcT$ has been studied by
many authors in the critical case $\nu\ge1$, in particular when
$\gss<\infty$,
see \eg{}
\citet{Kesten,BarlowKumagai,DurhuusJW,FujiiKumagai},
but also when $\gss=\infty$,
see \citet{CroydonKumagai}
(assuming attraction to a stable law).

A different approach 
is to study simple random walk on $\ctn$ 
and 
study asymptotics os \ntoo.
For example, by rescaling the tree one can obtain convergence to  a process
on the continuum 
random tree (when $\gss<\infty$)
or stable tree (assuming attraction to a stable law), see 
\citet{Croydon,Croydon2010}.

For $\nu<1$, the simple random walk on $\hcT$ does not make sense, since
the tree has 
a node with infinite degree.
Nevertheless, it might be interesting to study simple random walk on $\ctn$
and find asymptotics of interesting quantities as \ntoo.

\subsection{Multi-type \cGWt{s}}
It seems likely that there are results similar to the ones in \refS{Smain}
for multi-type \GWt{s}
conditioned on the total size, or perhaps on the number of nodes of each
type, and for corresponding generalizations of  simply generated random
trees. 
We are not aware of any such results, however,
and leave this as an open problem. (See \citet{KLPP} for related results that
presumably are useful.)

\section{Different conditionings for \GWt{s}}\label{Scondx}
One of the principal objects studied in this paper is the \cGWt{}
$(\cT\mid|\cT|=n)$,  
\ie{} a \GWt{} $\cT$ conditioned on its total size being $n$; we then let
\ntoo. This is one way to consider very large \GWt{s}, but there are also
other similar conditionings.
For comparison, we briefly consider two possibilities;
see further \citet{Kennedy} and \citet{AldousPitman}.
We denote the offspring distribution by $\xi$ and its \pgf{} by $\Phi(t)$.

\subsection{Conditioning on $|\cT|\ge n$.}

If $\E\xi\le1$, \ie, in the subcritical and critical cases,
$\vct<\infty$ \as{} and thus $\cT$ conditioned on $\vct\ge n$ is a mixture
of $(\cT\mid\vct=N)=\cT_N$ for $N\ge n$. It follows immediately from
\refT{Tmain} that
$(\cT\mid\vct=N)\dto\hcT$ as \ntoo.

If $\E\xi>1$,  \ie, in the supercritical case, on the other hand, 
the event $\vct=\infty$ has positive probability, and the events $\vct\ge n$
decrease to $\vct=\infty$. Consequently,
\begin{equation}\label{oct}
  (\cT\mid\vct\ge n)
\dto
  (\cT\mid\vct=\infty),
\end{equation}
a supercritical \GWt{} conditioned on non-extinction.

\begin{remark}\label{Roct}
When $\cT$ is supercritical,
the conditioned \GWt{} $(\cT\mid|\cT|=\infty)$ in \eqref{oct}
can be constructed by  a 2-type \GWp, somewhat similar to the construction
of $\hcT$ in \refS{ShGW}:
Let $q\=\P(|\cT|<\infty)<1$ be the extinction probability, which is given by
$\Phi(q)=q$.
Consider a \GWp{} $\oct$
with individuals of two types, \emph{mortal} and
\emph{immortal}, where a mortal gets only mortal children while an immortal
may get both mortal and immortal children. The numbers $\xi'$ of mortal
and $\xi''$ of immortal children are described by the \pgf{s}
\begin{equation}\label{phim}
\E x^{\xi'}y^{\xi''} =\Phim(x)\=\Phi_q(x)=\Phi(qx)/q
\end{equation}
 for a mortal  
and
\begin{equation}\label{phiim}
\E x^{\xi'}y^{\xi''} =\Phiim(x)\=\frac{\Phi(qx+(1-q)y)-\Phi(qx)}{1-q}
\end{equation}
for an immortal (with the children coming in random order).
Note that the subtree started  by a mortal is subcritical
(since $\Phim'(1)=\Phi'(q)<1$, \cf{} \eqref{lep3}), and thus \as{}
finite, while every immortal has at least one immortal child 
(since $\Phiim(x,0)=0$)
and thus the
subtree started by an immortal is infinite.
It is easily verified that $\cT$ conditioned on non-extinction equals this
random tree $\oct$ started with an immortal, while 
$\cT$ conditioned on extinction equals $\overline\cT$ started with a mortal.
(See \citetq{Section I.12}{AN}, where this is stated in a somewhat
different form.)

One important difference from $\hcT$ is that $\oct$ does not have a single
spine; started with an immortal it has \as{} an uncountable number of
infinite paths from the root.

Note that $\hcT$ in the critical case
can be seen as a limit case of this construction. If we let
$q\upto1$, which requires that we really consider a sequence of different
distributions with generating functions $\Phi\nn(t)\to\Phi(t)$, then taking the
limits in \eqref{phim}--\eqref{phiim} gives for the limiting critical
distribution the offspring generating functions $\Phim(x)=\Phi(x)$ and
$\Phiim(x,y)=y\Phi'(x)$, which indeed are the generating functions for the
offspring distributions in \refS{ShGW} in the critical case (with
mortal = normal and immortal = special), since $\E
x^{\hxi-1}y=y\Phi'(x)=\Phiim(x,y)$ by \eqref{pgfhxi}.
\end{remark}

\subsection{Conditioning on $H(\cT)\ge n$.}\label{Scondh}

To condition on the height $H(\cT)$ being at least $n$ is the same as
conditioning on $\elz_n(\cT)>0$, 
\ie, that the \GWp{} survives for at least
$n$ generations.

If $\E\xi>1$,  \ie, in the supercritical case, 
the events $\elz_n(\cT)>0$
decrease to $\vct=\infty$. Consequently,
\begin{equation}
(\cT\mid H(\cT)\ge n)
=  (\cT\mid\elz_n(\cT)>0)
\dto
  (\cT\mid\vct=\infty),
\end{equation}
exactly as when conditioning on $\vct\ge n$ in \eqref{oct}.
By \refR{Roct}, the limit equals $\oct$, started with an immortal.

In the subcritical and critical cases, the following result, proved by
\citet{Kesten} (at least for $\E\xi=1$, see also \citet{AldousPitman}),
shows convergence to 
the size-biased \GWt{} $\ctx$ in \refR{Rsizebias}.

\begin{theorem}\label{Tsb}
  Suppose that $\mu\=\E\xi\le1$. Then, as \ntoo, 
\begin{equation}
(\cT\mid H(\cT)\ge n)=
  (\cT\mid\elz_n(\cT)>0)
\dto
  \ctx.
\end{equation}
\end{theorem}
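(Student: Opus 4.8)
The plan is to deduce the result from the local characterisation of convergence of locally finite trees. Since $\ctx$ is locally finite a.s.\ (\refR{Rsizebias}), \refL{LClf} together with \eqref{jobl} shows it is enough to prove, for every fixed $m\ge0$ and every fixed finite ordered rooted tree $T$,
\begin{equation*}
  \P\bigpar{\cT\xmm=T \bigm| \elz_n(\cT)>0}\to\P\bigpar{(\ctx)\xmm=T}
  \qquad\text{as \ntoo},
\end{equation*}
where $\cT\xmm$ denotes $\cT$ truncated at height $m$; recall that in the countable discrete space $\stf$ convergence in distribution is just pointwise convergence of probabilities, and that the (deterministic) truncation map commutes with conditioning. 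We may assume $\xi$ is non-degenerate: if $\xi=0$ a.s.\ then $\P(\elz_n(\cT)>0)=0$ for $n\ge1$ and the statement is vacuous, while if $\xi=1$ a.s.\ (the only other degenerate case compatible with $\mu\le1$) then $\cT=\ctx=P_\infty$ deterministically and the claim is trivial. In particular $0<\mu\le1$, $\pi_0>0$, and $p_n\=\P(\elz_n(\cT)>0)=\P(H(\cT)\ge n)>0$ for every $n$.

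Next I compute the conditional probability explicitly. Fix $T$ and $m$, put $j\=\elz_m(T)$, and condition on $\cT\xmm=T$. The vertices of $\cT$ in generation $m$ are then exactly the $j$ vertices of $T$ in generation $m$, and the subtrees of $\cT$ rooted there are $j$ independent copies of $\cT$; hence $\elz_n(\cT)>0$ iff at least one of these copies has height $\ge n-m$, so $\P\bigpar{\elz_n(\cT)>0\bigm|\cT\xmm=T}=1-(1-p_{n-m})^{j}$. By Bayes' rule,
\begin{equation*}
  \P\bigpar{\cT\xmm=T\bigm|\elz_n(\cT)>0}
  =\P\bigpar{\cT\xmm=T}\,\frac{1-(1-p_{n-m})^{j}}{p_n}.
\end{equation*}
For fixed $j\ge1$ one has $1-(1-p_{n-m})^{j}=j\,p_{n-m}\,\bigpar{1+O(p_{n-m})}$ as \ntoo{} (and for $j=0$, i.e.\ $T$ of height $<m$, all quantities in sight vanish, since $\ctx$ has an infinite spine so $\elz_m(\ctx)\ge1$ always and $(\ctx)\xmm$ cannot equal a tree of height $<m$). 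Thus everything reduces to the asymptotics of the survival probabilities, namely to showing $p_n/p_{n-1}\to\mu$, whence $p_{n-m}/p_n\to\mu^{-m}$ for each fixed $m$.

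This last point is the main obstacle, and it is where the critical case $\mu=1$ needs care (no variance assumption is available). Let $q_n\=1-p_n=\P(H(\cT)<n)$; then $q_0=0$, $q_n=\Phi(q_{n-1})$, and since $\mu\le1$ the extinction probability equals $1$, so $q_n\uparrow1$ and $p_n\downarrow0$. Because $\sum_k k\pi_k=\mu<\infty$, the series $\Phi'(z)=\sum_{k\ge1}k\pi_k z^{k-1}$ converges at $z=1$, so by Abel's theorem $\Phi'$ is continuous on $[0,1]$ with $\Phi'(1)=\mu$; equivalently $\bigpar{\Phi(1)-\Phi(1-t)}/t\to\mu$ as $t\downarrow0$. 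Applying this with $t=p_{n-1}\to0$ in the identity $p_n=1-\Phi(1-p_{n-1})$ gives $p_n/p_{n-1}\to\mu$, and hence $p_{n-m}/p_n=\prod_{i=n-m+1}^{n}p_{i-1}/p_i\to\mu^{-m}$. (For $\mu<1$ this is the classical geometric decay of the survival probability, cf.\ \citet{AN}; for $\mu=1$ it is the delicate statement $p_n/p_{n-1}\to1$.) Combining with the displayed Bayes formula,
\begin{equation*}
  \P\bigpar{\cT\xmm=T\bigm|\elz_n(\cT)>0}
  \to\mu^{-m}\,\elz_m(T)\,\P\bigpar{\cT\xmm=T},
\end{equation*}
and by \eqref{c3} (with the truncations taken at height $m$) the right-hand side equals $\P\bigpar{(\ctx)\xmm=T}$. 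This establishes the required convergence, and the theorem follows.
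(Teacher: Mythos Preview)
Your proof is correct and follows the same overall architecture as the paper's: truncate at a fixed height $m$, use the branching property to write the conditional probability via Bayes, and identify the limit through the size-biasing formula \eqref{c3}. The one place where you diverge is in establishing the key ratio $p_n/p_{n-m}\to\mu^m$. You prove $p_n/p_{n-1}\to\mu$ directly from the recursion $p_n=1-\Phi(1-p_{n-1})$ together with the left-continuity of $\Phi'$ at $1$ (Abel's theorem), and then iterate. The paper instead avoids this step entirely: it sums the Bayes identity over all trees $T$ of height exactly $\ell$, notes that the sum is $1$, divides by $r_{n-\ell}$, and uses monotone convergence of $(1-(1-r)^N)/r\uparrow N$ to get $r_n/r_{n-\ell}\to\E\elz_\ell(\cT)=\mu^\ell$ in one stroke. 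Your route is more classical (it is essentially the standard fact that the survival probability decays like $\mu^n$ in the subcritical case and that $p_n/p_{n-1}\to1$ in the critical case with no moment assumption beyond $\mu<\infty$); the paper's normalisation trick is slicker and sidesteps any analytic facts about $\Phi$. Both are perfectly valid.
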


\begin{proof}
  Let $r_n\=\P(\elz_n(\cT)>0)$, the probability of survival for at least $n$
  generations. Then $r_n\to0$ as \ntoo.
Fix $\ell>0$ and a tree $T$ with height $\ell$. Conditioned on $\cT\nnx\ell=
T$, the remainder of the tree consists of $\elz_\ell(T)$ independent branches,
each distributed as $\cT$, and thus, for $n>\ell$,
\begin{equation}\label{rr1}
  \begin{split}
	\P(\cT\nnx\ell=T\mid H(\cT)\ge n)
&=
\frac{\P(\cT\nnx\ell=T \text{ and } H(\cT)\ge n)}{\P(H(\cT)\ge n)}
\\&
=
\frac{\P(\cT\nnx\ell=T)\bigpar{1-(1-r_{n-\ell})^{\elz_\ell(T)}}}
 {\P(H(\cT)\ge n)}.
  \end{split}
\end{equation}
Let $\stf\nnx\ell$ be the set of finite trees of height $\ell$.
Summing \eqref{rr1} over $T\in\stf\nnx\ell$ yields 1, and thus
\begin{equation}
\P(H(\cT)\ge n)=
\sum_{T\in\stf\nnx\ell}
{\P(\cT\nnx\ell=T)\bigpar{1-(1-r_{n-\ell})^{\elz_\ell(T)}}}
\end{equation}
Dividing by $r_{n-\ell}$, and noting that for any $N\ge1$, 
$\bigpar{1-(1-r)^N}/r\upto N$ as $r\downto0$, we find by monotone convergence
\begin{equation}
  \begin{split}
\frac{\P(H(\cT)\ge n)}{r_{n-\ell}}
&=
\sum_{T\in\stf\nnx\ell}
\P(\cT\nnx\ell=T)\frac{{1-(1-r_{n-\ell})^{\elz_\ell(T)}}}{r_{n-\ell}}
\\&
\to
\sum_{T\in\stf\nnx\ell}
\P(\cT\nnx\ell=T)\elz_\ell(T)
=\E\elz_\ell(\cT)=\mu^\ell.
  \end{split}
\end{equation}
Hence, by \eqref{rr1} and \eqref{c3},
\begin{equation}
  \begin{split}
	\P(\cT\nnx\ell=T\mid H(\cT)\ge n)
&\sim
\frac{\P(\cT\nnx\ell=T){\elz_\ell(T)}r_{n-\ell}}{\P(H(\cT)\ge n)}
\\&
\to
\frac{\P(\cT\nnx\ell=T){\elz_\ell(T)}}{\mu^\ell}
=\P(\ctxx\ell=T).
  \end{split}
\end{equation}
Thus, $(\cT\mid H(\cT)\ge n)\nnx\ell\dto\ctxx\ell$, and
the result follows by \eqref{jobl}.
\end{proof}

Note that if $\E\xi=1$, then $\ctx=\hcT$, see \refR{Rsizebias},
so the limits in Theorems
\refand{Tmain}{Tsb} of $\cT$ conditioned on $\vct=n$ and $H(\cT)\ge n$ have
the same limit. 
However, in the subcritical case $\E\xi<1$, $\ctx\neq\hcT$; moreover, $\ctx$
differs also from the limit in \refT{Tmain}, which is $\hcT$ for a
conjugated distribution, and the same is true in the supercritical case.
Hence, as remarked by \citet{Kennedy}, conditioning on 
$\vct=n$ and $H(\cT)\ge n$ give similar results (in the sense that the
limits as \ntoo{} are the same) in the critical case,
but quite different results in the subcritical and supercritical cases.
Similarly, conditioning on 
$\vct\ge n$ and $H(\cT)\ge n$ give quite different results in the
subcritical case.  \citet{AldousPitman} remarks that the two different
limits as \ntoo{} both can be intuitively interpreted as ``$\cT$ conditioned
on being infinite'', which shows that one has to be careful with such
interpretations.

\begin{ack}
This research was started during a visit to NORDITA, Stockholm, 
during the  program \emph{Random Geometry and Applications}, 2010.
I thank the participants, in particular 
Zdzis{\l}aw  Burda,
Bergfinnur Durhuus, 
Thordur Jonsson
and  Sigur\dh ur Stef\'ansson, for stimulating discussions. 
\end{ack}

\newcommand\AAP{\emph{Adv. Appl. Probab.} } 
\newcommand\JAP{\emph{J. Appl. Probab.} }
\newcommand\JAMS{\emph{J. \AMS} }
\newcommand\MAMS{\emph{Memoirs \AMS} }
\newcommand\PAMS{\emph{Proc. \AMS} }
\newcommand\TAMS{\emph{Trans. \AMS} }
\newcommand\AnnMS{\emph{Ann. Math. Statist.} }
\newcommand\AnnPr{\emph{Ann. Probab.} }
\newcommand\CPC{\emph{Combin. Probab. Comput.} }
\newcommand\JMAA{\emph{J. Math. Anal. Appl.} }
\newcommand\RSA{\emph{Random Struct. Alg.} }
\newcommand\ZW{\emph{Z. Wahrsch. Verw. Gebiete} }
\newcommand\DMTCS{\jour{Discr. Math. Theor. Comput. Sci.} }

\newcommand\AMS{Amer. Math. Soc.}
\newcommand\Springer{Springer-Verlag}
\newcommand\Wiley{Wiley}

\newcommand\vol{\textbf}
\newcommand\jour{\emph}
\newcommand\book{\emph}
\newcommand\inbook{\emph}
\def\no#1#2,{\unskip#2, no. #1,} 
\newcommand\toappear{\unskip, to appear}

\newcommand\urlsvante{\url{http://www.math.uu.se/~svante/papers/}}
\newcommand\arxiv[1]{\url{arXiv:#1.}}
\newcommand\arXiv{\arxiv}

\end{document}